\newcommand{\area}{\operatorname{Area}}
\newcommand{\length}{\operatorname{length}}
\newcommand{\lk}{\operatorname{lk}}
\newcommand{\St}{\operatorname{St}}
\newcommand{\QI}{\operatorname{QI}}
\newcommand{\Isom}{\operatorname{Isom}}
\newcommand{\Aut}{\operatorname{Aut}}
\newcommand {\B}{\mathcal B}
\newcommand {\C}{\mathcal C}
\newcommand {\D}{\mathbb D}
\newcommand {\I}{\mathcal I}
\newcommand {\V}{\mathcal V}
\newcommand {\F}{\mathcal F}
\newcommand {\h}{\mathcal H}
\newcommand {\p}{\mathcal P}
\newcommand {\W}{\mathcal W}
\newcommand {\s}{\mathcal S}
\newtheorem{theorem}{Theorem}[section]
\newtheorem{lemma}[theorem]{Lemma}
\newtheorem{question}[theorem]{Question}
\newtheorem{prop}[theorem]{Proposition}
\newtheorem{conj}[theorem]{Conjecture}
\newtheorem{corollary}[theorem]{Corollary}
\theoremstyle{definition} 
\newtheorem{definition}[theorem]{Definition}
\newtheorem{remark}[theorem]{Remark}
\newcommand{\angled}[1]{\langle#1\rangle}
\newcommand {\mr}{\mathrm}
\newcommand{\act}{\curvearrowright}
\newcommand{\Xa}{X^{\ast}}
\newcommand{\Pa}{\Pi^{\ast}}
\newcommand{\Xb}{X^{\scalebox{0.53}{$\triangle$}}}
\newcommand{\ui}[1]{u_{#1}^{-1}}
\newcommand{\di}[1]{d_{#1}^{-1}}
\newcommand{\sk}[2]{#1^{(#2)}}
\newcommand{\HOdiscII}{Theorem 2.8}
\newcommand{\morse}{Theorem 3.9}
\newcommand{\corfourthree}{Corollary 4.3}
\newcommand{\lemfourseven}{Lemma 4.7}
\newcommand{\lemfivetwo}{Lemma 5.2}
\newcommand{\lemfivethree}{Lemma 5.3}
\newcommand{\lemfivesix}{Lemma 5.6}
\newcommand{\lemfiveseven}{Lemma 5.7}
\newcommand{\lemfiveeight}{Lemma 5.8}
\newcommand{\lemfiveeleven}{Lemma 5.11}
\newcommand{\lemfivefourteen}{Lemma 5.13}
\newcommand{\lemfivesixteen}{Lemma 5.15}
\newcommand{\lemfiveseventeen}{Lemma 5.16}
\newcommand{\lemfivenineteen}{Lemma 5.18}
\newcommand{\lemsixtwo}{Lemma 6.3}
\newcommand{\lemsixthree}{Lemma 6.4}
\newcommand{\lemsixsix}{Lemma 6.7}
\newcommand{\lemsixseven}{Theorem 6.1}
\newcommand{\lemsevenseven}{Theorem 7.7}
\newcommand{\im}{\operatorname{Im}}
\begin{document}

\title[
Quasi-Euclidean tilings and their applications
]{
Quasi-Euclidean tilings over $2$--dimensional Artin groups and their applications
}

\author{Jingyin Huang}
\address{Max Planck Institute for Mathematics, Vivatsgasse 7, 53111 Bonn, Germany}
\email{jingyin@mpim-bonn.mpg.de}

\author{Damian Osajda}
\address{Instytut Matematyczny,
Uniwersytet Wroc\l awski\\
pl.\ Grun\-wal\-dzki 2/4,
50--384 Wroc{\l}aw, Poland}
\address{Institute of Mathematics, Polish Academy of Sciences\\
\'Sniadeckich 8, 00-656 War\-sza\-wa, Poland}
\email{dosaj@math.uni.wroc.pl}

\subjclass[2010]{{20F65, 20F36, 20F67, 20F69}} \keywords{quasi-isometric rigidity, two-dimensional Artin group, metric systolicity}

\date{\today}

\begin{abstract}
We describe the structure of quasiflats in two-dimensio\-nal Artin groups. 
We rely on the notion of
metric systolicity developed in our previous work. Using this weak form of non-positive curvature
and analyzing in details the combinatorics of tilings of the plane we describe precisely 
the building blocks for quasiflats in all two-dimensional Artin groups -- atomic sectors.
This allows us to provide useful quasi-isometry invariants for such groups -- completions of atomic sectors,  stable lines, and the intersection pattern of certain abelian subgroups.
These are described combinatorially, in terms of the structure of the graph defining
an Artin group. As an important tool, we introduce an analogue of the curve complex in the context of two-dimensional Artin groups -- the intersection graph. We show quasi-isometric invariance of the intersection graph under natural assumptions.

As immediate consequences we present a number of results concerning quasi-isometric rigidity for
the subclass of CLTTF Artin groups. 
We give a necessary and sufficient condition for such groups to be strongly rigid (self quasi-isometries are close to automorphisms), 
we describe quasi-isometry groups, we indicate when quasi-isometries imply isomorphisms 
for such groups.
In particular, there exist many strongly rigid large-type Artin groups. In contrast, none of the right-angled Artin groups are strongly rigid by a previous work of Bestvina, Kleiner and Sageev. 
\end{abstract}

\maketitle

\setcounter{tocdepth}{2}
\tableofcontents

\section{Introduction}
\label{s:intro}
\subsection*{Overview}
Let $\Gamma$ be a finite simple graph with each edge labeled by an integer $\ge 2$. An \emph{Artin group with defining graph $\Gamma$}, denoted $A_\Gamma$, is given by the following presentation. Generators of $A_\Gamma$ are in one to one correspondence with vertices of $\Gamma$, and there is a relation of the form
\begin{center}
$\underbrace{aba\cdots}_{m}=\underbrace{bab\cdots}_{m}$
\end{center}
whenever two vertices $a$ and $b$ are connected by an edge labeled by $m$. 

Despite the seemingly simple presentation, the most basic questions (torsion, center, word problem and cohomology) on Artin groups remain open, though partial results are obtained by various authors. We refer to the survey papers by Godelle and Paris \cite{MR3203644}, and McCammond \cite{jonproblems}. Other fundamental and natural questions for Artin groups which are equally exciting and difficult can be found in Charney \cite{charney2016problems}.

One common feature of various classes of Artin groups studied so far is that they all exhibit features of non-positive curvature of certain form:
\begin{itemize}
	\item small cancellation \cite{AppelSchupp1983,Appel1984,Pride,Peifer};
	\item bi-automaticity, automaticity, or some form of combing \cite{Charney1992,MR1314589,charney2003k,MR2208796,holt2011artin,MR2985512,MR3351966};
	\item other notions of combinatorial non-positive curvature \cite{Bestvina1999,Artinsystolic};
	\item $CAT(0)$ \cite{charney1995k,BradyMcCammond2000,brady2002two,bell2005three,brady2010braids,haettel20166};
	\item hierarchical hyperbolicity (hence coarse median) \cite{CharneyCrispAutomorphism,MR3650081,gordon2004artin,behrstock2015hierarchically};
	\item relative hyperbolicity \cite{kapovich2004relative,charney2007relative};
	\item acylindrical hyperbolicity \cite{calvez2016acylindrical,charney2019artin} and hyperbolicity in a statistical sense \cite{cumplido2017loxodromic,yang2016statistically}.
\end{itemize}
Conjecturally, all Artin groups should be non-positively curved in an appropriate sense, and this is intertwined with understanding many fundamental questions about Artin groups.


As most of the Artin groups have many abelian subgroups intersecting in a highly non-trivial way \cite{davis2017determining}, it is natural to compare them with other \textquotedblleft higher rank spaces with non-positive-curvature features\textquotedblright\, like symmetric spaces of non-compact type of rank $>1$, Euclidean buildings, mapping class groups and Teichmuller spaces of surfaces etc., and ask how much of the properties on geometry and rigidity of these spaces still hold for Artin groups and what are the new phenomena for Artin groups. 



Motivated by such considerations, we study Gromov's program of understanding quasi-isometric classification and rigidity of groups and metric spaces in the realm of \emph{$2$--dimensional} Artin groups. We build upon a previous result \cite{Artinmetric}, where it was shown that all $2$--dimensional Artin groups satisfy a form of non-positive curvature called metric systolicity. In the current paper, we will show that certain \textquotedblleft $0$--curvature chunks\textquotedblright\ of the group have a very specific structure, which gives rise to rigidity results.

\subsection*{Background}
Previous works on quasi-isometric rigidity and classification of Artin groups fall into the following three classes, listed from the most rigid to the least rigid situation.
\begin{enumerate}
	\item Some affine type Artin groups are commensurable to mapping class groups of surfaces \cite{CharneyCrispAutomorphism}. Hence the quasi-isometric rigidity results for mapping class groups \cite{behrstock2012geometry,hamenstaedt2005geometry} apply to them.
	\item Atomic right-angled Artin groups \cite{bestvina2008asymptotic} and their right-angled generalizations beyond dimension 2 \cite{MR3692971,huang2016groups,huang2016commensurability,huang2016quasi}.
	\item Artin groups whose defining graphs are trees \cite{behrstock2008quasi} (they are fundamental groups of graph manifolds) and a right-angled generalization beyond dimension 2 \cite{MR2727658}.
\end{enumerate}
Most of the mapping class groups of surfaces enjoy the strong quasi-isometric rigidity property that any quasi-isometry of the group to itself is uniformly close to an automorphism. However, it follows implicitly from \cite[Section 11]{bestvina2008asymptotic} that none of the right-angled Artin groups satisfies such form of rigidity (see \cite[Example 4.14]{MR3692971} for a more detailed explanation). 

One is more likely to obtain stronger rigidity result when the complexity of the intersection pattern of flats in the space is higher. Thus we are motivated to study Artin groups which are not necessarily right-angled, whose structure is generally more intricate than the one of right-angled ones. One particular case is the class of \emph{large-type} Artin group, where the label of each edge in the defining graph is $\ge 3$. A commensuration rigidity result was proved by Crisp \cite{MR2174269} for certain class of large-type Artin groups. 

An Artin group is \emph{$n$--dimensional} if it has cohomological dimension $n$. One-dimensional Artin groups are free groups. The class of two-dimensional Artin groups is much more abundant -- all large-type Artin groups have dimension $\le 2$. By Charney and Davis \cite{CharneyDavis}, $A_\Gamma$ has dimension $\le 2$ if and only if for any triangle $\Delta\subset\Gamma$ with its sides labeled by $p,q,r$, we have $\frac{1}{p}+\frac{1}{q}+\frac{1}{r}\le 1$. A model example to keep in mind is $A_\Gamma$ with $\Gamma$ being a complete graph with all its edges labeled by $3$.

\subsection*{Structure of quasiflats}
Let $A_\Gamma$ be a two-dimensional Artin group. We study $2$--dimensional quasiflats in $A_\Gamma$, since the structure of top-dimensional quasiflats often plays a fundamental role in quasi-isometric rigidity of non-posi\-ti\-ve\-ly curved spaces, see the list of references after Theorem~\ref{thm:intro1}.


Let $\Xa_\Gamma$ be the universal cover of the standard presentation complex of $A_\Gamma$. Modulo some technical details, quasiflats can be viewed as subcomplexes of $\Xa_\Gamma$ which are homeomorphic to $\mathbb R^2$ and are quasi-isometric to $\mathbb E^2$ with the induced metric. Such subcomplexes are called \emph{quasi-Euclidean tilings over $\Xa_\Gamma$} and studying such tilings of $\mathbb R^2$ is of independent interests. This viewpoint is motivated by both the geometric aspects of quasiflats explored in \cite{bks} and diagrammatic aspects studied in \cite{AppelSchupp1983,Pride,olshanskii2017flat}.

To construct such a tiling, one can first search for tilings of Euclidean sectors appearing in $\Xa_\Gamma$ (see Figure~\ref{f:atomic} for some examples), then glue these sectors along their boundaries in a cyclic fashion to form a quasi-Euclidean tiling. The following theorem says that this is essentially the only way one can obtain a quasiflat. Inside $\Xa_\Gamma$ one can build a list of sectors which naturally arise from certain abelian subgroups, or centralizers of certain elements in $A_\Gamma$. We call them \emph{atomic sectors} (cf.\ Section~\ref{subsec:atomic sector}, in particular, Table~\ref{t:atomic} on page \pageref{t:atomic}) since they are building blocks of quasiflats. Some of them are shown in Figure~\ref{f:atomic}.
\begin{theorem}[=Theorem~\ref{thm:main2}]
	\label{thm:intro1}
Suppose $A_\Gamma$ is two-dimensional. Then any $2$--dimensional quasiflat $Q$ of $\Xa_\Gamma$ is at finite Hausdorff distance away from a union of finitely many atomic sectors in $\Xa_\Gamma$.
\end{theorem}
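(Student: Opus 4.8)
The overall strategy follows the template of Bestvina--Kleiner--Sageev for quasiflats in $CAT(0)$ $2$--complexes \cite{bks}: one first replaces $Q$ by a genuine subcomplex, then analyzes its singular set and cuts it into sectors. \textbf{Step 1 (straightening $Q$ to a tiling).} Since $\Xa_\Gamma$ is metrically systolic by \cite{Artinmetric}, a large combinatorial loop in $\Xa_\Gamma$ tracking a round Euclidean circle in $Q$ bounds a minimal disc diagram, and the metric systolic condition (via a combinatorial Gauss--Bonnet argument on links) forces the interior of such a diagram to be essentially non-positively curved, hence flat. Letting the radius tend to infinity and using local finiteness of $\Xa_\Gamma$, a subsequence of these flat discs converges to an isometrically embedded flat plane $\bar Q\subset\Xa_\Gamma$, homeomorphic to $\mathbb R^2$, whose induced piecewise-Euclidean metric is quasi-isometric to $\mathbb E^2$; a Morse-type property for quasiflats (established earlier in the paper) shows $\bar Q$ lies at finite Hausdorff distance from $Q$. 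This $\bar Q$ is precisely a \emph{quasi-Euclidean tiling over $\Xa_\Gamma$}, so it suffices to describe such tilings.

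\textbf{Step 2 (singular set, stable lines, decomposition into finitely many sectors).} On $\bar Q$ the locus where it fails to look locally like a standard flat piece of $\Xa_\Gamma$ is a subcomplex $\Sigma$, the \emph{singular set}; a link computation in $\Xa_\Gamma$ --- whose vertex links are governed by $\Gamma$ and by the dihedral subgroups $A_m$ --- shows that near a singular vertex the tiling can branch only in one of finitely many model ways. Because $\bar Q$ is flat (bounded total curvature) and quasi-isometrically embedded, a singular point cannot be isolated: it must lie on a bi-infinite, coarsely geodesic combinatorial line contained in $\Sigma$, and the Morse property forces there to be only finitely many such lines up to finite Hausdorff distance, pairwise diverging. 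These are the \emph{stable lines}. They cut $\bar Q$ into finitely many regions, each coarsely isometric to a Euclidean sector and bounded by subrays of stable lines; this realizes the ``cyclic gluing of sectors'' picture from the introduction and already yields the finiteness asserted in the statement.

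\textbf{Step 3 (each sector is atomic) --- the main obstacle.} It remains to identify each sector $S$ produced in Step 2, up to finite Hausdorff distance, with one of the model atomic sectors of Section~\ref{subsec:atomic sector}. Each bounding stable line of $S$ coarsely stabilizes an axis of an element of $A_\Gamma$, and in the two-dimensional case the centralizer of such an element is of a very restricted type; this controls the ``angular speed'' at which the tiling of $S$ develops away from its apex. One then argues row by row that the local metric systolicity conditions together with flatness leave essentially no freedom in how one row of the tiling determines the next, so the tiling of $S$ propagates periodically; matching this periodic pattern against the explicit finite list of atomic sectors completes the identification. The bookkeeping here --- which labels of $\Gamma$ may occur along a row, the transition rules between consecutive rows, and the proof that no non-atomic pattern can be simultaneously flat and unbounded --- is exactly the detailed combinatorics of tilings of the plane highlighted in the abstract, and is the technically heaviest part of the argument. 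Combining the three steps, $Q$ lies at finite Hausdorff distance from $\bar Q$, a union of finitely many sectors, each at finite Hausdorff distance from an atomic sector, which is the assertion of Theorem~\ref{thm:intro1}.
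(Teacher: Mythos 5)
Your outline matches the paper's broad strategy (approximate, locate singular rays, develop sectors, bound the number of sectors), but Step~1 contains a genuine error and Steps~2--3 skip the ideas that carry the actual proof. In Step~1 you claim that $Q$ is Hausdorff-close to an \emph{isometrically embedded flat plane} $\bar Q\subset\Xa_\Gamma$. That statement is false in general: a $2$--quasiflat in these groups is typically \emph{not} close to any genuine flat -- for instance a quasiflat assembled from a DCH or CCH together with other sectors has no flat within finite Hausdorff distance, and indeed the whole content of the theorem is that quasiflats are only unions of sectors, not flats. What the machinery actually provides (Theorem~\ref{thm:MSquasiflats}) is a simplicial map $q'\colon Q'\to X_\Gamma$ from a CAT(0) triangulated plane that is intrinsically flat only \emph{outside a compact set}, into the thickened complex $X_\Gamma$, and this map is not claimed to be a global embedding; flatness outside a compact set comes from a quadratic area-growth argument via Theorem~\ref{thm:BKS}, not from ``non-positively curved, hence flat'', which is a non sequitur. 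Moreover the approximation lives in $X_\Gamma$, and transferring it back to $\Xa_\Gamma$ is itself nontrivial: one needs the partial retraction $\rho$, its well-definedness on deep flat vertices (Lemma~\ref{lem:well-defined}), and the local classification of stars into types O, I, II, III (Lemma~\ref{lem:star homeo}) to obtain the tiled complex $Q_R$ over $\Xa_\Gamma$.

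In Step~2 you assert that singular points organize themselves into finitely many bi-infinite stable lines that cut the plane into sectors, with finiteness ``forced by the Morse property''. Neither assertion is justified, and both hide the technical core of the paper. The existence of even \emph{one} singular ray in $Q_R$ is a major theorem (Proposition~\ref{prop:existence of singular rays}), proved through the analysis of half Coxeter regions and, crucially, the orientation arguments of Section~\ref{sec:orientation of edges of Coxeter regions} (Lemmas~\ref{lem:real ray border}, \ref{lem:two right angle}, Proposition~\ref{prop:half coxeter region contains Coxeter ray}), which rule out pathological local configurations that pure link/curvature considerations cannot exclude. The finiteness of sectors is then obtained not from a Morse lemma but from the development argument (Theorem~\ref{thm:main1}): starting from a singular ray one sweeps out an atomic sector whose shadow in $Q'$ is a CAT(0) sector of angle at least $\pi/6$, and since the Tits boundary of $Q'$ is a circle of finite length, finitely many iterations exhaust $Q_R$ (Theorem~\ref{thm:main2}). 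Your Step~3 correctly identifies the development/case analysis as the heaviest part, but offering it as ``row-by-row periodicity'' leaves unaddressed exactly the case distinctions (plain, Coxeter, diamond rays; type O/I/II/III vertices; the half-flat sectors DCH, CCH, PCH) that make the identification with atomic sectors work. As written, the proposal would not compile into a proof without importing essentially all of Sections~\ref{sec:geometric model}--\ref{sec:development}.
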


\begin{figure}[h!]
	\centering
	\includegraphics[width=0.75\textwidth]{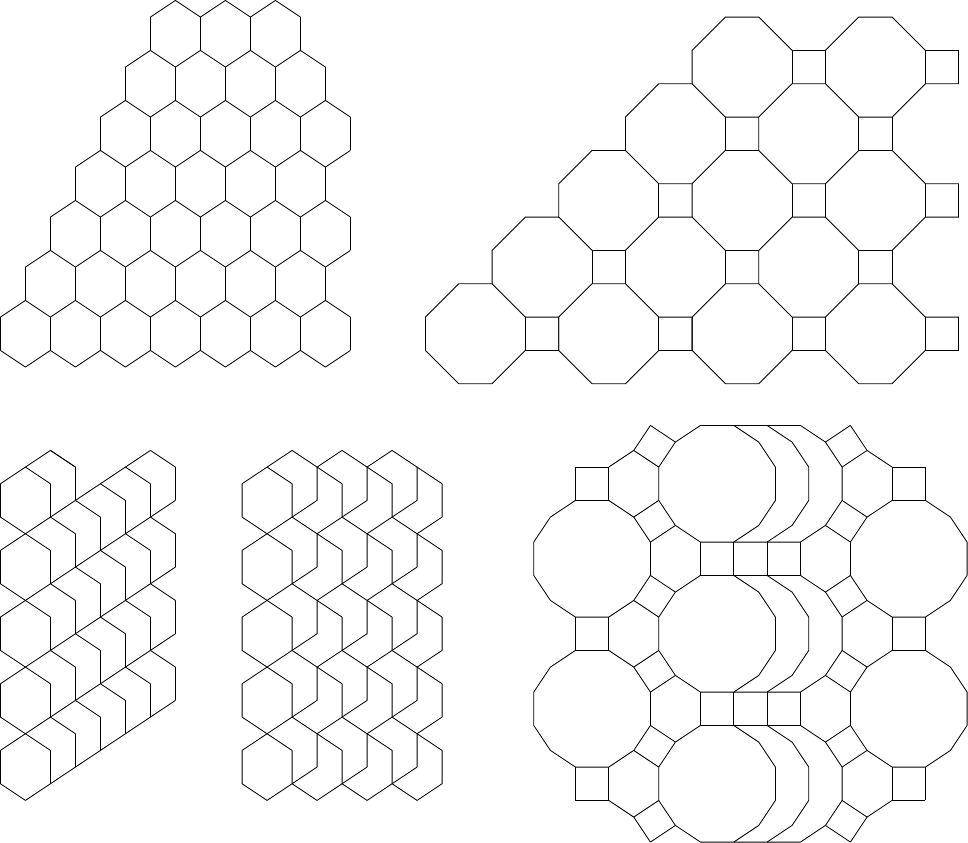}
	\caption{Some atomic sectors.}
	\label{f:atomic}
\end{figure}

A simpler but less concise restatement of the above theorem is that every 2-dimensional quasiflat is made of \textquotedblleft fragments\textquotedblright\ of subgroups of $A_\Gamma$ of form $F_n\times F_m$ ($n,m\ge 1$), where $F_n$ denotes the free group with $n$ generators.

Previously, structure theorems for quasiflats were proved for Euclidean buildings and symmetric spaces of non-compact type \cite{kleiner1997rigidity,eskin1997quasi,wortman2006quasiflats}, universal covers of certain Haken manifolds \cite{kapovich1997quasi}, certain CAT(0) complexes \cite{bks,MR3654109} and hierarchically hyperbolic spaces \cite{behrstock2017quasiflats}.

Our viewpoint of tilings is convenient for, simultaneously, analyzing relevant group theoretic information and studying geometric aspects of quasiflats. The combinatorics of the group structure make up a substantial part. This is quite different from the more geometric situation in \cite{bks,MR3654109,behrstock2017quasiflats}. Some of the atomic sectors are actually half-flats (see Definitions~\ref{def:DCH}--\ref{def:PCH} below). This might seems strange compared to several previous quasiflat results, however, these sectors arise naturally when considering the group structure and they are convenient for our later applications.

Atomic sectors are not necessarily preserved under quasi-isometries. However, atomic sectors have natural \emph{completions} (cf.\ Section~\ref{subsec:completion}) which are preserved under quasi-isometries (see Corollary~\ref{cor:qi and completion} for a precise statement). 

The following theorem says that certain $\mathbb Z$--subgroups corresponding to boundaries of atomic sectors are preserved under quasi-isometries. These $\mathbb Z$--subgroups act on what we call \emph{stable lines}, and they are analogues of $\mathbb Z$--subgroups generated by Dehn twists in mapping class groups.

\begin{theorem}[=Theorem~\ref{thm:preservation of stable lines}]
	\label{thm:stable line}
	Suppose $A_{\Gamma_1}$ and $A_{\Gamma_2}$ are two-dimensional Artin groups. Let $q\colon\Xa_{\Gamma_1}\to \Xa_{\Gamma_2}$ be an $(L,A)$--quasi-isometry. Then there exists a constant $D$ such that for any stable line $L_1\subset \Xa_{\Gamma_1}$, there is a stable line $L_2\subset \Xa_{\Gamma_2}$ with $d_H(q(L_1),L_2)<D$.
\end{theorem}

For each $2$--dimensional Artin group $A_\Gamma$, we define an \emph{intersection graph} $\I_\Gamma$ (Definition~\ref{def:intersection graph}), which describes the intersection pattern of certain abelian subgroups. This object can be viewed as an analogue of the spherical building at infinity for symmetric spaces of non-compact type, or the curve graph in the case of mapping class groups. When $\Gamma$ is a triangle with its edges labeled by $3$, the group $A_\Gamma$ is commensurable to the mapping class group of the $5$--punctured sphere \cite{CharneyCrispAutomorphism}, and $\I_\Gamma$ is isomorphic to the curve complex of the $5$--punctured sphere.
\begin{theorem}
	\label{thm:intro2}
	Let $A_{\Gamma}$ and $A_{\Gamma'}$ be two $2$--dimensional Artin groups and let $q\colon A_\Gamma\to A_{\Gamma'}$ be a quasi-isometry. Then $q$ induces an isomorphism from the stable subgraph of $\I_{\Gamma}$ onto the stable subgraph of $\I_{\Gamma'}$. In particular, if both $A_\Gamma$ and $A_{\Gamma'}$ have finite outer automorphism group and $\Gamma$ has more than two vertices, then $q$ induces an isomorphism between their intersection graphs.
\end{theorem}
We refer to Theorem~\ref{thm:invariance} for a more general version of Theorem~\ref{thm:intro2}.

Analogously, quasi-isometries of higher rank symmetric spaces of non-compact type and Euclidean buildings induce automorphisms of their spherical buildings at infinity \cite{kleiner1997rigidity,eskin1997quasi}; and quasi-isometries of most mapping class groups induce automorphisms of their curve complexes \cite{hamenstaedt2005geometry,behrstock2012geometry,behrstock2017quasiflats}.

Recall that a finitely generated group $H$ is \emph{virtually} $A_\Gamma$ if there exists a finite index subgroup $H'\le H$ and a homomorphism $\phi\colon H'\to A_\Gamma$ with finite kernel and finite index image.

\begin{corollary}
	\label{cor:injective}
	Let $A_{\Gamma}$ be a $2$--dimensional Artin group with finite outer automorphism group. Suppose $\Gamma$ has more than two vertices. If $q\colon A_\Gamma\to A_\Gamma$ is an $(L,A)$--quasi-isometry inducing the identity map on $\I_\Gamma$, then there exists $C=C(L,A,\Gamma)$ such that $d(q(x),x)\le C$ for any $x\in A_\Gamma$. Thus the map $\QI(A_\Gamma)\to \Aut(\I_\Gamma)$ described in Theorem~\ref{thm:intro2} is an injective homomorphism.
	
	Thus if we know in addition that the homomorphism $A_\Gamma\to \Aut(\I_\Gamma)$ induced by the action $A_\Gamma\act \Aut(\I_\Gamma)$ has finite index image, then any finitely generated groups quasi-isometric to $A_\Gamma$ is virtually $A_\Gamma$.
\end{corollary}

We refer to Corollary~\ref{cor:injective2} for a more general version of Corollary~\ref{cor:injective2}.

Unlike the case of mapping class groups, $\Aut(\I_\Gamma)$ could be much larger than $\QI(A_\Gamma)$ in the context of Corollary~\ref{cor:injective}. This happens for example in the right-angled case \cite[Corollary 4.20]{MR3692971}. However, we expect more rigidity to occur outside the right-angled world -- see the next subsection for some special cases.

\subsection*{Rigidity of certain large-type Artin groups}
We now discuss some immediate consequences of the results from the previous subsection. We will restrict ourselves to the class of \emph{CLTTF} Artin groups introduced by Crisp \cite{MR2174269} in order to use his results directly. Nevertheless, we believe that similar goals for more general classes of Artin groups can be achieved using the invariants described above. See the list of questions in Section~\ref{rmk:ending remark} for possible further directions.

\begin{definition}
	\label{def:CLTTF}
An Artin group $A_\Gamma$ is CLTTF if all of the following conditions are satisfied:
\begin{itemize}
	\item (C) $\Gamma$ is connected and has at least three vertices;
	\item (LT) $A_\Gamma$ is of large type;
	\item (TF) $\Gamma$ is triangle-free, i.e.\ $\Gamma$ does not contain any triangles.
\end{itemize}
\end{definition}
In particular, CLTTF Artin groups are $2$--dimensional. As pointed out by Crisp, (C) serves to rule out $2$--generator Artin groups, which are best treated as a separate case. Here, we improve the commensurability rigidity of \cite[Theorem 3]{MR2174269} to quasi-isometric rigidity results and point out some new phenomena in the setting of quasi-isometries.

\begin{theorem}[=Corollary~\ref{cor:qi and iso}]
	\label{thm:introduction 3}
Let $A_\Gamma$ and $A_{\Gamma'}$ be CLTTF Artin groups. Suppose $\Gamma$ does not have separating vertices and edges. Then $A_\Gamma$ and $A_{\Gamma'}$ are quasi-isometric if and only if $\Gamma$ and $\Gamma'$ are isomorphic as labeled graphs. 
\end{theorem}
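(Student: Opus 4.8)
The plan is to deduce this from the quasi-isometry invariants developed above, principally Theorem~\ref{thm:intro2} together with Crisp's commensurability rigidity. The ``if'' direction is trivial: isomorphic labeled graphs yield isomorphic Artin groups, hence quasi-isometric ones. For the ``only if'' direction, suppose $q\colon A_\Gamma\to A_{\Gamma'}$ is a quasi-isometry. First I would check that the hypothesis that $\Gamma$ has no separating vertices or edges forces $A_\Gamma$ to have finite outer automorphism group: separating vertices/edges are precisely the source of the ``transvection-type'' and partial-conjugation automorphisms, and in the CLTTF setting Crisp's analysis of $\Aut(A_\Gamma)$ shows $\mathrm{Out}(A_\Gamma)$ is finite exactly when no such separating simplices exist. (One must also observe that $\Gamma'$ inherits this property, since being quasi-isometric to a one-ended group with the relevant properties is detected by the invariants; alternatively, if $\Gamma'$ had a separating vertex or edge then $A_{\Gamma'}$ would split as an amalgam over $\bZ$ or $\bZ^2$ in a way incompatible with the thick, non-relatively-hyperbolic geometry of $A_\Gamma$, giving a contradiction.)

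Next, since $\Gamma$ has more than two vertices (condition (C)) and both outer automorphism groups are finite, Theorem~\ref{thm:intro2} applies and yields an isomorphism of intersection graphs $\I_\Gamma\cong\I_{\Gamma'}$ induced by $q$. The heart of the argument is then to promote this combinatorial isomorphism to an actual isomorphism of labeled defining graphs $\Gamma\cong\Gamma'$. I would do this by showing that, for CLTTF Artin groups, the labeled graph $\Gamma$ is reconstructible from $\I_\Gamma$: vertices of $\Gamma$ correspond to certain distinguished (non-stable, or maximal) vertices of $\I_\Gamma$ arising from standard generators, edges of $\Gamma$ with their labels correspond to the $F_n\times F_m$-type dihedral pieces recorded by adjacency and the local structure in $\I_\Gamma$ (the label $m$ being read off from the combinatorics of the atomic sectors / the completion data attached to that piece, since different labels $m\ge 3$ produce non-quasi-isometric dihedral Artin subgroups $A_{m}$ and hence combinatorially distinguishable local patterns). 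Since the triangle-free hypothesis means $\Gamma$ has no $2$-cells, knowing vertices, edges and labels determines $\Gamma$ as a labeled graph.

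The main obstacle, I expect, is this last reconstruction step: matching up the combinatorial invariant $\I_\Gamma$ with the labeled graph precisely enough to recover \emph{labels}, not merely the underlying graph. This is where Crisp's work \cite{MR2174269} is doing real labor — his commensurability rigidity already identifies the right combinatorial skeleton — and the task here is to show the quasi-isometry $q$ respects exactly that skeleton. Concretely, one needs: (a) that $q$ carries standard generator axes to standard generator axes (up to finite Hausdorff distance), which should follow from Theorem~\ref{thm:stable line} and the classification of stable lines; (b) that the intersection pattern of the associated $\bZ^2$'s, together with the completion invariant of Corollary~\ref{cor:qi and completion}, pins down both which dihedral parabolic each belongs to and its label. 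Once (a) and (b) are in place, the induced bijection on generators and dihedral parabolics is an isomorphism of labeled graphs $\Gamma\to\Gamma'$, completing the proof. A secondary, more bookkeeping-type obstacle is handling the degenerate low-complexity cases (e.g.\ ruling out that $\Gamma'$ has two vertices, or is disconnected), which one dispatches using one-endedness and the thickness/non-relative-hyperbolicity of $2$-dimensional CLTTF Artin groups without separating simplices.
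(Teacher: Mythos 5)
There is a genuine gap, and it sits exactly at the step you flag as ``the heart of the argument.'' Your plan to recover the labels of $\Gamma$ from $\I_\Gamma$ rests on the claim that ``different labels $m\ge 3$ produce non-quasi-isometric dihedral Artin subgroups,'' and this is false: every large-type dihedral Artin group is commensurable to $F_k\times\mathbb Z$ for some $k\ge 2$, and all such groups are quasi-isometric to one another. So the label of an edge is \emph{not} visible from the quasi-isometry type of the corresponding block, nor from the completion/atomic-sector data attached to it, and the reconstruction of the labeled graph directly from $\I_\Gamma$ is left unproved with the one concrete mechanism you offer being incorrect. The paper recovers the labels only much later and by a different mechanism: it first upgrades the graph isomorphism $\Theta_\Gamma\to\Theta_{\Gamma'}$ to a canonical \emph{bijection} $\tilde q\colon A_\Gamma\to A_{\Gamma'}$ carrying left cosets of standard vertex and edge subgroups to left cosets (Theorem~\ref{thm:CLTTF1}), which uses Crisp's $\V\F$-isomorphism result (Proposition~\ref{prop:VF}) and chunk rigidity (Proposition~\ref{prop:chunk}) together with the hypothesis that $\Gamma$ has no separating vertices or edges (so $\widehat\Gamma$ is a single solid chunk). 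Only then does one get a simplicial isomorphism of Deligne complexes, and the label of an edge is read off from the girth of the link of the corresponding rank~$2$ vertex of $\square_\Gamma$ (Lemma~\ref{lem:cycle in links}); this combinatorial girth argument, not a quasi-isometry invariant of the block, is what pins down $m$.

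Two smaller points. First, you route the argument through Theorem~\ref{thm:intro2} in its ``finite outer automorphism group'' form, which forces you to argue that $\Gamma'$ also has no separating vertices or edges; your suggested fix via thickness or non-relative-hyperbolicity is not established anywhere in the paper and is unnecessary: the paper uses the stable-subgraph statement (Theorem~\ref{thm:invariance}), which needs no finiteness of $\mathrm{Out}$, combined with the fact that for CLTTF groups the stable subgraph coincides with Crisp's fixed-set graph; the symmetric treatment of $\Gamma'$ then falls out of the construction of $\tilde q$ and its inverse rather than being assumed. Second, your item (a) (quasi-isometries carry standard generator axes to standard generator axes) is essentially Theorem~\ref{thm:stable line} and is fine, but it does not by itself give a bijection on generators or parabolics; the passage from ``stable lines are preserved up to finite Hausdorff distance'' to an honest coset-preserving bijection is precisely the content of the chunk argument you have not supplied.
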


By \cite[Theorem 1]{MR2174269}, a CLTTF Artin group has finite outer automorphism group if and only if its defining graph has no separating vertices and edges. So Theorem~\ref{thm:introduction 3} can be compared to \cite[Theorem 1.1]{MR3692971}.

\begin{theorem}[=Theorem~\ref{thm:CLTTF2}]
	\label{thm:intro4}
	Let $A_\Gamma$ be a CLTTF Artin group such that $\Gamma$ does not have separating vertices and edges. Let $\QI(A_\Gamma)$ be the quasi-isometry group of $A_\Gamma$. Let $\Isom(A_\Gamma)$ be the isometry group of $A_\Gamma$ with respect to the word distance for the standard generating set. Then the following hold.
	\begin{enumerate}
		\item Any quasi-isometry from $A_\Gamma$ to itself is uniformly close to an element in $\Isom(A_\Gamma)$.
		\item There are isomorphisms $\QI(A_\Gamma)\cong \Isom(A_\Gamma)\cong \Aut(D_\Gamma)$, where $\Aut(D_\Gamma)$ is the simplicial automorphism group of the Deligne complex $D_\Gamma$ of $A_\Gamma$.
	\end{enumerate}
\end{theorem}

There are counterexamples if we drop the condition that $\Gamma$ does not have separating vertices and edges, see \cite[Lemma 42]{MR2174269}.

If $A_\Gamma$ is right-angled then $\QI(A_\Gamma)$ is much smaller than $\Aut(D_\Gamma)$, see \cite[Corollary 4.20]{MR3692971}. This suggests that the Artin groups in Theorem~\ref{thm:intro4} are more rigid than right-angled Artin groups. On the other hand, the Artin groups in Theorem~\ref{thm:intro4} may not be as rigid as mapping class groups, since elements in $\Isom(A_\Gamma)$ are not necessarily uniformly close to automorphisms of $A_\Gamma$. A group $G$ is \emph{strongly rigid} if any element in $\QI(G)$ is uniformly close to an element in $\Aut(G)$. Now we characterize all strongly rigid members of the class of large-type and triangle-free Artin groups.

\begin{theorem}[=Theorem~\ref{thm:largechar2}]
	\label{thm:largechar}
	Let $A_\Gamma$ be a large-type and triangle-free Artin group. Then $A_\Gamma$ is strongly rigid if and only if $\Gamma$ satisfies all of the following conditions:
	\begin{enumerate}
		\item $\Gamma$ is connected and has $\ge 3$ vertices;
		\item $\Gamma$ does not have separating vertices and edges;
		\item any label preserving automorphism of $\Gamma$ which fixes the neighborhood of a vertex is the identity.
	\end{enumerate}
Moreover, if a large-type and triangle-free Artin group $A_\Gamma$ satisfies all the above conditions and $H$ is a finitely generated group quasi-isometric to $A_\Gamma$, then $H$ is virtually $A_\Gamma$.
\end{theorem}

\begin{remark}
We note that the above quasi-isometric rigidity results do not use the full strength of Theorem~\ref{thm:intro1} and Theorem~\ref{thm:stable line}, since proving the two theorems in the special cases of triangle-free Artin groups and Artin groups of hyperbolic type in the sense of \cite{MR2174269} are much easier (though still non-trivial). Moreover, if an Artin group is triangle-free, then it acts geometrically on a $2$--dimensional CAT(0) complex \cite{brady2000three}, and it is C(4)-T(4) with respect to the standard presentation \cite{Pride}. There are alternative starting points of studying quasiflats in such complexes, either by using \cite{bks}, or using \cite{olshanskii2017flat}. However, several new ingredients are needed to deal with the general $2$--dimensional case. We present the above quasi-isometric rigidity results in order to demonstrate the potential of using Theorem~\ref{thm:intro1} and Theorem~\ref{thm:stable line} to obtain similar results for more general classes of Artin groups.
\end{remark}


\subsection*{Comments on the proof}
First we discuss the proof of Theorem~\ref{thm:intro1}. Let $\Xa_\Gamma$ be the universal cover of the standard presentation complex of a $2$--dimensional Artin group $A_\Gamma$. To avoid technicalities, we assume the quasiflat is a subcomplex $Q$ of $\Xa_\Gamma$ homeomorphic to $\mathbb R^2$ and quasi-isometric to $\mathbb E^2$, and we would like to understand the tiling of $Q$.
\medskip

\noindent
\emph{Step 0:} The general idea is to use geometry of $A_\Gamma$ to control the tiling of $Q$. We start by showing that $A_\Gamma$ is non-positively curved in an appropriate sense. We would like to use CAT(0) geometry, however, at the time of writing, it is not known whether all $2$--dimensional Artin groups are CAT(0). Moreover, \cite{brady2002two} implies that if certain $2$--dimensional Artin groups act geometrically on CAT(0) complexes, the dimension of the complexes is $\ge 3$. There are quite non-trivial technicalities in studying $2$--quasiflats in higher dimensional complexes and relating them to the combinatorics of groups, which we would like to bypass.

In \cite{Artinmetric}, we built a geometric model $X_\Gamma$ for $A_\Gamma$ with features of both CAT(0) geometry and two-dimensionality. More precisely, $X_\Gamma$ is a thickening of $\Xa_\Gamma$. We equip the $2$--skeleton $\sk{X_\Gamma}{2}$ with a metric and it turns out that $X_\Gamma$ becomes non-positively curved in the sense that any $1$--cycle can be filled by a CAT(0) disc in $\sk{X_\Gamma}{2}$.
Such a complex $X_\Gamma$ is an example of a \emph{metrically systolic complex}. 

\medskip

\noindent
\emph{Step 1.} We study the local structure of $Q$ and show that outside a compact set, $Q$ is locally flat in an appropriate combinatorial sense.

\medskip

\noindent
\emph{Step 1.1}. We approximate $Q$ by a CAT(0) subcomplex $Q'$ in $X_\Gamma$ such that $Q'$ is homeomorphic to $\mathbb R^2$. To do that, we pick larger and larger discs in $Q$, and we replace them by minimal discs in $\sk{X_\Gamma}{2}$, which are CAT(0), then we take a limit. By a version of Morse Lemma proved in \cite{Artinmetric}, $Q'$ is at bounded Hausdorff distance from $Q$. An argument on area growth implies that $Q'$ is flat outside a compact set. 
In fact, we obtain the following result in the more general setting of all metrically systolic complexes (see Theorem~\ref{thm:MSquasiflats} for the detailed statement). 
\begin{theorem}(cf. Theorem~\ref{thm:MSquasiflats})
	\label{thm:intro6}
	Every quasiflat in a metrically systolic complex $X$ can be uniformly approximated by a simplicial map $q'\colon Y \to X$ from a CAT(0) triangulation $Y$ of $\mathbb R^2$ being flat outside a compact set such that $q'$ is injective on neighbourhoods of flat vertices.
\end{theorem}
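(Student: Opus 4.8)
The plan is to realize the quasiflat as an actual subcomplex first, then perform a controlled limiting procedure over exhaustions by discs, replacing large discs by CAT(0) minimal discs.

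\medskip

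\noindent\textbf{Setup and reduction to a subcomplex.} Let $f\colon \mathbb E^2 \to X$ be an $(L,A)$--quasi-isometric embedding onto a quasiflat. Since $X = X_\Gamma$ is metrically systolic, hence in particular has a cocompact automorphism group acting on the underlying complex and its $2$--skeleton carries a CAT(0)-fillable metric, the first step is to replace $f$ by a map whose image is contained in the $2$--skeleton and is combinatorially well-behaved. Concretely, I would cover a large ball $B(0,R)\subset \mathbb E^2$ by a fine triangulation, map its vertices approximately to $\sk{X}{2}$, and fill each triangle by a minimal (CAT(0)) disc in $\sk{X}{2}$ using the metric systolicity of $X$; this produces a simplicial map $q_R'\colon Y_R \to X$ from a CAT(0) triangulated disc $Y_R$ with $d_H(q_R'(Y_R), f(B(0,R)))$ uniformly bounded by the Morse Lemma of \cite{Artinmetric} (the version cited as \morse). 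The key point is that the Morse constant is \emph{independent of $R$}: the filling disc of a quasigeodesic cycle in a metrically systolic complex lies in a bounded neighborhood of the cycle with constants depending only on $L,A$ and the local geometry, so the approximations $q_R'$ are all within a single bounded Hausdorff distance of the fixed quasiflat.

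\medskip

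\noindent\textbf{Taking the limit.} Next I would let $R\to\infty$ along a sequence and extract a limiting simplicial map $q'\colon Y\to X$. Since the $q_R'$ are uniformly close to $f$ and $f$ is a quasi-isometric embedding of $\mathbb E^2$, the domains $Y_R$ can be taken with uniformly bounded vertex degree and uniformly bounded cell sizes under $q_R'$ (CAT(0) minimal discs in a systolic-type complex have combinatorial curvature control, e.g.\ by a Gauss--Bonnet/combinatorial isoperimetric argument, so the number of triangles at a vertex is bounded). A standard diagonal/ultralimit argument then yields a CAT(0) triangulation $Y$ of $\mathbb R^2$ (it is simply connected, $2$--dimensional, nonpositively curved combinatorially, and noncompact with linear volume growth hence homeomorphic to $\mathbb R^2$) together with a simplicial map $q'\colon Y\to X$ at bounded Hausdorff distance from the quasiflat.

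\medskip

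\noindent\textbf{Flatness outside a compact set and local injectivity.} Now I would run the area-growth argument: the quasiflat has quadratic area growth (matching $\mathbb E^2$), so $\operatorname{Area}(q'(B_Y(p,r)))$ grows quadratically; on the other hand, in a CAT(0) triangulation of $\mathbb R^2$, a vertex of negative combinatorial curvature forces superquadratic area growth unless all such vertices lie in a bounded region (a discrete Gauss--Bonnet count: each non-flat vertex contributes a fixed positive amount of curvature, and infinitely many would make balls grow faster than quadratically, or concentrate curvature contradicting the QI to $\mathbb E^2$). Hence $Y$ is flat (all interior vertices have exactly six triangles, i.e.\ the standard $\mathbb E^2$ triangulation) outside a compact subcomplex $K\subset Y$. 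Finally, for local injectivity near flat vertices: if $v\in Y\setminus K$ is flat, its star maps to a combinatorially flat configuration in $\sk{X}{2}$; were $q'$ non-injective on $\St(v)$, two distinct triangles of the hexagonal star would coincide in $X$, creating a short essential loop in $q'$ contradicting either that $q'$ is within bounded distance of a genuine embedded plane or the local combinatorics of minimal discs (one can push $q'$ to a nearby map removing such folds, and the limit can be taken among fold-free maps on the flat part).

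\medskip

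\noindent\textbf{Main obstacle.} I expect the genuinely delicate step to be Step 1.1 combined with the limit: ensuring that the minimal-disc fillings over an exhaustion can be chosen \emph{compatibly} (or at least with uniformly bounded combinatorial complexity and uniform Morse constants) so that the limit $Y$ is a single connected CAT(0) triangulation of the whole plane rather than a union of pieces with uncontrolled gluing, and simultaneously arranging local injectivity on the flat part. This requires a careful interplay between the Morse Lemma for metrically systolic complexes and a combinatorial isoperimetric/curvature bound for minimal discs; the area-growth dichotomy (quadratic vs.\ superquadratic) is the conceptual engine that forces everything into the flat model outside a compact set.
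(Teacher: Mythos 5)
Your overall strategy---exhaust the quasiflat by discs, replace each by a minimal CAT(0) disc diagram, use the metrically systolic Morse Lemma for uniform closeness, pass to a limit, force flatness outside a compact set by an area-growth argument, and get injectivity near flat vertices from minimality---is the same as the paper's proof of Theorem~\ref{thm:MSquasiflats}. However, two of your justifications have genuine gaps. First, the compactness you invoke to extract the limit does not come from ``uniformly bounded vertex degree'': in a CAT(0) disc diagram the angle sum at an interior vertex is only bounded \emph{below} by $2\pi$, and curvature can concentrate, so with cell angles bounded below the vertex degree is bounded below, not above. The paper instead obtains compactness from Theorem~\ref{thm:CAT(0)diagrem}\,(2): the minimal diagrams $D'_n$ use no new vertices, so the number of vertices of $D'_n$ mapping into a fixed $R$-ball is at most the number of vertices of $\mathbb E^2$ whose $q$-image lies there, and local finiteness of $X$ then leaves only finitely many possibilities for these subcomplexes, enabling the diagonal argument. (Relatedly, you must fill the entire boundary cycle of each large disc by a single CAT(0) diagram; filling each small triangle separately gives no control of the angle sums at the vertices shared by the small fillings, so the glued disc need not be CAT(0).)

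Second, to apply the flatness criterion (Theorem~\ref{thm:BKS}) you need quadratic area growth of the \emph{domain} $Y$, not of the image of the quasiflat; since $q'$ need not be injective, quadratic growth of the image does not bound the area of balls in $Y$. The paper establishes this by combining the bound $\area(D'_n)\le L_1\,\ell(\partial D'_n)^2\le L_2 n^2$ (CAT(0) isoperimetric inequality; alternatively the triangle-count bound in Theorem~\ref{thm:CAT(0)diagrem}) with the $\tfrac{r}{\delta n}$-Lipschitz geodesic contraction towards a center, which yields the monotonicity $\area(B(y,r))/r^2\le \area(B(y,\delta n))/(\delta n)^2$ and hence a uniform quadratic bound at every scale; your Gauss--Bonnet heuristic points in the right direction but does not replace this step. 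Finally, the ``fold removal'' you sketch for injectivity on $\St(v,Y)$ at flat vertices is not justified as stated; the paper makes it precise by choosing each $D'_n$ with the least number of triangles, so that Theorem~\ref{thm:CAT(0)diagrem}\,(3) gives injectivity on closed stars and any link bounding a diagram without interior vertices would contradict minimality.
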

Recently, independently, Elsner \cite{elsner2017} has obtained an analogous result for systolic complexes. Theorem~\ref{thm:intro6} applies to a larger class of spaces, however, Elsner's result provides better control on the structure of quasiflats in the systolic setting.
\medskip

\noindent
\emph{Step 1.2}. Though $Q'$ is CAT(0) and flat outside a compact set, the cell structure on $Q'$ is not ideal. A priori there may be triangles in $Q'$ such that their angles are irrational multiples of $\pi$, and the cell structure is not compatible with the intersection pattern of quasiflats of $A_\Gamma$. So we go back to $\Xa_\Gamma$, whose combinatorial structure is simpler than the one of $X_\Gamma$. By the design of $X_\Gamma$, there is 
a partial retraction $\rho$ from $X_\Gamma$ (defined on a subcomplex of $X_\Gamma$) to $\Xa_\Gamma$. We argue that outside a compact subset, $\rho$ is well-defined on $Q'$. Let $Q''=\rho(Q')$. Then $Q''$ is a subcomplex with a ``hole" (since $\rho$ may not be defined on all of $Q'$). The map $\rho$ transfers information on local structure of $Q'$ to information on local structure of $Q''$ (cf.\ Lemma~\ref{lem:star homeo}). This step is discussed in Section~\ref{sec:geometric model} and Section~\ref{sec:local flatness}.
\medskip

\noindent
\emph{Step 2.} So far, we have a subcomplex $Q''$ about which we know that its local structure belongs to one of several types (as in Lemma~\ref{lem:star homeo}). The next goal is to run a local-to-global argument in order to produce the sectors as required. 
\medskip

\noindent
\emph{Step 2.1.} We first produce a boundary ray for the sector, which is easier than producing the whole sector. To guess what kind of ray it could be, we look at a motivating example $X=\mathrm{SL}(3,\mathbb R)/\mathrm{SO}(3,\mathbb R)$. Quasiflats in $X$ are Hausdorff close to a finite union of Weyl cones \cite{kleiner1997rigidity,eskin1997quasi}. The boundary of a Weyl cone is a singular ray, i.e.\ this ray is contained in the intersection of two flats. Analogously, we search for \emph{singular rays} in $X_\Gamma$, which are possibly contained in the intersection of two free abelian subgroups of rank $2$. A list of plausible singular rays is given in Section~\ref{sec:singular lines}.
\medskip

\noindent
\emph{Step 2.2.} We show that there is at least one singular ray in $Q''$. The strategy is to start at one vertex of $Q''$, then walk away from this vertex and collect information about local landscape from Step 1.2 along the way to decide where to go further. Note that there is a \textquotedblleft hole\textquotedblright\ in $Q''$ we want to avoid. This can be handled in the following way. Paths in $Q''$ have shadows in $Q'$. Thus we can use the CAT(0) geometry in $Q'$ to control where to go so we can avoid the ``hole". Section~\ref{sec:half coxeter regions}, Section~\ref{sec:orientation of edges of Coxeter regions} and Section~\ref{sec:proof Existence of singular rays} are devoted to the proof of the existence of singular rays in $Q''$. Another thing we need to be careful about is that the local structure discussed in Step 1.2 concerns not just the shapes of the cells around one vertex, but also the orientation of the edges around that vertex. In Section~\ref{sec:orientation of edges of Coxeter regions} we deal with the orientation issues. 
\medskip

\noindent
\emph{Step 2.3.} Now we know there is at least one singular ray in $Q''$. We start with this singular ray, and use the local characterization from Step 1.2 to \textquotedblleft sweep out\textquotedblright\ a sector which ends in another singular ray. This uses a development argument. Now we iterate this process. It turns out that each sector in $Q''$ corresponds to a CAT(0) sector in $Q'$ with angle bounded below by a uniform number. Since $Q'$ has quadratic growth, after finitely many steps we produce enough sectors to fill $Q''$, which finishes the proof of Theorem~\ref{thm:intro1}. This step is discussed in Section~\ref{sec:development}.
\medskip

Finally, we discuss how to deduce the quasi-isometric rigidity theorems for CLTTF Artin groups from Theorem~\ref{thm:intro1}. Again, we proceed in several steps. The first two steps can be generalized in an appropriate way to all $2$--dimensional Artin groups (cf. Theorem~\ref{thm:preservation of stable lines} and Theorem~\ref{thm:invariance}).
\medskip

\noindent
\emph{Step 1:} Let $A_\Gamma$ be a CLTTF Artin. We consider the collection of maximal cyclic subgroups of $A_\Gamma$ with centralizers commensurable to $F_2\times\mathbb Z$. We show that these cyclic subgroups are preserved under quasi-isometries. 
\medskip

\noindent
\emph{Step 2:} Let $\Theta_\Gamma$ be a graph whose vertices correspond to cyclic subgroups from the previous step, and two vertices are adjacent if the associated cyclic subgroups generate a free abelian subgroup of rank $2$. This graph was introduced by Crisp \cite{MR2174269}. We show that any quasi-isometry from $A_\Gamma$ to itself induces an automorphism of $\Theta_\Gamma$.
\medskip

\noindent
\emph{Step 3:} Crisp \cite{MR2174269} showed that any automorphism of $\Theta_\Gamma$ is induced by a canonical bijection from $A_\Gamma$ to itself, under appropriate additional conditions. In such way we approximate quasi-isometries by maps which preserve more combinatorial structure, and then deduce the quasi-isometric rigidity results listed above, see Section~\ref{sec:CLTTF}.

\subsection*{On the structure of the paper}
In Section~\ref{sec:quasiflats in MS complex} we analyze the structure of quasiflats in metrically systolic complexes and we prove Theorem~\ref{thm:intro6} above (Theorem~\ref{thm:MSquasiflats} in the text). 
In Sections~\ref{sec:geometric model}--~\ref{sec:development} we describe the structure of complexes
approximating quasiflats in two-dimensional Artin groups. 
In Section~\ref{sec:development} we prove Theorem~\ref{thm:intro1} (Theorem~\ref{thm:main2}).
In Section~\ref{sec:application} we prove Theorem~\ref{thm:stable line} (Theorem~\ref{thm:preservation of stable lines}) and Theorem~\ref{thm:intro2} (Theorem~\ref{thm:invariance}).
In Section~\ref{sec:CLTTF} we provide proofs of the results concerning CLTTF Artin groups: Theorem~\ref{thm:introduction 3} (Corollary~\ref{cor:qi and iso}), Theorem~\ref{thm:intro4} (Theorem~\ref{thm:CLTTF2}), and Theorem~\ref{thm:largechar} (Theorem~\ref{thm:largechar2}).

\subsection*{Acknowledgment}
J.H.\ would like to thank M.\ Hagen for a helpful discussion in July 2014, and R. Charney and T. Haettel for a helpful discussion in June 2016. J.H.\ also thanks the Max-Planck Institute for Mathematics, where part of the work was done, for its hospitality. Part of the work on the paper was carried out while D.O.\ was visiting McGill University.
We would like to thank the Department of Mathematics and Statistics of McGill University
for its hospitality during that stay.
The authors were partially supported by (Polish) Narodowe Centrum Nauki, grants no.\ UMO-2015/\-18/\-M/\-ST1/\-00050 and UMO-2017/25/B/ST1/01335. This work was partially supported by the grant 346300 for IMPAN from the Simons Foundation and the matching 2015-2019 Polish MNiSW fund.

\section{Quasiflats in metrically systolic complexes}
\label{sec:quasiflats in MS complex}

We start with several notations. Let $X$ be a combinatorial cell complex. For a subset $Y\subset X$, the \emph{carrier} of $Y$ in $X$ is the union of closed cells in $X$ which contain at least one point of $Y$ in their interior. For a vertex $x\in X$, the \emph{closed star} of $v$ in $X$, denoted by $\St(x,X)$, is the union of closed cells in $X$ which contain $x$. When $X$ is a piecewise Euclidean polyhedral complex, i.e.\ $X$ is obtained by taking the disjoint union of a family of convex polyhedra in Euclidean spaces of various dimensions and gluing them along isometric faces, see \cite[Definition I.7.37]{BridsonHaefliger1999}, the $\epsilon$--sphere around each vertex of $X$ inherits a natural polyhedral complex structure from $X$, for $\epsilon$ small (see \cite[Definition I.7.15]{BridsonHaefliger1999}). This polyhedral complex is called the \emph{link} of $x$ in $X$, and is denoted by $\lk(x,X)$. If $X$ is a $2$--dimensional simplicial complex, then we always identify $\lk(x,X)$ with the full subgraph of the one-skeleton $X^{(1)}$ of $X$ spanned by vertices adjacent to $x$.

For a metric space $Z$ and a point $z\in Z$, we use $B(z,R)$ to denote the $R$--ball in $Z$ centered at $z$. For a subset $Y\subset Z$, we use $N_R(Y,R)$ to denote the $R$--neighborhood of $Y$ in $Z$.

\subsection{Preliminaries on metrically systolic complexes}
Let $X$ be a flag simplicial complex. We put a piecewise Euclidean structure on $\sk{X}{2}$ (the $2$--skeleton of $X$) in the following way. Since $\sk{X}{2}$ can be viewed as a disjoint collection of simplices with identifications between their faces, we assume every $2$--simplex (triangle) is isometric to a non-degenerate Euclidean triangle and all the identifications are isometries. This gives a length metric on $\sk{X}{2}$, which we denote by $d$. Since we will work with the $2$--skeleton of $X$, for a vertex $v\in X$, we define its \emph{link} to be the full subgraph of $\sk{X}{1}$
spanned by all vertices adjacent to $v$ as explained as above. Every link is equipped with an \emph{angular metric}, defined as follows. For an edge $\overline{v_1v_2}$, we define the \emph{angular length} of this edge to be the angle $\angle_v(v_1,v_2)$ with the apex $v$. This turns the link into a metric graph, and the angular metric, which we denote by $d_\angle$, is the path metric of this metric graph (note that a priori we do not know $\angle_v(v_1,v_2)=d_\angle(v_1,v_2)$ for adjacent vertices $v_1$ and $v_2$). The \emph{angular length} of a path $\omega$ in the link, which we denote by $\length_\angle(\omega)$, is the summation of angular lengths of edges in this path. In this paper we assume that the following weak form of triangle inequality
holds for angular lengths of edges in $X$: for each $v\in X$ and every three pairwise adjacent vertices $v_1,v_2,v_3$
in the link of $v$ we have that $\angle_v(v_1,v_3)\le \angle_v(v_1,v_2)+\angle_v(v_2,v_3)$.
Then we call $X$ (with metric $d$) a \emph{metric simplicial complex}.

\begin{remark}
We allow that the above inequality becomes equality -- intuitively, it corresponds to degenerate $2$--simplices
	in a link, which correspond to degenerate $3$--simplices in $X$.
\end{remark}

For $k=4,5,6,\ldots$, a simple $k$--cycle $C$ in a simplicial complex is \emph{$2$--full} if 
there is no edge connecting any two vertices in $C$ having a common neighbor in $C$ (that is, there
are no \emph{local diagonals}). 

\begin{definition}[Metrically systolic complex]
	\label{d:metric_syst}
	A link in a metric simplicial complex is \emph{$2\pi$--large} if every $2$--full
	simple cycle in the link has angular length at least $2\pi$.
	A metric simplicial complex $X$ is \emph{locally $2\pi$--large} if every its link is $2\pi$--large.
	A simply connected locally $2\pi$--large metric complex is called a \emph{metrically systolic} complex.
\end{definition}

In Theorem~\ref{thm:CAT(0)diagrem} below we state a fundamental property of metrically systolic complexes.
It concerns filling diagrams for cycles, and it is a main tool 
used in proofs of various results about such complexes in subsequent sections and, previously, in \cite{Artinmetric}.

Let $X$ be a simplicial complex.
A \emph{cycle} in $X$ is a simplicial map from a triangulated circle to $X$ which is injective on each edge. A \emph{singular disc} $D$ is a simplicial complex isomorphic to a finite connected and simply connected subcomplex
of a triangulation of the plane. There is the (obvious) \emph{boundary cycle} for $D$, that is, a map
from a triangulation of $1$--sphere (circle) to the boundary of $D$, which is injective on edges. More precisely, we can view $D$ as a subset of the $2$--sphere $\mathbb S^2$. Then $\mathbb S^2\setminus D$ is an open cell, and the boundary of this open cell gives rise to the boundary cycle of $D$. For a cycle $C\colon  K\to X$ in a simplicial complex $X$, a \emph{singular disc diagram for $C$} is a simplicial
map $f\colon D \to X$ from a singular disc $D$ to $X$ such that $C\colon K\to X$ factors through the boundary cycle of $D$. By the relative simplicial approximation theorem \cite{Zeeman1964}, for every cycle in a simply connected simplicial complex there exists a singular disc diagram (cf.\ also van Kampen's lemma e.g.\ in \cite[pp.\ 150-151]{LSbook}). It is an essential feature of metrically systolic complexes that singular disc diagrams may be modified to ones with some additional properties; see Theorem~\ref{thm:CAT(0)diagrem} below.

A singular disc diagram is called \emph{nondegenerate} if it is injective on all simplices.
It is \emph{reduced} if distinct adjacent triangles (i.e., triangles sharing an edge) are mapped 
into distinct triangles. For a metric simplicial complex $X$ and a nondegenerate singular disc diagram $f\colon D \to X$ we equip
$D$ with a metric in which $f|_{\sigma}$ is an isometry onto its image, for every simplex $\sigma$ in $D$. Then, $f$ is a \emph{CAT(0) singular disc diagram} if $D$ is CAT(0), that is, if the angular length of
every link in $D$ being a cycle (that is, the link of an interior vertex in $D$) is at least $2\pi$. 
An internal vertex in $D$ is \emph{flat} when the angular length of its link is exactly $2\pi$.

\begin{theorem}[CAT(0) disc diagram]
	\cite[\HOdiscII]{Artinmetric}
	\label{thm:CAT(0)diagrem}
	Let $f\colon D\to X$ be a singular disc diagram for a cycle $C$ in a metrically systolic
	complex $X$. Then there exists a singular disc diagram $f' \colon D' \to X$ for $C$ such that 
	\begin{enumerate}
		\item $f' \colon D' \to X$ is a CAT(0) nondegenerate reduced disc diagram;
		\item $f'$ does not use any new vertices in the sense that there is an injective map $i$ from the vertex set of $D'$ to the vertex set of $D$ such that $f=f'\circ i$ on the vertex set of $D$;
		\item if $v\in D'$ is a flat interior vertex, then $f'$ is injective on the closed star of $v$ in $D'$.
	\end{enumerate}	
The number of $2$--simplices in $D'$ is at most the number of $2$--simplices in $D$. 
\end{theorem}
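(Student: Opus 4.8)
\emph{Strategy.}
I would prove this by a minimization argument in the spirit of minimal disc diagrams over systolic complexes, carried out with the \emph{angular} metric in place of combinatorial length. Starting from any singular disc diagram for $C$ (one exists by relative simplicial approximation), I apply \emph{reducing moves} — each of which only deletes or merges vertices, or re-triangulates a subcomplex, so that no new vertex is ever created — as long as one is available; since every such move strictly decreases the number of $2$--simplices, the process stops after finitely many steps at a diagram $f'\colon D'\to X$ admitting no further reducing move. Because the area never increases, $D'$ has at most as many $2$--simplices as $D$, and taking $i$ to be the inclusion on vertex sets gives item (2). It then remains to exhibit a reducing move whenever (1) or (3) fails.

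\emph{Nondegeneracy and reducedness.}
If $f'$ collapses a $2$--simplex $\sigma$, one edge of $\sigma$ is collapsed, hence interior (since $C$ is injective on edges), while the other two edges of $\sigma$ share an image; folding $\sigma$ onto the triangle across one of those edges strictly decreases the area, so a terminal $D'$ is nondegenerate. Likewise, if adjacent triangles $\sigma_1,\sigma_2$ share an edge $e$ and $f'(\sigma_1)=f'(\sigma_2)$, then $f'$ folds across $e$; identifying $\sigma_1$ with $\sigma_2$ and resolving the resulting pinch by standard surgery strictly decreases the area, so $D'$ is reduced. Thus we may henceforth assume $D'$ is nondegenerate and reduced.

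\emph{The CAT(0) property and injectivity on flat stars.}
Let $v$ be an interior vertex, $\lk(v,D')=(w_1,\dots,w_n)$ its link (a simple cycle, $n\ge 3$), and $\beta_v=(f'(w_1),\dots,f'(w_n))$ its image, a closed edge-path in $L:=\lk(f'(v),X)$ of angular length $\ell:=\sum_i\angle_v(w_i,w_{i+1})$. A direct inspection of cells shows that $f'$ is injective on $\St(v,D')$ exactly when $\beta_v$ is simple; hence it suffices to show that a terminal $D'$ has $\ell\ge 2\pi$ for every interior $v$ (this is the CAT(0) condition, giving (1)) and $\beta_v$ simple whenever $\ell=2\pi$ (giving (3)). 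Suppose $\ell<2\pi$ and $\beta_v$ is simple. If $n=3$, flagness makes $f'(w_1),f'(w_2),f'(w_3)$ span a $2$--simplex of $X$, so we replace the three triangles of $\St(v,D')$ by the single triangle $\overline{w_1w_2w_3}$, dropping the area by $2$. If $n\ge 4$, then $\beta_v$ is a simple $n$--cycle of angular length $<2\pi$, hence not $2$--full by $2\pi$--largeness of $L$, so $L$ contains a local diagonal $\overline{f'(w_{i-1})f'(w_{i+1})}$; since $f'(w_{i-1}),f'(w_i),f'(w_{i+1})$ are then pairwise adjacent and distinct, flagness provides $2$--simplices $\overline{w_{i-1}w_iw_{i+1}}$ and $\overline{w_{i-1}vw_{i+1}}$, which re-triangulate so that $v$ survives with link the simple $(n-1)$--cycle obtained by deleting $f'(w_i)$; by the weak triangle inequality in $X$ — applicable precisely because of the local diagonal — this new link still has angular length $<2\pi$. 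Iterating peels off $n-3$ triangles and ends with a $3$--cycle link, which is filled by one triangle as above; altogether $\St(v,D')$ is replaced by $(n-3)+1=n-2$ triangles, a reducing move. So $\ell\ge2\pi$ whenever $\beta_v$ is simple. The remaining, and genuinely harder, situation is that $\beta_v$ is not simple: backtracking in $\beta_v$ contradicts reducedness and a collapsed edge contradicts nondegeneracy, so there is a ``remote return'' $f'(w_p)=f'(w_q)$ with $w_p,w_q$ non-adjacent on $\lk(v,D')$; choosing the return so that the image of the intervening arc is an \emph{simple} loop $\beta_1$ at $f'(w_p)$, the same $2\pi$--largeness argument shows $\beta_1$ has a local diagonal, which is again a local diagonal of $\beta_v$, and one ear-peels — but now the bookkeeping no longer reduces area by itself, and one must either finish by a cut-and-reglue that yields again a \emph{single} singular disc diagram for $C$ of strictly smaller area, or refine the minimization by an auxiliary invariant (e.g.\ the number of interior vertices whose link image is non-simple) that the peeling decreases. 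This is where I expect the main difficulty.

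\emph{Where the obstacle lies.}
The area-strict moves and the simple-link re-triangulation are routine, and it is exactly there that the metric hypotheses pull their weight: $2\pi$--largeness replaces the combinatorial $6$--largeness of the systolic setting, and the weak triangle inequality for angles is what keeps the angular length below $2\pi$ along the peeling induction. The hard part is the non-simple ``remote return'' case — organizing the cut-and-reglue so as not to break the disc structure, not to disconnect $C$, and not to introduce new vertices, and ensuring termination (possibly through a carefully chosen secondary invariant). Once that is settled, a terminal $D'$ satisfies $\ell\ge2\pi$ at every interior vertex and $\beta_v$ simple at every flat one, which by the cell count gives injectivity of $f'$ on flat stars, completing (1) and (3).
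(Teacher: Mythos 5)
You should first note that this paper does not actually prove Theorem~\ref{thm:CAT(0)diagrem}: it is imported verbatim from the authors' earlier work (cited there as Theorem~2.8 of \cite{Artinmetric}), so the comparison is with that proof. Your overall strategy — minimize the number of $2$--simplices over diagrams that use no new vertices, show a minimal diagram is nondegenerate and reduced, and then use flagness, $2\pi$--largeness and the weak triangle inequality for angles to produce an area-reducing re-triangulation whenever an interior link has angular length $<2\pi$ — is exactly the natural adaptation of the minimal-disc-diagram method from the systolic setting, and it is in the same spirit as the cited proof. Your treatment of the simple-link case (ear-peeling along local diagonals, with the angular length controlled by the triangle inequality, ending in a single triangle replacing the star) is correct as far as it goes, as is the observation that, for a nondegenerate simplicial map, injectivity on the closed star of $v$ is equivalent to simplicity of the image of $\lk(v,D')$.

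However, there is a genuine gap, and you identify it yourself: the case where the image cycle $\beta_v$ of an interior link is \emph{not} simple is never resolved. This case is not a technicality one can wave at — it is precisely where conclusion (3) lives (a flat interior vertex is exactly one where $\ell=2\pi$, and you must exclude non-simple $\beta_v$ there, not merely at vertices with $\ell<2\pi$), and it is also needed to get the CAT(0) inequality (1) at such vertices, since $2\pi$--largeness only constrains \emph{simple} $2$--full cycles and your peeling argument is set up only for simple image cycles. Your sketch ("cut-and-reglue'' or a secondary invariant such as the number of interior vertices with non-simple link image) is not an argument: cutting the disc along a remote return and regluing must be shown to produce again a single singular disc diagram for the same boundary cycle $C$, without increasing area, without creating new vertices (so that (2) survives), and the auxiliary invariant must be shown to strictly decrease while area does not increase — none of which is verified, and the termination of the combined procedure is exactly the delicate point. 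In addition, your nondegeneracy/reducedness "folding and surgery'' moves are described too loosely to check that they preserve the no-new-vertices property and the disc structure. Until the non-simple return case is carried out in full (as it is in the cited Theorem~2.8 of \cite{Artinmetric}), the proposal establishes the theorem only under the unproved assumption that all interior link images in a minimal diagram are simple.
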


The following result is immediate. 
\begin{corollary}
	\label{cor:quadratic}
	Suppose $X$ is a metrically systolic complex with finitely many isometry types of cells. Then there exists a constant $L>0$ such that for each cycle $C$ with $\le n$ edges, there is a singular disc diagram for $C$ with $\le Cn^2$ triangles in the disc diagram and the image of the singular disc diagram is contained in the $(L\cdot n)$--neighborhood of $C$.
\end{corollary}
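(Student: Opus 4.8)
The plan is to produce the required diagram by upgrading an arbitrary singular disc diagram for $C$ to a $\mathrm{CAT}(0)$ one via Theorem~\ref{thm:CAT(0)diagrem}, and then to extract both the quadratic bound on the number of triangles and the neighbourhood bound from elementary $\mathrm{CAT}(0)$ geometry of the disc $D'$ thus obtained, using only that $X$ has finitely many isometry types of cells.

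Concretely, I would first fix an upper bound $M$ on the $d$--length of an edge of $X$ and a lower bound $a>0$ on the $d$--area of a $2$--simplex of $X$; both exist since $X$ has finitely many isometry types of $2$--simplices. As $X$ is metrically systolic it is in particular simply connected, so by the relative simplicial approximation theorem the cycle $C$ admits some singular disc diagram, and applying Theorem~\ref{thm:CAT(0)diagrem} to it yields a $\mathrm{CAT}(0)$ nondegenerate reduced singular disc diagram $f'\colon D'\to X$ for $C$. Equipping $D'$ with the metric in which $f'$ is isometric on each simplex, the map $f'$ is $1$--Lipschitz, $D'$ is a compact $\mathrm{CAT}(0)$ disc, and its boundary cycle --- through which $C$ factors --- has at most $n$ edges, hence metric perimeter $\ell\le Mn$. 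Now I would invoke two standard facts about compact $\mathrm{CAT}(0)$ discs. First, a quadratic isoperimetric inequality $\area(D')\le c_0\ell^2$ with $c_0$ a universal constant (one may take $c_0=\frac{1}{4\pi}$); dividing by $a$ bounds the number of $2$--simplices of $D'$ by $\frac{c_0 M^2}{a}\,n^2$. Second, $\mathrm{CAT}(0)$ volume comparison: a metric ball $B(p,r)\subseteq D'$ has area at least $\pi r^2$; applying this with $r=d(p,\partial D')$ and combining with the isoperimetric bound gives $d(p,\partial D')\le\sqrt{c_0/\pi}\,\ell\le\sqrt{c_0/\pi}\,M\,n$ for every $p\in D'$. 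Since $f'$ is $1$--Lipschitz and $f'(\partial D')$ is exactly the image of $C$ (because $C$ factors through the boundary cycle of $D'$), the image $f'(D')$ lies in the $\sqrt{c_0/\pi}\,M\,n$--neighbourhood of $C$. Taking the constant in the statement to be $\max\{\frac{c_0 M^2}{a},\ \sqrt{c_0/\pi}\,M\}$ yields both assertions simultaneously.

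There is no serious obstacle here --- which is why the statement is ``immediate'' --- but one point deserves care: the bound on the number of triangles must not depend on the (possibly enormous) diagram $D$ one starts with, so one must apply the isoperimetric inequality to $D'$ directly rather than rely on the monotonicity clause of Theorem~\ref{thm:CAT(0)diagrem} that the number of $2$--simplices of $D'$ is at most that of $D$. One should also make sure the two $\mathrm{CAT}(0)$ comparison inputs (the quadratic isoperimetric inequality and the lower volume bound for balls) are cited in a form valid for compact $\mathrm{CAT}(0)$ geodesic spaces of topological dimension $2$, and note that their constants are absolute, so a single constant works uniformly for all cycles $C$ with at most $n$ edges and all $n$.
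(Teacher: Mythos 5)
Your proposal is correct and follows exactly the route the paper intends: the paper offers no written proof (it declares the corollary ``immediate'' after Theorem~\ref{thm:CAT(0)diagrem}), and the implicit argument is precisely yours --- pass to a CAT(0) reduced nondegenerate diagram $D'$, bound its area by the CAT(0) isoperimetric inequality (the same constant $L_1$ the authors invoke later in the proof of Theorem~\ref{thm:MSquasiflats}) and divide by the minimal triangle area coming from the finitely many isometry types, then bound $d(p,\partial D')$ by comparison and use that $f'$ is $1$--Lipschitz with $f'(\partial D')=\im C$. Your remark that the triangle count must come from the isoperimetric inequality applied to $D'$ rather than from the monotonicity clause of Theorem~\ref{thm:CAT(0)diagrem} is exactly the right point of care.
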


Note that whenever there is a statement regarding metric on $X$, we always mean the length metric $d$ with respect to the piecewise Euclidean structure on $\sk{X}{2}$. If there are finitely many isometry types of cells in $\sk{X}{2}$, then $(\sk{X}{2},d)$ is a complete geodesic metric space \cite[Theorem I.7.19]{BridsonHaefliger1999}. Moreover, $d$ is quasi-isometric to the path metric on $\sk{X}{1}$ such that each edge has length $1$ \cite[Proposition I.7.31]{BridsonHaefliger1999}.

We also need the following version of the Morse Lemma for discs in metrically systolic complexes. See \cite[\morse]{Artinmetric} for the proof.
\begin{lemma}
	\label{thm:Morse lemma}[Morse Lemma for $2$--dimensional quasi-discs]
	Let $D$ be a combinatorial ball in the Euclidean plane tiled by equilateral triangles. Let
	$f\colon D \to X$ be a disc diagram for a cycle $C$ in $X$ 
	being an $(L,A)$--quasi-isometric embedding. 
	Let $g\colon D' \to X$ be a singular disc diagram for $C$.
	Then $\mr{im}(f)\subseteq N_a(\mr{im}(g))$, where $a>0$ is a constant depending only on $L$ and $A$.
\end{lemma}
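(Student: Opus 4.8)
The plan is to cap off the quasi-disc with the diagram $g$ into a singular $2$--sphere in $X$, and then exploit asphericity of metrically systolic complexes together with the quasi-isometric embedding hypothesis to force every point of $\operatorname{im}(f)$ to lie close to $\operatorname{im}(g)$.

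First I would put $g$ in normal form. By Theorem~\ref{thm:CAT(0)diagrem} there is a CAT(0), nondegenerate, reduced disc diagram $g_0\colon D_0\to X$ for $C$ using only a subset of the vertices of $g$; since cells have uniformly bounded size, $\operatorname{im}(g_0)\subseteq N_1(\operatorname{im}(g))$, so it suffices to bound how far $\operatorname{im}(f)$ can stray from $\operatorname{im}(g_0)$. Suppose, for contradiction, that some vertex $v\in D$ satisfies $R:=d\bigl(f(v),\operatorname{im}(g_0)\bigr)$ with $R$ arbitrarily large. Since $f(\partial D)=\operatorname{im}(C)\subseteq\operatorname{im}(g_0)$ and $f$ is an $(L,A)$--quasi-isometric embedding, $v$ has combinatorial distance at least $(R-A)/L$ from $\partial D$; hence for $\rho$ of order $R/L$ the combinatorial ball $B:=B_D(v,\rho)$ is a regular hexagon in the interior of $D$, its boundary $\partial B$ is a simple $6\rho$--cycle, and $f(B)\subseteq B\bigl(f(v),L\rho+A+O(1)\bigr)$ is disjoint from $\operatorname{im}(g_0)$.

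Now glue $D$ and $D_0$ along $C$ with reversed orientation, obtaining a combinatorial $2$--sphere $\Sigma=D\cup_C D_0$; the maps $f$ and $g_0$ agree on $C$ and together define a singular spherical diagram $F\colon\Sigma\to X$ with $F(v)=f(v)$. Since $X$ is metrically systolic it is aspherical (which follows from the filling machinery of \cite{Artinmetric}), so $F$ is null-homotopic, and I would use the spherical analogue of Theorem~\ref{thm:CAT(0)diagrem} from \cite{Artinmetric}: $F$ can be reduced to a trivial diagram through elementary moves --- cancellations of adjacent $2$--simplices with equal image and flips at flat vertices --- each altering the image only within a uniformly bounded neighbourhood. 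The circle $\partial B$ separates $v$ from the entire $D_0$--hemisphere inside $\Sigma$, while $f|_B$ is an $(L,A)$--quasi-isometric embedding placing $f(v)$ at distance of order $\rho/L$ from $\operatorname{im}(\partial B)$; following this collar through the reduction, and using the quadratic isoperimetric inequality of Corollary~\ref{cor:quadratic} to control the minimal fillings that appear, one finds that the reduction cannot be completed unless $R$ is bounded in terms of $L$ and $A$ alone. Taking $a$ to be that bound (plus the $O(1)$ from replacing $g_0$ by $g$) finishes the proof.

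The main obstacle I anticipate is precisely this last step: extracting a bound on $R$ that depends only on $L$ and $A$ and not on the a priori unbounded length of $C$. This is where two-dimensionality is essential --- unlike quasigeodesics in a non-hyperbolic space, a top-dimensional quasiflat disc is coarsely determined by its boundary, so there is no ``bowing out'' --- and it forces the estimate to be carried out essentially locally near $v$, with the CAT(0) geometry of the hexagon $f|_B$ and the isoperimetric bound providing the localization while the sphere $\Sigma$ supplies the topological input (an efficient filling on the $D_0$--side) linking $f(v)$ back to $\operatorname{im}(g_0)$.
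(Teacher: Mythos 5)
Your opening normalization is fine: replacing $g$ by a CAT(0) reduced nondegenerate diagram via Theorem~\ref{thm:CAT(0)diagrem} only moves the image a bounded amount (no new vertices), and the observation that a putative far point $f(v)$ gives a large hexagonal ball $B_D(v,\rho)$ in the interior of $D$ whose $f$-image misses $\operatorname{im}(g)$ is correct; any proof starts this way. (The paper itself does not reprove the lemma---it quotes \cite[Theorem 3.9]{Artinmetric}.) The problem is the step you then lean on. The two tools you invoke are not available: Theorem~\ref{thm:CAT(0)diagrem} and Corollary~\ref{cor:quadratic} concern disc diagrams for cycles, and neither this paper nor \cite{Artinmetric} provides a ``spherical analogue'' letting one reduce a spherical diagram to a point by moves of uniformly bounded displacement; asphericity of general metrically systolic complexes is likewise not established there (note that when the authors later need contractibility of $\Xa_\Gamma$, in the proof of Lemma~\ref{lem:atomic qi embedding}, they appeal to \cite{CharneyDavis} rather than to metric systolicity). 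Even granting asphericity, ``each move displaces the image boundedly'' does not bound the cumulative displacement over an unbounded number of moves, and ``following the collar $\partial B$ through the reduction'' is not a defined procedure.

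More importantly, the decisive quantitative step---that the reduction cannot be completed unless $R$ is bounded in terms of $L$ and $A$---is exactly the content of the lemma, and your justification (``a top-dimensional quasiflat disc is coarsely determined by its boundary, so there is no bowing out'') assumes the conclusion. It also tacitly treats $X$ as a $2$--dimensional space, which it need not be: $X$ is a flag complex of possibly higher dimension (only its $2$--skeleton is metrized), so $C$ separates nothing and no manifold-style linking is available for free. What is needed is a genuine degree-theoretic input showing that \emph{every} filling of $C$ passes uniformly close to every point of $\operatorname{im}(f)$, for instance in the spirit of the support-set argument of \cite[Lemma 4.3]{bks} used later in this paper: the mod~$2$ relative class carried by $f$ has support set at uniformly bounded Hausdorff distance from $\operatorname{im}(f)$ (this is where the $(L,A)$--quasi-isometric embedding hypothesis does real work), and any other filling representing the same relative class must contain that support set; carrying this out requires the vanishing/uniform acyclicity input that your sphere-moves scheme was intended to replace but does not supply. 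As written, the proposal has a genuine gap at its main step.
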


\subsection{The structure of quasiflats}
Recall that $B_X(x,r)$ denotes the ball of radius $r$ centered at $x$ in a metric space $X$. Here and elsewhere we use the notation $d_H$ for the Hausdorff distance. The following is a consequence of \cite[Theorem 4.1]{bks}.
\begin{theorem}
	\label{thm:BKS}
Suppose $X$ is a $2$--dimensional piecewise Euclidean CAT(0) complex such that $X$ is homeomorphic to $\mathbb R^2$ and $X$ has finitely many isometry types of cells. If there is a constant $L>0$ and a base point $x\in X$ such that $\area(B_X(x,r))\le Lr^2$, for any $r>0$, then $X$ is flat outside a compact subset.
\end{theorem}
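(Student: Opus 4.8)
The plan is to prove the contrapositive-flavoured statement that $X$ has only finitely many \emph{singular} vertices, and to control that number by a Gauss--Bonnet computation for metric balls; alternatively one quotes \cite[Theorem 4.1]{bks} once its hypotheses are matched, the argument below being the mechanism behind it.

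\emph{Reduction.} Since $\area(B_X(x,r))\le Lr^2$ and every $2$--cell of $X$ has area at least some $a_0>0$ (finitely many isometry types), each ball $B_X(x,r)$ contains at most $Lr^2/a_0$ cells; hence $X$ is locally finite and proper. Finiteness of isometry types also forces the corner angles, and hence the angular length of each link, to take only finitely many values, so there is $\epsilon_0>0$ such that every vertex $v$ of $X$ has link of angular length either exactly $2\pi$ (then $v$ is \emph{flat}) or at least $2\pi+\epsilon_0$ (then $v$ is \emph{singular}); angular length $<2\pi$ is excluded because $X$ is CAT(0) and $2$--dimensional, so every link is a CAT(1) circle. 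A flat vertex has a Euclidean neighbourhood, and an edge missing the singular vertices does too (in a surface it lies in exactly two cells, glued isometrically). Hence, if $X$ had only finitely many singular vertices, the union of their closed stars would be a compact subcomplex $K$ with $X\setminus K$ locally isometric to $\mathbb E^2$, i.e.\ $X$ would be flat outside $K$. So it suffices to bound the number of singular vertices.

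\emph{Gauss--Bonnet for metric balls.} Let $\ell(r)=\mathcal H^1(\partial B_X(x,r))$. The distance function $d(x,\cdot)$ has unit gradient a.e.\ on $\sk{X}{2}$, so by the coarea formula $\area(B_X(x,r))=\int_0^r\ell(s)\,ds$. Because $X$ is CAT(0), the ball $B_X(x,r)$ is a convex topological disc; applying the combinatorial Gauss--Bonnet theorem to it (the angle defect at an enclosed vertex $v$ is $-(\omega(v)-2\pi)$, where $\omega(v)$ denotes the angular length of the link of $v$, and the total geodesic curvature along $\partial B_X(x,r)$ integrates to $\ell'(r)$) gives, for a.e.\ $r$,
\[
\ell'(r)=2\pi+\sum_{\substack{v\ \text{vertex of}\ X\\ d(x,v)<r}}\bigl(\omega(v)-2\pi\bigr).
\]
Integrating twice (terms with $\omega(v)=2\pi$ vanish) yields the exact formula
\[
\area(B_X(x,r))=\pi r^2+\sum_{\substack{v\ \text{singular}\\ d(x,v)<r}}\frac{\omega(v)-2\pi}{2}\,\bigl(r-d(x,v)\bigr)^2 ,
\]
all of whose summands are nonnegative.

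\emph{Conclusion and main obstacle.} If $X$ had infinitely many singular vertices $v_1,v_2,\dots$, fix $N$ and put $\rho_N=\max_{i\le N}d(x,v_i)$; for $r>\rho_N$ the formula and $\omega(v_i)-2\pi\ge\epsilon_0$ give $\area(B_X(x,r))\ge \pi r^2+\tfrac{\epsilon_0 N}{2}(r-\rho_N)^2$, whence $\liminf_{r\to\infty}\area(B_X(x,r))/r^2\ge \pi+\tfrac{\epsilon_0 N}{2}$. As $N$ is arbitrary this forces $\area(B_X(x,r))/r^2\to\infty$, contradicting $\area(B_X(x,r))\le Lr^2$; so $X$ has finitely many singular vertices and, by the Reduction, is flat outside a compact subcomplex. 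The only nonroutine point is the Gauss--Bonnet identity for the metric balls $B_X(x,r)$ in a CAT(0) piecewise-Euclidean $2$--complex: justifying $\int_{\partial B_X(x,r)}\kappa_g\,ds=\ell'(r)$ for a.e.\ $r$, doing the bookkeeping at the finitely many radii where $\partial B_X(x,r)$ passes through a vertex or changes combinatorial type (one may need to replace the equality by the corresponding inequality, with $d(x,v)$ shifted by the maximal cell diameter, which still suffices), and using convexity to keep $\partial B_X(x,r)$ a single circle so $\chi(B_X(x,r))=1$. This is precisely where CAT(0) enters essentially, and is the content one may instead import from \cite[Theorem 4.1]{bks}.
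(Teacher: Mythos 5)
Your argument is essentially correct, but note that the paper does not prove Theorem~\ref{thm:BKS} at all: it is stated as an immediate consequence of \cite[Theorem 4.1]{bks}, so there is no internal argument to compare with, and what you have written is in effect a self-contained proof of the imported fact (very likely the same mechanism that lies behind it). Your reduction is sound: the quadratic area bound at the basepoint together with finitely many isometry types of cells gives local finiteness, a positive lower bound on cell areas, and a uniform gap $\epsilon_0>0$ so that every vertex link has angular length $2\pi$ or at least $2\pi+\epsilon_0$; and once only finitely many singular vertices remain, the union of their closed stars is compact and every point outside it (interior of a $2$--cell, interior of an edge, or a flat vertex) has a Euclidean neighbourhood, which is the sense in which the conclusion is used later in the paper. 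The one step carrying real content is the identity $\ell'(r)=2\pi+\sum_{d(x,v)<r}\bigl(\omega(v)-2\pi\bigr)$ for a.e.\ $r$, where $\ell(r)$ is the length of the sphere $\partial B_X(x,r)$ and $\omega(v)$ the length of the link of $v$ (with $2\pi$ replaced by $\omega(x)$ if $x$ is itself singular, which only helps); you have correctly isolated it. Two remarks there: the delicate configuration is nested shadows, where one singular vertex lies behind another as seen from $x$ -- a naive ``disjoint shadow sectors'' estimate would not simply add, but your wavefront/Gauss--Bonnet bookkeeping does add correctly, because the distance sphere is the boundary of a convex disc, is piecewise a circular arc, its first variation of length under the outward equidistant flow equals its total turning away from the exceptional radii where it meets a vertex, and that turning accumulates each excess $\omega(v)-2\pi$ at radius $d(x,v)$. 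And, as you observe, only the inequality $\area(B_X(x,r))\ge \pi r^2+\sum_{v}\frac{\omega(v)-2\pi}{2}\max\bigl(r-d(x,v)-c,0\bigr)^2$ is needed, which is robust to those exceptional radii. With that in place, the contradiction with $\area(B_X(x,r))\le Lr^2$ in the presence of infinitely many singular vertices goes through exactly as you wrote; compared with the paper's one-line citation, your route buys a self-contained argument at the cost of justifying the Gauss--Bonnet/first-variation step rather than importing it.
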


\begin{theorem}
	\label{thm:MSquasiflats}
Let $X$ be a locally finite metrically systolic complex with finitely many isometry types of cells. Let $q\colon \mathbb E^2\to X$ be an $(L,A)$--quasi-isometric embedding. Then there exist a constant $M_0$ depending only on $L,A$ and $X$, a simplicial complex $Y$, and a reduced nondegenerate simplical map $q'\colon Y\to X$ such that
\begin{enumerate}
\item $Y$ is a $2$--dimensional simplicial complex homeomorphic to $\mathbb R^2$;
\item if we endow $Y$ with the piecewise Euclidean structure such that each simplex in $Y$ is isometric to its $q'$--image, then $Y$ is CAT(0);
\item $Y$ is flat outside a compact subset;
\item for any flat vertex $v\in Y$, $q'$ is injective on $\St(v,Y)$; and if we view $\lk(v,Y)$ as a simple close cycle in $X$, then it does not bound a disc diagram without interior vertices;
\item $d_H(\im q,\im q')<M_0$.
\end{enumerate}
\end{theorem}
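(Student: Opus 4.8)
The plan is to combine three ingredients already available: the CAT(0) disc diagram theorem (Theorem~\ref{thm:CAT(0)diagrem}), the Morse Lemma for quasi-discs (Lemma~\ref{thm:Morse lemma}), and the Bestvina--Kleiner--Sageev flatness criterion (Theorem~\ref{thm:BKS}). The strategy is a standard exhaustion-and-limit argument: approximate larger and larger metric balls of the quasiflat by CAT(0) disc diagrams provided by Theorem~\ref{thm:CAT(0)diagrem}, control them uniformly using Morse, and pass to a limit to obtain a single simplicial complex $Y$ homeomorphic to $\mathbb{R}^2$ together with the simplicial map $q'$.

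First I would set up the exhaustion. Fix a base point $o\in\mathbb{E}^2$ and, for each $n$, let $D_n$ be the combinatorial ball of radius $n$ in a triangulation of $\mathbb{E}^2$ by equilateral triangles. After subdividing if necessary, $q$ restricted to $D_n$ is an $(L,A)$--quasi-isometric embedding, and its restriction to $\partial D_n$ gives a cycle $C_n$ in $X$ (one has to take a nearby simplicial cycle; the length of $C_n$ is linear in $n$ since $X$ has finitely many isometry types of cells and is locally finite). Apply Theorem~\ref{thm:CAT(0)diagrem} to $C_n$ to obtain a CAT(0), nondegenerate, reduced disc diagram $f_n\colon D_n'\to X$ whose interior flat vertices have embedded closed stars and whose flat-vertex links do not bound disc diagrams without interior vertices — this last property is exactly condition (4) and is automatic for a reduced CAT(0) diagram. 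By the Morse Lemma (Lemma~\ref{thm:Morse lemma}), $\mathrm{im}(q|_{D_n})\subseteq N_a(\mathrm{im}(f_n))$ for a constant $a=a(L,A)$, and conversely $\mathrm{im}(f_n)\subseteq N_{Ln}(C_n)$ by Corollary~\ref{cor:quadratic}; together these pin $f_n$ near the quasiflat in a way that will give $d_H$-control. The key uniform estimate is the quadratic area bound: Corollary~\ref{cor:quadratic} gives at most $Cn^2$ triangles in $D_n'$, so $D_n'$ is a CAT(0) disc of area $\le C' n^2$. Since it is locally finite with finitely many isometry types of cells, a diagonal/Arzel\`a--Ascoli argument lets me pass to a subsequential limit $q'\colon Y\to X$ with $Y$ a locally finite CAT(0) complex; the limit is a simplicial complex homeomorphic to $\mathbb{R}^2$ because each $D_n'$ is a disc, the $D_n'$ eventually stabilize on any fixed bounded region (using the nondegeneracy and the embedded-star property to rule out degenerations), and the interiors exhaust an open $2$--manifold that is simply connected and one-ended, hence $\mathbb{R}^2$. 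Properties (1) and (2) come for free from this construction, and (5), $d_H(\mathrm{im}\,q,\mathrm{im}\,q')<M_0$, follows from the two Morse-type containments combined over the exhaustion, with $M_0$ absorbing $a$ and the constants from Corollary~\ref{cor:quadratic}.

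For the flatness statement (3), I would transfer the quadratic area bound from the quasiflat to $Y$: since $q'$ is a quasi-isometry onto its image at bounded distance from $\mathrm{im}\,q$ (by (5)), and since simplicial maps that are injective on stars of flat vertices are locally isometric there, the metric balls $B_Y(v,r)$ satisfy $\area(B_Y(v,r))\le L' r^2$ for a suitable $L'$ and a suitable base vertex $v$. Then Theorem~\ref{thm:BKS} applies directly to the CAT(0) complex $Y$ and yields that $Y$ is flat outside a compact subset, which is (3). Finally, the embedded-star and link conditions in (4) must be maintained under the limit: the third clause of Theorem~\ref{thm:CAT(0)diagrem} gives injectivity of $f_n$ on the closed star of each flat interior vertex uniformly in $n$, and reducedness gives the no-disc-diagram-without-interior-vertices condition, so both survive the stabilization.

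The main obstacle, and the part requiring genuine care, is the limiting argument itself: showing that the CAT(0) diagrams $f_n\colon D_n'\to X$ converge (after passing to a subsequence) to a well-defined simplicial map on a complex $Y$ that is genuinely homeomorphic to $\mathbb{R}^2$, rather than, say, developing a one-dimensional spike, a branch point, or failing to be a manifold at the limit. This is where local finiteness of $X$, the bound on the number of isometry types of cells, the quadratic area estimate (which keeps balls of bounded radius combinatorially bounded), and the embedded-star property at flat vertices all get used in concert — one needs that on every fixed $B_X(x,R)$ only finitely many combinatorial patterns for $f_n$ occur, so a pigeonhole/diagonal extraction stabilizes them, and then that the stabilized complex is a manifold because each $D_n'$ is. Making the "flat outside a compact set" for $D_n'$ (from the BKS-type area growth, via Theorem~\ref{thm:BKS} applied uniformly, or by a direct combinatorial Gauss--Bonnet count on $D_n'$) compatible with the limit is the other delicate point, but it is forced by the uniform quadratic area bound.
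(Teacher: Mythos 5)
Your overall strategy (exhaust $\mathbb E^2$ by combinatorial discs, replace each $q|_{D_n}$ by a CAT(0) reduced nondegenerate diagram via Theorem~\ref{thm:CAT(0)diagrem}, control it with Lemma~\ref{thm:Morse lemma}, stabilize on bounded regions by local finiteness and a diagonal extraction, and invoke Theorem~\ref{thm:BKS} for flatness) is the same as the paper's, but two steps as written are genuinely gapped. First, your claim that condition (4) is ``automatic for a reduced CAT(0) diagram'' is false: clause (3) of Theorem~\ref{thm:CAT(0)diagrem} only gives injectivity of $f_n$ on closed stars of flat interior vertices, and reducedness only says adjacent triangles have distinct images; neither prevents the link of a flat vertex from bounding a disc diagram with no interior vertices. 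The paper obtains (4) by an extra minimality choice: among all diagrams supplied by Theorem~\ref{thm:CAT(0)diagrem} with the same boundary cycle as $q_n$, take $q'_n\colon D'_n\to X$ with the least number of triangles; if $\lk(v,D'_n)$ (a $k$--cycle) bounded a diagram without interior vertices, replacing $\St(v,D'_n)$ (which has $k$ triangles) by that diagram (at most $k-2$ triangles) and re-applying Theorem~\ref{thm:CAT(0)diagrem} -- which does not increase the number of triangles -- would contradict minimality. Without some device of this kind, property (4), which is used crucially later (e.g.\ in Lemma~\ref{lem:flat vertex}), does not hold for your $f_n$ and hence not for the limit.

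Second, your derivation of (3) asserts that because $q'$ has image Hausdorff close to a quasiflat, the balls of $Y$ satisfy $\area(B_Y(v,r))\le L'r^2$. Quasi-isometry (even bi-Lipschitz equivalence) to $\mathbb E^2$ does not bound area growth of a CAT(0) plane: cone points of arbitrarily large total angle are invisible to the bi-Lipschitz type yet contribute unbounded area per ball, and nothing in the construction bounds the total angle at non-flat vertices of $Y$. Moreover, the identification of a ball $B_Y(y,r)$ with a subcomplex of some $D'_n$ comes out of the diagonal argument with no control of $n$ in terms of $r$, so even the total bound $\area(D'_n)\le Cn^2$ (which you correctly get from Corollary~\ref{cor:quadratic} together with the last clause of Theorem~\ref{thm:CAT(0)diagrem} and the finitely many cell shapes) does not directly bound $\area(B_Y(y,r))$ by a multiple of $r^2$. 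The paper closes this by bounding $\area(D'_n)$ via the CAT(0) isoperimetric inequality applied to $\partial D'_n$ (whose length is linear in $n$), and then transferring the bound to balls of radius $r<\delta n$ by the $\frac{r}{\delta n}$--Lipschitz geodesic contraction of $B_{D'_n}(y,\delta n)$ onto $B_{D'_n}(y,r)$, which yields $\area(B_{D'_n}(y,r))\le L_3r^2$ uniformly in $n$ and $r$; only then does Theorem~\ref{thm:BKS} apply. You need this (or an equivalent argument) rather than the appeal to (5); the remaining parts of your outline -- the simplicial approximation of $q$, the use of the Morse Lemma for (5), and the stabilization argument for (1)--(2) -- do match the paper's proof.
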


\begin{proof}
Let $q\colon\mathbb E^2\to X$ be an $(L,A)$--quasi-isometric embedding. We view $\mathbb E^2$ as a simplicial complex which is tiled by equilateral triangles. Using Corollary~\ref{cor:quadratic} and a skeleton by skeleton approximation argument, we can assume $q$ is also $L'$--Lipschitz. Since $X$ has finitely many isometry types of cells, it follows from $q$ being $L'$--Lipschitz and \cite[Lemma I.7.54]{BridsonHaefliger1999} that if the diameter of equilateral triangles in $\mathbb E^2$ is small enough, then the $q$--image of the closed star of a vertex in $\mathbb E^2$ is contained in the closed star of a vertex in $X$. By a standard simplicial approximation argument, we can assume in addition that $q$ is a simplicial map. The new quasi-isometric constants of $q$ depend only on the old constants and $X$.

Pick a base vertex $x\in \mathbb E^2$. Let $D_n\subset \mathbb E^2$ be the full subcomplex spanned by vertices with combinatorial distance $\le n$ from $x$. Up to attaching a thin annulus to $D_n$ along $\partial D_n$, we assume $q_n=q|_{D_{n}}$ maps each edge in $\partial D_n$ to an edge in $X$. As in Theorem~\ref{thm:CAT(0)diagrem}, for each $n$, we modify $q_n\colon D_n\to X$ to obtain a reduced nondegenerate CAT(0) singular disc diagram $q'_n\colon D'_n\to X$ such that $q'_n$ and $q_n$ have the same boundary cycle. Moreover, we assume $q'_n\colon D'_n\to X$ has the least number of triangles among all singular disc diagrams satisfying the conditions of Theorem~\ref{thm:CAT(0)diagrem} and has the same boundary cycle as $q_n$. Then
\begin{itemize}
\item $d_H(\im q'_n, \im q_n)<M$ with $M$ depending only on $L$ and $A$;
\item Theorem~\ref{thm:MSquasiflats} (4) holds for flat interior vertices in $D'_n$.
\end{itemize}
The first property follows from Lemma~\ref{thm:Morse lemma}. For the second property, if $\lk(v,D'_n)$ bounds a singular disc diagram without interior vertices for some flat interior vertex $v\in D'_n$, then we can replace $\St(v,D'_n)$ with such diagram to obtain a singular disc diagram $\bar q_n\colon\bar D_n\to X$ with fewer triangles. Now, use Theorem~\ref{thm:CAT(0)diagrem} to modify $\bar q_n$ further to obtain a singular disc diagram $\hat q_n\colon\widehat D_n\to X$. Then $\widehat D_n$ contradicts the minimality of $D'_n$.

Let $A_0=\max\{L,A,L',M\}$. For $R\ge 100 A_0$ and $n\ge 100A_0R$, let $K_{R,n}\subset D'_n$ be the largest possible subcomplex such that $q'_n(K_{R,n})$ is contained in the $R$--ball $B_X(q(x),R)$ of $X$ centered at $q(x)$. Then the following hold:
\begin{enumerate}[label=(\alph*)]
	\item $K_{R,n}$ is locally CAT(0);
	\item there is $M'$ depending only on $L$ and $A$ such that $q(\mathbb E^2)\cap B_X(q(x),\frac{R}{2})$ is contained in the $M'$--neighborhood of $q'_n(K_{R,n})$; 
	\item $\lk(v,K_{R,n})$ is a circle for any vertex $v\in K_{\frac{R}{2},n}$;
\end{enumerate}
(b) follows from the inequality $d_H(\im q'_n, \im q_n)<M$, and (c) follows from the fact that $q'_n(v)$ is far away from $q'_n(\partial D'_n)$. Since $q'_n$ does not use new vertices in the sense of Theorem~\ref{thm:CAT(0)diagrem}, the cardinality of $K^{(0)}_{R,n}$ is $\le$ the number of vertices in $\mathbb E^2$ whose $q$--images are contained in $B_X(q(x),R)$. Moreover, $X$ is locally finite by our assumption. Thus by passing to a subsequence, we assume for any $n$ and $m$, there is a simplicial isomorphism $\phi_{n,m}\colon K_{R,n}\to K_{R,m}$ such that $q'_n=q'_m\circ \phi_{n,m}$ on $K_{R,n}$. Now we let $R\to\infty$ and use a diagonal argument to produce $q'\colon Y\to X$ such that for any $R$ and any subcomplex $K\subset Y$ such that $q'(K)$ is contained in the $R$--ball of $x$, there exists $n$ and a simplicial embedding $\phi_K\colon K\to D'_n$ such that $q'=q'_n\circ \phi_K$ on $K$. 

Now we show that $q'$ satisfies all the requirements. First we show $Y$ is simply-connected. Take a closed curve $C\subset Y$ and take $R$ such that $q'(C)\subset B_{X}(q(x),\frac{R}{100})$. Let $K\subset Y$ be the maximal subcomplex such that $q(K)\subset B_X(q(x),R)$ and let $D'_n$ be as above. Since $D'_n$ is CAT(0), we can find a geodesic homotopy in $D'_n$ that contracts $C$ to a point in $C$. Note that $q'_n$ is Lipschitz since it is simplicial. Thus the $q$--image of this homotopy is contained in $B_X(q(x),R)$, and the homotopy actually happens inside $K$. This together with property (a) above implies that $Y$ is CAT(0). By (c), $Y$ is homeomorphic to $\mathbb R^2$. By Theorem~\ref{thm:CAT(0)diagrem}, $q'(Y)$ is contained in the full subcomplex of $X$ spanned by $q(\mathbb E^2)$. This and property (b) above imply $d_H(\im q', \im q)<M_0$ for $M_0=M_0(L,A,X)$. Also Theorem~\ref{thm:MSquasiflats} (4) follows from Theorem~\ref{thm:CAT(0)diagrem} (3) and the properties of $q'_n$ discussed before.

It remains to show $Y$ is flat outside a compact set. By Theorem~\ref{thm:BKS}, it suffices to estimate the area of balls in $Y$. Pick a base point $y\in Y$ such that $d(q'(y),q(x))<M_0$. Let $B_R$ be the $R$--ball in $Y$ centered at $y$ with respect to the CAT(0) metric and let $K_R$ be the union of faces of $Y$ that intersect $B_R$. Then there exists $n$ and a simplicial embedding $\phi_R\colon K_R\to D'_n$ such that $q'=q'_n\circ \phi_R$ on $K_R$. Thus we also view $y,B_R$ and $K_R$ as subsets of $D'_n$. Since  $B_R\subset B_{D'_n}(y,R)$, it suffices to show $\area(B_{D'_n}(y,R))\le L_3 R^2$ for $L_3$ independent of $R$ and $n$.

Note that there exists a constant $0<\delta<1$ independent of $n$ and $R$ such that $B_{D'_n}(y,\delta n)$ does not touch the boundary of $D'_n$. This uses the fact that $q_n$ and $q'_n$ agree on the boundary, $q_n$ is a quasi-isometry and $q'_n$ is simplicial (hence Lipschitz). We assume $n$ is large enough so that $R<\delta n$. For $r<\delta n$, let $h_r\colon B_{D'_n}(y,\delta n)\to B_{D'_n}(y,r)$ be the map which moves every point $p\in B_{D'_n}(y,\delta n)$ towards $y$ along the geodesic $\overline{py}$ by a factor of $\frac{r}{\delta n}$. Then $h_r$ is $\frac{r}{\delta n}$--Lipschitz and we have
\begin{equation*}
\frac{\area(B_{D'_n}(y,\delta n))}{(\delta n)^2}\ge \frac{\area(B_{D'_n}(y,r))}{r^2}
\end{equation*}
On the other hand, 
$$\area(B_{D'_n}(y,\delta n))\le\area(D'_n)\le L_1(\ell(\partial D'_n))^2=L_1(\ell(\partial D_n))^2\le L_2 n^2.$$
Here $L_1$ is the isoperimetric constant for CAT(0) space and $\ell(\partial D'_n)$ denotes the length of $\partial D'_n$. Thus there exists $L_3$ independent of $n$ and $R$ such that $\area(B_{D'_n}(y,r))\le L_3 r^2$ for any $r<\delta n$. This finishes the proof.
\end{proof}

\section{The complexes for dihedral Artin groups}
\label{sec:geometric model}

In this section we recall the local structure of the complexes for dihedral Artin groups constructed in \cite{Artinmetric} and study disc diagrams over such complexes.
\subsection{The complex for dihedral Artin groups}
\label{subsec:precells}
Let $DA_n$ be the $2$--generator Artin group presented by $\angled{a,b\mid \underbrace{aba\cdots}_{n} =
	\underbrace{bab\cdots}_n}$.

Let $P_n$ be the standard presentation complex for $DA_n$. Namely the $1$--skeleton of $P_n$ is the wedge of two oriented circles, one labeled $a$ and one labeled $b$. Then we attach the boundary of a closed $2$--cell $C$ to the $1$--skeleton with respect to the relator of $DA_n$. Let $C\to P_n$ be the attaching map. Let $\Xa_n$ be the universal cover of $P_n$. Then any lift of the map $C\to P_n$ to $C\to \Xa_n$ is an embedding (cf.\ \cite[Corollary 3.3]{Artinsystolic}). These embedded discs in $\Xa_n$ are called \emph{precells}. Figure~\ref{f:precell} depicts a precell $\Pa$. $\Xa_n$ is a union of copies of $\Pa$'s.
We pull back the labeling and orientation of edges in $P_n$ to obtain labeling and orientation of edges in $\Xa_n$.
\begin{figure}[h!]
	\centering
	\includegraphics[width=1\textwidth]{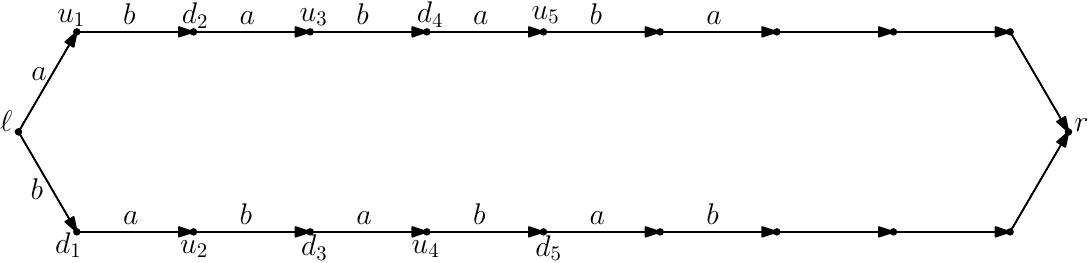}
	\caption{Precell $\Pa$.}
	\label{f:precell}
\end{figure}
We label the vertices of $\Pa$ as in Figure \ref{f:precell}. The vertices $\ell$ and $r$ are called the \emph{left tip} and the \emph{right tip} of $\Pa$. The boundary $\partial\Pa$ is made of two paths. The one starting at $\ell$, going along $\underbrace{aba\cdots}_{n}$ (resp.\ $\underbrace{bab\cdots}_{n}$), and ending at $r$ is called the \emph{upper half} (resp.\ \emph{lower half}) of $\partial\Pa$. The orientation of edges inside one half is consistent, thus each half has an orientation. 

We summarize several facts on how precells intersect each other. 
\begin{lemma}
	\label{cor:connected intersection}
	\cite[Corollary 3.4]{Artinsystolic}
	Let $\Pa_1$ and $\Pa_2$ be two different precells in $\Xa_n$. Then 
	\begin{enumerate}
		\item either $\Pa_1\cap\Pa_2=\emptyset$, or $\Pa_1\cap\Pa_2$ is connected;
		\item if $\Pa_1\cap\Pa_2\neq\emptyset$, $\Pa_1\cap\Pa_2$ is properly contained in the upper half or in the lower half of $\Pa_1$ (and $\Pa_2$);
		\item if $\Pa_1\cap\Pa_2$ contains at least one edge, then one end point of $\Pa_1\cap\Pa_2$ is a tip of $\Pa_1$, and another end point of $\Pa_1\cap\Pa_2$ is a tip of $\Pa_2$, moreover, among these two tips, one is a left tip and one is a right tip.
	\end{enumerate}
\end{lemma}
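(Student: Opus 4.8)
## Proof proposal for Lemma \ref{cor:connected intersection} (structure of precell intersections)

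The plan is to reduce the statement to a combinatorial fact about the covering map $\Xa_n \to P_n$, exploiting that each precell maps homeomorphically onto the single $2$--cell $C$ of $P_n$, together with the fact that $\Xa_n$ is simply connected. First I would record the shape of $\partial\Pa$: it is a cycle of length $2n$, decomposed into the upper half (a path spelling $\underbrace{aba\cdots}_n$ from $\ell$ to $r$) and the lower half (a path spelling $\underbrace{bab\cdots}_n$ from $\ell$ to $r$), and note that the labeling/orientation of edges is pulled back from $P_n$, so on each half all edges with the same label point the same way and consecutive edges alternate labels. The interiors of distinct precells are disjoint (they are distinct lifts of $\mathrm{int}(C)$), so $\Pa_1\cap\Pa_2\subseteq\partial\Pa_1\cap\partial\Pa_2$ always lies in the $1$--skeleton.

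For (1), I would argue that $\partial\Pa_1\cap\partial\Pa_2$ cannot be disconnected. Suppose it has two distinct components; since $\partial\Pa_1$ is a circle, these two components are separated along $\partial\Pa_1$ by two nontrivial arcs $\alpha_1,\alpha_1'$ of $\partial\Pa_1$ whose interiors miss $\partial\Pa_2$, and similarly along $\partial\Pa_2$. Choosing one such arc $\alpha_1$ of $\partial\Pa_1$ and a complementary arc $\alpha_2$ of $\partial\Pa_2$ with the same endpoints, the concatenation $\alpha_1\cup\alpha_2$ is an embedded cycle in $\Xa_n$ bounding a subcomplex meeting $\mathrm{int}(\Pa_1)$ (and possibly $\mathrm{int}(\Pa_2)$); projecting to $P_n$ and using simple connectivity of $\Xa_n$ one gets that $\alpha_1$ and $\alpha_2$ spell equal words in the free group on $\{a,b\}$ — but inside a single precell boundary, a subarc of the upper (or lower) half is a subword of $\underbrace{aba\cdots}_n$, and no two distinct such subarcs of $\partial\Pa_1$ and $\partial\Pa_2$ can match up unless they already coincide. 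Pushing this contradiction through gives connectedness. The same ``equal words force coincidence'' principle, applied to the two halves separately, gives (2): $\Pa_1\cap\Pa_2$, being connected and a proper sub-path of a common-label-pattern path, must sit inside one half of each precell, and properly (it cannot be the whole half, else by the relator the precells would coincide).

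For (3), assume $\Pa_1\cap\Pa_2$ contains an edge $e$. By (2) it is a sub-path of, say, the upper half of $\Pa_1$; so it is a path $\underbrace{\cdots}_{}$ that is a subword of $\underbrace{aba\cdots}_n$. I would then examine its endpoints. If an endpoint $v$ of $\Pa_1\cap\Pa_2$ were an interior vertex of the upper half of $\Pa_1$ \emph{and} an interior vertex of a half of $\Pa_2$, then in a neighborhood of $v$ both $\Pa_1$ and $\Pa_2$ occupy the same two consecutive edges of the relator word around $v$; tracing the relator forces $\Pa_1$ and $\Pa_2$ to share the whole half, hence to coincide — contradiction. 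Hence each endpoint of $\Pa_1\cap\Pa_2$ must be a tip of $\Pa_1$ or a tip of $\Pa_2$; and since $\Pa_1\cap\Pa_2$ is a proper sub-path of the upper half of $\Pa_1$, it cannot contain both tips $\ell,r$ of $\Pa_1$, so exactly one of its endpoints is a tip of $\Pa_1$ and the other is a tip of $\Pa_2$. The left/right assertion then falls out of orientations: the upper half is oriented from $\ell$ to $r$, and $\Pa_1\cap\Pa_2$ inherits compatible orientations from both halves; matching the oriented label patterns at the two ends shows the $\Pa_1$-tip and the $\Pa_2$-tip sit at opposite ends of this oriented path, one being a left tip and one a right tip.

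The main obstacle I expect is making rigorous the repeatedly-used step ``two sub-paths of precell boundaries spelling the same word in a simply connected cover must coincide.'' This is where the simple connectivity of $\Xa_n$ and the specific combinatorics of the relator $\underbrace{aba\cdots}_n=\underbrace{bab\cdots}_n$ (in particular, that neither side has a nontrivial period, and the two sides share no common proper prefix except the empty one at $\ell$, nor common proper suffix except at $r$) really enter; everything else is bookkeeping on the alternating label pattern. Fortunately this lemma is quoted from \cite[Corollary 3.4]{Artinsystolic}, so in the present paper I would simply cite it, but the sketch above is how I would reconstruct the argument if needed.
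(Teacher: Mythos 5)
The paper does not actually prove this lemma: it is quoted from \cite[Corollary 3.4]{Artinsystolic}, so your closing remark that you would simply cite it matches what the paper does. However, your reconstruction sketch has a genuine gap at its central step. From the embedded cycle $\alpha_1\cup\alpha_2$ in $\Xa_n$ you conclude, ``projecting to $P_n$ and using simple connectivity,'' that $\alpha_1$ and $\alpha_2$ spell equal words in the free group on $\{a,b\}$. Simple connectivity only gives that the word spelled by the cycle is trivial in $\pi_1(P_n)=DA_n$; it would be trivial in the free group only if the cycle bounded a diagram with no $2$--cells, which is exactly not the case here (you yourself note that the cycle bounds a region meeting the interiors of the precells). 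Since equality in $DA_n$ allows applications of the relator, the ``equal words force coincidence'' principle does not follow from bookkeeping on alternating labels, and this is precisely where the real content lies: the proof in the cited reference goes through the algebra of the dihedral Artin group/monoid (cancellativity and injectivity of the positive monoid, and the divisor structure of the Garside element, of which the boundary vertices of a precell are translates) --- the same ingredients this paper invokes elsewhere, e.g.\ in the proof of Lemma~\ref{lem:small intersection}. So part (1) and the containment-in-a-single-half claim of (2) are not established by your argument as written.

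By contrast, your treatment of (3) is essentially sound once (1) and (2) are granted, though it should be phrased via the local structure of $\Xa_n$ rather than ``tracing the relator'': at every vertex of $\Xa_n$ there is exactly one incoming and one outgoing edge of each label, and within a half of $\partial\Pa$ the traversal direction agrees with the edge orientations; hence at an endpoint of $\Pa_1\cap\Pa_2$ that is interior to a half of both precells, the next edge of each half is forced to be the same edge (the unique outgoing, respectively incoming, edge of the forced label), so the intersection would extend past that vertex, a contradiction. Thus each endpoint is a tip of $\Pa_1$ or of $\Pa_2$, and your orientation argument then correctly identifies which endpoint is a left tip and which is a right tip.
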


\begin{lemma}
	\cite[\corfourthree]{Artinmetric}
	\label{lem:unique}
	Let $\Pa_1$ and $\Pa_2$ be two different precells in $\Xa_n$. If $\Pa_1\cap\Pa_2$ contains at least one edge, and $\Pa_3\cap\Pa_2=\Pa_1\cap\Pa_2$, then $\Pa_3=\Pa_1$. 
\end{lemma}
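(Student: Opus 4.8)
\textbf{Proof plan for Lemma~\ref{lem:unique}.}

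The plan is to argue by contradiction: suppose $\Pa_3 \neq \Pa_1$ but $\Pa_3 \cap \Pa_2 = \Pa_1 \cap \Pa_2 =: e$, where $e$ contains at least one edge. The key observation is that Lemma~\ref{cor:connected intersection}(3) gives strong constraints on the endpoints of $e$: one endpoint of $e$ is a tip of $\Pa_2$ and, simultaneously, $e$ being the intersection $\Pa_1 \cap \Pa_2$ forces the \emph{other} endpoint to be a tip of $\Pa_1$; applying the same lemma to $\Pa_3 \cap \Pa_2 = e$ forces that same other endpoint to be a tip of $\Pa_3$ as well, and moreover the tip-type (left versus right) at each endpoint is determined. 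So $\Pa_1$ and $\Pa_3$ share a tip $t$ (say the left tip, the other case being symmetric) located at an endpoint of $e$, and $e$ lies in one half (upper or lower) of each of $\Pa_1$, $\Pa_2$, $\Pa_3$ by Lemma~\ref{cor:connected intersection}(2).

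First I would reduce to a local statement at the shared tip $t$. Since $e$ is a common edge-path emanating from the left tip $t$ of both $\Pa_1$ and $\Pa_3$, and the edges of $\partial \Pa_i$ near a tip are labeled and oriented by the defining relator of $DA_n$ (the two halves start with $a$ and with $b$ respectively), the first edge of $e$ determines which half of $\Pa_1$ (resp.\ $\Pa_3$) contains $e$: it is the half whose initial generator matches the label of that first edge. So $e$ sits inside a uniquely determined half of $\Pa_1$ and a uniquely determined half of $\Pa_3$, and along $e$ both halves read off the same word in $\{a,b\}$ with the same orientation, namely a prefix of $aba\cdots$ or of $bab\cdots$ of length $\length(e) < n$.

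The heart of the argument — and the step I expect to be the main obstacle — is to promote this agreement along the boundary path $e$ to an equality $\Pa_1 = \Pa_3$ of the full precells. The mechanism should be a local development / lift-uniqueness argument: a precell containing $t$ as its left tip, with its $e$-half starting along a prescribed labeled-oriented edge at $t$, is the lift to $\Xa_n$ of the attaching map $C \to P_n$ along that initial edge-germ, and since $\Xa_n$ is simply connected such a lift is unique once the starting edge and orientation are fixed (this is exactly the content of the embedded-precell statement cited from \cite[Corollary 3.3]{Artinsystolic}, combined with uniqueness of lifts in a covering space). Concretely: $\Pa_1$ and $\Pa_3$ are both embedded copies of $\Pa$ in $\Xa_n$; they agree on a boundary sub-path $e$ of positive length that begins at the left tip and respects the labeled oriented $1$--skeleton of $P_n$; two such lifts of the $2$--cell attaching map that agree on a nondegenerate initial segment of the boundary must coincide. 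Hence $\Pa_1 = \Pa_3$, contradicting $\Pa_3 \neq \Pa_1$.

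Finally I would double-check the degenerate possibilities: if $e$ has length exactly $1$ one still has a well-defined oriented labeled starting edge at the shared tip, so the lift-uniqueness argument applies verbatim; and the right-tip case is handled by the same reasoning reading the other endpoint of $e$. This completes the proof. The only subtlety to be careful about is ensuring that "agreeing along $e$" really pins down the \emph{half} of each precell (not merely one boundary arc), which is why the labeling-and-orientation bookkeeping at the tip — that the upper half starts with $a$ and the lower with $b$ — is essential rather than cosmetic.
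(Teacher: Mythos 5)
Your proof is correct and is essentially the intended argument (note the paper does not prove this lemma here but quotes it from \cite[\corfourthree]{Artinmetric}): you use Lemma~\ref{cor:connected intersection}(2)--(3) to see that the endpoint of $\Pa_1\cap\Pa_2$ which is not a tip of $\Pa_2$ is a tip of the \emph{same} handedness of both $\Pa_1$ and $\Pa_3$, and then conclude by uniqueness of the precell with a prescribed left (or right) tip, which is exactly your unique-lifting step --- agreement of the two lifts of the attaching map at the shared tip already forces them to coincide, so the bookkeeping about which half contains the first edge of $\Pa_1\cap\Pa_2$ is not even needed. The only details worth stating explicitly are that proper containment of $\Pa_1\cap\Pa_2$ in a half of $\partial\Pa_2$ guarantees that only one of its endpoints can be a tip of $\Pa_2$, so the two applications of Lemma~\ref{cor:connected intersection}(3) single out the same endpoint as the tip of $\Pa_1$ and of $\Pa_3$, and that the uniqueness you invoke is the unique-lifting property of the covering $\Xa_n\to P_n$ (agreement at one point of the connected domain suffices), rather than a consequence of simple connectivity of $\Xa_n$.
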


\begin{lemma}
	\label{lem:small intersection}
	Let $\{\Pa_i\}_{i=1}^3$ be three different precells in $\Xa_n$. Suppose
\begin{enumerate}
	\item $\Pa_1\cap\Pa_2$ contains an edge;
	\item $\Pa_1\cap\Pa_3$ contains an edge;
	\item $(\Pa_1\cap\Pa_2)\cap(\Pa_1\cap\Pa_3)$ is either one point or empty.
\end{enumerate}	
Then $\Pa_2\cap\Pa_3$ is either one point or empty.
\end{lemma}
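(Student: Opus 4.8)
The plan is to argue by contradiction: assume $\Pa_2\cap\Pa_3$ contains an edge, and derive a contradiction with the hypothesis that $(\Pa_1\cap\Pa_2)\cap(\Pa_1\cap\Pa_3)$ is at most a point. The key structural input is Lemma~\ref{cor:connected intersection}: whenever two distinct precells meet in at least one edge, the intersection is a connected subpath of one half (upper or lower) of each, running between a left tip of one and a right tip of the other. So by hypothesis $\Pa_1\cap\Pa_2$ is such a path $\alpha_{12}$, and $\Pa_1\cap\Pa_3$ is such a path $\alpha_{13}$, and under our contradiction hypothesis $\Pa_2\cap\Pa_3$ is a path $\alpha_{23}$.

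First I would exploit Lemma~\ref{cor:connected intersection}(2): $\alpha_{12}$ and $\alpha_{13}$ each lie in the upper or lower half of $\partial\Pa_1$. I would split into cases according to whether they lie in the same half of $\partial\Pa_1$ or in different halves. If they lie in the same half, say the upper half of $\partial\Pa_1$, then both are subpaths of a single path; since their intersection is at most a point, they are disjoint or share only an endpoint, so along the upper half of $\partial\Pa_1$ they occur in a definite order, say $\alpha_{12}$ closer to $\ell_1$ and $\alpha_{13}$ closer to $r_1$ (or vice versa). By Lemma~\ref{cor:connected intersection}(3) the tip-endpoints are forced: one endpoint of $\alpha_{12}$ is a tip of $\Pa_2$ and one of $\alpha_{13}$ is a tip of $\Pa_3$; I would track exactly which tips (left vs.\ right) appear where, using the left/right alternation in part (3). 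Then $\Pa_2$ and $\Pa_3$ are "attached" to $\Pa_1$ along disjoint segments of the same boundary arc; the orientation-consistency within a half (from Section~\ref{subsec:precells}) plus the fact that $\Xa_n$ is a union of embedded precells should force any common edge of $\Pa_2$ and $\Pa_3$ to already lie in $\Pa_1$, hence in $\alpha_{12}\cap\alpha_{13}$, contradicting that this is at most a point. If instead $\alpha_{12}$ and $\alpha_{13}$ lie in opposite halves of $\partial\Pa_1$, they can only meet at the common tips $\ell_1$ or $r_1$; a similar tip-bookkeeping argument via Lemma~\ref{cor:connected intersection}(3) and Lemma~\ref{lem:unique} (to pin down precells from their intersection with $\Pa_1$) should again show $\Pa_2\cap\Pa_3$ cannot contain an edge without that edge being visible in $\Pa_1$.

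The main obstacle, I expect, is the careful combinatorial case analysis of \emph{which} tips of $\Pa_2$ and $\Pa_3$ sit at the ends of the intersection arcs, together with ruling out the "wrap-around" configuration in which $\Pa_2$ and $\Pa_3$ could conceivably share an edge lying outside $\Pa_1$ (e.g.\ an edge in the half of $\Pa_2$ not meeting $\Pa_1$). Here I would lean on Lemma~\ref{lem:unique}: if $\Pa_2\cap\Pa_3$ contained an edge $e$, then $\Pa_2\cap\Pa_3$ is a subpath of a half of $\Pa_2$; comparing this with $\alpha_{12}=\Pa_1\cap\Pa_2$ (also a subpath of a half of $\Pa_2$) and using that a given edge of $\Pa_2$ lies in at most one half, I can locate $e$ relative to $\alpha_{12}$. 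If $e\subset\alpha_{12}$ then $e\subset\Pa_1$, so $e$ is an edge of $\Pa_1\cap\Pa_3=\alpha_{13}$ as well, whence $e\subset\alpha_{12}\cap\alpha_{13}$, contradiction. If $e$ is disjoint from $\alpha_{12}$, then $e$ lies in a region of $\Pa_2$ "beyond" the attaching arc, and one shows geometrically — using connectedness of intersections and the planar/embedded structure — that $\Pa_3$, being glued to $\Pa_1$ along $\alpha_{13}$ on the far side, cannot reach $e$. Making the last point rigorous (rather than picture-driven) is the delicate part; I would phrase it via the observation that $\Pa_1\cup\Pa_2$ is a disc and $\alpha_{13}\subset\Pa_1$ lies in its boundary, so $\Pa_3$ meets this disc in a connected set containing $\alpha_{13}$, forcing any further intersection of $\Pa_3$ with $\Pa_2$ to be adjacent to (hence to meet) $\alpha_{12}$.
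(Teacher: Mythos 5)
Your case split (same half of $\partial\Pa_1$ versus different halves) matches the paper's, and your observation that an edge of $\Pa_2\cap\Pa_3$ lying inside $\alpha_{12}$ would contradict hypothesis (3) is fine. The genuine gap is in the step you yourself flag as delicate: ruling out an edge of $\Pa_2\cap\Pa_3$ lying away from $\Pa_1$. Your proposed fix — that $\Pa_3$ meets the disc $\Pa_1\cup\Pa_2$ in a connected set containing $\alpha_{13}$, so any intersection with $\Pa_2$ must be adjacent to $\alpha_{12}$ — is unjustified and essentially begs the question. Lemma~\ref{cor:connected intersection}(1) gives connectedness only for intersections of \emph{two} precells; $\Pa_3\cap(\Pa_1\cup\Pa_2)=(\Pa_3\cap\Pa_1)\cup(\Pa_3\cap\Pa_2)$ can a priori be disconnected, and the disconnected "wrap-around" configuration is exactly what the lemma is asserting cannot happen. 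Likewise, appeals to "planar/embedded structure" do not apply: $\Xa_n$ is the universal cover of a $2$--complex whose vertex links are not circles, so it is not planar, and nothing local forces two precells attached to $\Pa_1$ along disjoint arcs to stay apart elsewhere.

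What closes the gap in the paper is an algebraic input that your plan never identifies: the injectivity of the positive Artin monoid in $DA_n$ (Deligne, Brieskorn--Saito), which guarantees that a path in $\Xa_n$ reading a positive word is embedded. Concretely, after reducing (via \cite[Corollary 3.5]{Artinsystolic}) to the case where $\Pa_1\cap\Pa_2$ and $\Pa_1\cap\Pa_3$ lie in the same half of $\Pa_1$ and $\Pa_2\cap\Pa_3$ lies in the same half of $\Pa_2$ as $\Pa_1\cap\Pa_2$, one builds a positive path $P$ from the left tip of $\Pa_2$ through the tips of $\Pa_1$ to the right tip of $\Pa_3$; the hypothetical edge in $\Pa_2\cap\Pa_3$ forces $P$ to fail to be embedded, contradicting monoid injectivity. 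Note also that the different-halves case, which you hope to settle by "tip bookkeeping" with Lemmas~\ref{cor:connected intersection} and \ref{lem:unique}, is in the paper an application of the same external result \cite[Corollary 3.5]{Artinsystolic}; it does not follow from the tip combinatorics alone. Without the positivity/injectivity ingredient (or an equivalent geometric model such as the cube complex used later in Lemma~\ref{lem:diamond embedding}), the argument does not go through.
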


\begin{proof}
By Lemma~\ref{cor:connected intersection}, there are two cases to consider. Either $\Pa_1\cap\Pa_2$ and $\Pa_1\cap\Pa_3$ are in the same half of $\Pa_1$, or they are in different halves. The latter case follows from \cite[Corollary 3.5]{Artinsystolic}. For the former case, we assume without loss of generality that $\Pa_1\cap\Pa_2$ and $\Pa_1\cap\Pa_3$ are contained in the upper half of $\Pa_1$. By Lemma~\ref{cor:connected intersection} (3) and Lemma~\ref{lem:small intersection} (3), we assume $\Pa_1\cap \Pa_2$ (resp.\ $\Pa_1\cap \Pa_3$) contains the left tip (resp.\ right tip) of $\Pa_1$, see Figure~\ref{f:cells}. Assume by contradiction that $\Pa_2\cap\Pa_3$ contains an edge. By \cite[Corollary 3.5]{Artinsystolic}, if $\Pa_2\cap\Pa_3$ and $\Pa_2\cap\Pa_1$ are in different halves of $\Pa_2$, then $\Pa_1\cap\Pa_3$ can not contain any edge, which yields a contradiction. Thus $\Pa_2\cap\Pa_3$ and $\Pa_2\cap\Pa_1$ are in the same half of $\Pa_2$. It follows that $P$ is not an embedded path, where $P$ travels from the left tip of $\Pa_2$ to the left tip of $\Pa_1$, then travel to the right tip of $\Pa_1$ along the upper half of $\Pa_1$, then travel to the right tip of $\Pa_3$. On the other hand, since $P$ represents a word in the positive Artin monoid, $P$ is an embedded path by the injectivity of positive Artin monoid \cite{deligne,brieskorn1972artin}, which leads to a contradiction.
\begin{figure}[ht!]
	\centering
	\includegraphics[width=0.6\textwidth]{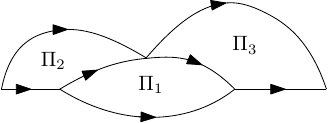}
	\caption{}
	\label{f:cells}
\end{figure}
\end{proof}

We subdivide each precell $\Pa$ in $\Xa_n$ into a simplicial complex by placing a vertex in the middle of the precell, and adding edges which connect this new vertex and vertices in the boundary $\partial\Pa$. This subdivision turns $\Xa_n$ into a simplicial complex $\Xb_n$. A \emph{cell} of $\Xb_n$ is defined to be a subdivided precell, and we use the symbol $\Pi$ for a cell. 
The original vertices of $\Xa_n$ in $\Xb_n$ are called the \emph{real vertices}, and the new vertices of $\Xb_n$ after subdivision are called \emph{fake vertices}. The fake vertex in a cell $\Pi$ is denoted $o$.

For each pair of cells $(\Pi_1,\Pi_2)$ in $\Xb_n$ such that $\Pi_1\cap\Pi_2$ contains at least two edges, we add an edge between the fake vertex of $\Pi_1$ and the fake vertex of $\Pi_2$. Let $X_n$ be the flag completion of the complex obtained by adding these edges to $\Xb_n$. There is a simplicial action $DA_n\act X_n$.

\begin{definition}
	\label{def:length}
	We assign lengths to edges of $X_n$. Edges connecting a real vertex to a fake vertex have length 1. Edges between two real vertices have length equal to the distance between two adjacent vertices in a Euclidean regular $(2n)$--gon with radius 1. 
	
	Now we assign lengths to edges between two fake vertices. First define a function $\phi\colon[0,\pi)\to \mathbb R$ as follows. Let $\triangle(ABC)$ be a Euclidean isosceles triangle with length of $AB$ and $AC$ equal to 1, and $\angle_A(B,C)=\alpha$. Then $\phi(\alpha)$ is defined to be the length of $BC$. Suppose $\Pi_1$ and $\Pi_2$ are cells such that $\Pi_1\cap\Pi_2$ contains $i$ edges ($i\ge 2$). Then the edge between the fake vertex of $\Pi_1$ and the fake vertex of $\Pi_2$ has length $=\phi(\frac{n-i}{2n}2\pi)$.
\end{definition}

\begin{remark}[Intuitive explanation of the construction of $X_n$]
A natural way to metrize $\Xa_n$ to declare each $2$--cell in $\Xa_n$ is a regular polygon in the Euclidean plane. However, if we take $\Pi_1$ and $\Pi_2$ (say, two $n$--gons) such that $P=\Pi_1\cap\Pi_2$ has $\ge 2$ edges, then any interior vertex of $P$ is not non-positively curved. Let $o_i$ be the fake vertex in $\Pi_i$ and let the two endpoints of $P$ be $v_1$ and $v_2$. Let $K$ be the region in $\Pi_1\cup \Pi_2$ bounded by the $4$--gon whose vertices are $o_1$, $o_2$, $v_1$ and $v_2$. Those positively curved cone points are contained in $K$. Now we replace $K$ by something flat as follows. Add a new edge $e$ between $o_1$ and $o_2$ and add two new triangles $\{\Delta_i\}_{i=1}^2$ such that the three sides of $\Delta_i$ are $e$, $\overline{o_1v_i}$ and $\overline{o_2v_i}$. We replace $K$ by $\Delta_1\cup\Delta_2$, which is flat. Moreover, we would like $\angle_{o_1}(v_1,o_2)=\angle_{o_1}(o_2,v_2)=\frac{|P|\pi}{2n}$ so that $o_1$ is still flat ($|P|$ is the number of edges in $P$). That is why we assign the length of $e$ as in Definition~\ref{def:length}.
\end{remark}

By \cite[\lemfourseven]{Artinmetric}, the lengths of the three sides of each triangle in $X^{(1)}_n$ satisfy the strict triangle inequality. Thus we can treat $X^{(2)}_n$ as a piecewise Euclidean complex such that each $2$--simplex is a Euclidean triangle whose lengths of sides coincide with the assigned lengths on $X^{(1)}_n$. 

\subsection{Local structure of $X_n$}
\label{subsec:local}
For a vertex $v\in X_n$, define $\Lambda_v=\lk(v,X^{(2)}_n)$. In this subsection we study the structure of $\Lambda_v$. 

Pick an identification between real vertices of $X_n$ and elements of $DA_n$ via the action $DA_n\act X_n$. Let $\Pi$ be a base cell in $X_n$ with its vertices and edges labeled as in Figure~\ref{f:precell}. We assume $u_0=d_0=\ell$, $u_n=d_n=r$, and $\ell$ is identified with the identity element of $DA_n$.

We first look at the case when $v$ is a real vertex. Up to action of $DA_n$, we assume $v=\ell$. Vertices of $\Lambda_v$ consist of two classes. (1) Real vertices $a^i,a^o,b^i$ and $b^o$, where $a^i$ and $a^o$ are the vertices in $\Lambda_v$ which corresponds to the incoming and outgoing $a$--edge containing $v$ ($b^i$ and $b^o$ are defined similarly). (2) Fake vertices. There is a 1-1 correspondence between such vertices and cells in $X_n$ containing $\ell$. Then the fake vertices of $\Lambda_v$ are of the form $w^{-1}o$ where $w$ is a vertex of $\partial\Pi$ and $o$ is the fake vertex in $\Pi$ ($w^{-1}o$ means the image of $o$ under the action of $w^{-1}$), that is $\{\ell^{-1} o,r^{-1}o,d^{-1}_1o,d^{-1}_2o,\ldots,d^{-1}_{n-1}o,u^{-1}_1o,u^{-1}_2o,\ldots,u^{-1}_{n-1}o\}$.

Edges of $\Lambda_v$ can be divided into two classes. Edges of \emph{type I} in $\Lambda_v$ are edges between a real vertex and a fake vertex. These edges are drawn in Figure~\ref{f:real} (we use the convention that the real vertices are drawn as solid points and the fake vertices as circles). By \cite[\lemfivetwo]{Artinmetric}, each edge of type I has angular length $=\frac{n-1}{4n}2\pi$. 

\begin{figure}[ht!]
	\centering
	\includegraphics[width=0.88\textwidth]{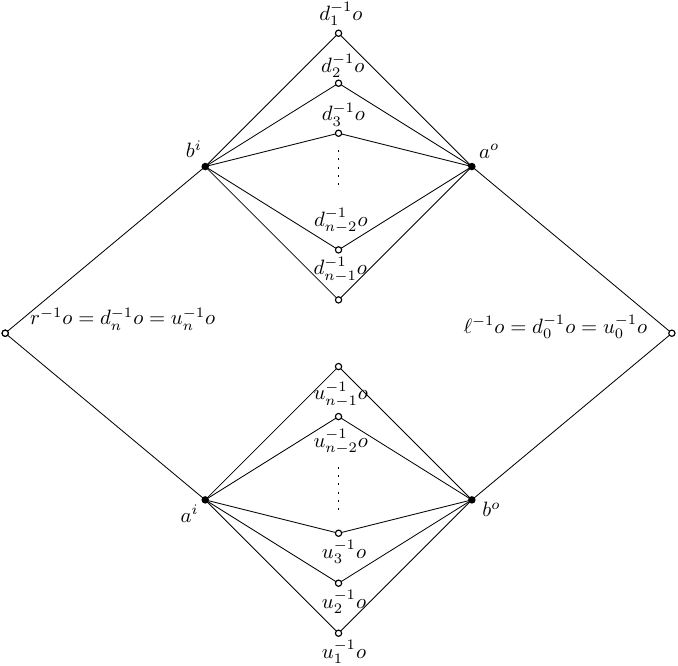}
	\caption{Edges of type I in the link of a real vertex.}
	\label{f:real}
\end{figure}

Edges of \emph{type II} in $\Lambda_v$ are edges between two fake vertices. They are characterized by Lemma~\ref{lem:edge type II}. There do not exist edges of $\Lambda_v$ which are between two real vertices. We write $t\sim s$ (resp.\ $t\nsim s$) if vertices $t$ and $s$ are connected (resp.\ are not connected) by an edge. 
\begin{lemma}\
	\label{lem:edge type II}
\begin{enumerate}
\item $d^{-1}_io\sim d^{-1}_jo$ if and only if $1\le |j-i|\le n-2$, in this case, the edge between $d^{-1}_io$ and $d^{-1}_jo$ has angular length $=\frac{j-i}{2n}2\pi$. A similar statement holds with $d$ replaced by $u$.
\item If $1\le i\le n-1$ and $1\le j\le n-1$, then $d^{-1}_io\nsim u^{-1}_jo$.
\end{enumerate}
\end{lemma}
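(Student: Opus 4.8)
The plan is to work entirely inside the link $\Lambda_v$ of a fixed real vertex $v=\ell$, using the correspondence between fake vertices of $\Lambda_v$ and cells containing $\ell$, together with the intersection lemmas (Lemmas~\ref{cor:connected intersection}, \ref{lem:unique}, \ref{lem:small intersection}) and the definition of edge lengths (Definition~\ref{def:length}). Recall that two fake vertices of $\Lambda_v$ are joined by an edge in $X_n$ precisely when the corresponding cells $\Pi_1,\Pi_2$ (both containing $\ell$) satisfy that $\Pi_1\cap\Pi_2$ contains at least two edges; and in that case the angular length of the resulting edge of $\Lambda_v$ should come out to $\phi$ evaluated at the appropriate angle, which in the link translates to $\frac{n-|P|}{2n}2\pi$ where $P=\Pi_1\cap\Pi_2$ — but one must be careful: here $|P|$ counts edges of the intersection \emph{inside the boundaries of the two cells as they sit around $\ell$}, and since each cell containing $\ell$ meets $\Pi$ along a subpath of $\partial\Pi$ issuing from a tip, the combinatorial data is exactly recorded by the indices $i,j$ labelling the vertices $d_i,d_j$ (or $u_i,u_j$).

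First I would set up, for each fake vertex $d_i^{-1}o$ of $\Lambda_v$, the concrete description of the cell $\Pi_i$ it names: translating the base cell $\Pi$ by $d_i^{-1}$, so that $\Pi_i$ is the unique cell meeting $\Pi$ along the lower subpath from $d_i$ "back toward" $\ell$ (uniqueness here is exactly Lemma~\ref{lem:unique}). Then $\Pi_i\cap\Pi_j$, for $i<j$, is a subpath of the lower half of $\Pi$ running from $d_i$ to $d_j$ — provided this intersection is nonempty and not all of a half. The constraint $1\le |j-i|\le n-2$ should appear as follows: if $|j-i|=0$ the cells coincide; and if $|j-i|=n-1$ then the would-be intersection is an entire half of $\Pi$ (from $\ell=d_0$ to $r=d_n$ would be the extreme, but with $d_i,d_j$ an apart-by-$(n-1)$ pair the intersection subpath has $n-1$ edges, i.e. a full half), which is ruled out by Lemma~\ref{cor:connected intersection}(2) (the intersection must be \emph{properly} contained in a half). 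So the "if and only if" reduces to: (a) when $1\le j-i\le n-2$, exhibit the common subpath of $\ge 2$... wait — even a single-edge intersection gives no edge in $\Lambda_v$; I need to recheck that the correspondence in the definition of $X_n$ for link edges actually corresponds to $\ge 1$ edge in the link, since type II edges of $\Lambda_v$ sit inside cells. Here the subtlety is that an edge between $d_i^{-1}o$ and $d_j^{-1}o$ in $\Lambda_v = \lk(\ell,X_n^{(2)})$ exists iff $\ell,d_i^{-1}o,d_j^{-1}o$ span a $2$-simplex of $X_n$, which by flag-completeness happens iff the three are pairwise adjacent; $d_i^{-1}o\sim d_j^{-1}o$ in $X_n$ iff $\Pi_i\cap\Pi_j\supseteq$ two edges. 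So I must double-check whether $\Pi_i\cap\Pi_j$ having the subpath $d_i\cdots d_j$ of $j-i$ edges means $\ge 2$ edges, i.e. $j-i\ge 2$ — but the claim asserts the edge exists already for $j-i=1$. The resolution must be that $\Pi_i$ and $\Pi_j$ \emph{also} both contain $\ell$, so their intersection contains, in addition to $d_i\cdots d_j$, the subpath from $\ell$ out to $\min(i,j)$ — no. I would pin this down by drawing the picture: $\Pi_i$ meets $\Pi$ along the subpath of $\partial\Pi$ from $d_i$ running \emph{toward the left tip} $\ell$, hence $\Pi_i\cap\Pi$ is the subpath $d_0d_1\cdots d_i$ (of $i$ edges); likewise $\Pi_j\cap\Pi$ is $d_0\cdots d_j$; so $\Pi_i\cap\Pi_j \supseteq \Pi_i\cap\Pi\cap\Pi_j = d_0d_1\cdots d_{\min(i,j)}$, which has $\min(i,j)\ge 1$ edges, and in fact one checks equality by Lemma~\ref{lem:unique}/Lemma~\ref{lem:small intersection}. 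Hmm, that would make the edge condition $\min(i,j)\ge 2$, still not matching. I think in fact the correct reading is that $\Pi_i$ is the cell obtained by gluing onto $\Pi$ along the single edge at $d_i$, or onto the subpath from $d_i$ to $r$; in any case the precise matching of indices to the subpath is the one bookkeeping point I would nail down first, then the inequality $1\le|j-i|\le n-2$ and the angular length formula $\frac{|j-i|}{2n}2\pi$ follow by substituting $|P|=n-|j-i|$ (so that $\frac{n-|P|}{2n}2\pi=\frac{|j-i|}{2n}2\pi$) into Definition~\ref{def:length}, together with the elementary fact that the angle subtended at $o$ by consecutive boundary vertices of a cell is $\frac{\pi}{n}$ (from the regular $2n$-gon structure).

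For part (2), that $d_i^{-1}o\nsim u_j^{-1}o$, the point is that $\Pi_i$ meets $\Pi$ inside its \emph{lower} half while $\Pi_j^{(u)}$ meets $\Pi$ inside its \emph{upper} half. By Lemma~\ref{cor:connected intersection}(2), $\Pi_i\cap\Pi_j^{(u)}$ is properly contained in one half of each; I would argue that if it contained $\ge 2$ edges, then chasing the tips via Lemma~\ref{cor:connected intersection}(3) and the positivity-of-the-Artin-monoid embeddedness argument (exactly as in the proof of Lemma~\ref{lem:small intersection}) forces a non-embedded positive path — a contradiction. Concretely: apply Lemma~\ref{lem:small intersection} with $\Pi_1=\Pi$, $\Pi_2=\Pi_i$, $\Pi_3=\Pi_j^{(u)}$; the hypotheses (2)(3) hold because $\Pi\cap\Pi_i$ (in the lower half) and $\Pi\cap\Pi_j^{(u)}$ (in the upper half) meet at most in the tip(s) of $\Pi$; the conclusion is that $\Pi_i\cap\Pi_j^{(u)}$ is one point or empty, hence no edge of $X_n$ between the two fake vertices, hence $d_i^{-1}o\nsim u_j^{-1}o$ in $\Lambda_v$.

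The main obstacle I anticipate is the first one: getting the index bookkeeping exactly right — which subpath of $\partial\Pi$ the cell $\Pi_i$ shares with $\Pi$, how this interacts with the fact that \emph{both} cells contain $\ell$, and hence what $|P|$ is in terms of $i$ and $j$ so that Definition~\ref{def:length}'s formula yields precisely $\frac{|j-i|}{2n}2\pi$ and the adjacency threshold yields precisely $1\le|j-i|\le n-2$ rather than an off-by-one variant. Once the dictionary between $\{d_i^{-1}o\}$ and cells-through-$\ell$ is fixed and checked against small cases (say $n=2,3$), both statements of the lemma are short consequences of Lemmas~\ref{cor:connected intersection}--\ref{lem:small intersection} and Definition~\ref{def:length}; everything else is routine.
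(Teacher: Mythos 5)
Your overall reduction is the right one, and it is exactly the reduction the paper itself records before deferring the details: adjacency of $d_i^{-1}o$ and $d_j^{-1}o$ in $\Lambda_v$ is (by flagness and the construction of $X_n$) equivalent to $d_i^{-1}\Pi\cap d_j^{-1}\Pi$ containing at least two edges, the angular length then comes out of Definition~\ref{def:length} as $\frac{n-|P|}{2n}2\pi$ where $|P|$ is the number of edges of the intersection, and part (2) can be run through Lemma~\ref{lem:small intersection} with $\Pi_1=\Pi$. But the paper's proof of the lemma is precisely the citation of \cite[Lemma 5.3]{Artinmetric} for the computation of these intersections, and that computation is the part your proposal does not contain. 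The whole content of part (1) is the claim that $d_i^{-1}\Pi\cap d_j^{-1}\Pi$ is a path with exactly $n-|j-i|$ edges (so that adjacency holds iff $1\le|j-i|\le n-2$ and $|P|=n-|j-i|$ gives the stated angle), and your attempted identifications of the cells contradict this: you propose $\Pi_i\cap\Pi=d_0\cdots d_i$ with $i$ edges, which would give the threshold $\min(i,j)\ge 2$, you notice the mismatch, and you explicitly defer the ``bookkeeping''. That bookkeeping is not a routine detail; it is the theorem. It depends delicately on how the labels $d_i,u_i$ of Figure~\ref{f:precell} are identified with group elements, and with the naive ``prefix'' identification both parts of the lemma actually become false: in $DA_3$ one checks directly that $b^{-1}\Pi\cap(ba)^{-1}\Pi$ is the single vertex $\{\ell\}$ while $b^{-1}\Pi\cap(ab)^{-1}\Pi$ is a two-edge path, so an off-by-convention error flips both the adjacency pattern of (1) and the non-adjacency claim of (2). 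Nailing down which translate of $\Pi$ each $d_i^{-1}o$ names and how two such translates overlap requires a genuine argument with the positive monoid/Garside structure of $DA_n$ (this is what the cited lemma supplies), not just a glance at small cases.

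Two further steps are wrong as written. First, your exclusion of $|j-i|=n-1$ via ``the intersection would be an entire half of $\Pi$, contradicting Lemma~\ref{cor:connected intersection}(2)'' does not work: a path of $n-1$ edges is \emph{properly} contained in a half of $n$ edges, and such intersections do occur (e.g.\ $\Pi\cap d_1^{-1}\Pi$ has exactly $n-1$ edges); the correct reason non-adjacency holds at $|j-i|=n-1$ is that the intersection then has exactly one edge, which again needs the unproven count. Second, in part (2) the hypotheses you feed into Lemma~\ref{lem:small intersection} --- that $\Pi\cap d_i^{-1}\Pi$ and $\Pi\cap u_j^{-1}\Pi$ each contain an edge, lie in opposite halves of $\Pi$, and meet at most in the tip $\ell$ --- are exactly part of the same unestablished structure (and under the naive labelling they fail: two of these intersections can lie in the same half and share an edge). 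So the strategy is the right shape, but the proposal leaves the essential computation open and the steps that substitute for it do not hold up.
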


This lemma is deduced from the fact that $d^{-1}_io\sim d^{-1}_jo$ if and only if $d^{-1}_i\Pi\cap d^{-1}_j\Pi$ has $\ge 2$ edges. We refer to 	\cite[\lemfivethree]{Artinmetric} for a proof. 

Now we study cycles in $\Lambda_v$. Let $\Lambda^+_v$ be the full subgraph of $\Lambda_v$ spanned by $\{b^i,a^o,\di{0}o,\di{1}o,\ldots,\di{n}o\}$. Let $\Lambda^-_v$ be the full subgraph of $\Lambda_v$ spanned by $\{b^o,a^i,\ui{0}o,\ui{1}o,\ldots,\ui{n}o\}$. 

\begin{lemma}	\cite[\lemfiveeight]{Artinmetric}
	\label{lem:cycle real}
Suppose $\omega\subset \Lambda_v$ is a simple cycle. Then at least one of the following two situations happen:
\begin{enumerate}
	\item $\omega\subset\Lambda^+_v$ or $\omega\subset\Lambda^-_v$;
	\item $\ell^{-1}o\in\omega$ and $r^{-1}o\in\omega$.
\end{enumerate} 
\end{lemma}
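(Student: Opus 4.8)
The plan is to argue by contradiction: suppose $\omega\subset\Lambda_v$ is a simple cycle that is neither contained in $\Lambda^+_v$ nor in $\Lambda^-_v$, and that does not contain both $\ell^{-1}o$ and $r^{-1}o$. First I would observe, using Lemma~\ref{lem:edge type II}(2), that there are no edges of $\Lambda_v$ joining a vertex of the form $d^{-1}_io$ (with $1\le i\le n-1$) to a vertex of the form $u^{-1}_jo$ (with $1\le j\le n-1$); moreover, inspecting the type~I edges in Figure~\ref{f:real}, the only fake vertices adjacent to real vertices lying in both the ``$d$-side'' and the ``$u$-side'' are precisely $\ell^{-1}o$ ($=d^{-1}_0o=u^{-1}_0o$) and $r^{-1}o$ ($=d^{-1}_no=u^{-1}_no$), and these are the only fake vertices that are adjacent both to $\{a^o,b^i\}$ and to $\{a^i,b^o\}$. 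The key combinatorial point is therefore that $\ell^{-1}o$ and $r^{-1}o$ are the only possible ``passage vertices'' through which a path in $\Lambda_v$ can cross from the subgraph $\Lambda^+_v$ to the subgraph $\Lambda^-_v$.

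Next I would make this precise by setting $V^+ = \{b^i,a^o,d^{-1}_0o,\ldots,d^{-1}_no\}$ and $V^- = \{b^o,a^i,u^{-1}_0o,\ldots,u^{-1}_no\}$, so that $V^+\cup V^- = \Lambda_v^{(0)}$, $V^+\cap V^- = \{\ell^{-1}o, r^{-1}o\}$, and (by the previous paragraph) every edge of $\Lambda_v$ lies entirely in $\Lambda^+_v$ or entirely in $\Lambda^-_v$. Now, if $\omega$ avoids $\ell^{-1}o$, then every vertex of $\omega$ lies in exactly one of $V^+\setminus\{\ell^{-1}o\}$ or $V^-\setminus\{\ell^{-1}o\}$, and similarly if $\omega$ avoids $r^{-1}o$. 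Under the standing assumption, $\omega$ avoids at least one of $\ell^{-1}o, r^{-1}o$; say it avoids $\ell^{-1}o$ (the other case is symmetric, swapping $\ell$ and $r$). If $\omega$ also avoids $r^{-1}o$, then the vertex set of $\omega$ is partitioned into $\omega\cap(V^+\setminus\{\ell^{-1}o,r^{-1}o\})$ and $\omega\cap(V^-\setminus\{\ell^{-1}o,r^{-1}o\})$ with no edges between the two parts; since $\omega$ is connected, one part is empty, contradicting $\omega\not\subset\Lambda^+_v$ and $\omega\not\subset\Lambda^-_v$. If instead $\omega$ contains $r^{-1}o$ but not $\ell^{-1}o$, then $\omega\setminus\{r^{-1}o\}$ is a (nonempty) path, and again its vertices split into a $V^+$-part and a $V^-$-part with no connecting edges; connectedness of the path forces one part to be empty, so $\omega$ itself (adding back $r^{-1}o$, which lies in both $V^\pm$) is contained in $\Lambda^+_v$ or $\Lambda^-_v$, a contradiction.

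The main obstacle I anticipate is a careful bookkeeping of the adjacencies along the boundary: one must verify from Figure~\ref{f:real} and Lemma~\ref{lem:edge type II} that no edge of $\Lambda_v$ joins $V^+\setminus\{\ell^{-1}o,r^{-1}o\}$ to $V^-\setminus\{\ell^{-1}o,r^{-1}o\}$ --- in particular that the real vertices $a^o,b^i$ are adjacent only to fake vertices of the $d$-type (plus $\ell^{-1}o$, $r^{-1}o$) and $a^i,b^o$ only to fake vertices of the $u$-type, and that there are no real-real edges in $\Lambda_v$. Once this separation statement is in hand, the rest is the elementary graph-connectivity argument above. It may be cleanest to record the separation statement as the assertion that $\{\ell^{-1}o,r^{-1}o\}$ is a cut set separating $\Lambda^+_v\setminus\{\ell^{-1}o,r^{-1}o\}$ from $\Lambda^-_v\setminus\{\ell^{-1}o,r^{-1}o\}$ in $\Lambda_v$, and then observe that a simple cycle missing a cut vertex cannot meet both sides.
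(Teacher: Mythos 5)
Your argument is correct and is essentially the argument behind this statement: the paper does not reprove it here (it is quoted from \cite{Artinmetric}), and the proof there rests on exactly the separation property you isolate --- every edge of $\Lambda_v$ has both endpoints in $\Lambda^+_v$ or both in $\Lambda^-_v$, with $\{\ell^{-1}o,r^{-1}o\}$ the only vertices common to the two sides --- followed by the same connectivity/cut-set argument. The adjacency facts you flag as the ``main obstacle'' (no real--real edges, Lemma~\ref{lem:edge type II}(2), and the type~I incidences recorded in Figure~\ref{f:real}) are precisely what the cited lemmas of \cite{Artinmetric} establish, so there is no gap.
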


Thus $\{\ell^{-1}o,r^{-1}o\}$ is called the \emph{necks} of $\Lambda_v$. The next two lemmas give more detailed description of situations (1) and (2) of Lemma~\ref{lem:cycle real} respectively.

\begin{lemma}	
	\label{lem:not two-full}
Suppose $\omega$ is a simple cycle in $\Lambda^+_v$ or $\Lambda^-_v$. Then $\omega$ is not $2$--full.
\end{lemma}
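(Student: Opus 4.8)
The plan is to use the explicit combinatorial description of the graph $\Lambda^+_v$ — from Lemma~\ref{lem:edge type II}, \cite[\lemfivetwo]{Artinmetric} and Figure~\ref{f:real} — to show directly that every simple cycle in $\Lambda^+_v$ has a local diagonal; the case of $\Lambda^-_v$ then follows by the symmetric argument, interchanging $a\leftrightarrow b$ and incoming $\leftrightarrow$ outgoing edges.

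First I would record the adjacency pattern of $\Lambda^+_v$. Its fake vertices $\di{0}o,\ldots,\di{n}o$ satisfy $\di{i}o\sim\di{j}o$ precisely when $1\le|i-j|\le n-2$; thus $\di{1}o,\ldots,\di{n-1}o$ span a clique, $\di{0}o$ is joined to exactly $\di{1}o,\ldots,\di{n-2}o$, and $\di{n}o$ to exactly $\di{2}o,\ldots,\di{n-1}o$, so in particular $\di{0}o\nsim\di{n}o$. The real vertices satisfy $a^o\sim\di{i}o$ for $0\le i\le n-1$ and $b^i\sim\di{i}o$ for $1\le i\le n$; equivalently, $a^o$ is joined to every fake vertex but $\di{n}o$ (the cell of $\di{n}o=r^{-1}o$ has $v$ as a right tip, hence carries no outgoing edge at $v$), and symmetrically $b^i$ is joined to every fake vertex but $\di{0}o$; and there is no edge joining $a^o$ and $b^i$. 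Consequently $K:=\{a^o,b^i,\di{1}o,\ldots,\di{n-1}o\}$ induces a complete graph with the single edge $a^ob^i$ removed.

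The crux is that a subgraph of $K$ containing no triangle has at most three vertices, and a triangle-free $3$-subset must be $\{a^o,b^i,\di{i}o\}$ for some $1\le i\le n-1$ (two vertices drawn from the clique $\di{1}o,\ldots,\di{n-1}o$ are mutually adjacent and adjacent to every other vertex of $K$, so with any third vertex they span a triangle). Now let $\omega$ be a simple cycle in $\Lambda^+_v$ with $|\omega|\ge 4$, and suppose for contradiction that it is $2$--full; then $\omega$ contains no triangle, so $|\omega\cap K|\le 3$, and since the only vertices of $\Lambda^+_v$ outside $K$ are $\di{0}o$ and $\di{n}o$ we get $|\omega|\le 5$. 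A short case check finishes the argument. If $|\omega|=4$: when $\omega\subseteq K$, three of its vertices are pairwise adjacent, producing a chord whose endpoints have a common neighbour in $\omega$ — a local diagonal; when $\omega\cap K=\{a^o,b^i,\di{i}o\}$ with fourth vertex $\di{0}o$ or $\di{n}o$, then $b^i\nsim\di{0}o$ (resp.\ $a^o\nsim\di{n}o$) together with $a^o\nsim b^i$ forces $b^i$ (resp.\ $a^o$) to have a single neighbour in $\omega$, impossible for a cycle; when $|\omega\cap K|\le 2$, then $\{\di{0}o,\di{n}o\}\subseteq\omega$ and, being non-adjacent, $\di{0}o$ and $\di{n}o$ are opposite in the $4$--cycle, so the remaining two vertices are common neighbours of $\di{0}o$ and $\di{n}o$, hence lie in $\{\di{2}o,\ldots,\di{n-2}o\}$, are therefore adjacent, and give a chord with common neighbour $\di{0}o$. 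If $|\omega|=5$, then necessarily $\omega=\{a^o,b^i,\di{i}o,\di{0}o,\di{n}o\}$; inside $\omega$ the only possible neighbours of $a^o$ are $\di{0}o,\di{i}o$ and those of $b^i$ are $\di{n}o,\di{i}o$, so $\omega$ must contain the path $\di{0}o-a^o-\di{i}o-b^i-\di{n}o$ and close with the nonexistent edge $\di{n}o\,\di{0}o$ — a contradiction. This covers all cases.

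The case analysis and the reduction of $\Lambda^-_v$ to $\Lambda^+_v$ by symmetry are routine; the one delicate point, and what I would verify most carefully, is pinning down the adjacency picture of $\Lambda^+_v$ exactly — that the fake vertices form a ``clique-interval'', that $a^o$ and $b^i$ attach to its initial and terminal length-$n$ blocks, and that $a^o\nsim b^i$ — since a mislabelling of the fake vertices would immediately break the argument.
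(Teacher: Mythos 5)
Your argument is correct in substance, and it takes a genuinely different route from the paper: the paper disposes of this lemma by citing \cite[\lemfivesix]{Artinmetric} and \cite[\lemfiveseven]{Artinmetric}, whereas you re-prove it as a self-contained finite check on the explicit graph $\Lambda^+_v$. Your adjacency picture is the right one under the paper's labelling conventions: the fake--fake edges are exactly those of Lemma~\ref{lem:edge type II}, and the type~I pattern you describe ($a^o$ attached to $\di{0}o,\ldots,\di{n-1}o$, $b^i$ attached to $\di{1}o,\ldots,\di{n}o$, and no edge $a^ob^i$) is the one recorded in Figure~\ref{f:real} and presupposed, e.g., by Lemma~\ref{lem:exactly pi} and the proof of Lemma~\ref{lem:real2}. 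Hence $K$ is indeed a clique minus the single edge $a^ob^i$, your enumeration of triangle-free subsets of $K$ is right, and the case analysis for $|\omega|=4,5$ is sound; what you gain over the paper is a short, verifiable argument in place of an external citation, at the cost of leaning on the precise link description. Two small caveats. First, the inference ``$\omega$ is $2$--full, hence its vertex set spans no triangle'' is immediate under the paper's phrasing of a local diagonal (an edge between two cycle vertices that have a common neighbour lying on the cycle); if one instead reads $2$--fullness as only forbidding chords between vertices at distance two along the cycle, this step needs a patch --- for instance, observe that then every vertex of a $2$--full cycle has non-adjacent cycle-neighbours, and since every non-edge of $\Lambda^+_v$ involves only $a^o,b^i,\di{0}o,\di{1}o,\di{n-1}o,\di{n}o$, one gets $|\omega|\le 6$ and the same kind of finite check finishes. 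Second, in the subcase $|\omega|=4$ with $|\omega\cap K|\le 2$, for $n=3,4$ the set $\{\di{2}o,\ldots,\di{n-2}o\}$ has fewer than two elements, so that configuration is simply impossible rather than exhibiting a chord; the conclusion is unaffected. (For $n=2$ the graphs $\Lambda^{\pm}_v$ contain no cycles at all, so the statement is vacuous, and your symmetry remark correctly reduces $\Lambda^-_v$ to $\Lambda^+_v$.)
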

This follows from \cite[\lemfivesix]{Artinmetric} and \cite[\lemfiveseven]{Artinmetric}.

\begin{lemma}	\cite[\lemfiveeleven]{Artinmetric}
	\label{lem:exactly pi}
	Suppose $v$ is real and $\omega$ is an edge path from $r^{-1}o$ to $\ell^{-1}o$. Then $\omega$ has angular length $\ge \pi$. 
	
	If $\omega$ has angular length $=\pi$, then either $\omega\subset\Lambda^+_v$ or $\omega\subset\Lambda^-_v$, and the following are the only possibilities of $\omega$ when $\omega\subset\Lambda^+_v$:
	\begin{enumerate}
		\item $\omega=r^{-1}o\to b^i\to d^{-1}_{1}o\to d^{-1}_{0}o$;
		\item $\omega=d^{-1}_{n}o\to d^{-1}_{n-1}o\to a^o\to \ell^{-1}o$;
		\item $\omega=d^{-1}_{i_1}o\to d^{-1}_{i_2}o\to\cdots\to d^{-1}_{i_k}o$ where $n=i_1>i_2>\cdots>i_k=0$.
	\end{enumerate}
	A similar statement holds for $\omega\subset\Lambda^-_v$.
\end{lemma}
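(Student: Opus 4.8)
~\textbf{Plan of proof.} The statement concerns edge paths in the link $\Lambda_v$ of a real vertex that run from one neck $r^{-1}o$ to the other neck $\ell^{-1}o$, and asserts a lower bound of $\pi$ on their angular length together with a classification of the minimal cases. The lower bound is quoted from \cite[\lemfiveeleven]{Artinmetric}, so the real content to verify is the classification of paths realizing angular length exactly $\pi$. The plan is to combine the structural information already assembled in this subsection: Lemma~\ref{lem:cycle real} tells us that a simple cycle either lies in $\Lambda^+_v$ (or $\Lambda^-_v$) or passes through both necks, and by attaching the (necessarily nonexistent, since no edge joins two real vertices — so actually a two-edge detour through the unique short connection) we can first argue that a minimal-length path from $r^{-1}o$ to $\ell^{-1}o$ must lie entirely in $\Lambda^+_v$ or entirely in $\Lambda^-_v$: if it used vertices from both halves it would have to cross through the two real vertices $b^i,a^o$ on one side or $b^o,a^i$ on the other, but Lemma~\ref{lem:edge type II}(2) forbids $d^{-1}_io\sim u^{-1}_jo$, so any such path would be forced into a longer configuration.

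First I would reduce, using the above, to analyzing paths $\omega\subset\Lambda^+_v$ from $r^{-1}o$ to $\ell^{-1}o$ (the argument for $\Lambda^-_v$ being identical by symmetry). The vertices available in $\Lambda^+_v$ are $b^i,a^o$ and the fake vertices $d^{-1}_0o,d^{-1}_1o,\ldots,d^{-1}_no$, with $r^{-1}o=d^{-1}_no$-adjacent vertices and $\ell^{-1}o=d^{-1}_0o$; here I use the identification $u_0=d_0=\ell$, $u_n=d_n=r$ from the setup. The adjacency and angular-length data are exactly: each type~I edge (real--fake) has angular length $\tfrac{n-1}{4n}2\pi$, and $d^{-1}_io\sim d^{-1}_jo$ precisely when $1\le|j-i|\le n-2$, with angular length $\tfrac{|j-i|}{2n}2\pi$. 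Next I would examine how $\omega$ can leave $r^{-1}o=d^{-1}_no$: either directly to a fake vertex $d^{-1}_jo$ with $2\le j\le n-1$ (an edge of length $\tfrac{n-j}{2n}2\pi$), or via the real vertex $b^i$ (since $d^{-1}_no$ is adjacent to $b^i$ in $\Lambda^+_v$), and symmetrically for how $\omega$ arrives at $\ell^{-1}o=d^{-1}_0o$: directly from some $d^{-1}_io$ with $1\le i\le n-2$, or via $a^o$.

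The heart of the argument is then a short optimization. A purely-fake path $d^{-1}_no\to d^{-1}_{i_2}o\to\cdots\to d^{-1}_0o$ has total angular length $\tfrac{2\pi}{2n}\sum|i_{k}-i_{k+1}|$, and since the indices go from $n$ down to $0$ the telescoping sum of absolute differences is minimized, equal to $n$, exactly when the sequence is strictly decreasing; that gives total length $\tfrac{2\pi}{2n}\cdot n=\pi$ and yields possibility (3). If instead $\omega$ uses $b^i$ near the start, I would note each type~I edge costs $\tfrac{n-1}{4n}2\pi$, so two of them cost $\tfrac{n-1}{n}\pi<\pi$ already, leaving a budget of only $\tfrac{1}{n}\pi=\tfrac{2\pi}{2n}$ for all remaining edges; since each fake--fake edge costs at least $\tfrac{2\pi}{2n}$, the path can use at most one more edge, and chasing which single fake--fake edge of minimal length connects the neighbors of $b^i$ and $a^o$ to $d^{-1}_no$ and $d^{-1}_0o$ pins down possibilities (1) and (2) (one forced type~I edge at each end plus the single length-$\tfrac{2\pi}{2n}$ fake edge $d^{-1}_1o\to d^{-1}_0o$ resp.\ $d^{-1}_no\to d^{-1}_{n-1}o$). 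A mixed path using one real vertex and a longer fake excursion would exceed $\pi$ by the same budget count. The main obstacle I anticipate is the careful bookkeeping in this last case — making sure that every way of inserting a real vertex and still achieving total length exactly $\pi$ is accounted for, and in particular ruling out paths that use a real vertex in the ``middle'' rather than at an end (these are excluded because $b^i$ and $a^o$ each have only the limited fake neighbors recorded in Figure~\ref{f:real}, forcing them to appear adjacent to a neck). Once the casework is organized by ``number of type~I edges used'' ($0$, giving case (3); or $2$, giving cases (1)--(2); odd numbers being impossible in a path between two fake vertices, and $\ge 4$ overshooting), the classification follows, and the $\Lambda^-_v$ statement is the mirror image.
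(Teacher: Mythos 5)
The paper never proves this lemma: it is imported verbatim from \cite[Lemma 5.11]{Artinmetric}, so there is no internal argument to compare yours against. On its own terms, your reconstruction is the natural one and its core is sound: organizing the casework by the number of type~I edges (necessarily even, since real vertices have only fake neighbours), the telescoping bound $\sum|i_k-i_{k+1}|\ge n$ with equality exactly for strictly decreasing index sequences gives case (3), and the budget count $2\cdot\tfrac{n-1}{4n}2\pi=\tfrac{n-1}{n}\pi$ leaving only $\tfrac{\pi}{n}$ — exactly one minimal type~II edge — is the right mechanism for pinning down cases (1)--(2) and excluding four or more type~I edges (for $n\ge 3$; note the $n=2$ case is degenerate, as there are then no type~II edges at all).

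Two points need tightening. First, your reduction to $\omega\subset\Lambda^+_v$ or $\omega\subset\Lambda^-_v$ is asserted rather than proved, and the sentence about ``crossing through the two real vertices'' is muddled: the clean statement is that there are no real--real edges and no $d$--$u$ fake edges (Lemma~\ref{lem:edge type II}(2)), so away from the necks (which are the endpoints and cannot be revisited on a length-$\pi$ path, by the $\ge\pi$ bound) the only way to switch families is through a real vertex; the same budget count then shows any such path either exceeds $\pi$ or is forced into the forms (1)--(2), which do lie in one half. Second, and more substantively, your identification of cases (1) and (2) rests entirely on the precise real--fake adjacencies ``recorded in Figure~\ref{f:real}'', which you take as given: concretely, which $d_i^{-1}o$ are neighbours of $b^i$ and of $a^o$. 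These adjacencies amount to the statement that the relevant real vertex lies on the boundary of the translated cell $d_i^{-1}\Pi$, and verifying them from the labelling conventions of $\partial\Pi$ (upper half $aba\cdots$, lower half $bab\cdots$) is exactly where care is required — for instance, whether $a^o\sim d_{n-1}^{-1}o$ holds depends on how the halves and the letters are matched up, i.e.\ on a parity bookkeeping that your write-up never performs. A complete proof should derive these adjacencies from the construction of $X_n$ (membership of $va^{\pm1},vb^{\pm1}$ in $\partial(d_i^{-1}\Pi)$, using Lemma~\ref{cor:connected intersection}) rather than read them off the figure; once that is done, your optimization argument does close the classification, and it yields the $\ge\pi$ lower bound as a by-product, so there is no need to quote that part separately.
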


Now we turn to the case when $v$ is a fake vertex. Up to the action of $DA_n$, we assume $v=o$. Vertices of $\Lambda_v$ consists of two classes. (1) Real vertices: these are the vertices in $\partial\Pi$. (2) Fake vertices: these are the fake vertices of cells $\Pi'$ such that $\Pi'\cap\Pi$ contain at least two edges.

By Lemma~\ref{cor:connected intersection}, $\Pi'\cap \Pi$ is a connected path such that it contains exactly one of the tips of $\Pi$ and it is properly contained in a half of $\partial\Pi$. If $\Pi'\cap \Pi$ is a path in the upper half (resp.\ lower half) of $\partial\Pi$ that has $m$ edges and contains the left tip, then we denote the fake vertex in $\Pi'$ by $L_m$ (resp.\ $L'_m$), and write $\Pi'=\Pi_{L_m}$ (resp.\ $\Pi'=\Pi_{L'_m}$). Note that such $\Pi'$ is unique by Lemma~\ref{lem:unique}. Similarly, we define $R_m$, $R'_m$, $\Pi_{R_m}$ and $\Pi_{R'_m}$ respectively; see Figure~\ref{f:fake}. 
\begin{figure}[ht!]
	\centering
	\includegraphics[width=1\textwidth]{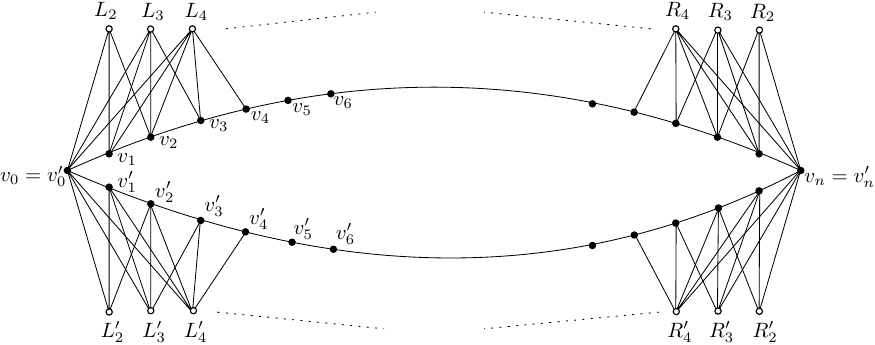}
	\caption{The link of a fake vertex.}
	\label{f:fake}
\end{figure}

The structure of $\Lambda_v$ will be easier to describe if we label vertices of $\partial\Pi$ differently as follows. The vertices in the upper half (resp.\ lower half) of $\partial\Pi$ are called $v_0,v_1,\ldots,v_n$ (resp.\ $v'_0,v'_1,\ldots,v'_n$) from left to right. Note that $v_0=v'_0$ and $v_n=v'_n$.

Edges of $\Lambda_v$ consist of three classes:
\begin{enumerate}
	\item \emph{Edges of type I}. They are edges between real vertices of $\Lambda_v$. Hence they are edges in $\partial\Pi$. Each of them has angular length $=\frac{1}{2n}2\pi$.
	\item \emph{Edges of type II}. They are edges between a real vertex and a fake vertex, and they are characterized by Lemma~\ref{lem:real fake} below.
	\item \emph{Edges of type III}. They are edges between fake vertices of $\Lambda_v$. We will not need information about them in this paper. The interested reader can find a description of them in \cite[\lemfivefourteen]{Artinmetric}.
\end{enumerate}
We refer to Figure~\ref{f:fake} for a picture of $\Lambda_v$. Edges of type I and some edges of type II are drawn. Edges of type III are not drawn in the picture.

\begin{lemma}\
	\label{lem:real fake}
	\begin{enumerate}
		\item The collection of vertices in $\partial\Pi$ adjacent to $L_i$ (resp.\ $L'_i$) is $\{v_0,v_1,\ldots,\\v_i\}$ (resp.\ $\{v'_0,v'_1,\ldots,v'_i\}$). 
		\item The collection of vertices in $\partial\Pi$ adjacent to $R_i$ (resp.\ $R'_i$) is $\{v_n,v_{n-1},\\ \ldots,v_{n-i}\}$ (resp.\ $\{v'_n,v'_{n-1},\ldots,v'_{n-i}\}$). 
		\item The angular length of any edge between $L_i$ and a real vertex of $\Lambda_v$ is $\frac{i}{4n}2\pi$. The same holds with $L_i$ replaced by $L'_i,R_i$ and $R'_i$.
	\end{enumerate}
\end{lemma}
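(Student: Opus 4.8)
\textbf{Plan for the proof of Lemma~\ref{lem:real fake}.}

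The plan is to unwind the definitions and reduce everything to the combinatorics of how precells intersect the base cell $\Pi$, using Lemma~\ref{cor:connected intersection} and Lemma~\ref{lem:unique} together with the angular-length formula of Definition~\ref{def:length}. First I would treat part (1) for $L_i$ (the cases $L'_i,R_i,R'_i$ being symmetric under relabeling the two halves of $\partial\Pi$ and reflecting left/right). By definition $L_i$ is the fake vertex $o'$ of the unique cell $\Pi'=\Pi_{L_i}$ with $\Pi\cap\Pi'$ equal to the sub-path of the upper half of $\partial\Pi$ consisting of the $i$ edges $\overline{v_0v_1},\overline{v_1v_2},\dots,\overline{v_{i-1}v_i}$. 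A real vertex $w\in\partial\Pi$ is adjacent to $L_i$ in $\Lambda_v=\lk(o,X_n^{(2)})$ precisely when $o$, $o'$ and $w$ span a $2$--simplex of $X_n$, which by the flag condition happens iff the three edges $\overline{oo'}$, $\overline{ow}$, $\overline{o'w}$ are all present in $X_n^{(1)}$. The edge $\overline{oo'}$ exists by construction since $|\Pi\cap\Pi'|=i\ge 2$ (the case $i=1$ being handled separately, where $\Pi\cap\Pi'$ is a single edge and $L_1$ is still adjacent to the fake vertices of $\Pi$; one checks the statement directly there). The edge $\overline{ow}$ exists iff $w\in\partial\Pi$, always true. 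The edge $\overline{o'w}$ exists iff $w\in\partial\Pi'$; so adjacency holds iff $w\in\partial\Pi\cap\partial\Pi'$. Thus I must identify $\partial\Pi\cap\partial\Pi'$: the intersection $\Pi\cap\Pi'$ is a path with endpoints at $v_0$ and $v_i$, so its vertex set is exactly $\{v_0,v_1,\dots,v_i\}$, and I need that these are precisely the boundary vertices common to $\Pi$ and $\Pi'$. Here I would invoke Lemma~\ref{cor:connected intersection}(2): since $\Pi\cap\Pi'$ is properly contained in a half of $\partial\Pi$ (and of $\partial\Pi'$), no vertex of $\partial\Pi$ outside this sub-path can lie on $\partial\Pi'$ — otherwise the intersection would be disconnected, contradicting Lemma~\ref{cor:connected intersection}(1), or would fail to be contained in a single half. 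This gives part (1); part (2) is the mirror statement obtained by swapping the roles of left and right tips.

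For part (3), the angular length of an edge $\overline{L_i\,w}$ with $w$ real is, by Definition~\ref{def:length}, the apex angle at $o$ (equivalently, we are told in the preamble that each such type-II edge has a well-defined angular length which I will compute from the Euclidean triangle $\Delta(o,o',w)$). The cleanest route is: the edge $\overline{oo'}$ has length $\phi\!\left(\frac{n-i}{2n}2\pi\right)$ by Definition~\ref{def:length} (applied with $|\Pi\cap\Pi'|=i$), the edges $\overline{ow}$ and $\overline{o'w}$ have length $1$ (edges between a fake and a real vertex), so $\Delta(o,o',w)$ is the isosceles triangle with two unit sides and base $\phi\!\left(\frac{n-i}{2n}2\pi\right)$; by the defining property of $\phi$, its apex angle at $o$ equals $\frac{n-i}{2n}2\pi$. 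But the angular length of $\overline{L_i w}$ as an edge in the \emph{link of} $o$ is this apex angle — wait, I should be careful: the three sides of the simplex $\{o,o',w\}$ give a link-triangle in $\lk(o,X_n^{(2)})$ whose three edges have angular lengths equal to the three face-angles of $\Delta(o,o',w)$ at the vertex $o$ — no, rather the link-edge $\overline{L_i w}$ has angular length equal to $\angle_o(o',w)$. Hmm, that is the apex angle, which I just computed to be $\frac{n-i}{2n}2\pi$, not $\frac{i}{4n}2\pi$. So this is not the right triangle to use. Instead, the correct triangle is the original, unmodified one inside the precell: $w$ is a boundary vertex of $\Pi$ at distance (along the fan from $o$) determined by its position, and the edge $\overline{L_i w}$ in $\lk(o)$ is really recording the angle subtended between two consecutive real boundary vertices scaled by the number of edges — more precisely, I expect $\angle_o(v_j, L_i)$ to be computed by summing the $\frac{1}{2n}2\pi$ contributions. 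The honest approach: reduce to known computations. By \cite[\lemfivetwo]{Artinmetric} (used in the real-vertex case, \ref{f:real}) every type-I edge in the link of a fake vertex has angular length $\frac{1}{2n}2\pi$ — this is part (1) of the ``edges of type I'' description just above the lemma. And the type-II edge $\overline{L_i w}$ should be half the ``width'' of $\Pi'$ as seen from $o$, which by the same Euclidean-triangle computation as in \cite[\lemfivetwo]{Artinmetric} is $\frac{i}{4n}2\pi$ — because $\Pi'$ spans $i$ edges of $\partial\Pi'$ adjacent to the relevant tip, each contributing $\frac{1}{2n}2\pi$ to the ``full angle'' $\frac{i}{2n}2\pi$ at $o'$, and the edge in $\lk(o)$ picks up half of the angle $\angle_o(v_0,v_i)=\frac{i}{2n}2\pi$... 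I would pin this down by the explicit triangle $\Delta(o, o', v_j)$ for $v_j$ an \emph{endpoint} of $\Pi\cap\Pi'$, where by flatness of the development $\angle_o(o', v_j) = \frac12\angle_o(v_0,v_i)=\frac{i}{4n}2\pi$, and then note all other $v_j\in\{v_0,\dots,v_i\}$ give the same value by the intuitive flatness construction in the Remark after Definition~\ref{def:length} (which arranges exactly $\angle_{o}(v_j, o')=\angle_o(o', v_{j'})$). This is the routine-but-fiddly computation I would cite rather than redo, pointing to \cite[\lemfivetwo,\ \lemfivefourteen]{Artinmetric}.

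\textbf{Main obstacle.} The conceptual content — which vertices are adjacent — is an easy consequence of Lemma~\ref{cor:connected intersection} and the flag condition, and I expect no real difficulty there. The genuine friction is in part (3): getting the angular lengths exactly right, keeping straight the difference between the apex angle of the modified Euclidean triangle $\Delta(o,o',w)$ (which involves $\phi$ and the ``missing'' $\frac{n-i}{2n}2\pi$) and the angular length of the \emph{link}-edge $\overline{L_i w}$ (which should come out to $\frac{i}{4n}2\pi$), and confirming that this value is independent of which real vertex $w\in\{v_0,\dots,v_i\}$ is chosen — the latter being exactly the point of the flatness arrangement in the Remark after Definition~\ref{def:length}. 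I would handle this by reducing to the verbatim computation already carried out in \cite[\lemfivetwo]{Artinmetric} for type-I edges and to \cite[\lemfivefourteen]{Artinmetric} for the fake-fake edges, so that part (3) becomes a short citation plus a one-line consistency check rather than a fresh trigonometric derivation.
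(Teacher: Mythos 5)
Your treatment of parts (1) and (2) is fine and is exactly what the paper intends: unwind the definition of $L_i$ and of the three kinds of edges of $X_n$, and use Lemma~\ref{cor:connected intersection} to see that the real vertices adjacent to $L_i$ are precisely the vertices of the path $\Pi\cap\Pi_{L_i}$. (One aside is wrong, though harmless here: if $\Pi\cap\Pi'$ has a single edge there is no edge between the two fake vertices at all, so $L_1$ is not a vertex of $\Lambda_v$; the lemma only concerns $2\le i\le n-1$.)

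The genuine gap is in part (3), and it comes from misplacing the angle in the triangle you had correctly set up and then discarded. In the $2$--simplex $\{o,o',w\}$, with $o'=L_i$ and $w\in\{v_0,\dots,v_i\}$, the two sides of length $1$ are $\overline{wo}$ and $\overline{wo'}$, so the vertex at which the two unit sides meet is $w$, not $o$; by the defining property of $\phi$ it is therefore $\angle_w(o,o')$ that equals $\frac{n-i}{2n}2\pi$, while the quantity relevant to the link edge $\overline{L_i\,w}$, namely $\angle_o(o',w)$, is a base angle of this isosceles triangle:
\[
\angle_o(o',w)=\tfrac12\Bigl(\pi-\tfrac{(n-i)\pi}{n}\Bigr)=\tfrac{i\pi}{2n}=\tfrac{i}{4n}2\pi .
\]
This one-line computation is the whole proof of (3) (it is what the paper means by ``(3) follows from Definition~\ref{def:length}''), and it gives the same value for every $w$ simply because all the simplices $\{o,L_i,v_j\}$, $0\le j\le i$, have the same three side lengths $1,1,\phi\bigl(\frac{n-i}{2n}2\pi\bigr)$. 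The replacement argument you offer instead does not close the gap: the bisection claim $\angle_o(o',v_j)=\frac12\angle_o(v_0,v_i)$ rests on the heuristic flat development of the Remark after Definition~\ref{def:length}, and taken literally that planar picture gives an angle strictly smaller than $\frac{i}{4n}2\pi$ for interior vertices $v_j$ (the constancy in $j$ is a feature of the simplexwise Euclidean metric on $X_n^{(2)}$, not of any planar development); and the results you propose to cite, \cite[\lemfivetwo]{Artinmetric} and \cite[\lemfivefourteen]{Artinmetric}, compute type I edges at a real vertex and fake--fake edges respectively, neither of which is the real--fake angular length needed here. So part (3) requires the isosceles-triangle computation above, not the appeal to flatness with which you end.
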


Here (1) and (2) follow from the definition of $L_i$ and $R_i$. (3) follows from Definition~\ref{def:length}.

Let $\Lambda^+_v$ be the full subgraph of $\Lambda_v$ spanned by $$\{v_0,v_1,\ldots,v_n\}\cup \{L_2,L_3,\ldots,L_{n-1}\}\cup \{R_2,R_3,\ldots,R_{n-1}\}.$$ Let $\Lambda^-_v$ be the full subgraph of $\Lambda_v$ spanned by $$\{v'_0,v'_1,\ldots,v'_n\}\cup \{L'_2,L'_3,\ldots,L'_{n-1}\}\cup \{R'_2,R'_3,\ldots,R'_{n-1}\}.$$

The following is proved in \cite[\lemfivesixteen]{Artinmetric} and \cite[\lemfiveseventeen]{Artinmetric}.
It says that
Lemma~\ref{lem:cycle real} and Lemma~\ref{lem:not two-full} continue to hold also in the case of $v$ being fake ($r^{-1}o$ and $\ell^{-1}o$ in the statement of Lemma~\ref{lem:cycle real} should be replaced by $v_0$ and $v_n$).
\begin{lemma}
	\label{lem:not two-full1}
	Suppose $\omega\subset \Lambda_v$ is a simple cycle. Then at least one of the following two situations happen:
	\begin{enumerate}
		\item $\omega\subset\Lambda^+_v$ or $\omega\subset\Lambda^-_v$;
		\item $v_0,v_n\in\omega$.
	\end{enumerate} 
	If $\omega$ is a simple cycle in $\Lambda^+_v$ or $\Lambda^-_v$ then $\omega$ is not $2$--full.		
\end{lemma}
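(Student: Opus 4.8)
The plan is to reduce the statement for a fake vertex $v=o$ to the already-established analogue for real vertices, by exhibiting an explicit combinatorial correspondence between $\Lambda_v$ and the link of a real vertex. Concretely, the data of $\Lambda^+_v$ is: the path $v_0v_1\cdots v_n$ (edges of type I, each of angular length $\tfrac{1}{2n}2\pi$), the fake vertices $L_2,\dots,L_{n-1},R_2,\dots,R_{n-1}$, with $L_i$ joined to $\{v_0,\dots,v_i\}$ and $R_i$ to $\{v_n,\dots,v_{n-i}\}$ (each such edge of angular length $\tfrac{i}{4n}2\pi$), together with edges of type III among the fake vertices whose precise description is in \cite[\lemfivefourteen]{Artinmetric}. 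I would compare this with $\Lambda^+_u$ for $u$ a real vertex, whose vertex set is $\{b^i,a^o,\di{0}o,\dots,\di{n}o\}$, where $\di{i}o\sim\di{j}o$ iff $1\le|j-i|\le n-2$ with angular length $\tfrac{j-i}{2n}2\pi$, and the type-I edges $b^i\sim\di{n-1}o$, $a^o\sim\di{1}o$, etc., each of angular length $\tfrac{n-1}{4n}2\pi$. The point is that both subgraphs are "linear" objects of bounded complexity, and Lemma~\ref{lem:not two-full} for the real case is really a statement about the combinatorics of such a graph.

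The two pieces to prove are: (i) every simple cycle $\omega\subset\Lambda_v$ either lies in $\Lambda^+_v$ or $\Lambda^-_v$ or contains both $v_0$ and $v_n$; and (ii) no simple cycle inside $\Lambda^+_v$ or $\Lambda^-_v$ is $2$--full. For (i), the key structural fact is that the only vertices of $\Lambda_v$ adjacent to vertices of both "halves" are $v_0$ and $v_n$: a vertex $v_i$ with $0<i<n$ lies only in the upper half, a vertex $v'_i$ with $0<i<n$ lies only in the lower half, and by Lemma~\ref{lem:real fake} each fake vertex $L_i,R_i$ (resp. $L'_i,R'_i$) is adjacent among real vertices only to upper-half (resp. lower-half) vertices; one still must check, via the type-III description in \cite[\lemfivefourteen]{Artinmetric}, that no fake vertex of $\Lambda^+_v$ is adjacent to a fake vertex of $\Lambda^-_v$ — but this follows because such an adjacency would force the corresponding two precells $\Pi_{L_i}$ and $\Pi_{L'_j}$ to intersect in $\ge 2$ edges, contradicting Lemma~\ref{cor:connected intersection}(2) together with Lemma~\ref{lem:small intersection} applied with $\Pi_1=\Pi$. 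Hence $\Lambda_v\setminus\{v_0,v_n\}$ is disconnected into the part spanned by $\Lambda^+_v\setminus\{v_0,v_n\}$ and the part spanned by $\Lambda^-_v\setminus\{v_0,v_n\}$, and a simple cycle either avoids $\{v_0,v_n\}$ (hence lies in one side) or contains at least one of $v_0,v_n$; a short argument on simple cycles shows that if it contains exactly one of them it still lies in a single side (a cycle meeting a cut vertex only once stays on one side), so the only way to use both sides is to contain both $v_0$ and $v_n$. This is exactly the content of \cite[\lemfivesixteen]{Artinmetric}, which I would simply cite, but I would reprove the essential separation claim if a self-contained argument is wanted.

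For (ii), I would argue as in the real case: a simple cycle $\omega$ in $\Lambda^+_v$ alternates between stretches along the path $v_0\cdots v_n$ and "jumps" through fake vertices $L_i$ or $R_i$. Because $L_i$ sees the initial segment $v_0,\dots,v_i$ and $R_i$ sees the terminal segment $v_{n-i},\dots,v_n$, any simple cycle using two fake vertices, or one fake vertex together with a stretch of the path, necessarily has a local diagonal: e.g. if $\omega$ passes $v_{j-1}\to v_j\to v_{j+1}$ along the path and also visits some $L_i$ with $i\ge j+1$, then $L_i$ is adjacent to all of $v_{j-1},v_j,v_{j+1}$, giving a chord $\overline{v_{j-1}v_{j+1}}$ of $\omega$ whose endpoints share the common neighbour $v_j$ in $\omega$ — a local diagonal — so $\omega$ is not $2$--full; the remaining cases (two fake vertices $L_i,L_k$ with $i<k$, where $L_k$ then dominates $L_i$'s real neighbours, or $L_i$ and $R_k$ etc.) are handled by the same domination/nesting observation from Lemma~\ref{lem:real fake}, plus checking the type-III edges force analogous chords. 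The main obstacle here is the bookkeeping around type-III edges among the fake vertices, since the paper deliberately suppresses their description; I expect one must either invoke \cite[\lemfivefourteen]{Artinmetric} to list them and check each configuration produces a local diagonal, or — more cleanly — observe that the whole lemma is asserted in the excerpt to be "proved in \cite[\lemfivesixteen]{Artinmetric} and \cite[\lemfiveseventeen]{Artinmetric}", so the proof here is genuinely a citation plus a sentence explaining the substitution $r^{-1}o\leftrightarrow v_0$, $\ell^{-1}o\leftrightarrow v_n$ and why the separation and non-$2$--fullness arguments of Lemma~\ref{lem:cycle real} and Lemma~\ref{lem:not two-full} transfer verbatim.
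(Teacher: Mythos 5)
The paper gives no argument for this statement at all: Lemma~\ref{lem:not two-full1} is simply quoted from \cite[\lemfivesixteen]{Artinmetric} and \cite[\lemfiveseventeen]{Artinmetric}, so your bottom line --- a citation plus the dictionary $r^{-1}o\leftrightarrow v_0$, $\ell^{-1}o\leftrightarrow v_n$ --- is exactly what the paper does. Moreover, your reconstruction of the first assertion is sound: by Lemma~\ref{lem:real fake} no fake vertex of one half is adjacent to a non-neck real vertex of the other half, and a type III edge between a fake vertex of $\Lambda^+_v$ and one of $\Lambda^-_v$ would force the two corresponding cells to share at least two edges, which Lemma~\ref{lem:small intersection} (with $\Pi_1=\Pi$) forbids; hence $v_0,v_n$ are the only vertices common to the two halves and the cut-vertex argument applies.

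One concrete slip in your sketch of the non-$2$-fullness part: the configuration $v_{j-1}\to v_j\to v_{j+1}$ on the cycle together with some $L_i$, $i\ge j+1$, does not yield a local diagonal $\overline{v_{j-1}v_{j+1}}$, because that edge does not exist --- in the link of a fake vertex the only edges between real vertices are the type I edges of $\partial\Pi$, i.e.\ between consecutive $v_j$'s, and adjacency of both $v_{j-1}$ and $v_{j+1}$ to $L_i$ does not create an edge between them. A local diagonal must be an actual edge of the link joining two cycle vertices at distance two along the cycle; the usable diagonals are of the shape $\overline{v_av_{a+1}}$ when the cycle contains $v_a\to L_i\to v_{a+1}$, or $\overline{L_iv_{b-1}}$ when it contains $L_i\to v_b\to v_{b-1}$ with $b-1<b\le i$ (and the analogous statements for $R_i$), or type III fake--fake edges. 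Carrying this out in all cases genuinely requires the type III adjacency list of \cite[\lemfivefourteen]{Artinmetric} --- exactly the bookkeeping you concede you have not done. Since you ultimately defer to the citation, your final answer matches the paper, but the displayed ``chord'' step as written would fail if you tried to make the sketch self-contained.
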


We call $\{v_0,v_n\}$ the \emph{necks} of $\Lambda_v$.
The following is parallel to Lemma~\ref{lem:exactly pi}.

\begin{lemma} 	\cite[\lemfivenineteen]{Artinmetric}
	\label{lem:exactly pi1}
Suppose $v$ is fake and $\omega$ is an edge path in $\Lambda_v$ from $v_0$ to $v_n$. Then $\omega$ has angular length $\ge\pi$.

If $\omega$ has angular length $=\pi$, then either $\omega\subset\Lambda^+_v$ or $\omega\subset\Lambda^-_v$, and the following are the only possibilities of $\omega$ when $\omega\subset\Lambda^+_v$:
	\begin{enumerate}
		\item $\omega$ does not contain fake vertices, i.e.\ $\omega=v_0\to v_1\to\cdots\to v_n$;
		\item $\omega=v_0\to v_1\to\cdots\to v_{n-i_1}\to R_{i_1}\to\cdots \to R_{i_m}\to v_n$, where $i_1>\cdots>i_m\ge2$;
		\item $\omega=v_0\to L_{i_1}\to\cdots\to L_{i_m}\to v_{i_m}\to v_{i_m+1}\to\cdots \to v_n$, where $2\le i_1<\cdots< i_m$;
		\item $\omega=v_0\to L_{i_1}\to\cdots\to L_{i_m}\to v_{i_m}\to v_{i_{m+1}}\to\cdots \to v_{n-i'_1}\to R_{i'_1}\to\cdots \to R_{i'_{m'}}\to v_n$, where $2\le i_1<\cdots<i_m$, $i'_{1}>\cdots>i'_{m'}\ge 2$, and $i_m\le n-i'_1$.
	\end{enumerate}
	A similar statement holds when $\omega\subset\Lambda^-_v$.
\end{lemma}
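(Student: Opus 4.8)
The plan is to transport the problem to the link of a real vertex, where the analogous statement (Lemma~\ref{lem:exactly pi}) is already available. The key observation is that the combinatorics of $\Lambda_v$ for $v$ fake is governed by how the cells $\Pi_{L_m},\Pi_{L'_m},\Pi_{R_m},\Pi_{R'_m}$ meet the base cell $\Pi$, and this is formally parallel to the description of $\Lambda_w$ for a real vertex $w$, with the roles of the tips played by $v_0$ and $v_n$. Concretely, I would first record from Lemma~\ref{lem:real fake} the angular lengths: an edge of type~I in $\partial\Pi$ has length $\tfrac{1}{2n}2\pi$, and an edge from $L_i$ (or $L'_i,R_i,R'_i$) to a real vertex has length $\tfrac{i}{4n}2\pi$; there is no edge joining $v_j$ to $v'_k$ except at the necks, and (by Lemma~\ref{lem:not two-full1}) any simple cycle either avoids one neck or lies in $\Lambda^+_v$ or $\Lambda^-_v$. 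The lower bound $\length_\angle(\omega)\ge\pi$ then follows exactly as in \cite[\lemfivenineteen]{Artinmetric}: one completes $\omega$ (say, running from $v_0$ to $v_n$) by the arc $v_n\to v_{n-1}\to\cdots\to v_1\to v_0$ of total angular length $\pi$, obtaining a cycle in $\Lambda_v$; if $\length_\angle(\omega)<\pi$ this cycle has angular length $<2\pi$, and one derives a contradiction by exhibiting inside it a $2$--full subcycle (using that the complementary arc is a geodesic in $\Lambda^+_v$ and that local diagonals are excluded by the description of type~II edges), contradicting local $2\pi$--largeness of $X_n$, i.e.\ Definition~\ref{d:metric_syst} together with the fact (from \cite{Artinmetric}) that $X_n$ is metrically systolic.

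For the equality case, suppose $\length_\angle(\omega)=\pi$. The first step is to show $\omega$ cannot pass through both necks $v_0$ and $v_n$ as interior vertices — but since $\omega$ \emph{starts} at $v_0$ and \emph{ends} at $v_n$, what I really need is that $\omega$ cannot re-enter the ``other half'': if $\omega$ contained some $v'_j$ with $0<j<n$, then since $v'_j$ is joined to $\Lambda^+_v$ only through $v_0$ or $v_n$, the path $\omega$ would have to traverse one of the necks twice, contradicting simplicity (here $\omega$ is a simple path because it realizes a geodesic). Hence $\omega\subset\Lambda^+_v$ or $\omega\subset\Lambda^-_v$; assume the former. Now I enumerate the geodesics from $v_0$ to $v_n$ in $\Lambda^+_v$. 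Because every edge incident to some $L_i$ or $R_i$ has length a multiple of $\tfrac{1}{4n}2\pi$ while edges $v_jv_{j+1}$ have length $\tfrac{1}{2n}2\pi$, a length-$\pi$ path corresponds to a total ``weight'' of exactly $2n$ in units of $\tfrac{\pi}{2n}$, and I can run a short combinatorial optimization: entering a vertex $L_i$ costs $\tfrac{i}{2}$ units and requires arriving from some $v_j$ with $j\le i$, leaving it to $L_{i'}$ costs $\tfrac{i'}{2}$ with $i<i'$ forced by adjacency (Lemma~\ref{lem:real fake}(1) says $L_i\sim v_j$ iff $j\le i$, so the only way $L_i\sim L_{i'}$ contributes to a geodesic is with the indices monotone), etc. Tracking the constraints $2\le i_1<\cdots<i_m$ on the left block, $i'_1>\cdots>i'_{m'}\ge 2$ on the right block, the matching condition $i_m\le n-i'_1$, and the requirement that consecutive real vertices $v_j$ be traversed by unit edges, yields exactly the four listed shapes (1)--(4); the degenerate cases where the left block or the right block is empty give (1), (2), (3).

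The main obstacle I anticipate is the equality-case bookkeeping: one must be careful that the enumeration is exhaustive, in particular that no geodesic mixes type~III edges (fake--fake) in a way that beats the listed paths. I would handle this by invoking \cite[\lemfivefourteen]{Artinmetric} for the description of type~III edges and checking that any such edge, together with the angular-length accounting, either forces the path off a geodesic or can be rerouted through real vertices without changing the angular length, reducing to the cases already listed — this is the analogue of the corresponding step in the proof of Lemma~\ref{lem:exactly pi}, and I expect it to go through \emph{mutatis mutandis} since the two links have parallel structure. A secondary (minor) point is verifying that a minimal-length path is automatically simple and does not backtrack, which follows from the strict triangle inequality for angular lengths assumed in the definition of a metric simplicial complex.
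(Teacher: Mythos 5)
The first thing to note is that the paper does not prove this lemma at all: it is quoted verbatim from \cite[\lemfivenineteen]{Artinmetric}, where it is established by a direct computation with the edge lengths of Definition~\ref{def:length}, \emph{before} and \emph{as an ingredient of} the proof that $X_n$ is locally $2\pi$--large (see the sentence preceding Theorem~\ref{thm:X_n}). Your lower-bound step inverts this logic: you invoke metric systolicity of $X_n$ to rule out a closed-up cycle of angular length $<2\pi$, which is circular within the development of these two papers. Moreover, even if systolicity were granted from elsewhere, the contradiction you describe cannot be extracted: $2\pi$--largeness constrains only $2$--full simple cycles, and when $\omega\subset\Lambda^+_v$ the cycle obtained by closing $\omega$ with the boundary arc $v_n\to\cdots\to v_0$ lies in $\Lambda^+_v$, where by Lemma~\ref{lem:not two-full1} no simple cycle is ever $2$--full; adding local diagonals only collapses it to triangles, and no $2$--full subcycle of length $<2\pi$ ever appears. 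So the bound $\length_\angle(\omega)\ge\pi$ has to come from a direct metric argument in $\Lambda_v$ (e.g.\ a monotone angle function on $\Lambda^{\pm}_v$), which requires knowing the angular lengths of the type III (fake--fake) edges — information this paper explicitly omits and you never actually obtain.

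The same missing ingredient breaks your equality-case bookkeeping. Cases (2)--(4) of the lemma consist largely of fake--fake edges $L_{i_k}\to L_{i_{k+1}}$ and $R_{i_k}\to R_{i_{k+1}}$, whose lengths are not covered by Lemma~\ref{lem:real fake} (that lemma only treats real--fake edges). Your accounting, charging on the order of $i'/2$ units for the step $L_i\to L_{i'}$, would make any path through two or more fake vertices strictly longer than $\pi$, contradicting the very list you must recover; the value consistent with the equality cases (and with the computation in \cite[\lemfivefourteen]{Artinmetric} and its sequel) is $\frac{(i'-i)\pi}{2n}$ for $L_i\to L_{i'}$, $i<i'$. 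Your fallback — rerouting a type III edge through real vertices ``without changing the angular length'' — is also false: replacing $L_i\to L_{i'}$ by $L_i\to v_j\to\cdots\to v_{i'}\to L_{i'}$ strictly increases the length by the figures in Lemma~\ref{lem:real fake}, which is precisely why cases (2)--(4) are genuinely distinct from case (1) and cannot be reduced to it. The part of your argument forcing $\omega\subset\Lambda^+_v$ or $\Lambda^-_v$ (no edge joins the interiors of the two halves, so a simple path of length exactly $\pi$ cannot revisit a neck) is sound in outline, provided one also rules out type III edges between the halves via Lemma~\ref{lem:small intersection}; but without the correct type III lengths neither the lower bound nor the enumeration goes through, so the proposal has a genuine gap rather than a repairable slip.
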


\begin{remark}
	\label{rmk:non tip}
Note that in all the cases (1)--(4) of Lemma~\ref{lem:exactly pi1}, for any two non-neck vertices $u_1,u_2\in\omega$ with $u_1$ real and $u_2$ fake, either $u_1$ is a tip of the cell $\Pi_{u_2}$ that contains $u_2$, or $u_1\notin \Pi_{u_2}$. This statement can be deduced from Lemma~\ref{lem:real fake} and Lemma~\ref{cor:connected intersection}.
\end{remark}

It follows from Lemma~\ref{lem:cycle real}, Lemma~\ref{lem:not two-full}, Lemma~\ref{lem:exactly pi}, Lemma~\ref{lem:not two-full1} and Lemma~\ref{lem:exactly pi1} that $X_n$ is locally $2\pi$--large (cf.\ Definition~\ref{d:metric_syst}). Moreover, we showed in \cite[\lemsixthree]{Artinmetric} that $X_n$ is simply-connected. 

\begin{theorem}
	\label{thm:X_n}
$X_n$ is metrically systolic.
\end{theorem}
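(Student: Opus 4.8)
The statement to prove is that $X_n$ is metrically systolic, i.e.\ that it is a simply connected, locally $2\pi$--large metric simplicial complex in the sense of Definition~\ref{d:metric_syst}. The plan is to verify the three ingredients separately: (i) $X_n$ is a metric simplicial complex (flag, with the weak triangle inequality for angular lengths of edges in every link); (ii) every link is $2\pi$--large; (iii) $X_n$ is simply connected. Items (i) and (iii) are essentially already recorded in \cite{Artinmetric}: flagness holds by construction (we took the flag completion after adding the fake--fake edges), the strict triangle inequality for the three sides of each $2$--simplex of $X_n$ is \cite[\lemfourseven]{Artinmetric}, so $\sk{X_n}{2}$ is a genuine piecewise Euclidean complex and each angle $\angle_v(v_1,v_2)$ is well defined; the weak triangle inequality for angular lengths around pairwise adjacent vertices then follows because in an actual Euclidean simplex the face angles satisfy that inequality. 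Simple connectedness is \cite[\lemsixthree]{Artinmetric}. So the substance of the proof is item (ii): the local $2\pi$--large condition.

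For the local condition I would argue that every $2$--full simple cycle $\omega$ in a link $\Lambda_v$ has angular length at least $2\pi$, splitting into the real--vertex case and the fake--vertex case. In both cases the structural lemmas of Section~\ref{subsec:local} do the work. By Lemma~\ref{lem:cycle real} (for $v$ real) or Lemma~\ref{lem:not two-full1} (for $v$ fake), a simple cycle $\omega$ either lies inside one of the ``halves'' $\Lambda^+_v$, $\Lambda^-_v$, or it passes through both necks ($\{\ell^{-1}o, r^{-1}o\}$, resp.\ $\{v_0, v_n\}$). In the first alternative, Lemma~\ref{lem:not two-full} (resp.\ the last sentence of Lemma~\ref{lem:not two-full1}) tells us $\omega$ is \emph{not} $2$--full, so there is nothing to check: such cycles are simply excluded from the definition. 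Hence it remains to handle the second alternative, where $\omega$ contains both necks. In that case $\omega$ is a union of two edge paths, each running from one neck to the other, and Lemma~\ref{lem:exactly pi} (for $v$ real) or Lemma~\ref{lem:exactly pi1} (for $v$ fake) says each such path has angular length $\ge \pi$. Adding the two contributions gives $\length_\angle(\omega)\ge \pi + \pi = 2\pi$, which is exactly what is required. This closes the local $2\pi$--large verification, and combined with simple connectedness yields that $X_n$ is metrically systolic.

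The main obstacle is conceptual bookkeeping rather than a hard new estimate: one must be careful that the two neck-to-neck subpaths of $\omega$ are genuinely disjoint away from the necks (so that their angular lengths add without double counting), and that the ``$2$--full'' hypothesis is what lets us discard the intra-half cycles via Lemma~\ref{lem:not two-full} / Lemma~\ref{lem:not two-full1}. One also needs to make sure that the case analysis in Lemmas~\ref{lem:cycle real} and \ref{lem:not two-full1} is genuinely exhaustive for \emph{simple} cycles and that a $2$--full cycle is in particular simple, so that these lemmas apply. Once that is in place, the argument is just an addition $\pi+\pi=2\pi$, and all the genuinely delicate combinatorics about how precells intersect has already been absorbed into the cited lemmas from \cite{Artinmetric}. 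Thus the proof is short modulo those inputs, and I expect the write-up to consist mainly of assembling Lemmas~\ref{lem:cycle real}--\ref{lem:exactly pi1} together with the observations on flagness, the triangle inequality \cite[\lemfourseven]{Artinmetric}, and simple connectedness \cite[\lemsixthree]{Artinmetric}.
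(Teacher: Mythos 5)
Your proposal is correct and takes essentially the same route as the paper: the text deduces local $2\pi$--largeness precisely by combining Lemma~\ref{lem:cycle real}, Lemma~\ref{lem:not two-full}, Lemma~\ref{lem:exactly pi}, Lemma~\ref{lem:not two-full1} and Lemma~\ref{lem:exactly pi1} (a $2$--full simple cycle cannot lie in $\Lambda^+_v$ or $\Lambda^-_v$, hence passes through both necks and decomposes into two neck-to-neck paths of angular length $\ge\pi$ each), and then invokes simple connectedness from \cite[\lemsixthree]{Artinmetric}. The remaining structural inputs (flagness by construction, the triangle inequality for $2$--simplices via \cite[\lemfourseven]{Artinmetric}, and the angular-length condition on links) are likewise imported from \cite{Artinmetric}, exactly as you indicate.
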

\subsection{Flat vertices in disc diagrams over $X_n$}
In this subsection, we explore links of certain flat vertices in reduced disc diagrams. 
\begin{definition}
	A flat interior vertex in a disc diagram whose all neighbours are flat interior vertices is called
	a \emph{deep flat vertex}.
\end{definition}

\begin{lemma}
	\label{lem:flat vertex}
	Suppose $q'\colon Y\to X_n$ satisfies all the conditions in Theorem~\ref{thm:MSquasiflats}. Let $y\in Y$ be a flat interior vertex and let $v=q'(y)$. Then $q'(\lk(y,Y))$ is a simple cycle in $\Lambda_v$ that is made of two paths of angular length $=\pi$ connecting the two necks of $\Lambda_v$, one in $\Lambda^+_v$ and one in $\Lambda^-_v$.
\end{lemma}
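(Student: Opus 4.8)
The plan is to combine the local structure of links in $X_n$ (Lemmas~\ref{lem:cycle real}--\ref{lem:exactly pi1}) with the flatness hypothesis on $y$ and the extra injectivity/no-small-disc property from Theorem~\ref{thm:MSquasiflats}~(4). Write $C=q'(\lk(y,Y))$. Since $y$ is a flat interior vertex and $q'$ is a CAT(0) nondegenerate reduced simplicial map with $q'$ injective on $\St(y,Y)$ (this is Theorem~\ref{thm:MSquasiflats}~(4)), the image $C$ is a \emph{simple} cycle in $\Lambda_v$ of angular length exactly $2\pi$.

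First I would show that $C$ must pass through both necks of $\Lambda_v$. Suppose not; then by Lemma~\ref{lem:cycle real} (if $v$ is real) or Lemma~\ref{lem:not two-full1} (if $v$ is fake), $C$ lies entirely in $\Lambda^+_v$ or in $\Lambda^-_v$, and by Lemma~\ref{lem:not two-full} / the last sentence of Lemma~\ref{lem:not two-full1}, such a simple cycle is not $2$--full, i.e.\ it has a local diagonal. A local diagonal in the link of $y$ corresponds to an edge of $Y$ joining two vertices of $\lk(y,Y)$ with a common neighbor in $\lk(y,Y)$; together with $y$ this produces a disc diagram for $\lk(y,Y)$ (viewed as a cycle) without interior vertices, or more precisely it shows $\St(y,Y)$ can be collapsed, contradicting the no-small-disc clause of Theorem~\ref{thm:MSquasiflats}~(4). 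Hence $C$ contains both necks $\{\ell^{-1}o,r^{-1}o\}$ (resp.\ $\{v_0,v_n\}$).

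Once $C$ passes through both necks, it is the concatenation of two arcs $\omega_1,\omega_2$ running between the two necks, with $\length_\angle(\omega_1)+\length_\angle(\omega_2)=2\pi$. By Lemma~\ref{lem:exactly pi} (if $v$ is real) or Lemma~\ref{lem:exactly pi1} (if $v$ is fake), each neck-to-neck path in $\Lambda_v$ has angular length $\ge\pi$, so $\length_\angle(\omega_1)=\length_\angle(\omega_2)=\pi$. The equality case of the same lemmas then forces each of $\omega_1,\omega_2$ to lie entirely in $\Lambda^+_v$ or in $\Lambda^-_v$. Finally the two arcs cannot lie in the same side: since $\Lambda^+_v$ and $\Lambda^-_v$ meet only in the two necks (this is what the vertex lists defining $\Lambda^\pm_v$ say, together with Lemma~\ref{lem:edge type II}~(2) / the fact that no edge joins $\Lambda^+_v\setminus\{\text{necks}\}$ to $\Lambda^-_v\setminus\{\text{necks}\}$), two distinct simple arcs between the necks both inside $\Lambda^+_v$ would give a cycle inside $\Lambda^+_v$, which by Lemma~\ref{lem:not two-full} is not $2$--full — again contradicting Theorem~\ref{thm:MSquasiflats}~(4) as above (unless the two arcs coincide, which is impossible for a simple cycle). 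Therefore one arc lies in $\Lambda^+_v$ and the other in $\Lambda^-_v$, each of angular length $\pi$, which is the claim.

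\textbf{Main obstacle.} The delicate point is translating ``$C$ is not $2$--full'' (a statement about local diagonals in the link) into a genuine contradiction with the minimality/no-small-disc property recorded in Theorem~\ref{thm:MSquasiflats}~(4): one must check that a local diagonal of $\lk(y,Y)$ really does exhibit $\St(y,Y)$ as replaceable by a diagram with fewer triangles (equivalently, that $\lk(y,Y)$ bounds a disc diagram without interior vertices in $X_n$), using that $q'$ is reduced and injective on $\St(y,Y)$ so that the combinatorics in $Y$ faithfully mirror the combinatorics in $\Lambda_v$. The separate bookkeeping for $v$ real versus $v$ fake is routine given the parallel pairs of lemmas, but should be spelled out.
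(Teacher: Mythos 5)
Your proposal follows essentially the same route as the paper: use the flatness of $y$ together with Lemmas~\ref{lem:cycle real}, \ref{lem:exactly pi}, \ref{lem:not two-full1} and \ref{lem:exactly pi1} to reduce everything to ruling out $q'(\lk(y,Y))\subset\Lambda^{\pm}_v$, and rule that case out by playing the non-$2$--fullness of such cycles against clause (4) of Theorem~\ref{thm:MSquasiflats}. That is exactly the paper's argument, and your handling of the two-necks/two-arcs bookkeeping is fine.

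One correction and one remark on the obstacle you flag. The local diagonal produced by Lemma~\ref{lem:not two-full} (or the last sentence of Lemma~\ref{lem:not two-full1}) is an edge of $\Lambda_v$, i.e.\ an edge of $X_n$; it need not correspond to an edge of $Y$, and no statement about collapsing $\St(y,Y)$ or about minimality of the diagram is needed. The contradiction is obtained entirely in $X_n$: viewing $q'(\lk(y,Y))$ as a simple cycle in $X_n$, cut off the triangle spanned by the diagonal, observe that the resulting shorter cycle still lies in $\Lambda^{+}_v$ because $\Lambda^{+}_v$ is by definition a \emph{full} subgraph of $\Lambda_v$, and iterate; after finitely many steps the cycle bounds a disc diagram with no interior vertices, which directly contradicts the statement in Theorem~\ref{thm:MSquasiflats}~(4) that $\lk(y,Y)$, viewed as a cycle in $X$, bounds no such diagram. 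So the "main obstacle" you identify is closed by this finite induction (this is how the paper does it), not by tracing the diagonal back into $Y$.
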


\begin{proof}
Since $q'$ is injective on $\St(y,Y)$, to simplify notation, we identify $\lk(y,Y)$ and $q'(\lk(y,Y))$. By Lemma~\ref{lem:cycle real}, Lemma~\ref{lem:exactly pi} and Lemma~\ref{lem:exactly pi1}, it suffices to rule out the cases $\lk(y,Y)\subset\Lambda^+_v$ and $\lk(y,Y)\subset\Lambda^-_v$. To rule out the former case, note that $\lk(y,Y)$ is a simple cycle in $\Lambda^+_v$, then by Lemma~\ref{lem:not two-full} (or Lemma~\ref{lem:not two-full1}), we can add a local diagonal to $\lk(y,Y)$ to cut it into a triangle and a cycle $\sigma$ with smaller number of edges. Now we apply same procedure to $\sigma$ (note that $\sigma\subset \Lambda^+_v$ since $\Lambda^+_v$ is a full subgraph of $\Lambda_v$ by definition). By repeating this process finitely many times, we know that $\lk(y,Y)$ bounds a disc diagram without interior vertices, which contradicts Theorem~\ref{thm:MSquasiflats} (4). Similarly, we rule out the case $\lk(y,Y)\subset\Lambda^-_v$.
\end{proof}

\begin{lemma}
	\label{lem:local}
Suppose $q'\colon   Y\to X_n$ satisfies all the conditions in Theorem~\ref{thm:MSquasiflats}. Let $v\in Y$ be a deep flat vertex. Then there do not exist two adjacent non-neck fake vertices in $q'(\lk(v,Y))$.
\end{lemma}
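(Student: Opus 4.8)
The strategy is to argue by contradiction using the development/cycle structure around the neighbours of $v$. Suppose $q'(\lk(v,Y))$ contains two adjacent non-neck fake vertices $s_1, s_2$. By Lemma~\ref{lem:flat vertex}, $q'(\lk(v,Y))$ decomposes as two paths of angular length exactly $\pi$ joining the necks of $\Lambda_v$, one in $\Lambda^+_v$ and one in $\Lambda^-_v$; without loss of generality $s_1,s_2$ both lie on the path inside $\Lambda^+_v$. Consulting the explicit list in Lemma~\ref{lem:exactly pi1} (or Lemma~\ref{lem:exactly pi} if $v$ is real), the only way a $\pi$--path in $\Lambda^+_v$ can contain two consecutive fake vertices is through an $L$--block $L_{i_1}\to\cdots\to L_{i_m}$ or an $R$--block $R_{j_1}\to\cdots\to R_{j_m}$; say it is an $L$--block $L_i \to L_{i'}$ with $2\le i<i'\le n-1$ (the $R$--case is symmetric). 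This means the cells $\Pi_{L_i}$ and $\Pi_{L_{i'}}$ meet $\Pi_v$ (the cell with fake vertex $v$) along the left tip in its upper half, in $i$ and $i'$ edges respectively, and moreover $\Pi_{L_i}\cap\Pi_{L_{i'}}$ contains at least two edges since the corresponding vertices of $\Lambda_v$ are joined by an edge.

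The heart of the argument is to transport this configuration to the neighbour $y\in Y$ with $q'(y)=s_1=L_i$, which by hypothesis is itself a \emph{flat interior} vertex. First I would identify precisely which vertices of $\lk(y,Y)$ are forced: $v$ itself is a non-neck vertex of $\Lambda_{q'(y)}$, and because $v$ and $s_2=L_{i'}$ are both adjacent to $s_1$ in $\lk(v,Y)$ they span a triangle $\{v,s_1,s_2\}$ in $Y$ (using that flat links are realized as genuine cycles via the injectivity of $q'$ on $\St(v,Y)$ and on $\St(y,Y)$ from Theorem~\ref{thm:MSquasiflats}(4)); hence both $v$ and $s_2$ lie in $\lk(y,Y)$ and are joined there by an edge. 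Now I apply Lemma~\ref{lem:flat vertex} to $y$: $q'(\lk(y,Y))$ is again a union of two $\pi$--paths through the necks of $\Lambda_{q'(y)}=\Lambda_{L_i}$, and the edge $q'(v)q'(s_2)$ lies on one of them, say the one in $\Lambda^+$. Then I invoke Remark~\ref{rmk:non tip}: along a $\pi$--path, for any two consecutive non-neck vertices one real and one fake, the real one is either a tip of the fake one's cell or not contained in it at all. The point is that here we have two \emph{fake} non-neck vertices $v$ and $s_2$ adjacent on a $\pi$--path, so they must form an $L$-- or $R$--block inside $\Lambda_{L_i}$; combined with the original block relation inside $\Lambda_v$, a direct check of which pairs of cells can simultaneously satisfy ``$\Pi_v\cap\Pi_{L_i}$ has $i$ edges containing the left tip in the upper half'', ``$\Pi_v\cap\Pi_{L_{i'}}$ has $i'$ edges containing the left tip in the upper half'', ``$\Pi_{L_i}\cap\Pi_{L_{i'}}$ has $\ge 2$ edges'', and ``inside $\Lambda_{L_i}$ the vertices $v$ and $L_{i'}$ are non-neck and adjacent on a $\pi$--path'' yields a contradiction, essentially because it would force $\Pi_{L_{i'}}$ to intersect $\Pi_v$ and $\Pi_{L_i}$ in a way incompatible with Lemma~\ref{cor:connected intersection} and Lemma~\ref{lem:small intersection} (the three precells would have pairwise intersections each containing an edge but not forming a consistent ``fan'', violating the embeddedness of positive Artin words as in the proof of Lemma~\ref{lem:small intersection}).

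To organize the case analysis cleanly I would set up notation for the left tip $\ell$ of $\Pi_v$, note it is shared by $\Pi_v$, $\Pi_{L_i}$, $\Pi_{L_{i'}}$, and track which tip of each cell lies at which endpoint of each pairwise intersection using Lemma~\ref{cor:connected intersection}(3). The neck structure is what keeps the bookkeeping honest: saying $v$ is a \emph{non-neck} vertex of $\Lambda_{L_i}$ means the intersection $\Pi_{L_i}\cap\Pi_v$, read from $L_i$'s perspective, does \emph{not} contain a tip of $\Pi_{L_i}$ that is a neck of $\Lambda_{L_i}$ --- and translating all such conditions back to combinatorial statements about the three precells produces the impossible configuration.

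\textbf{Main obstacle.} The routine part is invoking the structural lemmas; the delicate part is the final combinatorial incompatibility --- carefully keeping track of the six distinct tips (two per cell), the three pairwise intersections, which half of each cell each intersection lies in, and the orientation/side data, so that the contradiction with Lemma~\ref{cor:connected intersection} and Lemma~\ref{lem:small intersection} (equivalently, with injectivity of the positive Artin monoid) is genuinely forced in every subcase rather than only in the ``generic'' one. I expect one or two sub-cases (e.g.\ when $i$ or $i'$ is close to $2$ or close to $n-1$, or when the block in $\Lambda_{L_i}$ is an $R$--block rather than an $L$--block) to need separate, slightly fiddly handling, but each should reduce to the same underlying impossibility.
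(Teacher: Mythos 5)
Your overall strategy matches the paper's in outline (contradict, classify the $\pi$--paths via Lemma~\ref{lem:flat vertex} and Lemmas~\ref{lem:exactly pi}/\ref{lem:exactly pi1}, then transport the configuration to the link of the neighbouring fake vertex $y$ using deepness), but the final step as you describe it does not produce a contradiction. You transport only the edge $\{v,s_2\}$ into $\lk(y,Y)$ and claim the three precells $\Pi_v,\Pi_{L_i},\Pi_{L_{i'}}$ cannot have pairwise intersections each containing an edge ``in a consistent fan'', invoking Lemma~\ref{cor:connected intersection} and Lemma~\ref{lem:small intersection}. But this configuration is perfectly realizable in $X_n$: since $i<i'$, the paths $\Pi_v\cap\Pi_{L_i}$ and $\Pi_v\cap\Pi_{L_{i'}}$ are nested, so $\Pi_{L_i}\cap\Pi_{L_{i'}}$ automatically contains the segment of $i\ge 2$ edges at the left tip $\ell$ of $\Pi_v$ --- which is exactly why $L_i\sim L_{i'}$ is a genuine edge of $\Lambda_v$ and why Lemma~\ref{lem:exactly pi1}\,(3)--(4) list such $L$--blocks at all. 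In particular the hypothesis of Lemma~\ref{lem:small intersection} fails (the two intersections with $\Pi_v$ share $\ge 2$ edges, not at most a point), and no contradiction can come from the pairwise intersection pattern alone: in $\Lambda_{q'(y)}$ the vertices $v$ and $s_2$ are simply two $R$--type fake vertices and may legitimately sit in an $R$--block of a $\pi$--path. The contradiction the paper extracts uses more: choosing the adjacent pair at the end of the block next to the neck (so that the neck of $\Lambda_v$ is adjacent to $s_1$ in the link cycle and can be transported), it considers the path $\ell\to v\to s_2$ in $\lk(y,Y)$, where $\ell$ is the \emph{real} tip of $\Pi_v$; then Remark~\ref{rmk:non tip} (equivalently Lemma~\ref{lem:real fake}) applied to the non-neck pair $(\ell,s_2)$ forces $\ell$ to be a tip of $\Pi_{L_{i'}}$ or to lie outside it, while in fact $\ell\in\Pi_{L_{i'}}$ and is not a tip of it. Without carrying that real vertex along, your ``direct check'' has nothing to check against.

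A second, smaller gap: the case where $v$ is real is not symmetric to the fake case and is not covered by your sketch. There the adjacent fake vertices are of the form $d_{i}^{-1}o$, $d_{j}^{-1}o$ from Lemma~\ref{lem:exactly pi}\,(3), and the paper's contradiction is different in nature: the configuration in $\lk(v,Y)$ forces the triple intersection $r^{-1}\Pi\cap\Pi_1\cap\Pi_2$ to contain an edge, whereas the classification of the $\pi$--path in $\lk(y,Y)$ (which must be of type (4) of Lemma~\ref{lem:exactly pi1}, having a real vertex flanked by two fake ones) together with Lemma~\ref{lem:real fake} bounds that triple intersection by a single point. You would need to supply this argument separately rather than appeal to symmetry.
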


To simply notation, denote $q'(v)$ by $v$ and identify $\lk(v,Y)$ and $q'(\lk(v,Y))$.
\begin{proof}
Arguing by contradiction we assume there are two adjacent non-neck fake vertices $o_1,o_2$ in $\lk(v,Y)$. We assume that $v=o$ when $v$ is fake (recall that $o$ is the fake vertex in the base cell $\Pi$), and $v=\ell$ when $v$ is real. For $i=1,2$, let $\Pi_i$ be the cell containing $o_i$.

\emph{Case 1}: $v=\ell$. By Lemma~\ref{lem:flat vertex}, $\lk(v,Y)$ consists of two paths $\sigma_1,\sigma_2$ of angular length $=\pi$ connecting the two necks of $\Lambda_v$. By Lemma~\ref{lem:exactly pi}, we can assume that $o_1,o_2\in\sigma_1\subset\Lambda^+_v$. Then $\sigma_1$ must satisfy Lemma~\ref{lem:exactly pi} (3), moreover, we can assume $\di{i_2}o=o_1$ and $\di{i_3}o=o_2$. See Figure~\ref{f:flat vertex}.
\begin{figure}[h!]
	\centering
	\includegraphics[scale=1.1]{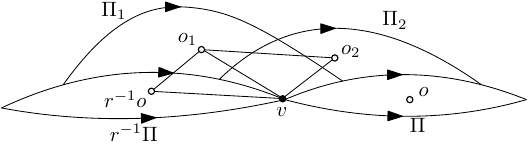}
	\caption{}
	\label{f:flat vertex}
\end{figure}

Now we consider the path $P=r^{-1}o\to v\to o_2$ in $\lk(o_1,Y)$. Since $v$ is not a tip of $\Pi_1$, $v$ is not a neck of $\Lambda_{o_1}$. By Lemma~\ref{lem:flat vertex}, there is a path $\sigma$ of angular length $=\pi$ connecting the two necks of $\Lambda_{o_1}$ such that $P\subset\sigma\subset \lk(o_1,Y)$. Since $P$ has a real vertex between two fake vertices, $\sigma$ satisfies Lemma~\ref{lem:exactly pi1} (4). It follows from Lemma~\ref{cor:connected intersection} that $r^{-1}\Pi\cap\Pi_1\cap\Pi_2$ contains at least one edge. However, by Lemma~\ref{lem:real fake}, in case (4) of Lemma~\ref{lem:exactly pi1}, $\Pi_{L_{i_m}}\cap\Pi\cap\Pi_{R_{i'_1}}$ is either empty (when $i_m<n-i'_1$) or one point (when $i_m=n-i'_1$). This leads to a contradiction.

\emph{Case 2:} $v=o$. By Lemma~\ref{lem:flat vertex}, $o_1$ and $o_2$ are consecutive non-neck fake vertices in a path of angular length $\pi$ between the two necks of $\Lambda_o$. By Lemma~\ref{lem:exactly pi1}, we can assume either $o_1=L_{i_1}$, $o_2=L_{i_2}$, or $o_1=R_{i'_{m'-1}}$, $o_2=R_{i'_{m'}}$. Now we study the former case. See Figure~\ref{f:flat vertex1}.
\begin{figure}[h!]
	\centering
	\includegraphics[scale=0.9]{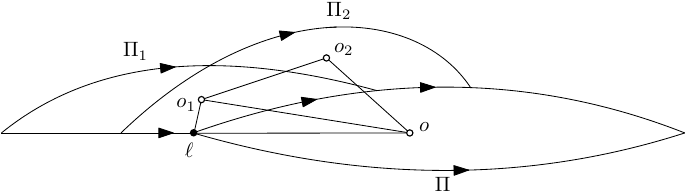}
	\caption{}
	\label{f:flat vertex1}
\end{figure}
Consider the path $P=\ell\to o\to o_2$ in $\lk(o_1,Y)$. Since $\ell$ is not a tip of $\Pi_1$, $\ell$ is not a neck of $\Lambda_{o_1}$. It follows that $P$ is contained in a path $\sigma\subset \Lambda_{o_1}$ of angular length $=\pi$ connecting the two necks of $\Lambda_{o_1}$.  Note that $\ell\in\Pi_2$, but $\ell$ is not a tip of $\Pi_2$ (by applying Lemma~\ref{cor:connected intersection} (3) to $\Pi_2\cap\Pi$). This contradicts Remark~\ref{rmk:non tip}. The case $o_1=R_{i'_{m'-1}}$ and $o_2=R_{i'_{m'}}$ can be treated similarly.
\end{proof}

\section{Local structure of quasi-Euclidean diagrams}
\label{sec:local flatness}
In this section we recall the construction of the metrically systolic complexes for two-dimensional Artin groups, and use these complexes as a tool to study the local structure of quasi-Euclidean diagrams over the universal covers of standard presentation complexes of two-dimensional Artin groups.

\subsection{The complex for $2$--dimensional Artin groups}
\label{s:general}
Let $A_\Gamma$ be an Artin group with defining graph $\Gamma$. Let $\Gamma'\subset\Gamma$ be a full subgraph with induced edge labeling and let $A_{\Gamma'}$ be the Artin group with defining graph $\Gamma'$. Then there is a natural homomorphism $A_{\Gamma'}\to A_{\Gamma}$. By \cite{lek}, this homomorphism is injective. Subgroups of $A_{\Gamma}$ of the form $A_{\Gamma'}$ are called \emph{standard subgroups}. Let $P_{\Gamma}$ be the standard presentation complex of $A_{\Gamma}$, and let $X^{\ast}_{\Gamma}$ be the universal cover of $P_{\Gamma}$. We orient each edge in $P_{\Gamma}$ and label each edge in $P_{\Gamma}$ by a generator of $A_\Gamma$. Thus edges of $X^{\ast}_{\Gamma}$ have induced orientation and labeling. There is a natural embedding $P_{\Gamma'}\hookrightarrow P_\Gamma$. Since $A_{\Gamma'}\to A_{\Gamma}$ is injective, $P_{\Gamma'}\hookrightarrow P_\Gamma$ lifts to various embeddings $X^{\ast}_{\Gamma'}\to X^{\ast}_{\Gamma}$. Subcomplexes of $X^{\ast}_{\Gamma}$ arising in such way are called \emph{standard subcomplexes}.

A \emph{block} of $X^{\ast}_{\Gamma}$ is a standard subcomplex which comes from an edge in $\Gamma$. This edge is called the \emph{defining edge} of the block. Two blocks with the same defining edge are either disjoint, or identical. A block is \emph{large} if the label of the corresponding edge is at least $3$.

We define precells of $X^{\ast}_{\Gamma}$ as in Section \ref{subsec:precells}. Each precell is embedded. We subdivide each precell as in Section~\ref{subsec:precells} to obtain a simplicial complex $\Xb_\Gamma$. Fake vertices and real vertices of $\Xb_{\Gamma}$ are defined in a similar way.

Within each block of $\Xb_\Gamma$, we add edges between fake vertices as in Section~\ref{subsec:precells}. Then we take the flag completion to obtain $X_\Gamma$. The action $A_{\Gamma}\curvearrowright\Xb_\Gamma$ extends to a simplicial action $A_{\Gamma}\curvearrowright X_{\Gamma}$, which is proper and cocompact. A \emph{block} in $X_{\Gamma}$ is defined to be the full subcomplex spanned by vertices in a block of $\Xb_\Gamma$. Intersection of two different blocks of $X_\Gamma$ does not contain fake vertices. Pick a block $B^{\Delta}\subset \Xb_\Gamma$, and let $B\subset X_\Gamma$ be the block containing $B^{\Delta}$. Let $n$ be the label of the defining edge of $B^{\Delta}$. Then by \cite[\lemsixtwo]{Artinmetric}, the natural isomorphism $B^{\Delta}\to \Xb_n$ extends to an isomorphism $B\to X_n$. 


Next we assign lengths to edges of $X_\Gamma$. Let $B\subset X_\Gamma$ be a block and let $B\to X_n$ be the isomorphism in the previous paragraph. We first rescale the edge lengths of $X_n$ defined in Section~\ref{subsec:precells} by a uniform factor such that any edge between two real vertices has length $=1$. Then we pull back these edge lengths to $B$ by the above isomorphism. We repeat this process for each block of $X_\Gamma$. Note that if an edge of $X_\Gamma$ belongs to two different blocks, then this edge is between two real vertices, hence it has a well-defined length (which is $1$). The action of $A_\Gamma$ preserves edge lengths.

\begin{theorem}\cite[\lemsixseven]{Artinmetric}
	\label{thm:msystolic}
If $A_\Gamma$ has dimension $\le 2$, then $X_\Gamma$ with its piecewise Euclidean structure is metrically systolic.
\end{theorem}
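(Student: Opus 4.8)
The plan is to reduce the global statement, that $X_\Gamma$ is metrically systolic, to the already-established local structure of blocks via Theorem~\ref{thm:X_n}. Recall that metric systolicity means two things: $X_\Gamma$ is simply connected, and it is locally $2\pi$--large, i.e.\ every link is $2\pi$--large. Simple connectivity of $X_\Gamma$ follows from that of $\Xa_\Gamma$ (the universal cover of the presentation complex) since $\Xb_\Gamma$ and then $X_\Gamma$ are obtained from $\Xa_\Gamma$ by subdividing cells and adding edges/simplices inside blocks, operations which do not change $\pi_1$. So the real content is checking the link condition at every vertex of $X_\Gamma$.

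The key observation is that the link of a vertex $v$ in $X_\Gamma$ is built out of the links of $v$ in the blocks containing it. More precisely, if $v$ is a fake vertex it lies in a unique block $B$ (intersections of distinct blocks contain no fake vertices), so $\lk(v,X_\Gamma)=\lk(v,B)\cong\lk(v,X_n)$ for the appropriate $n$, and then Theorem~\ref{thm:X_n} (or rather the local $2\pi$--largeness established for $X_n$ via Lemmas~\ref{lem:cycle real}--\ref{lem:exactly pi1}) gives the conclusion directly. If $v$ is a real vertex, it may lie in several blocks $B_1,\dots,B_k$; then $\lk(v,X_\Gamma)$ is the union of the subgraphs $\lk(v,B_j)$, and these subgraphs meet only along the pair of "real" vertices of the link corresponding to the two edge-directions of a shared generator, or along isolated real vertices. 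The point is that any fake vertex of $\lk(v,X_\Gamma)$ belongs to exactly one $\lk(v,B_j)$, and edges of type II (fake–fake) and most edges of type I live inside a single block. Thus a simple $2$--full cycle $\omega$ in $\lk(v,X_\Gamma)$ that uses a fake vertex is forced, by tracking which block each consecutive vertex lies in, to be entirely contained in a single $\lk(v,B_j)$, and we conclude by the block case. A simple cycle using only real vertices of the link would have to be a cycle in the graph $\Gamma$ itself (via the links of the generator-subcomplexes/standard subgroups), but such a cycle, being a triangle or longer in $\Gamma$, either fails $2$--fullness or has angular length $\ge 2\pi$ because of the two-dimensionality hypothesis $\frac1p+\frac1q+\frac1r\le 1$ on triangles and the fact that each edge between real vertices contributes angle $\tfrac{n-1}{4n}\cdot 2\pi\ge \tfrac{1}{3}\cdot\tfrac{2\pi}{2}$ type contributions. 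One should organize this as: (i) classify simple $2$--full cycles in $\lk(v,X_\Gamma)$ by whether they are "contained in one block" or "go through several standard subgroups"; (ii) dispatch the first type via $X_n$; (iii) dispatch the second via combinatorics of $\Gamma$ and the two-dimensionality inequality.

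I would carry out the steps in this order: first record that $X_\Gamma$ is simply connected (cite \cite[\lemsixthree]{Artinmetric}-type reasoning, or note it directly from $\Xa_\Gamma$); second, handle fake vertices, which is immediate from $B\cong X_n$ and Theorem~\ref{thm:X_n}; third, analyze the link of a real vertex, decomposing it into per-block pieces glued along real vertices and checking that any $2$--full cycle touching a fake vertex stays in one block; fourth, treat $2$--full cycles made entirely of real vertices of the link using the defining graph $\Gamma$ and the hypothesis on labels of triangles. Since much of this is already done inside blocks in \cite{Artinmetric}, the proof is mostly a bookkeeping argument assembling the block-level results.

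The main obstacle I expect is the real-vertex case: one must verify carefully that a $2$--full simple cycle in $\lk(v,X_\Gamma)$ cannot "leak" between two blocks while passing through fake vertices — i.e.\ that consecutive vertices of the cycle, if one of them is fake, lie in a common block, so the whole cycle localizes. This requires knowing exactly how $\lk(v,B_i)$ and $\lk(v,B_j)$ overlap inside $\lk(v,X_\Gamma)$ (only along real vertices, and in a controlled configuration dictated by how the blocks $B_i,B_j$ share the generator edges at $v$), together with the no-local-diagonal ($2$--full) hypothesis to forbid shortcut edges across blocks. Once that localization is in hand, everything reduces to the already-proven $X_n$ statement plus the elementary triangle-label inequality for the purely-real cycles.
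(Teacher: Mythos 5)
There is a genuine gap in the real-vertex case, and it is exactly where the hard work of the cited result \cite[\lemsixseven]{Artinmetric} lies. Your localization claim --- that a simple $2$--full cycle in $\lk(v,X_\Gamma)$ containing a fake vertex must be entirely contained in the link of a single block --- is false. Compare Lemma~\ref{lem:2pi cycle} (which is \cite[\lemsixsix]{Artinmetric}, the key input to the proof of Theorem~\ref{thm:msystolic}): there are simple cycles of angular length exactly $2\pi$ that pass through fake vertices of two, three, or four \emph{different} blocks (cases (2)--(4)), e.g.\ the alternating real/fake $6$--cycles running through three blocks whose defining edges form a triangle of $\Gamma$. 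Such cycles need not fail $2$--fullness, and distinct block links really do meet along the pair of real vertices $a^i,a^o$ of a shared generator, so nothing prevents a cycle from crossing between blocks there. Consequently your dichotomy (either the cycle localizes to one block, or it consists only of real link-vertices) is wrong; in fact the second alternative is vacuous, since links of real vertices contain no edges between two real vertices at all (every triangle of $X_\Gamma$ has at least one fake vertex), so the step in which you invoke the two-dimensionality inequality never gets used in your scheme.

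The missing content is a quantitative lower bound on the angular length of the portion of a multi-block cycle inside each block: a path in $\lk(v,B)$ between the two shared real vertices has angular length at least $\pi$ (two type~I edges plus at least one type~II edge, as in the proof of Lemma~\ref{lem:real2}), a real--fake--real path in a block with label $n_i$ has angular length at least $\frac{n_i-1}{n_i}\pi$, and in the square configuration each block contributes at least $\pi/2$. Summing these gives $\ge 2\pi$ for two-block cycles, $\ge \pi\bigl(3-\sum_i \tfrac{1}{n_i}\bigr)\ge 2\pi$ for the triangle configuration (this is precisely where $\tfrac{1}{n_1}+\tfrac{1}{n_2}+\tfrac{1}{n_3}\le 1$ enters), and $\ge 2\pi$ for the square configuration; the one-block case is then handled by the dihedral result as you propose. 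Without this per-block crossing estimate and the resulting classification of short cycles, your argument does not establish local $2\pi$--largeness at real vertices. (Your simple-connectivity remark is also too quick --- adding the fake--fake edges does create new loops, which are only killed by the triangles added in the flag completion --- but that is a minor, fixable point.)
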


From now on we will assume $A_\Gamma$ has dimension $\le 2$.

Now we consider local structure of $X_\Gamma$. If $v\in X_\Gamma$ is a fake vertex, then there is a unique block $B\ni v$. Moreover, $\lk(v,X^{(2)}_\Gamma)=\lk(v,B^{(2)})$ by our construction, which reduces to discussion in Section~\ref{subsec:local}. Links of real vertices are more complicated since they can travel through several blocks. However, cycles of angular length $\le 2\pi$ in the link have a relatively simple characterization as follows.

\begin{lemma}\cite[\lemsixsix]{Artinmetric}
	\label{lem:2pi cycle}
	Let $v\in X_\Gamma$ be a real vertex and let $\omega$ be a simple cycle in $\lk(v,X^{(2)}_\Gamma)$ with angular length $\le 2\pi$ in the link of $v$. Then exactly one of the following four situations happens:
	\begin{enumerate}
		\item $\omega$ is contained in one block;
		\item $\omega$ travels through two different blocks $B_1$ and $B_2$ such that their defining edges intersect in a vertex $a$, and $\omega$ has angular length $=\pi$ inside each block, moreover, there are exactly two vertices in $\omega\cap B_1\cap B_2$ and they corresponding to an incoming $a$--edge and an outgoing $a$--edge based at $v$;
		\item $\omega$ travels through three blocks $B_1,B_2,B_3$ such that the defining edges of these blocks form a triangle $\triangle(abc)\subset \Gamma$ and $\frac{1}{n_1}+\frac{1}{n_2}+\frac{1}{n_3}=1$ where $n_1$, $n_2$ and $n_3$ are labels of the edges of this triangle, moreover, $\omega$ is a $6$--cycle with its vertices alternating between real and fake such that the three real vertices in $\omega$ correspond to an $a$--edge, a $b$--edge and a $c$--edge based at $v$;
		\item $\omega$ travels through four blocks such that the defining edges of these blocks form a full $4$--cycle in $\Gamma$, moreover, $\omega$ is a $4$--cycle with one edge of angular length $\pi/2$ in each block.
	\end{enumerate}
	Note that in cases (2), (3) and (4), $\omega$ actually has angular length $=2\pi$.
\end{lemma}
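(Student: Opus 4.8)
The plan is to reduce everything to the single-block analysis of Section~\ref{subsec:local} by tracking how $\omega$ passes between blocks. Two structural facts are used throughout: (i)~every fake vertex of $X_\Gamma$ lies in a unique block, distinct blocks meet only along real vertices, and all edges between fake vertices lie in a single block; (ii)~inside a block $B\cong X_n$, the link of a real vertex is $\Lambda^+\cup\Lambda^-$ glued along its two necks, every edge incident to a real vertex has angular length $\frac{n-1}{4n}\cdot2\pi$ (it is of type~I), the incoming and outgoing edges at $v$ of a fixed generator lie in $\Lambda^+$ and $\Lambda^-$ respectively, each adjacent to a different neck, and by Lemma~\ref{lem:exactly pi} and Lemma~\ref{lem:exactly pi1} any edge-path joining the two necks has angular length $\ge\pi$.

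First I would dispose of the case where all fake vertices of $\omega$ lie in one block $B$: then every real vertex of $\omega$ is, within $\omega$, adjacent to two fake vertices of $B$ (there are no real--real edges), hence lies in $B$, so $\omega\subset\lk(v,B)$ and we are in case~(1); this also settles every $\omega$ with at most one real vertex. So assume the fake vertices of $\omega$ meet $k\ge2$ blocks. Reading off maximal runs of fake vertices, $\omega$ decomposes cyclically as $\gamma_1\cdots\gamma_k$, where $\gamma_i$ is a simple path in $\lk(v,B_i)$ from a real vertex $r_{i-1}$ to a real vertex $r_i$ and consecutive $\gamma_i$ lie in distinct blocks; by~(i) we have $r_i\in B_i\cap B_{i+1}$, so the defining edges $e_i,e_{i+1}$ of $B_i,B_{i+1}$ share the vertex of $\Gamma$ represented by $r_i$, and each endpoint of $\gamma_i$ is an incoming or outgoing edge at an endpoint of $e_i$. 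The first and last edges of $\gamma_i$ are incident to real vertices, hence of type~I, so $\length_\angle(\gamma_i)\ge\tfrac{n_i-1}{n_i}\pi$ with $n_i$ the label of $e_i$, and equality forces $\gamma_i$ to be a single real--fake--real path; moreover, if $r_{i-1},r_i$ are the incoming and outgoing edges of the \emph{same} generator then $\gamma_i$ runs between $\Lambda^+$ and $\Lambda^-$, so it crosses a neck, and prolonging it by the type~I edges joining its endpoints to their nearest necks and invoking Lemma~\ref{lem:exactly pi}/Lemma~\ref{lem:exactly pi1} upgrades the estimate to $\length_\angle(\gamma_i)\ge\pi$. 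Summing gives $2\pi\ge\sum_i\length_\angle(\gamma_i)\ge\pi\bigl(k-\sum_i\tfrac1{n_i}\bigr)$, hence $\sum_i\tfrac1{n_i}\ge k-2$, and since each $n_i\ge2$ this forces $k\le4$.

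It then remains to treat $k=2,3,4$. For $k=2$ the distinct edges $e_1,e_2$ share exactly one vertex $a$, so $\{r_1,r_2\}$ is the pair of incoming/outgoing $a$-edges, both $\gamma_i$ cross their block and have angular length $\ge\pi$, and $\length_\angle(\omega)\le2\pi$ forces each to equal $\pi$: case~(2). For $k=3$, if some $\gamma_i$ crosses its block then the inequalities above force the other two blocks to coincide and carry label $2$, and the configuration collapses to $\omega$ meeting only two blocks with angular length $\pi$ in each, i.e.\ again case~(2); otherwise every $\gamma_i$ is a real--fake--real path, the three transition vertices represent distinct generators $a,b,c$, the edges $e_1,e_2,e_3$ form a triangle $\Delta(abc)\subset\Gamma$, and $\sum\tfrac1{n_i}\ge1$ together with two-dimensionality ($\sum\tfrac1{n_i}\le1$) yields $\sum\tfrac1{n_i}=1$: case~(3). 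For $k=4$ the inequalities force all $n_i=2$ and each $\gamma_i$ to be a real--fake--real path of angular length $\pi/2$; no $\gamma_i$ crosses, so the four transition vertices represent distinct generators, $e_1,\dots,e_4$ form a $4$--cycle in $\Gamma$, and this cycle is full since a chord would create a triangle with two sides labelled $2$, violating two-dimensionality: case~(4). In all of cases (2)--(4) the displayed chain of inequalities holds with equality, so $\length_\angle(\omega)=2\pi$.

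The step I expect to be most delicate is the refined estimate $\length_\angle(\gamma_i)\ge\pi$ for a path crossing a block, which rests on knowing exactly which real vertices of a link in $X_n$ are adjacent to which neck and by which edges --- precisely the fine structure of $\Lambda^{\pm}$ recorded in Lemma~\ref{lem:edge type II}, Lemma~\ref{lem:exactly pi} and Lemma~\ref{lem:exactly pi1}. A second point requiring care is to check that the configurations in which $\omega$ leaves a block and later re-enters it do not yield anything outside cases (1)--(2).
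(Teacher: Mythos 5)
This lemma is not proved in the paper at all: it is quoted verbatim from \cite{Artinmetric} (the reference \lemsixsix), so there is no in-paper argument to compare against. Your blind reconstruction follows what is essentially the expected route: decompose $\omega$ into maximal single-block subpaths $\gamma_i$ separated by real transition vertices, bound $\length_\angle(\gamma_i)$ below by $\frac{n_i-1}{n_i}\pi$ (two type~I edges) in general and by $\pi$ when the endpoints are the incoming and outgoing edge of the same generator, sum to get $\sum_i \frac{1}{n_i}\ge k-2$, hence $k\le 4$, and then use van der Lek (block intersections) and the Charney--Davis inequality to pin down cases (2)--(4), including distinctness of the transition generators and fullness of the $4$--cycle. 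That architecture is sound, and your $k=4$ conclusion (an $8$--cycle alternating real/fake with angular length $\pi/2$ contributed by each block) is the correct geometric picture behind the loosely worded ``$4$--cycle with one edge in each block'' in the statement (compare Lemma~\ref{lem:real3} and Figure~\ref{f:flat5}); real--real edges do not exist in links, so the statement cannot be read literally.

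The one step that does not work as written is the crossing estimate. ``$\gamma_i$ runs between $\Lambda^+_v$ and $\Lambda^-_v$, so it crosses a neck'' is not by itself enough: $a^i$ and $b^i$ also lie in different halves ($a^i\in\Lambda^-_v$, $b^i\in\Lambda^+_v$), yet they are joined through their \emph{common} neck $r^{-1}o$ by a path of angular length only $\frac{n-1}{n}\pi<\pi$; and your prolongation, taken literally (attach the type~I edges at \emph{both} endpoints and apply Lemma~\ref{lem:exactly pi} to the whole prolonged path), yields only $\length_\angle(\gamma_i)\ge \pi-\frac{n-1}{n}\pi=\frac{\pi}{n}$. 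What saves the same-generator case is that $a^i$ and $a^o$ attach to \emph{different} necks ($a^i,b^i\sim r^{-1}o$ and $a^o,b^o\sim\ell^{-1}o$ are the only real--neck adjacencies): split $\gamma_i$ at the first neck it meets; one of the two halves, prolonged by the single type~I edge from its real endpoint to the \emph{other} neck, becomes a neck-to-neck path and so has angular length $\ge\pi-\frac{n-1}{2n}\pi$, while the remaining half ends in a type~I edge of length $\frac{n-1}{2n}\pi$; adding gives $\ge\pi$. (Equivalently: no cell contains both the incoming and outgoing $a$-edge at $v$, so there is no two-edge path from $a^i$ to $a^o$, and one argues via Lemma~\ref{lem:edge type II} as in the proof of Lemma~\ref{lem:real2}, treating $n=2$ by parity.) Two smaller points: in the $k=3$ sub-case with a crossing segment, the correct conclusion is that the other two blocks would have the same defining edge and hence coincide, contradicting the maximality of your decomposition -- the configuration simply cannot occur, rather than ``collapsing to case (2)''; and it is this same label argument that shows the four transition generators in the $k=4$ case are pairwise distinct, which you assert but do not justify.
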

\subsection{Classification of flat points in diagrams over $X_\Gamma$}
\label{subsec:local structure}

Let $q'\colon Q'\to X_\Gamma$ be a map which satisfies all the requirements of Theorem~\ref{thm:MSquasiflats}. We study the local properties of $q'$ and $Q'$ in this subsection.

We define a partial retraction $\rho$ from $X^{(2)}_\Gamma$ (defined on its subset) to $\Xb_\Gamma$ as follows. The map is the identity on the $2$--skeleton of $\Xb_\Gamma$. Let $\overline{o_1o_2}$ be an edge of $X^{(2)}_\Gamma$ not in $\Xb_\Gamma$, where $o_i$ is the fake vertex in some cell $\Pi_i$ for $i=1,2$. Let $m$ be the middle point of $\partial\Pi_1\cap\partial\Pi_2$ and let $x_1,x_2$ be the two endpoints of $\partial\Pi_1\cap\partial\Pi_2$. We map $\overline{o_1o_2}$ homeomorphically to the concatenation of $\overline{o_1m}$ and $\overline{mo_2}$, and map the triangle $\triangle(o_1o_2x_1)$ (resp.\ $\triangle(o_1o_2x_2)$) homeomorphically to the region in $\Pi_1\cup\Pi_2$ bounded by $\overline{o_1x_1}$, $\overline{x_1o_2}$, $\overline{o_2m}$ and $\overline{mo_1}$ (resp.\ $\overline{o_1x_2}$, $\overline{x_2o_2}$, $\overline{o_2m}$ and $\overline{mo_1}$). This finishes the definition of $\rho$. Note that $\rho$ is not defined on the whole space $X^{(2)}_\Gamma$. That is why we call it a partial retraction. More precisely, for a triangle $\Delta\subset X^{(2)}_\Gamma$, $\rho(\Delta)$ is defined if and only if $\Delta$ satisfies one of the following two cases:
\begin{enumerate}
	\item $\Delta$ has two real vertices and one fake vertices, in this case $\rho(\Delta)=\Delta$;
	\item $\Delta$ has one real vertex $v$ and two interior vertices $o_1$ and $o_2$, moreover, $v$ is a tip of $\Pi_1$ or $\Pi_2$, where $\Pi_i$ is the cell containing $o_i$ for $i=1,2$.
\end{enumerate}

\begin{lemma}
	\label{lem:well-defined}
Let $q=\rho\circ q'$. If $\Delta\subset Q'$ is a triangle such that all of its vertices are flat and deep, then $q(\Delta)$ is defined. 
\end{lemma}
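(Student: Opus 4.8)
The plan is to show that for a triangle $\Delta\subset Q'$ all of whose vertices are flat and deep, the triangle $q'(\Delta)$ falls into one of the two cases where $\rho$ is defined. First I would note that since all vertices of $\Delta$ are deep flat vertices, Lemma~\ref{lem:flat vertex} applies to each of them, and moreover $q'$ is injective on the star of each, so we may identify $\Delta$ with its image. The goal reduces to: (i) $\Delta$ cannot have three real vertices, (ii) $\Delta$ cannot have one real vertex and two fake vertices with the real vertex failing the tip condition, and the remaining combinatorial possibility (three fake vertices) also needs to be excluded — after which only cases (1) and (2) in the definition of $\rho$ remain.

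The main tool is that the link of each vertex of $\Delta$, being a flat deep vertex, is a simple cycle made of two paths of angular length exactly $\pi$ joining the two necks, one in $\Lambda^+$ and one in $\Lambda^-$ (Lemma~\ref{lem:flat vertex}), together with the classification of such $\pi$-paths in Lemmas~\ref{lem:exactly pi} and~\ref{lem:exactly pi1}, and crucially Lemma~\ref{lem:local}, which says no two adjacent non-neck fake vertices appear in the link of a deep flat vertex. To rule out a triangle with three fake vertices $o_1,o_2,o_3$: such a triangle would force $o_2$ and $o_3$ to both lie in $\lk(o_1,Q')$ and be adjacent there; since $o_1$ is deep flat, Lemma~\ref{lem:local} forbids two adjacent non-neck fake vertices, so at least one of $o_2,o_3$ would have to be a neck of $\Lambda_{o_1}$, i.e.\ equal to $v_0$ or $v_n$ — but the necks of $\Lambda_{o_1}$ are real vertices of the ambient block, not fake vertices, a contradiction. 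For the one-real-two-fake case: if $\Delta$ has real vertex $v$ and fake vertices $o_1,o_2$, I would look at $\lk(o_1,Q')$, where $v$ and $o_2$ are adjacent. By Lemma~\ref{lem:flat vertex} the edge $\overline{v o_2}$ lies on a $\pi$-path $\sigma\subset\Lambda^{\pm}_{o_1}$ joining the necks; the necks of $\Lambda_{o_1}$ are $v_0,v_n$. If $v$ is not a tip of $\Pi_1$ then $v$ is a non-neck real vertex of $\Lambda_{o_1}$, and I would invoke Remark~\ref{rmk:non tip}: for the non-neck pair $v$ (real), $o_2$ (fake) on $\sigma$, either $v$ is a tip of the cell $\Pi_{o_2}=\Pi_2$, or $v\notin\Pi_2$ — but $v$ is a vertex of $\Delta$ hence $v\in\Pi_2$, so $v$ must be a tip of $\Pi_2$. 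This is exactly condition (2) in the definition of $\rho$ (with the roles of $\Pi_1,\Pi_2$ possibly swapped), so $\rho(\Delta)$ is defined. The three-real-vertices case is excluded because $\Xb_\Gamma$, being a subdivision in which each precell has a fake apex joined to its boundary, has no $2$-simplex with three real vertices; equivalently every triangle of $X_\Gamma^{(2)}$ meeting the $2$-skeleton of $\Xb_\Gamma$ has exactly one fake vertex, and a triangle not meeting $\Xb_\Gamma$ has three fake vertices (the case already handled).

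I would organize the write-up as: first reduce to the two/three-fake cases by the structure of $X_\Gamma^{(2)}$; then dispose of the three-fake case via Lemma~\ref{lem:local} plus the observation that necks of a fake vertex's link are real; then handle the one-real-two-fake case via Lemma~\ref{lem:flat vertex}, the $\pi$-path classification, and Remark~\ref{rmk:non tip}, carefully checking that $v$ being a vertex of the shared triangle forces $v\in\Pi_2$. One point requiring care is whether $v$ could coincide with a neck of $\Lambda_{o_1}$: if it does, the argument via Remark~\ref{rmk:non tip} does not directly apply, but then $v$ being a neck of $\Lambda_{o_1}=\Lambda_v$-in-$B$ means $v$ is a tip of $\Pi_1$, which is precisely case (2) of $\rho$'s domain, so we are done anyway. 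The main obstacle I anticipate is bookkeeping the interplay between the two possible block structures at $v$ (Lemma~\ref{lem:2pi cycle}) when $v$ is real and the triangle might in principle live near a block intersection; however since a triangle of $X_\Gamma^{(2)}$ lies in a single block (its fake vertices force this), the ambient block is unambiguous and the local lemmas of Section~\ref{subsec:local} apply verbatim inside it, so this obstacle is more apparent than real.
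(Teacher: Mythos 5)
Your argument is essentially the paper's proof: the all-fake case is ruled out via Lemma~\ref{lem:local} together with the observation that the necks of a fake vertex's link are real, and the one-real-two-fake case via Lemma~\ref{lem:flat vertex} and Remark~\ref{rmk:non tip} (the paper runs this by contradiction in the link of $o_2$ assuming $v$ is a tip of neither cell, while you run it directly in the link of $o_1$; your neck caveat is precisely what makes that variant work). The only blemish is the side remark that every triangle meeting $\Xb_\Gamma$ has exactly one fake vertex --- false, since one-real-two-fake triangles also meet $\Xb_\Gamma$ --- but as you treat that case separately this is harmless, and your exclusion of three-real triangles addresses a case the paper leaves implicit.
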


\begin{proof}
If all vertices of $q'(\Delta)$ are fake, then for any vertex $x\in\Delta$, the cycle $q'(\lk(x,Q'))$ in $\lk(q'(x),X^{(2)}_\Gamma)$ contains two consecutive fake vertices, hence contradicts Lemma~\ref{lem:local}. If $q'(\Delta)$ has one real vertex $u$ and two interior vertices $o_1$ and $o_2$, but $u$ is neither the tip of $\Pi_1$ nor the tip of $\Pi_2$ ($\Pi_i$ is the cell containing $o_i$), then $u\in \Pi_1$ and $u$ is not a neck of $\lk(o_2,X^{(2)}_\Gamma)$, which contradicts Remark~\ref{rmk:non tip} (let the non-neck real vertex and the fake vertex in Remark~\ref{rmk:non tip} be $u$ and $o_1$ respectively).
\end{proof}

In what follows we use the map $q$ as in Lemma~\ref{lem:well-defined} (wherever it is well-defined).
A vertex $x\in Q'$ is \emph{real} or \emph{fake} if $q'(x)=q(x)$ is, respectively, real or fake. 

\begin{lemma}
	\label{lem:fake vertex}
Suppose $x\in Q'$ is flat, deep and fake, then the restriction of $q$ to $\St(x,Q')$ is an embedding. Moreover, $q(\St(x,Q'))$ contains the cell of $\Xb_\Gamma$ containing $q(x)$.
\end{lemma}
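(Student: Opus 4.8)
Let $x\in Q'$ be flat, deep and fake, and write $v=q'(x)=q(x)$, a fake vertex of $X_\Gamma$ lying in a unique block $B$. Since $x$ is flat and interior, Lemma~\ref{lem:flat vertex} (applied inside the block $B\cong X_n$, using that $\lk(v,X_\Gamma^{(2)})=\lk(v,B^{(2)})$) says that $q'(\lk(x,Q'))$ is a simple cycle in $\Lambda_v$ consisting of two paths of angular length $\pi$ joining the two necks $v_0,v_n$ of $\Lambda_v$, one path lying in $\Lambda^+_v$ and one in $\Lambda^-_v$; moreover $q'$ is injective on $\St(x,Q')$ by Theorem~\ref{thm:MSquasiflats}(4). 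So already $q'$ embeds $\St(x,Q')$. The point is to upgrade this to an embedding of $q=\rho\circ q'$, i.e.\ to check that composing with the partial retraction $\rho$ does not collapse anything, and that the image is the full cell $\Pi\ni v$.

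\textbf{Step 1: $q$ is defined on $\St(x,Q')$.} Each triangle $\Delta$ of $\St(x,Q')$ has $x$ as a vertex, hence all three vertices of $q'(\Delta)$ are flat and deep. By Lemma~\ref{lem:well-defined}, $q(\Delta)=\rho(q'(\Delta))$ is defined, so $q$ makes sense on $\St(x,Q')$.

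\textbf{Step 2: identifying the cycle $q'(\lk(x,Q'))$.} Since $v$ is fake, the link $\Lambda_v$ is the one described in Section~\ref{subsec:local} after the subdivision/flag-completion, with necks $v_0,v_n$ and halves $\Lambda^\pm_v$ spanned (in $\Lambda^+_v$) by $\{v_0,\dots,v_n\}\cup\{L_2,\dots,L_{n-1}\}\cup\{R_2,\dots,R_{n-1}\}$, etc. By Lemma~\ref{lem:exactly pi1}, the two $\pi$-length paths of $q'(\lk(x,Q'))$ are of one of the four listed types in $\Lambda^+_v$ (and similarly in $\Lambda^-_v$). Moreover, $x$ being \emph{deep} lets us invoke Lemma~\ref{lem:local}: no two adjacent non-neck fake vertices appear in $q'(\lk(x,Q'))$. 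Combining this with the classification in Lemma~\ref{lem:exactly pi1}, each of the two paths contains at most one fake vertex (the cases (2),(3),(4) with $m\ge 2$ or $m'\ge 2$ contain two consecutive fake vertices and are excluded; what remains is $v_0\to v_1\to\cdots\to v_n$, or a path with a single $L_i$ or single $R_i$ inserted). A short case check then shows that the only cycles $q'(\lk(x,Q'))$ compatible with both constraints are, up to relabelling, either the full boundary cycle $\partial\Pi = v_0\to v_1\to\cdots\to v_n\to v'_{n-1}\to\cdots\to v'_1\to v_0$ of a cell, or a boundary modified by replacing one half's straight path with a path through a single fake vertex $L_i$ or $R_i$ (which corresponds to an adjacent cell sharing a whole half-boundary with $\Pi$).

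\textbf{Step 3: $q$ restricted to $\St(x,Q')$ is an embedding with image containing $\Pi$.} Here I would argue as follows. The retraction $\rho$ acts as the identity on triangles of type (1) (two real, one fake vertex) and, on a triangle $\Delta(o_1o_2x_i)$ of type (2), homeomorphically onto the corresponding region of $\Pi_1\cup\Pi_2$. On $\St(x,Q')$: triangles whose $q'$-image has vertices $\{v, v_j, v_{j+1}\}$ (type (1)) map identically to the corresponding triangle of the subdivided cell $\Pi$; triangles whose $q'$-image involves a second fake vertex $o'$ (type (2)) map via $\rho$ onto the ``bigon'' region between $v=o_{\Pi}$ and $o'=o_{\Pi'}$ inside $\Pi\cup\Pi'$, and crucially the part of that region lying in $\Pi$ is precisely the union of the subdivided triangles $\{v, x_i, m\}$ of $\Pi$, where $m$ is the midpoint of $\partial\Pi\cap\partial\Pi'$ and $x_i$ are the endpoints of that segment. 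Using Step 2's description of $q'(\lk(x,Q'))$, I would check that as $\Delta$ ranges over $\St(x,Q')$, the images $q(\Delta)$ exactly tile the closed cell $\Pi$ (the subdivided triangles $\{v,v_j,v_{j+1}\}$ and $\{v,v'_j,v'_{j+1}\}$ are each covered once, either directly by a type-(1) triangle or as half of the image of a type-(2) triangle), and no two have overlapping interiors, using injectivity of $q'$ on $\St(x,Q')$ together with the fact that $\rho$ is injective on each cell. Hence $q|_{\St(x,Q')}$ is injective and its image equals $\St(v, \Xb_\Gamma^{(2)})$, which in particular contains the cell $\Pi\ni v$. (One should also note that any type-(2) triangle meeting $x$ has $v$ as a tip of neither $\Pi_1$ nor $\Pi_2$ only in the excluded cases; when it survives, $v$ plays the role of $o_i$, and the real vertex $x_i$ is a tip of the relevant cell, so the relevant hypothesis of $\rho$'s definition and of Lemma~\ref{lem:well-defined} is met.)

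\textbf{Main obstacle.} The genuinely delicate part is Step 3: one must verify that the different triangles of $\St(x,Q')$, once pushed through $\rho$, do not overlap in $\Pi$ except along shared edges, and that together they cover \emph{all} of $\Pi$ rather than just a proper subset. Injectivity of $q'$ on $\St(x,Q')$ gives injectivity of the \emph{combinatorial} map on vertices, but $\rho$ identifies interiors of certain triangles with \emph{subdivided} regions of $\Pi$, so one has to track the midpoints $m$ carefully and confirm the pieces fit together to form exactly the subdivided cell. The deep-and-flat hypotheses, via Lemma~\ref{lem:local} and Lemma~\ref{lem:flat vertex}, are exactly what rule out the degenerate configurations (consecutive fake vertices, or a real link-vertex that is not a tip of the adjacent cell) that would break this fitting-together, so the proof is really an exercise in combining those two lemmas with the explicit description of $\rho$.
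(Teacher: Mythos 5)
Your overall route is the same as the paper's (Lemma~\ref{lem:flat vertex} plus Lemma~\ref{lem:local} to constrain $q'(\lk(x,Q'))$, then the classification of Lemma~\ref{lem:exactly pi1} combined with Lemma~\ref{lem:real fake} and the definition of $\rho$), but Step 2 contains a concrete error: you conclude that each of the two $\pi$-length paths contains at most one fake vertex. Lemma~\ref{lem:local} only excludes \emph{adjacent} non-neck fake vertices, so case (4) of Lemma~\ref{lem:exactly pi1} with $m=m'=1$, namely $\omega^+=v_0\to L_i\to v_i\to v_{i+1}\to\cdots\to v_{n-j}\to R_j\to v_n$ with $2\le i$, $j\ge 2$, $i\le n-j$, is \emph{not} excluded: the fake vertices $L_i$ and $R_j$ are separated by at least one real vertex. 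This is exactly the fourth possibility the paper lists for $\omega^+$, and it means $\St(x,Q')$ can have up to four ears (two per half of the link), not at most two as your description allows. Consequently the tiling check in your Step 3, which you base on Step 2's list, omits this configuration. The omission is fixable within your framework --- the same $\rho$-argument applies, with the bigon regions on the $L$-side and the $R$-side covering disjoint arcs of the same half of $\partial\Pi$ because $i\le n-j$ (this is where Lemma~\ref{lem:real fake} enters) --- but as written the case analysis is incomplete and the stated exclusion is false.

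A smaller inaccuracy: at the end of Step 3 you assert that the image of $q|_{\St(x,Q')}$ equals $\St(v,\Xb_\Gamma)$, i.e.\ the subdivided cell $\Pi$. When ears are present the image also contains the halves of the bigon regions lying in the neighbouring cells $\Pi'$, so it strictly contains $\Pi$ in general. Only the containment $\Pi\subset q(\St(x,Q'))$ is asserted in the lemma, and that part of your argument is fine.
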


\begin{proof}
By Theorem~\ref{thm:MSquasiflats}, the restriction of $q'$ to the closed star $\St(x,Q')$ is an embedding. Let $v=q'(x)$ and let $\omega=q'(\lk(x,Q'))$. Then
\begin{enumerate}
	\item $\omega$ is a concatenation of two paths $\omega^+$ and $\omega^-$ such that each of them connects the two necks of $\Lambda_v=\lk(v,X^{(2)}_\Gamma)$, and $\omega^+\subset\Lambda^+_v$, $\omega^-\subset\Lambda^-_v$;
	\item $\omega^+$ and $\omega^-$ do not contain consecutive fake vertices.
\end{enumerate}
(1) follows from Lemma~\ref{lem:flat vertex}, (2) follows from Lemma~\ref{lem:local}. By (2) and Lemma~\ref{lem:exactly pi1}, there are only four possibilities for $\omega^+$:
\begin{enumerate}
		\item $\omega^+$ does not contain fake vertices, i.e.\ $\omega=v_0\to v_1\to\cdots\to v_n$;
		\item $\omega^+=v_0\to v_1\to\cdots\to v_{n-i}\to R_i\to v_n$, where $i\ge2$;
		\item $\omega^+=v_0\to L_i\to v_i\to v_{i+1}\to\cdots \to v_n$, where $2\le i$;
		\item $\omega^+=v_0\to L_i\to v_i\to v_{i+1}\to\cdots \to v_{n-j}\to R_j\to v_n$, where $2\le i$, $j\ge 2$, and $i\le n-j$.
\end{enumerate}
A similar statement holds for $\omega^-$. Then the lemma follows from Lemma~\ref{lem:real fake} and the definition of $\rho$ (see Figure~\ref{f:retraction} for an example).
\end{proof}

\begin{figure}[h!]
	\centering
	\includegraphics[width=1\textwidth]{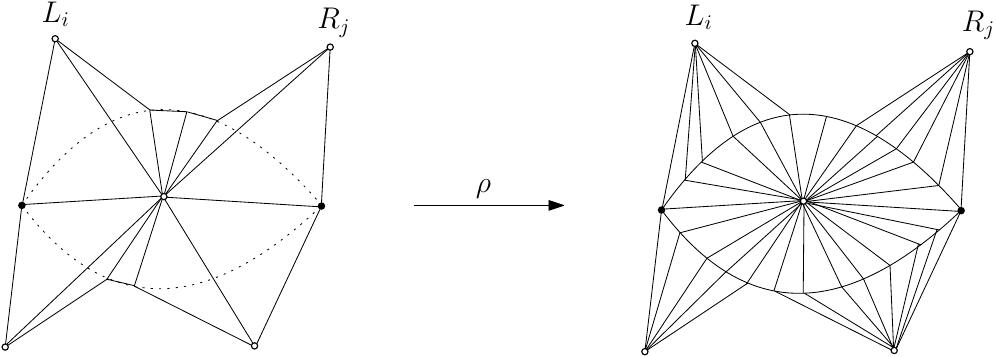}
	\caption{}
	\label{f:retraction}
\end{figure}

Let $x$ be as in Lemma~\ref{lem:fake vertex}. Then for any fake vertex $x'\in\lk(x,Q')$, there are exactly two triangles in $\St(x,Q')$ containing $x'$. The union of these two triangles is called an \emph{ear} of $\St(x,Q')$. Note that $\St(x,Q')$ can have at most four ears. By Lemma~\ref{lem:fake vertex} again, there is a subset of $\St(x,Q')$ which is mapped to the cell containing $q'(x)$ homeomorphically by $q$. We denote this subset by $C_x$. Note that the boundary $\partial C_x$ cuts through the ears of $\St(x,Q')$ and follows the edges in $\lk(x,Q')$ that are between two real vertices.

\begin{lemma}
	\label{lem:real1}
Suppose $x\in Q'$ is flat, deep and real. Let $v=q'(x)$. Suppose the cycle $\omega=q'(\lk(x,Q'))$ in $\lk(v,X^{(2)}_\Gamma)$ satisfies Lemma~\ref{lem:2pi cycle} (1), then the restriction of $q$ to $\St(x,Q')$ is an embedding. Moreover, there are exactly four fake vertices in $\omega$, and the four cells containing these four fake vertices can be ordered as $\{\Pi_i\}_{i=1}^{4}$ such that (see Figure~\ref{f:flat2})
\begin{enumerate}
	\item each $\Pi_i$ contains $v$, and 
	$\Pi_1,\Pi_2,\Pi_3,\Pi_4$ are in the same block;
	\item $v$ is the right tip of $\Pi_2$ and is the left tip of $\Pi_1$;
	\item $v$ is not a tip of $\Pi_3$ or $\Pi_4$;
	\item each of $\Pi_3\cap\Pi_2$, $\Pi_3\cap\Pi_1$, $\Pi_4\cap\Pi_2$ and $\Pi_4\cap\Pi_1$ contains at least one edge, hence $\Pi_i\cap (\Pi_1\cup\Pi_2)$ is a half of $\Pi_i$ for $i=3,4$ by Lemma~\ref{cor:connected intersection};
	\item if in addition each vertex in $\lk(x,Q')$ is deep and flat, then $\St(x,Q')$ is contained in $\cup_{i=1}^4C_{x_i}$, where $x_i$ is the fake vertex in $\lk(x,Q')$ such that $q(x_i)$ is the fake vertex in $\Pi_i$;
	\item under the assumption of (5), $x\in C_{x_i}$ for each $i$.
\end{enumerate}
\end{lemma}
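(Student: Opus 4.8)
The plan is to run the same analysis as in the proof of Lemma~\ref{lem:fake vertex}, but now around a real vertex $v=q'(x)$, using the hypothesis that $\omega=q'(\lk(x,Q'))$ falls into case (1) of Lemma~\ref{lem:2pi cycle}, i.e.\ $\omega$ is contained in a single block $B$. First I would invoke Theorem~\ref{thm:MSquasiflats}~(4) together with the link structure of real vertices inside $B\cong X_n$ (Section~\ref{subsec:local}): since $x$ is flat, $\omega$ is a simple cycle of angular length exactly $2\pi$ in $\Lambda_v$, and by Lemma~\ref{lem:cycle real}, Lemma~\ref{lem:not two-full} and Lemma~\ref{lem:exactly pi}, $\omega$ must pass through both necks $\ell^{-1}o$ and $r^{-1}o$ of $\Lambda_v$ and decompose as $\omega^+\cup\omega^-$ with $\omega^+$ a path of angular length $\pi$ in $\Lambda^+_v$ and $\omega^-$ a path of angular length $\pi$ in $\Lambda^-_v$. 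Then, exactly as in Lemma~\ref{lem:fake vertex}, I would apply Lemma~\ref{lem:local} (since $x$ is deep) to rule out two consecutive fake vertices on $\omega^+$ or $\omega^-$; combined with the list in Lemma~\ref{lem:exactly pi}, each of $\omega^+,\omega^-$ is forced into one of the short forms, and in particular each contains exactly one non-neck fake vertex. Including the two necks $\ell^{-1}o, r^{-1}o$, which are themselves fake, this gives exactly four fake vertices on $\omega$: the two necks, one non-neck fake vertex $o^+$ on $\omega^+$, and one non-neck fake vertex $o^-$ on $\omega^-$. I would then label the corresponding cells: $\Pi_1$ the cell with $v$ as left tip (so its fake vertex is $\ell^{-1}o$-side), $\Pi_2$ the cell with $v$ as right tip, and $\Pi_3,\Pi_4$ the cells of $o^+,o^-$. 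This establishes (1)--(3), with (3) holding because $o^+$ (resp.\ $o^-$) is a \emph{non-neck} fake vertex, so by Lemma~\ref{lem:edge type II} and Lemma~\ref{cor:connected intersection} the vertex $v$ lies on the interior of $\Pi_3\cap\Pi_1$ (resp.\ is such that $\Pi_3\cap\Pi_1$ has $\ge2$ edges), hence $v$ is not a tip of $\Pi_3$ or $\Pi_4$.

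For (4), the key point is that each of $\omega^+,\omega^-$ is a path from one neck to the other passing through its non-neck fake vertex, and adjacency on $\omega$ records when the relevant cells share $\ge2$ edges (this is how Lemma~\ref{lem:edge type II} characterizes edges of type II in $\Lambda_v$, and how edges between fake vertices are defined). Concretely, using the forms in Lemma~\ref{lem:exactly pi}, on $\omega^+$ the fake vertex $o^+$ is adjacent to both $\ell^{-1}o$ and $r^{-1}o$ (possibly through the real vertices $a^o,b^i$, but the fake–fake adjacency $d_i^{-1}o\sim d_0^{-1}o$ and $d_i^{-1}o\sim d_n^{-1}o$ holds as long as $1\le i\le n-1$, which is exactly the non-neck condition), so $\Pi_3\cap\Pi_1$ and $\Pi_3\cap\Pi_2$ each contain $\ge2$ edges; similarly for $\Pi_4$. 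Then Lemma~\ref{cor:connected intersection}~(3) forces $\Pi_i\cap\Pi_1$ (resp.\ $\Pi_i\cap\Pi_2$) to run from a tip of $\Pi_i$ to the left (resp.\ right) tip of $\Pi_1$ (resp.\ of $\Pi_2$, which equals $v$), and since these two intersections share the common cell $\Pi_i$ and are forced into the \emph{same} half of $\Pi_i$ (they cannot be in opposite halves, else Lemma~\ref{lem:small intersection} or \cite[Corollary 3.5]{Artinsystolic} gives a contradiction with $\Pi_1\cap\Pi_2$ being a single point $v$), we get $\Pi_i\cap(\Pi_1\cup\Pi_2)$ is an entire half of $\Pi_i$, for $i=3,4$. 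This proves (4).

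For the embedding claim and for (5)--(6), the argument parallels the fake-vertex case but is run through the map $q=\rho\circ q'$. By Lemma~\ref{lem:well-defined}, $q$ is defined on every triangle of $\St(x,Q')$ once all its vertices are deep and flat (which is the standing hypothesis, strengthened in (5)); injectivity of $q$ on $\St(x,Q')$ follows from injectivity of $q'$ there (Theorem~\ref{thm:MSquasiflats}) together with the explicit description of $\rho$ on the ears. For (5): each fake vertex $x_i\in\lk(x,Q')$ is itself flat and deep, so Lemma~\ref{lem:fake vertex} applies and produces the region $C_{x_i}\subset\St(x_i,Q')$ mapped homeomorphically by $q$ onto the cell $\Pi_i$; I would then check that the triangles of $\St(x,Q')$ adjacent to $x_i$ — the ears and the retracted triangles of type (1) in the definition of $\rho$ — are precisely the ones that $C_{x_i}$ picks up near $x$, so that $\St(x,Q')\subseteq\bigcup_{i=1}^4 C_{x_i}$; the fact that there are exactly four fake vertices on $\omega$ and the orientation constraints from Figure~\ref{f:flat2} (which I would recover from the labeling and orientation of edges pulled back from $P_\Gamma$) guarantee these four $C_{x_i}$ cover the whole star and not merely part of it. Statement (6), $x\in C_{x_i}$, is then immediate since $v=q'(x)$ is a vertex of $\partial\Pi_i$ for every $i$ by (1)--(3), and $C_{x_i}$ is the full preimage-component of $\Pi_i$, which contains all of $\partial\Pi_i$, in particular $v$; pulling back through the local homeomorphism $q|_{C_{x_i}}$ gives $x\in C_{x_i}$.

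The main obstacle I expect is (4) and the bookkeeping for (5): one must be careful that the two non-neck fake vertices $o^+,o^-$ are genuinely \emph{distinct} from the necks and from each other (this uses that $\omega$ is a \emph{simple} cycle and that $\Lambda^+_v,\Lambda^-_v$ meet only at the necks, Lemma~\ref{lem:cycle real}), and that the "same half of $\Pi_i$" conclusion really does follow — here the interplay of Lemma~\ref{cor:connected intersection}, Lemma~\ref{lem:small intersection}, and the positivity/injectivity of the Artin monoid (as used in the proof of Lemma~\ref{lem:small intersection}) is what rules out the bad configuration where $\Pi_i$ meets $\Pi_1$ and $\Pi_2$ in opposite halves. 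The orientation statement in (2) (which tip of which cell $v$ is) is not automatic from the unoriented link picture and needs the edge orientations from $P_\Gamma$; this is the kind of orientation subtlety flagged in Step~2.2 of the introduction, and I would handle it by tracking the $a$- and $b$-labels along $\omega^+$ and $\omega^-$ as in Figure~\ref{f:real}.
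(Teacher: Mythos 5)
Your overall route is the same as the paper's: use the flat/deep hypotheses (via the analogues of Lemma~\ref{lem:flat vertex} and Lemma~\ref{lem:local}) to force each half of $\omega$ into one of the short forms of Lemma~\ref{lem:exactly pi} with exactly one non-neck fake vertex, count four fake vertices (two necks plus two non-necks), identify $\Pi_1,\Pi_2$ as the cells of the necks and $\Pi_3,\Pi_4$ as the cells $d_i^{-1}\Pi$, $u_j^{-1}\Pi$ with $1\le i,j\le n-1$, and then get (5)--(6) from Lemma~\ref{lem:fake vertex} and the definition of $\rho$ and $C_{x_i}$. That part, and your treatment of the embedding claim and of (5)--(6), matches the paper.

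However, your justification of (3)--(4) contains concrete errors. First, Lemma~\ref{lem:edge type II}(1) gives $d_i^{-1}o\sim d_j^{-1}o$ iff $1\le|j-i|\le n-2$, so the fake--fake adjacencies $d_i^{-1}o\sim d_0^{-1}o$ and $d_i^{-1}o\sim d_n^{-1}o$ do \emph{not} both hold for all $1\le i\le n-1$: for $i=1$ or $i=n-1$ one of them fails (and for $n=3$ there is no $i$ for which both hold), which is precisely why cases (1) and (2) of Lemma~\ref{lem:exactly pi} pass through the real vertices $b^i$, $a^o$. Consequently your claim that $\Pi_3\cap\Pi_1$ and $\Pi_3\cap\Pi_2$ each contain at least two edges is false in those cases: when the path goes through a real vertex, the corresponding intersection is a single edge (e.g.\ $d_1^{-1}\Pi\cap r^{-1}\Pi$ is exactly the incoming $b$-edge at $v$). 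Statement (4) only requires one edge, so this is repairable, but the one-edge case then needs its own argument (the shared edge comes from the type I vertex on the path), which your write-up skips. Second, your ``same half'' step is not a valid proof by contradiction: Lemma~\ref{lem:small intersection} and \cite[Corollary 3.5]{Artinsystolic} would conclude that $\Pi_1\cap\Pi_2$ is at most a point, which is consistent with $\Pi_1\cap\Pi_2=\{v\}$, not contradictory. The correct (and simpler) reason is that $v$ is not a tip of $\Pi_3$, so by Lemma~\ref{cor:connected intersection}(2) each of the connected sets $\Pi_3\cap\Pi_1$, $\Pi_3\cap\Pi_2$, containing $v$ and lying in a single half, must lie in the unique half of $\partial\Pi_3$ whose interior contains $v$; that they reach \emph{different} tips of $\Pi_3$ (so their union is the full half) then follows from Lemma~\ref{lem:unique} or, as the paper does, from the explicit identification $\Pi_3=d_i^{-1}\Pi$, $\Pi_4=u_j^{-1}\Pi$, which also gives (3) directly (non-neck fake vertices are exactly those whose cells contain $v$ at a non-tip position; $v$ is in general an endpoint, not an interior point, of $\Pi_3\cap\Pi_1$). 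With these corrections your plan coincides with the paper's proof.
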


\begin{figure}[h!]
	\centering
	\includegraphics[scale=0.66]{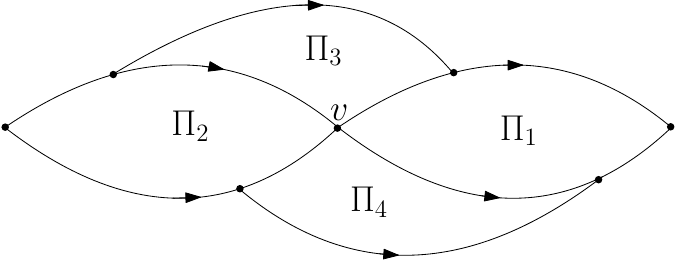}
	\caption{}
	\label{f:flat2}
\end{figure}

\begin{proof}
Let $B$ be the block containing $\omega$. We argue as before to deduce $\omega$ is a concatenation of two paths $\sigma_1$ and $\sigma_2$ connecting two necks of $\lk(v,B^{(1)})$ such that each of them satisfies one of the three possibilities in Lemma~\ref{lem:exactly pi}. Moreover, by Lemma~\ref{lem:local}, $k=3$ in Lemma~\ref{lem:exactly pi} (3). Thus there are four fake vertices in $\omega$. We can assume $\Pi_1=\Pi$ is the base cell of $X_n$. Then the conclusions (1)-(4) of the lemma follows by letting $\Pi_2=r^{-1}\Pi$, $\Pi_3=\di{i}o$ and $\Pi_4=\ui{j}o$ for some $1\le i,j\le n-1$. For (5), it follows from our assumption that the link of each fake vertex in $\lk(x,Q')$ is as described in the proof of Lemma~\ref{lem:fake vertex}. Thus (5) follows from the definition of $\rho$ and $C_{x_i}$. To see (6), note that for each $C_{x_i}$, $x$ is either a real vertex in an ear of $\St(x_i,Q')$, or $x$ is inside an edge of $\lk(x_i,Q')$ whose two end points are real. Thus $x\in C_{x_i}$. We refer to Figure~\ref{f:flat3} for a particular example.
\end{proof}

\begin{figure}[h!]
	\centering
	\includegraphics[width=1\textwidth]{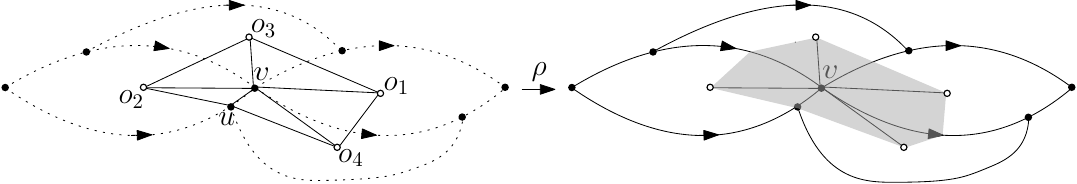}
	\caption{The path $o_2\to o_3\to o_1$ satisfies Lemma~\ref{lem:exactly pi} (3), and the path $o_2\to u\to o_4\to o_1$ satisfies Lemma~\ref{lem:exactly pi} (1).}
	\label{f:flat3}
\end{figure}

\begin{lemma}
	\label{lem:real2}
Let $x,v,\omega$ be as in Lemma~\ref{lem:real1}. If $\omega$ satisfies Lemma~\ref{lem:2pi cycle} (2), then the restriction of $q$ to $\St(x,Q')$ is an embedding. 

Let $\overline{ab},\overline{ac}\subset\Gamma$ be the defining edges of $B_1$ and $B_2$ as in Lemma~\ref{lem:2pi cycle} (2). For $i=1,2$, let the label of the defining edge of $B_i$ be $n_i$. Then there are exactly four fake vertices in $\omega$, and the four cells contains these four fake vertices can be ordered as $\{\Pi_i\}_{i=1}^{4}$ (see Figure~\ref{f:flat4}) such that
\begin{enumerate}
	\item each $\Pi_i$ contains $v$;
	\item $\Pi_1$ and $\Pi_2$ are in $B_1$, and $\Pi_3$ and $\Pi_4$ are in $B_2$;
	\item $\Pi_1\cap\Pi_2$ has $n_1-1$ edges, $\Pi_3\cap\Pi_4$ has $n_2-1$ edges, $\Pi_1\cap\Pi_3$ intersects along an $a$--edge, and $\Pi_2\cap\Pi_4$ intersects along an $a$--edge;
	\item the analogous statements of Lemma~\ref{lem:real1} (5) and (6) hold.
\end{enumerate}
\end{lemma}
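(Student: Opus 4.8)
The plan is to run the argument of Lemma~\ref{lem:real1} separately inside each of the two blocks $B_1,B_2$ and then glue the two outcomes along $B_1\cap B_2$. First, write $\omega_j=\omega\cap B_j$ for $j=1,2$, and let $\{s,t\}=\omega\cap B_1\cap B_2$, where $s$ (resp.\ $t$) is the vertex of $\lk(v,X^{(2)}_\Gamma)$ corresponding to the incoming (resp.\ outgoing) $a$--edge at $v$; since the defining edges $\overline{ab}$ and $\overline{ac}$ of $B_1,B_2$ share the vertex $a$, both $s$ and $t$ lie in $\lk(v,B_1^{(1)})\cap\lk(v,B_2^{(1)})$. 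By Lemma~\ref{lem:2pi cycle}(2) we have $\omega=\omega_1\cup\omega_2$ with $\omega_1\cap\omega_2=\{s,t\}$, and each $\omega_j$ is an edge path of angular length $\pi$ in $\lk(v,B_j^{(1)})$ from $s$ to $t$. So it suffices to classify, in a single dihedral block $B_j\cong X_{n_j}$ with $v$ real (which by the $DA_{n_j}$--action we may take to be $\ell$, so that $s\in\Lambda^-_v$, $t\in\Lambda^+_v$ in the notation of Section~\ref{subsec:local}), the angular-length-$\pi$ edge paths in $\lk(v,B_j^{(1)})$ from $s$ to $t$.

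This local classification is the analogue of Lemma~\ref{lem:exactly pi}, now for paths between the two ``$a$--vertices'' instead of between the two necks, and I would carry it out by an angular-budget count with the data of Section~\ref{subsec:local}: there are no edges of $\lk(v,B_j^{(1)})$ between two real vertices, every type I edge has angular length $\tfrac{n_j-1}{2n_j}\pi$, and every type II edge has angular length $\ge\tfrac1{n_j}\pi$ (Lemma~\ref{lem:edge type II}, Lemma~\ref{lem:real fake}). Since $\length_\angle(\omega_j)=\pi$, when $n_j\ge 3$ there is no interior real vertex on $\omega_j$ (the two end edges at $s,t$ plus the two edges at an interior real vertex would all be of type I and already sum to $\tfrac{2(n_j-1)}{n_j}\pi>\pi$), and there are at most two fake vertices (three fake vertices force angular length $\ge\tfrac{n_j+1}{n_j}\pi>\pi$, and one fake vertex gives $<\pi$); hence $\omega_j=s\to o'\to o\to t$, and the remaining budget forces the type II edge $\overline{o'o}$ to have angular length exactly $\tfrac1{n_j}\pi$, so by Definition~\ref{def:length} the carrier cells $\Pi,\Pi'$ of $o,o'$ satisfy that $\Pi\cap\Pi'$ has $n_j-1$ edges (when $n_j=2$ one gets instead $\omega_j=s\to o'\to(\text{real})\to o\to t$, and $\Pi\cap\Pi'$ has again $n_j-1=1$ edge, no type II edge being present). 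From $t\sim o$ and $s\sim o'$ it follows that $\Pi$ carries the outgoing and $\Pi'$ the incoming $a$--edge at $v$; and Case~1 of the proof of Lemma~\ref{lem:local} --- which is local around $x$ and uses only Theorem~\ref{thm:MSquasiflats}(4), a property inherited by $B_j$ --- applies inside $B_j$ to forbid $\overline{o'o}$ from being a pair of adjacent non-neck fake vertices of $q'(\lk(x,Q'))$, so at least one of $\Pi,\Pi'$ has $v$ as a tip.

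Feeding this into $B_1$ and into $B_2$ produces the four cells; label them so that $\Pi_1\subset B_1$ and $\Pi_3\subset B_2$ carry the outgoing $a$--edge at $v$ and $\Pi_2\subset B_1$, $\Pi_4\subset B_2$ carry the incoming one. Then (1) holds because a fake vertex of $\lk(v,B_j^{(1)})$ names a cell of $B_j$ containing $v$; (2) holds by construction; and (3) holds since $\Pi_1\cap\Pi_2$, $\Pi_3\cap\Pi_4$ have $n_1-1$, $n_2-1$ edges by the previous step, while $\Pi_1\cap\Pi_3,\Pi_2\cap\Pi_4\subseteq B_1\cap B_2$, which is the $a$--axis through $v$ (a standard subcomplex of $X^{(2)}_\Gamma$ meeting that axis in an edge), and each of these cells meets the axis in exactly the corresponding $a$--edge at $v$ because a cell boundary contains no two consecutive $a$--edges. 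That $q=\rho\circ q'$ restricts to an embedding of $\St(x,Q')$ now follows as in Lemma~\ref{lem:real1}: $q'$ is injective there by Theorem~\ref{thm:MSquasiflats}(4), $\rho$ fixes the triangles of $\St(x,Q')$ with two real vertices, and on any triangle of $\St(x,Q')$ with the single real vertex $v$ (which can only be the one along $\overline{o_1o_2}$ or along $\overline{o_3o_4}$, when these fake--fake edges exist) $\rho$ is defined since $v$ is a tip of one of the two carrier cells. Finally, conclusion (4) is a verbatim transcription of Lemma~\ref{lem:real1}(5)--(6): under the extra hypothesis the vertices $x_i\in\lk(x,Q')$ over $o_i$ are deep and flat, so Lemma~\ref{lem:fake vertex} supplies subsets $C_{x_i}\subset\St(x_i,Q')$ mapped homeomorphically onto $\Pi_i$, and tracing the links of the $x_i$ (whose real--real edges bound $C_{x_i}$ and cut through the ears) gives $\St(x,Q')\subseteq\bigcup_{i=1}^4 C_{x_i}$ and $x\in C_{x_i}$ for each $i$.

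I expect the local analysis of the second paragraph to be the crux. Lemma~\ref{lem:exactly pi} and Lemma~\ref{lem:exactly pi1} only classify angular-length-$\pi$ paths between the necks, so the classification of such paths from $s$ to $t$ --- and in particular the extraction of the precise numerics ``$n_j-1$ edges'' and ``intersects along an $a$--edge'' --- has to be done by hand from the angular-length data of Definition~\ref{def:length}, Lemma~\ref{lem:edge type II} and Lemma~\ref{lem:real fake}, together with a careful transfer of Lemma~\ref{lem:local} to a single block in order to decide which carrier cell has $v$ as a tip. Once this local statement is in place, everything else is a routine adaptation of Lemma~\ref{lem:real1}.
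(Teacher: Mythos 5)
Your proposal follows essentially the same route as the paper: split $\omega$ into the two subpaths $\sigma_1,\sigma_2$ joining $a^o$ to $a^i$ inside $B_1$ and $B_2$, and run the angular-length count (no real--real edges, two type I edges of length $\tfrac{n_j-1}{2n_j}\pi$ forced at the ends, remaining budget $\tfrac{\pi}{n_j}$), so that each subpath is two type I edges plus one type II edge of minimal angular length, whence the two carrier cells in each block meet along $n_j-1$ edges; the paper then concludes via the definition of $\rho$ exactly as you do. Your explicit treatment of $n_j=2$ (no type II edges, an interior real vertex appears, intersection still $n_j-1=1$ edge) covers a case the paper's wording glosses over, which is a small plus. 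The one step I would not accept as written is the appeal to ``Case~1 of the proof of Lemma~\ref{lem:local} applied inside $B_j$'' to get that $v$ is a tip of one of the two cells: that case's setup is not present here (the neighbour of the fake vertex along the link is the real vertex $a^o$, not the neck $r^{-1}o$, and Lemma~\ref{lem:flat vertex} in its single-block form does not describe the two-block link of $x$), so the citation does not literally apply. The fact itself is true and cheap to get: the paper's classification pins down the subpaths explicitly as $a^o\to \ell^{-1}o\to u_1^{-1}o\to a^i$ or $a^o\to d_{n_1-1}^{-1}o\to r^{-1}o\to a^i$, where $\ell$ is visibly a tip of $\Pi$ resp.\ of $r^{-1}\Pi$; alternatively, from your own classification, $v\in\Pi\cap\Pi'$ and $v$ cannot be an interior vertex of that path (otherwise $\partial\Pi$ would contain both $a$--edges at $v$ in addition to a $b$--edge, three boundary edges at one vertex), so $v$ is an endpoint and hence a tip of $\Pi$ or $\Pi'$ by Lemma~\ref{cor:connected intersection}(3); or argue in the link of the fake vertex using Lemma~\ref{lem:flat vertex} together with Remark~\ref{rmk:non tip}, which is the Case~2-style mechanism of Lemma~\ref{lem:local}. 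With that step repaired, the rest (embedding via $\rho$, and conclusion (4) transcribed from Lemma~\ref{lem:real1}(5)--(6)) matches the paper.
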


\begin{figure}[h!]
	\centering
	\includegraphics[width=1\textwidth]{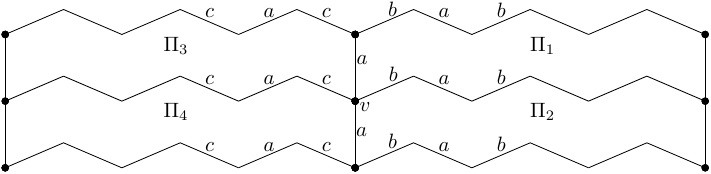}
	\caption{}
	\label{f:flat4}
\end{figure}

\begin{remark}
	\label{rmk:real2}
We did not specify orientations of edges in Figure~\ref{f:flat4}. However, it follows from Lemma~\ref{lem:real2} (3) that all vertical edges in Figure~\ref{f:flat4} have orientations pointing towards the same direction (all up or all down). All non-vertical edges in Figure~\ref{f:flat4} that are in $B_1$ have orientations pointing towards the same direction (all left or all right), and a similar statement holds with $B_1$ replaced by $B_2$.
\end{remark}

\begin{proof}[Proof of Lemma~\ref{lem:real2}]
We denote the two vertices in $\omega\cap B_1\cap B_2$ by $a^i$ and $a^o$ respectively. For $i=1,2$, let $\sigma_i$ be the sub-path of $\omega$ from $a^o$ to $a^i$ in $\lk(v,B^{(1)}_i)$. We now study the possibilities for $\sigma_1$. We identify $B_1$ with $X_{n_1}$ and assume $v=\ell$ in the cell $\Pi$ of $X_{n_1}$.

By the discussion in Section~\ref{subsec:local}, all edges of type II in $\lk(v,B^{(1)}_1)$ (see Figure~\ref{f:real}) are between two interior vertices, and there are no edges between real vertices. Thus to travel from one real vertex to another real vertex in $\lk(v,B^{(1)}_1)$, one has to go through at least two edges of type I. However, only two edges of type I do not bring one from $a^i$ to $a^o$. So we need at least one another edge. By Lemma~\ref{lem:edge type II}, an edge in $\lk(v,B^{(1)}_1)$ has angular length at least $\frac{1}{2n}2\pi$. Thus $\sigma$ is made of two edges of type I and one edges of type II with minimal angular length. Thus $\sigma_1=a^o\to \ui{0}o\to\ui{1}o\to a^i$ or $\sigma_1=a^o\to \di{n_1-1}o\to\di{n}o\to a^i$. Note that $\ui{0}\Pi\cap\ui{1}\Pi$ has $n_1-1$ edges, and $\di{n-1}\Pi\cap\di{n}\Pi$ has $n_1-1$ edges (see Figure~\ref{f:flat1}). A similar statement holds for $\sigma_2$. Now the lemma follows from the definition of $\rho$.
\end{proof}

\begin{figure}[h!]
	\centering
	\includegraphics[scale=1.1]{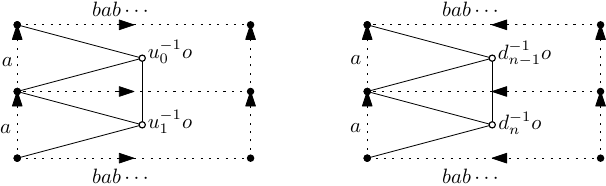}
	\caption{}
	\label{f:flat1}
\end{figure}

\begin{lemma}
\label{lem:real3}
Let $x,v,\omega$ be as in Lemma~\ref{lem:real1}. If $\omega$ satisfies Lemma~\ref{lem:2pi cycle} (3), then the restriction of $q$ to $\St(x,Q')$ is an embedding. Moreover, let $\{B_i\}_{i=1}^3$ be as in  Lemma~\ref{lem:2pi cycle} (3). Then there are three cells $\{\Pi_i\subset B_i\}_{i=1}^3$ as in Figure~\ref{f:flat5} (left) such that
\begin{enumerate}
	\item $v\in\Pi_i$ for $1\le i\le 3$;
	\item each of $\Pi_1\cap\Pi_2$, $\Pi_2\cap\Pi_3$, and $\Pi_3\cap\Pi_1$ consists of one edge;
	\item the analogous statements of Lemma~\ref{lem:real1} (5) and (6) hold.
\end{enumerate}
A similar statement holds when $\omega$ satisfies Lemma~\ref{lem:2pi cycle} (4), where we have four squares around $v$, see Figure~\ref{f:flat5} right.
\end{lemma}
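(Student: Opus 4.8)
The plan is to follow exactly the template of the previous three lemmas (Lemma~\ref{lem:real1}, Lemma~\ref{lem:real2}) and carry it through for cases (3) and (4) of Lemma~\ref{lem:2pi cycle}. First I would fix the flat deep real vertex $x$, set $v=q'(x)$, and by Theorem~\ref{thm:MSquasiflats}(4) together with Lemma~\ref{lem:flat vertex} (applied in the version for real vertices, using the fact that $q'$ is injective on $\St(x,Q')$) identify $\lk(x,Q')$ with the cycle $\omega=q'(\lk(x,Q'))\subset\lk(v,X^{(2)}_\Gamma)$. By hypothesis $\omega$ is of type (3) of Lemma~\ref{lem:2pi cycle}, so it is a $6$--cycle of angular length exactly $2\pi$ travelling through three blocks $B_1,B_2,B_3$ whose defining edges form a triangle $\Delta(abc)\subset\Gamma$ with $\tfrac1{n_1}+\tfrac1{n_2}+\tfrac1{n_3}=1$, and its vertices alternate real/fake with the three real vertices corresponding to an $a$-, $b$-, and $c$-edge at $v$. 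So $\omega$ already has precisely three fake vertices $o_1,o_2,o_3$; let $\Pi_i$ be the cell (in $B_i$) containing $o_i$. Conclusion (1), that $v\in\Pi_i$, is immediate since $o_i\in\lk(v,X^{(2)}_\Gamma)$ forces the cell carrying $o_i$ to contain $v$.

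For conclusion (2), that each of $\Pi_1\cap\Pi_2$, $\Pi_2\cap\Pi_3$, $\Pi_3\cap\Pi_1$ is a single edge: the fake vertices $o_i$ and $o_{i+1}$ are \emph{not} adjacent in $\omega$ (a real vertex sits between them in the $6$--cycle), hence by Lemma~\ref{lem:edge type II} / the construction of edges of type II (two fake vertices are joined by an edge iff the corresponding cells share $\ge 2$ edges) the cells $\Pi_i$ and $\Pi_{i+1}$ share at most one edge; on the other hand $\Pi_i$ and $\Pi_{i+1}$ both contain $v$ and they lie in distinct blocks $B_i,B_{i+1}$ whose defining edges share the vertex of $\Gamma$ labelling the real vertex of $\omega$ between $o_i$ and $o_{i+1}$, and that real vertex together with $v$ gives a common edge. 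So $\Pi_i\cap\Pi_{i+1}$ is exactly one edge. This also gives that $q$ restricted to $\St(x,Q')$ is an embedding: by Lemma~\ref{lem:well-defined} (using that $x$ and all its neighbours are flat and deep) $q$ is defined on $\St(x,Q')$, and since each triangle of $\St(x,Q')$ either has two real vertices (so $q$ is the identity on it) or has one real vertex which is a tip of one of its two fake-vertex cells, injectivity is read off from the injectivity of $q'$ together with the description of $\rho$ exactly as in the earlier lemmas. Conclusion (3) is then verbatim the argument of Lemma~\ref{lem:real1}(5)--(6): assuming every vertex of $\lk(x,Q')$ is deep and flat, Lemma~\ref{lem:fake vertex} describes each $\St(o_i,Q')$ and its distinguished subset $C_{x_i}$ mapping homeomorphically onto $\Pi_i$, and from the definition of $\rho$ one gets $\St(x,Q')\subset\bigcup_{i=1}^3 C_{x_i}$ and $x\in C_{x_i}$ for each $i$ because $x$ lies either in an ear of $\St(o_i,Q')$ or inside an edge of $\lk(o_i,Q')$ with real endpoints.

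For the parenthetical last sentence, when $\omega$ satisfies Lemma~\ref{lem:2pi cycle}(4), $\omega$ is a $4$--cycle through four blocks whose defining edges form a full $4$--cycle in $\Gamma$, with one edge of angular length $\pi/2$ in each block. Here one must be a little careful: a type-(4) cycle as stated could have real and fake vertices arranged differently, but the local picture around $v$ is that four squares (i.e.\ $2$--cells that are $2$-gons, coming from blocks with label $2$) meet around $v$; the retraction $\rho$ restricted to such squares is the identity and the four squares fit together combinatorially as the $4$--cycle in $\Gamma$ dictates, so the statement follows directly from Lemma~\ref{lem:2pi cycle}(4) and the definition of $\rho$ with essentially no extra work. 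I expect the main (minor) obstacle to be the bookkeeping in establishing injectivity of $q|_{\St(x,Q')}$ and the precise intersection-size claim (2): one has to combine the type-II edge characterisation, Lemma~\ref{cor:connected intersection}, and the block structure of $X_\Gamma$ to rule out $\Pi_i\cap\Pi_{i+1}$ having two or more edges, and to do the square case correctly one should double-check the orientation/label data of the $6$--cycle in Lemma~\ref{lem:2pi cycle}(3)--(4), but all of this is parallel to the treatment of cases (1) and (2) already given.

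\begin{proof}
Let $v=q'(x)$ and $\omega=q'(\lk(x,Q'))$. Since $x$ is flat and $q'$ is injective on $\St(x,Q')$, we identify $\lk(x,Q')$ with $\omega$, a simple cycle of angular length $2\pi$ in $\lk(v,X^{(2)}_\Gamma)$. By hypothesis $\omega$ satisfies Lemma~\ref{lem:2pi cycle}(3): it is a $6$--cycle alternating between real and fake vertices, travelling through three blocks $B_1,B_2,B_3$ whose defining edges form a triangle $\Delta(abc)\subset\Gamma$ with $\tfrac1{n_1}+\tfrac1{n_2}+\tfrac1{n_3}=1$, and its three real vertices correspond to an $a$--edge, a $b$--edge and a $c$--edge based at $v$. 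Thus $\omega$ has exactly three fake vertices $o_1,o_2,o_3$; for each $i$ let $\Pi_i$ be the cell containing $o_i$, and note $\Pi_i\subset B_i$ after reindexing.

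Since $x$ and all its neighbours in $Q'$ are flat and deep, Lemma~\ref{lem:well-defined} applies to every triangle of $\St(x,Q')$, so $q$ is defined on $\St(x,Q')$. Each triangle of $\St(x,Q')$ either has two real vertices --- and then $q$ is the identity on it by definition of $\rho$ --- or has a single real vertex which, by Lemma~\ref{lem:well-defined}, is a tip of one of the two cells carrying its fake vertices; in both cases the description of $\rho$ together with the injectivity of $q'$ on $\St(x,Q')$ shows $q$ is injective on $\St(x,Q')$, exactly as in the proofs of Lemma~\ref{lem:real1} and Lemma~\ref{lem:real2}.

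\emph{Conclusion (1).} Each $o_i\in\lk(v,X^{(2)}_\Gamma)$, so the cell $\Pi_i$ containing $o_i$ contains $v$.

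\emph{Conclusion (2).} In the $6$--cycle $\omega$ the fake vertices $o_i$ and $o_j$ ($i\neq j$) are never adjacent: a real vertex lies between them. Hence $o_i\nsim o_j$ in $\lk(v,X^{(2)}_\Gamma)$, so by the characterisation of edges of type II (two fake vertices are joined by an edge if and only if the corresponding cells share at least two edges, cf.\ Lemma~\ref{lem:edge type II}), the cells $\Pi_i$ and $\Pi_j$ share at most one edge. On the other hand, between $o_i$ and $o_j$ in $\omega$ there is a real vertex $w$ which, together with $v$, spans an edge contained in $\Pi_i\cap\Pi_j$ (both cells contain $v$ and $w$; here we use that the defining edges of $B_i$ and $B_j$ meet in the vertex of $\Gamma$ labelling $w$). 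Therefore each of $\Pi_1\cap\Pi_2$, $\Pi_2\cap\Pi_3$, $\Pi_3\cap\Pi_1$ is exactly one edge, and these three cells are arranged around $v$ as in Figure~\ref{f:flat5} (left).

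\emph{Conclusion (3).} Assume in addition that each vertex of $\lk(x,Q')$ is deep and flat. Then Lemma~\ref{lem:fake vertex} applies to each $o_i$ (viewed in $Q'$), giving the subset $C_{x_i}\subset\St(o_i,Q')$ mapped homeomorphically by $q$ onto $\Pi_i$, whose boundary follows the edges of $\lk(o_i,Q')$ with both endpoints real. From the definition of $\rho$ one obtains $\St(x,Q')\subset\bigcup_{i=1}^3 C_{x_i}$, exactly as in Lemma~\ref{lem:real1}(5). Finally, for each $i$, the vertex $x$ lies either in an ear of $\St(o_i,Q')$ (as a real vertex) or inside an edge of $\lk(o_i,Q')$ whose two endpoints are real; in either case $x\in C_{x_i}$, which is Lemma~\ref{lem:real1}(6).

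If instead $\omega$ satisfies Lemma~\ref{lem:2pi cycle}(4), then $\omega$ is a $4$--cycle with one edge of angular length $\pi/2$ in each of four blocks whose defining edges form a full $4$--cycle in $\Gamma$; each such block has label $2$, so its cells are bigons ("squares" after the obvious metrization) on which $\rho$ restricts to the identity. The four squares around $v$ fit together combinatorially according to the $4$--cycle in $\Gamma$, and the analogous conclusions follow directly from Lemma~\ref{lem:2pi cycle}(4) and the definition of $\rho$; see Figure~\ref{f:flat5} (right).
\end{proof}
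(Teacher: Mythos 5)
Your overall template is the right one, and your setup (identifying $\lk(x,Q')$ with the alternating real/fake $6$--cycle $\omega$, letting $\Pi_i$ be the cell of the fake vertex $o_i$) matches the paper. The paper's own proof is in fact shorter and cleaner at the point where you work hardest: since every edge of $\omega$ joins a real vertex to a fake vertex, each two-edge subpath $\sigma_i$ (real--fake--real) lies inside the single cell $\Pi_i$, so $q'(\St(x,Q'))\subset\Xb_\Gamma$ and hence $\rho$ is the \emph{identity} on $q'(\St(x,Q'))$; the embedding statement is then immediate from Theorem~\ref{thm:MSquasiflats}~(4), with no case analysis of triangles needed. Your ``either two real vertices, or one real vertex which is a tip'' dichotomy is harmless only because the second case is vacuous here (there are no fake--fake edges in $\omega$), but you never observe this, and as written the second branch would not give injectivity without the finer arguments of Lemmas~\ref{lem:real1}--\ref{lem:real2}.

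The genuine gap is in your proof of conclusion~(2). You deduce ``$\Pi_i\cap\Pi_j$ has at most one edge'' from ``$o_i\nsim o_j$'' via the type~II edge criterion (Lemma~\ref{lem:edge type II}: two fake vertices are adjacent iff their cells share $\ge 2$ edges). That criterion only describes the edges that are \emph{added within a single block} in the construction of $X_\Gamma$; here $\Pi_i\subset B_i$ and $\Pi_j\subset B_j$ lie in \emph{different} blocks, where no fake--fake edges are ever introduced regardless of how large $\Pi_i\cap\Pi_j$ is, so non-adjacency of $o_i$ and $o_j$ carries no information about the size of the intersection. (Your intermediate step ``not adjacent in $\omega$ $\Rightarrow$ not adjacent in $\lk(v,X^{(2)}_\Gamma)$'' is also unjustified as stated; it happens to be true here, but only because the blocks differ.) The correct route to the upper bound is Lemma~\ref{lem:intersetion of blocks}: since $B_i\neq B_j$, the intersection $B_i\cap B_j$ is a single-labeled plain line, so $\Pi_i\cap\Pi_j$ lies on that line; as consecutive edges of $\partial\Pi_i$ carry different labels (and two boundary edges of a precell with the same label lie on distinct cosets of the corresponding $\mathbb Z$--subgroup), the intersection is exactly the one edge $\overline{vw}$ already produced by the real vertex $w$ of $\omega$ between $o_i$ and $o_j$. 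With that repair (or by simply quoting the paper's observation that $\rho$ is the identity on $q'(\St(x,Q'))$ and reading off the local picture), the rest of your argument for (1), (3) and the square case goes through; note also the slip in the last paragraph, where the cells of a label-$2$ block are squares ($4$--gons), not bigons.
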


\begin{figure}[h!]
	\centering
	\includegraphics[scale=1.3]{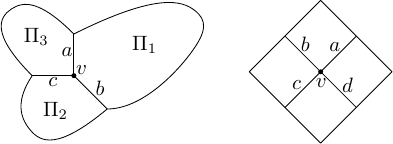}
	\caption{}
	\label{f:flat5}
\end{figure}

\begin{proof}
Take $\omega=\sigma_1\cup\sigma_2\cup\sigma_3$ where each $\sigma_i$ is a sub-path of $\omega$ made of two edges such that $\sigma_i\subset B_i$. For $1\le i\le 3$, we take $\Pi_i$ to be the cell that contains the fake vertex in $\sigma_i$. Then $\sigma_i\subset\Pi_i$. Thus $q'(\St(x,Q'))\subset\Xb_{\Gamma}$. Consequently, $\rho$ is the identity on $q'(\St(x,Q'))$. Hence the lemma follows.
\end{proof}

\subsection{A new cell structure on the quasiflat}
\label{subsec:new cell structure}
It follows from Lemma~\ref{lem:well-defined}, Lemma~\ref{lem:fake vertex}, Lemma~\ref{lem:real1}, Lemma~\ref{lem:real2} and Lemma~\ref{lem:real3} that if $x\in Q'$ is flat and deep, then the restriction of $q$ to $\St(x,Q')$ is well-defined (in particular, it is well-defined outside a compact set) and it is an embedding. Now we think of the range of $q$ as $\Xa_\Gamma$. In this subsection we want to \textquotedblleft pull back\textquotedblright\ the cell structure on $\Xa_\Gamma$ to an appropriate subset of $Q'$ via $q$.

Choose a base point $x_0\in Q'$. For $R>0$, let $Q'_R$ be the smallest subcomplex of $Q'$ that contains $Q'\setminus B(x_0,R)$. Recall that $B(x_0,R)$ is the ball of radius $R$ centered at $x_0$. Let $Q'_{R^+}$ be the subcomplex made of all triangles of $Q'$ which have non-trivial intersection with $Q'_R$. By Theorem~\ref{thm:MSquasiflats}, we choose $R$ large enough such that each vertex in $Q$ that has combinatorial distance $\le 5$ from $Q'_{R^+}$ is flat and deep.

\begin{lemma}
	\label{lem:intersection of cells}
	Suppose $C_{x_1}\cap C_{x_2}\neq\emptyset$ for two fake, deep and flat vertices $x_1,x_2\in Q'$. We also assume each vertex in $\St(x_1,Q')$ is flat and deep. Let $\Pi_i$ be the cell containing $o_i=q(x_i)$. Then $q$ maps $C_{x_1}\cup C_{x_2}$ homeomorphically onto $\Pi_1\cup\Pi_2$. In particular, $C_{x_1}\cap C_{x_2}$ is a connected interval (possibly degenerate).
\end{lemma}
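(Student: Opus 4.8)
The plan is to reduce the whole statement to showing that the two natural local sections agree on $P:=\Pi_1\cap\Pi_2$. Write $s_i:=(q|_{C_{x_i}})^{-1}\colon\Pi_i\to C_{x_i}\subseteq Q'$ for the inverse of the homeomorphism produced by Lemma~\ref{lem:fake vertex}, so that $q\circ s_i=\operatorname{id}_{\Pi_i}$. Since $q(C_{x_i})=\Pi_i$, an equality $q(p_1)=q(p_2)$ with $p_i\in C_{x_i}$ forces the common value into $P$, and then $p_i=s_i(q(p_i))$; as $q$ is already injective on each $C_{x_i}$ separately, injectivity of $q$ on $C_{x_1}\cup C_{x_2}$ is \emph{equivalent} to the single equality $s_1|_P=s_2|_P$. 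Granting that equality, $q$ restricted to the compact set $C_{x_1}\cup C_{x_2}$ is a continuous bijection onto the Hausdorff space $\Pi_1\cup\Pi_2$, hence a homeomorphism; moreover $C_{x_1}\cap C_{x_2}=s_1(P)$ (the inclusion $\supseteq$ is clear, and if $p$ lies in the intersection then $q(p)\in P$ and $p=s_1(q(p))$); finally $P$ is nonempty (as $C_{x_1}\cap C_{x_2}\ne\emptyset$) and, by Lemma~\ref{cor:connected intersection}, connected and contained in $\partial\Pi_1$ -- it is a point or an arc -- so $s_1(P)$ is a connected interval, possibly degenerate.

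To obtain $s_1|_P=s_2|_P$ I would run a connectedness argument on $P$. Let $Z:=\{w\in P : s_1(w)=s_2(w)\}$; it is closed in $P$ (continuity of $s_1,s_2$ and Hausdorffness of $Q'$) and nonempty (choose $p\in C_{x_1}\cap C_{x_2}$; then $q(p)\in P$ and $s_1(q(p))=p=s_2(q(p))$), so it remains to prove $Z$ open in $P$. Fix $w\in Z$ and put $p:=s_1(w)=s_2(w)$. The key point will be that \emph{$p$ has a neighbourhood $N$ in $Q'$ on which $q$ is injective}. Granting this, pick neighbourhoods $U_i$ of $w$ in $\Pi_i$ with $s_i(U_i)\subseteq N$ (possible by continuity of $s_i$); then for every $w'\in P\cap U_1\cap U_2$ both $s_1(w')$ and $s_2(w')$ lie in $N$ and have the same $q$-image $w'$, hence coincide, so the neighbourhood $P\cap U_1\cap U_2$ of $w$ in $P$ lies in $Z$ and $Z$ is open. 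To produce $N$: since $p=s_1(w)\in C_{x_1}\subseteq\St(x_1,Q')$, the point $p$ lies in the relative interior of a unique cell $\tau$ of $Q'$, and $\tau\subseteq\St(x_1,Q')$. If $\tau$ is a triangle, then $x_1\in\tau$ (as $\lk(x_1,Q')$ is a circle, every triangle of $\St(x_1,Q')$ contains $x_1$), so $N:=\operatorname{int}\tau\subseteq\St(x_1,Q')$ and $q|_{\St(x_1,Q')}$ is injective by Lemma~\ref{lem:fake vertex}. If $\tau$ is an edge: if $x_1\in\tau$ the two triangles of $Q'$ containing $\tau$ again lie in $\St(x_1,Q')$; otherwise $\tau=\overline{ab}$ is an edge of $\lk(x_1,Q')$, and since $x_1$ is deep and fake, Lemma~\ref{lem:local} forbids two fake vertices of $\lk(x_1,Q')$ to be adjacent, so one endpoint, say $a$, is real, hence flat, deep and real, and the two triangles containing $\tau$ lie in $\St(a,Q')$, on which $q$ is injective by Lemmas~\ref{lem:real1}--\ref{lem:real3}. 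If $\tau$ is a vertex, then $p=\tau\ne x_1$ (because $q(x_1)=o_1\in\operatorname{int}\Pi_1$ while $q(p)=w\in P\subseteq\partial\Pi_1$) and $p$ is not a fake vertex of $\lk(x_1,Q')$ (those map to fake vertices of neighbouring cells, which do not meet $\Pi_1$), so $p$ is a real, flat, deep vertex and $q|_{\St(p,Q')}$ is injective again by Lemmas~\ref{lem:real1}--\ref{lem:real3}. In all cases $N$ exists, $Z$ is open, hence $Z=P$.

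The hard part will be the local injectivity of $q$ near $p$: this is where the fine structure of the partial retraction $\rho$ and of the links of $x_1$ and of the real vertices of $\lk(x_1,Q')$ must be used, and one has to be careful that every reduction stays inside a star on which $q$ is already known to be an embedding. All of that information is, however, packaged in Lemmas~\ref{lem:fake vertex}, \ref{lem:local} and \ref{lem:real1}--\ref{lem:real3}, so modulo the bookkeeping indicated above the remaining work is purely point-set topology.
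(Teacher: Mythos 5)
Your overall strategy is genuinely different from the paper's: instead of the paper's case analysis on whether $x_1,x_2$ are adjacent and on what $\St(x_1,Q')\cap\St(x_2,Q')$ contains (using Lemma~\ref{lem:small intersection} and Lemmas~\ref{lem:real1}--\ref{lem:real3} to pin down $\Pi_1\cap\Pi_2$), you reduce everything to agreement of the two local sections over $P=\Pi_1\cap\Pi_2$ and prove it by a clopen/connectedness argument, with local injectivity of $q$ supplied by Lemmas~\ref{lem:fake vertex}, \ref{lem:local} and \ref{lem:real1}--\ref{lem:real3}. That local-injectivity analysis is correct: in each of your cases (interior of a triangle, interior of an edge with or without $x_1$, a vertex) the relevant star is one of $\St(x_1,Q')$, $\St(a,Q')$ or $\St(p,Q')$ for a flat, deep vertex, and the hypothesis that every vertex of $\St(x_1,Q')$ is flat and deep is exactly what makes those lemmas applicable.

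However, there is a gap at the step where you declare $P$ connected ``by Lemma~\ref{cor:connected intersection}''. That lemma concerns two precells of $\Xa_n$, i.e.\ two cells lying in a common block, whereas under the hypotheses of the present lemma $\Pi_1$ and $\Pi_2$ may lie in different blocks (this is not a marginal case: it is precisely what happens for the pairs of cells around type II and type III vertices, which is where the lemma is used to build $Q_R$). Connectedness of $P$ is not decorative in your argument: without it, ``$Z$ is nonempty, open and closed in $P$'' does not give $Z=P$, so injectivity of $q$ on $C_{x_1}\cup C_{x_2}$ could fail on a component of $P$ disjoint from $q(C_{x_1}\cap C_{x_2})$; and the final ``connected interval'' statement needs it as well. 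In the cross-block case one only gets for free that $P\subset B_1\cap B_2$, which by Lemma~\ref{lem:intersetion of blocks} is (at most) a single-labeled plain line $\ell$; you would still have to show that a cell meets such a line in at most one closed edge, which is true but requires a genuine argument (e.g.\ a positive-monoid/Garside computation showing that two distinct boundary vertices of a precell differing by a power of the generator labeling $\ell$ must be the endpoints of a single boundary edge), and no lemma in the paper states it. So either supply that auxiliary fact, or handle the cross-block configuration the way the paper does, by reading off $\Pi_1\cap\Pi_2$ directly from the description of $\St(x,Q')$ for the (real, flat, deep) vertices $x\in\St(x_1,Q')\cap\St(x_2,Q')$ given in Lemmas~\ref{lem:real1}--\ref{lem:real3}. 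With that one point repaired, your proof goes through and is an attractive alternative to the paper's bookkeeping.
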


\begin{proof}
Note that $x_1$ and $x_2$ have combinatorial distance $\le 2$ in $Q'$. We claim that if $x_1$ and $x_2$ are not adjacent, then $o_1$ and $o_2$ are not adjacent. To see this, note that there exists $x\in\lk(x_1,Q')$ such that $x_1,x_2\in\lk(x,Q')$. Since $x$ is deep and flat, the claim follows from the descriptions of $\lk(x,Q')$ in Lemma~\ref{lem:fake vertex}, Lemma~\ref{lem:real1}, Lemma~\ref{lem:real2} and Lemma~\ref{lem:real3}.
	
	
	
If $x_1$ and $x_2$ are adjacent, then so are $o_1$ and $o_2$. It is clear that $q(C_{x_1}\cap C_{x_2})\subset\Pi_1\cap\Pi_2$. Let $\omega$ be the arc on $\partial C_{x_1}$ which is mapped homeomorphically to $\Pi_1\cap\Pi_2\subset\partial \Pi_1$. Then $C_{x_1}\cap C_{x_2}\subset\omega$ since $q|_{C_{x_1}}$ is an embedding. However, $\omega$ cuts through an ear of $\St(x_1,Q')$ thus, by definition of $C_{x_i}$, we have $\omega\subset C_{x_1}\cap C_{x_2}$. Hence $C_{x_1}\cap C_{x_2}=\omega$ and the lemma follows.

Suppose $x_1$ and $x_2$ are not adjacent. Since $C_{x_1}\cap C_{x_2}\neq\emptyset$, the intersection $\St(x_1,Q')\cap\St(x_2,Q')$ consists of at least one vertex. If there is a fake vertex $x\in\St(x_1,Q')\cap\St(x_2,Q')$, then let $\Pi_x$ be the cell containing $q(x)$. Since $x_1$ and $x_2$ are fake vertices in $\lk(x,Q')$, by the description of $\lk(x,Q')$ in Lemma~\ref{lem:fake vertex}, we know $\Pi_1\cap\Pi_x$ contains an edge, $\Pi_2\cap\Pi_x$ contains an edge and $(\Pi_1\cap\Pi_x)\cap(\Pi_2\cap\Pi_x)$ is at most one point. Thus $\Pi_1\cap\Pi_2$ is one point by Lemma~\ref{lem:small intersection} (note that $\Pi_1,\Pi_2$ and $\Pi_x$ are in the same block). Thus the lemma follows by the discussion in the previous paragraph. Now suppose there are no fake vertices in $\St(x_1,Q')\cap\St(x_2,Q')$. If there is an edge $e$ in $\St(x_1,Q')\cap\St(x_2,Q')$, then $e\subset C_{x_1}\cap C_{x_2}$ since both endpoints of $e$ are real. Thus $q(e)$ is an edge in $\Pi_1\cap\Pi_2$. However, $\Pi_1\cap\Pi_2$ has at most one edge since $o_1$ and $o_2$ are not adjacent. Thus $e= C_{x_1}\cap C_{x_2}$ and the lemma follows. If there are no edges in $\St(x_1,Q')\cap\St(x_2,Q')$, then let $x$ be a vertex in this intersection. Since $x_1$ and $x_2$ are two fake vertices in $\lk(x,Q')$, in all cases of Lemma~\ref{lem:real1}, Lemma~\ref{lem:real2} and Lemma~\ref{lem:real3}, $\Pi_1\cap\Pi_2$ is a point whenever $\St(x_1,Q')\cap\St(x_2,Q')$ is a real vertex. Hence the lemma follows.
\end{proof}

\begin{lemma}
	\label{lem:cover}
$Q'_R$ is contained in the union of $C_x$ with $x$ varying among vertices of $Q'_{R^+}$ that are flat, deep and fake.
\end{lemma}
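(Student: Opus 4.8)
The goal is to show every triangle of $Q'_R$ is covered by some $C_x$ with $x$ a fake, deep, flat vertex of $Q'_{R^+}$. The plan is to argue triangle-by-triangle, using the local classification from Lemmas~\ref{lem:fake vertex}, \ref{lem:real1}, \ref{lem:real2}, \ref{lem:real3}: each flat deep vertex $x$ of $Q'$ has the property that $\St(x,Q')$ is covered by a bounded union of the sets $C_{x_i}$ attached to the fake vertices $x_i$ appearing in $\lk(x,Q')$ (this is exactly conclusion (5) in Lemmas~\ref{lem:real1}--\ref{lem:real3}, and for a fake deep flat vertex $x$ one has $x\in C_x$ by Lemma~\ref{lem:fake vertex}).

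\smallskip

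\noindent\emph{Step 1: reduce to vertices.} First I would observe that by definition $Q'_R$ is a subcomplex, so it suffices to show that every \emph{vertex} $y\in Q'_R$ lies in $C_x$ for some fake deep flat $x\in Q'_{R^+}$, and in fact that every \emph{triangle} $\Delta\subset Q'_R$ is contained in such a $C_x$ — because the sets $C_x$ are unions of triangles (and parts of ears), and if each triangle of $Q'_R$ is covered we are done. So fix a triangle $\Delta\subset Q'_R$, and let $y$ be a vertex of $\Delta$. By the choice of $R$ (every vertex within combinatorial distance $\le 5$ of $Q'_{R^+}$ is flat and deep), the vertex $y$ and all vertices in a large neighbourhood of $y$ are flat and deep.

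\smallskip

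\noindent\emph{Step 2: locate $\Delta$ inside some star.} Since $\Delta\subset Q'_R$, at least one vertex of $\Delta$, call it $x$, lies in $Q'_{R^+}$; in fact $\Delta\subset\St(x,Q')$. Now distinguish two cases according to the type of $x$ under $q'$. If $x$ is fake, then by Lemma~\ref{lem:fake vertex} the set $C_x$ is the preimage of the cell containing $q'(x)$, its boundary cuts through the ears of $\St(x,Q')$, and one checks directly that every triangle of $\St(x,Q')$ lies either in $C_x$ itself or in $C_{x'}$ for a neighbouring fake vertex $x'\in\lk(x,Q')$ (the ear triangles are split between $C_x$ and the adjacent $C_{x'}$). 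Since $x$ and such $x'$ are both within distance $\le 1$ of $Q'_{R^+}$, they are flat, deep, fake vertices of $Q'_{R^+}$, so $\Delta$ is covered. If $x$ is real, then $\omega=q'(\lk(x,Q'))$ satisfies one of the four cases of Lemma~\ref{lem:2pi cycle}, and the corresponding one of Lemmas~\ref{lem:real1}(5), \ref{lem:real2}(4), \ref{lem:real3}(3) says exactly that $\St(x,Q')\subset\bigcup_i C_{x_i}$ where the $x_i$ are the (finitely many, at most four) fake vertices in $\lk(x,Q')$. Again each $x_i$ is within distance $1$ of $x$, hence of $Q'_{R^+}$, hence is a flat deep fake vertex of $Q'_{R^+}$, so $\Delta\subset\St(x,Q')$ is covered.

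\smallskip

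\noindent\emph{Step 3: bookkeeping on membership in $Q'_{R^+}$.} The one point requiring care is making sure the fake vertices $x_i$ that actually do the covering genuinely lie in $Q'_{R^+}$, not merely in $Q'$. This is where the choice of $R$ (distance-$5$ buffer) is used: a triangle $\Delta$ of $Q'_R$ meets $Q'_R$ by definition, so its vertices lie in $Q'_{R^+}$; the vertex $x$ chosen above is therefore in $Q'_{R^+}$, and the fake vertices $x_i\in\lk(x,Q')$ used in Step~2 are within combinatorial distance $1$ of $x$, so they too lie in $Q'_{R^+}$ (enlarging $R^+$ by the single extra layer of triangles is harmless, or one absorbs it into the distance-$5$ slack). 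The prerequisite hypotheses of Lemmas~\ref{lem:real1}--\ref{lem:real3} — that every vertex of $\lk(x,Q')$ is flat and deep, needed to invoke conclusion (5) — are guaranteed by the same distance-$5$ choice. The main (only) obstacle is thus purely organizational: assembling the four local pictures into one uniform statement and verifying the neighbourhood/indexing claims; there is no new geometric input beyond Lemmas~\ref{lem:fake vertex}--\ref{lem:real3}.
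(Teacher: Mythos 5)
Your argument is correct and is essentially the paper's proof: cover $Q'_R$ by closed stars of its vertices and apply conclusion (5) of Lemmas~\ref{lem:real1}--\ref{lem:real3} (resp.\ Lemma~\ref{lem:fake vertex}), noting that the covering fake vertices lie in $Q'_{R^+}$ (any triangle containing a vertex of $Q'_R$ meets $Q'_R$, hence lies in $Q'_{R^+}$) and are flat, deep and fake by the choice of $R$. The only differences are cosmetic: the paper skips your fake-vertex case by observing that no triangle of $Q'$ has three fake vertices, so one may always take the base vertex real, and your Step~1 claim that each triangle lies in a \emph{single} $C_x$ is too strong for ear triangles (they are only covered by $C_x\cup C_{x'}$), which your Step~2 in fact handles correctly.
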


\begin{proof}
Recall that there are no triangles with three fake vertices in $X_\Gamma$, thus the same is true for $Q'$. Thus $Q'_R$ is contained in the union of $\St(x,Q')$ with $x$ ranging over real vertices in $Q'_R$. By Lemma~\ref{lem:real1}, Lemma~\ref{lem:real2}, Lemma~\ref{lem:real3} and our choice of $R$, for any real $x\in Q'_R$, $\St(x,Q')$ is contained in the union of $C_y$ with $y$ varying among fake vertices in $\lk(x,Q')$. Note that $y\in Q'_{R^+}$. Then the lemma follows.
\end{proof}

Let $Q_R$ be the union of $C_x$ with $x$ varying among fake vertices of $Q'_{R^+}$. By Lemma~\ref{lem:intersection of cells} and Lemma~\ref{lem:cover}, $Q_R$ has a well-defined cell structure whose closed $2$--cells are the $C_x$, and whose edges (resp.\ vertices) are arcs (resp.\ points) in the boundary of $C_x$ which are mapped to edges (resp.\ vertices) in $\Xa_\Gamma$ by $q$. Also, Lemma~\ref{lem:intersection of cells} implies that we can pullback the orientation and labeling of edges of $\Xa_\Gamma$ to orientation and labeling of edges of $Q_R$. 

A vertex of $Q_R$ is \emph{interior} if it has a neighborhood in $Q_R$ which is homeomorphic to an open disc. Let $\St(x,Q_R)$ be the union of cells of $Q_R$ that contain $x$. Now we look at the structure of $\St(x,Q_R)$.

\begin{lemma}
	\label{lem:star homeo}
Let $x\in Q_R$ be an interior vertex. Then the following are the only possibilities for $\St(x,Q_R)$.
\begin{enumerate}
	\item $\St(x,Q_R)$ is a union of two $2$--cells.
	\item The point $x$ corresponds to a real vertex in $Q'$, and $q$ maps $\St(x,Q_R)$ homeomorphically onto the union of the $\Pi_i$ in $\Xa_\Gamma$ described in Lemma \ref{lem:real1}, Lemma~\ref{lem:real2}, or Lemma~\ref{lem:real3}.
\end{enumerate}
\end{lemma}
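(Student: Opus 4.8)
The statement is a local classification of vertex stars in the new complex $Q_R$, and the strategy is to push the information we already have about stars of flat deep vertices in $Q'$ through the retraction $q=\rho\circ q'$. Fix an interior vertex $x\in Q_R$. First I would distinguish the two cases according to whether $x$, viewed as a point of $Q'$, is an interior point of a $2$--cell $C_y$ (a fake vertex $y$), lies on an edge of $C_y$, or is a vertex of the cell structure on $Q_R$. In the first two cases the preimage $q^{-1}$(the carrier of $q(x)$) is small and one sees directly that $\St(x,Q_R)$ is a union of exactly two $2$--cells, so case (1) holds. The real content is the case when $x$ is a genuine vertex of $Q_R$; then $q(x)$ is a real vertex of $\Xa_\Gamma$, since the cells $C_y$ carry only real boundary vertices (the fake vertex of $\Pi_y$ is interior to $C_y$). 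So $x$ lifts to a real vertex $\hat x\in Q'$, and by our choice of $R$ (every vertex within combinatorial distance $5$ of $Q'_{R^+}$ is flat and deep), $\hat x$ and all its neighbours in $Q'$ are flat and deep.

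\textbf{Key steps.} Having reduced to $\hat x$ real, flat and deep, I would invoke Lemma~\ref{lem:2pi cycle}: the cycle $\omega=q'(\lk(\hat x,Q'))$ falls into exactly one of the four cases there. In each of the four cases we have a precise description of $\St(\hat x,Q')$ and the cells $\Pi_i$ around $q'(\hat x)$ -- this is exactly the content of Lemma~\ref{lem:real1} (case (1) of Lemma~\ref{lem:2pi cycle}), Lemma~\ref{lem:real2} (case (2)), and Lemma~\ref{lem:real3} (cases (3) and (4)). In all of these lemmas, parts (5) and (6) say that, under the additional hypothesis that every vertex of $\lk(\hat x,Q')$ is flat and deep, $\St(\hat x,Q')$ is contained in $\bigcup_i C_{x_i}$ and $\hat x\in C_{x_i}$ for each $i$, where $x_i$ is the fake vertex of $\lk(\hat x,Q')$ mapping to the fake vertex of $\Pi_i$. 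Thus the $C_{x_i}$ are precisely the $2$--cells of $Q_R$ containing $x$, i.e.\ $\St(x,Q_R)=\bigcup_i C_{x_i}$. It remains to check that $q$ glues these $C_{x_i}$ together in $Q_R$ exactly the way the $\Pi_i$ are glued in $\Xa_\Gamma$, so that $q|_{\St(x,Q_R)}$ is a homeomorphism onto $\bigcup_i\Pi_i$. For this I would apply Lemma~\ref{lem:intersection of cells}: for any two fake deep flat $x_i,x_j$ with $C_{x_i}\cap C_{x_j}\ne\emptyset$, $q$ maps $C_{x_i}\cup C_{x_j}$ homeomorphically onto $\Pi_i\cup\Pi_j$, and in particular $C_{x_i}\cap C_{x_j}$ is a connected interval matching $\Pi_i\cap\Pi_j$. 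Since $q$ is a homeomorphism on each $C_{x_i}$ and the pairwise intersections match those of the $\Pi_i$, and since the $\Pi_i$ form a disc around $q'(\hat x)$ in $\Xa_\Gamma$ (again by Lemma~\ref{lem:real1}--\ref{lem:real3}), a standard gluing argument gives that $q$ restricted to $\bigcup_i C_{x_i}$ is a homeomorphism onto $\bigcup_i\Pi_i$. That is exactly conclusion (2).

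\textbf{Where the difficulty lies.} The routine but slightly delicate point is the bookkeeping in the case analysis: one must check that for a vertex $x$ of $Q_R$ the associated lift $\hat x$ in $Q'$ is unambiguous and really is a vertex (not a midpoint of an edge or an interior point), and that the set of $2$--cells $C_y$ of $Q_R$ containing $x$ is exactly $\{C_{x_i}\}$ rather than some larger collection -- this is where one uses that $q|_{\St(\hat x,Q')}$ is an \emph{embedding} (established in Lemmas~\ref{lem:real1}, \ref{lem:real2}, \ref{lem:real3}) together with Lemma~\ref{lem:intersection of cells} to rule out extraneous cells whose $C$ happens to touch $x$. The genuinely substantive inputs -- the four-case dichotomy for $\omega$ and the description of the $\Pi_i$ -- are all already established, so no new geometric argument is needed here; the main obstacle is simply organizing the topological gluing so that "homeomorphically onto $\bigcup_i\Pi_i$" is verified rather than merely asserted, which is why invoking Lemma~\ref{lem:intersection of cells} pairwise and then patching is the natural route.
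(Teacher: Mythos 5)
Your proposal is correct and follows essentially the same route as the paper: split according to whether $x$ is a real vertex of $Q'$ or not, dispose of the non-vertex case by observing that such an $x$ lies in the interior of an ear shared by exactly two cells $C_{x_1},C_{x_2}$ (case (1)), and in the real-vertex case use flatness/deepness, Lemma~\ref{lem:2pi cycle} together with Lemma~\ref{lem:real1}--Lemma~\ref{lem:real3} (parts (5) and (6)) to identify $\St(x,Q_R)=\bigcup_i C_{x_i}$, and then Lemma~\ref{lem:intersection of cells} applied pairwise to conclude that $q$ is a homeomorphism onto $\bigcup_i\Pi_i$. The only blemish is your opening trichotomy, which should be phrased in terms of the cell structure of $Q'$ (a vertex of $Q_R$ is never interior to a $2$--cell $C_y$ of $Q_R$, and need not be a vertex of $Q'$); once read as ``$x$ is a real vertex of $Q'$ or $x$ lies in the interior of an ear,'' your argument matches the paper's proof.
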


\begin{proof}
Suppose $x$ is not a real vertex of $Q'$. There are at least two $2$--cells $C_{x_1}$, $C_{x_2}$ in $Q_R$ that contain $x$. Thus $x$ is in the interior of an ear of $\St(x_i,Q')$ for $i=1,2$. Hence $x_1$ and $x_2$ are adjacent in $Q'$, and $\St(x_1,Q')$ and $\St(x_2,Q')$ share an ear. Now (1) follows.

Suppose $x$ is a real vertex of $Q'$. Then each vertex of $\St(x,Q')$ is flat and deep by our choice of $R$. If $x$ satisfies the assumptions of Lemma~\ref{lem:real1}, then $\cup_{i=1}^4 C_{x_i}\subset \St(x,Q')$, by Lemma~\ref{lem:real1} (6). By Lemma~\ref{lem:real1} (5), $\cup_{i=1}^4 C_{x_i}$ contains a disc neighborhood of $x$, thus $\cup_{i=1}^4 C_{x_i}=\St(x,Q')$. Since, by Lemma~\ref{lem:intersection of cells}, $q$ maps $C_{x_i}\cup C_{x_j}$ homeomorphically onto $\Pi_i\cup\Pi_j$, $q$ maps $\cup_{i=1}^4 C_{x_i}$ homeomorphically onto $\cup_{i=1}^4\Pi_i$. The cases of Lemma~\ref{lem:real2} and Lemma~\ref{lem:real3} are similar.
\end{proof}

\begin{definition}
	\label{d:types}
	An interior vertex $v\in Q_R$ is of \emph{type O} if it satisfies Lemma~\ref{lem:star homeo} (1), and is of \emph{type I, II, or III} if it satisfies Lemma~\ref{lem:real1}, Lemma~\ref{lem:real2}, or Lemma~\ref{lem:real3}, respectively. The \emph{support} of a $2$--cell in $Q_R$ is the defining edge of the block of $\Xa_\Gamma$ that contains the $q$--image of this $2$--cell. The \emph{support} of a vertex of $Q_R$ is the union of the supports of $2$--cells in $Q_R$ that contain this vertex. For a type III vertex $v$, its support is either a triangle or a square, and $v$ is called either a \emph{$\triangle$--vertex} or \emph{a $\square$--vertex}, respectively. (See Table~\ref{t:flat} on page \pageref{t:flat}.) By Lemma~\ref{lem:2pi cycle} (3) and (4), the Coxeter group whose defining graph is the support of $v$ acts on the Euclidean plane. 
\end{definition}

\begin{table}[]
	\begin{tabular}{|c|c|}
		\cline{1-2}
		\begin{tabular}[c]{@{}c@{}}type O\end{tabular} & 
		\begin{tabular}[c]{@{}c@{}}\\ \includegraphics[width=0.39\textwidth]{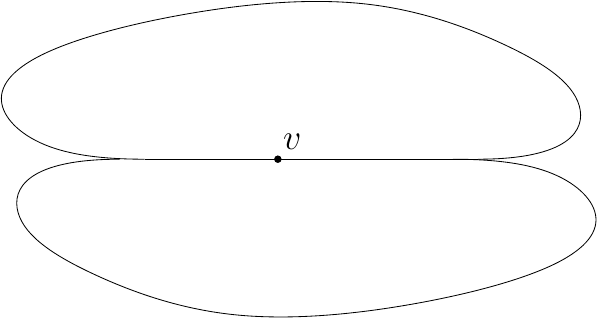} \\ \end{tabular}
		\\ \cline{1-2}
		\begin{tabular}[c]{@{}c@{}}type I\end{tabular} & 
		\begin{tabular}[c]{@{}c@{}}\\ \includegraphics[width=0.41\textwidth]{flat2} \\ \end{tabular}
		\\ \cline{1-2}
		\begin{tabular}[c]{@{}c@{}}type II\end{tabular} & 
		\begin{tabular}[c]{@{}c@{}}\\ \includegraphics[width=0.6\textwidth]{flat4}\end{tabular}
		\\ \cline{1-2}
		\begin{tabular}[c]{@{}c@{}}type III\\ $\triangle$--vertex\end{tabular} & 
		\begin{tabular}[c]{@{}c@{}}\\ \includegraphics[width=0.33\textwidth]{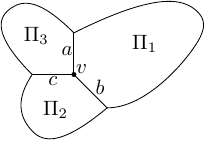}\end{tabular}
		\\ \cline{1-2}
		\begin{tabular}[c]{@{}c@{}}type III \\ $\square$--vertex\end{tabular} & 
		\begin{tabular}[c]{@{}c@{}}\\ \includegraphics[width=0.21\textwidth]{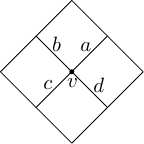}\end{tabular}
		\\ \cline{1-2}
	\end{tabular}
	\caption{Types of flat vertices.}
	\label{t:flat}
\end{table}

Note that $\St(v_1,Q_R)\cap\St(v_2,Q_R)$ contains two $2$--cells for two adjacent vertices $v_1,v_2$ of type III, thus we have the following result.

\begin{lemma}
	\label{lem:same tag}
If two vertices of type III of $Q_R$ are adjacent, then they have the same support.
\end{lemma}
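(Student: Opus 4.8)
The claim to prove is Lemma~\ref{lem:same tag}: if two vertices $v_1, v_2$ of type III in $Q_R$ are adjacent, then they have the same support. The plan is to exploit the remark made just before the statement, namely that $\St(v_1,Q_R)\cap\St(v_2,Q_R)$ contains two $2$--cells whenever $v_1$ and $v_2$ are adjacent vertices of type III. I would argue as follows: since $v_1$ and $v_2$ are both interior vertices of type III, Lemma~\ref{lem:star homeo}~(2) together with Lemma~\ref{lem:real3} tells us that $q$ maps $\St(v_i,Q_R)$ homeomorphically onto a union of cells $\Pi^{(i)}_1,\dots,\Pi^{(i)}_{k_i}$ ($k_i=3$ or $4$) all lying in a single Coxeter configuration of blocks (a triangle or a square in $\Gamma$), and the support of $v_i$ is exactly the defining graph of that configuration.

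The key step is to observe that the two common $2$--cells $C, C'$ of $\St(v_1,Q_R)$ and $\St(v_2,Q_R)$ have a well-defined support: by Definition~\ref{d:types}, the support of a $2$--cell is the defining edge of the (unique) block of $\Xa_\Gamma$ containing its $q$--image. Since $C$ lies in $\St(v_1,Q_R)$, its support is one of the edges forming the support of $v_1$; since $C$ lies in $\St(v_2,Q_R)$, its support is also one of the edges of the support of $v_2$. The same applies to $C'$. So the support of $v_1$ and the support of $v_2$ share at least two edges $e, e'$ (the supports of $C$ and $C'$); I would then check $e \neq e'$ using the fact that $\St(v_i,Q_R)$ is, via the Coxeter action on $\mathbb{E}^2$ from Lemma~\ref{lem:2pi cycle}~(3)--(4), a neighbourhood of $q(v_i)$ in which no two distinct $2$--cells carried by the same block meet along more than a vertex while consecutive cells around $v_i$ lie in distinct blocks — so two cells of $\St(v_i,Q_R)$ sharing an edge (as $C$ and $C'$ do, since they both contain the edge $\overline{v_1v_2}$) must be carried by distinct blocks, giving $e\neq e'$.

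Now both supports are either a triangle or a full $4$--cycle in $\Gamma$ containing the two distinct edges $e$ and $e'$. If either support is a triangle, then two of its three edges are determined, namely $e$ and $e'$, and the third is forced to be the unique edge of $\Gamma$ completing $e\cup e'$ to a triangle; since $\Gamma$ is a simple graph this third edge is unique, so the triangle is determined — and then the other support, containing $e$ and $e'$ and being either this same triangle or a $4$--cycle through $e,e'$, must actually coincide, because a $4$--cycle and a triangle cannot both be full subgraphs sharing two consecutive edges in a triangle-containing configuration (the relevant constraints are $\tfrac1p+\tfrac1q+\tfrac1r=1$ for the triangle from Lemma~\ref{lem:2pi cycle}~(3) versus a genuine $4$--cycle from (4), and a vertex of $Q_R$ has a single well-defined type O/I/II/III and, within type III, is either a $\Delta$--vertex or a $\square$--vertex). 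If instead both supports are $4$--cycles sharing two edges $e,e'$: if $e$ and $e'$ are adjacent (share a vertex), the $4$--cycle is determined by $e\cup e'$ together with the requirement that it be a simple full $4$--cycle, hence unique; if $e$ and $e'$ are opposite edges of a $4$--cycle they share no vertex, but then a length count of the boundary cycle $\omega=q'(\lk(v_i,Q'))$ from Lemma~\ref{lem:2pi cycle}~(4) (one edge of angular length $\pi/2$ in each of the four blocks) pins down the remaining two blocks via the cyclic order in which $\omega$ passes through them, forcing the two $4$--cycles to agree.

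The main obstacle I expect is the last bookkeeping: carefully ruling out the possibility that $v_1$ and $v_2$, although adjacent and both of type III, have supports that are ``compatible along two edges'' but still distinct — e.g. one a triangle and one a square — and confirming that the two shared $2$--cells $C,C'$ really do force the full configurations to coincide rather than merely to overlap in two blocks. I would handle this by going back to the local pictures: in $\St(v_1,Q_R)$ the cyclic order of the $2$--cells around $v_1$ is governed by the cyclic order of blocks in the Coxeter configuration (Figure~\ref{f:flat5}), and $C,C'$ being \emph{consecutive} around $v_1$ (they share the edge through $v_1$ toward $v_2$) together with the same statement around $v_2$ means the two local wheels glue along a common ``fan'' of two cells in two consecutive blocks; a short case analysis on whether the configuration is a triangle or a square then shows the wheel, hence the support, is reconstructed uniquely from this fan, so the supports of $v_1$ and $v_2$ coincide.
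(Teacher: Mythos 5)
Your route is essentially the paper's own: the paper deduces Lemma~\ref{lem:same tag} from exactly the observation you begin with, namely that the two $2$--cells $C,C'$ containing the edge $\overline{v_1v_2}$ lie in both stars. Your handling of the cases involving a $\Delta$--vertex is correct in substance: $C$ and $C'$ lie in distinct blocks (by Lemma~\ref{lem:real3} the cells around a type III vertex lie in pairwise distinct blocks, and two distinct blocks sharing the edge $\overline{v_1v_2}$ must have distinct defining edges, since blocks with the same defining edge are disjoint or identical -- note this, not your claim that two cells of the same block meet in at most a vertex, is the right justification for $e\neq e'$); the two edges $e,e'$ share the vertex of $\Gamma$ labelling $\overline{v_1v_2}$; a triangle in a simple graph is determined by two of its sides; and the mixed $\Delta$/$\square$ case is impossible because the third side of the triangle would be a diagonal of the full $4$--cycle, contradicting fullness (this is the reason you need -- the appeal to $\frac1p+\frac1q+\frac1r=1$ and to ``well-definedness of the type'' does not do the work).

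The genuine gap is your $\square$--$\square$ case. The claim that a full $4$--cycle is determined by two consecutive edges is false: if $\Gamma$ contains an induced $K_{2,3}$, say with parts $\{a,c\}$ and $\{s,d_1,d_2\}$, then the path $a-s-c$ completes to two different full $4$--cycles, $a-s-c-d_1$ and $a-s-c-d_2$. Moreover this cannot be repaired by your proposed ``short case analysis of the fan of two cells'', because in the square case the two shared cells genuinely do not determine the remaining two blocks around each vertex: with all labels equal to $2$ one has $A_\Gamma\cong F_2\times F_3$, $\Xa_\Gamma$ is a product of two trees, and a flat $\ell_1\times\ell_2$ whose $F_3$--line passes through $d_1^{-1},1,s,sd_2$ contains two adjacent vertices (at $1$ and at $s$), each surrounded by four squares in four blocks forming a full $4$--cycle, whose supports are the two distinct $4$--cycles above. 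So any proof of the $\square$--$\square$ case must use the non-shared cells around the two vertices, and the configuration just described shows that this case is delicate rather than routine; note that the paper's own one-sentence justification (the two shared cells) has exactly the same blind spot, and in its later applications the ``same support'' conclusion is exploited chiefly for $\Delta$--vertices and Coxeter (triangle) data, where your argument does work. As written, your proposal proves the lemma only when at least one of the two vertices is a $\Delta$--vertex.
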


Since $Q'_R\subset Q_R$ by Lemma~\ref{lem:cover}, we will assume the quasiflat is represented by $q\colon Q_R\to \Xa_\Gamma$. Each $2$--cell in $Q_R$ corresponds to a fake vertex in $Q'_R$, which is called the vertex \emph{dual} to this $2$--cell.

\section{Singular lines, singular rays and atomic sectors}
\label{sec:singular lines}
Throughout this section $\Gamma$ will the defining graph of an Artin group $A_\Gamma$ with dimension $\le 2$.
\subsection{Singular lines and singular rays}
\label{subsec:singular lines}
\begin{definition}
	\label{def:diamond line}
A \emph{diamond line} is a locally injective cellular map $k\colon U\to \Xa_\Gamma$ satisfying
\begin{enumerate}
	\item $U=\cup_{i=-\infty}^{\infty} C_i$ such that each $C_i$ is a $2$--cell whose boundary is a $2n$--gon for $n\ge 3$, and each $C_i$ intersects $C_{i+1}$ and $C_{i-1}$ in opposite vertices (and has empty intersection with other $2$--cells), see Figure~\ref{f:singular_line1} for the $n=3$ case;
	\item $k(U)$ is contained in a block;
	\item $k(C_i\cap C_{i+1})$ is a tip of both $k(C_i)$ and $k(C_{i+1})$.
\end{enumerate}
We define a \emph{diamond ray} in a similar way by replacing $\cup_{i=-\infty}^{\infty} C_i$ by $\cup_{i=0}^{\infty} C_i$.
\end{definition}
\begin{figure}[h!]
	\centering
	\includegraphics[width=1\textwidth]{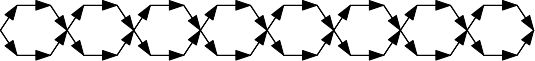}
	\caption{A part of a diamond line.}
	\label{f:singular_line1}
\end{figure}

Let $k,U$ be as in Definition~\ref{def:diamond line} and let $\overline{ab}\subset\Gamma$ be the defining edge of the block containing $k(U)$. Then the two lines in the $1$--skeleton of $U$ corresponding to the bi-infinite alternating word $\cdots ababa\cdots$ are called the \emph{boundary lines} of $U$. 

\begin{remark}
Roughly speaking, each diamond line corresponds to the centralizer of the stabilizer of the block of $\Xa_\Gamma$ that contains this diamond line (note that the stabilizer of a block is a conjugate of the standard subgroup associated with the defining edge of this block).
\end{remark}

Each diamond line is embedded and quasi-isometrically embedded in $\Xa_\Gamma$. This is clear when $\Gamma$ is an edge, and the general case follows from a result by Charney and Paris \cite[Theorem 1.2]{charney2014convexity}.
Note that each large block is a union of diamond lines.

Before we state the next definition, recall that each Coxeter group with defining graph $\Gamma$ gives rise to its \emph{Davis complex} $\D_\Gamma$, whose $1$--skeleton is the Cayley graph of the associated Coxeter group with bigons collapsed to single edges. In particular, each edge of $\D_\Gamma$ is labeled by a vertex of $\Gamma$. A \emph{wall} in $\D_\Gamma$ is the fixed point set of a reflection in the associated Coxeter group. 

\begin{definition}
	\label{def:wall line}
Suppose the defining graph $\Gamma$ of the Artin group $A_\Gamma$ contains a triangle $\Delta\subset\Gamma$ such that the labels $m,n,r$ of the three sides of $\Delta$ satisfy $\frac{1}{m}+\frac{1}{n}+\frac{1}{r}=1$. Let $\D_\Delta$ be the Davis complex of the Coxeter group with the defining graph $\Delta$. Then $\D_\Delta$ is isometric to $\mathbb E^2$ and edges of $\D_\Delta$ are labeled by vertices of $\Delta$. Let $U$ be the \emph{carrier} of a wall in $\D_\Delta$ (i.e.\ $U$ is the union of cells that intersect this wall). See Figure~\ref{f:singular_line2} for an example when $(m,n,r)=(2,3,6)$.
\begin{figure}[h!]
	\centering
	\includegraphics[width=1\textwidth]{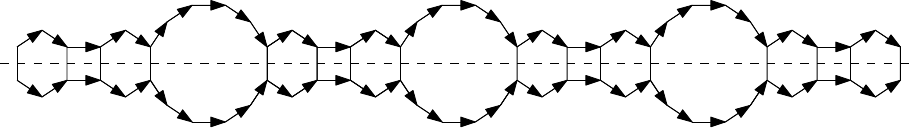}
	\caption{A part of a Coxeter line.}
	\label{f:singular_line2}
\end{figure}
A \emph{Coxeter line} of $\Xa_\Gamma$ is a locally injective cellular map $k\colon U\to \Xa_\Gamma$ such that
	\begin{enumerate}
		\item $k$ preserves the label of edges;
		\item if we pull back the orientation of edges of $\Xa_\Gamma$ to $U$, then there are no orientation reversing vertices in the boundary of $U$.
	\end{enumerate}
If we restrict $k$ to a subcomplex of $U$ homeomorphic to $[0,\infty)\times[0,1]$ (resp.\ $[0,k]\times[0,1]$), we obtain a \emph{Coxeter ray} (resp.\ \emph{Coxeter segment}).

Let $V$ be the carrier (in $\D_\Delta$) of the region bounded by two different parallel walls $\W_1,\W_2$ in $\D_{\Delta}$. A \emph{thickened Coxeter line} is a locally injective cellular map $k'\colon V\to \Xa_\Gamma$ such that $k'$ preserves the label of edges and $k'$ restricted to the carriers of $\W_1$ and $\W_2$ forms two Coxeter lines.
\end{definition}

The maps $k$ and $k'$ in Definition~\ref{def:wall line} are injective and they are quasi-isometric embeddings. This can be deduced by considering the quotient homomorphism from $A_\Gamma$ to the Coxeter group with the defining graph $\Gamma$. 

For a set of vertices $V$ in $\Gamma$, let $V^{\perp}$ be the collection of vertices of $\Gamma$ that are adjacent to each element in $V$ along an edge labeled by $2$. If both $V\neq\emptyset$ and  $V^{\perp}\neq\emptyset$, as $A_\Gamma$ is assumed to have dimension $\le 2$, the subgroup of $A_\Gamma$ generated by $V^\perp$ has to be free, and so is the subgroup of $A_\Gamma$ generated by $V$. In particular, neither $V^{\perp}$ nor $V$ contain a pair of adjacent vertices of $\Gamma$.

\begin{definition}
	\label{def:plain line}
A \emph{plain line} is a line $L$ in the $1$--skeleton of $\Xa_\Gamma$ such that the collection $V_L$ of labels of edges satisfies either $V_L$ is a singleton or $V^{\perp}_L\neq\emptyset$.
A \emph{plain ray} is defined in a similar fashion. A plain line or ray is \emph{single-labeled} if $V_L$ is a singleton. A plain ray is \emph{chromatic} if $V_r$ is not a singleton for any sub-ray $r$ of this plain ray.

Let $U=\cup_{i=-\infty}^{\infty}C_i$ be a union of $2$--cells such that 
\begin{enumerate}
	\item $\partial C_i$ is a $2n$--gon for each $i$ ($n\ge 2$);
	\item $C_i\cap C_{i-1}$ and $C_i\cap C_{i+1}$ are two disjoint connected paths in $\partial C_i$ and each of them has $n-1$ edges;
	\item $C_i\cap C_j=\emptyset$ for $|i-j|\ge 2$.
\end{enumerate}
A \emph{thickened plain line} is a cellular embedding $k\colon U\to\Xa_\Gamma$ such that $k$ restricted to the two boundary lines of $U$ are plain lines.
\end{definition}
Again it follows from \cite[Theorem 1.2]{charney2014convexity} that each plain line is quasi-isometrically embedded for some uniform quasi-isometric constants independent of the plain line.

\begin{definition}
A \emph{singular ray} is either a diamond ray, or a Coxeter ray, or a plain ray. We define \emph{singular line} analogously.
\end{definition}

\subsection{Flats, half-flats and sectors}
\label{subsec:atomic sector}
Since the left action $A_\Gamma\act \Xa_\Gamma$ is simply transitive on the vertex set of $\Xa_\Gamma$, we choose an identification between elements of $A_\Gamma$ and vertices of $\Xa_\Gamma$.

\begin{definition}
	\label{def:diamond-plain flat}
Let $L$ be a diamond line containing the identity element of $A_\Gamma$. A \emph{diamond-plain flat} is a subcomplex of $\Xa_\Gamma$ of the form $g\cup_{i=-\infty}^{\infty} a^i L$ or $g\cup_{i=-\infty}^{\infty} b^i L$, where $a$ and $b$ are the labels of edges in $L$, $a^i L$ means the left translation of $L$ under the group element $a^i$ and $g$ is an element in $A_\Gamma$. Note that each diamond-plain flat can be naturally realized as the image of a locally injective cellular map $f\colon U\to \Xa_\Gamma$ where $U$ is a union of subcomplexes isomorphic to diamond lines; see Figure~\ref{f:flat_diamond-plain}.
\end{definition}

\begin{figure}[h!]
	\centering
	\includegraphics[width=0.7\textwidth]{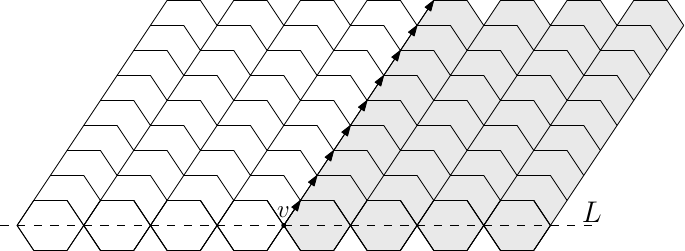}
	\caption{A part of a diamond-plain flat and a diamond-plain sector (shaded).}
	\label{f:flat_diamond-plain}
\end{figure}

\begin{lemma}
	\label{lem:diamond embedding}
The map $f$ above is an embedding.
\end{lemma}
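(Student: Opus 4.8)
The plan is to push the whole picture into a single large block of $\Xa_\Gamma$, identify the $2$--cells of the flat as the orbit of one precell under an explicit doubly-indexed family of group elements, and then analyse combinatorially how translates of a precell intersect, using the precell lemmas of Section~\ref{subsec:precells}. First, since $L$ is a diamond line through $1\in A_\Gamma$ whose cells are $2n$--gons with $n\ge 3$, $L$ lies in the large block $B$ whose defining edge $\overline{ab}$ carries the labels of $L$ and which contains the vertex $1$; the stabiliser of $B$ is the standard subgroup $\langle a,b\rangle\cong DA_n$, so $a\in\operatorname{Stab}(B)$ and therefore $\bigcup_i a^iL\subseteq B$. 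After replacing the flat by its image under the cellular automorphism of $\Xa_\Gamma$ given by $g^{-1}$ we may assume $g=1$, and by \cite[\lemsixtwo]{Artinmetric} the block $B$ is isomorphic to $X_n$; so it suffices to work inside $\Xa_n$, where the relevant combinatorics is governed by Lemma~\ref{cor:connected intersection}, Lemma~\ref{lem:unique} and Lemma~\ref{lem:small intersection}.

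A precell of $\Xa_n$ is determined by either of its tips, and it follows from Definition~\ref{def:diamond line} that every diamond line in $\Xa_n$ is a left translate of the ``standard'' one $L_0=\bigcup_k\Delta^k\Pi$, where $\Delta=\underbrace{aba\cdots}_n$ and $\Pi$ is the base precell; hence, after reindexing, $L=gL_0$ and the $2$--cells of the flat $F=\bigcup_i a^iL$ are precisely the cells $a^ig\Delta^k\Pi$ with $(i,k)\in\mathbb Z^2$. These cells are pairwise distinct: $A_\Gamma$ acts freely on $\Xa_\Gamma$, and any element stabilising $\Pi$ restricts to a self--homeomorphism of the closed disc $\Pi$ and so, by Brouwer, has a fixed point, hence is trivial; thus $a^ig\Delta^k\Pi=a^{i'}g\Delta^{k'}\Pi$ forces $g^{-1}a^{i-i'}g=\Delta^{k'-k}$, and passing to $DA_n/Z(DA_n)$, where the image of $a$ has infinite order and that of $\Delta$ has order $\le 2$, gives $i=i'$ and then $k=k'$. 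Consequently $f$ is a bijection on cell sets and restricts to a cellular isomorphism on each $2$--cell, so $f$ is an embedding if and only if for every pair of $2$--cells $C,C'$ of $U$ the intersection $C\cap C'$ computed in $U$ is carried isomorphically onto the intersection of the images; the problem is thereby reduced to computing all the intersections $a^ig\Delta^k\Pi\cap a^{i'}g\Delta^{k'}\Pi$ in $\Xa_n$ and checking that $U$ is glued exactly along them.

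To compute these, translate so that the question becomes one about $\Pi\cap w\Pi$ for $w$ built from $a,b,\Delta$, and use centrality of $z=\Delta^2$ (together with $\Delta a\Delta^{-1}=b$ when $n$ is odd) to reduce to $w=a^k\Delta^l$ or $w=b^k\Delta^l$. Using the total-length homomorphism $\ell\colon A_\Gamma\to\mathbb Z$ sending every generator to $1$, one sees that for each fixed $k$ the vertices of $\Pi$ and of $a^k\Delta^l\Pi$ have $\ell$--values lying in intervals that overlap for only finitely many $l$, so only finitely many $l$ can give a nonempty intersection; and to bound $|k|$ I would use the fact, recorded after Definition~\ref{def:diamond line}, that every diamond line is quasi-isometrically embedded in $\Xa_\Gamma$ -- equivalently, that the image of $a$ is a hyperbolic isometry of the Bass--Serre tree $T$ of $DA_n/Z(DA_n)$ while $\Delta$ acts elliptically. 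Since the projection $\Xa_n\to T$ collapses each diamond line to a bounded set, $a^k\Delta^l\Pi$ and $\Pi$ have disjoint projections, hence are disjoint, once $|k|$ exceeds an explicit constant. This leaves only finitely many pairs $(k,l)$, and for each I would pin down $\Pi\cap a^k\Delta^l\Pi$ precisely using Lemma~\ref{cor:connected intersection}, Lemma~\ref{lem:unique}, Lemma~\ref{lem:small intersection} and the injectivity of the positive Artin monoid \cite{deligne,brieskorn1972artin}; the conclusion is that nontrivial overlaps occur only between nearby cells, along subpaths of halves of precells or at single tips, in a periodic pattern along each row $a^iL$ -- and this is exactly the gluing prescribed in the definition of $U$, so $C\cap C'$ in $U$ matches the intersection of the images and $f$ is injective.

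The routine parts are the precell bookkeeping and the $\ell$--length estimates. The real obstacle is ruling out ``wrap--around'': showing that no row $a^iL$ meets another row in a way not already recorded in $U$. A length argument alone is hopeless here, since $\ell$ is far from proper on $\langle a,\Delta\rangle$, and one genuinely needs quasi-isometric embeddedness of diamond lines (equivalently the tree action) to obtain a distance lower bound that grows with $|i-i'|$. A secondary nuisance when $n$ is odd is that conjugation by $\Delta$ interchanges $a$ and $b$, so the overlap pattern of consecutive rows depends on the parity of the $\Delta$--index and must be compared cell by cell with the gluing maps that define $U$.
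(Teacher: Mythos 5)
Your reduction to a single block, the parametrization of the $2$--cells as $a^i\Delta^k\Pi$, and the freeness argument showing these cells are pairwise distinct are all fine, and your overall plan (explicit precell bookkeeping plus a tree action to kill ``wrap-around'') is in the spirit of the alternative the paper merely mentions (injectivity via the word problem for spherical Artin groups); the paper itself instead reduces to the block and invokes Wise's cube complex $K_n$, whose universal cover is $T\times\mathbb E^1$ with vertical strips corresponding to diamond lines, and concludes by CAT(0) geometry. However, the step you yourself single out as ``the real obstacle'' is carried by a claim that is false for half the cases. For $n$ even one has $DA_n=\langle a,y\mid [a,y^{n/2}]=1\rangle$ with $y=ab$ and $Z(DA_n)=\langle\Delta\rangle=\langle y^{n/2}\rangle$, so $DA_n/Z(DA_n)\cong\mathbb Z\ast\mathbb Z_{n/2}$ with $\bar a$ generating the infinite cyclic \emph{vertex group} of this free-product splitting. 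Hence $\bar a$ is elliptic, not hyperbolic, on the Bass--Serre tree of $DA_n/Z(DA_n)$: all the rows $a^kL$ project (via $\Delta\mapsto 1$) into a fixed bounded neighbourhood of the vertex fixed by $\bar a$, and the ``disjoint projections, hence disjoint'' conclusion never materializes. (Your parenthetical ``equivalently'' is also not an equivalence: quasi-isometric embeddedness of a diamond line concerns non-distortion of $\langle\Delta\rangle$ and says nothing about $a$ translating along a tree.) The mechanism can be repaired — choose instead a tree on which $\bar a$ genuinely translates, e.g.\ the Bass--Serre tree of the splitting of $\mathbb Z\ast\mathbb Z_{n/2}$ as an HNN extension of $\mathbb Z_{n/2}$ over the trivial subgroup with stable letter $\bar a$, a tree with finite vertex stabilizers, or, most naturally, the tree factor of Wise's $\widetilde K_n\cong T\times\mathbb E^1$, which is exactly what the paper uses — but as written the argument breaks for all even $n$.

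Separately, the final step is only asserted, not done: you say the finitely many remaining intersections $\Pi\cap a^k\Delta^l\Pi$ ``would'' be pinned down and ``exactly'' match the gluing of $U$, but this is where the content lies, and it is not entirely innocuous. For instance, in $DA_4$ one has $a^2\Delta^{-1}\cdot bab=a$, so the row-$0$ cell $\Pi$ and the row-$2$ cell $a^2\Delta^{-1}\Pi$ do share the vertex $a$; this is consistent with $U$ (rows two apart meet at the pinch tips of the intermediate row), but it shows that the overlap pattern between non-adjacent rows is not ``empty'' and must be computed and matched against $U$ case by case, using monoid injectivity or Garside normal forms, before injectivity of $f$ follows. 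Until the tree is corrected and these finitely many verifications are actually carried out, the proposal is an outline with a genuine gap at its decisive step rather than a proof.
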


\begin{proof}
Since the image of $f$ is contained in a block, it suffices to consider the case when $\Gamma$ is an edge. One can deduce the injectivity of $f$ from the solution of the word problem for spherical Artin groups \cite{brieskorn1972artin,deligne}. Here we provide a geometric proof depending on a complex constructed by Jon McCammond \cite{McCammond2010}. The construction of such complex was presented in the proof of \cite[Theorem 5.1]{huang2015cocompactly}.

Suppose the edge of $\Gamma$ is labeled by $n$. Let $K_n$ be the cube complex described in the Figure~\ref{f:cube} below.
\begin{figure}[h!]
	\centering
	\includegraphics[width=0.8\textwidth]{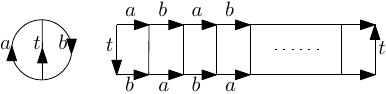}
	\caption{}
	\label{f:cube}
\end{figure}
On the left we see part of the $1$--skeleton of $K_n$ consisting of
three edges labelled by $a,b,t$, and the right side indicates
how to attach a rectangle (subdivided into $n$ squares)
along its boundary path $\underbrace{ab\dots}_{n}t^{-1}
\underbrace{ b^{-1} a^{-1}\dots}_{n}t^{-1}$. It is easy
to check that the link of each of the two vertices in $K_n$ is
isomorphic to the spherical join of two points with $n$ points,
hence $K_n$ is nonpositively curved and its universal cover $\widetilde K_n$ is isometric to tree times $\mathbb E^1$. There is a homotopy equivalence $h\colon K_n\to P_\Gamma$ by collapsing the $t$--edge. This induces a map $\widetilde{K}_n\to\Xa_\Gamma$. Note that $h$ gives a one to one correspondence between lifts of $t$--edges in $\widetilde{K}_n$ and vertices in $\Xa_\Gamma$; as well as one to one correspondence between vertical flat strips in $\widetilde{K}_n$ and diamond lines in $\Xa_\Gamma$. Now the lemma follows from the CAT(0) geometry on $\widetilde{K}_n$.
\end{proof}

Thus each diamond-plain flat $F$ is homeomorphic to $\mathbb R^2$. Moreover, there is a subgroup $A_F\le A_\Gamma$ isomorphic to $\mathbb Z \oplus\mathbb Z$ acting cocompactly on $F$ (for example, when $F=\cup_{i=-\infty}^{\infty} a^i L$, then $A_F$ is generated by $a$ and the centralizer of the standard subgroup of $A_\Gamma$ generated by $a$ and $b$). 

\begin{definition}
		\label{def:diamond-plain sector}
Choose a diamond-plain flat $F$ and let $v\in F$ be a vertex of a diamond line $L$ in $F$ being the intersection of two $2$--cells of $L$. Let $L_1\subset L$ be a diamond ray based at $v$ and let $L_2\subset F$ be a plain ray based at $v$. Then the region in $F$ bounded by $L_1$ and $L_2$ (including $L_1$ and $L_2$) is called a \emph{diamond-plain sector}; see Figure~\ref{f:flat_diamond-plain}. $L_1$ and $L_2$ are called the \emph{boundary rays} of the diamond-plain sector.
\end{definition}

\begin{definition}
Let $L\subset\Xa_\Gamma$ be a Coxeter line containing the identity element of $A_\Gamma$ and let $e$ be the edge in $L$ such that $e$ is dual to the wall in $L$ and $e$ contains the identity element of $A_\Gamma$. Suppose the label of $e$ is $a$. Then a \emph{Coxeter-plain flat} is a subcomplex of $\Xa_\Gamma$ of the form $g\cup_{i=-\infty}^{\infty}a^i L$, where $g\in A_\Gamma$. Let $L'\subset L$ be a Coxeter ray starting at the edge $e$. A \emph{Coxeter-plain sector} is a subcomplex of $\Xa_\Gamma$ of the form $g\cup_{i=0}^{\infty}a^i L'$, where $g\in A_\Gamma$; see Figure~\ref{f:flat_Coxeter-plain}. The Coxeter-plain sector has two \emph{boundary rays}, one is $L'$ and another one is the plain ray containing $e$.
\end{definition}

\begin{figure}[h!]
	\centering
	\includegraphics[width=0.8\textwidth]{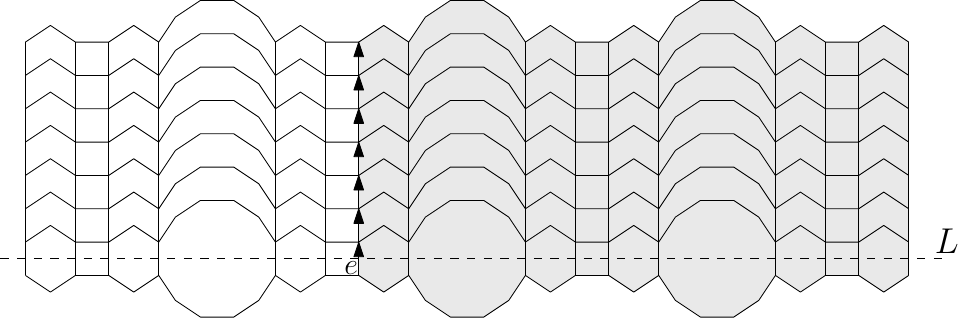}
	\caption{A part of a Coxeter-plain flat and a Coxeter-plain sector (shaded).}
	\label{f:flat_Coxeter-plain}
\end{figure}

Since the Artin monoid injects into the Artin group by the work of Paris \cite{paris2002artin}, $L\cap a^iL=\emptyset$ for $i\ge 2$ and $L\cap aL$ is a boundary line of $L$. Thus each Coxeter-plain flat is a subcomplex of $\Xa_\Gamma$ homeomorphic to $\mathbb R^2$.

\begin{definition}
	\label{def:Coxeter flat}
Let $\Delta$ and $\D_{\Delta}$ be as Definition~\ref{def:wall line}. A \emph{Coxeter flat} is a locally injective cellular map $f\colon \D_{\Delta}\to \Xa_\Gamma$ such that 
\begin{enumerate}
	\item $f$ preserves the label of edges;
	\item the orientation of edges in $\D_\Delta$ induced by $f$ satisfies the following: if two edges of $\D_{\Delta}$ are dual to parallel walls in $\D_{\Delta}$, then they are oriented towards the same direction.
\end{enumerate}	
\end{definition}

By considering the $1$--Lipschitz quotient homomorphism from $A_\Gamma$ to the Coxeter group with defining graph $\Gamma$ as before (cf.\ the discussion after Definition~\ref{def:wall line}), we know each Coxeter flat is embedded and quasi-isometrically embedded.

For each Coxeter flat $F$, there are exactly two families of parallel walls whose carriers in $F$ give rise to Coxeter lines. To see this, choose a $2$--cell $C\subset F$ with the maximal number of edges on its boundary and let $e_1,e_2\subset \partial C$ be the two edges containing a tip $t\in \partial C$. Then by 
Definition~\ref{def:wall line} (2) and Definition~\ref{def:Coxeter flat} (2), the carrier of a wall $\W\subset F$ is a Coxeter line if and only if $\W$ is parallel to the wall of $F$ dual to $e_1$ or $e_2$.

\begin{figure}[h!]
	\centering
	\includegraphics[width=0.6\textwidth]{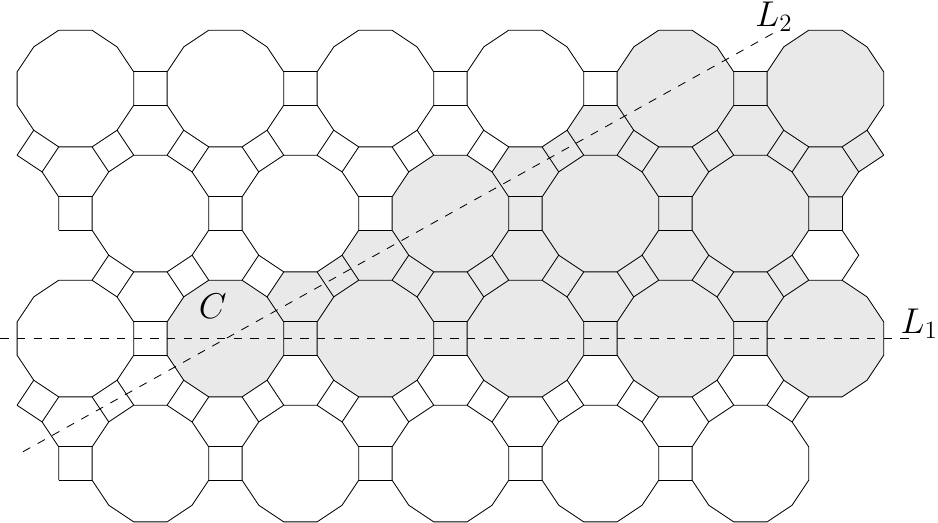}
	\caption{A part of a Coxeter flat and a Coxeter sector (shaded).}
	\label{f:flat_Coxeter}
\end{figure}

\begin{definition}
Let $F$ be a Coxeter flat and choose two Coxeter lines $L_1,L_2$ such that they intersect in a $2$--cell $C\in F$ (such Coxeter lines exists by the discussion in the previous paragraph). For $i=1,2$, let $L'_i\subset L_i$ be a Coxeter ray starting at $C$. Then a \emph{Coxeter sector} is defined to be the region in $F$ bounded by $L'_1$ and $L'_2$ (including $L'_1$ and $L'_2$); see Figure~\ref{f:flat_Coxeter}. $L'_1$ and $L'_2$ are called the \emph{boundary rays} of the Coxeter sector.
\end{definition}

\begin{definition}
	\label{def:plain sector}
Let $Q$ be a quarter plane tiled by unit squares in a standard way. A \emph{plain sector} is a locally injective cellular map $f\colon Q\to \Xa_\Gamma$ such that $f$ restricted to the two \emph{boundary rays} of $Q$ gives two plain rays.
\end{definition}

For each plain sector, there is a full subgraph $\Gamma'\subset\Gamma$ such that $Q$ is contained in a copy of $\Xa_{\Gamma'}$ inside $\Xa_\Gamma$ and $A_{\Gamma'}$ is a right-angled Artin group. Note that both $Q\to \Xa_{\Gamma'}$ and $\Xa_{\Gamma'}\to \Xa_\Gamma$ are injective quasi-isometric embeddings (the first one follows from the fact that the Salvetti complexes of right-angled Artin groups are non-positively curved, and the second one follows from \cite[Theorem 1.2]{charney2014convexity}), thus $f$ is an injective quasi-isometric embedding.

\begin{definition}
	\label{def:DCH}
	A \emph{diamond chromatic half-flat (DCH)} is a locally injective cellular map $f\colon  U=\cup_{i=1}^{\infty} L_i\to \Xa_\Gamma$ such that 
	\begin{enumerate}
		\item $f|_{L_i}$ is a diamond line for each $i$;
		\item $L_i\cap L_{i+1}$ is a boundary line of both $L_i$ and $L_{i+1}$, moreover, $L_i\cap L_j=\emptyset$ for $|i-j|\ge 2$;
		\item there does not exist $i_0\ge 1$ such that $f(\cup_{i=i_0}^{\infty} L_i)$ is contained in a diamond-plain flat.
	\end{enumerate}
The \emph{boundary line} of this DCH is defined to be the diamond line $f|_{L_1}$. 
\end{definition}

\begin{figure}[h!]
	\centering
	\includegraphics[width=0.6\textwidth]{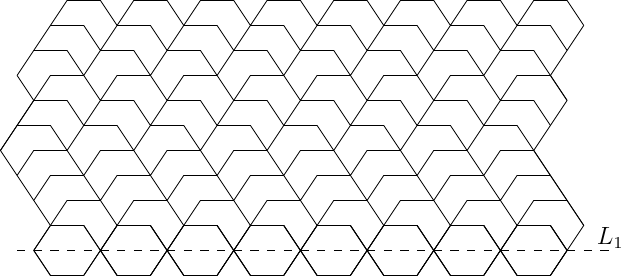}
	\caption{A part of a diamond chromatic half-flat (DCH).}
	\label{f:flat_DCH}
\end{figure}

By using the CAT(0) cube complex $\widetilde{K}_n$ in Lemma~\ref{lem:diamond embedding}, one readily deduces that each DCH is embedded and quasi-isometrically embedded.

\begin{definition}
	\label{def:CCHI}
	A \emph{Coxeter chromatic half-flat (CCH)} of type I is a locally injective cellular map $f\colon  U=\cup_{i=1}^{\infty} L_i\to \Xa_\Gamma$ such that
	\begin{enumerate}
		\item $f|_{L_i}$ is a thickened Coxeter line or a Coxeter line;
		\item $L_i\cap L_j=\emptyset$ for $|i-j|\ge 2$;
		\item $L_i\cap L_{i+1}$ is a boundary component of both $L_i$ and $L_{i+1}$, moreover, each vertex in $L_i\cap L_{i+1}$ is either of type O, or of type II (cf.\ Definition~\ref{d:types}, and Table~\ref{t:flat} on page \pageref{t:flat});
		\item there does not exist $i_0$ such that for all $i\ge i_0$, each vertex in $L_i\cap L_{i+1}$ is of type O; and  there does not exist $i_0$ such that for all $i\ge i_0$, each vertex in $L_i\cap L_{i+1}$ is of type II.
	\end{enumerate}
The \emph{boundary line} of this CCH is defined to be the Coxeter line containing $f|_{\partial U}$, where $\partial U$ is the boundary of $U$ in the topological sense.
\end{definition}

\begin{figure}[h!]
	\centering
	\includegraphics[width=0.6\textwidth]{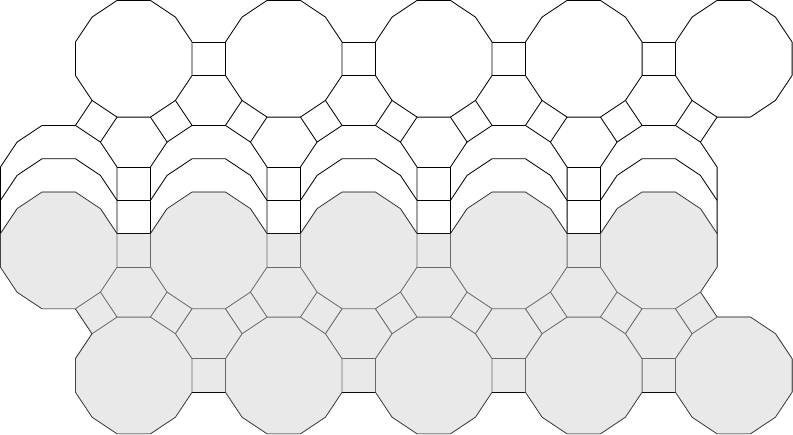}
	\caption{A part of a Coxeter chromatic half-flat (CCH) of type I. A thickened Coxeter line $L_1$ is shaded.}
	\label{f:flat_CCHI}
\end{figure}

\begin{definition}
	\label{def:CCHII}
Let $\Delta$ and $\D_{\Delta}$ be as Definition~\ref{def:wall line}. Let $H$ be the carrier of a halfspace $\h\subset\D_\Delta$ bounded by a wall $\W_1$ of $\D_\Delta$. A \emph{Coxeter chromatic half-flat (CCH)} of type II is a locally injective cellular map $f\colon  H\to \Xa_\Gamma$ satisfying all the following conditions:
	\begin{enumerate}
	\item $f$ preserves the label of edges;
	\item $f$ restricted to the carrier of $\W_1$ is a Coxeter line;
    \item there does not exist a halfspace $\h'\subset \h$ such that the image of the carrier of $\h'$ under $f$ is contained in a Coxeter flat; see Figure~\ref{f:flat_CCHII}.
	\end{enumerate}
The \emph{boundary line} of this CCH is the Coxeter line containing $f|_{\W_1}$. 
\end{definition}

\begin{figure}[h!]
	\centering
	\includegraphics[width=0.9\textwidth]{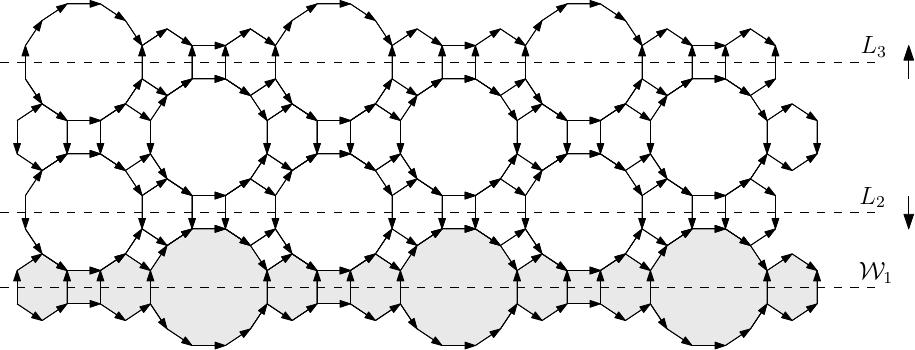}
	\caption{A part of a Coxeter chromatic half-flat (CCH) of type II. The Carrier of $\mathcal W_1$ (shaded) is a Coxeter line. Orientations of edges along $L_2$ and $L_3$ are opposite.}
	\label{f:flat_CCHII}
\end{figure}

As in the discussion after Definition~\ref{def:Coxeter flat}, each CCH of type II is embedded and quasi-isometrically embedded.

\begin{definition}
	\label{def:PCH}
	A \emph{plain chromatic half-flat (PCH)} is a locally injective cellular map $f\colon  U=\cup_{i=1}^{\infty} L_i\to \Xa_\Gamma$ such that
	\begin{enumerate}
		\item $f|_{L_i}$ is a thickened plain line for each $i$;
		\item $L_i\cap L_j=\emptyset$ for $|i-j|\ge 2$;
		\item $L_i\cap L_{i+1}$ is a boundary line of both $L_i$ and $L_{i+1}$;
		\item there does not exist $i_0\ge 1$ such that $f(\cup_{i=i_0}^{\infty} L_i)$ is contained in a block;
		\item there does not exist $i_0\ge 1$ such that $\cup_{i=i_0}^{\infty} L_i$ is made of squares.
	\end{enumerate}
\end{definition}

Note that (5) implies that $f(\partial U)$ is a single-labeled plain line; see Figure~\ref{f:flat_PCH}. The \emph{boundary line} of this PCH is $f|_{\partial U}$.

\begin{figure}[h!]
	\centering
	\includegraphics[width=0.6\textwidth]{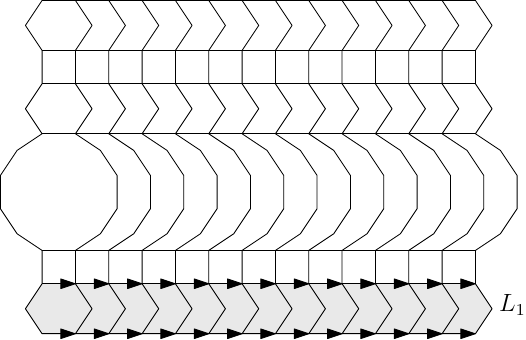}
	\caption{A part of a plain chromatic half-flat (PCH). A thickened plain line $L_1$ is shaded.}
	\label{f:flat_PCH}
\end{figure}

\begin{definition}
\label{d:atomic}
An \emph{atomic sector} is one of the objects among diamond-plain sector, Coxeter-plain sector, Coxeter sector, plain sector, DCH, CCH (of type I or II) and PCH. If $S$ is a DCH, CCH or PCH, then a \emph{boundary ray} of $S$ is a singular ray contained in the boundary line of $S$.
\end{definition}

For further reference, we collect all the types of atomic sectors in Table~\ref{t:atomic} on page \pageref{t:atomic}.

One can build an atomic sector in a local and cell-by-cell fashion. There are plenty of atomic sectors in $\Xa_\Gamma$. Actually, for any vertex in $\Xa_\Gamma$, there is an atomic sector of given type whose image contains this vertex.

\begin{table}[]
	\begin{tabular}{|c|c|}
		\cline{1-2}
		 \begin{tabular}[c]{@{}c@{}}diamond-plain flat \\ and sector \end{tabular} & 
			\begin{tabular}[c]{@{}c@{}}\\ \includegraphics[width=0.39\textwidth]{flat_diamond-plain} \\ \end{tabular}
			   \\ \cline{1-2}
		 \begin{tabular}[c]{@{}c@{}}Coxeter-plain flat \\ and  sector\end{tabular} & 
			\begin{tabular}[c]{@{}c@{}}\\ \includegraphics[width=0.41\textwidth]{flat_Coxeter-plain} \\ \end{tabular}
				\\ \cline{1-2}
		 \begin{tabular}[c]{@{}c@{}}Coxeter flat \\ and sector\end{tabular} & 
			\begin{tabular}[c]{@{}c@{}}\\ \includegraphics[width=0.39\textwidth]{flat_Coxeter}\end{tabular}
				\\ \cline{1-2}
		\begin{tabular}[c]{@{}c@{}}diamond  chromatic \\ half-flat \\ (DCH)\end{tabular} & 
			\begin{tabular}[c]{@{}c@{}}\\ \includegraphics[width=0.39\textwidth]{flat_DCH}\end{tabular}
				\\ \cline{1-2}
		\begin{tabular}[c]{@{}c@{}}Coxeter  chromatic \\ half-flat \\ (CCH) \\ type I\end{tabular} & 
			\begin{tabular}[c]{@{}c@{}}\\ \includegraphics[width=0.35\textwidth]{flat_CCHI}\end{tabular}
				\\ \cline{1-2}
		\begin{tabular}[c]{@{}c@{}}Coxeter  chromatic \\ half-flat \\ (CCH) \\ type II\end{tabular} & 
			\begin{tabular}[c]{@{}c@{}}\\ \includegraphics[width=0.41\textwidth]{flat_CCHII}\end{tabular}
				\\ \cline{1-2}
		\begin{tabular}[c]{@{}c@{}}plain chromatic \\ half-flat \\ (PCH)\end{tabular} & 
			\begin{tabular}[c]{@{}c@{}}\\ \includegraphics[width=0.34\textwidth]{flat_PCH}\end{tabular}
				\\ \cline{1-2}
	\end{tabular}
	\caption{Atomic sectors.}
	\label{t:atomic}
\end{table}

\subsection{Properties of atomic sectors}
\label{subsec:properties of atomic sectors}
For each Artin group $A_\Gamma$, Charney and Davis \cite{CharneyDavis} defined an associated \emph{modified Deligne complex} $D_\Gamma$. Now we recall their construction in the $2$--dimensional case, since it will be used for proving certain properties of atomic sectors. The vertex set $V$ of $D_\Gamma$ is in one to one correspondence with left cosets of the form $gA_{\Gamma'}$, where $g\in A_\Gamma$ and $\Gamma'$ is either the empty-subgraph of $\Gamma$ (in which case $A_{\Gamma'}$ is the identity subgroup), or a vertex of $\Gamma$, or an edge of $\Gamma$. The \emph{rank} of a vertex $gA_{\Gamma'}$ of $V$ is the number of vertices in $\Gamma'$. In other words, rank $2$ vertices of $V$ correspond to blocks of $\Xa_\Gamma$, rank $1$ vertices of $V$ correspond to single-labeled plain lines in $\Xa_\Gamma$, and rank $0$ vertices of $V$ correspond to vertices of $\Xa_\Gamma$.

Note that $V$ has a partial order induced by inclusion of sets. A collection $\{v_i\}_{i=1}^k\subset V$ of vertices spans a $(k-1)$--dimensional simplex if $\{v_i\}_{i=1}^k$ form a chain with respect to the partial order. It is clear that $D_\Gamma$ is a $2$--dimensional simplicial complex, and $A_\Gamma$ acts on $D_\Gamma$ without inversions, i.e.\ if an element of $A_\Gamma$ fixes a simplex of $D_\Gamma$, then it fixes the simplex pointwise.

We endow $D_\Gamma$ with a piecewise Euclidean metric such that each triangle $\triangle(g_1,g_2A_s,g_3A_{\overline{st}})$ is a Euclidean triangle with angle $\pi/2$ at $g_2A_s$ and angle $\frac{\pi}{2n}$ at $g_3A_{\overline{st}}$ with $n$ being the label of the edge $\overline{st}$ of $\Gamma$. By \cite[Proposition 4.4.5]{CharneyDavis}, $D_\Gamma$ is CAT(0) with such metric. As being observed in \cite[Lemma 6]{MR2174269}, the action $A_\Gamma\act D_\Gamma$ is semisimple.

Let $X^b_\Gamma$ be the barycentric subdivision of $\Xa_\Gamma$. Then there is a simplicial map $\pi\colon X^b_\Gamma\to D_\Gamma$ which maps the center of each $2$--cell (resp.\ $1$--cell) in $\Xa_\Gamma$ to the vertex in $D_\Gamma$ representing the block (resp.\ single-labeled plain line) of $\Xa_\Gamma$ that contains this $2$--cell (resp.\ $1$--cell), and maps vertices of $\Xa_\Gamma$ to the corresponding rank $0$ vertices in $D_\Gamma$.

\begin{lemma}
	\label{lem:intersetion of blocks}
If two different blocks $B$ and $B'$ of $\Xa_\Gamma$ satisfy that $B\cap B'$ contains an edge, then $B\cap B'$ is a single-labeled plain line.
\end{lemma}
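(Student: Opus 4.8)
The plan is to reduce the statement to a fact about intersections of standard parabolic subgroups and then extract that fact from the CAT(0) geometry of the modified Deligne complex $D_\Gamma$.

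First I would set up the reduction. Suppose $B\cap B'$ contains an edge $\eta$; translating by an element of $A_\Gamma$ (which preserves all relevant structure, in particular the notion of a single-labeled plain line), I may assume $\eta$ passes through the base vertex $1\in\Xa_\Gamma$. A block containing the base vertex is standard, so $B=\Xa_{e}$ and $B'=\Xa_{e'}$ for edges $e,e'$ of $\Gamma$. Since an edge of $\Xa_\Gamma$ lying in $\Xa_{e}$ is labeled by an endpoint of $e$, the label $s$ of $\eta$ lies on both $e$ and $e'$; as $B\ne B'$ forces $e\ne e'$, we get $e=\overline{su}$ and $e'=\overline{sw}$ with $u\ne w$. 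The standard $s$--line $\Xa_{\{s\}}$ is contained in both $\Xa_{\overline{su}}$ and $\Xa_{\overline{sw}}$, so $\Xa_{\{s\}}\subseteq B\cap B'$, and it remains to prove equality. I claim this follows from
\[
A_{\overline{su}}\cap A_{\overline{sw}}=A_{\{s\}} .
\]
Granting this: any vertex of $B\cap B'$ lies in $A_{\overline{su}}\cap A_{\overline{sw}}=\langle s\rangle$, hence on $\Xa_{\{s\}}$; any edge of $B\cap B'$ has both endpoints on $\Xa_{\{s\}}$ and carries a label in $\{s,u\}\cap\{s,w\}=\{s\}$ (comparing with $A_\Gamma^{\mathrm{ab}}=\bZ^{V(\Gamma)}$ excludes $u$ and $w$), so it is an $s$--edge of $\Xa_{\{s\}}$; and a $2$--cell of $B\cap B'$ would be a precell whose boundary word involves both $s,u$ and both $s,w$, which is impossible. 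Hence $B\cap B'=\Xa_{\{s\}}$, a single-labeled plain line.

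To prove the displayed claim I would work in $D_\Gamma$, which is CAT(0). Let $v_1=A_{\overline{su}}$, $v_2=A_{\overline{sw}}$ be the corresponding rank $2$ vertices and $w_0=A_{\{s\}}$ the rank $1$ vertex; since $\{s\}\subset\overline{su}$ and $\{s\}\subset\overline{sw}$ we have $w_0<v_1$ and $w_0<v_2$, so $D_\Gamma$ contains the edges $\overline{w_0v_1}$ and $\overline{w_0v_2}$. A direct inspection shows that $\lk(w_0,D_\Gamma)$ is the complete bipartite graph between the rank $0$ vertices $\{s^k\}$ ($k\in\bZ$) below $w_0$ and the rank $2$ vertices $A_{\overline{se}}$ (over edges $e$ of $\Gamma$ incident to $s$) above it, all of whose edges have angular length $\pi/2$. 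Consequently $v_1\ne v_2$ (as $u\ne w$) are at angular distance exactly $\pi$ in $\lk(w_0,D_\Gamma)$, i.e. $\angle_{w_0}(v_1,v_2)=\pi$; and since an interior point of $\overline{w_0v_i}$ has link a union of arcs of length $\pi$ joined at the two edge directions, each edge $\overline{w_0v_i}$ is a local geodesic. Concatenating, the bent path $v_1\to w_0\to v_2$ is a local geodesic in the CAT(0) complex $D_\Gamma$, hence it is the unique geodesic $[v_1,v_2]$; in particular $w_0\in[v_1,v_2]$. Now if $g\in A_{\overline{su}}\cap A_{\overline{sw}}$, then $g$ fixes $v_1$ and $v_2$, hence fixes $[v_1,v_2]$ pointwise, hence fixes $w_0$, so $g\in\mathrm{Stab}_{A_\Gamma}(w_0)=A_{\{s\}}$; the reverse inclusion is obvious.

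The main obstacle is this last step, namely pinning down the geodesic $[v_1,v_2]$ in $D_\Gamma$ — equivalently, computing the intersection of two standard parabolics. One could instead simply invoke the (by now standard) fact that standard parabolic subgroups of an Artin group meet in the standard parabolic on the intersection of the defining subgraphs; but deriving it here from the link structure of $D_\Gamma$ keeps the argument self-contained, the only delicate point being the verification that the relevant edges of $D_\Gamma$, and hence the bent path through $w_0$, are local geodesics, which is immediate from the explicit shapes of the triangles and the description of the links.
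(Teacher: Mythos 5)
Your proof is correct, but it takes a genuinely different route from the paper. The paper disposes of this lemma in one line by citing van der Lek's theorem that $A_{\Gamma_1}\cap A_{\Gamma_2}=A_{\Gamma_1\cap\Gamma_2}$ for full subgraphs $\Gamma_1,\Gamma_2\subset\Gamma$; your reduction to the identity $A_{\overline{su}}\cap A_{\overline{sw}}=A_{\{s\}}$, together with the vertex/edge/$2$--cell bookkeeping (which is fine: labels of edges and the cells lying over a single relator are well defined in $\Xa_\Gamma$, so the only delicate point is the vertex statement), is exactly what that citation is implicitly doing. Where you diverge is in proving the needed special case geometrically: the complete bipartite link of the rank $1$ vertex $w_0=A_{\{s\}}$ with all angles $\pi/2$, the bent path $v_1\to w_0\to v_2$ being a local (hence global) geodesic in the CAT(0) complex $D_\Gamma$, and convexity of fixed-point sets forcing $A_{\overline{su}}\cap A_{\overline{sw}}\subset\mathrm{Stab}(w_0)=\langle s\rangle$. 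Each step checks out (the link computation only needs the abelianization to see which cosets are comparable, and the local-geodesic criterion at $w_0$ and at interior points of the edges is the standard cone-structure fact for piecewise Euclidean complexes with finitely many shapes). What the two approaches buy: the paper's citation is shorter and valid for all Artin groups with no dimension restriction; your argument is self-contained modulo the CAT(0)-ness of $D_\Gamma$ (Charney--Davis), which the paper already invokes in the same subsection, and it is very much in the spirit of the fixed-set/convexity arguments the paper uses later (e.g.\ the tree $F_g$ in Lemma~\ref{lem:atomic embedding} and Lemma~\ref{lem:uniform}). The one caveat worth keeping in mind is that CAT(0)-ness of $D_\Gamma$ is itself a substantial input, arguably heavier than van der Lek's intersection theorem, and it confines your argument to the two-dimensional setting — which is of course the setting of the paper.
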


\begin{proof}
This follows from a more general fact by van der Lek \cite{lek} that for any two full subgraphs $\Gamma_1$ and $\Gamma_2$ of $\Gamma$, we have $A_{\Gamma_1\cap\Gamma_2}=A_{\Gamma_1}\cap A_{\Gamma_2}$.
\end{proof}

\begin{lemma}
	\label{lem:atomic embedding}
Each atomic sector is embedded.
\end{lemma}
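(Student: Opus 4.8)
The plan is to run through the eight types of atomic sector and, for each, either realize the sector as a subcomplex of an object whose embeddedness has already been recorded above, or else carry out a short development argument. The four non-half-flat types are immediate: a diamond-plain sector, a Coxeter-plain sector, and a Coxeter sector are, by definition, subcomplexes of a diamond-plain flat, a Coxeter-plain flat, and a Coxeter flat, and each of these three kinds of flat was shown to be embedded in $\Xa_\Gamma$ --- diamond-plain flats by Lemma~\ref{lem:diamond embedding}, Coxeter-plain flats directly after their definition (they are homeomorphic to $\mathbb R^2$), and Coxeter flats via the $1$--Lipschitz quotient to the Coxeter group as in the discussion after Definition~\ref{def:Coxeter flat}. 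A plain sector embeds because its defining map factors as $Q\hookrightarrow\Xa_{\Gamma'}\hookrightarrow\Xa_\Gamma$ with $A_{\Gamma'}$ right-angled and both arrows injective, as recorded after Definition~\ref{def:plain sector}.

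Among the half-flats, a DCH and a CCH of type~II were already shown to embed: the former by lifting to the CAT(0) cube complex $\widetilde K_n$ of Lemma~\ref{lem:diamond embedding} (note that, since consecutive diamond lines in a DCH share a boundary line, which is an alternating word in a fixed pair of generators, all the diamond lines of a DCH in fact lie in a single block), the latter by pushing forward to the Davis complex of the triangle Coxeter group. So it remains to treat a CCH of type~I and a PCH. In both cases $U$ is simply connected and homeomorphic to a closed half-plane, $f\colon U\to\Xa_\Gamma$ is locally injective, and the task is to promote local to global injectivity; the delicate point is that, unlike for a DCH, such a half-flat is not contained in any single flat or single block --- that is precisely what condition~(4), respectively conditions~(3)--(5), forbid --- so one cannot localize the argument.

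I would resolve this using the CAT(0) geometry of the modified Deligne complex $D_\Gamma$ together with the projection $\pi\colon X^b_\Gamma\to D_\Gamma$ of Section~\ref{subsec:properties of atomic sectors}. For a CCH of type~I the shared boundary lines are Coxeter lines, so $f(U)$ is carried by blocks, and the barycentric subdivision of $f$ composed with $\pi$ maps the half-plane $U^b$ into $D_\Gamma$; using the local control of Lemma~\ref{lem:2pi cycle} on links of real vertices (together with the link analysis of Section~\ref{subsec:local}) one checks that this composite is a local isometric embedding, hence --- source convex in a CAT(0) plane, target CAT(0) --- a global embedding, from which injectivity of $f$ follows because $\pi$ is injective on the cells lying over a given block and over a given single-labeled plain line. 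For a PCH the block sequence follows a walk $c_0c_1c_2\cdots$ in $\Gamma$ with $L_i$ a strip in the block of $\overline{c_{i-1}c_i}$, and one argues in the same spirit, or alternatively glues the strip-embeddings furnished inside each $X_n$ by $\widetilde K_n$ along the single-labeled lines $L_i\cap L_{i+1}$ and observes that no two such strips can be identified, since their images already have distinct $\pi$-images or distinct carriers. An equivalent route, if one prefers to stay inside $X_\Gamma$, is to suppose $f(p)=f(q)$ with $p\ne q$, join them by a path $\beta\subset U$, fill the loop $f(\beta)$ by a CAT(0) singular disc diagram via Theorem~\ref{thm:CAT(0)diagrem}, and derive a contradiction from the metrically systolic structure.

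The hard part is exactly this last step for the two bending half-flats CCH of type~I and PCH: for every other atomic sector the embeddedness is inherited from a flat or from a single block, whereas for these two one must genuinely invoke a global non-positive-curvature feature --- the CAT(0)-ness of $D_\Gamma$ through $\pi$, or equivalently the CAT(0) disc diagram theorem for $X_\Gamma$ --- to rule out long-range self-intersections. I expect the $D_\Gamma$ route to be cleanest, the only technical care being the verification that $\pi\circ f^b$ is locally isometric, which is where Lemma~\ref{lem:2pi cycle} (and the link description of Section~\ref{subsec:local}) enters.
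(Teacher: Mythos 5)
Your reduction of the problem to the two bending half-flats (PCH and CCH of type I) is exactly the paper's reduction, and your treatment of the other six types is fine. But the argument you propose for the two hard cases has a genuine gap: the composite $\pi\circ f^{b}$ is \emph{not} a local isometric embedding, nor even locally injective, so the local-to-global CAT(0) step has no starting point. The map $\pi$ sends the barycenter of every $2$--cell of a given block to one and the same rank~$2$ vertex of $D_\Gamma$; hence whenever two adjacent $2$--cells of the half-flat lie in the same block (which happens throughout: inside any thickened plain line, inside any group of consecutive thickened plain lines lying in one block of a PCH, and inside every ``degenerate'' piece of a CCH of type I where consecutive parallel Coxeter lines are stacked as in a Coxeter-plain flat, as allowed by Definition~\ref{def:CCHI}(3)), the two triangles of the barycentric subdivision on either side of the shared edge are folded onto a single triangle of $D_\Gamma$. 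Indeed, in the paper's own proof the walls of a degenerate piece of a CCH of type I are all sent by $\pi\circ f$ to one geodesic line of $D_\Gamma$. So Lemma~\ref{lem:2pi cycle} cannot make $\pi\circ f^{b}$ locally isometric, and convexity in $D_\Gamma$ cannot be invoked this way.

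Your fallback remarks assert precisely what has to be proved. Saying that distinct strips ``already have distinct $\pi$-images or distinct carriers'' begs the question: the whole difficulty is to rule out that two block-strips $L'_i,L'_j$ with $|i-j|\ge 2$ lie in intersecting (or equal) blocks, even though the walk of defining edges in $\Gamma$ may revisit edges, and local injectivity of the induced path in $D_\Gamma$ alone does not give this. The paper supplies the missing global ingredient: for a PCH it shows the alternating path $v_1,w_1,v_2,w_2,\dots$ of rank~$2$ and rank~$1$ vertices lies in the fixed-point set $F_g$ of the generator of the stabilizer of $w_1$, which is a \emph{tree}, so the locally injective path is embedded, and then CAT(0) geometry of $D_\Gamma$ yields $B_i\cap B_j=\emptyset$ for $|i-j|\ge 2$ (combined with Lemma~\ref{lem:intersetion of blocks}); for a CCH of type I it takes an element $g$ translating a Coxeter line, splits its min-set as $M_g=\ell_g\times T$ with $T$ a tree, and shows the projection of the half-flat to $T$ is an embedded path, the degenerate pieces projecting to star-shaped sets. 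Your alternative suggestion of filling a hypothetical loop $f(\beta)$ with a CAT(0) disc diagram via Theorem~\ref{thm:CAT(0)diagrem} is only a hope, not an argument: metric systolicity by itself does not prevent a locally injective half-plane from self-intersecting. Without an argument of the fixed-tree or min-set type (or an equivalent convexity statement), your proof of injectivity for PCH and CCH of type I does not go through.
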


\begin{proof}
It remains to prove the lemma for PCH and CCH of type I.
\medskip

\noindent	
\emph{(PCH case.) }Let $f\colon U=\cup_{i=1}^{\infty}L_i\to \Xa_\Gamma$ be as in Definition~\ref{def:PCH}. We group those consecutive $L_i$'s that are mapped by $f$ to the same block to form a new decomposition $U=\cup_{i=1}^{\infty} L'_i$ so that the block $B_i$ containing $f(L'_i)$ satisfies $B_i\neq B_{i+1}$ for any $i\ge 1$. Each $B_i$ gives rise to a rank $2$ vertex in $D_\Gamma$, which we denote by $v_i$, and $B_i\cap B_{i+1}$ gives rise to a rank $1$ vertex in $D_\Gamma$ (cf.\ Lemma~\ref{lem:intersetion of blocks}), which we denote by $w_i$. Then for each $i\ge 1$, $w_i$ is adjacent to $v_i$ and $v_{i+1}$ in $D_\Gamma$. Let $P\to  D_\Gamma$ be the edge path starting from $v_1$ and traveling through $w_1,v_2,w_2,v_3,\ldots$. Note that $v_i\neq v_{i+1}$ (since $B_i\neq B_{i+1}$) and $w_i\neq w_{i+1}$ (since $f(L'_{i+1})$ is embedded in $B_{i+1}$). Thus $P\to D_\Gamma$ is locally injective. 

Let $g\in G$ be the generator of the stabilizer of $w_1$. Let $F_g$ be the fixed point set of $g$. Crisp \cite{MR2174269} observed that $F_g$ is a tree. We repeat his argument here. Note that $F_g$ is a convex subcomplex of $D_\Gamma$. However, $g$ never fixes rank $0$ vertices, so $F_g$ lies in the part of the $1$--skeleton of $D_\Gamma$ which is spanned by rank $1$ and rank $2$ vertices. Thus $F_g$ is a tree. 

Note that the edge path $P$ is actually contained in $F_g$. Since $P$ is locally injective, $P\to F_g$ is an embedding. Now we pick $i,j$ with $|i-j|\ge 2$. Then the segment of $P$ from $v_i$ from $v_j$ is a CAT(0) geodesic segment in $D_\Gamma$ made of $\ge 4$ edges thus, by the way we metrize $D_\Gamma$, there does not exist rank $0$ vertices which are adjacent to both $v_i$ and $v_j$. Also there are no rank $1$ vertices adjacent to both $v_i$ and $v_j$, since any such vertex has to be contained in $F_g$. Let $\pi\colon X^b_\Gamma\to D_\Gamma$ be the simplicial map defined as before. Note that each vertex in $\pi(B_i)$ is adjacent to $v_i$. Thus $B_i\cap B_j=\emptyset$ whenever $|i-j|\ge 2$. From this, Lemma~\ref{lem:intersetion of blocks}, and the fact that $f_{L'_i}$ is injective for each $i$ we deduce that $f$ is injective.
\medskip

\noindent	
\emph{(CCH of type I case.) }
Now let $f\colon U=\cup_{i=1}^{\infty}L_i\to \Xa_\Gamma$ be as Definition~\ref{def:CCHI}. \emph{Walls} in $U$ are defined to be the walls in the domains of Coxeter lines contained in $U$. If $\W\subset U$ is a wall, then $\pi(f(\W))$ is a geodesic line in the $1$--skeleton of $D_\Gamma$. We re-decompose $U$ as $U=\cup_{i=1}^{\infty} L'_i$ such that 
\begin{enumerate}
	\item each $L'_i$ is bounded by two walls;
	\item either each vertex of $U$ in $L'_i$ is of type III (we call such $L'_i$ \emph{non-degenerate}), or each vertex of $U$ in $L'_i$ is of type O or type II (we call such $L'_i$ \emph{degenerate});
	\item $\{L'_i\}_{i=1}^{\infty}$ alternates between non-degenerate and degenerate ones.
\end{enumerate}
If $L'_i$ is non-degenerate, then $\pi\circ f|_{L'_i}$ is locally injective, hence injective by CAT(0) geometry. If $L'_i$ is degenerate, then walls of $L'_i$ are mapped by $\pi\circ f$ to the same geodesic line in $D_\Gamma$. Since each Coxeter line is periodic, pick an element $g\in A_\Gamma$ acting by translation on a Coxeter line $L\subset f(U)$. Note that $f(U)$ is invariant under the action of $g$. Moreover, let $\ell_g$ be the $\pi$--image of the wall in $L$, then $g$ acts by translation on $\ell_g$. Let $M_g$ be the collection of points in $D_\Gamma$ such that $d(x,gx)$ attains its minimum. Then $M_g$ admits a splitting $M_g=\ell_g\times T$ by \cite[Theorem II.6.8]{BridsonHaefliger1999}. Since $D_\Gamma$ is $2$--dimensional, $T$ is a tree. Note that $\pi\circ f(U)\subset M_g$. For each $i$, let $s_i\subset T$ be the orthogonal projection of $\pi\circ f(L'_i)$ onto the $T$--factor. If $L'_i$ is non-degenerate, then $s_i$ is a segment with length $>0$. If $L'_i$ is degenerate, then $s_i$ is a star shaped subset, i.e.\ $s_i$ is an union of segments along a vertex. We assume without loss of generality that $L'_1$ is non-degenerate. Let $P\to T$ be the concatenation of $s_{2i+1}$ for $i\ge 0$ (note that the endpoint of $s_{2i-1}$ is the starting point of $s_{2i+1}$). By the argument in the PCH case, we know $f$ restricted to the carrier of $L'_i$ in $U$ is injective for $i$ even. This implies that $P$ is locally injective at $s_i\cap s_{i+2}$ (for $i$ odd). Thus $P$ is an embedding. This and $f|_{L'_i}$ being an embedding for each $i$ imply that $f$ is an embedding.
\end{proof}

\begin{lemma}
	\label{lem:atomic intersection}
Let $S_1$ and $S_2$ be two atomic sectors. If there is a sequence of $2$--dimensional discs $\{D_i\}_{i=1}^{\infty}\subset S_1\cap S_2$ such that $D_i$ contains a ball of radius $i$ in $S_1$ (with respect to the path metric on the $1$--skeleton of $S_1$), then $d_H(S_1\cap S_2,S_1)<\infty$ and $d_H(S_1\cap S_2,S_2)<\infty$. Moreover, $S_1$, $S_2$ and $S_1\cap S_2$ are atomic sectors of the same type.
\end{lemma}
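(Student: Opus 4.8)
The plan is to show that if $S_1$ and $S_2$ share arbitrarily large discs, then in fact $S_1 \cap S_2$ is cofinal in both, and that this forces the three objects to be atomic sectors of the same kind. First I would reduce to a statement about the combinatorial structure of the $2$--cells. Each atomic sector $S$ comes with a preferred decomposition into pieces (diamond lines, (thickened) Coxeter lines, (thickened) plain lines, or rows/columns of squares, depending on the type), and by Lemma~\ref{lem:atomic embedding} each $S_i$ is embedded in $\Xa_\Gamma$. So inside the disc $D_i \subset S_1 \cap S_2$ we have a large region of $\Xa_\Gamma$ which is simultaneously a subcomplex of $S_1$ and of $S_2$; in particular the $2$--cells of $S_1$ meeting the ball of radius $i$ are literally $2$--cells of $S_2$ and vice versa. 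The key local observation is that the decomposition of an atomic sector into its constituent lines (and the identification of its boundary rays as singular rays) is \emph{locally determined}: the way $2$--cells of $\Xa_\Gamma$ fit together along a large ball, together with the orientation data recorded in Section~\ref{sec:local flatness} (the types O, I, II, III of interior vertices), forces which kind of sector one is looking at. Indeed, a diamond line is detected by consecutive $2n$--gons meeting in single (tip) vertices; a Coxeter line is detected by the carrier of a wall with no orientation-reversing boundary vertices; a plain sector is detected by a grid of squares; and the ``chromatic half-flat'' conditions (no subray contained in a block / diamond-plain flat / Coxeter flat, and no subray made of squares) are asymptotic conditions that only constrain behavior far out.

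Concretely, I would argue as follows. Fix a vertex $v$ in $S_1 \cap S_2$ deep inside the $D_i$. The $2$--cells of $S_1$ through $v$, their supports (defining edges of blocks), their tip-structure, and the orientations of their edges are all visible in any sufficiently large ball around $v$ inside $\Xa_\Gamma$, hence coincide with the corresponding data for $S_2$. Running this over all of $D_i$ and letting $i \to \infty$, the two decompositions $S_1 = \cup L_j^{(1)}$ and $S_2 = \cup L_j^{(2)}$ must agree on a cofinal collection of pieces: more precisely, every constituent line of $S_1$ that meets a fixed large ball is a constituent line of $S_2$. Since an atomic sector is determined by its collection of constituent lines together with the two boundary rays, and since the boundary rays are themselves singular rays whose type (diamond / Coxeter / plain) is locally visible, I conclude that after discarding a compact part near the apexes, $S_1$, $S_1 \cap S_2$, and $S_2$ are built from the same data. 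This gives $d_H(S_1\cap S_2, S_1) < \infty$ and $d_H(S_1\cap S_2, S_2)<\infty$, and it shows $S_1\cap S_2$ is again an atomic sector. The ``same kind'' clause then follows because the kind is read off from finitely many of these pieces.

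There are a few cases to watch. A half-flat (DCH, CCH, PCH) could a priori contain, and be contained in Hausdorff-bounded distance from, a genuine \emph{sector} of smaller ``angle'' — for instance a plain sector sitting inside a PCH. Here the defining conditions rule this out: condition (4) (resp.\ (5)) of Definition~\ref{def:PCH}, and the analogous conditions (3) of Definitions~\ref{def:DCH}, \ref{def:CCHI}, \ref{def:CCHII}, precisely assert that the half-flat is \emph{not} eventually contained in a block / diamond-plain flat / Coxeter flat / a strip of squares, so a half-flat cannot coarsely coincide with a thinner sector; and two half-flats that coarsely coincide must have the same boundary line, hence be of the same kind. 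Similarly for the non-half-flat sectors: a diamond-plain sector, a Coxeter-plain sector, a Coxeter sector, and a plain sector have intrinsically different large-scale $2$--cell patterns (one boundary being a diamond ray vs.\ a Coxeter ray, the bulk being diamond lines vs.\ Coxeter lines vs.\ squares), so coarse equality forces equality of kind.

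The main obstacle I anticipate is \textbf{bookkeeping the orientation data along the common region} and matching up the two preferred decompositions of $S_1$ and $S_2$ when the pieces are ``thickened'' (thickened Coxeter lines, thickened plain lines) or degenerate in the sense of the proof of Lemma~\ref{lem:atomic embedding} — the decomposition of a CCH of type~I into alternating degenerate and non-degenerate strips is not canonical cell-by-cell, so one has to be careful to show that the \emph{union} of pieces meeting a large ball, rather than the individual pieces, is what transfers from $S_1$ to $S_2$. Once that is set up, the argument is a finite local check using Lemma~\ref{lem:2pi cycle}, Lemma~\ref{lem:star homeo}, and the definitions in Section~\ref{sec:singular lines}.
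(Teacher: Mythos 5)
Your reduction to common subcomplexes via Lemma~\ref{lem:atomic embedding} is fine, but the core of your argument rests on a claim that is not established and is in fact the whole content of the lemma: that ``every constituent line of $S_1$ that meets a fixed large ball is a constituent line of $S_2$,'' i.e.\ that agreement on a large ball propagates to agreement of the decompositions. Containment of an entire (half-)line or thickened line of $S_1$ in $S_2$ is a global statement; it cannot be read off from any bounded neighbourhood, because $S_2$ may branch away from the common region immediately outside the ball (nothing local forces the next constituent piece of $S_1$ adjacent to the ball to lie in $S_2$). The danger you never exclude is the ``overlap at an angle'' scenario: two cone-like regions can share arbitrarily large discs while neither is coarsely contained in the other (think of two Euclidean sectors whose angular spans overlap in a smaller sector), and the discs $D_i$ in the hypothesis are moreover allowed to drift off to infinity. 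Ruling this out for atomic sectors is precisely where a genuinely global argument is needed, and your ``key local observation'' plus the asymptotic defining conditions of Definitions~\ref{def:DCH}--\ref{def:PCH} do not supply it; the latter only enter your discussion of the ``same kind'' clause, which is downstream of the coarse equality you have not proved.

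The paper's proof supplies exactly this missing global mechanism, via the Deligne complex: using the simplicial projection $\pi\colon X^b_\Gamma\to D_\Gamma$, the min set $M_{g_1}=\ell_{g_1}\times T$ of a periodic direction $g_1$ of $S_1$, and the fixed trees $F_{g_2}$ from the proof of Lemma~\ref{lem:atomic embedding}, one shows case by case (the CCH of type I case being representative) that if $S_2$ is of a different kind, or its asymptotic direction in the tree factor $T$ diverges from the path $P$ associated to $S_1$, then convexity of $F_{g_2}$ and $M_{g_1}$ in the CAT(0) complex $D_\Gamma$ traps $S_1\cap S_2$ inside the carrier of a single constituent piece $L'_i$, which has bounded width and hence cannot contain discs of radius $i\to\infty$; in the remaining case the projections share a ray and coarse equality follows. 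If you want to salvage your approach, you would need a substitute for this step --- some argument showing that a large common disc forces the asymptotic data (block, wall directions, periodic direction) of the two sectors to agree, not merely their cells on the disc --- and that is essentially the Deligne-complex argument in disguise.
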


\begin{proof}
The lemma follows from a case by case inspection. Suppose $S_1$ and $S_2$ satisfy the assumption of the lemma. Let $D\subset S_1\cap S_2$ be a large disc. We only discuss the case when $S_1$ is a CCH of type I. Then other cases are either similar, or much simpler. 

Let $S_1=\cup_{i=1}^{\infty}L'_i$, $M_{g_1}=\ell_{g_1}\times T\subset D_\Gamma$ and the path $P\subset T$ be as in Lemma~\ref{lem:atomic embedding}, where $g_1\in A_\Gamma$ is an element acting on $S_1$. Since a diamond-plain sector or a DCH is contained in a block, so $S_2$ can not be one of them. Also $S_2$ can not be a plain sector since a plain sector is made of squares. If $S_2$ is a PCH, then $D$ is contained in the carrier of $L'_i$ (in $S_1$) for some $i$ even. Let $g_2\subset A_\Gamma$ be the element acting on $S_2$ and we define the tree $F_{g_2}$ as in Lemma~\ref{lem:atomic embedding}. The existence of $D$ implies that a sub-segment of $F_{g_2}$ goes along the $\ell_g$ direction of $M_g$. The convexity of $F_{g_2}$ and $M_{g_1}$ implies that $F_{g_2}\cap M_{g_2}\subset \ell_g\times\{t\}$ for some $t\in T$. Since $\pi(S_1)\subset M_{g_2}$ and each vertex of $\pi(S_2)$ is either contained in $F_{g_2}$ or adjacent to a vertex in $F_{g_2}$, we know that $S_1\cap S_2$ is contained in the carrier of $L'_i$ inside $S_1$. This rules out the possibility of $S_2$ being a PCH. The case of $S_2$ being a Coxeter-plain sector can be ruled out in a similar way. If $S_2$ is a Coxeter sector, then $D$ is contained in the carrier of $L'_i$ (in $S_1$) for some $i$ odd and $\pi(S_2)\subset M_{g_1}$. Note that $P$ and the projection of $\pi(S_2)$ on $T$ are two rays which diverge from each other at some point, thus $S_1\cap S_2$ is contained in the carrier of $L'_i$ inside $S_1$ for some odd $i$ and $S_2$ can not be a Coxeter sector.

Now suppose that $S_2$ is a CCH of type I or II and that $S_2$ is invariant under $g_2\in A_\Gamma$. The existence of $D$ implies that there is a Coxeter line $L\subset S_2$ such that $L\cap S_1$ contains a Coxeter segment $U$ (cf. Definition~\ref{def:wall line}) which is sufficiently long. Thus we have the following two cases:
\begin{enumerate}
	\item if $U$ is contained in a Coxeter line of $S_1$, then we know $L$ is a Coxeter line of $S_1$;
	\item if $U$ is not contained in a Coxeter line of $S_1$, then $N$ is contained in a thickened Coxeter line of $S_1$, hence $L$ and a Coxeter line of $S_1$ span a Coxeter flat in $\Xa_\Gamma$.
\end{enumerate}
Let $\ell\subset D_\Gamma$ be the $\pi$--image of the wall in $L$. In both cases (1) and (2), we have $\ell\subset M_{g_1}$. If $\ell$ and $\ell_{g_1}$ are not parallel, then $P$ and the projection of $\ell$ on $T$ are two rays which diverge from each other at some point (since $\ell$ is periodic), which implies that $S_1\cap S_2$ is contained in the carrier of $L'_i$ inside $S_1$ for some odd $i$. Thus $\ell$ and $\ell_{g_1}$ are parallel. Then $\pi(S_2)\subset M_{g_1}$. The projections of $\pi(S_1)$ and $\pi(S_2)$ must contain a common geodesic ray, otherwise we can reach a contradiction as before. Now one readily deduces that $d_H(S_1\cap S_2,S_1)<\infty$ and $d_H(S_1\cap S_2,S_2)<\infty$.
\end{proof}

Now we recall the notion of locally finite homology. We use $\mathbb Z/2\mathbb Z$ coefficients throughout this paper. Recall that for a topological space $X$ we can consider \emph{locally finite chains} in~$X$, which are formal sums $\Sigma_{\lambda\in\Lambda}a_{\lambda}\sigma_{\lambda}$ where $a_{\lambda}\in\mathbb Z/2\mathbb Z$, $\sigma_{\lambda}$ are singular simplices, and any compact set in $X$ intersects the images of only finitely many $\sigma_{\lambda}$ with $a_{\lambda}\neq 0$. This
gives rise to \textit{locally finite homology} of $X$, denoted by $H^{\mathrm{lf}}_{*}(X)$. We define the relative locally finite homology $H^{\mathrm{lf}}_{*}(X,Y)$ for a subset $Y\subset X$ in a similar way. For a point $z\in Z\setminus Y$, there is a homomorphism $i\colon H^{\mathrm{lf}}_{k}(Z,Y)\to H^{\mathrm{lf}}_{k}(Z,Z\setminus\{z\})\cong H_{k}(Z,Z\setminus\{z\})$ induced by the inclusion of pairs $(Z,Y)\to (Z,Z-\{z\})$. For $[\sigma]\in H^{\mathrm{lf}}_{k}(Z,Y)$, we define the \emph{support set} of $[\sigma]$, denoted $S_{[\sigma],Z,Y}$, to be $\{z\in Z\setminus Y\mid i_{\ast}[\sigma]\neq \textmd{Id}\}$. We will also use $S_{[\sigma]}$ to denote the support set if the underlying spaces $Z$ and $Y$ are clear.

\begin{lemma}
	\label{lem:atomic qi embedding}
Each atomic sector is quasi-isometrically embedded.
\end{lemma}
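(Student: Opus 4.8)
The plan is to reduce the claim to the cases that have not already been handled by the embedding statements and the references cited in the definitions of the atomic sectors. For a diamond-plain sector, a Coxeter-plain sector, a Coxeter sector, and a plain sector, the quasi-isometric embedding property was already recorded immediately after the corresponding definition in Section~\ref{subsec:atomic sector}; for a DCH it was noted right after Definition~\ref{def:DCH} using the CAT(0) cube complex $\widetilde K_n$ of Lemma~\ref{lem:diamond embedding}, and for a CCH of type II it was noted after Definition~\ref{def:CCHII} via the $1$--Lipschitz quotient to the Coxeter group. So it remains to treat a PCH and a CCH of type I. In both of these cases, the natural strategy is to compose with the simplicial map $\pi\colon X^b_\Gamma\to D_\Gamma$ into the CAT(0) Deligne complex and exploit the product/tree structure already extracted in the proof of Lemma~\ref{lem:atomic embedding}.

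For a PCH $f\colon U=\cup_{i\ge 1}L_i'\to\Xa_\Gamma$, regrouped so that consecutive $L_i'$ map to distinct blocks $B_i$ with $B_i\neq B_{i+1}$, recall from the proof of Lemma~\ref{lem:atomic embedding} that the edge path $P\to D_\Gamma$ through $v_1,w_1,v_2,w_2,\dots$ lies in the tree $F_g$ (the fixed set of the generator $g$ of the stabilizer of $w_1$), and that $P\to F_g$ is an embedding; moreover all the $v_i$ are vertices of a CAT(0) tree, so $d_{D_\Gamma}(v_i,v_j)$ grows linearly in $|i-j|$. Since $\pi$ is simplicial hence Lipschitz, and since $B_i\cap B_j=\emptyset$ for $|i-j|\ge 2$, a path in $U$ from $L_i'$ to $L_j'$ must have length bounded below by (a constant times) $d_{D_\Gamma}(v_i,v_j)$, which is $\gtrsim|i-j|$. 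Combined with the fact that each $f|_{L_i'}$ is an isometric embedding of a thickened plain line into a block — and each block is a copy of $X_n$, which is metrically systolic hence has a well-understood metric — and that consecutive blocks are glued along a single-labeled plain line that is convex and quasi-isometrically embedded (Lemma~\ref{lem:intersetion of blocks} together with \cite[Theorem 1.2]{charney2014convexity}), one builds a quasi-geodesic in $\Xa_\Gamma$ out of any geodesic in $U$ and concludes that $f$ is a quasi-isometric embedding with uniform constants. The key point making the estimate work is that the ``transverse'' direction of $U$ is forced by $\pi$ to spread out linearly in the CAT(0) tree $F_g\subset D_\Gamma$, so there is no folding back.

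For a CCH of type I, one argues in the same spirit but using the splitting $M_{g_1}=\ell_{g_1}\times T$ from the proof of Lemma~\ref{lem:atomic embedding}, where $T$ is a tree and $g_1\in A_\Gamma$ translates along a Coxeter line of the half-flat. One decomposes $U=\cup_i L_i'$ alternating between non-degenerate pieces (all vertices of type III, on which $\pi\circ f$ is locally injective into $D_\Gamma$, hence a CAT(0) geodesic) and degenerate pieces (type O or II), projects to the $T$-factor to obtain the path $P\subset T$, and uses that $P$ is an embedding in the tree $T$ together with the established quasi-isometric embeddedness of each individual (thickened) Coxeter line $f|_{L_i'}$ (noted after Definitions~\ref{def:wall line} and~\ref{def:CCHII}). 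As before, $\pi$ being Lipschitz pushes down lower bounds on distances in $\Xa_\Gamma$ from lower bounds on distances in $M_{g_1}$, and the product structure guarantees that moving across many $L_i'$ forces linear displacement in the tree direction $T$, while moving within an $L_i'$ is controlled by the quasi-isometric embeddedness of that Coxeter line. Assembling these pieces gives uniform quasi-isometry constants for $f$. I expect the main obstacle to be the bookkeeping at the interfaces: verifying that the quasi-geodesic one concatenates from pieces lying in distinct blocks (PCH case) or distinct $L_i'$ (CCH case) does not lose efficiency where the pieces are glued — i.e., that a geodesic of $\Xa_\Gamma$ between two points of $f(U)$ cannot shortcut by leaving $f(U)$ — which is exactly what the convexity of $F_g$, of $M_{g_1}$, and of the single-labeled plain lines in $D_\Gamma$ is there to prevent, but which requires care to state with uniform constants independent of the particular atomic sector.
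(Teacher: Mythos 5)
Your reduction to the PCH and CCH of type I cases matches the paper (the other five types are indeed covered by the remarks following their definitions and by the quasi-isometric embeddedness of rank-$2$ abelian subgroups), but the argument you propose for the two remaining cases has a genuine gap. The map $\pi\colon X^b_\Gamma\to D_\Gamma$ collapses each block to a set of uniformly bounded diameter (every vertex of $\pi(B_i)$ is adjacent to the rank-$2$ vertex $v_i$), and in the CCH case the splitting $M_{g_1}=\ell_{g_1}\times T$ likewise collapses the width of each thickened Coxeter line: the walls of a degenerate piece all map to the same geodesic of $D_\Gamma$. Consequently your projection argument only certifies that the distance in $\Xa_\Gamma$ between two transverse layers of $U$ grows linearly in the number of \emph{distinct blocks} (resp.\ non-degenerate pieces) separating them, not in the number of layers themselves. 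But Definition~\ref{def:PCH}~(4) only forbids an entire tail of $U$ from lying in one block, and Definition~\ref{def:CCHI}~(4) only forbids an eventual tail of genuine Coxeter lines; arbitrarily long finite runs of consecutive thickened plain lines inside a single block, and arbitrarily wide thickened Coxeter lines, are allowed. For such runs the transverse distance you must bound from below lies exactly in the direction that $\pi$ (resp.\ the projection to $T$) kills, so the assertion that ``the transverse direction of $U$ is forced by $\pi$ to spread out linearly'' fails, and convexity of $F_g$ or $M_{g_1}$ in $D_\Gamma$ cannot exclude shortcuts through a block, since block interiors are invisible downstairs. (There is also a smaller slip of direction: at one point you bound lengths of paths \emph{in $U$} from below, whereas a quasi-isometric embedding requires a lower bound on lengths of paths in $\Xa_\Gamma$ between points of $f(U)$.)

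What is actually needed --- and what the paper proves --- is the uniform layer-by-layer separation $d_{A_\Gamma}(\ell_i,\ell_j)\ge M|i-j|$ for the family of parallel lines $\ell_i$ in the half-flat. The paper obtains this by a filling/volume argument rather than a projection: assuming a short connection between $\ell_i$ and $\ell_j$, it fills the resulting cycle using the quadratic isoperimetric inequality coming from metric systolicity (Corollary~\ref{cor:quadratic} and Theorem~\ref{thm:msystolic}), sums translates under a periodic element $g$ to get a locally finite relative $2$--cycle, compares it via uniform contractibility of $\Xa_\Gamma$ with the fundamental class of the region $U_{ij}$ between the two lines using support sets in locally finite homology, and reaches a contradiction by counting $2$--cells modulo $\langle g\rangle$: the filling contributes at most $CL^2$ cells per period while $U_{ij}$ contributes at least $L^2/\epsilon$. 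This area count detects precisely the collapsed directions your projection misses. To salvage a projection-style proof you would need, in addition, a per-block transverse estimate (say via the cube complex $\widetilde K_n$ of Lemma~\ref{lem:diamond embedding}) together with an argument that these local estimates accumulate along a geodesic of $\Xa_\Gamma$ which need not respect the block decomposition --- and that additivity is exactly the non-trivial point.
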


\begin{proof}
The cases of diamond-plain sector, Coxeter-plain sector and Coxeter sector follow from the fact that any rank $2$ abelian subgroup of $A_\Gamma$ is quasi-isometrically embedded (\cite[\lemsevenseven]{Artinmetric}). The case of plain sector has already been discussed after Definition~\ref{def:plain sector}. It remains to consider chromatic half-flats. Let $U\subset \Xa_\Gamma$ be a PCH and let $\{\ell_i\}_{i=1}^{\infty}$ be the collection of all plain lines in $U$ parallel to $\partial U$ such that $\ell_1=\partial U$ and $\ell_i$ is between $\ell_{i-1}$ and $\ell_{i+1}$. Since each $\ell_i$ is quasi-isometrically embedded with constants independent of $i$, it suffices to show that there is a constant $M$ such that $d_{A_\Gamma}(\ell_i,\ell_j)\ge M|i-j|$, where $d_{A_\Gamma}$ denotes the distance in the $1$--skeleton of $\Xa_\Gamma$. Suppose the contrary holds. Then for any given $\epsilon>0$, there exists $\ell_i,\ell_j$ such that $d_{A_\Gamma}(\ell_i,\ell_j)< \epsilon|i-j|$. Suppose $L=d_{A_\Gamma}(\ell_i,\ell_j)$. Then $|i-j|>L/\epsilon$. Let $P_0$ be a shortest edge path from a vertex $v_0\in \ell_i$ to a vertex $w_0\in \ell_j$. Let $g\in A_\Gamma$ be an element which translates $\ell_i$ by distance $L$. Let $C_1=v_0\to gv_0\to gw_0\to w_0\to v_0$ be the cycle traveling along $\ell_i$, $gP_0$, $\ell_j$ and $P_0$. Since $C_1$ has $\le 4L$ edges and $A_\Gamma$ has quadratic Dehn function (by Corollary~\ref{cor:quadratic} and Theorem~\ref{thm:msystolic}), there is a cellular $2$--chain $\sigma$ such that $\partial \sigma =C_1$, and $\sigma$ has at most $CL^2$ cells, where $C$ only depends on $A_\Gamma$. Since $\ell_i$ is quasi-isometrically embedded, $\alpha=\sum_{k=-\infty}^{\infty} g^k\sigma$ gives rise to a locally finite relative homology class $[\alpha]\in H^{\mathrm{lf}}_{2}(\Xa_\Gamma,\ell_i\cup\ell_j)$. On the other hand, let $U_{ij}$ be the region of $U$ bounded by $\ell_i$ and $\ell_j$. The fundamental class of $U_{ij}$ gives another locally finite cellular chain $\beta$ such that $\partial\beta=\partial \alpha$. Then $[\beta]\in H^{\mathrm{lf}}_{2}(\Xa_\Gamma,\ell_i\cup\ell_j)$ and $S_{[\beta]}=U_{ij}\setminus(\ell_i\cup\ell_j)$. Note that $\alpha\cup\beta$ is a periodic \textquotedblleft infinite cylinder\textquotedblright. Recall that $\Xa_\Gamma$ is contractible by By \cite[Theorem B]{CharneyDavis} and \cite[Corollary 1.4.2]{CharneyDavis}. Then $\Xa_\Gamma$ is also uniformly contractible since it is cocompact. Thus we can fill in the periodic infinite cylinder to obtain a locally finite singular $3$--chain $\gamma$ such that $\partial\gamma=\alpha-\beta$. Thus $[\alpha]=[\beta]$ and $S_{[\alpha]}=S_{[\beta]}=U_{ij}\setminus(\ell_i\cup\ell_j)$. It is clear that $S_{[\alpha]}\subset \im \alpha$, thus $U_{ij}\subset \im\alpha$. Note that both $\im\alpha$ and $U_{ij}$ are invariant under the action of the subgroup $\langle g\rangle\cong \mathbb Z$. Moreover, $\im\alpha/\langle g\rangle$ has at most $CL^2$ $2$--cells, but $U_{ij}/\langle g\rangle$ has $L^2/\epsilon$ $2$--cells (since $|i-j|>L/\epsilon$). This will be contradictory to $U_{ij}\subset \im\alpha$ if we take $\epsilon$ small enough. Similarly we deal with other chromatic half-flats, since they all have one periodic direction (the cases of DCH and CCH of type II have already been treated differently in Section~\ref{subsec:atomic sector}).
\end{proof}

\subsection{Completions of atomic sectors}
\label{subsec:completion}
A subset $A$ is \emph{coarsely contained} in another subset $B$ of a metric space if $A$ is contained in the $R$--neighborhood of $B$ for some $R$. We say $A$ is the \emph{coarse intersection} of a family of subsets $\{B_i\}_{i\in I}$ if there exists $R_0$ such that for all $R\ge R_0$, the Hausdorff distance between $A$ and $\cap_{i\in I} N_R(B_i)$ is finite.

In the rest of this subsection, we define the \emph{completion} of an atomic sector $S$, which (roughly speaking) is a ``smallest possible subset'' that is the coarse intersection of 2-dimensional quasiflats and coarsely contains $S$.
\begin{lemma}
	\label{lem:atomic is intersection of quasiflats}
Let $S$ be an atomic sector which is not a plain sector or a diamond-plain sector. Then $S$ is a coarse intersection of finitely many $2$--dimensional quasiflats.
\end{lemma}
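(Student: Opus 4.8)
The plan is to handle the four remaining types of atomic sector --- Coxeter-plain sector, Coxeter sector, DCH, CCH (of type I or II), PCH --- by exhibiting in each case an explicit finite collection of $2$--dimensional quasiflats whose coarse intersection is $S$, and then invoking Lemma~\ref{lem:atomic intersection} to identify that coarse intersection. For the \emph{sectors} (Coxeter-plain and Coxeter), the idea is that each such sector $S$ sits inside a \emph{flat} $F$ of the corresponding kind (a Coxeter-plain flat, resp.\ Coxeter flat) as one of the quadrants/Weyl-type cones cut out by its two boundary rays. Extend each boundary ray of $S$ to a full singular line: a diamond ray extends to a diamond line, a Coxeter ray extends to a Coxeter line, a plain ray extends to a single-labeled plain line, and each such line is the boundary line of a second flat or half-flat of the appropriate kind lying on the opposite side. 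Concretely, for a Coxeter sector with boundary rays $L_1', L_2'$ inside the Coxeter flat $F$, take the three other Coxeter sectors of $F$ determined by the two complete Coxeter lines $L_1, L_2$ through the chosen $2$--cell; each of these, together with the relevant halves of $F$, assembles into a quasiflat (by Lemma~\ref{lem:atomic embedding}, Lemma~\ref{lem:atomic qi embedding}, and Theorem~\ref{thm:main2}, every union of atomic sectors glued cyclically along singular rays is a quasiflat), and the coarse intersection of $F$ with these is exactly $S$. By Lemma~\ref{lem:atomic intersection}, since each pairwise intersection contains arbitrarily large discs of $S$, the coarse intersection is an atomic sector of the same kind as $S$, hence equals $S$ up to finite Hausdorff distance.

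For the \emph{chromatic half-flats} the geometry is slightly different: a DCH, CCH, or PCH is not a ``quadrant'' of a flat but a half-space, and its single boundary line is a singular line. Here the strategy is: take the boundary line $\ell = f(\partial U)$ of the half-flat $S$, extend it on the other side by \emph{two} different completions into two distinct quasiflats $Q_1, Q_2$ --- for instance, for a PCH with single-labeled boundary plain line $\ell$, glue to $\ell$ on the opposite side two PCH's (or a PCH and a block-thickening) that diverge from each other, so that $Q_i = S \cup (\text{opposite half}_i)$; then $N_R(Q_1)\cap N_R(Q_2)$ coarsely equals $S$ since the two opposite halves coarsely intersect only along $\ell$ by the divergence, while $S$ itself is common to both. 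One more quasiflat through $\ell$ may be needed to pin down $\ell$ itself as opposed to a thickened neighborhood, depending on the type. Again Lemma~\ref{lem:atomic intersection} upgrades the coarse intersection to an honest atomic sector, which must then be $S$. The CCH of type II and DCH cases use the CAT(0) cube complex $\widetilde K_n$ and the Davis/Deligne complex $D_\Gamma$ machinery (the trees $F_g$, the splitting $M_g = \ell_g \times T$) exactly as in the proofs of Lemmas~\ref{lem:atomic embedding} and \ref{lem:atomic intersection} to verify that the opposite halves really do diverge coarsely.

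I expect the main obstacle to be two-fold. First, one must verify that the finite unions of atomic sectors used as the ambient quasiflats are genuinely $2$--dimensional quasiflats --- i.e.\ genuinely quasi-isometric to $\mathbb{E}^2$ and homeomorphic to $\mathbb{R}^2$ rather than just locally flat complexes; this requires knowing that gluing atomic sectors cyclically along matching singular rays produces a quasi-isometrically embedded plane, which follows from the quasi-isometric embedding statements (Lemma~\ref{lem:atomic qi embedding}) combined with a Morse/convexity argument in $D_\Gamma$, but the bookkeeping for compatibility of the boundary rays (orientations of edges, types of vertices along the gluing line as in Definitions~\ref{def:CCHI}, \ref{def:PCH}) is delicate. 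Second, and more seriously, one must rule out that the coarse intersection is \emph{larger} than $S$ --- i.e.\ that the ``opposite halves'' genuinely diverge and do not fellow-travel a sub-half-flat; this divergence is precisely what conditions (3)/(4)/(5) in the definitions of DCH, CCH, PCH (the ``there does not exist $i_0$ such that \dots is contained in a flat/block'') are designed to guarantee, and the argument should mirror the projection-onto-the-tree-factor analysis in the proof of Lemma~\ref{lem:atomic intersection}. The case analysis is lengthy but each case is by now routine given the earlier lemmas; the real content is choosing the right small family of completing quasiflats so that their coarse intersection is visibly $S$ and then letting Lemma~\ref{lem:atomic intersection} do the rest.
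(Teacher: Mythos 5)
Your overall route is essentially the paper's: for the chromatic half-flats you glue the sector to two completing halves along a boundary singular line to obtain two quasiflats whose coarse intersection is the sector (the paper's ``triplane'' $U_1\cup U_2\cup U_3$, with the DCH case carried out in the cube complex $\widetilde K_n$ of Lemma~\ref{lem:diamond embedding}), and for the Coxeter and Coxeter-plain sectors you write $S$ as the intersection of two half-flats $H_1\cap H_2$ of the ambient flat and complete each $H_i$ to quasiflats in the same way, checking embeddedness and quasi-isometric embeddedness of the glued pieces by the arguments of Lemmas~\ref{lem:atomic embedding} and~\ref{lem:atomic qi embedding}. So the decomposition and the key tools coincide with the paper's proof.

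Two of your justifications, however, do not work as stated. First, the divergence of the two ``opposite halves'' is not what conditions (3)/(4)/(5) of Definitions~\ref{def:DCH}--\ref{def:PCH} guarantee: those conditions constrain $S$ itself (it is not eventually contained in a flat or a block) and say nothing about the halves you choose to attach on the other side of the boundary line, which are entirely your choice. The paper avoids this issue by taking the two attached pages to be the two halves of a single flat through the gluing line (the Coxeter-plain flat containing $L_1$, resp.\ the diamond-plain flat in the PCH case), so their coarse intersection is the gluing line for free; if you instead attach two independent half-flats you must prove the divergence separately (the $M_g=\ell_g\times T$ projection you mention can do it, but it is an additional argument, not a consequence of the definitions). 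Second, Lemma~\ref{lem:atomic intersection} cannot ``upgrade'' the coarse intersection of your quasiflats to an atomic sector: its hypotheses concern two atomic sectors touching along large discs, and its conclusion is that they are Hausdorff-close to each other; it gives no bound showing the coarse intersection of the chosen quasiflats is no larger than $S$. In the paper this step is unnecessary, because with the triplane choice the coarse intersection is computed directly: the intersection of neighborhoods of $U_3\cup U_1$ and $U_3\cup U_2$ is Hausdorff-close to $U_3\cup(U_1\cap U_2)$, which is $S$ up to bounded Hausdorff distance. With these two repairs your argument coincides with the paper's.
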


\begin{proof}
Let $U=\cup_{i=1}^{\infty}L_i$ be a CCH of type I (see Table~\ref{t:atomic} on page \pageref{t:atomic}). We assume without loss of generality that $L_1$ is a Coxeter line and $L_2$ is a thickened Coxeter line. Let $F$ be the Coxeter-plain flat containing $L_1$. Then the line $L_1\cap L_2$ divides $F$ into two half-spaces, which we denote by $U_1$ and $U_2$. Let $U_3=\cup_{i=2}^{\infty}L_i$. We glue $U_1$, $U_2$ and $U_3$ along $L_1\cap L_2$ to form a triplane $V$. Note that there is a locally injective map $V\to\Xa_\Gamma$. By the same argument as in the proof of Lemma~\ref{lem:atomic embedding} and Lemma~\ref{lem:atomic qi embedding}, we know that $U_i\cup U_j$ is embedded and quasi-isometrically embedded for any $1\le i\neq j\le 3$. Thus $U$ is the coarse intersection of the quasiflat $U_3\cup U_1$ and the quasiflat $U_3\cup U_2$. The case of CCH of type II is similar. If $U$ is a PCH, we can assume $L_1$ and $L_2$ are not in the same block. Let $F$ be the diamond flat containing $L_1$. Then we can form a triplane as before and find three quasiflats using the argument of Lemma~\ref{lem:atomic embedding} and Lemma~\ref{lem:atomic qi embedding}. To deal with DCH, note that each DCH is contained in a large block. Then we can use the cube complex $\tilde K_n$ in Lemma~\ref{lem:diamond embedding} to conclude the proof.

If $S$ is a Coxeter sector, let $F$ be the Coxeter flat containing $S$. Let $L_1$ and $L_2$ be two Coxeter lines in $F$ that contains the two boundary rays of $S$. Then there are half-flats $H_1$ and $H_2$ of $F$ bounded by $L_1$ and $L_2$ respectively such that $S=H_1\cap H_2$. Since $F$ is quasi-isometrically embedded, $S$ is the coarse intersection of $H_1$ and $H_2$. However, each $H_i$ is the coarse intersection of two quasiflats by the argument in the previous paragraph. Thus we are done in the case of Coxeter sector. The case of Coxeter-plain sector is similar.
\end{proof}

We define the \emph{completion} of an atomic sector which is not a plain sector or a diamond-plain sector to be itself. Before discussing completions of diamond-plain sectors and plain sectors, we need the following observation.

For two vertices $a,b\subset\Gamma$, we define $a\sim b$ if $a$ and $b$ are adjacent in $\Gamma$ along an edge labeled by an odd number. If $a\sim b$, then for each singled-labeled plain line $\ell_a$ in $\Xa_\Gamma$ labeled by $a$, there is a thickened plain line $L$ such that 
\begin{enumerate}
	\item one boundary component of $L$ is $\ell_a$;
	\item another boundary component of $L$ is a single-labeled plain line labeled by $b$.
\end{enumerate}
Now we consider the equivalence relation on the vertex set of $\Gamma$ generated by $\sim$. Each equivalence class is called an \emph{odd component}. Note that a plain line whose edges are labeled by an element in an odd component can be \textquotedblleft parallelly transported\textquotedblright\ to a plain line with its edges labeled by another element in the same odd component via finitely many thickened plain lines.

If $S$ is a diamond-plain sector, let $F$ be the diamond flat containing $S$. Then $S$ is the intersection of two half-flats $H_1,H_2$ of $F$ such that $H_1$ is bounded by a plain line $L_1$ and $H_2$ is bounded by a diamond line $L_2$. Note that $H_2$ is the coarse intersection of two 2-dimensional quasiflats. However, this may not be true for $H_1$ (e.g.\ when $\Gamma$ is an edge). Let $a\in\Gamma$ be the vertex corresponding to the label of edges in $L_1$. If every vertex in the odd component containing $a$ is a leaf in $\Gamma$ (recall that a leaf in a graph is a vertex which is only adjacent to one another vertex), then we define the \emph{completion} of $S$ to be $H_2$. 
Observe that in this case either $a$ is a leaf in $\Gamma$ contained in an edge with an even label, or the component of
$a$ in $\Gamma$ is an edge (with an odd label).
If there exists a vertex $c$ in the odd component containing $a$ which is not a leaf, then the \emph{completion} of $S$ is defined to be itself. In this case, we can form a triplane containing $H_1$, hence $H_1$ is the coarse intersection of two quasiflats and $S$ is the coarse intersection of finitely many quasiflats.

Now suppose $S$ is a plain sector. We write $S=r_1\times r_2$ where $r_1$ and $r_2$ are two boundary rays of $S$. For $i=1,2$, let $V_i\subset\Gamma$ be the collection of labels of edges in $r_i$. Let $V^{\perp}_i$ be the collection of vertices in $\Gamma$ which are adjacent to each vertex in $V_i$ along an edge labeled by $2$. The set $V_i$ is \emph{good} if either $V^{\perp}_i$ has $\ge 2$ vertices, or $V_i$ is a singleton and there exists a non-leaf vertex in the odd component containing $V_i$. Since $V_1$ and $V_2$ form a join subgraph $\Gamma'\subset\Gamma$, the latter case is equivalent to $V_i$ being a singleton and not a leaf. 

If both $V_1$ and $V_2$ are good, then the \emph{completion} of $S$ is $S$ itself. In this case, $S$ is the coarse intersection of finitely many $2$--dimensional quasiflats. Now we prove this statement. Note that $A_{\Gamma'}$ is a right-angled Artin group. We assume $S$ is contained in a copy of $\Xa_{\Gamma'}$ in $\Xa_{\Gamma}$. Then there is a flat $F=\ell_1\times \ell_2$ in $\Xa_{\Gamma'}$ such that $\ell_i$ is a geodesic line extending $r_i$ (such extension may not be unique). We know $S=H_1\cap H_2$ where $H_i$ is a half-flat in $F$ bounded by $\ell_i$. Since $F$ is isometrically embedded in $\Xa_{\Gamma'}$ and $\Xa_{\Gamma'}$ is quasi-isometrically embedded in $\Xa_{\Gamma}$ (\cite[Theorem 1.2]{charney2014convexity}), we know $S$ is the coarse intersection of $H_1$ and $H_2$. Note that the labels of edges in $\ell_i$ are in $V_i$, then we deduce from the fact that $V_1$ and $V_2$ are good that each of $H_1$ and $H_2$ ia a coarse intersection of two quasiflats. 

If only one of $V_1, V_2$ is good, then $S$ is contained in a canonical half-flat $H$ such that $H$ is made of squares and $H$ is a coarse intersection of two quasiflats. Note that the boundary of $H$ is a single-labeled plain line. $H$ is defined to be the completion of $S$. 

If both $V_1$ and $V_2$ are not good, then $V_1$ and $V_2$ are singletons. We extend $r_i$ to a single-labeled plain line $\ell_i$ and define the \emph{completion} of $S$ to be the flat of the form $\ell_1\times\ell_2$.

\begin{corollary}
	\label{cor:completion of atomic}
For any atomic sector $S$, the completion $\bar S$ of $S$ is a coarse intersection of finitely many $2$--dimensional quasiflats.
\end{corollary}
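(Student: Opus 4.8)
The plan is a case analysis dictated by the definition of the completion, in each case feeding off the constructions assembled just above. First I would dispose of the easy case: if $S$ is neither a plain sector nor a diamond-plain sector, then $\bar S=S$ by definition and Lemma~\ref{lem:atomic is intersection of quasiflats} is exactly the statement. So the work is entirely in the two remaining families.

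For a diamond-plain sector $S$, I would write $S=H_1\cap H_2$ as in the paragraph defining its completion, with $H_1$ the half-flat of the ambient diamond flat bounded by the plain line $L_1$ (labeled, say, by $a$) and $H_2$ the half-flat bounded by the diamond line $L_2$. Using the cube complex $\widetilde K_n$ of Lemma~\ref{lem:diamond embedding}, $H_2$ is always a coarse intersection of two quasiflats. If every vertex of the odd component of $a$ is a leaf, then $\bar S=H_2$ and we are done. Otherwise $\bar S=S$: choosing a non-leaf vertex $c$ in the odd component of $a$ and parallel-transporting $L_1$ across thickened plain lines gives a third half-plane glued to $H_1$ along $L_1$, i.e.\ a triplane whose three pairwise unions are embedded and quasi-isometrically embedded by the arguments of Lemma~\ref{lem:atomic embedding} and Lemma~\ref{lem:atomic qi embedding}; hence $H_1$ is a coarse intersection of two quasiflats, and $S=H_1\cap H_2$ of at most four.

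For a plain sector $S=r_1\times r_2$ with label sets $V_1,V_2$, I would split on how many of $V_1,V_2$ are good. If both are good, then $\bar S=S$; realizing $S$ inside a copy of $\Xa_{\Gamma'}$ for the join subgraph $\Gamma'$ spanned by $V_1\cup V_2$ (a right-angled Artin complex), extending $r_i$ to single-labeled plain lines $\ell_i$, and using that $F=\ell_1\times\ell_2$ is isometrically embedded in $\Xa_{\Gamma'}$ while $\Xa_{\Gamma'}\hookrightarrow\Xa_\Gamma$ is a quasi-isometric embedding \cite[Theorem 1.2]{charney2014convexity}, one gets $S=H_1\cap H_2$ as a coarse intersection of half-flats $H_i$; goodness of $V_i$ then makes each $H_i$ a coarse intersection of two quasiflats (either via a $V_i^\perp$ with at least two vertices, or via the odd-component triplane). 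If exactly one $V_i$ is good, then $\bar S$ is the canonical square half-flat with single-labeled boundary, again a coarse intersection of two quasiflats. If neither is good, then $\bar S=\ell_1\times\ell_2$ is a flat, which is itself a $2$-dimensional quasiflat and so trivially a coarse intersection of one quasiflat.

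The one item needing genuine, if routine, care — and the only real obstacle — is the associativity bookkeeping: a coarse intersection of two subsets, each in turn a coarse intersection of finitely many quasiflats, is again a coarse intersection of finitely many quasiflats. I would handle this by recording that every subset in play — flats, half-flats, quasiflats, and their finite coarse intersections — is quasi-isometrically embedded with uniformly controlled constants, so that for comparable radii $R\asymp R'$ the $R$-neighborhood of a coarse intersection lies within bounded Hausdorff distance of the intersection of the $R'$-neighborhoods of the pieces; iterating this over the finitely many pieces gives the claim. With that lemma in hand the corollary is precisely the concatenation of the cases above.
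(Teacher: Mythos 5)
Your proposal is correct and follows essentially the same route as the paper: the corollary there is just the summary of the case analysis carried out in Section~\ref{subsec:completion}, namely Lemma~\ref{lem:atomic is intersection of quasiflats} for sectors whose completion is themselves, the cube complex of Lemma~\ref{lem:diamond embedding} and the odd-component triplane construction for diamond-plain sectors, and the Charney--Paris quasi-convexity argument with the goodness dichotomy for plain sectors. The only addition you make is to spell out the routine fact that coarse intersections compose, which the paper leaves implicit; that step is fine (and in fact holds by a purely metric neighborhood-chasing argument, without needing the uniform quasi-isometric embedding constants you invoke).
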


\section{Half Coxeter regions in $Q'$ and $Q_R$}
\label{sec:half coxeter regions}
Let $x\in Q_R$ be a vertex of type III (see Table~\ref{t:flat} on page \pageref{t:flat}). Let $K_x\subset Q'$ be the convex hull of fake vertices of $Q'$ that are dual to $2$--cells in $\St(x,Q_R)$. Such $K_x$ is called a \emph{chamber} or an \emph{$x$--chamber}. Note that $K_x$ is either a triangle or a square, and define \emph{edges} and \emph{vertices} of $K_x$ with respect to the standard cell structure on a triangle or a square. $K_x$ is isometric to the standard fundamental domain of the action of a Euclidean Coxeter group on $\mathbb E^2$ (the defining graph of this Coxeter group is the support of $x$).  Since $K_x\subset \St(x,Q')$, the intersection of two different chambers is either empty, or a point, or an edge of both of the chambers. Each chamber $K_x$ has a \emph{support} inherited from $x$.

Let $\C'\subset Q'$ be the union of $x$--chambers with $x$ ranging over vertices of type III in $Q_R$ (it is possible that $\C'=\emptyset$). A \emph{Coxeter region} of $Q'$ is a connected component of $\C'$, and a \emph{Coxeter region} of $Q_R$ is the union of $\St(x,Q_R)$ such that the associated $x$--chamber is in a Coxeter region of $Q'$.
 
\begin{lemma}
	\label{lem:local convexity}
Let $C\subset Q_R$ be an interior $2$--cell, i.e.\ $C$ is disjoint from the boundary of $Q_R$. Let $V$ be the collection of type III vertices on $\partial C$. Suppose that $V\neq\emptyset$ and that $\partial C$ has $2n$ edges. Then all vertices of $V$ have the same support, and one of the following three possibilities happens:
\begin{enumerate}
	\item $V$ is one point;
	\item $V$ is the vertex set of an arc in $\partial C$ made of $n-1$ edges;
	\item each vertex in $\partial C$ is in $V$.
\end{enumerate}
\end{lemma}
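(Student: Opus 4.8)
The plan is to analyze how the $2$--cell $C$ sits among the stars of the type III vertices on its boundary, using the local pictures from Lemma~\ref{lem:real3} and Lemma~\ref{lem:same tag}. First I would observe that $C$ is dual to some fake vertex of $Q'_R$, hence corresponds under $q$ to a precell of $\Xa_\Gamma$ contained in a single block $B$, whose defining edge is the support of $C$ as a $2$--cell. If $x\in V$ is a type III vertex on $\partial C$, then by Lemma~\ref{lem:real3}, $q$ carries $\St(x,Q_R)$ homeomorphically onto the union $\bigcup\Pi_i$ described there (three precells around a $\Delta$--vertex, four around a $\square$--vertex), and $C$ is one of those $\Pi_i$. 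In particular the support of $C$ is one of the edges of the triangle (resp.\ square) that is the support of $x$, so the support of $x$ determines, and is strongly constrained by, the support of $C$ together with the local combinatorics. The first real step is therefore to show that all vertices in $V$ have the same support: if $x_1,x_2\in V$ are adjacent along an edge of $\partial C$, then $\St(x_1,Q_R)\cap\St(x_2,Q_R)$ contains a $2$--cell (namely $C$), so Lemma~\ref{lem:same tag} applies and they share a support; since $\partial C$ is a cycle, walking around it propagates this to all of $V$ provided $V$ spans a connected arc — which is exactly what cases (1)--(3) will assert, so I would prove connectedness of $V$ simultaneously with the trichotomy rather than assuming it.

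The heart of the argument is the local analysis at a single type III vertex $x$ on $\partial C$. Fix such an $x$ and look at $\St(x,Q_R)$, which $q$ maps onto $\bigcup_{i=1}^{k}\Pi_i$ with $k=3$ (support a triangle) or $k=4$ (support a square). The precell $C=\Pi_j$ meets exactly two of the others, say $\Pi_{j-1}$ and $\Pi_{j+1}$, each along an edge of $\partial C$ incident to $x$. So within $\partial C$ the vertex $x$ is the common endpoint of exactly two edges, each shared with another chamber of the same Coxeter region around $x$; the two edges of $\partial C$ at $x$ are precisely the two ``singular'' edges $e_1,e_2$ through the tip $t$ in the terminology after Definition~\ref{def:Coxeter flat}, equivalently the two edges of $\partial C$ meeting at the tip of the precell $C$. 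Now I would examine the other endpoints of $e_1,e_2$. Using Lemma~\ref{lem:star homeo} and the orientation/combinatorial data recorded in Remark~\ref{rmk:real2} and in Lemma~\ref{lem:real3}, each of these neighbouring vertices is again interior in $Q_R$, and its star is again one of the types O, I, II, III. I claim that such a vertex is either (a) not of type III, in which case $V$ stops there and we are in case (1) or in the middle of an arc; or (b) of type III with the same support, and then again $C$ is one of its surrounding precells meeting two others, pushing the arc of $V$ one step further along $\partial C$. The combinatorics of a precell with $2n$ edges is rigid enough that once we enter a type III vertex on $\partial C$ and start walking, the arc of consecutive type III vertices can stop only at a vertex whose star is not type III, and the possible lengths of a maximal such arc are exactly $0$ edges (case (1), a single point), $n-1$ edges (case (2)), or $2n$ edges (case (3), the whole boundary is one Coxeter region). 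The value $n-1$ appears because, in the planar Coxeter group generated by the support of $x$, a chamber adjacent to $C$ along a ``long'' wall of $C$ shares the arc of $n-1$ edges between the two singular edges; this is where the isometry of $K_x$ with the Coxeter fundamental domain is used. The case $2n$ is the periodic case where the Coxeter region wraps all the way around.

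The main obstacle I anticipate is the bookkeeping in step (b): showing that if two non-adjacent type III vertices lie on $\partial C$ then every vertex of $\partial C$ on the arc between them is also type III with the same support, i.e.\ ruling out a ``type III, then non--type III, then type III again'' pattern along $\partial C$. The way I would handle this is to exploit that $C$ is dual to a fake vertex $o\in Q'_R$ and that, by Lemma~\ref{lem:fake vertex} and the description of $\lk(o,Q')$ there, the local structure of $Q'$ in a whole neighbourhood of $C_o$ is controlled: the chambers $K_x$ for $x\in V$ all sit inside $\St(o,Q')$-related cells, and their union is convex in $Q'$ (it is an intersection of half-planes in the CAT(0) surface $Q'$), so the ``footprint'' of the Coxeter region on $C$ must be a sub-arc, not a disconnected set. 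Combining convexity in $Q'$ with the discrete count of how a planar Coxeter chamber can be surrounded then forces exactly the three listed possibilities, and Lemma~\ref{lem:same tag} upgrades ``same Coxeter region'' to ``same support''. A secondary subtlety is that when $C$ is a square ($n=2$), case (2) degenerates to an arc of $n-1=1$ edge, so one should double check the three cases remain distinct and exhaustive in that boundary case; this is a short direct check once the general mechanism is in place.
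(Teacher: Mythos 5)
Your first step (same support via Lemma~\ref{lem:same tag}, propagated along adjacencies) matches the paper, but the two places where the real work happens are missing. First, your way of ruling out a ``type III, then non--type III, then type III'' pattern on $\partial C$ is to assert that $\bigcup_{x\in V}K_x$ is convex in $Q'$ (``an intersection of half-planes''). Nothing established so far gives this; in fact local convexity of Coxeter regions is precisely what Lemma~\ref{lem:local convexity} is for --- it is invoked right afterwards (Corollary~\ref{cor:local classification}, Section~\ref{sec:half coxeter regions}) to prove that half Coxeter regions are convex --- so as proposed the argument is circular. The paper's proof is purely combinatorial along $\partial C$: take $v_1,v_2\in V$, a shortest arc $\omega\subset\partial C$ between them, and the vertex $v_3\in\omega$ adjacent to $v_2$. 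The two cells containing $\overline{v_2v_3}$ lie in different blocks (since $v_2$ is type III), so $v_3$ is not of type O or type I; and if $v_3$ were of type II, Lemma~\ref{lem:real2} would produce a cell $C_1$ in the same block as $C$ with $C_1\cap C$ an arc of $n-1$ edges starting at $v_3$ and running away from $v_2$ --- since $\omega$ is shortest (hence of length $\le n$), that arc contains $v_1$, contradicting that the cells around the type III vertex $v_1$ lie in pairwise different blocks. This step, which forces $V$ to be geodesically convex in the $2n$--cycle and hence either the whole boundary or an arc of $\le n-1$ edges, has no counterpart in your plan.

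Second, you give no argument excluding arcs of length strictly between $1$ and $n-2$; ``the combinatorics of a precell is rigid enough'' and the heuristic about a chamber along a long wall are not proofs. The paper handles this by planarity: if the arc $\omega$ has $k$ edges with $1\le k<n-1$, the vertices $w_1,w_2$ just outside its endpoints must be of type II, so each yields a cell $C_i$ with $C_i\cap C$ an $(n-1)$--edge arc emanating from $w_i$ away from $\omega$; for this range of $k$ the two arcs overlap in at least one edge while $C_1\ne C_2$, which is impossible in the planar complex $Q_R$. Without these two ingredients (or substitutes for them) the trichotomy is not established. A minor further slip: your identification of the two edges of $\partial C$ at a type III vertex with the edges at a tip of the precell is neither justified nor needed.
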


\begin{proof}
Suppose $V$ is not a singleton. Let $v_1,v_2$ be two vertices of $V$ and let $\omega\subset\partial C$ a path of shortest combinatorial distance from $v_1$ to $v_2$. We claim each vertex of $\omega$ is in $V$. Let $v_3$ be a vertex in $\omega$ that is adjacent to $v_2$. Since $v_2$ is of type III, let $C,C'$ be the two $2$--cells in $\St(v_2,Q_R)$ that contain $v_3$. By Lemma~\ref{lem:star homeo} and Lemma~\ref{lem:real3} (2), $v_3$ is not of type O. Moreover, since $C$ and $C'$ are mapped to different blocks by $q$, $v_3$ is not of type I. If $v_3$ is of type II, let $C_1$ and $C'_1$ be other cells in $\St(v_3,Q_R)$. Since $C\cap C'$ is one edge, $C\cap C_1$ has $n-1$ edges by Lemma~\ref{lem:star homeo} and Lemma~\ref{lem:real2} (3). Thus $v_1\in C\cap C_1$. By Lemma~\ref{lem:real2} (3), $q(C)$ and $q(C_1)$ are in the same block. This contradicts that $v_1$ is of type III since different cells containing a vertex of type III have $q$--images in different blocks. Thus $v_3$ must be of type III. Moreover, $v_3$ and $v_2$ have the same support by Lemma~\ref{lem:same tag}. Now the claim follows by repeating this argument for other vertices in $\omega$.

It follows from the above claim that either $V$ is the collection of all vertices in $\partial C$, or $V$ is the collection of vertices in an arc $\omega$ of $\partial C$ that has $\le n-1$ edges. Suppose the number of edges in $\omega$ is $\ge 1$ and is $<n-1$. Let $v_1$ and $v_2$ be two endpoints of $\omega$. For $i=1,2$, let $w_i\in(\partial C\setminus\omega)$ be a vertex that is adjacent to $v_i$. Since $w_i$ is not of type III, by the discussion in the previous paragraph, $w_i$ is of type II. Moreover, there is a cell $C_i$ of $Q_R$ such that $C_i\cap C$ has $n-1$ edges and $(C_i\cap C)\cap \overline{v_iw_i}=w_i$. Since $\omega$ has at least one edge, $(C_1\cap C)\cap(C_2\cap C)$ contains at least one edge. Since  $\omega$ has $<n-1$ edges, $(C_1\cap C)\neq (C_2\cap C)$, in particular $C_1\neq C_2$. This contradicts that $Q_R$ is planar. Thus $\omega$ is either a point, or an arc with $n-1$ edges.
\end{proof}

The following is a consequence of Lemma~\ref{lem:local convexity}.

\begin{corollary}
	\label{cor:local classification}
Let $\C'\subset Q'$ be a Coxeter region and let $x\in \C'$ be a fake vertex such that the $2$--cell of $Q_R$ associated with $x$ is interior. Let $D_\epsilon$ be a disc of radius $\epsilon$ around $x$ in $Q'$. Then for $\epsilon$ small enough, exactly one of the following possibilities happens:
\begin{enumerate}
	\item $\C'\cap D_{\epsilon}=K\cap D_{\epsilon}$ where $K$ is a chamber of $\C'$ containing $x$;
	\item $\C'\cap D_{\epsilon}$ is a half disc of $D_{\epsilon}$;
	\item $\C'\cap D_\epsilon =D_\epsilon$.
\end{enumerate}
\end{corollary}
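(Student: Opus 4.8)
The plan is to deduce this corollary directly from Lemma~\ref{lem:local convexity}, applied not to a single $2$--cell but to each of the (at most four) $2$--cells of $Q_R$ containing $x$. First I would recall the setup: $x$ is a fake vertex of $Q'$ dual to some interior $2$--cell $C_x$ of $Q_R$ of type III, hence $C_x$ lies in $\C'$ by construction. Since $C_x$ is an interior cell, $x$ is an interior vertex of $Q'$, and after passing to a small enough $\epsilon$ the disc $D_\epsilon$ is contained in $\St(x,Q')$; moreover $\C'\cap D_\epsilon$ is determined by which of the $2$--simplices of $Q'$ around $x$ lie in chambers of $\C'$. The key observation is that a $2$--simplex of $Q'$ containing $x$ lies in $\C'$ exactly when it lies in the chamber $K_y$ of some type~III vertex $y$ of $Q_R$, and by the definition of $K_y$ and the structure of $\St(y,Q_R)$ (Lemma~\ref{lem:star homeo}, Lemma~\ref{lem:real3}) this happens precisely when the $2$--cell of $Q_R$ containing that simplex and dual to $x$ (equivalently, one of the $2$--cells of $Q_R$ containing $x$) has a type~III vertex on its boundary whose support matches. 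So the problem reduces to: for each $2$--cell $C$ of $Q_R$ incident to $x$, determine which portion of $\partial C$ near $x$ consists of type~III vertices.

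Next I would apply Lemma~\ref{lem:local convexity} to each interior $2$--cell $C$ of $Q_R$ with $x$ on its boundary. That lemma says the type~III vertices on $\partial C$ either form a single point, or an arc of $n-1$ edges (where $2n$ is the number of edges of $\partial C$), or all of $\partial C$; and in the first two cases $x$ may or may not be one of those vertices. Translating this back through $q$, the chamber $K_y$ associated to a type~III vertex $y\in\partial C$ is the convex hull in $Q'$ of the fake vertices dual to $2$--cells of $\St(y,Q_R)$, and its intersection with $D_\epsilon$ is a wedge at $x$ of angle determined by whether $x$ is itself type~III (case~(1) of the corollary, where $\C'\cap D_\epsilon$ is just one chamber's wedge) or $x$ sits between two type~III vertices lying in the $n-1$ edge arcs of two adjacent cells, or $x$ is surrounded entirely by type~III structure. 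I would organize the argument by a short case analysis on $x$: (i) if $x$ itself is of type~III then $C_x$ and its three sibling cells (from Lemma~\ref{lem:real3}) all meet at $x$ and cover $D_\epsilon$, giving case~(3); (ii) if $x$ is not type~III, then I examine the (at most four) cells of $Q_R$ at $x$, noting $C_x\in\C'$ but possibly others are not, and by Lemma~\ref{lem:local convexity} together with Lemma~\ref{lem:same tag} and connectedness of $\C'$ one gets either a single-chamber wedge (case~(1)) or a half-disc (case~(2)) according to whether one or two of the incident cells contribute chambers through $x$.

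The main obstacle, I expect, is the bookkeeping that links the combinatorial structure of $\St(x,Q')$ (simplices, as in Theorem~\ref{thm:MSquasiflats}) with the coarser cell structure of $Q_R$ (the $C_x$'s and their intersections, as in Lemma~\ref{lem:intersection of cells} and Lemma~\ref{lem:star homeo}) and then with the chambers $K_y\subset Q'$. One must be careful that a fixed simplex at $x$ can lie in the star of several type~III vertices and that the ``support'' of each incident cell is well-defined and consistent — this is exactly what Lemma~\ref{lem:same tag} and the support-compatibility in Lemma~\ref{lem:local convexity} are for. A secondary point to check is that cases~(1)--(3) are genuinely exhaustive and mutually exclusive for $\epsilon$ small: this follows because $x$ is an interior point of $Q'$ so $D_\epsilon$ is a genuine disc, the chambers meeting $x$ have disjoint interiors and meet along edges through $x$, and the union of their wedges at $x$ is either a proper sub-wedge, a half-disc, or the whole disc by the planarity of $Q_R$ (used also at the end of Lemma~\ref{lem:local convexity}). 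Once these identifications are set up carefully the corollary is immediate, so I would keep the exposition brief and lean heavily on the cited lemmas rather than reprove any local combinatorics.
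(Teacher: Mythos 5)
Your overall instinct --- that the statement should fall out of Lemma~\ref{lem:local convexity} applied to the cell structure of $Q_R$ near $x$ and then translated into chambers --- is the right one (the paper itself offers nothing beyond ``this is a consequence of Lemma~\ref{lem:local convexity}''), but your execution rests on a misreading of what $x$ is, and the case analysis built on it would not prove the statement. In the corollary $x$ is a \emph{fake} vertex of $Q'$: it is the vertex dual to a single $2$--cell $C$ of $Q_R$ and lies in the \emph{interior} of $C$. It is not a vertex of $Q_R$ at all (the vertices of $Q_R$ are real vertices of $Q'$), so there are not ``at most four $2$--cells of $Q_R$ containing $x$'', $x$ does not lie on the boundary of any cell, and the question ``is $x$ itself of type III'' is meaningless --- the type classification of Definition~\ref{d:types} applies only to interior vertices of $Q_R$. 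For the same reason the claim ``$C_x$ lies in $\C'$ by construction'' is false: $\C'$ is a union of chambers $K_y$ (small triangles or squares spanned by fake vertices), never a union of $2$--cells of $Q_R$. Consequently the mechanisms you propose for the three cases --- the four cells at a type III vertex ``covering $D_\epsilon$'' giving case (3), and ``one or two incident cells contributing chambers'' giving cases (1)/(2) --- do not correspond to the trichotomy in the statement.

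The actual derivation uses Lemma~\ref{lem:local convexity} for the \emph{one} dual cell $C$, whose boundary has, say, $2n$ edges. A chamber $K_y$ contains $x$ exactly when $C\in\St(y,Q_R)$, i.e.\ when $y$ is a type III vertex of $\partial C$; for $\epsilon$ small, $\C'\cap D_\epsilon$ is the union of $K_y\cap D_\epsilon$ over exactly these $y$ (chambers are locally finite, and every chamber through $x$ lies in the component $\C'$). Each such $K_y$ is isometric to the Euclidean Coxeter fundamental domain and subtends angle $\pi/n$ at the vertex $x$ dual to the $2n$--gon, and the chambers of two consecutive type III vertices of $\partial C$ share an edge emanating from $x$, so they assemble into a fan of consecutive wedges. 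The three cases of Lemma~\ref{lem:local convexity} then translate directly: a single type III vertex gives one wedge, i.e.\ case (1); an arc of $n-1$ edges gives $n$ consecutive wedges of total angle $n\cdot(\pi/n)=\pi$, hence a genuine half disc because $x$ is a flat vertex (its link has total angle $2\pi$, since $C$ is interior and far from the base point), i.e.\ case (2); all of $\partial C$ being type III gives $2n$ wedges of total angle $2\pi$, i.e.\ case (3). This angle bookkeeping at the fake vertex, which converts the counts $1$, $n$, $2n$ into ``one chamber's wedge / half disc / whole disc'', is the step your write-up is missing, and it cannot be recovered from a case analysis carried out at real vertices of $Q_R$.
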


In general, we do not have control of the local structure at each point of $\C'$. This motivates us to pass to a subset of $\C'$ as follows.

\begin{definition}
	\label{def:half Coxeter region}
Let $x\in Q_R$ be a vertex of type III. Suppose there exists an edge $e\subset Q_R$ containing $x$ such that
\begin{enumerate}
	\item the geodesic line $\ell\subset Q'$ that is orthogonal to $e$ (we can view $e$ as an edge in $Q'$) and passes through the midpoint of $e$ satisfies that $d(x_0,\ell)>R$, where $x_0$ is the base point in $Q'$ as in Section~\ref{subsec:new cell structure} and $d$ denotes the CAT(0) distance;
	\item $x$ and $x_0$ are on different sides of $\ell$;
	\item let $H'$ be the half-space of $Q'$ that is bounded by $\ell$ and contains $x$, then $H'\subset Q_R$ and any $2$--cell of $Q_R$ that intersects $H'$ is interior.
\end{enumerate} 
Let $\C'\subset Q'$ be the Coxeter region containing $x$. The \emph{$(x,\ell)$--half Coxeter region} of $Q'$ is the connected component of $\C'\cap H'$ that contains $x$.
\end{definition}

\begin{remark}
There is a constant $L$ such that for any vertex $x\in Q_R$ of type III satisfying $d(x,x_0)>LR$ (we can also view $x$ as a vertex in $Q'$), there exists an edge of $Q_R$ containing $x$ that satisfies the conditions in Definition~\ref{def:half Coxeter region}.
\end{remark}

By Definition~\ref{def:half Coxeter region} (1) and (2), $H'$ is isometric to a half-plane. Let $\h'$ be the $(x,\ell)$--half Coxeter region. Then $\h'$ is locally convex in $H'$ by Lemma~\ref{lem:local convexity} and Definition~\ref{def:half Coxeter region} (3). Since $\h'$ is connected, $\h'$ is actually convex in $H'$. 

\begin{lemma}
	\label{lem:union of chambers}
$\h'$ is a union of chambers with the same support. 
\end{lemma}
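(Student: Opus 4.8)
The plan is to show that the half Coxeter region $\h'$ is a union of chambers and then that all these chambers share a common support. First I would argue that $\h'$ is a union of chambers. Recall $\h'$ is the connected component of $\C' \cap H'$ containing $x$, and $\C'$ is by definition a union of chambers $K_y$ with $y$ ranging over type III vertices of $Q_R$; moreover any two chambers meet in $\emptyset$, a point, or a common edge. By Definition~\ref{def:half Coxeter region} (3), $H'$ lies in $Q_R$ and only meets interior $2$--cells of $Q_R$, so every fake vertex $z \in \C' \cap H'$ with its dual $2$--cell interior is covered by Corollary~\ref{cor:local classification}: locally $\C'$ near $z$ is either a single chamber, a half-disc, or a full disc. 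In the second and third cases $z$ is an interior point of a union of chambers; in the first case $z$ lies on an edge or at a vertex of a chamber. In all cases a neighbourhood of $z$ in $\C' \cap H'$ is a union of (pieces of) chambers. Running this over all points of the connected set $\h'$ and using that chambers intersect only in faces, I conclude $\h'$ is a full subcomplex in the chamber-structure, i.e. a union of chambers.

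Next I would establish that all these chambers have the same support. Suppose $K_{y_1}$ and $K_{y_2}$ are two chambers in $\h'$. Since $\h'$ is connected and is a union of chambers, and since adjacent chambers (sharing an edge) belong to the same connected component, I can find a chain $K_{y_1} = K_{z_0}, K_{z_1}, \dots, K_{z_k} = K_{y_2}$ of chambers in $\h'$ with $K_{z_{j}} \cap K_{z_{j+1}}$ an edge $e_j$ for each $j$. It therefore suffices to prove that two chambers sharing an edge have the same support. This is where Lemma~\ref{lem:local convexity} enters. Let $e_j = K_{z_j} \cap K_{z_{j+1}}$, and pick an interior $2$--cell $C$ of $Q_R$ which contains $e_j$ in its boundary — such a $C$ exists because $e_j$ is an edge of a chamber, hence a portion of $\partial C_w$ for some fake vertex $w$, and by Definition~\ref{def:half Coxeter region} (3) the $2$--cells of $Q_R$ meeting $H'$ are interior. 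Both $z_j$ and $z_{j+1}$ are type III vertices of $Q_R$ lying on $\partial C$, hence the set $V$ of type III vertices of $\partial C$ contains both of them and is not a singleton. By Lemma~\ref{lem:local convexity}, all vertices of $V$ have the same support; in particular $z_j$ and $z_{j+1}$ have the same support, and hence so do the chambers $K_{z_j}$ and $K_{z_{j+1}}$ (the support of a chamber is inherited from its defining type III vertex).

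Chaining these equalities along the path gives that $K_{y_1}$ and $K_{y_2}$ have the same support, completing the proof. The one point requiring care — and the step I expect to be the main obstacle — is the first half: verifying that $\h'$ is genuinely a union of closed chambers rather than, say, a union of chambers with some lower-dimensional pieces of chambers attached along the boundary $\ell$. Here I would use that $\h'$ is \emph{convex} in $H'$ (established in the paragraph preceding the lemma statement, from local convexity via Lemma~\ref{lem:local convexity} and Definition~\ref{def:half Coxeter region} (3), together with connectedness), so its boundary in $H'$ is a concatenation of geodesic segments; combined with Corollary~\ref{cor:local classification}, which says that at each fake vertex with interior dual cell the local picture of $\C'$ is one of three clean types, there is no room for a dangling partial chamber. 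Once that is settled, the support-propagation argument via Lemma~\ref{lem:local convexity} is routine.
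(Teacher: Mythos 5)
Your second half (same support, propagated along chains of edge-adjacent chambers via Lemma~\ref{lem:local convexity}) matches the paper's intent and is essentially fine, apart from the slip that a shared chamber edge is not ``a portion of $\partial C_w$'': it is the segment joining the centers of two $2$--cells of $Q_R$ that both contain the two type III vertices in question, and one applies Lemma~\ref{lem:local convexity} to either of those $2$--cells (or simply Lemma~\ref{lem:same tag}, since those two type III vertices are adjacent). The genuine gap is in the first half, and it is precisely the point the paper's proof is devoted to: you must rule out that the geodesic line $\ell$ passes through the \emph{interior} of some chamber, for otherwise $\h'$ would contain a two-dimensional partial chamber along its fake border and would fail to be a union of chambers. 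Convexity of $\h'$ together with Corollary~\ref{cor:local classification} does not give this: a convex subset of $H'$ bounded partly by a straight segment of $\ell$ can perfectly well contain the $H'$--half of a chamber sliced by $\ell$, and Corollary~\ref{cor:local classification} only describes $\C'$ near fake vertices --- it says nothing about how $\ell$ sits relative to the chambers. Nor can you argue that ``walls are straight, so $\ell$ must run along a wall'': the identification of $\h'$ with a subcomplex of the model Coxeter tiling $\D'$ is constructed only after, and using, this very lemma, so only local information is available here.

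What is needed, and what the paper actually does, is a propagation along the fake border. The specific choice of $\ell$ (orthogonal to the edge $e$ of $Q_R$ through its midpoint) gives the base case: $K_x\cap\ell$ is an edge of $K_x$. One then walks along $\ell\cap\h'$ from one fake vertex $v$ to the next, and at each such $v$ uses Definition~\ref{def:half Coxeter region}~(3) and Lemma~\ref{lem:local convexity} to check that every chamber of $\C'$ containing $v$ meets $\ell$ either in an edge of that chamber or in the single point $v$ (and that a chamber with an edge on $\ell$ may be taken on the $H'$--side, since otherwise one is in case (3) of Lemma~\ref{lem:local convexity}). This shows a neighborhood of $\ell\cap\h'$ in $\h'$ is covered by whole chambers, each meeting $\ell$ in an edge or a point; since away from $\ell$ the boundary of $\h'$ consists of faces of chambers ($\C'$ itself being a union of chambers and the real borders lying in $\partial\C'$), it follows that $\h'$ is a union of chambers. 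Your proposal correctly flags this as ``the main obstacle'' but then dismisses it with convexity plus the local classification, so the key step of the proof is missing.
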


\begin{proof}
Let $x\in Q_R$ be as in Definition~\ref{def:half Coxeter region}. Let $K_x$ be the $x$--chamber. Then $K_x\subset \h'$, and $K_x\cap \ell$ is an edge of $K_x$. Let $v_1$ be an endpoint of $K_x\cap \ell$ and let $\ell^-$ be the half-line of $\ell$ with endpoint $v_1$ such that $K_x\cap\h'\nsubseteq \ell^-$. Note that $v_1$ is a fake vertex in $Q'$. Let $\{K_i\}_{i=1}^k$ be the chambers of $\C'$ that contain $v_1$. By Definition~\ref{def:half Coxeter region} (3) and Lemma~\ref{lem:local convexity}, either $K_i\cap \ell$ is an edge of $K_i$, or $K_i\cap \ell=\{v_1\}$. If $(\cup_{i=1}^k K_i)\cap \ell=K_x\cap \ell$, then $\ell^-\cap\h'=\{v_1\}$ by the convexity of $\h'$. If $(\cup_{i=1}^k K_i)\cap \ell\neq K_x\cap \ell$, then there exists $K_i$ such that $K_i\cap\ell$ is an edge of $K_i$ and $K_i\cap\ell\neq K_x\cap \ell$. We can assume $K_i\subset H'$ (if $K_i$ and $K_x$ are on different sides of $\ell$, then we must be in case (3) of Lemma~\ref{lem:local convexity}). Thus $K_i\subset\h'$. Let $v_2$ be the endpoint of $K_i\cap \h'$ that is different from $v_1$. We then repeat previous discussion with $v_1$ replaced by $v_2$. Keep running this process to walk along $\ell\cap\h'$ until one has to stop at an endpoint of $\ell\cap\h'$. Then it follows that a small neighborhood of $\ell\cap\h'$ in $\h'$ is contained in a union of chambers inside $\h'$ such that each chamber either intersects $\ell$ in an edge of this chamber, or intersects $\ell$ in a point. Thus $\h'$ is a union of chambers. Lemma~\ref{lem:local convexity} and the connectedness of $\h'$ imply that all chambers in $\h'$ have the same support.
\end{proof}

By Lemma~\ref{lem:union of chambers}, $\h'$ has a cell structure such that its vertices are fake vertices of $Q'$ contained in $\h'$ and its $2$--cells are chambers with the same support. We define the \emph{$(x,\ell)$--half Coxeter region $\h\subset Q_R$ corresponding to $\h'$} to be the union of $2$--cells of $Q_R$ which are associated with fake vertices in $\h'$. Each edge $e\subset \h'$ has a dual edge in $Q_R$ which intersects $e$ in exactly one point. Thus edges of $\h'$ inherit labels from edges of $Q_R$. 

Let $\D$ be the Davis complex of the Coxeter group $W$ whose defining graph is the support of a chamber in $\h'$. The collections of walls of $\D$ cut $\D$ into another complex $\D'$, which is the dual of $\D$. Each edge of $\D'$ inherits a label from its dual edge in $\D$ (edges of $\D$ are labeled by generators of $W$).

By Lemma~\ref{lem:local convexity}, there exists a label-preserving cellular isometry from the closed star of each vertex of $\h'$ to $\D'$. Since $\h'$ is simply connected (as $\h'$ is convex in $H'$), by Corollary~\ref{cor:local classification} and a developing argument, we can produce a label-preserving cellular local isometry $i'\colon \h'\to \D'$. Note that $i$ is actually an isometric embedding, since both $\h'$ and $\D'$ are CAT(0). By Lemma~\ref{lem:star homeo} (2), there is an induced cellular embedding $i\colon \h\to\D$ such that $i(\h)$ is the union of $2$--cells dual to vertices in $i'(\h')$. We view $\h'$ (resp.\ $\h$) as subcomplexes of $\D'$ (resp.\ $\D$) via $i'$ (resp.\ $i$). A \emph{wall} in $\h'$ (resp.\ $\h$) is the intersection a wall in $\D'$ (resp.\ $\D$) with $\h'$ (resp.\ $\h$). The definition of wall does not depend the choice of the label-preserving map $i'$. We say $\h$ (or $\h'$) is \emph{irreducible} (resp.\ \emph{reducible}) if the associated Coxeter group $W$ has defining graph being a triangle (resp.\ square). 

\section{Orientation of edges of Coxeter regions} 
\label{sec:orientation of edges of Coxeter regions}
Let $x,\ell$ and $H'$ be as in Definition~\ref{def:half Coxeter region} and let $\h$, $\h'$, $\D$ and $\D'$ be as in the previous section.
 
A \emph{border} of $\h'$ is a maximal connected subset of $\partial\h'$ which is convex in $\h'$. A border of $\h'$ is \emph{fake} if this border is contained in $\ell$. Other borders are \emph{real}. If we think of $\h'$ as of a convex subcomplex of $\D'$, then each border $\B'\subset\h'$ is contained in a wall $\W$ of $\D'$. Let $\W^+$ (resp.\ $\W^-$) be the half-space of $\D'$ bounded by $\W$ that contains (resp.\ does not contain) $\h'$. Let $N_{\B'}$ be the carrier of $\B'$ in $\D$ (see the beginning of Section~\ref{sec:quasiflats in MS complex} for definition of carrier). Then $\B'$ corresponds to an \emph{outer border} $\B^o$ and an \emph{inner border} $\B^i$ of $\h$, which are defined to be the maximal subcomplexes of $N_{\B'}$ contained in $\W^-$ and $\W^+$, respectively. $\B^o$ or $\B^i$ is \emph{real} or \emph{fake} if $\B'$ is real or fake. Recall that each edge of $Q_R$ inherits an orientation via $q\colon Q_R\to\Xa_\Gamma$, so does each edge in $\h$. Now we study the orientation of edges along a border of $\h$.

The borders $\B^o$ and $\B^i$ are made of \emph{pieces}, which are maximal subsegments of the border that are contained in a $2$--cell of $N_{\B'}$. Each vertex in a border of $\h$ either belongs to one piece, or belongs to the intersection of two pieces.

\subsection{Orientation along the borders}
\begin{lemma}
	\label{lem:piece orient}
The orientation of edges on the same piece on a real outer border of $\h$ is consistent, i.e.\ there does not exist an interior vertex of a piece such that the orientation is reversed at that vertex. The same is true for real inner border.
\end{lemma}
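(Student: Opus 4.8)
The plan is to analyze a single piece of a real outer border and show that the orientation of its edges cannot reverse at an interior vertex. Let $P$ be a piece of a real outer border $\B^o$, so $P$ is a maximal subsegment of $\B^o$ contained in a single $2$--cell $C$ of the carrier $N_{\B'}$, where $C$ is dual to a vertex of $\B'$ (equivalently, $C$ corresponds to a fake vertex of $\h'$, i.e.\ to a chamber $K$ of $\h'$). Geometrically, $C$ is one of the cells $\Pi_i$ appearing in Lemma~\ref{lem:real1}, Lemma~\ref{lem:real2}, or Lemma~\ref{lem:real3} around some type III vertex, and $P$ traces a connected subpath of $\partial C$ lying in $\W^-$; by construction $P$ runs along a ``half'' of $\partial C$ in the sense of Lemma~\ref{cor:connected intersection}, i.e.\ along the upper or lower half of the corresponding precell, which carries a consistent orientation by the discussion in Section~\ref{subsec:precells}.

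First I would set up the correspondence: since $\h'$ is convex in $\D'$ and $i'\colon \h'\to\D'$ is a label-preserving cellular embedding, each edge of $P$ inherits both a label (a generator of the Coxeter group $W$, which is a vertex of the support triangle or square) and, via $q\colon Q_R\to\Xa_\Gamma$, an orientation. Fix an interior vertex $y$ of the piece $P$, i.e.\ $y$ lies strictly between the two endpoints of $P$, so both edges of $P$ at $y$ lie in the same cell $C$. Because $C$ is interior and all its vertices are flat and deep (by our choice of $R$ in Section~\ref{subsec:new cell structure}), Lemma~\ref{lem:star homeo} describes $\St(y,Q_R)$ completely, and in particular the two edges of $P$ at $y$ are two consecutive boundary edges of the precell $q(C)$ lying in a single half of $\partial q(C)$. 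By the construction of $\Xa_\Gamma$ (Section~\ref{subsec:precells}), the orientation of all edges inside one half of the boundary of a precell is consistent — they all point the same way along the half — so the orientations of the two edges of $P$ at $y$ agree. This is exactly the non-reversal statement.

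The one thing that needs care — and I expect this to be the main obstacle — is verifying that $P$ really does stay within a single half of $\partial q(C)$, rather than wrapping around a tip. This is where Lemma~\ref{lem:local convexity} enters: a piece of a real border, being the intersection of the border with one cell $C$ of the carrier, corresponds (through the wall structure on $\h'\subset\D'$) to the part of $\partial C$ cut off by the dual wall, and by the structure of chambers and their supports established in Lemmas~\ref{lem:union of chambers}--\ref{lem:local convexity} together with the analysis of type III vertices in Lemma~\ref{lem:real3}, this part is contained in a half of the precell and does not contain a tip in its interior (a tip would force a local diagonal or a type-change incompatible with the vertices being type III of fixed support). Once this containment is established, the orientation-consistency is immediate from the precell construction. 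The argument for a real \emph{inner} border $\B^i$ is identical: an inner border is again a union of pieces each lying in a single cell of $N_{\B'}$, on the $\W^+$ side this time, and the same precell-orientation argument applies verbatim, using that the pieces of $\B^i$ again trace halves of precell boundaries.
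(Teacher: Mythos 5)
Your reduction of the statement to ``the piece lies in a single half of $\partial q(C)$, i.e.\ contains no tip of $q(C)$ in its interior'' is correct---that is exactly equivalent to the lemma---but this is precisely where your argument has a genuine gap, and the justification you offer does not close it. The lemmas you invoke (Lemma~\ref{lem:local convexity}, Lemma~\ref{lem:union of chambers}, Lemma~\ref{lem:real3}) only control the Coxeter-combinatorial data: which cells meet at type III vertices and how chambers fit together. They say nothing about where the tips of $q(C)$ sit relative to the border wall, and that is exactly the orientation information at stake. The interior vertices of an outer piece are \emph{not} type III vertices (only the single cell $C$ of $\h$ contains them; the other cells of $Q_R$ around them lie outside $\h$ and are unconstrained), so ``a type-change incompatible with the vertices being type III'' does not apply, and local diagonals play no role here. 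Concretely, nothing in your argument excludes an interior vertex $y$ of the piece being a type I vertex at which $y$ is a tip of $q(C)$ (by Lemma~\ref{lem:real1}~(2) a type I vertex \emph{is} a tip of two of its four cells); at such a vertex the orientation would reverse, and Lemma~\ref{lem:star homeo} alone does not rule this out, so your sentence ``in particular the two edges of $P$ at $y$ are two consecutive boundary edges \dots lying in a single half'' is asserted rather than proved.

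The paper closes this gap by arguing from the endpoints of the piece rather than from its interior vertices: one endpoint $v$ of the piece lies in two different cells of $\h$, which $q$ sends to different blocks, so $v$ is of type II or III; type III is excluded because the half Coxeter region could then be enlarged across the real border, contradicting maximality; hence $v$ is of type II. The type II local structure (Lemma~\ref{lem:real2}~(3) and Remark~\ref{rmk:real2}), combined with the count that a piece inside a $2n$-gon cell has $n-1$ edges, then identifies the piece with the intersection of $C$ with its same-block neighbour at $v$, and such an intersection of two precells is properly contained in one half of the boundary by Lemma~\ref{cor:connected intersection}~(2), which is what yields the consistent orientation. The inner border is then deduced from the outer one via Lemma~\ref{lem:ob} (opposite edges of a cell point the same way); your claim that the same argument applies ``verbatim'' to the inner border inherits the same gap.
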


\begin{proof}
Let $\omega$ be a piece in the lemma. Then there exists one endpoint, say $v$, of $\omega$ such that $v$ belongs to two different cells $C_1,C_2\subset\h$. Since $q$ maps $C_1$ and $C_2$ to cells in different blocks of $\Xa_\Gamma$, we know $v$ is of type II or type III (see Table~\ref{t:flat} on page \pageref{t:flat}). If $v$ is of type III, then we can enlarge $\h$ (hence $\h'$) since $v$ is in a real border, which contradicts the maximality of $\h'$. Hence $v$ is of type II. Note that if a piece is contained in a cell of $\h$ with $2n$ edges, then the piece has $n-1$ edges. Now, the statement about the outer border follows from Remark~\ref{rmk:real2}. The statement about the inner border follows from Lemma~\ref{lem:ob} below.
\end{proof}

We will be repeatedly using the following simple observation, which follows from Figure~\ref{f:precell}.

\begin{lemma}
	\label{lem:ob}
If we draw a cell of $\Xa_\Gamma$ as a regular polygon in $\mathbb E^2$, then opposite sides of this polygon have orientations pointing towards the same direction.
\end{lemma}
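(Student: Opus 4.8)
The plan is to make precise the picture in Figure~\ref{f:precell}. Recall that a cell of $\Xa_\Gamma$ is (a subdivided copy of) a precell $\Pa$, whose boundary $\partial\Pa$, drawn as a regular $2n$-gon with left tip $\ell$ and right tip $r$, is the concatenation of the upper half $\ell\to u_1\to\cdots\to u_{n-1}\to r$ (reading the word $\underbrace{aba\cdots}_n$) and the lower half $\ell\to d_1\to\cdots\to d_{n-1}\to r$ (reading $\underbrace{bab\cdots}_n$). Label the $2n$ boundary edges $e_0,\dots,e_{2n-1}$ cyclically so that $e_0,\dots,e_{n-1}$ are the edges of the upper half in order from $\ell$ to $r$, and $e_n,\dots,e_{2n-1}$ are the edges of the lower half in order from $r$ back to $\ell$. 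In a regular $2n$-gon the two sides in each ``opposite'' pair are $e_j$ and $e_{j+n}$; placing the polygon with its vertices $P_0,\dots,P_{2n-1}$ on a circle centred at the origin, one has $P_{j+n}=-P_j$, whence the boundary-circuit arrow $P_{j+n+1}-P_{j+n}=-(P_{j+1}-P_j)$ along $e_{j+n}$ is the negative of the boundary-circuit arrow along $e_j$; that is, opposite sides carry antiparallel boundary-circuit arrows. Note also that exactly one member of each opposite pair lies in the upper half and the other in the lower half.

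Next I would compare the boundary circuit with the given edge orientations of $\Xa_\Gamma$. By construction the edges inside the upper half are all oriented from $\ell$ towards $r$, so for $0\le j\le n-1$ the actual orientation of $e_j$ agrees with the boundary-circuit arrow $\vec{c}_j$ of $e_j$. The edges inside the lower half are also all oriented from $\ell$ towards $r$, which is opposite to the direction $r\to\ell$ in which the circuit traverses the lower half; hence for $n\le j\le 2n-1$ the actual orientation of $e_j$ is $-\vec{c}_j$. (Equivalently, reading the boundary word cyclically around the $2n$-gon one obtains $\underbrace{aba\cdots}_{n}\,\big(\underbrace{bab\cdots}_{n}\big)^{-1}$, so the first $n$ positions are positive letters and the last $n$ positions are negative letters.)

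Finally I would combine the two observations. Fix an opposite pair $e_j,e_{j+n}$ with $0\le j\le n-1$, so that $e_j$ is in the upper half and $e_{j+n}$ in the lower half. The actual orientation of $e_j$ is $\vec{c}_j$, while the actual orientation of $e_{j+n}$ is $-\vec{c}_{j+n}=-(-\vec{c}_j)=\vec{c}_j$ by the antiparallelism of opposite boundary arrows. Thus the two opposite sides of the polygon point in the same direction, which is the assertion of the lemma. There is no genuine obstacle here; the only point requiring a little care is the bookkeeping identifying ``opposite side of the $2n$-gon'' with ``one edge in the upper half, one in the lower half,'' which is immediate from $2n$-periodicity, and then tracking the single sign flip contributed by the lower half versus the single sign flip contributed by the polygon's central symmetry.
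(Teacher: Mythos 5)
Your proof is correct and takes essentially the same route as the paper, which simply records the lemma as an observation "following from Figure~\ref{f:precell}": both halves of a precell are oriented from the left tip to the right tip, and the central symmetry of the regular $2n$-gon then forces opposite sides to point the same way. Your edge-labeling bookkeeping is just the explicit version of that observation.
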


\begin{lemma}
	\label{lem:real ray border}
Suppose $\h'$ is irreducible. Suppose one real border $\B'$ of $\h'$ is unbounded. Let $\B^o\subset\h$ be the associated outer border. Then there exists a sub-ray $\B^o_1\subset\B^o$ such that edges in $\B^o_1$ have consistent orientation.
\end{lemma}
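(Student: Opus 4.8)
The plan is to analyze how the orientation of edges along the outer border of an irreducible half-Coxeter region behaves near the "corners" between consecutive pieces, and show that after passing far enough out along the unbounded border, the orientation must stabilize. The main geometric input is that $\h'$ is irreducible, so the underlying Coxeter group $W$ has defining graph a triangle $\Delta(n_1,n_2,n_3)$ with $\tfrac1{n_1}+\tfrac1{n_2}+\tfrac1{n_3}=1$, and hence $\D'$ has a very rigid combinatorial structure: walls come in three parallelism classes, and the carrier $N_{\B'}$ of any wall $\B'$ is a strip of cells whose $q$-images lie in blocks whose defining edges share a common vertex.

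First I would set up the local picture at a vertex $v$ where two pieces $\omega_1,\omega_2$ of $\B^o$ meet. By \lemfivenine\ (the piece-orientation lemma), the orientation along each $\omega_i$ individually is consistent, so the only possible orientation reversals on $\B^o$ occur at these corner vertices. Since $\B'$ is real, $v$ is not of type III (otherwise we could enlarge $\h'$, contradicting maximality as in the proof of \lemfivenine), so $v$ is of type II. I would then invoke the classification of type II vertices (Lemma~\ref{lem:real2} together with Remark~\ref{rmk:real2}): around a type II vertex on the outer border, the two pieces lie in two blocks $B_1,B_2$ whose defining edges meet in a vertex $a$, and by Remark~\ref{rmk:real2} the "non-vertical" edges within each block have a consistent orientation, while the pieces of $\B^o$ are exactly made of such non-vertical edges. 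The key point to extract is that whether the orientation is preserved or reversed at $v$ depends only on the combinatorial type of the corner — i.e.\ on which pair of parallelism classes of walls of $\D'$ the two adjacent pieces correspond to, and on a finite amount of orientation data — and in particular there are only finitely many corner types.

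Next I would use the structure of $\D'$ as the dual of the Davis complex of a Euclidean triangle Coxeter group to understand the sequence of corner types encountered as one travels along the unbounded border $\B'$ toward infinity. A border of $\h'$ lies in a wall $\W$ of $\D'$; walking along $\W$, the pieces of $\B^o$ correspond to the cells of $\D$ met by $\W$, which form a bi-infinite periodic sequence (the carrier of a wall in the Davis complex of a 2-dimensional Euclidean Coxeter group is periodic under the stabilizer of that wall, an infinite cyclic group generated by a glide reflection or translation). Hence the sequence of corner types along $\B^o$ is eventually periodic. Now I would argue that along a periodic pattern of corners the orientation cannot keep flipping in a way that is compatible with the global coherence forced by Lemma~\ref{lem:ob} ("opposite sides of a regular polygon point the same way") together with Remark~\ref{rmk:real2}: tracking the orientation data around one period either returns it to itself (so the orientation is eventually periodic with the same period, and in particular consistent on a sub-ray after passing to the sub-ray where we only look at, say, every edge in a fixed position of the period) or it forces a contradiction with the planarity/embeddedness of $\h$. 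Concretely, I expect that once the corner sequence is purely periodic, the orientation along $\B^o$ is genuinely periodic, and then choosing $\B^o_1$ to be a sub-ray consisting of edges that are all in "the same slot" of the period, or more simply noting periodicity forces the pattern of reversals to be periodic and hence — since a single reversal in a period would be incompatible with $q$ being orientation-respecting and the cells forming an embedded planar strip — there are in fact \emph{no} reversals in the periodic part. Taking $\B^o_1$ to be the terminal ray of $\B^o$ past the point where periodicity kicks in then finishes the proof.

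The hard part will be the last step: ruling out a genuine periodic oscillation of the orientation along $\B^o$. This requires carefully combining (i) Remark~\ref{rmk:real2}, which pins down orientations within a single type II star, (ii) Lemma~\ref{lem:ob}, which propagates orientation across a single cell, and (iii) the explicit combinatorics of how consecutive walls of $\D'$ cross cells of $\D$ in a Euclidean triangle group — i.e.\ understanding, for each of the three wall-classes, the periodic word in the generators labeling the edges dual to a wall, and checking that the induced orientations assemble coherently. I would organize this as a finite case check over the (finitely many) triangle types $(n_1,n_2,n_3)\in\{(2,3,6),(2,4,4),(3,3,3)\}$ and the finitely many corner types in each, showing in every case that the periodic orientation pattern along a wall is in fact constant, so that a terminal sub-ray $\B^o_1$ of the unbounded outer border has all edges oriented consistently.
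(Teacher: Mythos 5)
Your reduction of the problem to corner vertices is fine: by Lemma~\ref{lem:piece orient} reversals can only occur where two pieces meet, and such a vertex is of type II (type III would let one enlarge $\h'$). But the next two steps contain genuine gaps. First, the eventual-periodicity claim does not hold in the form you need it. What is periodic (under the wall stabilizer) is the underlying cell pattern of the carrier of the wall in $\D$; the orientations of the edges of $\h$ are pulled back from $\Xa_\Gamma$ via $q$, which restricted to the half Coxeter region carries no periodicity whatsoever. A sequence of ``corner types decorated with orientation data'' over a finite alphabet need not be eventually periodic, so your reduction to ruling out a \emph{periodic} oscillation of reversals simply does not cover the general situation where reversals occur at irregular positions along the unbounded border.

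Second, and more seriously, the core contradiction is never actually produced. Your plan stays inside the carrier of the border wall and local type II star data (Remark~\ref{rmk:real2} plus Lemma~\ref{lem:ob}), and asserts that a reversal ``would be incompatible with $q$ being orientation-respecting and the cells forming an embedded planar strip''; but $q$ respects orientations by construction, so there is nothing local to contradict. The paper's argument is essentially different: it picks a wall $\W$ of $\h'$ parallel to $\B'$ and closest to it (this uses that $\h'$ contains a chamber, i.e.\ the two-dimensional interior of the region, which your proposal never invokes), takes two orientation-reversing vertices $u,v$ on the inner border with no reversal between them, and propagates the orientation from $u$ and $v$ across the strip between $\B'$ and $\W$ using Lemma~\ref{lem:ob} and the boundary pattern of cells; the two propagations force opposite orientations on a single edge of the carrier of $\W$ (case analysis over $(2,3,6)$, $(2,4,4)$, $(3,3,3)$). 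This shows no two consecutive reversal points exist at all (cf.\ Lemma~\ref{lem:at most one reserving point}), which is what yields the consistent sub-ray; nothing in your finite case check, confined to the border strip, would generate that forced clash. (Also, minor: walls of the Euclidean triangle groups fall into more than three parallelism classes in the $(2,4,4)$ and $(2,3,6)$ cases, and the piece-orientation statement you invoke is Lemma~\ref{lem:piece orient}.)
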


\begin{proof}
We shall show the inner border $\B^i$ has only finitely many vertices where the orientation is reversed. Then the same holds for $\B^o$ by Lemma~\ref{lem:ob} and the corollary follows. Since $\h'$ is a convex subcomplex of $\D'$ which contains at least one chamber, there exists a wall $\W\subset\h'$ such that $\B'\nsubseteq \W$  and $\W$ is parallel to $\B'$. Note that if $\B'$ is a ray (resp.\ line), then $\W$ is a ray (resp.\ line). Suppose $\W$ is closest to $\B'$ among all such walls. Since $\B'$ is a real border, any orientation reserving vertex in the inner border $\B^i$ is the intersection of two pieces by Lemma~\ref{lem:piece orient}. 

We now conduct a case study and first assume the associated Coxeter group is $(2,3,6)$, i.e.\ the labels of edges in the defining graph of the Coxeter graph (which is a triangle) are $2,3,6$. Depending on the structure of the carrier of $\B$, we consider two sub-cases as follows. 

\begin{figure}[h!]
	\centering
	\includegraphics[width=0.9\textwidth]{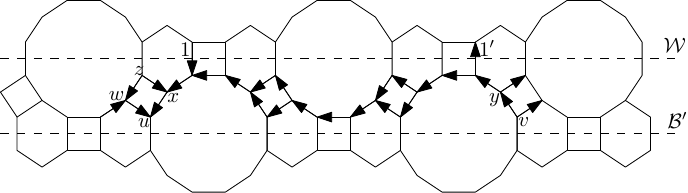}
	\caption{}
	\label{f:orient1}
\end{figure}

The first sub-case is indicated in Figure~\ref{f:orient1}. Choose orientation reversing vertices $u,v\in \B^i$ such that there are no orientation reversing vertices in $\B^i$ between $u$ and $v$. We use Lemma~\ref{lem:ob} to find orientation of edges in the boundary of the carrier $N_\W$ of $\W$. Note that $u$ may give rise to more than one orientation reversing points (e.g. $w,z$ and $x$ in Figure~\ref{f:orient1}), however, we take the \textquotedblleft rightmost\textquotedblright\ one (e.g.\ $x$ in Figure~\ref{f:orient1}). Similarly, we pick the \textquotedblleft leftmost\textquotedblright\ orientation reserving vertex $y$ determined by $v$. Note that
\begin{enumerate}
	\item $x$ and $y$ are in the same side of $\W$;
	\item both $x$ and $y$ are contained in only one $2$--cell of $N_\W$;
	\item there are no orientation reversing interior vertices in $\omega$, where $\omega$ is the shortest edge path in $\partial \W_\h$ from $x$ to $y$;
\end{enumerate}
The vertex $x$ completely determines the orientation of edges in the unique $2$--cell of $N_\W$ that contains $x$, thus edge $1$ in Figure~\ref{f:orient1} is oriented downwards. Similarly, edge $1'$ is oriented upwards. This contradicts Lemma~\ref{lem:ob}.

We now consider the second $(2,3,6)$ sub-case. Let $u$ and $v$ be as before. Again we use Lemma~\ref{lem:ob} to find orientation of other edges. When there are at least two $12$--gons between $u$ and $v$, see Figure~\ref{f:orient2}, Figure~\ref{f:orient4} and Figure~\ref{f:orient3}. When there is only one $12$--gon between $u$ and $v$, see Figure~\ref{f:orient5}. When there are no $12$--gons between $u$ and $v$, see Figure~\ref{f:orient3} left ($u'$ and $v'$ there play the roles of $u$ and $v$ respectively). 

\begin{figure}[h!]
	\centering
	\includegraphics[width=1\textwidth]{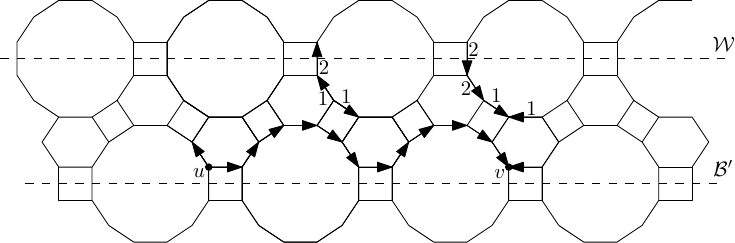}
	\caption{The orientations of edges labelled $1$ follow from Lemma~\ref{lem:ob}. The orientations
		of edges labelled $2$ follow from the form of orientation of edges in the boundary of a cell (cf. Figure~\ref{f:precell}). The two vertical edges labelled by $2$
		have opposite orientations, contradicting Lemma~\ref{lem:ob}. 
	}
	\label{f:orient2}
\end{figure}

\begin{figure}[h!]
	\centering
	\includegraphics[width=1\textwidth]{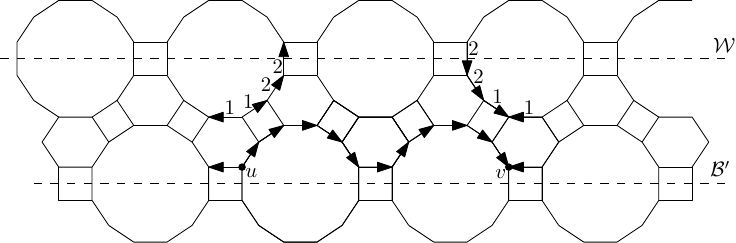}
	\caption{The orientations of edges labelled $1$ follow from Lemma~\ref{lem:ob}. The orientations
		of edges labelled $2$ follow from the form of orientation of edges in the boundary of a cell (cf.\ Figure~\ref{f:precell}). The two vertical edges labelled by $2$
		have opposite orientations, contradicting Lemma~\ref{lem:ob}. 
		}
	\label{f:orient4}
\end{figure}

\begin{figure}[h!]
	\centering
	\includegraphics[width=1\textwidth]{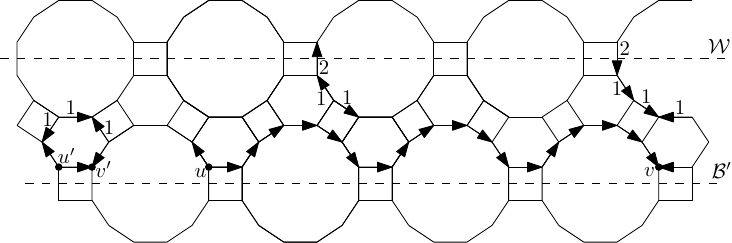}
	\caption{The orientations of edges labelled $1$ follow from Lemma~\ref{lem:ob}. The orientations
		of edges labelled $2$ follow from the form of orientation of edges in the boundary of a cell (cf.\ Figure~\ref{f:precell}). The two vertical edges labelled by $2$ 
		have opposite orientations, contradicting Lemma~\ref{lem:ob}. The orientation of the hexagon on the left contradicts the form of cells (cf.\ Figure~\ref{f:precell}).}
	\label{f:orient3}
\end{figure}

\begin{figure}[h!]
	\centering
	\includegraphics[width=1\textwidth]{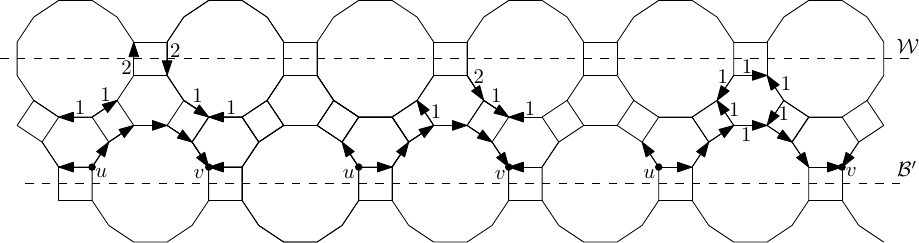}
	\caption{The following lead to contradictions: two vertical edges labelled $2$ on the left; two edges of the hexagon labelled $1$ and $2$ in the middle; orientation of the hexagon by edges labelled $1$ on the right. 
	}
	\label{f:orient5}
\end{figure}

The $(3,3,3)$ case and the $(2,4,4)$ case are similar and simpler. We omit the proof, but include Figure~\ref{f:orient10} for the convenience of the reader. Note that the second $(2,4,4)$ sub-case and the $(3,3,3)$ case are immediate since there is no gap between the carrier of $\mathcal{W}$ and the carrier of $\mathcal{B}'$.
\end{proof}
\begin{figure}[h!]
	\centering
	\includegraphics[width=1\textwidth]{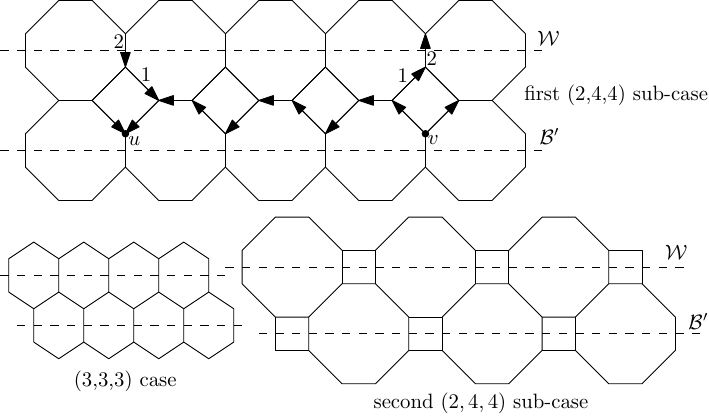}
	\caption{}
	\label{f:orient10}
\end{figure}




We record the following result which follows from the proof of Lemma~\ref{lem:real ray border}.
\begin{lemma}
	\label{lem:at most one reserving point}
Suppose $\h'$ is irreducible. Let $\B'$ be a real border of $\h'$ which is homeomorphic to $\mathbb R$. Let $\B^o\subset \h$ be the associated outer border. Then there is at most one orientation reversing vertex on $\B^o$.
\end{lemma}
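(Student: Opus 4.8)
The plan is to extract the quantitative content already contained in the proof of Lemma~\ref{lem:real ray border}, applied now to a border $\B'$ homeomorphic to $\mathbb{R}$ rather than merely to a ray. First I would set up the same framework: view $\h'$ as a convex subcomplex of $\D'$, so $\B'$ lies on a wall $\W'$ of $\D'$ and, since $\h'$ contains at least one chamber and is convex, there is a wall $\W\subset \h'$ parallel to $\B'$ with $\B'\nsubseteq\W$; pick such a $\W$ closest to $\B'$. Because $\B'$ is a real border, Lemma~\ref{lem:piece orient} shows that any orientation-reversing vertex on the outer border $\B^o$ must occur at the meeting point of two pieces, and Lemma~\ref{lem:ob} transports orientation information between $\B^o$, $\B^i$, and the boundary of the carrier $N_\W$ of $\W$.

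The heart of the argument is the following dichotomy, which is exactly what the case analysis in the proof of Lemma~\ref{lem:real ray border} establishes: given \emph{two} orientation-reversing vertices $u, v$ on $\B^o$ (or equivalently on $\B^i$) with no orientation-reversing vertex strictly between them, one propagates orientations via Lemma~\ref{lem:ob} and the shape of cells (Figure~\ref{f:precell}) across the gap between the carrier of $\B'$ and the carrier of $\W$, and in every possible configuration --- the $(2,3,6)$, $(3,3,3)$, and $(2,4,4)$ cases, and within each the sub-cases according to how many top-dimensional polygons lie between $u$ and $v$ --- one reaches a contradiction (two parallel edges of a single cell oriented oppositely, or an orientation of a polygon incompatible with the form of a cell). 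Since $\B'\cong\mathbb{R}$, there is no ``boundary effect'' at the ends that could absorb a reversing vertex: the wall $\W$ is then a bi-infinite line and the same propagation is available on both sides of any reversing vertex. Hence I would argue as follows: suppose for contradiction that $\B^o$ carries two distinct orientation-reversing vertices. Among all pairs of reversing vertices choose one pair $u,v$ that is \emph{consecutive} along $\B^o$, i.e.\ with no reversing vertex strictly between them on the segment $[u,v]\subset\B^o$. (Such a consecutive pair exists: order the reversing vertices along $\B^o$ and take two that are adjacent in this order.) Now the configuration $(u,v)$ is precisely one of those treated in the proof of Lemma~\ref{lem:real ray border}, and that proof derives a contradiction from it. Therefore $\B^o$ has at most one orientation-reversing vertex.

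The only point requiring a little care --- and the step I expect to be the mild obstacle --- is to make sure that the contradiction derived in Lemma~\ref{lem:real ray border} genuinely uses only the existence of a consecutive pair $u,v$ of reversing vertices together with the geometry between $\W$ and $\B'$, and not, implicitly, any assumption that $\B'$ is merely a ray or that $u,v$ lie far out along it. Inspecting the figures (Figures~\ref{f:orient1}--\ref{f:orient10}), one sees that the contradiction is entirely local to the strip between $N_\W$ and $N_{\B'}$ near the segment $[u,v]$: one only needs the two pieces meeting at $u$, the two meeting at $v$, the (orientation-consistent, by Lemma~\ref{lem:piece orient}) pieces between them, and the finitely many cells of $N_\W$ in that range. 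None of this uses that $\B'$ is one-ended; it uses only that $\B'$ is a real border of an irreducible $\h'$ and that $u,v$ are consecutive reversing vertices. Hence the argument of Lemma~\ref{lem:real ray border} applies verbatim, and the conclusion follows. I would phrase the write-up as: ``This is immediate from the proof of Lemma~\ref{lem:real ray border}: were there two orientation-reversing vertices on $\B^o$, pick a consecutive pair $u,v$; the case analysis carried out there (Figures~\ref{f:orient1}--\ref{f:orient10}) applied to $u,v$ yields a contradiction. Hence $\B^o$ has at most one orientation-reversing vertex.''
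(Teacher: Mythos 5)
Your proposal is correct and matches the paper exactly: the paper records this lemma as an immediate consequence of the proof of Lemma~\ref{lem:real ray border}, whose case analysis derives a contradiction from any consecutive pair of orientation-reversing vertices on (the inner, hence by Lemma~\ref{lem:ob} the outer) border, and that argument is local to the strip between the carrier of $\B'$ and the nearest parallel wall $\W$, so it applies verbatim when $\B'$ is a line. Your extra check that no unboundedness or ray-specific feature is used is precisely the right point and is consistent with the paper's remark that $\W$ is a ray or line according to whether $\B'$ is.
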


\begin{prop}
	\label{prop:half coxeter region contains Coxeter ray}
Let $\h\subset Q_R$ be the $(x,\ell)$--half Coxeter region. Suppose $\h$ is irreducible and unbounded. Then $\h$ contains a Coxeter ray.
\end{prop}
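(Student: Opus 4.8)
The plan is to locate inside $\h$ a half-carrier of a wall of $\D_\Delta$ on which the pulled-back edge labels agree with the wall labels and on which the pulled-back edge orientations are eventually consistent; by definition such a half-carrier is a Coxeter ray, so this suffices.

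First I would determine the coarse shape of $\h'$. Since $\h$ is unbounded, so is $\h'$, which is a convex subcomplex of the Euclidean plane $\D'$ contained in the half-plane $H'$. Its boundary $\partial_{H'}\h'$ is connected (as $\h'$ is $2$-dimensional, containing the chamber $K_x$) and is a convex polygonal curve whose edges are chamber edges of $\D'$; these have directions in a finite set and lengths bounded below, and a convex curve turns by a total of at most $2\pi$, so with break-point turning angles bounded below there are only finitely many maximal straight pieces — that is, finitely many borders. Hence some border is unbounded, and we split into two cases: either a \emph{real} border is unbounded, or all real borders are bounded and the \emph{fake} border (the part of $\partial_{H'}\h'$ lying on $\ell$) is unbounded, which forces $\h'$ to contain a sub-half-plane of $H'$ beyond a compact set.

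In the first case, an unbounded real border $\B'$ lies in a wall $\W$ of $\D'$, and the outer and inner borders $\B^o,\B^i\subset\h$ associated with $\B'$ together form the part of the carrier $N_\W$ of the dual wall of $\D$ lying over $\B'$; since $\B'$ is unbounded this contains a subcomplex $V\cong[0,\infty)\times[0,1]$ inside $\h$, a half-carrier of a wall of $\D_\Delta$. The map $q|_V$ preserves edge labels because the identification $i\colon\h\to\D$ is label preserving and the resulting labels on $\h$ coincide with those pulled back by $q$. Finally, Lemma~\ref{lem:at most one reserving point} (when $\B'$ is a line) together with the argument proving Lemma~\ref{lem:real ray border} (which in fact bounds the number of orientation-reversing vertices on \emph{both} the outer and the inner border over an unbounded $\B'$) shows that the two boundary rays of $V$ carry finitely many orientation-reversing vertices; discarding an initial half-strip $[0,t_0]\times[0,1]$ removes them all, and what remains is a Coxeter ray.

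The second case is where I expect the real difficulty to lie. Here $\h$ contains a sub-half-plane's worth of chambers; since $K_x\cap\ell$ is a chamber edge and chamber edges lie on walls, $\ell$ is parallel to one of the finitely many wall directions of $\D'$, so that sub-half-plane contains a bi-infinite wall $\W$ of $\D'$ and $\h$ contains the full carrier $N_\W$ of the dual wall of $\D$. As before $q|_{N_\W}$ is label preserving, and a sub-half-strip of $N_\W$ will be a Coxeter ray as soon as one controls orientations along the two boundary lines of $N_\W$. The hard part is precisely this: these lines are now in the ``interior'' of $\h$ rather than on a genuine border, so Lemmas~\ref{lem:real ray border}--\ref{lem:at most one reserving point} do not apply directly; instead one must re-run the orientation analysis of Section~\ref{sec:orientation of edges of Coxeter regions}, using Lemma~\ref{lem:ob} and the classification of interior vertices of $Q_R$ lying in $\h$ (obtained from Lemma~\ref{lem:2pi cycle} through Lemma~\ref{lem:star homeo}), to bound the number of orientation-reversing vertices on each boundary line. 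A secondary point to check in both cases is that the half-of-a-carrier produced above really is a Coxeter ray in the precise cell-structured sense — a full-width half-strip over a wall of $\D_\Delta$ — which follows from Lemma~\ref{lem:star homeo}(2) and the definition of $\h$ as a union of $2$-cells of $Q_R$.
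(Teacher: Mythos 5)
Your reduction to two cases (an unbounded real border versus $\h'$ filling out the half-plane $H'$) is exactly the paper's dichotomy, and your treatment of the first case is essentially the paper's: the strip of cells of $\h$ crossed by the wall containing the unbounded real border is a half-carrier inside $\h$, labels are preserved by construction, and the proof of Lemma~\ref{lem:real ray border} (which indeed bounds the reversals on the inner border first and then on the outer border via Lemma~\ref{lem:ob}) lets you discard a compact piece and get a Coxeter ray. So far, so good.

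The gap is the second case, and it is not a small one: it is the heart of Proposition~\ref{prop:half coxeter region contains Coxeter ray}, occupying almost the entire proof in the paper, and you explicitly defer it ("one must re-run the orientation analysis\dots"). Moreover, the plan you sketch is not clearly workable as stated. You pre-select a single bi-infinite wall parallel to $\ell$ and propose to bound the orientation-reversing vertices on the two boundary lines of its carrier; but there is no a priori reason this particular wall works -- the walls crossing it could a priori reverse orientation infinitely often, and ruling that out (or finding a different wall) is precisely what has to be proved, since Lemmas~\ref{lem:real ray border} and~\ref{lem:at most one reserving point} only apply along genuine borders of $\h'$. The paper does not fix a wall in advance: it orients every wall of $\h$ via Lemma~\ref{lem:ob} and runs a dichotomy over parallel families. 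If some family contains two walls $\W_1,\W_2$ with opposite orientations, the key claim is that the walls meeting them transversally are, after discarding finitely many, either all positively or all negatively oriented; this is proved by totally ordering them along a transversal between $\W_1$ and $\W_2$ and propagating forced orientation-reversing vertices through the intervening $2$--cells, and it produces a Coxeter ray inside the carrier of $\W_1$ (a wall that need not be parallel to $\ell$), with an additional case analysis in the $(2,3,6)$ situation depending on the local combinatorics of the carrier. If instead every parallel family is consistently oriented, the Coxeter ray is immediate. Without an argument of this global type, your second case remains unproven.
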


\begin{proof}
Recall that the $(x,\ell)$--half Coxeter region $\h'$ associated with $\h$ can be embedded as a convex subcomplex of $\D'$. By our assumption, $\h'$ is also unbounded. Thus either $\h'$ has a real border that is unbounded, or $\h'$ has no real border and has only a fake border that is all of $\ell$. By Lemma~\ref{lem:real ray border}, it remains to deal with the latter case. We also assume the associated Coxeter group is $(2,3,6)$, since the $(2,4,4)$ and $(3,3,3)$ cases are similar and simpler.

We think of $\h$ as of a subcomplex of $\D$. By Lemma~\ref{lem:ob}, edges dual to the same wall of $\h$ have orientation pointing towards the same direction. Thus each wall of $\h$ has a well-defined orientation. Each wall $\W$ has an \emph{associated halfspace}, which is the halfspace bounded by $\W$ such that $\W$ is oriented toward this halfspace.

First we consider the case that there are two different parallel walls $\W_1$ and $\W_2$ of $\h$ with opposite orientations. We assume without loss of generality that $\W_1$ and $\W_2$ are not separated by any other wall of $\h$, and they are oriented as in Figure~\ref{f:orient6}. We define sequences of consecutive points $\{v_i\}_{i=1}^{\infty}\subset\W_1$ and $\{u_i\}_{i=1}^{\infty}\subset\W_2$ such that each $v_i$ (resp.\ $u_i$) is the center of a $12$--gon $C_{i}$ (resp.\ $C'_i$) and $\angle_{v_i}(u_i,v_{i+1})=\angle_{u_i}(v_{i+1},u_{i+1})=\pi/6$ for $i\ge 1$. Assume without loss of generality that $v_i,u_i\in \h$ for all $i$. For each $i\ge 1$, let $\mathfrak{W}_i$ be the collection of walls that are different from $\W_1$ and $\W_2$, and contain at least one of $v_j$ or $u_j$ for $j\ge i$. An element in $\mathfrak{W}_i$ is \emph{positively oriented} if its associated halfspace contains all but finitely many $v_i$, otherwise it is \emph{negatively oriented}. We claim for $i$ large enough, either each element in $\mathfrak{W}_i$ is positively oriented, or each element in $\mathfrak{W}_i$ is negatively oriented. 

Let $L$ be a straight line parallel to $\W_1$ such that $L$ is between $\W_1$ and $\W_2$ (see the dotted line in Figure~\ref{f:orient6}). Then $\mathfrak{W}_1\cap L$ is a discrete subset of $L$, which can be naturally identified with the positive integers $\mathbb Z^+$. This gives rise to a total order on $\mathfrak{W}_1$. If the smallest element $\W$ of $\mathfrak{W}_1$ is negatively oriented, then the orientation of $\W$ and $\W_1$ gives rise to an orientation-reversing vertex in the boundary of $C_1$, hence the orientation of each edge in $C_1$ is determined as in Figure~\ref{f:orient6}. Let $\W_{12}$ be the wall containing $u_1$ and $v_1$. Then the orientation of $\W_2$ and $\W_{12}$ gives rise to an orientation reversing vertex in the boundary of $C'_1$, hence the orientation of each edge in $C'_1$ is determined as in Figure~\ref{f:orient6}. By repeating this process, we know the claim holds with $i=1$. If $\W$ is positively oriented, let $\W'$ be the smallest element in $\mathfrak{W}_1$ that is negatively oriented (if such $\W'$ does not exist, then we are done). Suppose $v=\W'\cap \W_1$. If $\W'$ is the smallest element in $\mathfrak{W}_1$ that contains $v$, then we finish the proof as before. Otherwise, let $\W''$ be the biggest element in $\mathfrak{W}_1$ that contains $v$. By considering the orientation of edges in the boundary of the $2$--cell containing $v$, we know  $\W''$ is negatively oriented. Since $\W''$ intersects $\W_1$ in a 30 degree angle, we argue as before to show any element of $\mathfrak{W}_1$ which is $\ge \W''$ is negatively oriented.
\begin{figure}[h!]
	\centering
	\includegraphics[width=1\textwidth]{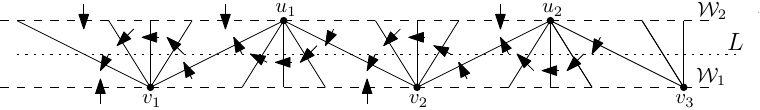}
	\caption{}
	\label{f:orient6}
\end{figure}

Now we produce a Coxeter ray from the above claim. If $\W_1$ and $\W_2$ are as in Figure~\ref{f:orient2}, then there are two centers of $12$--gons in $\W_1$ between $v_1$ and $v_2$, which we denote by $v'_1$ and $v''_1$. Now we define $\mathfrak{W}'_1$ and $\mathfrak{W}''_1$ as in the previous paragraph, and the above claim still holds for them. Since there are infinitely many walls in $\mathfrak{W}'_1\cap \mathfrak{W}_1$ and $\mathfrak{W}''_1\cap \mathfrak{W}_1$, by deleting finitely many walls from $\mathfrak{W}'_1\cup \mathfrak{W}''_1\cup \mathfrak{W}_1$, all of the rest are positively oriented (or negatively oriented). This gives rise to a Coxeter ray in the carrier of $\W_1$. If $\W_1$ and $\W_2$ are as in Figure~\ref{f:orient11}, then by the above claim, we assume without loss of generality that the orientation of edges is as in Figure~\ref{f:orient11}. We have yet to determine the orientation of $\overline{st}$. If $\W_2$ is oriented downwards (resp.\ upwards) and $\W_1$ is oriented upwards (resp.\ downwards), then the orientation reverses at $y$ (resp.\ $x$) if we consider the hexagon containing $y$ (resp.\ $x$), hence $\overline{st}$ (resp.\ $\overline{s't'}$) is oriented from $s$ to $t$ (resp.\ $s'$ to $t'$). It follows that there is a Coxeter ray contained in the carrier of $\W_1$.
\begin{figure}[h!]
	\centering
	\includegraphics[width=1\textwidth]{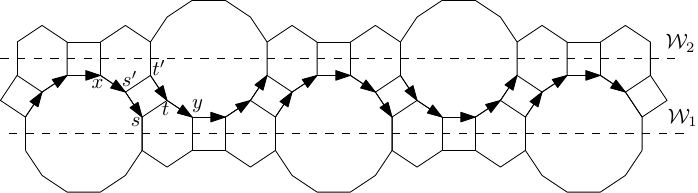}
	\caption{}
	\label{f:orient11}
\end{figure}

It remains to consider the case where each pair of parallel walls of $\h$ are oriented towards the same direction. However, it is straightforward to produce a Coxeter ray in such case.
\end{proof}

\subsection{Orientation around the corners}
Let $\h'\subset Q', H', \ell$ be as in Definition~\ref{def:half Coxeter region}. A \emph{corner} of $\h'$ is the intersection of two real borders of $\h'$. Note that if $\h'$ is bounded, then it has at least one corner. 

\begin{lemma}
	\label{lem:two right angle}
Suppose $\h'$ is irreducible. Then there does not exist a real border of $\h'$ such that it has two corners of angle $=\pi/2$. 
\end{lemma}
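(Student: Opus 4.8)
The plan is to argue by contradiction: assume $\B'$ is a real border of the irreducible region $\h'$ carrying two corners $p$ and $q$, each of interior angle $\pi/2$. First I would record the possible shapes of chambers. Since $\h'$ is irreducible, the associated planar Coxeter group $W$ has defining graph a triangle labelled $(2,3,6)$, $(2,4,4)$ or $(3,3,3)$, and every $2$--cell of $\h'$ is a chamber isometric to the corresponding fundamental triangle, with angle set $\{\pi/2,\pi/3,\pi/6\}$, $\{\pi/2,\pi/4,\pi/4\}$ or $\{\pi/3,\pi/3,\pi/3\}$. As $\h'$ is convex and is a union of chambers (Lemma~\ref{lem:union of chambers}), the interior angle of $\h'$ at any vertex is a sum (of value $\le\pi$) of chamber angles at that vertex; since no positive multiple of $\pi/3$ equals $\pi/2$, the $(3,3,3)$ case is already impossible, so $W$ is $(2,3,6)$ or $(2,4,4)$. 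In either case a $\pi/2$ wedge is filled in one of only finitely many ways (a single chamber with its right angle at the apex, or — at a higher-order vertex — two or three chambers meeting along edges and each contributing a $\pi/4$ or $\pi/6$ angle at the apex), all of which will have to be inspected in the final step.

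Next I would fix the ambient picture. Realizing $\h'$ as a convex subcomplex of $\D'\subset\mathbb E^2$, the border $\B'$ lies on a wall $\W$, and the second real borders meeting $\B'$ at $p$ and at $q$ lie on walls $\W_1$ and $\W_2$ with $\angle(\W,\W_1)=\angle(\W,\W_2)=\pi/2$; hence $\W_1$ and $\W_2$ are parallel, and $\W,\W_1,\W_2$ are locally, hence globally, supporting lines of $\h'$. Thus $\h'$ is contained in the closed half-strip they bound, with $\B'=\W\cap\h'$ the bounded side joining the feet of $\W_1$ and $\W_2$; up to coordinates, $\W=\{y=0\}$, $\W_1=\{x=p_1\}$, $\W_2=\{x=q_1\}$ with $p_1<q_1$, and near $p$ (resp.\ $q$) the region $\h'$ is the quadrant $\{x\ge p_1,\ y\ge 0\}$ (resp.\ $\{x\le q_1,\ y\ge 0\}$). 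Note the two corners are mirror images of one another across a vertical line.

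The heart of the argument is then an orientation count along $\B'$, carried out as in the proofs of Lemma~\ref{lem:piece orient}, Lemma~\ref{lem:real ray border} and Proposition~\ref{prop:half coxeter region contains Coxeter ray}. Pulling back the orientations of edges of $\Xa_\Gamma$, Lemma~\ref{lem:ob} (opposite sides of a cell are co-oriented) gives, exactly as in the proof of Proposition~\ref{prop:half coxeter region contains Coxeter ray}, that all edges of $\h$ dual to a fixed wall of $\h$ are coherently oriented; in particular the wall $\W$ of $\h$ acquires a well-defined orientation. On the other hand, the $\pi/2$ corner at $p$ forces the chamber(s) of $\h'$ meeting $p$ into one of the finitely many configurations above, and inspecting the corresponding cell(s) of $\h$ — their shapes given by Lemma~\ref{lem:real3}, combined with Remark~\ref{rmk:real2}, the form of orientations in a cell (Figure~\ref{f:precell}), and Lemma~\ref{lem:ob} — pins down the orientation of the edge of $\h$ dual to $\W$ that is incident to $C_p$ (the $2$--cell of $Q_R$ dual to $p$): it must point ``out of the corner $p$'' along $\W$, i.e.\ into the region $x>p_1$ where $\B'$ lies. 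Applying the identical analysis at $q$, whose local picture is the horizontal reflection of the one at $p$, forces the edge of $\h$ dual to $\W$ incident to $C_q$ to point ``out of $q$'', i.e.\ into $x<q_1$. Since $p$ and $q$ are the two endpoints of $\B'\subset\W$ and all edges of $\h$ dual to $\W$ are coherently oriented, these requirements are incompatible, and this contradiction proves the lemma.

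The main obstacle is precisely this last step: verifying, case by case over $(2,3,6)$ and $(2,4,4)$ and over the one-, two-, and three-chamber ways of filling a $\pi/2$ wedge, that the orientation data at a right-angle corner is rigid with the stated ``source along $\W$'' behaviour (and in particular that the mirror-image configuration at $q$ really produces the opposite constraint, despite a cell of $\Xa_\Gamma$ not being mirror-symmetric). This is the same kind of finite but delicate figure-chasing as in the proof of Lemma~\ref{lem:real ray border}, and the cleanest packaging is probably to reduce it first to the assertion that the edge of $\W$ at a $\pi/2$ corner is the $q$--image of an edge on the boundary of $C_p$ that enters the corner, and then to invoke only Lemma~\ref{lem:ob} and the orientation pattern of a single precell (Figure~\ref{f:precell}).
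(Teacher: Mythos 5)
Your reduction to the $(2,3,6)$ and $(2,4,4)$ cases is fine (it matches the paper's, which gets it from Corollary~\ref{cor:local classification}: a $\pi/2$ corner lies in a single chamber, so the dual cell is a square), but the central mechanism of your contradiction does not work. The edges of $\h$ dual to the wall $\W$ carrying the border $\B'$ are the edges \emph{crossed} by $\W$, i.e.\ transverse to the border, not along it; so the phrase ``points out of the corner $p$ along $\W$, into the region $x>p_1$'' does not describe a possible orientation of such an edge. If you instead mean the edges lying \emph{along} the border (in $\B^o$ or $\B^i$), then the coherence statement you invoke does not apply to them as a single family: each such edge is dual to a different transverse wall, and coherence of orientation along a border is exactly the delicate point — Lemma~\ref{lem:piece orient} only gives it within pieces, and Lemma~\ref{lem:at most one reserving point} shows an orientation-reversing vertex on a real border line can genuinely occur. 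Either reading leaves the claimed incompatibility between the two corners unestablished.

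The second, independent problem is your assertion that the local configuration at a right-angled corner ``pins down'' the orientation of the relevant edge to point away from the corner. No local configuration can force an absolute direction: applying the global inversion of $A_\Gamma$ (all generators to their inverses) reverses every edge orientation while preserving all cells, chambers, borders and corners, so any locally forced direction would be forced both ways. Moreover, half Coxeter regions with exactly one right-angled corner do occur and are handled in Corollary~\ref{cor:delta point}, so a single corner is compatible with either orientation pattern. The correct shape of the argument — and what the paper actually does — is to fix the orientation near one corner without loss of generality (via Lemma~\ref{lem:piece orient}) and then \emph{propagate} it across the strip of cells between $\B'$ and the nearest parallel wall, alternating between Lemma~\ref{lem:piece orient} and Lemma~\ref{lem:ob}, until the second corner forces an orientation-reversing vertex at a point that Lemma~\ref{lem:piece orient} shows is of type II, a contradiction. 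This interior propagation (the case analysis of Figures~\ref{f:orient7}--\ref{f:orient9}) is precisely the content you defer as ``the main obstacle,'' so as it stands the proposal is missing the proof's essential step.
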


\begin{proof}
By Corollary~\ref{cor:local classification}, the cell in $\h$ that contains a right-angled corner of $\h'$ must be a square. Thus it suffices to consider the $(2,3,6)$ case and the $(2,4,4)$ case. The $(2,3,6)$ case has two sub-cases. The first sub-case is indicated in Figure~\ref{f:orient7}, where $k_1$ and $k_2$ are two right-angled corners.
\begin{figure}[h!]
	\centering
	\includegraphics[width=1\textwidth]{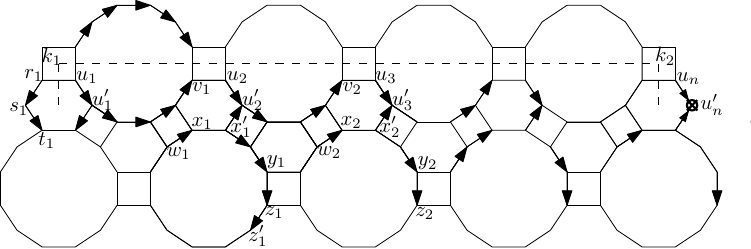}
	\caption{}
	\label{f:orient7}
\end{figure}
By Lemma~\ref{lem:piece orient}, we can assume without loss of generality that both $\overline{r_1s_1}$ and $\overline{s_1t_1}$ are oriented downwards. Thus $\overline{u_1u'_1}$ is oriented downwards since it is parallel to $\overline{s_1t_1}$. Using Lemma~\ref{lem:piece orient} again, we determine the orientation of each edge in the path $\omega_1$ from $u_1$ to $v_1$. Hence we know the orientation of $\overline{w_1x_1}$ and $\overline{u'_2x'_1}$ (since each of them is parallel to one edge in $\omega_1$), and the orientation of $\overline{z_1z'_1}$ (since it is parallel to $\overline{r_1s_1}$). Let $\omega'_1$ be the path made of 6 edges from $w_1$ to $z'_1$, containing $x_1$. The orientation of $\overline{w_1x_1}$ and $\overline{z_1z'_1}$ implies edges in $\omega'_1$ are oriented consistently. In particular, $\overline{y_1z_1}$ is oriented downwards. Using Lemma~\ref{lem:piece orient} again, we deduce the orientation of each edge in the path $\omega_2$ from $u_2$ to $v_2$, since one edge of $\omega_2$ and one edge of $\omega'_1$ are parallel. Hence $\overline{w_2x_2}$ is oriented upwards. Since $\overline{y_2z_2}$ is oriented downwards (it is parallel to $\overline{y_1z_1}$), edges in the path $\omega'_2$ of length 5 from $w_2$ to $z_2$ are oriented consistently. Now we can define $\omega_3$ and determine orientation of its edges as before. Since $\overline{x'_2u'_3}$ is oriented upwards, $u'_3$ is an orientation-reversing point. We can continue this process to deduce that $u'_{n}$ is an orientation-reversing point, however, this is impossible since $u_n$ is of type II by Lemma~\ref{lem:piece orient}. The second $(2,3,6)$ sub-case (see Figure~\ref{f:orient8}) can be studied in a similar way. Note that the orientation of the edges labeled $n$ in Figure~\ref{f:orient8} can be deduced from orientation of edges whose labels are $<n$, for example, $6$ follows from $5$ and $2$. We deduce that $u_n$ is an orientation-reversing point, which leads to a contradiction.
\begin{figure}[h!]
	\centering
	\includegraphics[width=1\textwidth]{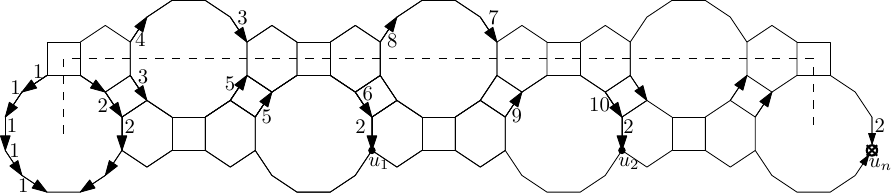}
	\caption{}
	\label{f:orient8}
\end{figure}

The $(2,4,4)$ case is similar and simpler, so we omit the proof, but we attach Figure~\ref{f:orient9} for the convenience of the reader.
\begin{figure}[h!]
	\centering
	\includegraphics[width=1\textwidth]{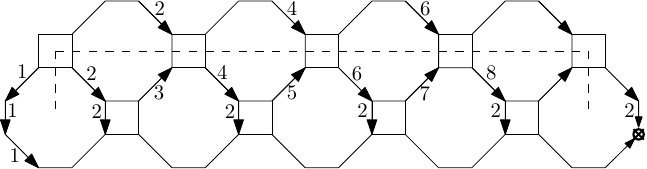}
	\caption{}
	\label{f:orient9}
\end{figure}
\end{proof}

\section{Existence of singular rays}
\label{sec:proof Existence of singular rays}

The goal of this section is proving Proposition~\ref{prop:existence of singular rays}. Let $Q',Q_R$ and $x_0$ be as in Section~\ref{subsec:local structure} and Section~\ref{subsec:new cell structure} (recall that $x_0$ is the base point in $Q'$). Let $x,\ell$ and $H'$ be as in Definition~\ref{def:half Coxeter region} and let $\h'$ and $\h$ be as in Section~\ref{sec:half coxeter regions}.

\begin{lemma}
	\label{lem:diamond}
A corner of $\h'$ with acute angle gives rise to a diamond ray.
\end{lemma}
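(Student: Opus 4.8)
The plan is to read the diamond ray off the single chamber sitting at the acute corner and then to \emph{develop} it outwards, cell by cell, in the same spirit as the other constructions of singular rays in this section.

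First I would fix the local picture at the corner. If $\h'$ is reducible then the associated Coxeter group is a product of two infinite dihedral groups, so its model complex is a grid of unit squares and any two of its walls are parallel or orthogonal; hence $\h'$ has no acute corner and there is nothing to prove. Assume henceforth that $\h'$ is irreducible. Let $v\in\h'$ be the given corner and let $C_0\subset Q_R$ be the $2$--cell dual to $v$; by our choice of $R$ the cell $C_0$ is interior, so Corollary~\ref{cor:local classification} applies at $v$. The half-disc and full-disc alternatives of that corollary put $v$ in the interior of a single border rather than at the meeting point of two real borders, so $\h'$ near $v$ consists of exactly one chamber, say $K_y$ for a type III vertex $y\in Q_R$, and the corner angle equals the corresponding vertex angle of the fundamental domain of the irreducible Euclidean Coxeter group. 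Acuteness therefore rules out the right-angled (square-centred) vertex, so $C_0$ is a $2n$--gon with $2n\in\{6,8,12\}$ lying in a single block $B$ of $\Xa_\Gamma$; writing $\St(y,Q_R)=C_0\cup\Pi_2\cup\Pi_3$ as in Lemma~\ref{lem:real3}, the chamber $K_y$ is attached to $\partial C_0$ along the two edges $C_0\cap\Pi_2$ and $C_0\cap\Pi_3$, which are adjacent in $\partial C_0$ and meet at $y$. Consequently at least one of the two tips of $C_0$, say $t_0$, is different from $y$ and therefore lies off the part of $\partial C_0$ belonging to $\h$.

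Next I would develop the diamond ray starting from $t_0$. By our choice of $R$, $t_0$ is an interior, flat, deep vertex of $Q_R$, so $\St(t_0,Q_R)$ is one of the configurations of Lemma~\ref{lem:star homeo}. It cannot be of type III: a type III vertex lies on a chamber whose vertices are the centres of the cells surrounding it, so since $t_0\in\partial C_0$ such a chamber would pass through the centre $v$ of $C_0$; but near $v$ the only chamber of $\C'$ is $K_y$, which would force $t_0=y$, a contradiction. Ruling out types O and II, and, once a type I vertex is at hand, arranging that $C_0$ plays the role of $\Pi_1$ or $\Pi_2$ in Lemma~\ref{lem:real1}, is where acuteness is used: propagating around $\partial C_0$ — via Lemma~\ref{lem:ob}, so that opposite sides of a cell carry parallel, equally oriented edges — the orientations of the edges of the two outer borders near $v$, which are consistent along their pieces by Lemma~\ref{lem:piece orient}, one checks that a type O or type II configuration at $t_0$ would produce two parallel, oppositely oriented edges. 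This is the same figure-by-figure bookkeeping, over the finitely many shapes of acute corner allowed by Corollary~\ref{cor:local classification}, that is carried out in the proofs of Lemma~\ref{lem:real ray border} and Lemma~\ref{lem:two right angle}. Hence $t_0$ is of type I, and Lemma~\ref{lem:real1} supplies a $2n$--gon $C_1\subset Q_R$ in the block $B$ with $C_0\cap C_1=\{t_0\}$ a common tip and a new forward tip $t_1$ of $C_1$ at which the same orientation pattern persists. Iterating produces a sequence $C_0,C_1,C_2,\dots$ of $2n$--gons of $Q_R$, all contained in $B$, with $C_i\cap C_{i+1}$ a common tip of both.

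Finally I would assemble the output. Let $U=\bigcup_{i\ge 0}C_i$ carry its natural cell structure; then the restriction $q\colon U\to\Xa_\Gamma$ is locally injective (by the local embedding statements invoked above), its image lies in the block $B$, and it sends each $C_i\cap C_{i+1}$ to a tip of both $q(C_i)$ and $q(C_{i+1})$ — precisely the conditions of Definition~\ref{def:diamond line} for a diamond ray. That $U$ is genuinely a diamond ray — the $C_i$ pairwise distinct, non-consecutive ones disjoint, and the development never stalling against the hole of $Q_R$ — I would deduce from the fact that blocks are quasi-isometrically embedded in $\Xa_\Gamma$ and that $Q'$ is flat outside a compact set by Theorem~\ref{thm:MSquasiflats}: these force the shadow in $Q'$ of the partial unions $C_0\cup\cdots\cup C_k$ to be quasigeodesic segments with uniform constants, so the fake vertices of $Q'$ dual to the $C_i$ form a quasigeodesic ray escaping every compact set, hence are distinct and the process continues indefinitely. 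The main obstacle is the orientation bookkeeping of the second step — verifying, shape by shape, that the orientations forced by the acute corner leave type I as the only possibility at every forward tip — which is exactly the technical heart already present in Lemma~\ref{lem:real ray border} and Lemma~\ref{lem:two right angle}.
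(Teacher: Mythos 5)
Your proposal has the right overall shape (isolate the corner cell, note it has $2n\ge 6$ sides, develop a tip-to-tip chain of cells in one block), but the two load-bearing steps are not actually supplied, and the mechanisms you propose in their place do not work. First, the inductive step. The paper's argument here is orientation-free: it does not start from an a priori chosen tip $t_0$ of the corner cell, but from the vertex $x_2$ of $\partial\Pi_1$ opposite the type III vertex $x_1$. The two vertices $y_1,y_1'$ adjacent to $x_1$ on $\partial\Pi_1$ are of type II (Lemma~\ref{lem:local convexity}), so Lemma~\ref{lem:real2} produces two flanking cells $C_1,C_1'$ in the same block as $\Pi_1$ meeting $\partial\Pi_1$ in paths of $n-1$ edges; hence $x_2$ lies in three cells of one block, which is incompatible with types O, II and III, so $x_2$ is of type I; Lemma~\ref{lem:real1} together with $n\ge 3$ then forces $x_2$ to be a tip of both $\Pi_1$ and the new cell $\Pi_2$, and the same edge count ($C_1\cap\Pi_2$ a single edge versus the $n-1$ edges a type II vertex would require) excludes type II at the next vertex $y_2$ — this is exactly where acuteness enters. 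Your proposal omits the flanking cells $C_1,C_1'$ entirely, picks an arbitrary tip $t_0\neq y$ (you never show $y$ is a tip, so $t_0$ need not be the vertex opposite $y$), and defers the exclusion of types O and II at $t_0$ to unspecified orientation bookkeeping ``as in'' Lemma~\ref{lem:real ray border} and Lemma~\ref{lem:two right angle}. Those lemmas concern orientations along borders and right-angled corners of half Coxeter regions and do not yield the statement you need; the check you promise is never carried out, so both the base case and the inductive step are unproved.

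Second, the control that keeps the development inside $Q_R$. In the paper this is arranged before any combinatorics: one takes the CAT(0) geodesic ray $r_K\subset Q'$ issuing from the corner and bisecting it away from the chamber $K$; acuteness together with Definition~\ref{def:half Coxeter region} gives $r_K\subset H'\subset Q_R$, and the successive vertices $x_2,x_3,\dots$ are shown to lie on $r_K$, so Lemma~\ref{lem:star homeo} is available at every step of the induction. Your substitute --- blocks are quasi-isometrically embedded and $Q'$ is flat outside a compact set, hence the shadow of $C_0\cup\cdots\cup C_k$ is a uniform quasigeodesic escaping every compact set --- is circular (it presupposes that the development already continues indefinitely in order to speak of its shadow) and in any case insufficient: even a genuine uniform quasigeodesic may re-enter the uncontrolled compact region, or leave $Q_R$, after finitely many steps, and at such a vertex Lemma~\ref{lem:star homeo} no longer applies and the construction halts. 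Without an analogue of $r_K$ there is no mechanism in your argument forcing each forward vertex to be interior, deep and flat, which is what the induction requires.
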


We refer to Figure~\ref{f:development1} for the following proof.
\begin{proof}
Let $s\in\h'$ be an acute corner. Let $\Pi_1\subset\h$ be the cell containing $s$. We are in the situation of Lemma~\ref{lem:local convexity} (1) and Corollary~\ref{cor:local classification} (1). Let $K$ be the unique chamber in $\h'$ containing $s$. Let $r_K\subset Q'$ be a CAT(0) geodesic ray such that $r_K\cap K=s$ and the angles between $r_K$ and the two sides of $K$ at $s$ are equal. In all cases we have $r_K\subset H'$, in particular $r_K\subset Q_R$.

Let $\Pi,\Pi'$ be the $2$--cells in $\h$ that intersect $\Pi_1$. Then $\Pi_1\cap\Pi\cap \Pi'$ is a vertex of type III (see Table~\ref{t:flat} on page \pageref{t:flat}), which we denote by $x_1$. Note that $\Pi_1\cap \Pi$ (resp.\ $\Pi_1\cap\Pi'$) is an edge containing $x_1$. Let $y_1$ (resp.\ $y'_1$) be the other endpoint of this edge. By Lemma~\ref{lem:local convexity} (1), $y_1$ and $y'_1$ are not of type III, hence they are of type II.

Suppose $\Pi_1$ has $2n$ edges. Since $s$ is an acute corner, $n\ge 3$. Since $\Pi\cap\Pi_1$ (resp.\ $\Pi\cap\Pi'_1$) has only one edge, by Lemma~\ref{lem:star homeo} and Lemma~\ref{lem:real2}, there is a $2$--cell $C_1\subset \St(y_1,Q_R)$ (resp.\ $C'_1\subset\St(y'_1,Q_R)$) such that $C_1\cap \Pi_1$ (resp.\ $C'_1\cap\Pi_1$) is a path with $n-1$ edges, moreover, $C_1$, $C'_1$ and $\Pi_1$ are in the same block. Let $x_2$ be the vertex in $\partial\Pi_1$ that is opposite to $x_1$. Then $x_2\in (C_1\cap C'_1\cap\Pi_1)$. Note that $x_2\in r_K\subset Q_R$. Thus by Lemma~\ref{lem:star homeo} and Lemma~\ref{lem:real1}, $x_2$ is of type I. Let $\Pi_2$ be the other $2$--cell in $\St(x_2,Q_R)$ besides $C_1$, $C'_1$ and $\Pi_1$. Either $x_2$ is the tip of both $\Pi_1$ and $\Pi_2$, or $x_2$ is the tip of both $C_1$ and $C'_1$. We deduce from Lemma~\ref{lem:real1} and $n\ge 3$ that the latter case is impossible. Also we deduce from Lemma~\ref{lem:real1} that $C_1\cap \Pi_2$ is an edge. Let $y_2$ be the endpoint of this edge that is not $x_2$. We claim $y_2$ is of type I.

Since $y_2$ is an interior vertex of $Q_R$, Lemma~\ref{lem:star homeo} describes all possibilities of $\St(y_2,Q_R)$. Lemma~\ref{lem:star homeo} (1) can be ruled out immediately. Note that $y_2$ is not of type III since $C_1$ and $\Pi_2$ are in the same block. If $y_2$ is of type II, then Lemma~\ref{lem:real2} (2) and (3) imply $C_1\cap\Pi_2$ has $n-1$ edges, which is contradictory to $n\ge 3$. Thus the above claim follows.

Note that $y_2$ is not a tip of $\Pi_2$ (since $x_2$ is a tip of $\Pi_2$). Hence $y_2$ is a tip of $C_1$. By Lemma~\ref{lem:real1}, there is a $2$--cell $C_2\subset\St(y_2,Q_R)$ such that $C_2\cap \Pi_2$ is a path with $n-1$ edges. We define $C'_2$ similarly. Let $x_3$ be the vertex in $\partial\Pi_2$ opposite to $x_2$. Again we know $x_3\in r_K\subset Q_R$ and $x_3$ is contained in three cells $C_2,C'_2$ and $\Pi_2$ which are in the same block. Hence $x_3$ is of type I and the pair $(\Pi_2,\Pi_3)$ plays the role of $(\Pi_1,\Pi_2)$ in Figure~\ref{f:flat2}. By repeating this argument, we can inductively produce a sequence of $2$--cells $\{\Pi_i\}_{i=1}^{\infty}$ such that $\Pi_i\cap\Pi_{i+1}$ is a vertex of type I contained in $r_K$ and $(\Pi_i,\Pi_{i+1})$ plays the role of $(\Pi_1,\Pi_2)$ in Figure~\ref{f:flat2}, which finishes the proof.
\end{proof}
\begin{figure}[h!]
	\centering
	\includegraphics[width=1\textwidth]{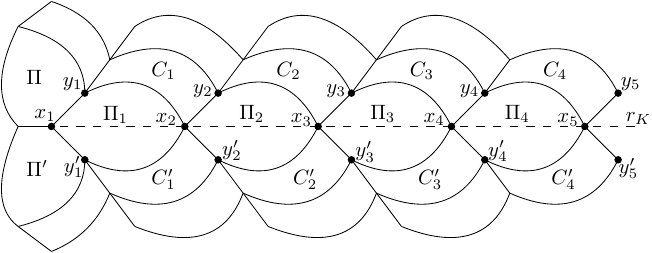}
	\caption{}
	\label{f:development1}
\end{figure}

\begin{corollary}
	\label{cor:delta point}
Suppose that outside any compact subset of $Q_R$ there is a $\triangle$--vertex (i.e. type III vertex whose support is a triangle). Then there is either a diamond ray, or a Coxeter ray in $Q_R$.
\end{corollary}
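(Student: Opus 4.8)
The plan is to set up a dichotomy on whether the $\Delta$--vertices that appear outside every compact set lie in half Coxeter regions that are bounded or unbounded. First I would use the hypothesis to produce a sequence of $\Delta$--vertices $x_i\in Q_R$ escaping every compact set, and (shrinking by the constant in the Remark after Definition~\ref{def:half Coxeter region}) attach to each $x_i$ an edge $e_i$ and a geodesic line $\ell_i$ so that $x_i$ lies in an $(x_i,\ell_i)$--half Coxeter region $\h_i$. Since each $x_i$ is a $\Delta$--vertex, $\h_i$ is irreducible (its chambers are triangles, by Lemma~\ref{lem:same tag} and Lemma~\ref{lem:local convexity}: all type III vertices on $\partial C$ share a support, so the whole component has triangular support).

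Now split into two cases. If for some $i$ the half Coxeter region $\h_i$ is unbounded, then Proposition~\ref{prop:half coxeter region contains Coxeter ray} applies directly: $\h_i$ contains a Coxeter ray, and since $\h_i\subset Q_R$ we are done. Otherwise every $\h_i$ is bounded. A bounded irreducible $\h'_i$, being a finite convex subcomplex of the Euclidean Davis complex $\D'$, must have at least one corner; and by Lemma~\ref{lem:two right angle} no real border can carry two right-angle corners, while the angle sum around a bounded convex Euclidean polygon forces the presence of a corner whose angle is strictly less than $\pi/2$, i.e.\ an acute corner (if all corners were right angles the boundary would be a rectangle, which would contradict Lemma~\ref{lem:two right angle}; and there are only finitely many admissible corner angles coming from the $(2,3,6)$, $(2,4,4)$, $(3,3,3)$ chambers). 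Then Lemma~\ref{lem:diamond} produces a diamond ray emanating from that acute corner, again lying inside $H'\subset Q_R$, and we are done.

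The main obstacle I anticipate is the bounded case: one must argue carefully that a bounded irreducible half Coxeter region genuinely has an acute corner rather than only right-angled ones, ruling out the possibility that $\h'$ is (combinatorially) a rectangle-like convex region all of whose corners are right angles. This is exactly where Lemma~\ref{lem:two right angle} is used — it forbids a real border with two right-angle corners — combined with the elementary fact that a compact convex subcomplex of $\mathbb E^2$ whose boundary is a polygon with all interior angles in $\{\pi/3,\pi/2,2\pi/3,\ldots\}$ and exterior-angle sum $2\pi$ cannot have only angles $\ge \pi/2$ unless it is a rectangle (four right angles), which the lemma excludes. A minor additional point is making sure the acute corner lies far enough out that Lemma~\ref{lem:diamond}'s hypotheses (interiority of the relevant cells, $r_K\subset H'$) hold, which is guaranteed by condition~(3) of Definition~\ref{def:half Coxeter region}. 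Once these are in place the corollary follows immediately by combining the two cases.
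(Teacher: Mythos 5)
There is a genuine gap in your bounded case. The corners of $\h'$ are, by definition, intersections of two \emph{real} borders; the boundary of a bounded $(x,\ell)$--half Coxeter region also contains a fake border lying on $\ell$, and the points where a real border meets $\ell$ are not corners at all (and are not constrained by Lemma~\ref{lem:diamond} or Lemma~\ref{lem:two right angle}). So your angle-count argument --- ``a bounded convex polygon with all interior angles $\ge \pi/2$ must be a rectangle, which Lemma~\ref{lem:two right angle} forbids'' --- does not apply: a bounded irreducible $\h'$ can perfectly well be, say, a triangle with its base on $\ell$ and a \emph{single} right-angled corner at the apex (the two base angles are real-border-meets-fake-border angles, not corners). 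Lemma~\ref{lem:two right angle} only rules out a real border carrying two right-angled corners; it does not force an acute corner to exist. This is exactly the configuration that the paper's proof has to work hardest on, and your proposal simply never produces a singular ray in that situation.

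The paper's argument handles it as follows: when $\h'$ is bounded with no acute corner, Lemma~\ref{lem:two right angle} gives a unique right-angled corner $s$; one extends the two real borders at $s$ to CAT(0) geodesic lines $\ell_1,\ell_2$ in $Q'$, uses boundedness (and the angle $\ge 30^{\circ}$ between these lines and $\ell$) to arrange, for $s$ far from the base point, that one of them, say $\ell_1$, lies in $Q_R$ with $x_0$ on the same side as the chamber $K$ at $s$; then one passes from the half Coxeter region to the full Coxeter region $\C'_K$ and examines the component $\ell'_1$ of $\C'_K\cap\ell_1$ through $s$. If $\ell'_1$ is unbounded, Corollary~\ref{cor:local classification} together with the argument of Lemma~\ref{lem:real ray border} yields a Coxeter ray; if $\ell'_1$ is bounded, its far endpoint $t$ is (by the proof of Lemma~\ref{lem:two right angle}) an acute-angle point of $\C'_K$ on the side of $\ell_1$ away from $x_0$, and rerunning the argument of Lemma~\ref{lem:diamond} there produces a diamond ray inside $Q_R$. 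Your two easy cases (unbounded region via Proposition~\ref{prop:half coxeter region contains Coxeter ray}, acute corner via Lemma~\ref{lem:diamond}) agree with the paper, but without this third argument the proof is incomplete.
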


\begin{proof}
Consider an irreducible $(x,\ell)$--half Coxeter region $\h'$. If $\h'$ is unbounded, then we obtain a Coxeter ray by Proposition~\ref{prop:half coxeter region contains Coxeter ray}. If $\h'$ contains an acute corner, then we obtain a diamond ray by Lemma~\ref{lem:diamond}. Now we assume $\h'$ is bounded and does not contain any acute corner. By Lemma~\ref{lem:two right angle}, $\h'$ contains a unique right-angled corner $s$.
	
Let $K$ be the unique chamber in $\h'$ containing $s$. We extend the two borders of $\h'$ meeting at $s$ to two CAT(0) geodesic lines $\ell_1,\ell_2$ in $Q'$. Since $\h'$ is bounded and $s$ is the only corner of $\h'$, we know that $\ell,\ell_1$ and $\ell_2$ bound a triangle which contains $K$. Note that the angle between $\ell_1$ (or $\ell_2$) and $\ell$ is $\ge 30^{\circ}$. Thus by choosing $s$ sufficiently far away from the base point $x_0$ of $Q'$, we know at least one of $\ell_1$ and $\ell_2$, say $\ell_1$, satisfies that $\ell_1\subset Q_R$ and that $K$ and $x_0$ are on the same side of $\ell_1$.  

Let $\C'_K$ be the Coxeter region containing $K$. Let $\ell'_1\subset\ell_1$ be the connected component of $\C'_K\cap \ell_1$ containing $s$. By local convexity of $\C'_K$ along $\ell'_1$, we know the intersection of $\C'_K$ and an $\epsilon$--neighborhood of $\ell_1$ (for $\epsilon$ small enough) is on one side of $\ell_1$. If $\ell'_1$ is unbounded, i.e.\ $\ell'_1$ is a ray based at $s$, then by Corollary~\ref{cor:local classification} and the argument in Lemma~\ref{lem:real ray border}, we know a sub-ray of $\ell'_1$ gives rise to a Coxeter ray. If $\ell'_1$ is bounded, let $t$ be the other endpoint of $\ell'_1$. Then $t$ satisfies Corollary~\ref{cor:local classification} (1). The proof of Lemma~\ref{lem:two right angle} implies that the angle at $t$ is acute, then we can produce a diamond ray by repeating the argument in Lemma~\ref{lem:diamond} (we define a ray $r$ based at $t$ as in the first paragraph of the proof of Lemma~\ref{lem:diamond}, then $r$ and $K$ are in different sides of $\ell_1$, hence, $r$ and the base point $x_0$ are in different sides of $\ell_1$, thus $r\subset Q_R$, the rest of the proof is identical).
\end{proof}

\begin{prop}
	\label{prop:existence of singular rays}
$Q_R$ contains a singular ray.
\end{prop}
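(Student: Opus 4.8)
The plan is to do a case analysis on the combinatorial landscape of $Q_R$ outside large balls, using the classification of interior vertices from Definition~\ref{d:types} (types O, I, II, III) together with the orientation and development machinery of Sections~\ref{sec:half coxeter regions}--\ref{sec:proof Existence of singular rays}. There are two main regimes to distinguish, according to whether type III vertices persist near infinity or not.

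\emph{Case 1: type III vertices appear outside every compact set.} If moreover there is a $\Delta$--vertex outside every compact set, Corollary~\ref{cor:delta point} already produces a diamond ray or a Coxeter ray, both of which are singular rays. So we may assume that beyond some compact set all type III vertices are $\square$--vertices, i.e.\ every $(x,\ell)$--half Coxeter region $\h'$ we encounter is reducible, with associated Coxeter group acting on $\mathbb E^2$ with defining graph a square (the right-angled affine reflection group). In that situation the half Coxeter region $\h$, viewed via the embedding $i\colon\h\to\D$, is a convex region in the Davis complex of a product of two infinite dihedral groups; cutting along walls gives a grid-like structure of squares. Here I would argue as in the irreducible case but more easily: pick $x$ of type III very far from $x_0$, form the associated half Coxeter region $\h'\subset H'$, which is convex in $H'$ and hence either unbounded or bounded with a corner. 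Using Lemma~\ref{lem:ob} to propagate edge orientations along the two families of parallel walls of $\h$, one extracts a line of consistently oriented edges in the carrier of one wall; since $\h$ is made of squares, such a line is exactly a plain ray (in fact a single-labeled or chromatic plain ray as in Definition~\ref{def:plain line}), giving a singular ray. The bounded-with-a-corner subcase forces a right-angled corner whose chamber is a square, and walking along the extended border as in the proof of Corollary~\ref{cor:delta point} again yields either an unbounded sub-border (hence a plain or Coxeter ray) or an acute corner further along (hence, by the argument of Lemma~\ref{lem:diamond}, a diamond ray).

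\emph{Case 2: there is a compact set outside of which $Q_R$ has no type III vertices.} Then outside that compact set every interior vertex is of type O, I, or II, so $q$ maps the relevant portion of $Q_R$ into a single block around each vertex, or more precisely into unions of two blocks (type II) or one block (types O, I). I would now run a development/local-to-global argument using Lemma~\ref{lem:star homeo}, Lemma~\ref{lem:real1}, and Lemma~\ref{lem:real2}: starting from a vertex far from $x_0$, follow the local model. If type I vertices dominate along some direction one reads off (using that $\Pi_i\cap\Pi_{i+1}$ is a type I vertex and the cells stay in one block, cf.\ the inductive construction at the end of the proof of Lemma~\ref{lem:diamond}) an infinite chain of cells $\Pi_i$ with $\Pi_i\cap\Pi_{i+1}$ a tip of both, which is precisely a diamond ray (Definition~\ref{def:diamond line}). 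If instead type II vertices recur along some direction, the intersections $\Pi_i\cap\Pi_{i+1}$ have $n-1$ edges and lie in thickened-plain-line configurations (Definition~\ref{def:plain line}), producing a thickened plain line and hence a plain ray on its boundary. The remaining possibility is that only type O vertices survive in some direction, i.e.\ locally $\St(v,Q_R)$ is just two $2$--cells meeting at a vertex — but then tracing the unique continuation gives again a diamond line. In every subcase one obtains a diamond ray or a plain ray, i.e.\ a singular ray.

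The main obstacle I expect is \textbf{Case 1 with reducible (square) half Coxeter regions}: one must verify that the orientation-propagation argument, which in Section~\ref{sec:orientation of edges of Coxeter regions} was carried out in detail only for the irreducible $(2,3,6)$, $(2,4,4)$, $(3,3,3)$ cases, still forces a consistently oriented line of edges when the ambient reflection group is the reducible affine $(2,\infty;2,\infty)$--type group, so that one actually gets a plain ray rather than getting stuck. A secondary technical point is the \textbf{bookkeeping to ensure the ray stays inside $Q_R$}, i.e.\ on the far side of $\ell$ away from $x_0$: this is handled exactly as in Lemma~\ref{lem:diamond} and Corollary~\ref{cor:delta point} by choosing the starting vertex far enough out and using the CAT(0) geometry of $Q'$ (via the ``shadow'' of paths in $Q_R$ in $Q'$, as explained in Step 2.2 of the introduction) to avoid the hole, but it must be tracked carefully in each branch of the case analysis. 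Once all branches are checked, the cases are exhaustive because Lemma~\ref{lem:star homeo} lists all interior-vertex types, so $Q_R$ always contains a singular ray.
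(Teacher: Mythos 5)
Your Case 2 (no type III vertices far out) is essentially the paper's argument: if a type II vertex occurs one propagates a plain ray through the local models of Lemma~\ref{lem:real2}, and if every interior vertex is of type I one gets a diamond ray; the bookkeeping that keeps the ray in $Q_R$ (extend in the direction making angle $\ge\pi/2$ with the segment to the basepoint $x_0$) is exactly what the paper does. The genuine gap is the branch you yourself flag as the ``main obstacle'': persistent $\square$--vertices. Your proposed route --- treat them via \emph{reducible} half Coxeter regions and an orientation-propagation argument analogous to Section~\ref{sec:orientation of edges of Coxeter regions} --- is not carried out, and as sketched it would not go through with the tools available: Coxeter rays are only defined for triangle supports (Definition~\ref{def:wall line} requires $\frac1m+\frac1n+\frac1r=1$), and the key orientation results (Lemma~\ref{lem:real ray border}, Proposition~\ref{prop:half coxeter region contains Coxeter ray}, Lemma~\ref{lem:two right angle}) are stated and proved only for irreducible regions. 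In particular your bounded subcase has no exit in the reducible world: all chamber angles are $\pi/2$, so there are no acute corners to feed into Lemma~\ref{lem:diamond}, and nothing excludes a bounded reducible region with several right-angled corners, since the two-right-angle exclusion is an irreducible statement.

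The deeper point is that the orientation consistency you are trying to force is not needed at all: a plain line (Definition~\ref{def:plain line}) is defined purely by the labels of its edges, with no orientation condition. The paper therefore treats type II vertices and $\square$--vertices uniformly and elementarily: after Corollary~\ref{cor:delta point} reduces to the case with no $\Delta$--vertices, at a type II or $\square$--vertex $y_0$ one picks two edges $e_0,e_1$ of $\St(y_0,Q_R)$, each the intersection of two $2$--cells lying in different blocks and not contained in a common cell (they exist by Lemmas~\ref{lem:real2} and~\ref{lem:real3}); by Lemma~\ref{lem:2pi cycle}(2) their concatenation is a CAT(0) geodesic segment in $Q'$, one extends it in the direction pointing away from $x_0$ so the resulting geodesic ray stays in $Q_R$, and one checks inductively that each successive vertex is again of type II or a $\square$--vertex, so the labels along the ray satisfy Definition~\ref{def:plain line} and the ray is a plain ray. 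No half Coxeter regions, walls, or orientation propagation enter this case; the only remaining case is that every interior vertex is of type I, which yields a diamond ray. So to repair your proof, replace the reducible-region machinery in your Case 1 by this direct construction (which also absorbs the type II part of your Case 2).
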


\begin{proof}
By Corollary~\ref{cor:delta point}, it suffices to consider the case that all the $\triangle$--vertices of type III are contained in a bounded subset of $Q'$. We assume without loss of generality that $Q_R$ does not contain any $\triangle$--vertex. 

Suppose there is an interior vertex $y_0\in Q_R$ such that $y_0$ is of type II or $y_0$ is a $\square$--vertex. Pick edges $e_0,e_1$ of $Q_R$ containing $y_0$ such that
\begin{enumerate}
	\item for $i=0,1$, $e_i$ is the intersection of two $2$--cells of $\St(y_0,Q_R)$ that are in different blocks;
	\item $e_0$ and $e_1$ are not contained in the same $2$--cell.
\end{enumerate}
By Lemma~\ref{lem:real2} and Lemma~\ref{lem:real3}, $e_0$ and $e_1$ always exist (such pair is unique when $y_0$ is of type II). By Lemma~\ref{lem:2pi cycle} (2), $e_0$ and $e_1$ are also edges in $Q'$, and the concatenation of $e_1$ and $e_0$ is a CAT(0) geodesic segment in $Q'$. 

Let $r^+$ and $r^-$ be two CAT(0) geodesic rays based at $y_0$ containing $e_1$ and $e_0$ respectively. Then the angle between $\overline{x_0y_0}$ (recall that $x_0$ is the base point of $Q'$) and at least one of $r^+$ and $r^-$, say $r^+$, is $\ge 90^{\circ}$. Then $d(r^+,x_0)\ge d(y_0,x_0)$ by CAT(0) geometry. Hence $r^+\subset Q_R$. We claim $r^+$ is a plain ray. Let $y_1$ be the other endpoint of $e_1$. Since $e_1$ is contained in two $2$--cells that are in different blocks, $y_1$ is not of type I. Hence $y_1$ is of type II or $y_1$ is a $\square$--vertex. By Lemma~\ref{lem:real2} and Lemma~\ref{lem:real3}, there is another edge $e_2$ containing $y_1$ such that the above two conditions hold with $e_0,e_1,y_0$ replaced by $e_1,e_2,y_1$. Note that $e_2\subset r^+$. By repeating such argument, we know $r^+$ is a union of edges $\{e_i\}_{i=1}^{\infty}$ such that $e_i\cap e_{i+1}$ is a vertex of type II for each $i$. Hence $r^+$ is a plain ray.

It remains to consider the case when every interior vertex of $Q_R$ is of type I. One readily verifies that there is a diamond ray in such case.
\end{proof}

\section{Development of singular rays}
\label{sec:development}
A \emph{flat sector (with angle $\alpha$)} in $Q'$ is a subset of $Q'$ isometric to a sector in the Euclidean plane $\mathbb E^2$ bounded by two rays meeting at an angle $\alpha$ with $0\le \alpha\le\pi$. Since $Q'$ is homeomorphic to $\mathbb R^2$, we specify an orientation of $Q'$. Let $\gamma\subset Q'$ be a CAT(0) geodesic ray. We think of $\gamma$ as being oriented from its base point to infinity (if $\gamma$ happens to be mapped to the $1$--skeleton of $\Xa_\Gamma$, then such orientation of $\gamma$ may be different from the induced orientation from $\Xa_\Gamma$). Recall that we use $d_H$ to denote the Hausdorff distance. A flat sector $S$ with angle $0\le\alpha\le \pi$ is \emph{on the right side} of $\gamma$ if
\begin{enumerate}
	\item one boundary ray $\gamma_1$ of $S$ satisfies that $d_H(\gamma_1,\gamma)<\infty$;
	\item the interior of $S$ has empty intersection with $\gamma$;
	\item $S$ is to the right of $\gamma$ with respect to the orientation of $\gamma$ and $Q'$.
\end{enumerate}
Since a flat sector with angle $0$ is just a geodesic ray of $Q'$, we know from the above definition the meaning of a geodesic ray $\gamma'$ on the right side of $\gamma$.

The goal of this section is the following theorem. Recall the Definition~\ref{d:atomic} of atomic sectors and
Table~\ref{t:atomic} on page \pageref{t:atomic}.
\begin{theorem}
	\label{thm:main1}
	Let $r\subset Q_R$ be a singular ray. Then there is a CAT(0) geodesic ray $\gamma$ in the $1$--skeleton of $Q'$ with $\gamma\subset r$. And there exists a singular ray $r_1\subset Q_R$ such that 
	\begin{enumerate}
		\item the CAT(0) geodesic ray $\gamma_1\subset r_1$ satisfies that $d_H(\gamma_1,\gamma)<\infty$;
		\item $r_1$ is a boundary ray of an atomic sector $S_1$ in $Q_R$;
		\item there is a flat sector $S'_1$ in $Q'$ such that $S'_1\subset S_1$ and $d_H(S_1,S'_1)<\infty$;
		\item $S'_1$ is on the right side of $\gamma$;
		\item the angle of $S'_1$ is $\ge \pi/6$.
	\end{enumerate}
\end{theorem}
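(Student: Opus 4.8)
The plan is to start from the singular ray $r\subset Q_R$ produced by Proposition~\ref{prop:existence of singular rays} and run a "development" argument that sweeps out an atomic sector on the right side of $r$, terminating at a second singular ray. First I would extract from $r$ a CAT(0) geodesic ray $\gamma$ in the $1$--skeleton of $Q'$: by the three types of singular rays (diamond, Coxeter, plain) and the local structure in Lemma~\ref{lem:star homeo}, together with the classification of $2\pi$--cycles (Lemma~\ref{lem:2pi cycle}) and the orientation lemmas of Section~\ref{sec:orientation of edges of Coxeter regions}, the $1$--skeleton path underlying $r$ is locally geodesic in $Q'$ after passing to a sub-ray, hence globally geodesic since $Q'$ is CAT(0). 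This gives $\gamma\subset r$. Fix the orientation of $Q'$ and orient $\gamma$ from its base point to infinity.

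Next I would carry out the main development step: starting from $\gamma$, repeatedly attach the cells of $Q_R$ adjacent to $\gamma$ on its right, using the local classification of interior vertices (types O, I, II, III from Definition~\ref{d:types}) to decide at each vertex how the next cell is glued on. The key point is that the local models in Lemma~\ref{lem:real1}, Lemma~\ref{lem:real2}, Lemma~\ref{lem:real3} and Lemma~\ref{lem:fake vertex} are precisely the vertex figures appearing in flats and half-flats of the atomic sectors, so the region swept out develops into one of the model pieces -- a block-strip (for a diamond or plain direction), a Coxeter-line strip, or (when $\Delta$-- or $\square$--vertices are present) a half Coxeter region as in Section~\ref{sec:half coxeter regions}. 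Using the CAT(0) geometry of $Q'$ (as in Lemma~\ref{lem:well-defined} and the fact that paths in $Q_R$ have shadows in $Q'$, Step 2.2 of the introduction), I would show this developed region is flat: its $q$--image lands in a union of diamond lines, Coxeter lines, or thickened plain lines with the correct intersection pattern, i.e.\ it is one of the atomic sectors of Section~\ref{subsec:atomic sector}. The "other side" of the swept region, an orbit of $\gamma$ under the relevant $\mathbb Z$--action, gives the second singular ray $r_1$ with a CAT(0) geodesic $\gamma_1\subset r_1$; that $d_H(\gamma_1,\gamma)<\infty$ follows from $r_1$ and $r$ being the two boundary lines of a strip that is quasi-isometrically embedded (Lemma~\ref{lem:atomic qi embedding}), equivalently from the bounded width of the developed strip. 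This establishes (1), (2), (3) once we observe that inside the atomic sector $S_1$ there is a flat sub-sector $S_1'$ (the Euclidean model region) with $d_H(S_1,S_1')<\infty$, again by quasi-isometric embeddedness and local finiteness of isometry types of cells.

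It remains to address the angle bound (5) and the side condition (4). The side condition is built into the construction: we develop only to the right of $\gamma$ with respect to the chosen orientations, so $S_1'$ is on the right side of $\gamma$ by Definition~\ref{def:half Coxeter region}-style bookkeeping and the planarity of $Q_R$. For the angle, the point is that each atomic-sector model region, when transported into the CAT(0) complex $Q'$, occupies a sector whose Euclidean angle at the common vertex is bounded below: the half Coxeter regions are cut from a Euclidean Davis complex whose chambers are triangles or squares, so their corner angles lie in $\{\pi/2,\pi/3,\pi/6\}$ and the relevant opening is $\ge \pi/6$; for diamond-plain, Coxeter-plain and plain sectors the two boundary rays meet at angle $\ge \pi/6$ as well (in the flat or half-flat model the angle is $\pi/2$, $\pi/3$ or $\pi/6$ depending on the polygon type), so in all cases the flat sector $S_1'$ has angle $\ge \pi/6$.

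The hard part will be the bookkeeping in the development step -- showing that the local moves dictated by Lemmas~\ref{lem:real1}--\ref{lem:real3} and~\ref{lem:fake vertex} are globally consistent, that the developed region does not "wrap around" or collide with the hole in $Q_R$ (handled via shadows in the CAT(0) space $Q'$ and the orientation constraints of Section~\ref{sec:orientation of edges of Coxeter regions}), and that the resulting region is genuinely one of the listed atomic sectors rather than some hybrid. This is where the orientation lemmas (Lemma~\ref{lem:piece orient}, Lemma~\ref{lem:real ray border}, Lemma~\ref{lem:at most one reserving point}, Lemma~\ref{lem:two right angle}) do the real work, ruling out inconsistent gluings; the remainder is a finite case analysis over the type of the initial singular ray $r$.
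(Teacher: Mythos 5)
Your overall strategy---extract a CAT(0) geodesic $\gamma$ from $r$, then develop cells of $Q_R$ on its right using the local classification (Lemma~\ref{lem:fake vertex}, Lemmas~\ref{lem:real1}--\ref{lem:real3}, Lemma~\ref{lem:star homeo}), the orientation results of Section~\ref{sec:orientation of edges of Coxeter regions}, and the half Coxeter region machinery---is the paper's strategy. But what you set aside as ``bookkeeping'' is the actual proof. The assertion that ``the region swept out develops into one of the model pieces'' is precisely the statement to be proved: the fact that each vertex figure matches a vertex figure of some atomic sector does not by itself exclude hybrids, and the substance of the argument is a chain of dichotomies that you never formulate. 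For a plain ray, either every new layer is quasi-rectangular or a non-good vertex forces a diamond-plain sector (Lemma~\ref{lem:producing diamond-plain sector}), followed by the further case split according to whether the boundary vertices are good, type I but not good, or $\Delta$--vertices; for a Coxeter ray, the half Coxeter region $\h'_r$ falls into cases (a)--(d), with (a) and (d) handled by Lemma~\ref{lem:case a and c} and (b),(c) forcing a restart from a shifted Coxeter ray, plus the final argument that the resulting stack of strips is a CCH of type I (no orientation-reversing vertices on the interfaces, via Lemma~\ref{lem:at most one reserving point} and Lemma~\ref{lem:piece orient}); for a diamond ray, the shift words $\delta_{r_i,r_{i+1}}$ and Lemmas~\ref{lem:extend to diamond line1} and~\ref{lem:extend to diamond line2} decide between a diamond-plain sector and a DCH. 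None of this appears in your proposal, so the claim that the developed region is ``genuinely one of the listed atomic sectors rather than some hybrid'' is unsupported.

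A second concrete gap concerns $r_1$. There is no $\mathbb Z$--action on $Q_R$ (it is a quasiflat approximation, not a group-invariant subcomplex), so ``an orbit of $\gamma$ under the relevant $\mathbb Z$--action'' is not defined, and your description wavers between $r_1$ being the far boundary of the sector (which would contradict $d_H(\gamma_1,\gamma)<\infty$) and the near one. In the paper, $r_1$ arises by deliberately replacing $r$ with a nearby singular ray---right shifts of diamond rays (Lemma~\ref{lem:right shift}), parallel transport of Coxeter rays (Lemma~\ref{lem:parallel transport}), or passing to sub-rays and adding quasi-rectangular layers---exactly so that the ensuing development stays inside $Q_R$, away from the hole; this is why the conclusion is phrased with $d_H(\gamma_1,\gamma)<\infty$ rather than $r_1=r$, and it requires an argument, not periodicity. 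Finally, a minor point: the angles that occur are not only $\pi/2,\pi/3,\pi/6$---diamond-plain sectors contribute $\frac{n\pm1}{2n}\pi$, Coxeter sectors can contribute $\pi-\alpha$, and the chromatic half-flats contribute $\pi$---although your lower bound $\pi/6$ is correct, since the smallest chamber angle in the Euclidean triangle groups is $\pi/6$.
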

Since $Q'$ is flat outside a compact subset, (1) actually means something stronger: a sub-ray of $\gamma$ and a sub-ray of $\gamma_1$ bound an isometrically embedded flat strip $[0,\infty)\times[0,a]$ in $Q'$.

Assuming Theorem~\ref{thm:main1}, we can deduce the following, being one of our main results (see Theorem~\ref{thm:intro1} in Introduction).
\begin{theorem}
	\label{thm:main2}
	There are finitely many mutually disjoint atomic sectors $\{S_i\}_{i=1}^{n}$ in $Q_R$ such that $d_H(\cup_{i=1}^{n}S_i,Q_R)<\infty$. Hence each $2$--quasiflat in $\Xa_\Gamma$ is at bounded distance from a union of finitely many atomic sectors. 
\end{theorem}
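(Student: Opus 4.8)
The plan is to prove Theorem~\ref{thm:main2} by a development (``unfolding'') argument: start from one singular ray, repeatedly invoke Theorem~\ref{thm:main1} to attach atomic sectors one after another, each sweeping a definite amount of angle to the right, and then use the flatness of $Q'$ outside a compact set to see that after finitely many steps the sectors have swept all the way around the end of $Q'$ and cover $Q_R$ up to bounded Hausdorff distance. Before starting I would record two reductions. First, by Theorem~\ref{thm:MSquasiflats}(3) (via Theorem~\ref{thm:BKS}) the complex $Q'$ is flat outside a compact subcomplex, so $Q'\setminus B_{Q'}(x_0,R_0)$ is a flat annulus isometric to a flat cone--annulus; it therefore has a well-defined cone angle $\Theta$ at infinity, and the quadratic area bound $\area(B_{Q'}(x_0,r))\le L_3r^2$ established in the proof of Theorem~\ref{thm:MSquasiflats} forces $\Theta\le 2L_3<\infty$. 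Second, since $d_H(\im q',\im q)<\infty$ by Theorem~\ref{thm:MSquasiflats}(5) and $Q'_R\subset Q_R$ by Lemma~\ref{lem:cover}, and since (as explained at the end of Section~\ref{subsec:new cell structure}) the quasiflat is represented by $q\colon Q_R\to\Xa_\Gamma$ with $q$ carrying atomic sectors in $Q_R$ to atomic sectors in $\Xa_\Gamma$, it suffices to produce finitely many mutually disjoint atomic sectors in $Q_R$ whose union lies at bounded Hausdorff distance from $Q'$.

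Next I would run the iteration. By Proposition~\ref{prop:existence of singular rays} there is a singular ray $r^{(0)}\subset Q_R$. Applying Theorem~\ref{thm:main1} to $r^{(0)}$ gives a CAT(0) ray $\gamma^{(0)}\subset r^{(0)}$, an atomic sector $S_1\subset Q_R$ one of whose boundary rays is a singular ray at bounded Hausdorff distance from $r^{(0)}$, and a flat sector $S'_1\subset S_1$ with $d_H(S_1,S'_1)<\infty$, lying on the right side of $\gamma^{(0)}$ with angle $\ge\pi/6$. The \emph{other} boundary ray $r^{(1)}$ of $S_1$ is again a singular ray contained in $Q_R$ (by definition the boundary rays of any atomic sector are singular rays); let $\gamma^{(1)}\subset r^{(1)}$ be its CAT(0) ray, which coincides up to bounded distance with the ``far'' boundary ray of $S'_1$. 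Now apply Theorem~\ref{thm:main1} to $r^{(1)}$ to get $S_2,S'_2,r^{(2)}$, with $S'_2$ on the right side of $\gamma^{(1)}$ (the side away from $S'_1$, by the orientation conventions), and iterate. After $k$ steps the flat sub-sectors $S'_1,\dots,S'_k$ have pairwise disjoint interiors and, inside the flat part of $Q'$, sweep a total angle $\sum_{i=1}^{k}\mathrm{angle}(S'_i)\ge k\pi/6$ in the cone at infinity of $Q'$.

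Since the cone angle $\Theta$ is finite, after at most $n:=\lceil 6\Theta/\pi\rceil$ steps the accumulated swept angle reaches $\Theta$; at that point $\gamma^{(n)}$ has returned to within bounded Hausdorff distance of $\gamma^{(0)}$ (it has gone once around the end of $Q'$), the development closes up, and $\bigcup_{i=1}^{n}S'_i$ covers $Q'$ outside a compact subcomplex. Here one uses crucially that each $S'_i$ has angle at least the uniform constant $\pi/6$ and that $Q'$ is genuinely flat (Euclidean cone) outside a compact set, so that the rays $\gamma^{(i)}$ march monotonically around the circle of directions at infinity and the union of wedges really exhausts a neighbourhood of infinity. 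Consequently $\bigcup_{i=1}^{n}S_i\supseteq\bigcup_{i=1}^{n}S'_i$ lies at bounded Hausdorff distance from $Q'$, hence from $Q_R$; pushing forward by $q$ gives the statement (and Theorem~\ref{thm:intro1}) that every $2$--quasiflat in $\Xa_\Gamma$ is at bounded distance from a finite union of atomic sectors.

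It remains to arrange genuine disjointness. For $|i-j|\ge 2$ the flat sub-sectors $S'_i$ and $S'_j$ diverge in the cone at infinity, so $S_i\cap S_j\subset N_C(S'_i)\cap N_C(S'_j)$ is bounded; should such an intersection ever contain discs of arbitrarily large radius, Lemma~\ref{lem:atomic intersection} would yield $d_H(S_i,S_j)<\infty$ and we could simply discard one of the two. For consecutive sectors $S_i,S_{i+1}$, which share the singular ray $r^{(i+1)}$ up to bounded distance, their overlap is contained in a bounded-width neighbourhood of $r^{(i+1)}$; I would clear it by replacing each $S_i$ with an atomic sub-sector obtained by pushing its boundary rays inward past these overlap strips (for the genuine-angle sectors one moves the apex inward along the bisector, for the half-flat types DCH/CCH/PCH one drops finitely many of the constituent lines $L_i$, a bounded push in each case), which is an atomic sector of the same type at bounded distance from $S_i$. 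After discarding the finitely many sectors that are coarsely contained in others and performing this bounded trimming, the resulting $S_1,\dots,S_n$ are mutually disjoint atomic sectors with $d_H(\bigcup_i S_i,Q_R)<\infty$. The main obstacle is the closing-up of the development in the previous paragraph — verifying that the successive boundary rays genuinely sweep once around the end of $Q'$ rather than spiralling, and that the swept wedges therefore cover a neighbourhood of infinity — where the finite cone angle of $Q'$ and the uniform lower bound $\pi/6$ on sector angles are both indispensable; the disjointness cleanup is a secondary, more bookkeeping-type point relying on Lemma~\ref{lem:atomic intersection} and on sub-sectors of atomic sectors being atomic of the same type.
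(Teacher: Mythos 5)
Your proposal is correct and follows essentially the same route as the paper: develop sectors by iterating Theorem~\ref{thm:main1} starting from the singular ray of Proposition~\ref{prop:existence of singular rays}, use the finite total angle at infinity of the flat-outside-compact plane $Q'$ (the paper phrases this via the Tits boundary circle of $Q'$) together with the uniform $\pi/6$ lower bound to stop after finitely many steps, and then pass to sub-sectors for disjointness. The only cosmetic difference is the last step: the paper rules out an overshoot of the final sector past the initial ray directly from the definitions of atomic sectors and then takes sub-sectors, whereas you handle the same overlap via Lemma~\ref{lem:atomic intersection} and trimming, which is an acceptable variant of the same idea.
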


\begin{proof}
	By Proposition~\ref{prop:existence of singular rays}, there exists at least one singular ray $r_1$ in $Q_R$. Now we apply Theorem~\ref{thm:main1} to $r_1$ to obtain an atomic sector $S_1\subset Q_R$ and a CAT(0) sector $S'_1\subset Q'$. Let $r_2$ be a boundary ray of $S_1$ such that $d_H(r_2,r_1)=\infty$. We apply Theorem~\ref{thm:main1} to $r_2$ to produce $S_2$ and $S'_2$. Let $S_i$ and $S'_i$ be the sectors produced by repeating this process for $i$ times. Consider $\{\partial_T S'_i\}$ in the Tits boundary $\partial_T Q'$ of $Q'$. Since $Q'$ comes from a quasiflat, $\partial_TQ'$ is a circle whose length is $\ge 2\pi$ and $<\infty$. By Theorem~\ref{thm:main1}, the length of each $\partial_T S'_i$ is between $\pi/6$ and $\pi$, and $\partial_T Q'$ inherits an orientation from $Q'$ such that the terminal point of $\partial_T S'_i$ is the starting point of $\partial_T S'_{i+1}$. So there exists $n$ such that $\cup_{i=1}^n S'_i$ covers $\partial_T Q'$. We take $n$ to be the smallest possible. It is clear that $d_H(\cup_{i=1}^n S'_i,\partial_T Q')<\infty$, hence $d_H(\cup_{i=1}^{n}S_i,Q_R)<\infty$. 
	
	Now we show that it is possible to arrange $\{S_i\}_{i=1}^n$ such that they are mutually disjoint in $Q_R$ (however, we are not claiming their images in $\Xa_\Gamma$ are also mutually disjoint). It suffices to show the terminal point of $\partial_T S'_n$ is the starting point $\eta$ of $\partial_T S'_1$, then we can pass to suitable sub-sectors of $S_i$ such that they are mutually disjoint. Suppose by contradiction that $\eta$ is in the interior of $\partial S'_n$. Then there is a boundary ray $r$ of $S_1$ such that $r$ is contained in $S_n$ (modulo a compact subset of $r$), but $r$ is not at finite Hausdorff distance away from any of the boundary rays of $S_n$. However, this is impossible by definitions of atomic sectors. Now the theorem follows.
\end{proof}

In the rest of this section, we prove Theorem~\ref{thm:main1}. The proof is divided into three cases when $r$ is a plain ray (Section~\ref{subsec:development of plain rays}), a Coxeter ray (Section~\ref{subsec:development of Coxeter rays}), or a diamond ray (Section~\ref{subsec:development of diamont rays}). We conclude the proof at the end of the section.

\subsection{Development of plain rays}
\label{subsec:development of plain rays}
We start with several preparatory observations. An interior vertex $v\in Q_R$ is \emph{good} if $\St(v,Q_R)$ has four $2$--cells $\{\Pi_i\}_{i=1}^4$ as in Figure~\ref{f:flat4} such that
\begin{enumerate}
	\item $\partial\Pi_1$ and $\partial\Pi_2$ have $2n_1$ edges, $\partial\Pi_3$ and $\partial\Pi_4$ have $2n_2$ edges;
	\item $\Pi_3\cap\Pi_2=\Pi_1\cap\Pi_4=\{v\}$;
	\item $\Pi_1\cap\Pi_2$ has $n_1-1$ edges, $\Pi_3\cap\Pi_4$ has $n_2-1$ edges, $\Pi_1\cap\Pi_3$ is an edge $e_1$ and $\Pi_2\cap\Pi_4$ is an edge $e_2$.
\end{enumerate}
Suppose $v$ is good. If $\Pi_1,\Pi_2,\Pi_3,\Pi_4$ are contained in the same block, then $v$ is of type I (see Table~\ref{t:flat} on page \pageref{t:flat}). If $\{\Pi_i\}_{i=1}^4$ are contained in two different blocks, then $v$ is of type II. If $\{\Pi_i\}_{i=1}^4$ are contained in more than two different blocks, then $v$ is a $\square$--vertex. We record the following observations, which follow from the argument in the second paragraph of the proof of Lemma~\ref{lem:real2} (see also Figure~\ref{f:flat1}).

\begin{lemma}
	\label{lem:concat}
	Suppose $v$ is a good vertex and let $e_1$ and $e_2$ be as above. Let $u$ be the endpoint of $\Pi_1\cap\Pi_2$ with $u\neq v$. Then 
	\begin{enumerate}
		\item $e_1$ and $e_2$ are also edges in $Q'$ and their concatenation forms a CAT(0) geodesic segment in $Q'$;
		\item the convex hull of $e_1$ (or $e_2$) and $u$ in $Q'$ is a flat triangle with a right angle at $v$ and an angle of $\pi/2n$ at $u$, where $2n$ is the number of edges in $\partial\Pi_1$.
	\end{enumerate}
\end{lemma}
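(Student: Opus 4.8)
The statement to prove, Lemma~\ref{lem:concat}, asserts two purely local geometric facts about a good vertex $v$: first, that the edges $e_1 = \Pi_1\cap\Pi_3$ and $e_2 = \Pi_2\cap\Pi_4$ are genuine edges of $Q'$ whose union is a CAT(0) geodesic segment through $v$; second, that the convex hull in $Q'$ of $e_1$ (or $e_2$) together with the far endpoint $u$ of $\Pi_1\cap\Pi_2$ is a flat Euclidean triangle with a right angle at $v$ and angle $\pi/2n$ at $u$.

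\begin{proof}[Proof sketch]
The plan is to reduce everything to the already-established local picture of $\St(v,Q_R)$ and the known geometry of $X_n$. First I would recall that, by the definition of ``good vertex'' and by Lemma~\ref{lem:real1}, Lemma~\ref{lem:real2}, and Lemma~\ref{lem:real3}, the four cells $\{\Pi_i\}_{i=1}^4$ around $v$ are the $q$-images of the corresponding cells $C_{x_i}$ in $Q'$, and $q$ restricts to a homeomorphism from $\St(v,Q_R)\cap Q'$ (equivalently $\cup_{i=1}^4 C_{x_i}$, which equals $\St(v,Q')$ by Lemma~\ref{lem:star homeo}) onto $\cup_{i=1}^4\Pi_i$ in $\Xa_\Gamma$ which, in the relevant block, sits inside $X_n$ with its piecewise-Euclidean metric. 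Since $q$ preserves the metric on each simplex by construction (Theorem~\ref{thm:MSquasiflats}(2)), it suffices to verify the two claims inside $X_n$ for the standard configuration drawn in Figure~\ref{f:flat1}: namely $\Pi_1 = \Pi$, $\Pi_2 = r^{-1}\Pi$, and $\sigma_1 = a^o\to \ui{0}o\to\ui{1}o\to a^i$ giving $\Pi_3 = \ui{0}\Pi$, $\Pi_4 = \ui{1}\Pi$ with $\ui{0}\Pi\cap\ui{1}\Pi$ having $n-1$ edges, exactly as extracted in the proof of Lemma~\ref{lem:real2}.

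For claim (1), I would argue that $e_1$ and $e_2$ are edges between two real vertices of $Q'$: indeed $x$ (i.e.\ $v$) is a real vertex here (type I or II or $\square$), and the two endpoints of $e_1$ lie in $\Pi_1\cap\Pi_3$, which by the explicit description is an edge of $\partial\Pi_1$ between real vertices; since such edges of $Q_R$ lift to genuine edges of $Q'$ (Lemma~\ref{lem:intersection of cells} and the remark that $C_{x_1}\cap C_{x_3}$ is a boundary arc following edges of $\lk$ between real vertices), $e_1$ and likewise $e_2$ are edges of $Q'$. To see $e_1\cup e_2$ is a CAT(0) geodesic through $v$, I would compute the angle at $v$ on each side: in the link $\lk(v,X^{(2)}_n)$ this is the angular length of a path from the vertex corresponding to $e_1$ to the one corresponding to $e_2$. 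In the configuration of Figure~\ref{f:flat1} these two link-vertices are $\ui{0}o$ and $a^i$ (on one side) and the ``opposite'' pair on the other side; the edge from $a^i$ to $\ui{0}o$ is type I of angular length $\tfrac{n-1}{4n}2\pi = \tfrac{(n-1)\pi}{2n}$ by \cite[\lemfivetwo]{Artinmetric}, and combining with the corresponding contribution from $\Pi_3$'s side (another type-I edge plus the relevant type-II segment) one gets total angular length $\pi$ on each side of $e_1\cup e_2$ at $v$. Hence $v$ is a flat, non-branching point of the geodesic, so $e_1\cup e_2$ is locally geodesic; being a local geodesic in the CAT(0) space $Q'$, it is a geodesic segment.

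For claim (2), consider the union of the two cells $\Pi_1\cup\Pi_2 = \Pi \cup r^{-1}\Pi$ and the intersection $\Pi_1\cap\Pi_2$, which has $n-1$ edges and runs from $v$ (a tip) to $u$ (the opposite tip in that half). Inside a single cell $\Pi$ of $X_n$, the subdivided structure places the fake vertex $o$ at the center and each boundary edge subtends angle $\tfrac{1}{2n}2\pi$ at $o$; the triangle $\Delta(o, v, u')$ for consecutive real vertices $v,u'$ on $\partial\Pi$ is a flat Euclidean isosceles triangle with apex angle $\tfrac{\pi}{n}$ at $o$ and base angles $\tfrac{\pi}{2}-\tfrac{\pi}{2n}$. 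Summing the $n-1$ such triangles along $\Pi_1\cap\Pi_2$ on one side and noting (by Definition~\ref{def:length} and the fake-fake edge length $\phi$) that the two halves $\Pi_1$ and $\Pi_2$ glue along this path so that $o_1 = o$ and $o_2$ are ``flattened'' across it, one sees that the convex hull in $Q'$ of $e_1$ and $u$ is isometric to a flat Euclidean triangle: the angle at $v$ is the angle between $e_1$ and the first edge of $\Pi_1\cap\Pi_2$ at the tip $v$, which from the precell picture (Figure~\ref{f:precell}, the upper/lower half meeting at a tip) is exactly $\pi/2$, while the angle at $u$ is $\pi/2n$, the half-angle of the $2n$-gon at its opposite tip, by the same precell geometry. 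Flatness of the hull follows because every intermediate vertex along the path is flat (angular length $2\pi$ there, by the metrically systolic condition verified in Theorem~\ref{thm:X_n}) and the region lies in the single block, where $X_n$'s cells have been metrized precisely to make such configurations Euclidean; I would cite \cite[\lemfourseven]{Artinmetric} for the triangle inequality guaranteeing the pieces assemble without overlap. The step I expect to be most delicate is pinning down the angle at $v$ exactly — $\pi/2$ rather than something slightly off — which requires carefully reading off from Figure~\ref{f:precell} which edge of $\Pi_1\cap\Pi_2$ at the tip $v$ is adjacent to $e_1 = \Pi_1\cap\Pi_3$ in the cyclic order around $v$, and checking against the link computation that the two sides $\sigma_1$ and $\sigma_2$ of $\omega$ each have angular length exactly $\pi$ so that $v$ is genuinely a right-angled corner of the triangle sitting inside a flat geodesic $e_1\cup e_2$; the bookkeeping in Figure~\ref{f:flat1} and the identities in \cite[\lemfivetwo]{Artinmetric}, \cite[\lemfivethree]{Artinmetric} make this routine but it is the heart of the matter.
\end{proof}
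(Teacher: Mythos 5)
Your overall route --- normalize to the local configuration at a good vertex extracted in the second paragraph of the proof of Lemma~\ref{lem:real2} and then compute angular lengths in the flat link --- is exactly what the paper intends (it records the lemma with no further argument), and your treatment of part (1) is essentially correct: the directions of $e_1$ and $e_2$ at $v$ are the two real vertices of the link of $v$ in $Q'$, each complementary arc has angular length $\frac{(n-1)\pi}{2n}+\frac{\pi}{n}+\frac{(n-1)\pi}{2n}=\pi$, and a local geodesic in the CAT(0) plane $Q'$ is a geodesic. Two caveats: your normalization is scrambled ($u_0^{-1}\Pi=\Pi$ and $\Pi\cap r^{-1}\Pi$ is a single vertex, so the quadruple $\Pi,\,r^{-1}\Pi,\,u_0^{-1}\Pi,\,u_1^{-1}\Pi$ does not satisfy the good-vertex conditions), and a good vertex may also be of type I or a $\square$--vertex, which your type II normal form does not literally cover, although the same angular count gives $\pi$ on both sides in those cases as well.

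For part (2) there is a genuine gap: the computation is carried out in the wrong local model, stemming from conflating $q$ with $q'$ (Theorem~\ref{thm:MSquasiflats}(2) makes $q'$ simplexwise isometric; $q=\rho\circ q'$ distorts the metric). Near a good vertex, $Q'$ does \emph{not} contain the $n-1$ subdivided-cell triangles along $\Pi_1\cap\Pi_2$: that path exists only in $Q_R$, and its interior vertices are not vertices of $Q'$. In $Q'$ the region between the two cell centres $o_1,o_2$ is the ear, i.e.\ the flat rhombus $(v,o_1,u,o_2)$ spanned by the fake--fake edge $\overline{o_1o_2}$ of length $2\sin(\pi/2n)$ (Definition~\ref{def:length}, Lemma~\ref{lem:edge type II}; cf.\ Lemma~\ref{lem:fake vertex} and Lemma~\ref{lem:intersection of cells}), and the side of the hull from $v$ to $u$ is the diagonal of this rhombus, not the combinatorial path. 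Consequently your angle readings are unjustified: Figure~\ref{f:precell} is purely combinatorial, and in the actual metric the interior angle of the regular $2n$--gon at a tip is $\pi-\pi/n$, so neither ``$\pi/2$ between $e_1$ and the first edge of $\Pi_1\cap\Pi_2$'' nor ``$\pi/2n$ as the half tip-angle at $u$'' follows (that half-angle is $\pi/2-\pi/2n$). The correct computation is: the angle at $v$ equals the base angle $\pi/2-\pi/2n$ of the simplex $(v,w,o_1)$ (where $w$ is the other endpoint of $e_1$) plus half the angular length $\pi/n$ of $\overline{o_1o_2}$, giving $\pi/2$; the angle at $u$ is half of $\pi/n$, i.e.\ $\pi/2n$; the side $[w,u]=[w,o_1]\cup[o_1,u]$ is geodesic because the hull's angle at $o_1$ is $\pi/n+(\pi-\pi/n)=\pi$ and $o_1$ is a flat vertex of $Q'$; and flatness of the hull uses that all nearby vertices of $Q'$ are flat by Theorem~\ref{thm:MSquasiflats}(3) and the choice of $R$ --- metric systolicity (Theorem~\ref{thm:X_n}), which you invoke, only gives links of length at least $2\pi$, and the ``intermediate vertices along the path'' whose flatness you appeal to are not points where any link condition in $Q'$ is relevant.
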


Let $Z$ be $\mathbb E^2$ tiled by unit squares. Suppose the tiling is compatible with the coordinate system on $\mathbb E^2$. So it makes sense to talk about vertical edges, or horizontal edges of $Z$. A subcomplex of $P\subset Q_R$ is \emph{quasi-rectangular}, or \emph{$Z'$--quasi-rectangular}, if there exists a homeomorphism $h$ from a convex subcomplex $Z'\subset Z$ to $P$ such that for each square $\sigma\subset Z'$, $h(\sigma)$ is a $2$--cell $C$ of $P$, $h(\partial\sigma)=\partial C$ and $h$ maps the two vertical edges of $\sigma$ to a pair of opposite edges on $\partial C$. We refer to Figure~\ref{f:simplelinethickening1} right for an example of a $[0,4]\times[0,5]$--quasi-rectangular subcomplex of $Q_R$. Each interior vertex of the quasi-rectangular region $P$ is either a vertex of type O, or a good vertex. 


Let $r\subset Q_R$ be a plain ray based at a vertex $y$. By Definition~\ref{def:plain line} and Table~\ref{t:flat}, each vertex in $r$ that is not $y$ and is not adjacent to $y$ is good. Thus by passing to a sub-ray, we assume each vertex of $r$ is good. By Lemma~\ref{lem:concat} (1), $r$ is a CAT(0) geodesic ray in $Q'$. Let $\ell_r$ be a CAT(0) geodesic line orthogonal to $r$ at $y$. Let $H_r$ be the halfspace which is bounded by $\ell_r$ and contains $r$. Up to passing to a sub-ray of $r$, we assume $H_r$ is sufficiently far away from the base point $x_0$ of $Q'$ so that $H_r$ is in the interior of $Q_R$.

The thickening $T_r$ of $r$ is defined to be the union of $2$--cells of $Q_R$ which contains an edge of $r$. We label these $2$--cells and vertices of $r$ as in Figure~\ref{f:simplelinethickening1} left, namely, the $2$--cells that contain the edge $\overline{y_{0,j-1}y_{0j}}$ are denoted by $C_{0j}$ and $C_{1j}$. Suppose $\partial C_{01}$ (resp.\ $\partial C_{11}$) is made of $2n_0$ (resp.\ $2n_1$) edges. Since each $y_{0j}$ is a good vertex, $\partial C_{ij}$ is made of $2n_i$ edges for $i=0,1$ and $j\ge 1$ and $C_{ij}\cap C_{i,j+1}$ is a path with $n_i-1$ edges.
\begin{figure}[h!]
	\centering
	\includegraphics[width=1\textwidth]{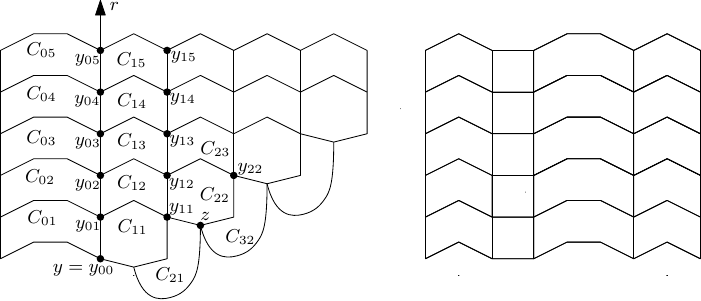}
	\caption{}
	\label{f:simplelinethickening1}
\end{figure}

Now we look at the vertex $y_{11}$ in Figure~\ref{f:simplelinethickening1}, i.e.\ $y_{11}=\partial T_r\cap C_{12}\cap C_{11}$. Since $y_{01}$ is good, by Lemma~\ref{lem:concat} (2), $y_{01},y_{11}$ and $y_{00}$ span a triangle in $Q'$ with a right-angle at $y_{01}$. Hence $y_{11}\in H_r$ and $y_{11}$ satisfies Lemma~\ref{lem:star homeo} by our choice of $H_r$.

\begin{lemma}\
	\label{lem:producing diamond-plain sector}
	\begin{enumerate}
		\item If $y_{11}$ is not good, then $r$ bounds a diamond-plain sector (see Table~\ref{t:atomic} on page \pageref{t:atomic}) on the right side of $r$.
		\item If $y_{11}$ is good, then $y_{1i}$ is good for all $i\ge 1$.
	\end{enumerate}
\end{lemma}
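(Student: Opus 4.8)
The plan is to analyze the local structure around $y_{11}$ and propagate the information along $r$ using a development argument. First I would observe that $y_{11}$ is an interior vertex of $Q_R$ contained in $H_r$, hence (by our choice of $H_r$) it satisfies one of the cases of Lemma~\ref{lem:star homeo}. Since $y_{11}$ lies on the boundary of the thickening $T_r$ and is adjacent to the good vertex $y_{01}$, the two $2$--cells $C_{11}$ and $C_{12}$ both contain $y_{11}$; moreover, $C_{11}\cap C_{12}$ is a path with $n_1-1$ edges having $y_{11}$ as an endpoint. By Lemma~\ref{lem:concat}(2) applied to $y_{01}$, the cells $C_{11}$, $C_{12}$ meet $\Pi$-type cells across $y_{11}$ in a pattern governed by Lemma~\ref{lem:real1} and Lemma~\ref{lem:real2}, so the only way $y_{11}$ fails to be good is that the configuration of $2$--cells around it is \emph{not} the four-cell ``good'' pattern of Figure~\ref{f:flat4}.

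For part (1), I would argue that if $y_{11}$ is not good then it must be of type I, with $C_{11}\cap C_{12}$ and the two cells of $\St(y_{11},Q_R)$ sharing a \emph{tip} at $y_{11}$; this is precisely the configuration in Lemma~\ref{lem:real1} where $v$ is the tip of two cells. This tip-sharing is exactly the local picture at the junction of consecutive $2$--cells in a diamond line (cf.\ Definition~\ref{def:diamond line}(3) and Lemma~\ref{lem:diamond}'s induction). So I would initiate the same inductive construction as in the proof of Lemma~\ref{lem:diamond}: starting from $y_{11}$, use Lemma~\ref{lem:star homeo}, Lemma~\ref{lem:real1} and $n_1\ge 3$ (which holds because the cells $C_{1j}$ are $2n_1$-gons with $C_{11}\cap C_{12}$ nondegenerate, forcing $n_1\ge 2$; and when $n_1=2$ the vertex is automatically good, so non-good with a tip forces $n_1\ge 3$) to produce a sequence of cells $\Pi_1,\Pi_2,\ldots$ each meeting the next in a type I vertex lying along a CAT(0) geodesic ray $r_K$ emanating on the right side of $r$. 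This yields a diamond ray; combined with the plain ray $r$ itself on the boundary, the region they span inside $Q'$ is a diamond-plain sector (Definition~\ref{def:diamond-plain sector}), and by construction it lies on the right side of $r$.

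For part (2), assuming $y_{11}$ is good, I would show goodness propagates: the four-cell configuration around $y_{11}$ forces the cell $C_{1,3}$ (the next cell of $T_r$ past $C_{1,2}$) together with the next boundary cell to meet at $y_{12}$ in a good configuration, by Lemma~\ref{lem:concat}(2) applied now at $y_{02}$ and by the same case analysis of Lemma~\ref{lem:star homeo} as at $y_{11}$. Formally this is an induction on $i$: goodness of $y_{1i}$ combined with goodness of $y_{0,i+1}$ (all $y_{0j}$ are good since each lies on $r$) determines $\St(y_{1,i+1},Q_R)$ via Lemma~\ref{lem:real2}/Lemma~\ref{lem:real1} and Remark~\ref{rmk:real2}, and the only consistent local picture is again the four-cell good one—if it were not, we would be in the tip-sharing case, but that would be incompatible with $y_{1i}$ being good by the constraints on which cells contain tips (Lemma~\ref{lem:real1}(2)--(3)).

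\textbf{Main obstacle.} The delicate point is the dichotomy at $y_{11}$: ruling out all the ``exotic'' possibilities of Lemma~\ref{lem:star homeo} except ``good'' and ``tip-sharing type I.'' In particular one must rule out that $y_{11}$ is a type III vertex (a $\Delta$-- or $\square$--vertex), or a type II vertex whose two distinguished edges are positioned so that $\St(y_{11},Q_R)$ is not the symmetric four-cell pattern. This requires carefully tracking orientations of the edges along $C_{11}\cap C_{12}$ via Lemma~\ref{lem:ob} and Remark~\ref{rmk:real2}, using that $y_{01}$ is good so that the edges bordering $C_{11}$ and $C_{12}$ near $y_{11}$ have prescribed orientations, and checking these are incompatible with the type III and the ``bad'' type II configurations. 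The propagation in part (2) is then a bookkeeping induction once the base case is pinned down; the real work is the orientation-and-cell-count analysis at the first junction vertex $y_{11}$.
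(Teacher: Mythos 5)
Your overall strategy (classify the star of $y_{11}$, then develop along $r$) is in the spirit of the paper's proof, but there is a genuine gap at the heart of part (1): you pass from ``there is a diamond ray emanating near $y_{11}$'' to ``the region it spans with $r$ is a diamond-plain sector'' as if this were immediate from Definition~\ref{def:diamond-plain sector}. It is not: a diamond-plain sector is a region inside a diamond-plain \emph{flat}, so one must verify that the whole region between $r$ and the diamond ray is tiled as in Figure~\ref{f:flat_diamond-plain}. That verification is the bulk of the paper's argument: the sector is built layer by layer, showing that $\cup_{j}C_{j,j+i}$ is a diamond ray for every $i\ge 1$, that the intermediate boundary vertices $y_{i,i+k}$ ($k\ge 1$) are good, and that each successive corner vertex $y_{22},y_{33},\dots$ is again a non-good type I vertex with the correct tip configuration so the induction can continue; moreover at every step one must check that the newly reached vertices (the vertex $z$, then $y_{22}$, etc.) lie in $H_r\subset Q_R$ --- done via flat rectangles/triangles in $Q'$ using Lemma~\ref{lem:concat}(2) --- before Lemma~\ref{lem:star homeo} may even be invoked. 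Relatedly, you cannot literally ``initiate the same inductive construction as in Lemma~\ref{lem:diamond}'': that lemma starts from an acute corner of a Coxeter region (a type III configuration), which is not the situation at $y_{11}$; and you never resolve \emph{which} two cells share the tip at $y_{11}$. The paper's key step is that, by Lemma~\ref{lem:real1}(2)--(4), either $y_{11}$ is a tip of $C_{11}$ and $C_{22}$, or of $C_{12}$ and $C_{21}$, and the second alternative forces $y_{11}$ to be good, a contradiction; this is what pins down the diamond ray $\cup_i C_{ii}$ and puts the sector on the right side of $r$.

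Secondly, the ``main obstacle'' you single out --- orientation bookkeeping via Lemma~\ref{lem:ob} and Remark~\ref{rmk:real2} to exclude type III and ``bad'' type II stars --- is misplaced. By Lemma~\ref{lem:real2} every type II vertex is automatically good, and by Lemma~\ref{lem:real3} so is every $\square$--vertex; a $\Delta$--vertex is excluded simply because $C_{11}\cap C_{12}$ has $n_1-1$ edges while adjacent cells at a $\Delta$--vertex meet in a single edge (and two cells of equal size cannot occur there when $n_1=2$). No orientation analysis enters; the content is the classification lemmas plus the tip dichotomy above. Your part (2) has roughly the right shape, but the stated conclusion is off: the tip-sharing case at $y_{1,i+1}$ is not ``incompatible with $y_{1i}$ being good''; rather, goodness of $y_{1i}$ (which pins $E\cap C_{1,i+1}$ to a single edge) together with either tip alternative forces the good four-cell pattern at $y_{1,i+1}$, contradicting the assumption that it is not good --- which is exactly why, in the paper, (2) drops out of the case analysis already carried out in (1).
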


The reader might find Table~\ref{t:flat} helpful for the following proof. 
\begin{proof}
	We prove (1). Since $C_{11}\cap C_{12}$ has $n_1-1$ edges, $y_{11}$ is not a $\triangle$--vertex. Since $y_{11}$ is not good, it is of type I and $n_1\ge 3$. Let $C_{22}$ and $C_{21}$ be other two $2$--cells containing $y_{11}$. For $j\ge 2$, let $C_{2j}$ be the $2$--cell containing $\overline{y_{1j}y_{1,j-1}}$ such that $C_{2j}\neq C_{1j}$. Note that $C_{12}\cap C_{22}=\overline{y_{11}y_{12}}$ (since other edges of $\partial C_{12}$ are also contained in either $C_{13}$, or $C_{11}$, or $C_{02}$). By Lemma~\ref{lem:real1} (2) and (3), either $y_{11}$ is a tip of both $C_{11}$ and $C_{22}$, or $y_{11}$ is a tip of both $C_{12}$ and $C_{21}$. If the latter case is true, then by Lemma~\ref{lem:real1} (4), $C_{11}\cap C_{21}$ has one edge (since $C_{11}\cap C_{12}$ has $n_1-1$ edges) and $C_{22}\cap C_{21}$ has $n_1-1$ edges (since $C_{12}\cap C_{22}$ has one edge), which implies $y_{11}$ is good and leads to a contradiction. Thus the former case holds.
	
	Since $C_{12},C_{22}$ and $C_{13}$ are in the same block, $y_{12}$ is of type I. We deduce from $C_{12}\cap C_{22}$ is an edge and Lemma~\ref{lem:real1} (4) that $y_{12}$ is good. Moreover, since $y_{12}$ is not a tip of $C_{22}$, $y_{12}$ is a tip of both $C_{12}$ and $C_{23}$. By repeating this argument (note that $y_{1j}\in H_r\subset Q_R$), we deduce that for each $j\ge 2$, $y_{1j}$ is a good vertex of type I and $y_{1j}$ is a tip of $C_{1j}$ and $C_{2,i+1}$. 
	
	Suppose $z\in \partial C_{22}$ is the endpoint of $C_{21}\cap C_{22}$ such that $z\neq y_{11}$. It is clear that $z\in H_r$, hence $z$ satisfies Lemma~\ref{lem:star homeo}. We claim $z$ is of type I. Since $C_{21}$ and $C_{22}$ are in the same block, $z$ is either type I or type II (note that $z\in Q_R$). However, since $y_{11}$ is not good, $C_{22}\cap C_{21}$ has $< n_1-1$ edges, which contradicts Lemma~\ref{lem:real2} (3). Thus $z$ is of type I.
	
	Let $y_{22}$ be the end point of $C_{22}\cap C_{23}$ with $y_{22}\neq y_{12}$. Since both $y_{01}$ and $y_{02}$ are good, we deduce from the relation between $Q'$ and $Q_R$ that the convex hull of $y_{01},y_{02},y_{11}$ and $y_{22}$ in $Q'$ is isometric to a flat rectangle. Since $y_{12}$ is good, by Lemma~\ref{lem:concat} (2), $y_{12},y_{11}$ and $y_{22}$ span a flat triangle in $Q'$ with a right angle at $y_{12}$. Hence $y_{22}\in H_r$ and $y_{22}$ satisfies Lemma~\ref{lem:star homeo}.
	
	Since $y_{11}$ is a tip of $C_{22}$, $z$ is not a tip of $C_{22}$. Thus $z$ is a tip of both $C_{21}$ and $C_{32}$. By Lemma~\ref{lem:real1} (4), $C_{22}\cap (C_{21}\cup C_{32})$ is a half of $C_{22}$. Thus $C_{22}\cap C_{32}$ is a path from $z$ to $y_{22}$. We deduce that $y_{22}$ is not good and is of type I, and we can apply the previous discussion to $y_{22}$.
	
	The above argument enable us to build inductively a diamond ray $L=\cup_{i=1}^{\infty} C_{ii}$ and a sector between $r$ and $L$. Moreover, for each $i\ge 1$, $\cup_{j=1}^{\infty} C_{j,j+i}$ is a diamond ray. Since $y_{i,i+k}$ is good for each $i\ge 0$ and $k\ge 1$, one readily verify that this sector is a diamond-plain sector.
	
	(2) already follows from the proof of (1).
\end{proof}

By Lemma~\ref{lem:concat}, the vertices $\{y_{ii}\}_{i=0}^{\infty}$ are contained in a CAT(0) geodesic ray $L'$ emanating from $y_{00}$, and the convex hull of $r$ and $L'$ in $Q'$ is a flat sector with angle $\frac{n_1-1}{2n_1}\pi\ge \pi/3$. This flat sector is at finite Hausdorff distance from the diamond-plain sector of $Q_R$ produced above in Lemma~\ref{lem:producing diamond-plain sector}.

It follows from Lemma~\ref{lem:producing diamond-plain sector} that exactly one of the following holds:
\begin{enumerate}[label=(\alph*)]
	\item $r$ bounds a $[0,\infty)\times [0,\infty)$--quasi-rectangular region $P$ on its right side with $r=\{0\}\times [0,\infty)$;
	\item there are a $[0,n]\times [0,\infty)$--quasi-rectangular region $P_1$ and a diamond-plain sector $P_2$ such that $r$ bounds $P_1$ on its right side with $r=\{0\}\times [0,\infty)$ and $P_1\cap P_2=\{n\}\times [0,\infty)$.
\end{enumerate}

Now we look at case (a) in more detail. We identify $P$ with $[0,\infty)\times [0,\infty)$ via $h\colon [0,\infty)\times [0,\infty)\to P$ (note that not all vertices of $P$ have integer $x$--coordinates). Let $\ell'$ be a CAT(0) geodesic line in $Q'$ that contains $r$, and let $H_P$ be the half-space bounded by $\ell'$ with $P\subset H_P$. By possibly replacing $[0,\infty)\times [0,\infty)$ by $[m,\infty)\times [0,\infty)$, we assume $H_P$ is sufficiently far away from the base point $x_0$ of $Q'$ such that $H_P$ is contained in the interior of $Q_R$. There are three cases to consider.

\emph{Case 1:} Suppose each vertex of $P$ in $[0,\infty)\times\{0\}$ is either of type O, or a good vertex. If $P$ is made of squares, then it is a plain sector. Now we assume at least one $2$--cell of $P$ is not a square. For each integer $k\ge 0$, let $w_k=\{k\}\times\{0\}$. Let $\Pi_k$ and $\Pi'_k$ be the two $2$--cells in $Q_R$ that contain $w_k$ and are not contained in $P$. There exists an integer $k_0$ such that the vertex $w_{k_0}$ is contained in a cell $\Pi$ of $P$ with $\ge 6$ edges. Since $w_{k_0}$ is of type II, $\Pi_{k_0}\cap\Pi'_{k_0}$ is one edge. Now we can prove inductively that $\Pi_k\cap \Pi'_k$ is one edge for all $k\ge 0$. Thus we can enlarge $P$ to be a $[0,\infty)\times [-1,\infty)$--quasi-rectangular region. If we can repeat this process for infinitely many times to obtain a $[0,\infty)\times(-\infty,\infty)$--quasi-rectangular region $P'$, then at least one of the following cases hold:
\begin{itemize}
	\item a sub-region of $P'$ of form $[m,\infty)\times(-\infty,\infty)$ is made of squares, in which case $P'$ contains a plain sector on the right side of $r$;
	\item $P'$ is a PCH as in Definition~\ref{def:PCH} (see Table~\ref{t:atomic} on page \pageref{t:atomic}), in which case $P'$ is Hausdorff close to a half-flat in $Q'$ by Lemma~\ref{lem:concat};
	\item there exists $m\ge 0$ such that the subcomplex $P''\subset P'$ corresponding to $[m,\infty)\times(-\infty,\infty)$ is contained in a block. Since $P''$ is quasi-rectangular, it is actually contained in a diamond-plain flat. Consequently $P''$ contains diamond-plain sector on the right side of $r$.
\end{itemize}

\emph{Case 2:} Suppose there is a vertex of $P$ in $[0,\infty)\times\{0\}$, say $y_{00}=\{0\}\times\{0\}$,  of type I but not good. Then $n_1\ge 3$ (recall that $C_{11}$ has $2n_1$ edges). Let $C_{00},C_{10}$ be $2$--cells containing $y_{00}$ other than $C_{01}$ and $C_{11}$. 

If $y_{00}$ is a tip of both $C_{00}$ and $C_{11}$ (we refer to Figure~\ref{f:producediamondsector} right) then, by Lemma~\ref{lem:real1}, $C_{01}\cap (C_{00}\cup C_{11})$ is a half of $\partial C_{01}$. Thus $C_{01}\cap C_{00}$ has $n_1-1$ edges. Since $y_{00}$ is not good, $C_{10}\cap C_{11}$ has $<n_1-1$ edges. Let $z$ be the end point of $C_{10}\cap C_{11}$ other than $y_{00}$. Then $z$ is an interior vertex of the path from $y_{00}$ to $y_{10}$. Since $C_{10}$ and $C_{11}$ are in the same block, $z$ is either of type I or of type II. In the former case, since $z$ is not a tip of $C_{11}$, $z$ is a tip of $C_{10}$ and $C_{20}$. Then $C_{11}\cap(C_{10}\cup C_{20})$ is a half of $\partial C_{11}$. Hence $y_{10}$ is an interior point of the interval $C_{11}\cap C_{20}$, which contradicts that $Q_R$ is a manifold around $y_{10}$. In the later case, we still have $C_{11}\cap(C_{10}\cup C_{20})$ is a half of $\partial C_{11}$, which can be deduced from Lemma~\ref{lem:real2} (2) and (3). Thus we reach a contradiction as before.

Thus $y_{00}$ is a tip of both $C_{01}$ and $C_{10}$ (we refer to Figure~\ref{f:producediamondsector} left). In this case, we argue as in Lemma~\ref{lem:producing diamond-plain sector} to produce a diamond ray $\cup_{i=0}^{\infty}C_{i,1-i}$ such that $r$ and this diamond ray bound a diamond-plain sector $S_1$. Moreover, $S_1$ contains a flat sector $S'_1\subset Q'$ with angle $=\frac{n_1+1}{2n_1}\pi$ (recall that $2n_1$ is the number of edges in $\partial C_{11}$).

\begin{figure}[h!]
	\centering
	\includegraphics[width=1\textwidth]{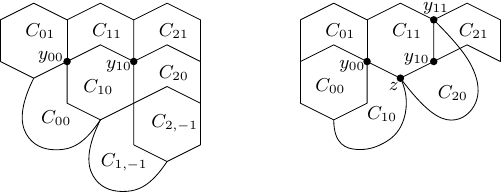}
	\caption{}
	\label{f:producediamondsector}
\end{figure}

\emph{Case 3:} Suppose there is a vertex $v$ of $P$ in $[0,\infty)\times\{0\}$ such that $v$ is a $\triangle$--vertex. In the light of Case 1 and Case 2, this is the only possibility left for discussion. Let $x_v$ be the $x$--coordinate of $v$ and let $\{v_i\}_{i=0}^{\infty}$ be consecutive vertices of $P$ on $[x_v,\infty)\times\{0\}$ with $v_0=v$. 

If each $v_i$ is of type III, then $[x_v,\infty)\times\{0\}$ or its appropriate sub-ray can be realized as part of the outer border of some half Coxeter region of $Q_R$. Then by Lemma~\ref{lem:real ray border}, there is a sub-ray $r'$ of $[x_v,\infty)\times\{0\}$ such that $r'$ is the boundary of a Coxeter ray. We deduce from $P$ being quasi-rectangular that $r'\times [0,\infty)$ is a Coxeter-plain sector, moreover, by Lemma~\ref{lem:concat}, $P$ is Hausdorff close to a flat sector of angle $\pi/2$ in $Q'$.

Now we assume at least one of $\{v_i\}$ is not of type III and assume without loss of generality that $v_1$ is the first vertex which is not of type III. Let $\Pi_1,\Pi_2$ be the two $2$--cells containing $\overline{v_0v_1}$ with $\Pi_1\subset P$. Since $v_0$ is of type III, $\Pi_1$ and $\Pi_2$ are in different blocks. Thus $v_1$ is either a $\triangle$--vertex with the same support as $v_0$ (Lemma~\ref{lem:same tag}), which can be ruled out by our assumption, or $v_1$ is of type II. Let $\Pi_3,\Pi_4$ be another two $2$--cells such that $\Pi_1\cap\Pi_3$ contains at least one edge. Suppose $\Pi_i$ has $2m_i$ edges. Since $\Pi_1\cap\Pi_2=\overline{v_0v_1}$ is one edge, by Lemma~\ref{lem:real2}, $\Pi_3\cap \Pi_1$ has $m_1-1$ edges. We deduce that $\Pi_1\cap (\Pi_2\cup\Pi_3)$ has $m_1$ edges. Since $\Pi_1\cap ([0,\infty)\times\{0\})$ has $m_1-1$ edges, $\Pi_1\cap\Pi_3$ has at least one edge which is not contained in $[0,\infty)\times\{0\}$. Hence $\Pi_3\subset P$. Moreover, $\Pi_1\cap \Pi_3$ is an edge since $P$ is quasi-rectangular. Thus $m_i=2$ and both $\Pi_1$ and $\Pi_3$ are squares.

We claim either all $v_i$ with $i\ge 2$ are of type II, or there exists $v_{i_0}$ with $i_0\ge 2$ such that $v_{i_0}$ is a $\triangle$--vertex with the same support as $v_0$ and $v_i$ is of type II for each $1\le i<i_0$. To see this, note that $v_2$ is contained in $\Pi_3$ and $\Pi_4$ which are in different blocks, so $v_2$ is either of type II or type III. If $v_2$ is of type II, let $\Pi_5$ and $\Pi_6$ be another two $2$--cells of $Q_R$ containing $v_2$. Then $\Pi_5$ is a square in $P$ and $|\partial\Pi_6|=|\partial\Pi_4|=|\partial\Pi_2|$ ($|\partial\Pi_i|$ denotes the number of edges in $\partial\Pi_i$). If $v_2$ is of type III, since $\Pi_4$ and $\Pi_2$ (and $\Pi_3$ and $\Pi_1$) are in the same block, $v_2$ is a $\triangle$--vertex with the same support as $v_0$. We now deduce the claim by repeating this argument. 

If $v_i$ is of type II for each $i\ge 2$, then there is a sub-region $P_1\subset P$ of form $[m,\infty)\times[0,\infty)$ for some $m$ such that each cell in $P_1$ is a square. Thus $P_1$ is a plain sector and it is isometric to a right-angled flat sector.

It remains to consider the case where $v_{i_0}$ in the above claim exists. We claim for all $i\ge i_0$, $v_i$ is a $\triangle$--vertex with the same support as $v_{i_0}$. Given this claim, one can find as before a sub-ray $r''$ of $[x_v,\infty)\times\{0\}$ such that $r''\times [0,\infty)$ is a Coxeter-plain sector. We argue by contradiction and assume there is $v_{i_1}$ such that $v_{i_1}$ is not of type III and $v_i$ is of type III for $i_0\le i<i_1$. We deduce as before that there exist two squares in $P$ containing $v_{i_1}$. The same holds for $v_{i_0-1}$. Then $v_{i_0-1}$ and $v_{i_1}$ give rise to two right-angled corners of a Coxeter region, and by the proof of Lemma~\ref{lem:two right angle}, this is impossible.

\subsection{Development of Coxeter rays}
\label{subsec:development of Coxeter rays}
Let $r$ be a Coxeter ray. Then $r$ is quasi-rectangular. Hence we identify $r$ with $[0,\infty)\times[0,1]$. Let $v$ be the vertex of $r$ with coordinates $(1,0)$ and let $\{v_i\}_{i=0}^{\infty}$ be consecutive vertices of $r$ on $[1,\infty)\times\{0\}$ with $v_0=v$.

\begin{lemma}\
	\label{lem:II or III}
	\begin{enumerate}
		\item If $v$ is of type III, then each vertex of $r$ on $[1,\infty)\times\{0\}$ is of type III.
		\item If $v$ is of type II, then each vertex of $r$ on $[1,\infty)\times\{0\}$ is either of type O or of type II.
	\end{enumerate}
\end{lemma}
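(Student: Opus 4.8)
The plan is to proceed by an induction along the ray, moving from $v_i$ to $v_{i+1}$, in complete analogy with the arguments already carried out for quasi-rectangular regions in Section~\ref{subsec:development of plain rays} (Cases 2 and 3 there). The key structural fact is that a Coxeter ray $r$ is quasi-rectangular, so each interior vertex of $r$ is either of type O or a good vertex (Definition of quasi-rectangular and Lemma~\ref{lem:concat}), and the $2$--cells of $r$ containing the edge $\overline{v_iv_{i+1}}$ on the boundary $[1,\infty)\times\{0\}$ form the expected pattern: two cells $\Pi_1\subset r$ and $\Pi_2\not\subset r$ meeting along the single edge $\overline{v_iv_{i+1}}$, and if $v_i$ is good with $\St(v_i,Q_R)$ as in Figure~\ref{f:flat4} then $\Pi_1\cap\Pi_3$ is the other edge $e_2$ of the boundary configuration, etc. First I would fix the setup: pass to a sub-ray so that each vertex of $r$ on $[1,\infty)\times\{0\}$ is good (or of type O) and so that the relevant half-plane is far from the base point $x_0$, hence all these vertices satisfy Lemma~\ref{lem:star homeo}.

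For part~(1): assume $v_i$ is of type III. Let $\Pi_1\subset r$ and $\Pi_2\not\subset r$ be the two cells containing $\overline{v_iv_{i+1}}$. Since $v_i$ is of type III, different cells containing $v_i$ have $q$-images in different blocks, so $\Pi_1$ and $\Pi_2$ are in different blocks. Now $v_{i+1}$ is an interior vertex; by Lemma~\ref{lem:star homeo} it is of type O, I, II, or III. It cannot be of type O since it lies on an edge $\overline{v_iv_{i+1}}$ shared by two cells in different blocks, which rules out Lemma~\ref{lem:real3}~(2) / Lemma~\ref{lem:star homeo}~(1). It cannot be of type I: if it were, then by Lemma~\ref{lem:real1}~(3)--(4) the cell $\Pi_1$ (containing $v_{i+1}$) would have $\Pi_1\cap\Pi_2$ with $n-1$ edges (where $\Pi_1$ has $2n$ edges), contradicting that $\Pi_1\cap\Pi_2=\overline{v_iv_{i+1}}$ is a single edge unless $n=2$; and if $n=2$ then $\Pi_1$ is a square, but then $v_i\in\Pi_1$ together with $v_i$ being type III and $\Pi_1$ being a square forces, via Lemma~\ref{lem:2pi cycle}~(3)--(4), that $\Pi_1$ belongs to a $\square$--configuration, incompatible with $v_i$ being a $\Delta$- or $\square$--vertex whose cells all have $q$-images arranged by the Coxeter group action — here I would invoke Lemma~\ref{lem:same tag} to conclude the support is preserved. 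Similarly $v_{i+1}$ cannot be of type II, because type II (Lemma~\ref{lem:real2}~(3)) would again force $\Pi_1\cap\Pi_2$ to have $n_1-1$ edges. Hence $v_{i+1}$ is of type III, and by Lemma~\ref{lem:same tag} it has the same support as $v_i$. Induction finishes part~(1).

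For part~(2): assume $v_i$ is of type II. Again let $\Pi_1\subset r$, $\Pi_2\not\subset r$ be the cells along $\overline{v_iv_{i+1}}$; since $v_i$ is type II, $\Pi_1$ and $\Pi_2$ lie in different blocks, and moreover (by the structure of a Coxeter ray and Lemma~\ref{lem:real2}~(3), exactly as in the second paragraph of the proof of Lemma~\ref{lem:real2}) the relevant cells meeting $\Pi_1$ along edges not on the $x$-axis give that $\Pi_1\cap\Pi_3$ is an edge and the $\Pi_i$ are squares, so the local picture near $v_{i+1}$ is the $(2,4,4)$ or $(2,3,6)$ Coxeter configuration. Now $v_{i+1}$ is interior; it cannot be of type I (a type I vertex would, by Lemma~\ref{lem:real1}, have all four surrounding cells in a single block, contradicting $\Pi_1,\Pi_2$ in different blocks); it cannot be of type III, since by Lemma~\ref{lem:same tag} adjacent type III vertices have the same support, but then $v_i$ — which is adjacent to $v_{i+1}$ through $\overline{v_iv_{i+1}}$ and lies on two cells forming part of that Coxeter region — would also be of type III, not type II. Hence $v_{i+1}$ is of type O or type II, and induction completes part~(2). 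The main obstacle I anticipate is the careful bookkeeping in the borderline $n=2$ (square) sub-cases, where one must cleanly separate "type II" from "$\square$--vertex of type III" using Lemma~\ref{lem:2pi cycle}~(3)--(4) and Lemma~\ref{lem:same tag}; everything else is a routine propagation of the local classification in Lemma~\ref{lem:star homeo}.
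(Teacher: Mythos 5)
Your overall strategy (induct along the bottom boundary of $r$ using the local classification of Lemma~\ref{lem:star homeo} and Lemmas~\ref{lem:real1}--\ref{lem:real3}) is the same as the paper's, but the decisive steps in both parts are carried out incorrectly. In part (1), to rule out $v_{i+1}$ being of type II you invoke Lemma~\ref{lem:real2}~(3) to claim that $\Pi_1\cap\Pi_2$ would have to consist of $n_1-1$ edges; but at a type II vertex it is the \emph{same-block} pair of cells that intersects along $n_1-1$ edges, while two cells lying in \emph{different} blocks --- which is exactly the situation of your $\Pi_1,\Pi_2$, since $v_i$ is of type III --- intersect along a single edge. So no contradiction arises and type II is not excluded by your argument. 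This is precisely where the actual work lies: if $v_{i+1}$ were of type II, the cell $\Pi_3$ in the same block as $\Pi_1$ with $\Pi_3\cap\Pi_1$ containing $n_1-1$ edges is forced to have an edge off the bottom boundary, hence $\Pi_3\subset r$; since cells of $r$ meet only along edges dual to the wall, $\Pi_1\cap\Pi_3$ is a single edge, so $n_1=2$ and $\Pi_1,\Pi_3$ are two squares of $r$ sharing an edge --- impossible by the definition of a Coxeter ray (a wall carrier never contains two adjacent squares). Your exclusion of type I is also argued in a roundabout and partly garbled way; the clean reason is simply that a type I vertex has all four surrounding cells in one block, contradicting $\Pi_1,\Pi_2$ being in different blocks.

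In part (2) your starting premise is false: when $v_i$ is of type II, the two cells containing $\overline{v_iv_{i+1}}$ are a \emph{same-block} pair of Lemma~\ref{lem:real2} (the single-edge, different-block intersections at $v_i$ are the wall-dual edge of $r$ and the edge going downward), and $\overline{v_iv_{i+1}}$ is the first edge of a path $\Pi_1\cap\Pi_2$ with $n-1$ edges. Consequently your exclusions of types I and III do not follow as written, and your appeal to Lemma~\ref{lem:same tag} is a misreading: it says that two \emph{adjacent type III} vertices have the same support, not that a neighbour of a type III vertex must itself be of type III. The argument the paper has in mind runs instead as follows: interior vertices of the path $\Pi_1\cap\Pi_2$ are of type O; its far endpoint is the bottom endpoint of the next wall-dual edge of $r$, so it lies in $\Pi_1,\Pi_2$ (same block, ruling out type III) and in the next cell of $r$, which is in a different block (ruling out types O and I), hence it is of type II, and one continues inductively. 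Relatedly, you never treat the induction step starting from a type O vertex, which is needed since the statement of (2) only asserts ``type O or type II''.
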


\begin{proof}
	For (1) we use induction. Suppose $v_i$ is of type III. Let $\Pi_1$ and $\Pi_2$ be the two $2$--cells of $Q_R$ containing $\overline{v_iv_{i+1}}$ with $\Pi_1\subset r$. Since $\Pi_1$ and $\Pi_2$ are in different blocks, $v_{i+1}$ is of type II or type III. If $v_{i+1}$ is of type II, let $\Pi_3$ be the cell containing $v_{i+1}$ such that $\Pi_3\cap\Pi_1$ has at least one edge and $\Pi_3\neq\Pi_2$. We argue as in Section~\ref{subsec:development of plain rays} to deduce that $\Pi_3\subset r$ and that $\Pi_1$ and $\Pi_3$ are squares which intersect along an edge. However, this is impossible by the definition of Coxeter ray. Thus $v_{i+1}$ is of type III. (2) can be proved by using Lemma~\ref{lem:real2}
\end{proof}

Note that the two $2$--cells in $r$ containing $v_0$ are in different blocks, thus $v_0$ is either of type II or type III. By Lemma~\ref{lem:II or III}, we can assume (up to passing to a sub-ray of $r$) that either all vertices of $r$ on $[0,\infty)\times\{0\}$ are of type III, or all vertices with coordinate $(n,0)$ with $n\ge 0$ are of type II. We deduce from Lemma~\ref{lem:real2} and Lemma~\ref{lem:real3} that in either cases, each vertical edge of $r$ (i.e.\ edge of $r$ of form $\{n\}\times[0,1]$ with $n\ge 0$) is also an edge of $Q'$ and different vertical edges are the opposite sides of a flat rectangle in $Q'$. Thus there is a CAT(0) geodesic ray $\gamma\subset Q'$ such that it is based at $(0,1/2)$ and intersects each vertical edge of $r$ orthogonally at its midpoint. 

We think of $\gamma$ as being oriented from its base point to infinity and suppose $[0,\infty)\times\{0\}$ (resp.\ $[0,\infty)\times\{1\}$) is on the right (resp.\ left) side of $\gamma$ with respect to the orientation of $\gamma$ and $Q'$. $[0,\infty)\times\{0\}$ (resp.\ $[0,\infty)\times\{0\}$) is called the \emph{right border} (resp.\ \emph{left border}) of $r$.

Let $\ell_r\subset Q'$ be a CAT(0) geodesic line containing $\gamma$ and let $H'_r\subset Q'$ be the half-space bounded by $\ell_r$ on the right side of $\gamma$. By repeatedly applying Lemma~\ref{lem:parallel transport} below, we can replace $r$ by an appropriate Coxeter ray such that $H'_r$ is sufficient far away from the base point $x_0$ of $Q'$ so that $H'_r$ and its carrier in $Q_R$ are contained in the interior of $Q_R$.

\begin{lemma}
	\label{lem:parallel transport}
	Suppose $r$ is a Coxeter ray in $Q_R$ with its associated CAT(0) geodesic ray $\gamma$. There there is a Coxeter ray $r_1\subset Q_R$ on the right side of $r$ such that its associated CAT(0) geodesic $\gamma_1$ satisfies that a sub-ray of $\gamma_1$ and a sub-ray of $\gamma$ bound a flat strip in $Q'$ isometric to $[0,\infty)\times[0,1]$.
\end{lemma}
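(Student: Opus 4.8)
\textbf{Proof proposal for Lemma~\ref{lem:parallel transport}.}

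The plan is to ``push'' the Coxeter ray $r$ one layer to the right inside its thickening in $Q_R$, and show the new layer is again a Coxeter ray whose associated CAT(0) geodesic is parallel to $\gamma$ and at positive distance from it. Concretely, write $r=[0,\infty)\times[0,1]$ with the right border $[0,\infty)\times\{0\}$ and consecutive vertices $v_i=(i,0)$, $i\ge 0$. By Lemma~\ref{lem:II or III}, after passing to a sub-ray we may assume either every $v_i$ is of type III, or every $v_i$ is of type O or type II. The first step is to describe, using Lemma~\ref{lem:real2} and Lemma~\ref{lem:real3} together with Lemma~\ref{lem:star homeo}, the two $2$--cells of $Q_R$ at each $v_i$ lying on the right side of $[0,\infty)\times\{0\}$: their intersection pattern (an edge in the type-III case, and in the type-II case an edge as well, the right-hand analogue of the ``$n_i-1$ edges / one edge'' alternation governed by Remark~\ref{rmk:real2}) gives a new strip of $2$--cells forming a subcomplex $r_1\subset Q_R$ of the form $[0,\infty)\times[-1,0]$ that is again quasi-rectangular. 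One then checks the defining conditions of a Coxeter ray (Definition~\ref{def:wall line}) for $r_1$: label preservation is automatic since $q$ preserves labels and we are translating along a wall; the no-orientation-reversing-vertex condition along the new border follows because the vertices of the new border are again of type III (resp.\ type II) by Lemma~\ref{lem:II or III} applied to $r_1$, and orientations of parallel vertical edges agree by Lemma~\ref{lem:ob}, exactly as in the proof of Lemma~\ref{lem:real ray border} and Lemma~\ref{lem:at most one reserving point}; after discarding an initial segment any remaining orientation-reversing vertex is eliminated.

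The second step is the geometric conclusion. Since $Q'$ is flat outside a compact set and $r,r_1$ are quasi-rectangular, Lemma~\ref{lem:concat}(1) says the vertical edges of $r$ (and of $r_1$) are genuine CAT(0) geodesic edges of $Q'$, and opposite vertical edges of a single $2$--cell of $r$ bound a flat rectangle in $Q'$; stacking $r$ and $r_1$ gives a wider flat strip in $Q'$ outside a compact set. Thus the CAT(0) geodesic $\gamma_1$ associated with $r_1$ (the ray through the midpoints of the vertical edges of $r_1$) is, along a sub-ray, parallel to $\gamma$ and separated from it by the width of one layer of cells, which is bounded below by a uniform constant (the cells of $Q_R$ have finitely many isometry types coming from the finitely many labels in $\Gamma$). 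Hence a sub-ray of $\gamma_1$ and a sub-ray of $\gamma$ co-bound a flat strip isometric to $[0,\infty)\times[0,a]$ with $a>0$, and after rescaling the statement $[0,\infty)\times[0,1]$ follows; $r_1$ lies on the right side of $r$ by construction.

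The main obstacle I expect is Step~1: verifying that the new layer $r_1$ really is a bona fide Coxeter ray rather than degenerating. Two things can go wrong. First, in the type-III case the ``outer border'' of a half Coxeter region need not itself be oriented consistently; one must invoke Lemma~\ref{lem:real ray border} / Lemma~\ref{lem:at most one reserving point} to see that at most one orientation-reversing vertex occurs on the new border, so that a sub-ray of it is orientation-consistent and extends to a Coxeter ray. Second, one must rule out the possibility that translating produces squares intersecting along an edge (which would make $r_1$ a plain rather than Coxeter strip): this is exactly the argument already used at the start of the proof of Lemma~\ref{lem:II or III}, applied now to $r_1$, and it is incompatible with the cells of $r$ being $2n$--gons with $n\ge 3$. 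Once these two points are settled, the rest is the routine flat-strip bookkeeping of Step~2. Finally, the phrase ``by repeatedly applying Lemma~\ref{lem:parallel transport}'' in the surrounding text is justified because each application moves $\gamma$ a definite distance $\ge a$ to the right, so finitely many steps push the half-space $H'_r$ arbitrarily far from $x_0$.
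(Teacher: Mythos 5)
Your treatment of the case where the right border consists of type O/II vertices is essentially the paper's: there the strip of cells on the other side of $[0,\infty)\times\{0\}$ is again a wall carrier (this is what Lemma~\ref{lem:real2} and Remark~\ref{rmk:real2} give), and the paper's $r_1$ indeed satisfies $r_1\cap r=[0,\infty)\times\{0\}$. The genuine gap is in the type III case. There your construction takes the layer of cells immediately beyond the right border and claims it is a quasi-rectangular strip $[0,\infty)\times[-1,0]$ forming a Coxeter ray. But a Coxeter ray is by definition the carrier of (a sub-ray of) a wall, and in the Euclidean reflection tilings the carrier of the wall of $r$ and the carrier of the nearest parallel wall need not be adjacent: in the $(2,3,6)$ case (and one of the $(2,4,4)$ sub-cases) there is a band of cells between the two carriers that is crossed by no wall parallel to the wall of $r$ --- this is exactly the ``gap'' the paper points out when contrasting the sub-cases in the proof of Lemma~\ref{lem:real ray border} (cf.\ the caption of Figure~\ref{f:orient10}). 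So the cells touching the zigzag border of $r$ on the outside do not in general assemble into a wall carrier, and your $r_1$ is simply not a Coxeter ray; no amount of orientation bookkeeping repairs this. There is also a circularity in your verification: you invoke Lemma~\ref{lem:II or III} ``applied to $r_1$'', but that lemma is a statement about Coxeter rays, which is what you are trying to establish for $r_1$.

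The paper's route in the type III case avoids this: a sub-ray of $\gamma$ (which runs through the centers of the cells of $r$, hence along chamber edges) is a real border of a half Coxeter region $\h'$, and one takes, as in the first paragraph of the proof of Lemma~\ref{lem:real ray border}, a wall $\W\subset\h'$ parallel to and disjoint from $\gamma$; the carrier in $Q_R$ of a sub-ray of $\W$ is the desired $r_1$, with the orientation condition supplied by Lemma~\ref{lem:ob}. Note that this $r_1$ may sit several layers away from $r$, but since the half-space containing $\h'$ is flat in $Q'$, the two walls still bound a flat strip, which is all the lemma needs. Your second step (the flat-strip conclusion and the remark on why repeated application pushes $H'_r$ away from $x_0$) is fine once $r_1$ is correctly produced.
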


\begin{proof}
	If Lemma~\ref{lem:II or III} (2) holds, then there is another Coxeter ray $r_1$ such that $r_1\cap r=[0,\infty)\times\{0\}$. We define $\gamma_1$ to be the CAT(0) geodesic ray associated with $r_1$. If Lemma~\ref{lem:II or III} (1) holds, then there is a half Coxeter region $\h'\subset Q'$ such that a sub-ray of $\gamma'$ is a real border of $\h'$. As in the first paragraph of the proof of Lemma~\ref{lem:real ray border}, there exists a wall $\W\subset\h'$ such that $\W\cap \gamma'=\emptyset$ and $\W$ is an infinite ray parallel to $\gamma'$. We define $r_1$ to be the carrier (in $Q_R$) of an appropriate sub-ray of $\W$. Then $r_1$ is a Coxeter ray by Lemma~\ref{lem:ob}.
\end{proof}

Let $v=(1,0)\in r=[0,\infty)\times[0,1]$ be as before. We now consider the case when $v$ is of type III in more detail. Let $\h'_r\subset Q'$ be the $(v,\ell_r)$--half Coxeter region and let $\h_r\subset Q_R$ be the associated half Coxeter region (see Section~\ref{sec:half coxeter regions}). Note that $\h'_r$ contains a sub-ray of $\gamma$ by Lemma~\ref{lem:II or III} (2).

Let $K$ be a chamber of $\h'_r$. The angle between a real border and a fake border of $\h'_r$ must be a multiple of an (internal) angle of $K$, and the angle between two real borders of $\h'_r$ is equal to an angle of $K$ (Corollary~\ref{cor:local classification} (1)). Since a sub-ray of $\gamma$ is contained in the real border of $\h'_r$, a computation of angles yields that exactly one of the following possibilities holds for $\h'_r$:
\begin{enumerate}[label=(\alph*)]
	\item $\h'_r= H'_r$;
	\item $\h'_r$ has two parallel unbounded borders, one real and one fake, and one bounded border, in this case $d_H(\h'_r,\gamma)<\infty$;
	\item $\h'_r$ is a flat strip isometric to $(-\infty,\infty)\times[0,s]$;
	\item $\h'_r$ has two unbounded borders, one real and one fake, and they are not parallel, in this case $\h'_r$ has at most one bounded border and has finite Hausdorff distance from a flat sector in $\h'_r$.
\end{enumerate}

\begin{lemma}\
	\label{lem:case a and c}
	\begin{enumerate}
		\item If case (d) holds, then Theorem~\ref{thm:main1} holds with $S_1$ being a Coxeter sector.
		\item If case (a) holds, then Theorem~\ref{thm:main1} holds with $S_1$ being a Coxeter sector or a CCH of type II.
	\end{enumerate}
\end{lemma}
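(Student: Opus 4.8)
The statement concerns the two boundary cases (d) and (a) in the classification of the half Coxeter region $\h'_r$ attached to a Coxeter ray $r$, and in each case we must upgrade the local/structural information we already have into an \emph{honest} atomic sector $S_1$ in $Q_R$ together with a flat sector $S'_1\subset Q'$ close to it satisfying all five conclusions of Theorem~\ref{thm:main1}. The strategy in both cases is the same: identify an unbounded real border $\B'$ of $\h'_r$, pass to the associated outer border $\B^o\subset\h_r$, use Lemma~\ref{lem:at most one reserving point} (in case (d), where $\h'_r$ is irreducible) respectively Lemma~\ref{lem:real ray border} to conclude that, after passing to a sub-ray, the edges of $\B^o$ carry a consistent orientation, and then recognize $\B^o$ as (a sub-ray of) a Coxeter line; finally develop outward from this Coxeter line through the cells of $Q_R$ that sit between $\gamma$ and $\B^o$ to sweep out a Coxeter sector.

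\textbf{Case (d).} Here $\h'_r$ has two unbounded borders, one real (call it $\B'_{\mathrm{real}}$) containing a sub-ray of $\gamma$, and one fake, meeting at a nonzero Tits angle; thus $\h'_r$ is Hausdorff close to a flat sector. Since $\h'_r$ is irreducible, by Lemma~\ref{lem:at most one reserving point} the outer border $\B^o$ of $\gamma$ (which is a real border) has at most one orientation reversing vertex, so after passing to a sub-ray the orientation is consistent; by Lemma~\ref{lem:ob} together with the quasi-rectangular structure of $r$, this means $\B^o$ realizes a Coxeter ray $r_1$ with $d_H(\gamma_1,\gamma)<\infty$ where $\gamma_1\subset r_1$ is the associated CAT(0) geodesic ray. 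The two Coxeter rays $r$ and $r_1$ live in a common Coxeter flat $F$ (the development of $\h_r$ into $\D$, filled out to the whole Davis complex via the label-preserving local isometry $i\colon\h\to\D$ of Section~\ref{sec:half coxeter regions}), and the region of $Q_R$ between them is, cell-by-cell, the development of the sector of $\D$ bounded by the two corresponding walls. By Lemma~\ref{lem:star homeo} and the classification of vertices of types I, II, III, this development stays inside $Q_R$ (we use that $\h'_r$, hence this region, is far from $x_0$), so it yields a genuine Coxeter sector $S_1\subset Q_R$ with boundary rays contained in $r$ and $r_1$. The corresponding flat sector $S'_1\subset Q'$ is the flat sector Hausdorff close to $\h'_r$; its angle equals the Tits angle between $\B'_{\mathrm{real}}$ and the fake border, which by Corollary~\ref{cor:local classification}~(1) is a positive multiple of an angle of a chamber $K$, hence is $\ge\pi/6$. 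This gives all five conclusions of Theorem~\ref{thm:main1}.

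\textbf{Case (a).} Here $\h'_r=H'_r$, i.e.\ the half Coxeter region fills the entire half-plane to the right of $\gamma$. Two sub-cases arise according to whether the development of $\h_r$ into the Davis complex $\D$ extends to a half-flat whose image in $\Xa_\Gamma$ is contained in a Coxeter flat, or not. In the first sub-case we argue as in case (d): the carrier of $\gamma$ is (after passing to a sub-ray and applying Lemma~\ref{lem:real ray border}) a Coxeter ray $r_1$, and pushing the walls of $\h_r$ across the whole half-plane sweeps out a Coxeter sector $S_1$, with $S'_1$ the entire flat half-plane $H'_r$ of angle $\pi$. In the second sub-case, the defining property of a CCH of type II in Definition~\ref{def:CCHII} is exactly the failure of this extension; so the $f$ realizing $\h_r$ (on the carrier $H$ of a halfspace of $\D_\Delta$) is a CCH of type II, whose boundary line is the Coxeter line containing $f|_{\mathcal W_1}$, and $\gamma$ runs along this boundary. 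Its $S'_1$ is again the flat half-plane $H'_r$, angle $\pi$. In both sub-cases we must check that $S_1$ is embedded, quasi-isometrically embedded, and Hausdorff close to $S'_1$ in $Q_R$: embedding and quasi-isometric embedding are supplied by Lemma~\ref{lem:atomic embedding} and Lemma~\ref{lem:atomic qi embedding}, and the Hausdorff estimate follows from Lemma~\ref{lem:concat} applied along the quasi-rectangular direction transverse to $\gamma$ (exactly as in the PCH discussion in Section~\ref{subsec:development of plain rays}).

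\textbf{Main obstacle.} The delicate point is not the geometry of the development — that is the content of Section~\ref{sec:half coxeter regions} — but distinguishing the Coxeter-sector sub-case from the CCH-of-type-II sub-case in case (a), and showing the dichotomy is exhaustive: one must rule out an intermediate possibility in which the orientation along $\gamma$ is consistent (so $\gamma$ is a Coxeter ray) yet the transverse development is \emph{not} a Coxeter flat nor a genuine CCH. This is handled by the orientation analysis of Section~\ref{sec:orientation of edges of Coxeter regions} (the carefully-drawn figures \ref{f:orient1}--\ref{f:orient11}): the orientation of walls in $\h$ forces the transverse family of walls to be eventually all positively (or all negatively) oriented, which is precisely what lets one recognize the region as belonging to one of the two listed atomic-sector types. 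The bookkeeping is routine given the figures, so I would cite Section~\ref{sec:orientation of edges of Coxeter regions} and Lemma~\ref{lem:real ray border} rather than reproduce it.
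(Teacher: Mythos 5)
Your outline captures the right general flavor (orientation of walls, a dichotomy between a Coxeter sector and a CCH of type II in case (a)), but in both cases the step you treat as routine is exactly where the paper's proof does its work, and as written your constructions do not go through. In case (d) you assert that the label-preserving development of $\h_r$ into $\D$ fills out to a Coxeter flat, so that (the carrier of) $\h'_r$ itself is a Coxeter sector whose angle is the Tits angle between the two unbounded borders. That is unjustified and false in general: a Coxeter sector must lie in a Coxeter flat, and Definition~\ref{def:Coxeter flat}(2) demands orientation coherence across \emph{every} parallel family of wall-dual edges, which the label-preserving map $i\colon\h\to\D$ does not provide; Lemma~\ref{lem:real ray border} and Lemma~\ref{lem:at most one reserving point} only control orientations along a single real border, not throughout the region. (Note also that $\gamma\subset\ell_r$, so the border containing $\gamma$ is the \emph{fake} one; the real unbounded border is the other one, and it is to that border that Lemma~\ref{lem:real ray border} is applied.) The paper never claims $\h_r$ is a Coxeter sector: it shows the walls parallel to the fake border are \emph{eventually} consistently oriented (because the carrier of a sub-ray of the real border is a Coxeter ray), and then extracts a Coxeter sector of angle $\alpha$ or $\pi-\alpha$ based at a special vertex far out along such a wall; the positively-oriented subcase requires the non-routine claim that the Tits angle between the two borders equals $\pi-\alpha$, proved by contradiction using Lemma~\ref{lem:piece orient}. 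Your alternative reading, the "region between $r$ and $r_1$", is not available either: $r_1$ is Hausdorff close to $\gamma\subset r$, so that region is a strip, not a sector of angle $\ge\pi/6$.

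In case (a) the dichotomy you propose ("the development extends to a half-flat inside a Coxeter flat, or not, and in the latter case it is a CCH of type II") is not exhaustive as stated, and one of its outputs contradicts Theorem~\ref{thm:main1}(3): if $S_1$ is a Coxeter sector, then $S'_1$ must be a flat sector at finite Hausdorff distance from $S_1$, so it cannot be the full half-plane $H'_r$ of angle $\pi$. To conclude CCH of type II one must first verify Definition~\ref{def:CCHII}(2), i.e.\ that the whole boundary wall $\W_1$ is a Coxeter \emph{line}, not just that $r$ is a Coxeter ray; the paper gets this by showing that when every parallel class of walls transverse to $\W_1$ is internally consistent, all of them are positively or all negatively oriented (each class meets the Coxeter ray $\W'_1$), and only then does failure of eventual consistency among the walls parallel to $\W_1$ yield a CCH of type II. In the complementary subcase (some transverse class contains oppositely oriented walls) the correct conclusion is a Coxeter sector, obtained via the argument of Proposition~\ref{prop:half coxeter region contains Coxeter ray} which produces a transverse Coxeter ray and hence eventual consistency of the walls parallel to $\W_1$; your appeal to Lemma~\ref{lem:real ray border} is unavailable here since $\h'_r=H'_r$ has no real border. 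So the obstacle you flag is genuine, but it arises in case (d) as well as (a), and deferring it to "routine bookkeeping with the figures of Section~\ref{sec:orientation of edges of Coxeter regions}" omits precisely the orientation arguments that constitute the paper's proof of this lemma.
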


\begin{proof}
	We first verify assertion (1). Recall that each wall of $\h'_r$ has an orientation by Lemma~\ref{lem:ob}. A wall $\W$ of $\h'_r$ with $\W\nsubseteq \ell_r$ is \emph{positively oriented} if the half-space $H_\W$ induced by the orientation of $\W$ satisfies that $H_\W$ contains $\gamma$ up to a finite segment. Recall that $\h'_r$ has a simplicial structure where each $2$--simplex is a chamber of $\h'_r$. Let $\alpha$ be the value of the smallest angle of a chamber of $\h'_r$. A vertex $u$ of $\h'_r$ is \emph{special} if $u$ is contained in a chamber whose angle at $u$ is $\alpha$. 
	
	Let $\B_1$ and $\B_2$ be two borders of $\h'_r$ with $\B_1$ fake (hence $\B_1\subset \ell_r$). Let $\{\W_i\}_{i=1}^{\infty}$ be parallel walls in $\h'_r$ such that $\W_1=\B_1$ and $\W_i$ and $\W_{i+1}$ are not separated by other walls of $\h'_r$ for each $i\ge 1$. Note that for each $i$, $\W_i$ has a sub-ray $\W'_i$ such that the carrier of $\W'_i$ (in $Q_R$) is a Coxeter ray. This is clear when $i=1$ since $r$ is a Coxeter ray. The $i>1$ case follows from Lemma~\ref{lem:ob}. We assume without loss of generality that the starting point $w_i$ of each $\W'_i$ is a special vertex of $\h'_r$ and that each wall $\W$ of $\h'_r$ that intersects $\W_i$ transversely is positively oriented. By Lemma~\ref{lem:real ray border}, the carrier (in $Q_R$) of a sub-ray of $\B_2$ is a Coxeter ray. Hence there exists $i_0\ge 1$ such that $\W_i$ and $\W_j$ are oriented towards the same direction for any $i,j\ge i_0$. 
	
	Suppose $\W_i$ is negatively oriented for $i\ge i_0$. Let $\s'_i$ be the flat sector with angle $\alpha$ in $\h_r'$ based at $w_i$ with one side of $\s'_i$ being $\W'_i$ (such $\s'_i$ always exists and is a convex subcomplex of $\h'_r$). One readily verifies that for each $i\ge i_0$, the carrier of $\s'_i$ in $Q_R$ is a Coxeter sector. We define $S'_1$ in Theorem~\ref{thm:main1} to be any $\s'_i$ with $i\ge i_0$.
	
	It remains to consider the case where each $\W_i$ for $i\ge i_0$ is positively oriented. Though we can still consider the carrier of the sector $\s'_i$ as before, one boundary ray of this carrier does not have the correct orientation in order for it to be a Coxeter ray. So we will consider a different sector as below.
	
	 We first claim the Tits angle (cf.\ \cite[Definition II.9.4]{BridsonHaefliger1999}) between $\B_2$ and $\B_1$ is $\pi-\alpha$. Suppose the claim is not true (i.e.\ the Tits angle is $<\pi-\alpha$). Then there exists a special vertex $u\in\B_2$ of form $u=\W_{i_1}\cap\B_2$ for some $i\ge i_0$ such that $\W_u\cap\W'_1\neq\emptyset$ where $\W_u$ is the wall of $\h'_r$ containing $u$ and intersecting $\B_1$ in an angle $=\alpha$ (this can always be arranged if one chooses $i_1$ sufficiently large). We refer to Figure~\ref{f:coxetersector1}.
	\begin{figure}[h!]
		\centering
		\includegraphics[width=0.6\textwidth]{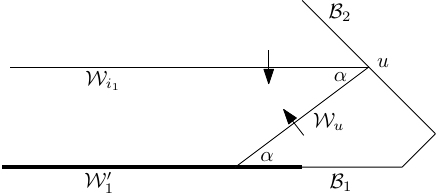}
		\caption{}
		\label{f:coxetersector1}
	\end{figure}
	Our choice implies that both $\W_u$ and $\W_{i_1}$ are positively oriented. Let $C_u$ be the $2$--cell in $\h_r$ dual to $u$. Then there is an orientation reserving vertex $y\in \partial C_u$ contained in the sector of $\h'_r$ bounded by $\W_{i_1}$ and $\W_u$. By Lemma~\ref{lem:piece orient}, $y$ is contained in an edge of $C_u$ that is dual to $\B_2$. This yields a contradiction since $\W_{i_1},\W_u$ and $\B_2$ are pairwise distinct walls. 
	
	Let $\s'_i$ be the flat sector with angle $\pi-\alpha$ in $\h_r'$ based at $w_i$ such that one boundary ray of $\s'_i$ is $\W'_i$. The existence of $\s'_i$ follows from the previous claim. By the argument in the previous paragraph, we can find orientation reversing vertex on the boundary of each $2$--cell $C$ of $\h_r$ dual to a special vertex in $\s'_i$ ($i\ge i_0$), hence the orientation of each edge of $C$ can be determined. Now one readily verify that the carrier of $\s'_i$ in $\h_r$ is a Coxeter sector and conclude the proof as before.
	
	It remains to prove assertion (2). Let $\mathfrak{W}$ be the collection of walls in $\h'_r$ intersecting $\W_1$ transversally. Let $\mathfrak{W}=\sqcup_{k=1}^m\mathfrak{W}_k$ be the decomposition into parallel classes. 
	
	Suppose there exists $k$ such that there is a pair of walls $\W,\W'$ in $\mathfrak{W}_k$ with opposite orientation. By the argument in Proposition~\ref{prop:half coxeter region contains Coxeter ray}, we know that $\W$ has a sub-ray whose carrier in $Q_R$ is a Coxeter ray. Thus there exists $i_0\ge 0$ such that all $\W_i$ are oriented towards the same direction for $i\ge i_0$, then we argue as before to produce a Coxeter sector.
	
	Suppose for each $k$, each pair of walls in $\mathfrak{W}_k$ are oriented towards the same direction. Recall that $\W_1$ contains a ray $\W'_1$ whose carrier in $Q_R$ is a Coxeter ray. Since each $\mathfrak{W}_k$ contains at least one wall that intersects $\W'_1$, either all walls in $\mathfrak{W}$ are positively oriented, or they are all negatively oriented. Hence $\W_1$ is a Coxeter line. If there exists $i_0\ge 0$ such that all $\W_i$ are oriented towards the same direction for $i\ge i_0$, then we argue as before. If such $i_0$ does not exist, then $\h_r$ is a CCH of type II.
\end{proof}
The angle $\alpha$ defined in the proof of Lemma~\ref{lem:case a and c} satisfies $\alpha\ge \pi/6$. Hence the associated CAT(0) sector has angle $\ge \pi/6$.

Now we are ready to prove Theorem~\ref{thm:main1} in the case when $r$ is a Coxeter ray. Starting from $r\cong[0,\infty)\times[0,1]$. If Lemma~\ref{lem:II or III} (2) holds, then there is another Coxeter ray $r_1$ such that $r_1\cup r$ is a quasi-rectangular region of $Q_R$, which we identify with $[0,\infty)\times[-1,1]$. By Lemma~\ref{lem:concat} (2), $r_1$ is contained in the half-space $H'_r$ defined before Lemma~\ref{lem:parallel transport}. Hence Lemma~\ref{lem:star homeo} holds for vertices of $r_1$ and we can apply Lemma~\ref{lem:II or III} to $[0,\infty)\times\{-1\}$. Then either we are able to repeat this argument for infinitely many times to produce a Coxeter-plain sector with one of its boundary ray being $r$, or after finitely many steps we find a vertex of type III on the boundary $[0,\infty)\times\{-m\}$ of the quasi-rectangular region we produced. In the latter case we assume without loss of generality that $m=0$, which leads to (a), (b), (c) and (d) discussed before. By Lemma~\ref{lem:case a and c}, it remains to consider cases (b) and (c).

Suppose (b) or (c) holds for $\h'_r$. Let $\B_1$ and $\B_2$ be two parallel unbounded borders of $\h'_r$ such that $\B_2$ is not real. By Lemma~\ref{lem:real ray border}, there exists a sub-ray of $\B_2$ such that its carrier $r_2$ in $Q_R$ is a Coxeter ray and $d_H(r,r_2)<\infty$. By the previous discussion, either $r_2$ is the boundary ray of a Coxeter-plain sector, or we find a Coxeter ray $r_3$ on the right side of $r_2$ such that $r_3$ has a vertex of type III on its right border. Note that $r_2\neq r_3$ by our choice of $r_2$. Moreover, the left border of $r_3$ satisfies Lemma~\ref{lem:II or III} (2). Now we define $\h'_{r_3}$, $H'_{r_3}$, $\gamma_3$ and $\ell_{r_3}$ as before. Let $\C'_{r_3}$ be the Coxeter region containing $\h'_{r_3}$. Since the left border of $r_3$ satisfies Lemma~\ref{lem:II or III} (2), the intersection of a sufficiently small ball around each interior point of $\gamma_3$ with $\C'_{r_3}$ satisfies Corollary~\ref{cor:local classification} (2). Now we consider the connected component $P$ of $\partial \C'_{r_3}$ that contains $\gamma_3$. By Corollary~\ref{cor:local classification} and an induction argument, we know either $P=\ell_{r_3}$, or $P$ travels into the interior of $H'_{r_3}$ at some point and remains there. Thus $\C'_{r_3}\subset H'_{r_3}$. It follows that $\C'_{r_3}=\h'_{r_3}$. We claim $\h'_{r_3}$ can not satisfy (b). By $\C'_{r_3}=\h'_{r_3}$ and Corollary~\ref{cor:local classification} (1), the only way $\h'_{r_3}$ can satisfy (b) is that the bounded border of $\h'_{r_3}$ is orthogonal to both of its unbounded borders, however, this can be ruled out by Lemma~\ref{lem:two right angle}.

From the above discussion, we deduce that at least one of following holds:
\begin{enumerate}
	\item there is a Coxeter sector or a Coxeter-plain sector $S$ on the right side of $r$ such that one boundary ray $r'$ of $S$ satisfies $d_H(r,r')<\infty$;
	\item there is a Coxeter line $L$ on the right side of $r$ such that $L$ coarsely contains $r$ and $L$ bounds a CCH of type II;
	\item there are subcomplexes $\{U_i\}_{i=1}^{\infty}$ of $Q_R$ such that
	\begin{enumerate}
		\item each $U_i$ coarsely contains $r$ and is on the right side of $r$;
		\item each $U_i$ is either a $(-\infty,\infty)\times [0,m_i]$--quasi-rectangular region ($m_i>0$) or a Coxeter region such that the associated Coxeter region $U'_i$ in $Q'$ is isometric to a flat strip $(-\infty,\infty)\times [0,m_i]$; moreover, there does not exist $i$ such that both $U_i$ and $U_{i+1}$ are quasi-rectangular;
		\item $U_i\cap U_{i+1}$ is a boundary line of both $U_i$ and $U_{i+1}$, and each vertex in $U_i\cap U_{i+1}$ is either of type O or of type II;
		\item there does not exist $i_0$ such that all $U_i$ are quasi-rectangular regions for $i\ge i_0$ (otherwise we will be in case (1) with $S$ being a Coxeter-plain region).
	\end{enumerate}
\end{enumerate}

To finish the proof of Theorem~\ref{thm:main1} in the Coxeter ray case, it remains to show $\cup_{i=1}^{\infty}U_i$ is a CCH of type I in case (3). Assume without loss of generality that $U_1$ is a Coxeter region. It suffices to prove there are no orientation reversing vertices on $U_1\cap U_2$, since one can deduce from this, Lemma~\ref{lem:ob} and Remark~\ref{rmk:real2} that there are no orientation reversing vertices on $U_i\cap U_{i+1}$ for each $i$, which implies the claim. By (3b) and (3c), $U_i\cap U_{i+1}$ is the outer border of a Coxeter region for each $i$. Hence by Lemma~\ref{lem:at most one reserving point}, there is at most one orientation reversing vertex on $U_i\cap U_{i+1}$. Suppose there is an orientation reversing vertex $v\in U_1\cap U_2$. We first look at the case $U_2$ is quasi-rectangular. Then $U_3$ is a Coxeter region and there is exactly one orientation reversing vertex in $U_2\cap U_3$ by Remark~\ref{rmk:real2}. Let $\W_1$ and $\W_2$ be two parallel boundary walls of the Coxeter region $U'_3\subset Q'$ associated with $U_3$. Suppose $2n$ is the largest possible number of edges on the boundary of a $2$--cell in $U_3$. Choose a vertex $v\in\W_1$ such that it is dual to a $2$--cell with $2n$ edges in $U_3$. Consider the zig-zag line $\ell_v$ containing $v$ as in Figure~\ref{f:orient60} such that $\ell_v$ is a union of straight line segments from a point in $\W_1$ to a point in $\W_2$ such that each segment has angle $\pi/n$ with $\W_1$. We deduce that each segment of $\ell_v$ is contained in a wall of $U'_3$. Let $\mathfrak{W}$ be the collection of walls which contain a segment of $\ell_v$. 
\begin{figure}[h!]
	\centering
	\includegraphics[width=1\textwidth]{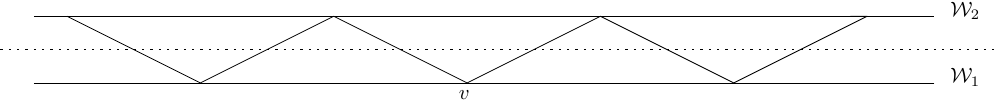}
	\caption{}
	\label{f:orient60}
\end{figure}
We assign a total order on $\mathfrak{W}$ by looking at intersection points of elements in $\mathfrak{W}$ along a straight line $\ell$ (see the dotted line in Figure~\ref{f:orient60}) between $\W_1$ and $\W_2$. Since there is exactly one orientation reversing vertex in $U_2\cap U_3$, there is exactly one pair of walls $\W,\W'$ in $\mathfrak{W}$ which are adjacent with respect to the total order such that one is oriented towards left and one is oriented towards right. Note that $u=\W\cap\W'$ is either in $\W_1$ or $\W_2$. Let $C_u$ be the $2$--cell in $U_3$ dual to $u$. By Lemma~\ref{lem:piece orient}, there are no orientation reversing vertices in $\partial C_u\cap\partial U_3$. Thus either both $\W$ and $\W'$ are oriented towards left, or they are oriented towards right. This leads to a contradiction. The case $U_2$ is a Coxeter region is similar.

Each CCH in $Q_R$ contains a CAT(0) half-flat of $Q'$ which is Hausdorff close this CCH. To see this, note that a CCH has infinitely many Coxeter lines, each of them contains a CAT(0) geodesic line of $Q'$ by a similar argument as before. A pair of such CAT(0) geodesic lines bounds a flat strip, and the union of these flat strips gives the half-flat as required.

\subsection{Development of diamond rays} 
\label{subsec:development of diamont rays}
Let $r=\cup_{i=0}^{\infty}C_i$ be a diamond ray in $Q_R$ with $C_i$ satisfying Definition~\ref{def:diamond line}. There is a CAT(0) geodesic ray $\gamma\subset r$ defined as follows. For each $C_i$, let $h_i$ be the CAT(0) geodesic segment in $Q'$ connecting the two tips of $C_i$. By Lemma~\ref{lem:real1}, Lemma~\ref{lem:exactly pi} and Lemma~\ref{lem:exactly pi1}, $\gamma=\cup_{i=1}^{\infty}$ is a CAT(0) geodesic ray in $Q'$. As before we think of $\gamma$ as being oriented from its base vertex to infinity, and we talk about right side or left side of $r$ and $\gamma$ with respect to the orientation of $\gamma$ and $Q'$.

\begin{lemma}
	\label{lem:right shift}
	Suppose $r$ is contained in the interior of $Q_R$. Then there is a diamond ray $r'$ such that $r\cap r'$ is contained in the right boundary of $r$.
\end{lemma}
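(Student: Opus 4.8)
The statement is the analogue, for diamond rays, of Lemma~\ref{lem:parallel transport} for Coxeter rays. The overall strategy is to use the local combinatorics of $Q_R$ around the boundary line of the diamond ray to build a second diamond line parallel to (and to the right of) $r$, and then to pass to a sub-ray to guarantee the intersection lies in the right boundary.

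First I would set up notation following Definition~\ref{def:diamond line} and the discussion in Section~\ref{subsec:development of diamont rays}: write $r=\cup_{i=0}^\infty C_i$ where each $C_i$ is a $2n$--gon ($n\ge 3$), $C_i\cap C_{i+1}$ is a single vertex which is a tip of both, and $r$ is contained in a block of $\Xa_\Gamma$. The boundary line of $r$ (one of the two $\cdots ababa\cdots$ lines along $\partial U$) consists of edges on the right side of $\gamma$; call its vertices $\{y_j\}_{j\ge 0}$, interleaving the tips $\{t_i\}$. Since $r$ is in the interior of $Q_R$, every such vertex is flat and deep (by our choice of $R$ in Section~\ref{subsec:new cell structure}) and hence satisfies one of the cases of Lemma~\ref{lem:star homeo}. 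The key local observation is that for a vertex $y$ on the boundary line of $r$ which is \emph{not} a tip of any $C_i$, the edge of $\partial C_i$ opposite to the pair of boundary-line edges at $y$ forces, via Lemma~\ref{lem:real1} (applied to the relevant real vertex) together with Remark~\ref{rmk:real2} and Lemma~\ref{lem:ob}, the existence of a new $2n$--gon $C_i'$ on the right side of $r$ with $C_i'\cap C_i$ a half of $\partial C_i$; and at the tip vertices $t_i$, the cell structure around $t_i$ (type I by Lemma~\ref{lem:real1} once $n\ge 3$, exactly as in the proof of Lemma~\ref{lem:diamond}) forces $C_i'\cap C_{i+1}'$ to be a single vertex which is a tip of both. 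So the $C_i'$ assemble into a diamond ray $r'=\cup_{i} C_i'$ on the right side of $r$, with $r'\cap r$ equal to the (right) boundary line of $r$. This is precisely the development argument of Lemma~\ref{lem:diamond} and Lemma~\ref{lem:producing diamond-plain sector}, carried out using $r$ itself as the "starting diamond ray" rather than a corner of a half Coxeter region.

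One technical point to handle: at finitely many initial vertices (namely the base tip and its neighbours) the hypotheses of Lemma~\ref{lem:star homeo} or the genericity needed for the development may fail; as everywhere else in the paper I would simply pass to a sub-ray of $r$ so that all vertices under consideration are flat, deep, and of the expected type, and note that discarding a compact initial segment does not affect the conclusion. A second point: I must verify the new cells $C_i'$ are again contained in the interior of $Q_R$ so that the local lemmas apply to \emph{their} vertices when iterating (this iteration is what is actually used later, but for the present lemma a single step suffices); this follows because the convex hull in $Q'$ of $r$ and $r'$ is, by Lemma~\ref{lem:concat}-type considerations, a flat strip $[0,\infty)\times[0,a]$, so $r'$ lies on the correct side of a geodesic line through $r$, at controlled distance, and we may first pass to a sub-ray of $r$ far enough from the base point $x_0$ that this whole strip is interior — exactly the move made before Lemma~\ref{lem:parallel transport}.

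\textbf{Main obstacle.} The crux is the same as in Lemma~\ref{lem:diamond}: showing that the development does not "stall" — i.e.\ that at each tip $t_i$ the vertex is of type I with $t_i$ a tip of $C_i'$ (not of $C_i$), so that the chain of new cells genuinely propagates to infinity and stays a diamond ray rather than, say, turning into a plain strip or hitting a type~III vertex. This is controlled by the inequality $n\ge 3$ (which holds since $r$ is a diamond, not a plain, ray) via Lemma~\ref{lem:real1}~(2)--(4), ruling out the competing case where $t_i$ would be a tip of $C_i$ and $C_i'$ would instead glue to $C_{i-1}'$ along a full half. Once that dichotomy is resolved in every step — which is a finite local check repeated inductively, identical in spirit to the argument already given in the proof of Lemma~\ref{lem:diamond} — the rest is bookkeeping, and passing to a sub-ray disposes of the boundary-behaviour subtleties so that $r\cap r'$ lands in the right boundary of $r$ as claimed.
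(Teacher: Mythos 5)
Your overall strategy (develop a parallel diamond ray cell-by-cell along the right boundary using the local structure lemmas) is the right spirit, but the specific local configuration your proof rests on is impossible, so the argument has a genuine gap. You posit, for each $C_i$, a facing cell $C_i'$ on the right with $C_i'\cap C_i$ a half of $\partial C_i$, and you then claim the tip structure forces $C_i'\cap C_{i+1}'$ to be a single common tip. Neither claim can hold. First, two distinct cells intersect in a path \emph{properly} contained in a half (Lemma~\ref{cor:connected intersection}~(2)), so no cell on the right shares a full half of $\partial C_i$. Second, the tip $v_i=C_i\cap C_{i+1}$ is of type~I (as in the paper: $C_i,C_{i+1}$ lie in the same block but meet in one vertex, so types II and III are excluded by Lemmas~\ref{lem:real2} and \ref{lem:real1}), and Lemma~\ref{lem:real1} then says $\St(v_i,Q_R)$ has exactly four cells, with $v_i$ a tip of only $C_i$ and $C_{i+1}$ and exactly \emph{one} cell on the right side of $r$, of which $v_i$ is not a tip; so two new cells cannot meet at $t_i$ as a common tip. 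Relatedly, the local type of a non-tip vertex on the right boundary is not determined by $r$ at all (it depends on what lies to the right), so nothing is ``forced'' there; the forcing happens only at the tips.

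The paper's proof runs differently and is staggered relative to yours: one new cell $C_i'$ is attached at each tip $v_i$ (the unique right-side cell of $\St(v_i,Q_R)$), and Lemma~\ref{lem:real1}~(4) gives that $C_i'\cap(C_i\cup C_{i+1})$ is a half of $\partial C_i'$, not of $\partial C_i$. The delicate step, which your proposal never addresses, is to show that consecutive new cells meet in a single vertex which is a tip of both: letting $v_i'$ be the far endpoint of $C_i'\cap C_{i+1}$ (a tip of $C_i'$ by Lemma~\ref{cor:connected intersection}), one must rule out the possibility that $v_i'$ is of type~II; the paper does this by considering the cell $C''_{i+1}$ across $v_i'$ sharing edges with $C_{i+1}$, observing that $C_{i+1}\cap(C''_{i+1}\cup C_i')$ is a half of $\partial C_{i+1}$ and hence reaches $v_{i+1}$, and using that $v_{i+1}$ is type~I to identify $C''_{i+1}=C_{i+1}'$, whence $v_i'$ is type~I and a tip of both $C_i'$ and $C_{i+1}'$. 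Note also that the overlap $C_i'\cap C_{i+1}$ can have any number of edges between $1$ and $n-1$ (this freedom is exactly the word $\delta_{r,r'}$ introduced right after the lemma), which your ``aligned'' picture would wrongly pin down; and the $n\ge 3$ ``stalling'' issue you flag as the main obstacle is not where the difficulty of this particular lemma lies. Finally, no passage to a sub-ray is needed: the conclusion is obtained for $r$ itself under the stated interiority hypothesis.
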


\begin{proof}
	Let $v_i=C_i\cap C_{i+1}$ for $i\ge 1$. Since $C_i$ and $C_{i+1}$ are in the same block, but they only intersect along one vertex, we know from Lemma~\ref{lem:real1} and Lemma~\ref{lem:real2} that $v_i$ is of type I (see Table~\ref{t:flat} on page \pageref{t:flat}) for each $i$. Let $C'_i$ be the $2$--cell containing $v_i$ on the right side of $r$. We claim $C'_i\cap C'_{i+1}$ is a tip of both $C'_i$ and $C'_{i+1}$ for $i\ge 0$. Then $r'=\cup_{i=0}^{\infty}C'_i$ is a diamond ray by this claim.
	
	Now we prove the claim. By Lemma~\ref{lem:real1}, $C'_{i}\cap (C_i\cup C_{i+1})$ is a half of $\partial C'_i$. Let $v'_i$ be the endpoint of the path $C'_i\cap C_{i+1}$ such that $v'_i\neq v_i$. Then $v'_i$ is a tip of $C'_i$ by Lemma~\ref{cor:connected intersection} (2). Since $C'_i$ and $C_{i+1}$ are in the same block, $v'_i$ is either of type I or type II. Let $C''_{i+1}$ be the $2$--cell different from $C_{i+1},C'_i$ such that $v'_i\in C''_{i+1}$ and $C''_{i+1}\cap C_{i+1}$ contains at least an edge. By Lemma~\ref{lem:real1} and Lemma~\ref{lem:real2}, $C''_{i+1}\cup C'_i$ is a point and $C_{i+1}\cap (C''_{i+1}\cup C'_i)$ is a half of $\partial C_{i+1}$. Thus $v_{i+1}\in C''_{i+1}$. It follows that $C''_{i+1}=C'_{i+1}$ since $v_{i+1}$ is of type I. Now we know $C''_{i+1}$, $C'_i$ and $C_{i+1}$ are in the same block. Thus $v'_i$ is of type I, and $C'_{i+1}$ and $C'_i$ both contain $v'_i$ as a tip. Hence the claim follows. 
\end{proof}

In the above proof, by reading off the label of edges in the path $C_{i+1}\cap C'_i$ traveling from $v_i$ to $v'_i$, we obtain a word, and this word does not depend on $i$. We denote this word and its length by $\delta_{r,r'}$ and $|\delta_{r,r'}|$. Let $2n$ be the number of edges in the boundary of $C_0$. Then $1\le |\delta_{r,r'}|\le n-1$. Suppose $|\delta_{r,r'}|>1$. Let $v'_{-1}$ be the tip of $C'_0$ other than $v'_0$. Since $C_0$ and $C'_0$ are in the same block and $C_0\cap C'_0$ has $n-|\delta_{r,r'}|<n-1$ edges, $v'_{-1}$ can not be of type II by Lemma~\ref{lem:real2} (2) and (3). Thus $v'_{-1}$ is of type I. Let $C'_{-1}$ be the $2$--cell such that $C'_{-1}\neq C'_0$ and $C'_{-1}$ contains $v'_{-1}$ as a tip. Then $\cup_{i=-1}^{\infty}C'_i$ is a diamond ray, which is defined to be the \emph{right shift} of $r$. If $|\delta_{r,r'}|=1$, then the \emph{right shift} of $r$ is defined to be $\cup_{i=0}^{\infty}C'_i$. 

Let $\ell_r\subset Q'$ be a CAT(0) geodesic line that contains the geodesic ray $\gamma$ associated with $r$ and let $H'_r$ be the half-space bounded by $\ell_r$ on the right side of $\ell_r$. By applying finitely many right shifts to $r$, we assume without loss of generality that $H'_r$ is sufficiently far away from the base point $x_0$ of $Q'$ such that $H'_r$ and its carrier in $Q_R$ are contained in the interior of $Q_R$.

\begin{lemma}
	\label{lem:extend to diamond line1}
	Let $r'$ be a right shift of $r$. If $1<|\delta_{r,r'}|<n$, then we can extend $r$ and $r'$ to diamond lines $L$ and $L'$ in $Q_R$ respectively such that $L\cap L'$ is a boundary line of both $L$ and $L'$.
\end{lemma}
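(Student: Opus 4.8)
The plan is to extend both diamond rays "downwards" (toward the negative indices) cell by cell, using the same local analysis already carried out in Lemma~\ref{lem:right shift} and the surrounding discussion, and to show that the two extensions stay adjacent along a full boundary line. First I would fix notation: write $r=\cup_{i\ge 0}C_i$ and $r'=\cup_{i\ge -1}C'_i$ (the right shift, so that $r'$ already has one extra cell $C'_{-1}$ when $1<|\delta_{r,r'}|<n$), with $v_i=C_i\cap C_{i+1}$ and $v'_i=C'_i\cap C'_{i+1}$ the tips, all of type I by the argument in the proof of Lemma~\ref{lem:right shift}. The word $\delta_{r,r'}$ read along $C_{i+1}\cap C'_i$ from $v_i$ to $v'_i$ has constant length with $1<|\delta_{r,r'}|<n$, so in particular $C_i\cap C'_i$ is a path with $n-|\delta_{r,r'}|$ edges, which is neither empty nor a half of $\partial C_i$; this is exactly the hypothesis that makes the downward induction possible.

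The heart of the argument is the downward induction step. Suppose we have already produced cells $C_{-1},C_{-2},\dots,C_{-k}$ extending $r$ and $C'_{-2},\dots,C'_{-k-1}$ extending $r'$, all inside $H'_r$ (hence all vertices satisfy Lemma~\ref{lem:star homeo}, by the choice of $H'_r$) and all in the same block, with $C_{-j}\cap C'_{-j}$ a path of $n-|\delta_{r,r'}|$ edges running from $v_{-j}$ to $v'_{-j}$, and with the tips matching up as in a diamond ray. To produce $C_{-k-1}$: the tip $v_{-k}$ of $C_{-k}$ (the one not equal to $v_{-k+1}$) lies in $C'_{-k-1}$; since $C_{-k}$ and $C'_{-k-1}$ are in the same block and $C_{-k}\cap C'_{-k-1}$ has $n-|\delta_{r,r'}|<n-1$ edges, Lemma~\ref{lem:real2}(2)--(3) forces $v_{-k}$ to be of type I (it cannot be type II), and Lemma~\ref{lem:real1} then gives a unique further cell $C_{-k-1}$ having $v_{-k}$ as a tip, with $C_{-k-1}\cap(C_{-k}\cup C'_{-k-1})$ a half of $\partial C_{-k-1}$. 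The same reasoning with the roles of the two rays exchanged produces $C'_{-k-2}$. Exactly as in the proof of Lemma~\ref{lem:right shift}, one checks $C_{-k-1}$, $C'_{-k-1}$ and the already-constructed cells are in a common block, that $C_{-k-1}\cap C_{-k}$ is the tip $v_{-k}$ (so $\cup C_{-j}$ is a diamond ray), and that $C_{-k-1}\cap C'_{-k-1}$ is again a path of $n-|\delta_{r,r'}|$ edges from $v_{-k-1}$ to $v'_{-k-1}$ with the word $\delta_{r,r'}$ unchanged. This preserves all inductive hypotheses, so the process continues indefinitely, yielding bi-infinite $L=\cup_{i\in\mathbb Z}C_i$ and $L'=\cup_{i\in\mathbb Z}C'_i$.

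Finally I would verify that $L$ and $L'$ are genuine diamond lines and meet along a boundary line. That $L$ (and likewise $L'$) satisfies Definition~\ref{def:diamond line} is immediate from the construction: consecutive cells meet in a tip which is a tip of both, all cells lie in one block, and $C_i\cap C_j=\emptyset$ for $|i-j|\ge2$ because $Q_R$ is a manifold (if such an intersection contained a point, the cyclic/local picture around it from Lemma~\ref{lem:star homeo} would be violated, exactly as in earlier arguments). The union $\cup_{i\in\mathbb Z}(C_i\cap C'_i)$ is, cell by cell, a path of $|\delta_{r,r'}|$-complementary length, and since consecutive such paths share the single tip vertices $v_{-j},v'_{-j}$ they concatenate to a bi-infinite path which is a boundary line of $L$ on its right side and of $L'$ on its left side; being a union of edges whose labels read off the periodic word determined by $\delta_{r,r'}$, it is precisely one of the two bi-infinite alternating boundary lines in Definition~\ref{def:diamond line}. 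The main obstacle I anticipate is bookkeeping rather than conceptual: one must be careful that at the start the index shift built into the right shift (the extra cell $C'_{-1}$ appearing exactly when $1<|\delta_{r,r'}|<n$) is correctly aligned so that the first downward step has $C_0\cap C'_{-1}$ with $n-|\delta_{r,r'}|<n-1$ edges, which is what licenses the "type I, not type II" dichotomy from Lemma~\ref{lem:real2}; the excluded cases $|\delta_{r,r'}|=1$ and $|\delta_{r,r'}|=n$ are genuinely different (in the first, $C_i\cap C'_i$ is almost all of a half and one would instead stay put; in the second, the two rays coincide), which is why the hypothesis $1<|\delta_{r,r'}|<n$ is imposed.
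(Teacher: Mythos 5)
Your overall strategy is the paper's: the paper proves this lemma by exactly the ``braiding'' you describe, alternately adding one cell to the bottom of $r$ (using that the new tip is of type I) and then re-running the right-shift construction on the enlarged ray to add one cell to the bottom of $r'$, noting that $\delta$ is unchanged at every stage. So the architecture of your induction is fine.

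However, there is a concrete bookkeeping error at the crucial step, and it is not cosmetic because you explicitly hang the ``type I, not type II'' dichotomy on it. In the braiding pattern the intersections $C_j\cap C'_j$ are the ones with $n-|\delta_{r,r'}|$ edges, while $C_{j+1}\cap C'_j$ (read from $v_j$ to $v'_j$) has $|\delta_{r,r'}|$ edges; in particular $C_{-k}\cap C'_{-k-1}$ has $|\delta_{r,r'}|$ edges, not $n-|\delta_{r,r'}|$ as you assert. The vertex at which you want to attach $C_{-k-1}$ is the far endpoint of $C_{-k}\cap C'_{-k-1}$, so ruling out type II there via Lemma~\ref{lem:real2}\,(3) is governed by the size of $|\delta_{r,r'}|$ itself (the paper's proof says: this intersection has $|\delta_{r,r'}|<n$ edges, hence the vertex is of type I); the inequality $n-|\delta_{r,r'}|<n-1$, i.e.\ $|\delta_{r,r'}|>1$, is the one needed at the \emph{other} half of the alternation, namely when you extend $r'$ by re-running the right-shift construction (there the relevant same-block intersection is $C_{-k-1}\cap C'_{-k-1}$, with $n-|\delta_{r,r'}|$ edges). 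As written, you have attached the lower-bound inequality to the wrong pair of cells and in effect only ever use $|\delta_{r,r'}|>1$, so your justification of the $r$-extension step does not go through; the two-sided hypothesis is there precisely because each side licenses one of the two alternating steps. Relatedly, your closing remark about the excluded boundary cases is off: $|\delta_{r,r'}|=n$ never occurs (one always has $|\delta_{r,r'}|\le n-1$), and the genuine upper boundary case is $|\delta_{r,r'}|=n-1$, where $C_{i+1}\cap C'_i$ is nearly a half of a cell --- this is exactly the situation that is instead handled by Lemma~\ref{lem:extend to diamond line2}. Fixing your write-up amounts to swapping the two edge counts and stating clearly which inequality is invoked at which step, after which it coincides with the paper's argument.
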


\begin{proof}
	Let $r=r_0=\cup_{i=0}^{\infty} C_i$, $r'=r'_{-1}=\cup_{i=-1}^{\infty} C'_i$. Let $v_{-1}$ be the tip of $C_0$ with $v_{-1}\neq v_0$. Then $v_{-1}$ is an endpoint of $C_0\cap C'_{-1}$, and the number of edges in $C_0\cap C'_{-1}=|\delta_{r,r'}|$ is $<n$. Thus $v_{-1}$ is type I. Let $C_{-1}\subset Q_R$ be the $2$--cell containing $v_{-1}$ as a tip such that $C_{-1}\neq C_0$. Now we can enlarge $r$ to a new diamond ray $\cup_{i=-1}^{\infty} C_i$. Note that $r_{-1}=\cup_{i=-1}^{\infty} C_i$ is contained in the interior of $Q_R$ by our choice of $H'_r$. Note that $\delta_{r_{-1},r'_{-1}}=\delta_{r,r'}>1$. Thus we can define the right shift $r'_{-2}=\cup_{i=-2}^{\infty} C'_i$ of $r_{-1}$, then enlarge $r_{-1}$ to $r_{-2}$ as before. The lemma follows by repeating this braiding process.
\end{proof}

Now we define a sequence of diamond rays $\{r_i\}_{i=0}^{\infty}$ such that $r_0=r$, and $r_{i}$ is the right shift of $r_{i-1}$. By our choice of $H'_r$, it is possible to define these diamond rays.

\begin{lemma}
	\label{lem:extend to diamond line2}
	If there exists $i_0$ such that $|\delta_{r_{i_0-1},r_{i_0}}|=1$ and $|\delta_{r_{i_0},r_{i_0+1}}|=n-1$, then there exists a diamond line $L$ containing $r_{i_0}$.
\end{lemma}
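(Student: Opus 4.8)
The plan is to extend $r_{i_0}$ backwards, one $2$--cell at a time, into a bi-infinite diamond line, imitating the braiding argument in the proof of Lemma~\ref{lem:extend to diamond line1}. Write $r_{i_0}=\cup_{i\ge 0}C_i$, let $2n$ be the number of edges of $\partial C_0$ (so $n\ge 3$), let $v_0=C_0\cap C_1$, and let $v_{-1}$ be the other tip of $C_0$. The heart of the matter is to show that $v_{-1}$ is a vertex of type~I; once this is known, Lemma~\ref{lem:real1} produces a $2$--cell $C_{-1}\ne C_0$ having $v_{-1}$ as a tip, so that $C_{-1}\cup r_{i_0}$ is again a diamond ray, and one may continue.

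To see that $v_{-1}$ is of type~I, I would exhibit three \emph{distinct} $2$--cells lying in a common block of $\Xa_\Gamma$ and all incident to $v_{-1}$. The first is $C_0$ itself. The second comes from $r_{i_0-1}$: since $r_{i_0}$ is the right shift of $r_{i_0-1}$ with $|\delta_{r_{i_0-1},r_{i_0}}|=1$, the base cell $\hat C$ of $r_{i_0-1}$ meets $C_0$ in a path of $n-1$ edges one of whose endpoints is $v_{-1}$ (the last edge of that half of $\partial C_0$, reaching $v_0$, lying in the next cell of $r_{i_0-1}$); moreover $\hat C$ lies in the same block $B$ as $r_{i_0}$ by Lemma~\ref{lem:real1} applied at the type~I vertices of $r_{i_0-1}$. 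The third comes from $r_{i_0+1}$: since $|\delta_{r_{i_0},r_{i_0+1}}|=n-1>1$, the right shift $r_{i_0+1}$ carries a prepended cell $E$, and by the definition of $\delta$ the intersection $C_0\cap E$ is a path of $n-1$ edges with $v_{-1}$ as an endpoint, lying on the half of $\partial C_0$ opposite to $C_0\cap\hat C$; again $E$ lies in the block $B$ by Lemma~\ref{lem:real1}. These cells are pairwise distinct ($\hat C\ne E$ because $C_0\cap\hat C$ and $C_0\cap E$ lie on different halves of $\partial C_0$, and neither equals $C_0$), they all lie in $B$, and all three contain $v_{-1}$. This is incompatible with $v_{-1}$ being of type~O (only two incident cells), of type~II (by Lemma~\ref{lem:real2}, at most two of its four incident cells lie in any one block), or of type~III (by Lemma~\ref{lem:2pi cycle}(3),(4) its three or four incident cells lie in pairwise distinct blocks). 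Hence $v_{-1}$ is of type~I, and $C_{-1}$ is produced as above.

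Having attached $C_{-1}$, I would iterate: prepend one cell to each of $r_{i_0-1}$, $r_{i_0}$, $r_{i_0+1}$ simultaneously. By the remark preceding the definition of the right shift, the values $|\delta_{r_{i_0-1},r_{i_0}}|=1$ and $|\delta_{r_{i_0},r_{i_0+1}}|=n-1$ are unchanged by such prepending, so the argument of the previous paragraph applies verbatim at the new free tip of $C_{-1}$, producing $C_{-2}$, and so on. The process does not terminate, and since (after replacing $r_{i_0}$ by a further right shift if necessary, exactly as in the paragraph preceding Lemma~\ref{lem:extend to diamond line1}) the half-plane $H'_r$ and its carrier in $Q_R$ lie in the interior of $Q_R$, every cell $C_{-k}$ produced is an interior cell and every step is legitimate. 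The union $L=\bigl(\cup_{k\ge 1}C_{-k}\bigr)\cup r_{i_0}$ is then a diamond line containing $r_{i_0}$.

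The step I expect to be the main obstacle is the local analysis around $v_{-1}$ in the second paragraph: pinning down that $C_0\cap\hat C$ and $C_0\cap E$ really are $(n-1)$--edge subpaths of the two distinct halves of $\partial C_0$ sharing the endpoint $v_{-1}$, and that $\hat C$ and $E$ lie in the same block as $C_0$. This is precisely where both hypotheses on $\delta$ enter --- for $|\delta_{r_{i_0},r_{i_0+1}}|=n-1$ alone the cell $E$ does not by itself exclude the type~II possibility (two cells of a block through a type~II vertex also meet in $n-1$ edges), which is why the third cell $\hat C$ furnished by $|\delta_{r_{i_0-1},r_{i_0}}|=1$ is needed. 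Carrying this out requires careful bookkeeping with Lemma~\ref{cor:connected intersection}, Lemma~\ref{lem:unique}, Lemma~\ref{lem:real1}, Lemma~\ref{lem:real2}, and the definitions of the right shift and of $\delta$.
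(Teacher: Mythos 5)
Your proposal is correct and takes essentially the same route as the paper: the paper also uses the two hypotheses on $\delta$ to exhibit, around the free tip of the base cell of $r_{i_0}$, the two $(n-1)$--edge intersections (one with the base cell of $r_{i_0-1}$, one with the prepended cell of $r_{i_0+1}$) sharing that tip, concludes the tip is of type I, and extends backwards. The only difference is presentational: the paper outsources your ``attach $C_{-1}$ and iterate'' induction (including the simultaneous prepending of the neighbouring rays) to the development argument already carried out in Lemma~\ref{lem:diamond}, whereas you sketch that induction by hand.
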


\begin{proof}
	We assume without loss of generality $i_0=1$. Let $r_0=\cup_{i=0}^{\infty} C_i$, $r_1=\cup_{i=0}^{\infty}C'_i$ and $r_2=\cup_{i=-1}^{\infty}C''_i$. Let $v'_0\in C'_0$. Then $v'_0$ is an endpoint of both $C'_0\cap C_0$ and $C'_0\cap C''_{-1}$ by Lemma~\ref{cor:connected intersection} (2). By $|\delta_{r_1,r_2}|=n-1$, we know $C'_0\cap C''_{-1}$ has $n-1$ edges. By $|\delta_{r_0,r_1}|=1$, we know $C_1\cap C'_0$ has one edge, hence $C_0\cap C'_0$ has $n-1$ edges. Now the lemma follows from the same argument in Lemma~\ref{lem:diamond} (note that $C_0$, $C'_0$ and $C''_{-1}$ play the role of $C_1,\Pi_1$ and $C'_1$ in Figure~\ref{f:development1} respectively).
\end{proof}
\medskip

Now we are ready to prove Theorem~\ref{thm:main1} in the case $r$ is a diamond line. Note that this is the 
only case left to be verified.

If there exists $i_0$ such that $|\delta_{r_{i_0},r_{i_0+1}}|=n-1$ for all $i\ge i_0$, or $|\delta_{r_{i_0},r_{i_0+1}}|=1$ for all $i\ge i_0$, then $\cup_{i=i_0}^{\infty} r_i$ is a diamond-plain sector.

We assume such $i_0$ does not exist. If there exists $i_1$ such that $1<|\delta_{r_{i_1},r_{i_1+1}}|<n$, then we can extend $r_{i_1}$ to a diamond line $L_{i_1}$ by Lemma~\ref{lem:extend to diamond line1}. Hence each $r_i$ for $i\ge i_1$ can be extended to a diamond line $L_i$ by the argument in Lemma~\ref{lem:right shift}. We deduce from the non-existence of $i_0$ that $\cup_{i=i_1}^{\infty} L_i$ is a DCH. If for all $i$, $|\delta_{r_{i_1},r_{i_1+1}}|$ is either $1$ or $n-1$, then there exists $i_2$ such that the assumption of Lemma~\ref{lem:extend to diamond line2} is satisfied. So we can extend $r_{i_2}$ to a diamond line $L_{i_2}$ and deduce as before that $L_{i_2}$ bounds a DCH. Note that each DCH in $Q_R$ contains a CAT(0) half-flat of bounded Hausdorff distance from this DCH. Indeed, each $L_i$ contains a CAT(0) geodesic line $\ell_i$, and $\ell_i$ and $\ell_{i+1}$ bound a flat strip isometric to $[0,m_i]\times \mathbb E^1$, then the union of these flat strips gives the half-flat. 
\medskip 

In Sections~\ref{subsec:development of plain rays}, \ref{subsec:development of Coxeter rays}, and \ref{subsec:development of diamont rays} we treated all the cases for the singular ray $r$. Therefore, Theorem~\ref{thm:main1} follows.

\section{Quasi-isometry invariants for $2$--dimensional Artin groups}
\label{sec:application}
\subsection{Preservation of completions of atomic sectors}
\begin{definition}
	\label{def:}
Let $\{S_j\}_{j=1}^n$ be a collection of subsets of $\Xa_\Gamma$. We say $\{S_j\}_{j=1}^n$ \emph{touch along discs} if there exist $L$, $A$ and a sequence of $(L,A)$--quasi-isometric embeddings $f_i\colon D_i\to \Xa_\Gamma$ such that each $D_i$ is a disc in the Euclidean plane with radius $m_i\to \infty$ and $f(D_i)\subset \cap_{j=1}^nN_A(S_j)$.
\end{definition}
Here $\Xa_\Gamma$ has a piecewise Euclidean structure inherited from $X_\Gamma$, which gives rise to a length metric on $\Xa_\Gamma$ that is quasi-isometric to $A_\Gamma$. We use $N_A(X)$ to denote the $A$ neighborhood of $X$. 

\begin{lemma}
	\label{lem:sector touch}
	Let $S_1$ and $S_2$ be atomic sectors. If they touch along discs, then $d_H(S_1\cap S_2,S_1)<\infty$ and $d_H(S_1\cap S_2,S_2)<\infty$.
\end{lemma}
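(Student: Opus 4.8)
The statement is a close variant of Lemma~\ref{lem:atomic intersection}: there, the hypothesis was that $S_1\cap S_2$ directly contains large discs, whereas here we only know that some quasi-isometrically embedded large discs land in the common $A$-neighbourhood $N_A(S_1)\cap N_A(S_2)$. The plan is therefore to first reduce the ``touch along discs'' hypothesis to the ``contains large discs'' hypothesis of Lemma~\ref{lem:atomic intersection}, and then quote that lemma. The bridge is the combination of Theorem~\ref{thm:MSquasiflats} (applied inside $X_\Gamma$, via the construction of $Q_R$ and Theorem~\ref{thm:main2}) together with Lemma~\ref{lem:atomic qi embedding}, which says every atomic sector is quasi-isometrically embedded, hence coarsely ``incompressible.''

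\textbf{Step 1: extract a thick piece of $S_1$ coarsely inside $S_2$.} Given the $(L,A)$--quasi-isometric embeddings $f_i\colon D_i\to\Xa_\Gamma$ with $f_i(D_i)\subset N_A(S_1)\cap N_A(S_2)$ and $\mathrm{radius}(D_i)=m_i\to\infty$, I first push each $f_i$ into $S_1$: since $f_i(D_i)\subset N_A(S_1)$ and $S_1$ is quasi-isometrically embedded (Lemma~\ref{lem:atomic qi embedding}), one can homotope $f_i$ within a bounded neighbourhood to a quasi-isometric embedding $f_i'\colon D_i'\to S_1$ with $D_i'$ a slightly smaller disc, still of radius $\to\infty$, and with $\mathrm{im}(f_i')\subset N_{A'}(S_2)$ for a uniform constant $A'$ (this uses that the nearest-point-type projection to the quasiflat/atomic sector $S_1$ is coarsely well-defined, which follows from $S_1$ being quasi-isometrically embedded in the metrically systolic — hence coarsely nonpositively curved — complex $X_\Gamma$). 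Thus $S_1$ contains a sequence of large quasi-discs each lying in a uniform neighbourhood of $S_2$.

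\textbf{Step 2: upgrade from coarse containment to genuine containment.} Now I want to replace ``$S_1$ has large quasi-discs coarsely inside $S_2$'' by ``$S_1\cap S_2$ contains large discs,'' so as to match the hypothesis of Lemma~\ref{lem:atomic intersection}. Here I use that $S_2$ itself is an atomic sector, hence (after the cell-structure analysis of Section~\ref{subsec:new cell structure}) has a combinatorial structure in which the coarse neighbourhood $N_{A'}(S_2)$ retracts, within bounded distance, back onto $S_2$: concretely, every vertex within distance $A'$ of $S_2$ lies in a uniformly bounded-diameter subcomplex meeting $S_2$, because $S_2$ is a convex-type subcomplex (a half-flat, sector, or chromatic half-flat inside the CAT(0) complex $Q'$ approximating the relevant quasiflat, as in Lemma~\ref{lem:atomic is intersection of quasiflats} and Corollary~\ref{cor:completion of atomic}). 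Projecting the discs $f_i'(D_i')$ to $S_2$ and intersecting with $S_1$, and using once more that both $S_1$ and $S_2$ are quasi-isometrically embedded so that these projections do not collapse, yields honest $2$--dimensional discs $D_i''\subset S_1\cap S_2$ each containing a ball of radius $\to\infty$ in $S_1$ with respect to the $1$--skeleton path metric of $S_1$. That is precisely the hypothesis of Lemma~\ref{lem:atomic intersection}.

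\textbf{Step 3: conclude.} Apply Lemma~\ref{lem:atomic intersection} to the sequence $\{D_i''\}$: it gives $d_H(S_1\cap S_2,S_1)<\infty$ and $d_H(S_1\cap S_2,S_2)<\infty$, which is exactly the assertion. \textbf{The main obstacle} is Step 2: making precise the ``coarse neighbourhood of an atomic sector deformation-retracts boundedly back onto it,'' uniformly over all atomic sectors and all the quasi-isometry constants, and checking that the projected-and-intersected regions really contain round balls of unbounded radius rather than degenerating (e.g. becoming long thin strips). This is where the quasi-isometric embedding of atomic sectors (Lemma~\ref{lem:atomic qi embedding}) and the CAT(0) geometry of the approximating complexes $Q'$ (Theorem~\ref{thm:MSquasiflats}) must be used carefully and in tandem, much as in the case analysis of Lemma~\ref{lem:atomic intersection} itself — indeed the proof will largely re-run that case analysis, now feeding in the coarse discs produced in Steps~1--2 in place of the genuine ones.
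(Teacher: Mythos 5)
Your overall reduction target is right (the paper also ends by invoking Lemma~\ref{lem:atomic intersection}), and your Step~1 matches the paper's first move: modify the $f_i$ to get uniform-constant quasi-disc maps into $S_1$ whose images stay in a uniform neighbourhood of $S_2$. But Step~2 has a genuine gap, and it is exactly the point where all the difficulty sits. Knowing that a large quasi-disc of $S_1$ lies in $N_{A'}(S_2)$ does not, by any projection or bounded-retraction argument, produce points of $S_1\cap S_2$: a coarse nearest-point projection of that disc lands in $S_2$, and its points are merely within $A'$ of $S_1$, so ``intersecting with $S_1$'' could a priori be empty --- two sectors could run parallel at bounded distance without meeting. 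Quasi-isometric embeddedness of $S_1$ and $S_2$ (Lemma~\ref{lem:atomic qi embedding}) prevents the projected region from collapsing inside $S_2$, but it gives no mechanism forcing genuine set-theoretic intersection, let alone $2$--dimensional discs in $S_1\cap S_2$ containing balls of unbounded radius measured in $S_1$. As written, Step~2 asserts the conclusion rather than proving it. (Your auxiliary claim that $N_{A'}(S_2)$ retracts boundedly onto $S_2$ because $S_2$ is ``convex-type'' is also unjustified in the ambient complex $\Xa_\Gamma$, which is not CAT(0); convexity is only available in the auxiliary approximating planes, not for the sector sitting inside $\Xa_\Gamma$.)

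The paper closes this gap with an algebraic-topological argument that your sketch does not contain. One takes the fundamental class $\sigma$ of $(D_i,\partial D_i)$ and pushes it to chains $\alpha_i=(g_i)_*\sigma$ in $S_1$ and $\beta_i=(h_i)_*\sigma$ in $S_2$, where $d(g_i(x),h_i(x))<A'$ pointwise. Using that $S_1$ is embedded (Lemma~\ref{lem:atomic embedding}) and quasi-isometrically embedded, the support set of $[\alpha_i]\in H_2(\Xa_\Gamma, g_i(\partial D_i))$ contains a ball $B_{S_1}(g_i(c_i),n_i/M)$. Since $\Xa_\Gamma$ is contractible and uniformly contractible, the two boundary cycles cobound a thin annular chain $\gamma_i$ supported near $g_i(\partial D_i)$, and $H_2(\Xa_\Gamma)=0$ gives $[\gamma_i+\beta_i]=[\alpha_i]$; comparing support sets, the ball $B_{S_1}(g_i(c_i),n_i/M)$ avoids $\im\gamma_i$ and hence lies in $\im\beta_i\subset S_2$. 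That is how genuine containment of larger and larger balls of $S_1$ inside $S_2$ is obtained, after which Lemma~\ref{lem:atomic intersection} finishes. If you want to salvage your write-up, Step~2 should be replaced by (or rebuilt around) this support-set/filling argument; no purely coarse projection argument will do the job.
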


\begin{proof}
By modifying the map $f_i$ in Definition~\ref{def:}, there are $L'$ and $A'$ independent of $i$ such that for each $D_i$, there are continuous $(L',A')$--quasi-isometric embeddings $g_i\colon D_i\to S_1$ and $h_i\colon D_i\to S_2$ such that for any $x\in D_i$, $d_{\Xa_\Gamma}(g_i(x),h_i(x))<A'$. By Lemma~\ref{lem:atomic qi embedding}, we can assume $g_i$ and $h_i$ are also $(L',A')$--quasi-isometric embeddings with respect to the intrinsic metric on $S_1$ and $S_2$. For the convenience of later computation, we assume $S_1$ and $S_2$ are tilings of flat Euclidean sectors (this can always be arranged up to quasi-isometries). Let $\sigma$ be a singular chain representing the fundamental class in $H_2(D_i,\partial D_i)$. Let $\alpha_i=(g_i)_{\ast}\sigma$ and $\beta_i=(h_i)_{\ast}\sigma$. 

We claim there exists $M>1$ independent of $i$ such that $[\partial \alpha_i]$ is nontrivial in $H_1(S_1\setminus B_{S_1}(g_i(c_i),\frac{n_i}{M}))$, where $B_{S_1}(g_i(c_i),\frac{n_i}{M})$ denotes the ball in $S_1$ centered at $g_i(c_i)$ and has radius $\frac{n_i}{M}$ with respect to the flat metric on $S_1$. To see the claim, we construct a continuous map $g^{-1}_i\colon \im g_i\to D_i$ such that $g^{-1}\circ g$ and $g\circ g^{-1}$ are identity maps up to a bounded error which depends only on $L'$ and $A'$. Hence $g^{-1}\circ g$ and $g\circ g^{-1}$ are homotopic to identity where the homotopy moves points by a distance $\le N=N(L',A')$. Now we assume without loss of generality that $n_i\gg L'$, $n_i\gg A'$ and $n_i\gg N$ for all $i$. Then the claim follows. It follows from the claim that the image of $[\alpha_i]$ is non-trivial under $H_2(S_1,S_1\setminus B_{S_1}(g_i(c_i),\frac{n_i}{M}))\to H_2(S_1,S_1\setminus\{p\})\to H_2(\Xa_\Gamma,\Xa_\Gamma\setminus\{p\})$ (recall that $S_1$ is embedded in $\Xa_\Gamma$ by Lemma~\ref{lem:atomic embedding}, hence the two maps here are both injective). Hence the support set $S_{[\alpha]}$ of the homology class $[\alpha]\in H_2(\Xa_\Gamma,g_i(\partial D_i))$ satisfies $B_{S_1}(g_i(c_i),\frac{n_i}{M})\subset S_{[\alpha]}$. 

Since $d_{\Xa_\Gamma}(g_i(x),h_i(x))<A'$ for each $x\in\partial D_i$ and $\Xa_\Gamma$ is uniformly contractible, there exists a continuous map $F\colon \partial D_i\times [0,1]\to \Xa_\Gamma$ such that $F|_{\partial D_i\times\{0\}}=g_i|_{\partial D}$, $F|_{\partial D_i\times\{1\}}=h_i|_{\partial D}$ and $d_H(\im F, g_i(\partial D_i))<A_1$ for $A_1$ independent of $i$. Then $F$ gives rise to a $2$--chain $\gamma_i$ such that $\partial \gamma_i=\partial\alpha_i-\partial\beta_i$ and $d_H(\im \gamma_i,\im \partial\alpha_i)<A_2$ for $A_2$ independent of $i$. Then $\partial (\gamma_i+\beta_i)=\partial\alpha_i$. Since $H_2(\Xa_\Gamma)$ is trivial, there exists a $3$--chain $\rho$ such that $\partial\rho=\gamma_i+\beta_i-\alpha_i$. Thus $[\gamma_i+\beta_i]=[\alpha_i]$ in $H_2(\Xa_\Gamma,g_i(\partial D_i))$ and $S_{[\gamma_i+\beta_i]}=S_{[\alpha_i]}\supset B_{S_1}(g_i(c_i),\frac{n_i}{M})$. By assuming $n_i\gg A_2$ and possibly enlarging $M$, we have $B_{S_1}(g_i(c_i),\frac{n_i}{M})\cap \im\gamma_i=\emptyset$. Thus $[\beta_i]$ is nontrivial in $H_2(\Xa_\Gamma,\Xa_\Gamma\setminus\{p\})$ for any $p\in B_{S_1}(g_i(c_i),\frac{n_i}{M})$. Hence $B_{S_1}(g_i(c_i),\frac{n_i}{M})\subset \im\beta_i\subset S_2$. Thus $S_1\cap S_2$ contains larger and larger discs and the lemma follows from Lemma~\ref{lem:atomic intersection}.
\end{proof}

\begin{lemma}
	\label{lem:sector quasiflat touch}
Let $S$ be an atomic sector and let $Q\subset\Xa_\Gamma$ be a $2$--dimensional quasiflat. Let $\{S_i\}_{i=1}^{n}$ and $Q_R$ be as in Theorem~\ref{thm:main2}. If $S$ and $Q$ touch along discs, then there exists an $S_i$ such that $d_H(S\cap S_i,S)<\infty$ and $d_H(S\cap S_i,S_i)<\infty$. In particular, $S\subset N_A(Q)$ for some $A$.
\end{lemma}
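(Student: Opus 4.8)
\textbf{Proof proposal for Lemma~\ref{lem:sector quasiflat touch}.}

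The plan is to reduce the statement about an atomic sector touching a quasiflat to the already-established statement about two atomic sectors touching, namely Lemma~\ref{lem:sector touch}. By Theorem~\ref{thm:main2}, the quasiflat $Q$ is at finite Hausdorff distance from a union $\bigcup_{i=1}^{n} S_i$ of finitely many atomic sectors, and we may take $Q_R$ in place of $Q$ up to a bounded error. So first I would replace $Q$ by $\bigcup_{i=1}^n S_i$ throughout: since $d_H(Q,\bigcup_i S_i)<\infty$, the hypothesis that $S$ and $Q$ touch along discs (with quasi-isometry constants $L,A$ and maps $f_j\colon D_j\to \Xa_\Gamma$ with $f_j(D_j)\subset N_A(S)\cap N_A(Q)$) implies, after enlarging $A$ by the bounded Hausdorff distance, that $f_j(D_j)\subset N_{A'}(S)\cap N_{A'}\!\bigl(\bigcup_{i=1}^n S_i\bigr)$ for a uniform $A'$.

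Next, the key step is a pigeonhole argument on the discs. Each $D_j$ is a disc of radius $m_j\to\infty$ tiled by equilateral triangles, and $f_j(D_j)$ lies in the $A'$-neighborhood of $\bigcup_{i=1}^n S_i$. I would partition the vertices of $D_j$ according to which of the $n$ sets $N_{A'}(S_i)$ their $f_j$-image lands in (choosing one if several apply); since $n$ is fixed, for each $j$ one of the classes, say $S_{i(j)}$, contains a subset of $D_j$ of combinatorial diameter comparable to $m_j$. Passing to a subsequence so that $i(j)$ is constant, equal to some fixed index $i_0$, I then need to extract from that large subset of $D_j$ a genuine large round sub-disc $D_j'\subset D_j$ (with radius $\to\infty$) whose $f_j$-image still lies in $N_{A'}(S_{i_0})$. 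This is the step I expect to require the most care: a priori the class $\{v : f_j(v)\in N_{A'}(S_{i_0})\}$ need not be convex or even connected in $D_j$. The remedy is that $f_j$ is an $(L,A)$-quasi-isometric embedding of a Euclidean disc, so its image is coarsely a flat disc, and the preimages $f_j^{-1}(N_{A'}(S_i))$ are coarsely flat pieces whose union covers $D_j$; by a quantitative covering/separation argument (of the kind used in the proof of Theorem~\ref{thm:MSquasiflats} and Lemma~\ref{lem:atomic qi embedding}, using quadratic area growth and the fact that each $S_i$ is quasi-isometrically embedded by Lemma~\ref{lem:atomic qi embedding}), one of these pieces must contain a ball of radius $c\, m_j$ for a uniform $c>0$. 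Restricting $f_j$ to that ball gives the desired $f_j'\colon D_j'\to\Xa_\Gamma$ with $f_j'(D_j')\subset N_{A'}(S)\cap N_{A'}(S_{i_0})$ and $\mathrm{radius}(D_j')\to\infty$.

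Having produced such a sequence, the hypotheses of Lemma~\ref{lem:sector touch} are met for the pair $S$ and $S_{i_0}$: they touch along discs. That lemma then gives $d_H(S\cap S_{i_0},S)<\infty$ and $d_H(S\cap S_{i_0},S_{i_0})<\infty$, which is exactly the claimed conclusion with $i=i_0$. Finally, for the "in particular" clause, from $d_H(S\cap S_{i_0},S)<\infty$ we get $S\subset N_{A''}(S_{i_0})$ for some $A''$, and since $S_{i_0}\subset \bigcup_{i=1}^n S_i\subset N_{A'''}(Q)$ by Theorem~\ref{thm:main2}, we conclude $S\subset N_A(Q)$ for a suitable $A$. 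The main obstacle, as noted, is the geometric pigeonhole step extracting a large round disc lying near a single sector; everything else is bookkeeping with Hausdorff distances and invocations of the already-proved Lemma~\ref{lem:sector touch} and Lemma~\ref{lem:atomic qi embedding}.
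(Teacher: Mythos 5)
Your overall strategy---pigeonhole down to a single $S_{i_0}$ so that $S$ and $S_{i_0}$ touch along discs, then invoke Lemma~\ref{lem:sector touch}, and deduce the ``in particular'' clause from Theorem~\ref{thm:main2}---is the same as the paper's. The gap is exactly at the step you flag yourself: extracting from $D_j$ a large round sub-disc whose image stays near a single $S_{i_0}$. As you set it up, you cover $D_j$ by the $n$ sets $f_j^{-1}(N_{A'}(S_i))$ and claim one of them must contain a ball of radius $c\,m_j$ by a ``quantitative covering/separation argument'' using quadratic area growth and Lemma~\ref{lem:atomic qi embedding}. That inference is not valid at the level of generality at which you state it: a Euclidean disc of radius $m$ can be covered by two sets, each of quadratic area and each coarsely connected (for instance alternating strips of bounded width), neither of which contains a ball of radius larger than a fixed constant. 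Quadratic growth and the quasi-isometric embeddedness of each $S_i$ do not exclude such a configuration for the preimages, and your assertion that the preimages are ``coarsely flat pieces'' is precisely the unproved content---a priori you have no control on how $f_j^{-1}(N_{A'}(S_i))$ sits inside $D_j$.

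The missing ingredient is to exploit the structure of $Q$ itself rather than an abstract covering of $D_j$: by Theorem~\ref{thm:MSquasiflats} the quasiflat $Q$ has a CAT(0) approximation $Q'$, and by Theorems~\ref{thm:main1} and~\ref{thm:main2} the plane $Q'$ is, up to finite Hausdorff distance, a union of finitely many pairwise disjoint flat sectors $S'_i\subset Q'$, each of angle between $\pi/6$ and $\pi$ and each at finite Hausdorff distance from $S_i$. A disc $D$ of radius $m$ in $N_A(S)\cap N_A(Q)$ pulls back to a ball of radius $m/M$ in $Q'$ (uniform $M$), and since the $S'_i$ are genuine flat sectors with angles bounded below which cover $Q'$ up to bounded error, elementary planar geometry gives a ball of radius $m/M'$ inside a single $S'_i$. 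Lemma~\ref{lem:atomic qi embedding} then identifies the flat metric on $S'_i$ with the induced metric on $S_i$ up to quasi-isometry, so these balls (whose images also lie near $S$, since $D\subset N_A(S)$) witness, after passing to a subsequence in $i$, that $S$ and some $S_{i_0}$ touch along discs; Lemma~\ref{lem:sector touch} finishes as you say. With this replacement your bookkeeping with Hausdorff distances is fine; without it, the pigeonhole step remains unproved.
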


\begin{proof}
Let $q'\colon Q'\to X_\Gamma$ be the CAT(0) approximation of $Q$ as in Theorem~\ref{thm:MSquasiflats}. Let $D$ be the image of an $(L,A)$--quasi-isometric embedding from a disc of radius $m$ to $Q$. Since $q'$ is Lipschitz, $(q')^{-1}(D)$ contains a ball of radius $\frac{m}{M}$ in $Q'$ with $M$ depending on $L,A$. By Theorem~\ref{thm:main1} and Theorem~\ref{thm:main2}, there are disjoint flat sectors $\{S'_i\}_{i=1}^n$ in $Q'$ such that $d_H(Q',\cup_{i=1}^{n}S'_i)<\infty$ and $d_H(S_i,S'_i)<\infty$. Since $Q'$ is a CAT(0) plane, for sufficiently large $m$, any ball of radius $\frac{m}{M}$ in $Q'$ (with respect to the CAT(0) metric on $Q$) must contain a ball of radius $\frac{m}{M'}$ in one of the $S'_i$ for some $M'>M$ (with respect to the CAT(0) metric on $S'_i$). By Lemma~\ref{lem:atomic qi embedding}, the CAT(0) metric on $S'_i$ and the metric on $S_i$ induced from $\Xa_\Gamma$ are quasi-isometric. Let $m\to\infty$. By passing to a subsequence we know $S$ and at least one of the $S_i$ touch along discs. Then the lemma follows from Lemma~\ref{lem:sector touch}.
\end{proof}

\begin{lemma}
	\label{lem:intersection of quasiflats contain a sector}
	Let $\{Q_i\}_{i=1}^{n}$ be a finite collection of $2$--dimensional quasiflats. If they touch along discs, then there exists an atomic sector $S$ and $A>0$ such that $S\subset \cap_{i=1}^{n}N_A(Q_i)$.
\end{lemma}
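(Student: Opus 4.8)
The plan is to induct on $n$, using Lemma~\ref{lem:sector quasiflat touch} to reduce the case of $n$ quasiflats to a situation governed by a single atomic sector, and then invoke the structure of the coarse intersection of that sector with another quasiflat. First I would dispose of the case $n=1$: here the statement just asserts that $Q_1$ coarsely contains an atomic sector, which is immediate from Theorem~\ref{thm:main2} (any one of the $S_i$ given there lies in $N_A(Q_1)$, and it is an atomic sector). So assume $n\ge 2$ and that the $Q_i$ touch along discs via quasi-isometric embeddings $f_i\colon D_i\to\Xa_\Gamma$ with $f(D_i)\subset\cap_{j=1}^n N_A(Q_j)$ and $\operatorname{radius}(D_i)=m_i\to\infty$.

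The key step is to extract an atomic sector inside $Q_n$ that is comparable to the tail of the discs $D_i$. Apply Theorem~\ref{thm:main2} to $Q_n$: there are finitely many disjoint atomic sectors $\{S_k\}_{k=1}^{N}$ with $d_H(\cup_k S_k, (Q_n)_R)<\infty$, and passing through the CAT(0) approximation $q'_n\colon Q'_n\to X_\Gamma$ of Theorem~\ref{thm:MSquasiflats} exactly as in the proof of Lemma~\ref{lem:sector quasiflat touch}, a large disc in $Q_n$ pulls back to a large ball in $Q'_n$, which (since $Q'_n$ is a CAT(0) plane that is flat outside a compact set) contains a large ball inside one of the flat sectors $S'_k$. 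Since there are finitely many $S_k$, after passing to a subsequence of the $D_i$ we may assume a single atomic sector $S:=S_{k_0}\subset\Xa_\Gamma$ has the property that it contains the images of $(L',A')$-quasi-isometric embeddings of discs $D'_i$ with $\operatorname{radius}(D'_i)\to\infty$, and moreover these discs are $A''$-close in $\Xa_\Gamma$ to $f(D_i)$, hence lie in $N_{A+A''}(Q_j)$ for every $j\le n-1$. In other words, $S$ and each $Q_j$ ($1\le j\le n-1$) touch along discs.

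Now apply Lemma~\ref{lem:sector quasiflat touch} successively: since $S$ and $Q_j$ touch along discs, there is an atomic sector $S^{(j)}$ among the pieces of $Q_j$ with $d_H(S\cap S^{(j)},S)<\infty$ and $d_H(S\cap S^{(j)},S^{(j)})<\infty$; in particular $S\subset N_{A_j}(Q_j)$. Taking the maximum of the constants over $j=1,\dots,n-1$ and combining with $S\subset Q_n$ gives $S\subset\cap_{i=1}^{n}N_{A_0}(Q_i)$ for a single constant $A_0$, which is exactly the assertion. (A small point to check: in extracting $S$ we need the discs in $Q'_n$ to genuinely be large balls in the CAT(0) metric so that Lemma~\ref{lem:atomic qi embedding} lets us pass between the CAT(0) metric on $S'_{k_0}$ and the intrinsic metric on $S=S_{k_0}$; this is the same bookkeeping as in Lemma~\ref{lem:sector quasiflat touch} and presents no new difficulty.)

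The main obstacle is the bookkeeping in the reduction step: one must ensure that ``touching along discs'' is preserved when replacing the disc family $\{D_i\}$ by discs sitting inside the chosen atomic sector $S$ of $Q_n$. Concretely, the quasi-isometric embedding $D_i\to Q_n$ composed with $Q_n\hookrightarrow\Xa_\Gamma$ must be compared with the flat structure on $Q'_n$, a large sub-ball selected in the appropriate $S'_k$, and then transported back to $\Xa_\Gamma$; keeping all quasi-isometry constants uniform in $i$ (so that the limiting argument of Lemma~\ref{lem:sector touch}, invoked through Lemma~\ref{lem:sector quasiflat touch}, applies) is the delicate part. Everything else is a finite iteration of lemmas already proved.
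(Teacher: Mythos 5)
Your argument is correct and is essentially the paper's proof: both use Theorem~\ref{thm:main2} together with the disc-localization argument from the proof of Lemma~\ref{lem:sector quasiflat touch} to pin the large discs (after passing to a subsequence) to a fixed atomic sector, and then conclude via Lemma~\ref{lem:sector touch}/Lemma~\ref{lem:sector quasiflat touch}; the only cosmetic difference is that the paper extracts a sector from every $Q_i$ simultaneously and applies Lemma~\ref{lem:sector touch} to that family, whereas you extract one sector from $Q_n$ and then invoke Lemma~\ref{lem:sector quasiflat touch} for each remaining $Q_j$. Only note that the sector $S$ lies in a bounded neighborhood of $Q_n$ rather than literally inside $Q_n$, which is all the statement requires.
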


\begin{proof}
By Theorem~\ref{thm:main2}, we can assume each $Q_i$ is a union of finitely many atomic sectors. By the proof of Lemma~\ref{lem:sector quasiflat touch}, we know that for each $(L,A)$--quasi-isometric embedding $D_m\to \cap_{i=1}^{n}N_A(Q_i)$ (where $D_m$ denotes a disc of radius $m$ in the Euclidean plane), there exist constants $L',A',M>1$ independent of $m$, atomic sector $S_i$ of $Q_i$ for each $i$ and an $(L',A')$--quasi-isometric embedding $D_{\frac{m}{M}}\to \cap_{i=1}^{n}N_{A'}(S_i)$. Since each $Q_i$ has only finitely many atomic sectors, we can assume $S_i$ does not change as $m\to \infty$ by passing to a subsequence. Thus the collection $\{S_i\}_{i=1}^{n}$ touch along discs and we are done by Lemma~\ref{lem:sector touch}.
\end{proof}

\begin{lemma}
	\label{lem:completion containment}
Let $S$ be an atomic sector and let $Q$ be a $2$--dimensional quasiflat. Let $Q_R$ and $\{S_i\}_{i=1}^{n}$ be as in Theorem~\ref{thm:main2}. If there exists $A>0$ such that $S\subset N_A(Q)$, then the completion $\bar S$ of $S$ satisfies that $d_H(\bar S\cap Q_R,\bar S)<\infty$. In particular, $\bar S\subset N_{A'}(Q)$ for some $A'>0$.	
\end{lemma}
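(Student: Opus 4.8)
The plan is to leverage the structure theory of atomic sectors and their completions established in Section~\ref{subsec:completion}, combined with the ``touch along discs'' technology developed in Lemmas~\ref{lem:sector touch}--\ref{lem:intersection of quasiflats contain a sector}. Recall that by Corollary~\ref{cor:completion of atomic}, the completion $\bar S$ is a coarse intersection of finitely many $2$--dimensional quasiflats, say $\bar S = $ coarse intersection of $\{Q'_j\}_{j=1}^k$. So the hypothesis $S\subset N_A(Q)$ together with the fact that $S$ coarsely fills $\bar S$ (in each of the constructions of Section~\ref{subsec:completion}, $S$ is obtained from $\bar S$ by chopping off along one boundary ray, so $S$ contains arbitrarily large discs of $\bar S$, hence of each $Q'_j$) will give us that $S$, and therefore $\bar S$, touches each $Q'_j$ along discs, and also that $S$ touches $Q$ along discs.

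First I would make precise the statement that $S$ coarsely fills $\bar S$: going through the case analysis in Section~\ref{subsec:completion} (diamond-plain sector with completion $H_2$; plain sector with completion a half-flat $H$ or a full flat; DCH/CCH/PCH and Coxeter/Coxeter-plain sectors whose completion is themselves), in every case $\bar S$ is either $S$ itself or a half-flat/flat obtained by extending $S$ across one boundary ray, so $\bar S \subset N_{A_0}(S)$ fails in general but $S$ does contain balls of radius $m\to\infty$ of $\bar S$. Hence $\bar S$ and $S$ touch along discs, and since $S\subset N_A(Q)$, the quasi-isometric embeddings of discs into $S$ compose with the inclusion to give quasi-isometric embeddings of discs into $\cap\, N_{A}(\{Q'_j\}_j) \cap N_A(Q)$; that is, the finite collection $\{Q'_1,\dots,Q'_k,Q\}$ touches along discs.

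Next, apply Lemma~\ref{lem:intersection of quasiflats contain a sector} to this collection to produce an atomic sector $T$ with $T\subset \cap_j N_{A_1}(Q'_j)\cap N_{A_1}(Q)$ for some $A_1$. Since $T\subset N_{A_1}(Q'_j)$ for each $j$ and $\bar S$ is the coarse intersection of the $Q'_j$, we get $T\subset N_{A_2}(\bar S)$; on the other hand $T\subset N_{A_1}(Q)$. Now I would run the argument of Lemma~\ref{lem:sector quasiflat touch}: $T$ touches $Q$ along discs (it sits in a bounded neighborhood of $Q$ and is itself a quasiflat-like sector with arbitrarily large discs), so there is some $S_i$ from the decomposition $d_H(\cup S_i, Q_R)<\infty$ of Theorem~\ref{thm:main2} with $d_H(T\cap S_i,T)<\infty$ and $d_H(T\cap S_i,S_i)<\infty$; in particular $T\subset N_{A_3}(Q_R)$. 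Combined with $T\subset N_{A_2}(\bar S)$ and the fact that $\bar S$ coarsely equals the coarse intersection which $T$ also lies in, I would then argue that $\bar S$ itself is coarsely contained in $Q_R$: since $\bar S$ and $T$ both lie in $\cap_j N(Q'_j)$ and $T$ coarsely fills it (being a sector produced by Lemma~\ref{lem:intersection of quasiflats contain a sector}, i.e.\ itself coarsely equal to the coarse intersection when the latter is a sector — and when $\bar S$ is a half-flat one needs that $\bar S$ and $T$ agree coarsely up to finite Hausdorff distance by Lemma~\ref{lem:atomic intersection}), we obtain $d_H(\bar S, T)<\infty$, hence $\bar S\subset N_{A_3}(Q_R)\subset N_{A_4}(Q)$, and $d_H(\bar S\cap Q_R,\bar S)<\infty$.

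The main obstacle I anticipate is the bookkeeping at the last step: matching $\bar S$ with the sector $T$ produced by Lemma~\ref{lem:intersection of quasiflats contain a sector} up to finite Hausdorff distance, rather than merely up to coarse containment. When $\bar S$ is already an atomic sector this is handled by Lemma~\ref{lem:atomic intersection} (two atomic sectors that contain a common sequence of growing discs are mutually coarsely dense). When $\bar S$ is a half-flat $H_i$ or a full flat (the plain-sector and diamond-plain-sector cases), one needs the analogous statement that such a half-flat is determined up to finite Hausdorff distance by the property of coarsely containing a fixed growing family of discs together with lying in a fixed bounded neighborhood of a quasiflat — which follows from the quasi-isometric embeddedness of half-flats and flats (Lemma~\ref{lem:atomic qi embedding} and the discussion after Definition~\ref{def:plain sector}) plus a support-set argument in locally finite homology identical in spirit to the proof of Lemma~\ref{lem:sector touch}. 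So concretely I would either (a) extend Lemma~\ref{lem:atomic intersection} to cover half-flats and flats, or (b) observe that in the cases where $\bar S$ is a genuine half-flat or flat, it is itself a coarse intersection of two quasiflats in a way that makes the homology argument of Lemma~\ref{lem:sector touch} go through verbatim. I expect option (b) to be cleanest, and the $\bar S = S$ cases are then immediate.
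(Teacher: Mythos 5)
There is a genuine gap at the final step, and it is precisely where the actual content of the lemma lies. Your chain produces, via Lemma~\ref{lem:intersection of quasiflats contain a sector}, \emph{some} atomic sector $T\subset \bigcap_j N_{A_1}(Q'_j)\cap N_{A_1}(Q)$, but nothing in that lemma (or in its proof) says that $T$ coarsely fills the coarse intersection $\bigcap_j N(Q'_j)\simeq \bar S$. Indeed $S$ itself already lies in this common neighborhood, so the sector returned could simply be (coarsely) $S$ again, yielding no new information. Knowing $T\subset N(\bar S)$, or even that $T$ and $\bar S$ touch along discs, does not give $d_H(T,\bar S)<\infty$: a sector sitting inside a half-flat or flat touches it along arbitrarily large discs while having infinite Hausdorff distance from it. Lemma~\ref{lem:atomic intersection} cannot close this, since it applies to pairs of \emph{atomic sectors}, and $\bar S$ is in general not one; worse, when $\bar S$ is a full flat (both $V_1,V_2$ not good), no atomic sector can be Hausdorff close to $\bar S$ at all, since shadows of atomic sectors have angle at most $\pi$, so the equality $d_H(T,\bar S)<\infty$ you aim for is impossible as stated in that case. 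Your fallback (b), a support-set argument ``as in Lemma~\ref{lem:sector touch}'', would only show that sets which already touch along discs coarsely intersect; it cannot show that $Q$ coarsely contains the part of $\bar S$ beyond $S$, which is the whole point. A symptom of the problem is that your argument never uses the defining hypotheses of the completion (the odd component of the label $a$ consisting of leaves, the conditions on $V_i^{\perp}$); but these are exactly what make the statement true, since they are what forces a quasiflat containing $S$ to continue across the non-good boundary ray inside $\bar S$.

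For comparison, the paper first reduces, as you do, to the diamond-plain and plain sector cases and, via Lemma~\ref{lem:sector touch} and Lemma~\ref{lem:sector quasiflat touch}, to $S=S_1$ being one of the sectors in the decomposition of $Q_R$ from Theorem~\ref{thm:main2}. The real work is then combinatorial: one reruns the development analysis of Section~\ref{subsec:development of plain rays} on the sector $S_2$ adjacent to $S_1$ across (a parallel copy of) the plain boundary ray $r_1$, and shows that the leaf/odd-component hypotheses (resp.\ the failure of $V_2$ to be good, forcing $V_2^{\perp}=V_1$) pin down the quasi-rectangular region $P$ and the type of $S_2$ — it must stay in the block $B$, resp.\ be made of squares of the prescribed labels — so that a sub-half-flat of $\bar S$ is \emph{literally} contained in $Q_R$. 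This literal containment of a coarsely dense subset is also what yields the stated conclusion $d_H(\bar S\cap Q_R,\bar S)<\infty$; note that even if your argument had produced $\bar S\subset N(Q_R)$, coarse containment alone does not bound the Hausdorff distance between $\bar S\cap Q_R$ and $\bar S$. To repair your approach you would have to prove the rigidity statement directly (any quasiflat coarsely containing $S$ coarsely contains the extension of $S$ across its non-good ray), and that proof is essentially the case analysis the paper carries out.
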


\begin{proof}
By Lemma~\ref{lem:atomic is intersection of quasiflats}, it suffices to consider the case when $S$ is a diamond-plain sector or a plain sector.  By Lemma~\ref{lem:sector touch}, we can assume $S=S_1$. 

First we assume $S_1$ is a diamond-plain sector. Suppose $S_1$ is bounded by a plain ray $r_1$ and a diamond ray $r'_1$. Then the discussion in Section~\ref{subsec:development of plain rays} implies that we can assume $S_2$ has a boundary ray $r_2$ such that 
\begin{enumerate}
	\item $r_2$ is a plain ray;
	\item $r_1$ and $r_2$ bound a (possibly degenerate) quasi-rectangular region $P$, in particular, $d_H(r_1,r_2)<\infty$.
\end{enumerate}
Also we know from Section~\ref{subsec:development of plain rays} that the possibilities for $S_2$ are diamond-plain sector, Coxeter-plain sector, plain sector and PCH. Let $a\in\Gamma$ be the label of edges in $r_1$ and $\overline{ab}$ be the defining edge of the block $B$ that contains $S$. The interesting case is that every vertex in the odd component (cf.\ Section~\ref{subsec:completion}) containing $a$ is a leaf. Thus either $a$ is a leaf and $\overline{ab}$ is labeled by an even number $>2$, or $\overline{ab}$ has odd label and $\overline{ab}$ is a connected component of $\Gamma$. In both cases we can deduce the following
\begin{enumerate}
	\item $P\subset B$;
	\item $r_2$ is single labeled by either $a$ or $b$;
	\item $S_2$ is a diamond-plain sector in $B$.
\end{enumerate}
Thus a sub-half-flat of $\bar{S}$ is contained in $Q_R$.

Now assume $S_1$ is a plain sector. Write $S_1=r_1\times r'_1$, where $r_1$ and $r'_1$ are boundary rays of $S_1$. Let $V_1$ and $V'_1$ be the collection of labels of edges in $r_1$ and $r'_1$ respectively. First we consider the case when $V_1$ is good (in the sense of Section~\ref{subsec:completion}) and $V_2$ is not good. Since we always have $V_1\subset V^{\perp}_2$, $V_1$ has to be a singleton. $V_2$ either satisfies $|V_2|\ge 2$ and $V^{\perp}_2=V_1$, or $|V_2|=1$ and $V_2$ is a leaf. In the latter case we also have $V^{\perp}_2=V_1$. 

Let $S_2,r_2$ and $P$ be as before. We claim that if $P$ is non-degenerate, then $P$ is made of squares. Now we prove the claim. In the case $|V_2|\ge 2$, there exists a vertex $v\in r_1$ such that the two edges in $r_1$ containing $v$ have different labels. Since the two squares of $S_1$ containing $v$ are in different blocks, $v$ is either of type II, or is a $\square$--vertex. By Lemma~\ref{lem:real2}, the other two cells containing $v$ are also squares. Now we walk along $r_1$ from $v$ to other vertices to prove inductively that each vertex in $r_1$ which is not the endpoint of $r_1$ is contained in four squares. This shows that the first layer of $P$ is made of squares and we can repeat this argument to show $P$ is made of squares. In the case $|V_2|=1$, if a vertex $v\in r_1$ is contained in a non-square cell, then $V_2$ is adjacent to another vertex in $\Gamma$ via an edge whose label is not 2. This contradicts that $V^{\perp}_2=V_1$ and $V_2$ is a leaf, hence the first layer of $P$ is made of squares. Now we conclude the proof as before.

The above argument also shows that $S_2$ is made of squares, hence $S_2$ is a plain sector. And $V^{\perp}_2=V_1$ determines what this plain sector is. Thus a sub-half-flat of $\bar{S}$ is contained in $Q_R$. The remaining case that both $V_1$ and $V'_1$ are not good is similar.
\end{proof}

\begin{corollary}
	\label{cor:qi and completion}
Suppose that $A_{\Gamma_1}$ and $A_{\Gamma_2}$ are two-dimensional Artin groups. Let $q\colon \Xa_{\Gamma_1}\to \Xa_{\Gamma_2}$ be a quasi-isometry. Then for any atomic sector $S_1$ in $\Xa_{\Gamma_1}$, there exists an atomic sector $S_2$ in $\Xa_{\Gamma_2}$ such that $d_H(q(\bar S_1),\bar S_2)<\infty$, where $\bar S_i$ denotes the completion of $S_i$.
\end{corollary}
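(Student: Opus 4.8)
\textbf{Proof plan for Corollary~\ref{cor:qi and completion}.}
The plan is to reduce the statement to the coarse-intersection characterization of completions established earlier in this section. First I would recall from Corollary~\ref{cor:completion of atomic} that for any atomic sector $S_1$ in $\Xa_{\Gamma_1}$, the completion $\bar S_1$ is a coarse intersection of finitely many $2$--dimensional quasiflats $\{Q^{(1)}_k\}_{k=1}^{m}$; that is, there is $R_0$ such that $\bar S_1$ is at finite Hausdorff distance from $\bigcap_k N_R(Q^{(1)}_k)$ for all $R\ge R_0$. Since $q$ is an $(L,A)$--quasi-isometry, the images $q(Q^{(1)}_k)$ are (up to bounded Hausdorff distance) again $2$--dimensional quasiflats in $\Xa_{\Gamma_2}$, and $q$ carries a coarse intersection to a coarse intersection: $q(\bar S_1)$ is at finite Hausdorff distance from $\bigcap_k N_{R'}(q(Q^{(1)}_k))$ for $R'$ large. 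In particular, the collection $\{q(Q^{(1)}_k)\}$ touches along discs (the $q$-images of the large discs witnessing that $\bar S_1$ coarsely contains arbitrarily large Euclidean discs, which exist because $\bar S_1$ is a completion of a sector and hence coarsely contains a half-flat or a sector).

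Next I would invoke Lemma~\ref{lem:intersection of quasiflats contain a sector}: since the quasiflats $\{q(Q^{(1)}_k)\}$ touch along discs, there is an atomic sector $S_2$ in $\Xa_{\Gamma_2}$ and a constant $A'$ with $S_2\subset\bigcap_k N_{A'}(q(Q^{(1)}_k))$. Applying Lemma~\ref{lem:completion containment} to $S_2$ and each quasiflat $q(Q^{(1)}_k)$ (using Theorem~\ref{thm:main2} to write each $q(Q^{(1)}_k)$ as a bounded-distance union of finitely many atomic sectors), we get that the completion $\bar S_2$ still satisfies $\bar S_2\subset\bigcap_k N_{A''}(q(Q^{(1)}_k))$ for some $A''$. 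Because $\bar S_2$ is itself a coarse intersection of finitely many quasiflats and coarsely contains arbitrarily large discs, and because it is coarsely contained in $\bigcap_k N_{A''}(q(Q^{(1)}_k))$ whose coarse intersection equals (coarsely) $q(\bar S_1)$, we conclude $\bar S_2\subset N_{C}(q(\bar S_1))$ for some $C$.

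For the reverse containment I would run the same argument with $q^{-1}$ in place of $q$ and $S_2$ in place of $S_1$: $\bar S_2$ is a coarse intersection of finitely many quasiflats $\{Q^{(2)}_j\}$, their $q^{-1}$-images touch along discs, Lemma~\ref{lem:intersection of quasiflats contain a sector} produces an atomic sector $S_1'$ in $\Xa_{\Gamma_1}$ coarsely contained in all of them, and Lemma~\ref{lem:completion containment} upgrades this to $\bar S_1'$. One then checks $\bar S_1'$ is coarsely the same subset as $\bar S_1$: both coarsely contain arbitrarily large discs inside $\bigcap_k N(Q^{(1)}_k)$, and a coarse intersection of a fixed finite family of quasiflats that coarsely contains a half-flat or sector is coarsely unique (any two such are within finite Hausdorff distance by the coarse-intersection definition together with Lemma~\ref{lem:atomic intersection}). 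Transporting back by $q$ gives $\bar S_1\subset N_{C'}(q^{-1}(\bar S_2))$, i.e.\ $q(\bar S_1)\subset N_{C''}(\bar S_2)$. Combining the two containments yields $d_H(q(\bar S_1),\bar S_2)<\infty$.

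The main obstacle I expect is the bookkeeping around \emph{coarse uniqueness}: making precise that two atomic-sector completions that are both coarsely squeezed between the same finite family of quasiflat-neighborhoods must be at finite Hausdorff distance from each other. This is where Lemma~\ref{lem:atomic intersection} and Lemma~\ref{lem:sector touch} do the real work — given two such completions $\bar S$ and $\bar S'$, their intersection with a common large quasiflat contains arbitrarily large discs, so they touch along discs, hence by Lemma~\ref{lem:atomic intersection} (applied piecewise to their constituent atomic sectors after Theorem~\ref{thm:main2}) they are Hausdorff close. The rest of the argument is a formal manipulation of coarse intersections under quasi-isometries, which is routine once one fixes the constants carefully; the only subtlety there is checking that the large discs needed to invoke ``touching along discs'' genuinely survive under $q$ and under passing between a completion and the quasiflats realizing it, which follows because completions of atomic sectors always coarsely contain either a half-flat or a genuine $2$--dimensional sector and hence arbitrarily large Euclidean discs.
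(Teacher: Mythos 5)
Your proposal is correct and follows essentially the same route as the paper's proof: Corollary~\ref{cor:completion of atomic} together with Lemma~\ref{lem:intersection of quasiflats contain a sector} and Lemma~\ref{lem:completion containment} gives $\bar S_2\subset N(q(\bar S_1))$, the symmetric argument with $q^{-1}$ produces a sector back in $\Xa_{\Gamma_1}$, and the identification of its completion with $\bar S_1$ is carried out — exactly as in the paper — not by your ``coarse uniqueness'' shortcut (which would require knowing $\bar S_1'$ is a coarse intersection of the $Q^{(1)}_k$, not merely coarsely contained in it) but by the touching-along-discs pigeonhole over the finitely many constituent sectors of $\bar S_1$ combined with Lemma~\ref{lem:sector touch}, which you correctly single out as where the real work happens. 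There is no genuine gap; only the ``coarsely unique'' phrasing should be dropped in favor of the sector-touch argument you already supply.
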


\begin{proof}
First we show for each $S_1$, there exists $S_2$ such that $\bar S_2\subset N_{A_1}(q(\bar S_1))$ for some $A_1>0$. By Corollary~\ref{cor:completion of atomic}, $\bar{S}_1$ is the coarse intersection of finitely many quasiflats $\{Q_i\}_{i=1}^n$. Thus $q(\bar S_1)$ is the coarse intersection of $\{q(Q_i)\}_{i=1}^n$. In particular, $\{q(Q_i)\}_{i=1}^n$ touch along discs. Lemma~\ref{lem:intersection of quasiflats contain a sector} implies there is an atomic sector $S_2\subset\cap_{i=1}^n N_A(q(Q_i))$ for some $A$. By Lemma~\ref{lem:completion containment}, $\bar S_2\subset\cap_{i=1}^n N_{A'}(q(Q_i))$ for some $A'$. Thus $\bar S_2\subset N_{A_1}(q(\bar S_1))$ for some $A_1$. Let $q^{-1}$ be a quasi-isometric inverse of $q$. As before, we know there exists an atomic sector $S_3$ of $\Xa_{\Gamma_1}$ such that $\bar S_3\subset N_{A_2}(q^{-1}(\bar S_2))$. Thus $\bar S_3\subset N_{A_3}(\bar S_1)$ for some $A_3>0$. Note that $\bar{S_1}$ is a union of finitely many atomic sectors $\{T_i\}$ such that the completion of any $T_i$ is Hausdorff close to $\bar{S}_1$. Since $S_3\subset\bar{S}_3\subset N_{A_3}(\bar S_1)$, we know from the argument in Lemma~\ref{lem:sector quasiflat touch} that $S_3$ and one of $\{T_i\}$ (say $T_1$) touch along discs. Thus we can assume $S_3=T_1$ by Lemma~\ref{lem:sector touch}. Hence $\bar S_3=\bar T_1$ has finite Hausdorff distance from $\bar S_1$. It follows that $d_H(q(\bar S_1),\bar S_2)<\infty$.
\end{proof}

\subsection{Preservation of stable lines}
\begin{definition}
	\label{def:stable line}
A \emph{stable line} in $\Xa_\Gamma$ is one of the following objects:
\begin{enumerate}
	\item a diamond line;
	\item a Coxeter line;
	\item a single labeled plain line such that its label $a$ satisfies that all edges of $\Gamma$ containing $a$ are labeled by $2$, $|a^{\perp}|\ge 2$ and $(a^{\perp})^{\perp}=a$;
	\item a single labeled plain line such that its label $a$ is not a leaf and there is an edge containing $a$ with label $\ge 3$;
	\item a single labeled plain line such that its label $a$ is a leaf, $a$ is connected to a vertex $b$ by an odd-labeled edge and $b$ is not a leaf.
\end{enumerate}
\end{definition}
Recall that $a^{\perp}$ is the collection of vertices in $\Gamma$ that are adjacent to $a$ along an edge labeled by 2.

Definition~\ref{def:stable line} is motivated by looking for a $\mathbb Z$--subgroup which is the $\mathbb Z$ factor of some $F_k\times \mathbb Z$ subgroup ($k\ge 2$), moreover, we do not want this $F_k\times \mathbb Z$ subgroup to be further contained in an $F_k\times F_{k'}$ subgroup ($k'\ge 2$). See the conjecture below for a precise formulation, which is motivated by properties of Dehn twists in mapping class groups of surfaces. 

\begin{conj}
	\label{c:Zstable}
Suppose $A_\Gamma$ has dimension $\le 2$. A $\mathbb Z$--subgroup $A$ acts on a stable line of $\Xa_\Gamma$ in the sense of Definition~\ref{def:stable line} if and only if 
\begin{enumerate}
	\item the centralizer $Z_{A_\Gamma}(A)$ of $A$ has a finite index subgroup isomorphic to $F_k\times A$, where $F_k$ is a free group with $k$ generators for $k\ge 2$;
	\item there does not exist another cyclic subgroup $B\le A_\Gamma$ such that $B\cap A$ is the trivial subgroup and the projection of $Z_{A_\Gamma}(B)\cap(F_k\times A)$ to the $F_k$ factor has finite index in $F_k$.
\end{enumerate}
\end{conj}
We will not need this conjecture in the later part of the paper.

It follows from the construction of $X_\Gamma$ and the structure of its vertex links that for each atomic sector $S\subset \Xa_\Gamma$, there exists a unique subcomplex $S'\subset X_\Gamma$ such that
\begin{enumerate}
	\item $S'$ with the induced piecewise Euclidean structure is isometric to a flat sector in the Euclidean plane;
	\item the center of any $2$--cell of $S$ is contained in $S'$, and $S'$ is the convex hull of such centers, where the convex hull is taken inside $S'$ with respect to the flat metric on $S'$.
\end{enumerate}
The subcomplex $S'$ is called the \emph{shadow} of $S$. Similarly, the completion of each atomic sector also has a \emph{shadow} which is either a CAT(0) sector or a flat plane. Note that if we view $S'$ and $S$ as subsets of $X_\Gamma$, then they are at finite Hausdorff distance. Moreover, if two atomic sectors satisfy $S_1\subset S_2$, then the shadow of $S_1$ is contained in the shadow of $S_2$. 

\begin{lemma}
	\label{lem:characterize stable line}
A singular line $L$ is stable if and only if there exists an atomic sector $S$ such that $L$ bounds $\bar{S}$ and the shadow of $\bar{S}$ has angle $=\pi$.
\end{lemma}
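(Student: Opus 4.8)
The statement is a characterization of stable singular lines in terms of the shadows of completions of atomic sectors, and the natural strategy is to go through the five types of singular lines (diamond, Coxeter, and the three kinds of single-labeled plain lines from Definition~\ref{def:stable line}) and compute, in each case, both the centralizer data required in the definition of a stable $\mathbb Z$--subgroup and the angle of the shadow of an appropriate completed atomic sector. First I would record the elementary fact that the shadow of an atomic sector which is \emph{not} a plain sector or a diamond-plain sector is a flat sector with angle $=\pi$, because such sectors are (coarse) half-flats: a Coxeter sector sits inside a Coxeter flat as an intersection of two half-flats meeting along a Coxeter line, but after completing (which does nothing in those cases by the discussion in Section~\ref{subsec:completion}) one is looking at the half-flat bounded by that line, and a diamond-plain / Coxeter / plain \emph{half}-flat has shadow a genuine half-plane, hence angle $\pi$. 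Dually, a half-flat bounded by a singular line $L$ gives exactly the kind of rank-$2$ abelian subgroup structure that makes $L$ stable. So the proof reduces to matching up, case by case, ``$L$ bounds a completed atomic sector whose shadow has angle $\pi$'' with ``the $\mathbb Z$--subgroup acting on $L$ satisfies conditions (1) and (2) in the definition of stable''.

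The forward direction (``stable $\Rightarrow$ bounds such an $\bar S$'') I would handle as follows. If $L$ is a diamond line, it lies in a large block $B$, the centralizer of the corresponding standard subgroup together with $\langle a\rangle$ gives $F_k\times\mathbb Z$ with $k\ge 2$ (the block $B$ is a union of diamond lines, giving free rank $\ge 2$ in the complementary direction), and $L$ bounds a DCH (Definition~\ref{def:DCH}), whose completion is itself and whose shadow is a half-plane. If $L$ is a Coxeter line, it bounds a CCH of type I or II (or a Coxeter-plain flat's half), again a completed atomic sector with shadow a half-plane; the needed abelian/free structure comes from the Coxeter-plain flat and the thickened Coxeter lines. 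For the three plain-line cases in Definition~\ref{def:stable line}, I would observe that each of the listed conditions on the label $a$ is precisely the condition under which the relevant plain half-flat (PCH, or a square half-flat, or a diamond-plain-type construction through an odd edge) does \emph{not} collapse to a block and is a genuine completed atomic sector: in case (3), $|a^\perp|\ge 2$ gives a plain sector whose completion is a flat or a half-flat with the $F_k$-direction coming from $a^\perp$; in cases (4) and (5), the non-leaf hypothesis is exactly what lets one build a triplane through $a$ (resp.\ through the odd edge at $a$), so the completion of the corresponding diamond-plain or plain sector is that sector itself, with shadow of angle $\pi$. In each case the combinatorial conditions are tailored so that conditions (1)--(2) in the definition of a stable $\mathbb Z$--subgroup hold; this is essentially a translation of the discussion of ``good'' sets and odd components in Section~\ref{subsec:completion} into group-theoretic language, using that the relevant atomic sectors are quasi-isometrically embedded (Lemma~\ref{lem:atomic qi embedding}) and that rank-$2$ abelian subgroups are undistorted (\cite[\lemsevenseven]{Artinmetric}).

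For the reverse direction (``bounds such an $\bar S$ $\Rightarrow$ stable''), suppose $L$ bounds $\bar S$ with shadow of angle $\pi$. Then $\bar S$ is coarsely a half-flat along $L$, and stacking translates of $\bar S$ under the $\mathbb Z$ acting on $L$ produces (a finite-index subgroup of) a centralizer of the form $F_k\times\mathbb Z$: the half-flat together with the parallel transported copies coming from thickened singular lines in the ambient block/flat structure gives free rank $k\ge 2$ in the transverse direction (this is where the hypothesis ``angle $=\pi$'' is used — a completion with shadow of angle $<\pi$, i.e.\ a plain or diamond-plain sector that got completed to something strictly larger or stayed a proper sector, corresponds to a label $a$ that is a leaf with an even edge or an isolated edge, and the transverse direction is then only $\mathbb Z$, not free of rank $\ge 2$). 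Condition (2) of stability — that one cannot find a transverse cyclic $B$ whose centralizer projects to finite index in the $F_k$ factor — follows from the rigidity of the intersection pattern of the half-flats making up $\bar S$, i.e.\ from Lemma~\ref{lem:atomic intersection} and Lemma~\ref{lem:sector touch}: any such $B$ would force a second half-flat structure transverse to $L$, contradicting that in each of the five types the completion ``uses up'' all the available flat directions.

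\textbf{Main obstacle.} The hard part will be the bookkeeping in the plain-line cases (3)--(5): one must verify that the purely graph-theoretic conditions in Definition~\ref{def:stable line} (about leaves, odd components, $a^\perp$ and $(a^\perp)^\perp$) correspond \emph{exactly}, with no gaps or overlaps, to the dichotomy ``$\bar S$ has shadow of angle $\pi$'' versus ``$\bar S$ has shadow of angle $<\pi$ or is a flat plane''. This is where the careful analysis of completions from Section~\ref{subsec:completion} — in particular the notion of a ``good'' set $V_i$ and the role of non-leaf vertices in odd components — has to be converted into a statement about shadows, and one must be sure that conditions (1)--(2) of stability are equivalent to, rather than merely implied by, the shadow condition. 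Showing the non-triviality of condition (2) (no competing transverse cyclic subgroup) in the borderline plain-sector cases, using the embedding and intersection lemmas for atomic sectors together with the Deligne-complex/tree arguments of Lemma~\ref{lem:atomic embedding}, is likely to require the most care.
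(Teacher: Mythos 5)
Your plan targets the wrong notion of ``stable''. In the lemma, \emph{stable} refers to Definition~\ref{def:stable line}, which is a purely combinatorial list of five types of singular lines (diamond lines, Coxeter lines, and three kinds of single-labeled plain lines characterized by conditions on the label $a$). The centralizer conditions (1)--(2) about subgroups of the form $F_k\times\mathbb Z$ appear only in the remark following that definition, where the authors explicitly state that the equivalence between that group-theoretic property and acting on a stable line is a conjecture they do not know how to prove and do not use. Your proposal to ``match up, case by case'' the shadow condition with ``conditions (1) and (2) in the definition of stable'' therefore proves the wrong statement; worse, your reverse direction (``bounds $\bar S$ with shadow angle $\pi$ $\Rightarrow$ conditions (1)--(2) hold'') would amount to proving a piece of that open conjecture. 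The sketched argument for condition (2) --- ``rigidity of the intersection pattern'' via Lemma~\ref{lem:atomic intersection} and Lemma~\ref{lem:sector touch}, claiming the completion ``uses up all available flat directions'' --- is nowhere near sufficient, since condition (2) quantifies over \emph{all} cyclic subgroups $B\le A_\Gamma$, not just those visible in the half-flat picture. What the direction ``bounds $\bar S$ $\Rightarrow$ stable'' actually requires is much lighter: a case analysis on the type of $S$ (DCH or diamond-plain sector with half-flat completion gives type (1); CCH gives type (2); plain sector with half-flat completion gives type (3); PCH gives type (4) or (5)), where the combinatorial conditions on $a$ are read off from the ``good''/odd-component analysis of completions in Section~\ref{subsec:completion}.

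In the other direction (stable $\Rightarrow$ bounds such an $\bar S$), your constructions (DCH for a diamond line, CCH for a Coxeter line, PCH or a square half-flat for the plain cases, reducing (5) to (4) via a thickened plain line through the odd edge) are in the right spirit and close to what the paper does, but the centralizer computations you interleave are irrelevant, and the one genuinely non-trivial point is missing: to get a CCH (resp.\ DCH, PCH) bounded by $L$ one must alternate infinitely often between the two extension moves --- e.g.\ for a Coxeter line, widening inside the Coxeter-plain flat versus thickening inside the Coxeter flat --- precisely so that no tail of the resulting half-flat lies in a single flat or block; this is what the ``chromatic'' conditions in Definitions~\ref{def:DCH}, \ref{def:CCHI} and \ref{def:PCH} demand, and without addressing it the constructed object need not be an atomic sector at all. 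Finally, your ``recorded elementary fact'' is false as stated: a Coxeter sector is not a plain or diamond-plain sector, its completion is itself, and its shadow is a flat sector whose angle is in general strictly less than $\pi$, not a half-plane.
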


\begin{proof}
We prove the \textquotedblleft if\textquotedblright\ direction. Suppose that $\bar{S}$ satisfies the assumption of the lemma. If $S$ is a DCH or $S$ is a diamond-plain sector with $\bar{S}$ being a half-flat, then Definition~\ref{def:stable line} (1) holds. If $S$ is a CCH, then Definition~\ref{def:stable line} (2) holds. If $S$ is a plain sector with $\bar{S}$ being a half-flat, then Definition~\ref{def:stable line} (3) holds. If $S$ is a PCH, then Definition~\ref{def:stable line} (4) or (5) holds. These are all the possibilities.

Now we prove the converse. Let $L$ be a Coxeter line. Let $F_1$ (resp.\ $F_2$) be the Coxeter-plain flat (resp.\ Coxeter flat) containing $L$. There are two basic moves to extend $L$ to a CCH. The first move is to enlarge $L$ inside $F_1$ to a finite width infinite strip bounded by $L$, the second move is to enlarge $L$ inside $F_2$ to obtain a thickened Coxeter line with one side bounded by $L$. By alternating between these two moves for infinitely many times, we can construct a CCH bounded by $L$. The other cases are similar. Note that in Definition~\ref{def:stable line} (3), we need to use all the vertices in $a^{\perp}$ to obtain a plain sector whose completion satisfies all the requirements. Definition~\ref{def:stable line} (5) can be reduced to (4) by first constructing a thickened plain line between an $a$--labeled plain line and a $b$--labeled plain line.
\end{proof}

\begin{lemma}
	\label{lem:stable lines are intersections}
There exist a constant $A$ and a function $k\colon (0,\infty)\to (0,\infty)$ such that the following hold for any stable line $L$:
\begin{enumerate}
	\item there are three $(A,A)$--quasiflats $Q_1,Q_2,Q_3$ such that for any $C\ge A$, we have $d_H(L,\cap_{i=1}^3N_C(Q_i))<k(C)$;
	\item each $Q_i$ is at finite Hausdorff distance from the disjoint union of the completion of two atomic sectors such that their shadows have angle $=\pi$.
\end{enumerate}
\end{lemma}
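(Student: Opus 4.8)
The plan is to combine the characterization of stable lines from Lemma~\ref{lem:characterize stable line} with the construction used in the proof of Lemma~\ref{lem:atomic is intersection of quasiflats} (the ``triplane'' argument). First I would invoke Lemma~\ref{lem:characterize stable line} to fix, for a given stable line $L$, an atomic sector $S$ whose completion $\bar S$ is bounded by $L$ and whose shadow has angle exactly $\pi$; so $\bar S$ is a half-flat (a DCH, a CCH of either type, a PCH, or a half-flat completion of a diamond-plain or plain sector). The key geometric point is that a half-flat bounded by a singular line $L$ naturally appears inside three ``half-flats of the second kind'' meeting along $L$: starting from $L$, one extends it in two genuinely distinct directions to produce two more half-flats $H_2, H_3$ (one coming from the Coxeter-plain flat / diamond flat / right-angled flat containing $L$, the other being the half-flat on the other side of $L$ inside that flat), and then $Q_1 := \bar S \cup H_2$, $Q_2 := \bar S \cup H_3$, $Q_3 := H_2 \cup H_3$ are three quasiflats whose pairwise coarse intersection is exactly $L$.

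The steps, in order: (i) by Lemma~\ref{lem:characterize stable line}, replace $L$ by the half-flat $\bar S$ it bounds. (ii) Identify the relevant ambient flat $F$ (Coxeter-plain flat, Coxeter flat, diamond flat, or a right-angled flat $\ell_1\times\ell_2$), so that $L$ divides $F$ into two half-flats $H_2, H_3$; in the cases where $\bar S$ itself already lies in such an $F$ this is immediate, and in the half-flat-completion cases of diamond-plain and plain sectors one checks as in Section~\ref{subsec:completion} that the relevant vertex of $\Gamma$ is a leaf (or its component is an edge) precisely so that the completion is the half-flat rather than a larger object, and that the two half-flats on either side of $L$ inside the diamond flat (resp.\ $\ell_1\times\ell_2$) are still quasiflats. (iii) Form the triplane $V$ by gluing $\bar S, H_2, H_3$ along $L$, note the natural locally injective map $V\to \Xa_\Gamma$, and invoke the embedding/quasi-isometric-embedding arguments of Lemmas~\ref{lem:atomic embedding},~\ref{lem:atomic qi embedding} (via the Deligne complex $D_\Gamma$ and the CAT(0) cube complex $\widetilde K_n$) to conclude that each union of two of the three sheets is a quasiflat, with constants bounded by a universal $A$ depending only on $\Gamma$. (iv) For the coarse intersection statement: each sheet is quasi-isometrically embedded and the three sheets diverge linearly away from $L$ (this is the defining feature of the triplane, coming from convexity in $D_\Gamma$ or in $\widetilde K_n$), so $\cap_{i=1}^3 N_C(Q_i)$ is contained in a $k(C)$-neighborhood of $L$ for an explicit linear-type function $k$, while trivially it contains $L$; this gives (1). (v) Assertion (2) is just the bookkeeping that each $Q_i$, being a union of two half-flats glued along $L$, is Hausdorff-close to the disjoint union of the completions of two atomic sectors each of whose shadow has angle $\pi$ — which is what half-flats are by Lemma~\ref{lem:characterize stable line}.

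I expect the main obstacle to be obtaining the uniformity: the constant $A$ and the function $k$ must not depend on $L$. For the quasi-isometric embedding constants this follows because there are only finitely many isometry types of cells and finitely many ``shapes'' of triplanes (classified by which kind of stable line $L$ is and which labels appear), so one takes the worst case. For the divergence function $k(C)$ the point is that in each case the three sheets project to three geodesic rays in a tree factor (of $F_g$ in $D_\Gamma$, or of the tree factor of $\widetilde K_n$, or of the $T$-factor of a Coxeter flat's splitting $M_g = \ell_g \times T$) that pairwise share only a bounded initial segment, and linear divergence of geodesic rays in a tree is uniform; one must check the single remaining case of a right-angled flat $\ell_1\times\ell_2$ separately, where divergence is literally Euclidean. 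The only genuinely delicate bookkeeping is making sure that in the ``half-flat completion'' cases (diamond-plain sector with $a$ a leaf, plain sector with one good side) the two extra sheets $H_2,H_3$ produced on either side of $L$ really are quasiflats and not something degenerate — but this is exactly the content of the triplane constructions already carried out in Lemma~\ref{lem:atomic is intersection of quasiflats}, so I would cite those constructions rather than redo them. Finally one records that $d_H(L, \cap_i N_C(Q_i)) < k(C)$ and $L \subset \cap_i N_C(Q_i)$ together yield (1) as stated.
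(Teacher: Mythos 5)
There is a genuine gap, and it lies in your choice of the two extra sheets $H_2,H_3$ as the halves of the ambient genuine flat $F$. Assertion (2) of the lemma requires \emph{each} $Q_i$ to be Hausdorff-close to a disjoint union of two completions of atomic sectors whose shadows have angle $\pi$. Your $Q_3=H_2\cup H_3=F$ and your $Q_1,Q_2$ each contain a half of $F$ cut along $L$, and such halves are in general \emph{not} close to any angle-$\pi$ completion: the very definitions of DCH, CCH (conditions (3)/(4)) and PCH (condition (5)) exclude half-flats that eventually lie in a diamond-plain flat, a Coxeter flat/Coxeter-plain flat, or a square half-flat respectively, and the remaining angle-$\pi$ completions (the half-flat completions of diamond-plain and plain sectors) only occur in the special ``leaf''/not-good cases of Section~\ref{subsec:completion}. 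So, e.g., for a Coxeter line the halves of the Coxeter-plain flat (or Coxeter flat) are never close to a CCH, and for a diamond line in the non-leaf case the halves of the diamond-plain flat are not close to any angle-$\pi$ completion; hence your three quasiflats fail (2), which is exactly the part of the lemma that is used in the proof of Theorem~\ref{thm:preservation of stable lines}. Moreover, in the cases where the completion $\bar S$ furnished by Lemma~\ref{lem:characterize stable line} \emph{is} a half of $F$ (the leaf cases), your three sheets are no longer distinct: $Q_1=\bar S\cup H_2$ degenerates to a single half-flat, which is not a quasiflat, and the triple coarse intersection becomes a half-flat rather than $L$.

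The paper avoids both problems by making all three sheets \emph{chromatic} half-flats bounded by $L$: for a Coxeter line with boundary lines $\ell_1,\ell_2$, it builds two \emph{different} CCHs attached along $\ell_1$ by starting the alternating ``two moves'' construction from the proof of Lemma~\ref{lem:characterize stable line} with the first move and with the second move respectively, and a third CCH attached along $\ell_2$; the three quasiflats are then the pairwise unions $U_i\cup U_j$, each of which is embedded and quasi-isometrically embedded by the triplane arguments of Lemmas~\ref{lem:atomic embedding} and~\ref{lem:atomic qi embedding}, and each of which is by construction a union of two angle-$\pi$ completions, so (2) is automatic. If you want to salvage your approach you must replace $H_2,H_3$ by two further chromatic half-flats (distinct from $\bar S$) bounded by $L$, which is precisely the paper's construction. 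Finally, the uniformity of $A$ and $k$ is obtained in the paper simply from $A_\Gamma$-equivariance and the fact that there are only finitely many $A_\Gamma$-orbits of stable lines; your ``finitely many shapes of triplanes'' argument is in the same spirit but unnecessarily heavier.
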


\begin{proof}
Since there are finitely many $A_\Gamma$--orbits of stable lines in $\Xa_\Gamma$, it suffices to prove the lemma for a single stable line. Consider a case of a Coxeter line $L$ with its boundaries denoted by $\ell_1$ and $\ell_2$. We extend $L$ along $\ell_1$ to a CCH $U'_1$ (resp.\ $U'_2$) bounded $L$ by first applying the first move (resp.\ the second move) in Lemma~\ref{lem:characterize stable line} and then alternating between first move and second move. For $i=1,2$, let $U_i$ be the sub-half-flat of $U'_i$ bounded by $\ell_1$. Similarly, we extend $L$ along $\ell_2$ to a CCH $U_3$ bounded by $L$. By the argument in the proof of Lemma~\ref{lem:atomic embedding} and Lemma~\ref{lem:atomic qi embedding}, we know $U_i\cup U_j$ is embedded and quasi-isometrically embedded for any $1\le i\neq j\le 3$. Hence $L$ is the coarse intersection of these three quasiflats. Other cases of $L$ are similar.
\end{proof}

\begin{theorem}
	\label{thm:preservation of stable lines}
Suppose that $A_{\Gamma_1}$ and $A_{\Gamma_2}$ are two-dimensional Artin groups. Let $q\colon \Xa_{\Gamma_1}\to \Xa_{\Gamma_2}$ be an $(L,A)$--quasi-isometry. Then there exists a constant $D$ such that for any stable line $L_1\subset \Xa_{\Gamma_1}$, there is a stable line $L_2\subset \Xa_{\Gamma_2}$ such that $d_H(q(L_1),L_2)<D$.
\end{theorem}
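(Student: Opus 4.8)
The plan is to combine the characterization of stable lines via completions of atomic sectors (Lemma~\ref{lem:characterize stable line}), the description of stable lines as coarse intersections of quasiflats (Lemma~\ref{lem:stable lines are intersections}), and the fact that quasi-isometries preserve completions of atomic sectors up to finite Hausdorff distance (Corollary~\ref{cor:qi and completion}). First, given a stable line $L_1\subset\Xa_{\Gamma_1}$, I would use Lemma~\ref{lem:stable lines are intersections} to write $L_1$ as the coarse intersection of three quasiflats $Q_1,Q_2,Q_3$, each of which is at finite Hausdorff distance from a disjoint union of completions of two atomic sectors whose shadows have angle $\pi$. Pushing forward by $q$, the images $q(Q_1),q(Q_2),q(Q_3)$ are quasiflats in $\Xa_{\Gamma_2}$ which touch along discs, and $q(L_1)$ is coarsely their intersection.

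Next I would apply Lemma~\ref{lem:intersection of quasiflats contain a sector} to find an atomic sector $S\subset\cap_{i=1}^3 N_A(q(Q_i))$, and then Lemma~\ref{lem:completion containment} to upgrade this to $\bar S\subset\cap_{i=1}^3 N_{A'}(q(Q_i))$. The goal is to show that $\bar S$ is (coarsely) a stable line, i.e.\ that its shadow has angle $\pi$. This is where the main work lies: I need to argue that the coarse intersection of these three particular quasiflats cannot contain anything ``thicker'' than a line, so that $\bar S$ must be a half-flat-free object whose shadow is one-dimensional --- equivalently, that $L_1$ really is the coarse intersection (not just coarsely contained in the intersection) and that this is a quasi-isometry invariant property. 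To make this precise I would track shadows in $X_\Gamma$: since $q$ is a quasi-isometry and shadows are at bounded Hausdorff distance from the sectors, the shadow of $\bar S$ is a CAT(0) sector or flat plane coarsely contained in the coarse intersection of the three flat shadows of $q(Q_i)$; by Lemma~\ref{lem:stable lines are intersections}(2) these shadows are unions of half-planes meeting along $q(L_1)$'s shadow in a ``tripod-like'' configuration, whose coarse intersection is a line. Hence the shadow of $\bar S$ has angle $\pi$ at most, and by Lemma~\ref{lem:characterize stable line} the boundary line $L_2$ of $\bar S$ is a stable line. Running the same argument with $q^{-1}$ and using Lemma~\ref{lem:sector touch} to identify the sector produced back in $\Xa_{\Gamma_1}$ with (a sub-sector of) the one bounding $L_1$, I obtain that $q(L_1)$ and $L_2$ are at finite Hausdorff distance.

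Finally, for uniformity of the constant $D$: there are finitely many $A_{\Gamma_1}$-orbits of stable lines, and all the constants produced along the way (in Lemma~\ref{lem:stable lines are intersections}, Lemma~\ref{lem:intersection of quasiflats contain a sector}, Lemma~\ref{lem:completion containment}, Corollary~\ref{cor:qi and completion}, Lemma~\ref{lem:sector touch}) depend only on $L,A$ and on $\Gamma_1,\Gamma_2$, not on the individual stable line; together with $A_{\Gamma_i}$-equivariance (translating $L_1$ by a group element translates everything by the corresponding isometry, and $q$ conjugated by these stays within bounded distance of $q$), this yields a single $D=D(L,A,\Gamma_1,\Gamma_2)$.

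I expect the main obstacle to be the second step: verifying rigorously that the coarse intersection of the three quasiflats on the $\Xa_{\Gamma_2}$ side is coarsely \emph{equal} to (not merely contains) a single stable line, i.e.\ pinning down that the shadow of $\bar S$ has angle exactly $\pi$ rather than something larger. This requires a careful case analysis through the types of atomic sectors (DCH, CCH of type I and II, PCH, Coxeter sector, Coxeter-plain sector, plain sector, diamond-plain sector) and their completions, checking in each case that the ``tripod of half-flats'' configuration forces one-dimensionality of the coarse intersection --- essentially a continuation of the shadow-tracking bookkeeping already used in the proofs of Lemma~\ref{lem:atomic intersection} and Corollary~\ref{cor:qi and completion}.
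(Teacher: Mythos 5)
There are two genuine gaps. First, your application of Lemma~\ref{lem:intersection of quasiflats contain a sector} is invalid: the three quasiflats $Q_1,Q_2,Q_3$ of Lemma~\ref{lem:stable lines are intersections} have triple coarse intersection equal to the \emph{line} $L_1$ (pairwise they intersect coarsely in half-flats, but all three only along $L_1$), so their images under $q$ do \emph{not} touch along discs, and the hypothesis of that lemma fails. Indeed, if you could find an atomic sector $S$ with $\bar S\subset\cap_{i=1}^3N_{A'}(q(Q_i))$ you would have produced a two-dimensional object inside a coarse line, contradicting the very conclusion you are after. The correct move (and what the paper does) is to apply Corollary~\ref{cor:qi and completion} to the individual sectors $\bar S_1,\bar T_1$ whose union is coarsely each $Q_i$, obtain their images' sectors $\bar S_2,\bar T_2$ on the $\Gamma_2$ side, and then assemble these into embedded triplanes whose common boundary is the candidate stable line; your subsequent ``shadow has angle exactly $\pi$'' discussion is aimed at the right phenomenon but is built on this faulty first step.

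Second, and more seriously, your uniformity argument does not work. The lemmas you invoke for the main construction (Corollary~\ref{cor:qi and completion}, Lemma~\ref{lem:sector touch}, Lemma~\ref{lem:atomic intersection}, Lemma~\ref{lem:completion containment}) only assert that certain Hausdorff distances are \emph{finite}; their proofs are support-set/limiting arguments and produce no constant depending only on $(L,A,\Gamma_1,\Gamma_2)$. The ``finitely many orbits plus equivariance'' trick that works inside a single space (and is used in the proof of Lemma~\ref{lem:stable lines are intersections}) does not transfer here: there is no isometry of $\Xa_{\Gamma_2}$ corresponding to $g\in A_{\Gamma_1}$, so translating $L_1$ to an orbit representative replaces $q$ by the new quasi-isometry $q\circ g$, and applying the non-uniform lemmas to the infinite family $\{q\circ g\}$ gives constants that may depend on $g$. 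The paper explicitly flags this: the finiteness statement $d_H(q(L_1),L_2)<\infty$ is the easy part, and the entire proof is devoted to uniformity. It gets the uniform $D$ by approximating each image quasiflat by a CAT(0) disc diagram with Hausdorff-distance bound $M_0(L,A,X)$ from Theorem~\ref{thm:MSquasiflats}, proving the approximating plane is genuinely flat (Tits boundary of length exactly $2\pi$, via the angle bookkeeping with the shadows of $\bar S_2,\bar T_2$), pulling back the $\Xa_{\Gamma_2}$ cell structure through $\rho$, and locating $L_2$ as the triple intersection of the resulting embedded planes $\widetilde O_2,\widetilde P_2,\widetilde Q_2$ -- each step with constants depending only on $L,A$ and the groups. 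Without an argument of this kind (or some other source of quantitative control), your proof only yields the non-uniform statement.
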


The weaker statement $d_H(q(L_1),L_2)<\infty$ follows from Corollary~\ref{cor:qi and completion} and Lemma~\ref{lem:characterize stable line}. We need more work to make $D$ independent of $L_1$.
\begin{proof}
Suppose $L_1$ is the coarse intersection of three quasiflats $O_1,P_1,Q_1$ as in Lemma~\ref{lem:stable lines are intersections}. Then there are atomic sectors $S_1,T_1$ such that $\bar S_1\cap \bar T_1=\emptyset$ and $d_H(\bar S_1\cup \bar T_1, O_1)<\infty$. By Corollary~\ref{cor:qi and completion}, there are atomic sectors $ S_2$ and $ T_2$ in $\Xa_{\Gamma}$ such that their completions satisfy $d_H(q(\bar S_1),\bar S_2)<\infty$ and $d_H(q(\bar T_1),\bar T_2)<\infty$. Note that the shadows of $\bar S_2$ and $\bar T_2$ have angle $\le \pi$. 

Let $O_2=q(O_1)$. Let $f'\colon O'_2\to X_{\Gamma_2}$ be a simplicial map approximating the quasiflat $O_2$ satisfying all the conditions (1)--(5) in Theorem~\ref{thm:MSquasiflats}, with $O'_2$ being a CAT(0) plane. Lemma~\ref{lem:stable lines are intersections} and Theorem~\ref{thm:MSquasiflats} imply that $d_H(\im f', O_2)<D_0$ for $D_0$ independent of $L_1$. 

We claim $O'_2$ is flat. Let $\beta$ be the length of the Tits boundary $\partial_{T}O'_2$. It suffices to prove $\beta=2\pi$. We have $\beta\ge 2\pi$ by CAT(0) geometry. It remains to show $\beta\le 2\pi$. By Theorem~\ref{thm:main2}, $O_2$ is at bounded Hausdorff distance away from a union of finitely many atomic sectors $\{\widehat S_i\}_{i=1}^k$. Since $d_H(O_2,\bar S_2\cup \bar T_2)<\infty$, we know from Lemma~\ref{lem:sector touch} that up to passing to a sub-sector, each $\widehat S_i$ is contained in at least one of $\bar S_2$ or $\bar T_2$. Let $\alpha_i$ be the angle of the CAT(0) sector $\widehat S'_i\subset O'_2$ associated with $\widehat S_i$ as in Theorem~\ref{thm:main1}. Then $\beta=\sum_{i=1}^k \alpha_i$. We can assume $f'(\widehat S'_i)$ is the shadow of $\widehat S_i$. Each $f'(\widehat S'_i)$ is contained in at least one of $\bar S'_2$ or $\bar T'_2$, which are shadows of $\bar S_2$ or $\bar T_2$ respectively. Moreover, if both $f'(\widehat S'_i)$ and $f'(\widehat S'_j)$ ($i\neq j$) are contained in $\bar S'_2$, then $f'(\widehat S'_i)\cap f'(\widehat S'_j)$ can not contain a flat sector with angle $>0$ (otherwise $\widehat S_i$ and $\widehat S_j$ touch along discs, which implies $d_H(\widehat S_i,\widehat S_j)<\infty$). Thus $\sum_{i=1}^k \alpha_i$ is bounded above by the sum of the angle of $\bar S'_2$ and $\bar T'_2$, which is $2\pi$. This finishes the proof of the claim.

By Lemma~\ref{lem:well-defined}, the map $\rho$ in Section~\ref{s:general} is well-defined on $O'_2$. Thus we can give a new cell structure on $O'_2$ and a new locally injective cellular map $f\colon \widetilde O_2\to \Xa_{\Gamma}$ ($\widetilde O_2$ is $O'_2$ with the new cell structure) as in Section~\ref{subsec:new cell structure} such that each vertex of $\widetilde O_2$ satisfies Lemma~\ref{lem:star homeo}. Note that $d_H(\im f, O_2)<D_1$ for $D_1$ independent of $L_1$. By Lemma~\ref{lem:sector quasiflat touch} and Lemma~\ref{lem:completion containment}, we can assume $\bar S_2\subset \widetilde O_2$. By using Lemma~\ref{lem:star homeo} and the argument in Section~\ref{sec:development}, we deduce that the topological closure of $\widetilde O_2\setminus \bar S_2$ in $\widetilde O_2$ is one of the following forms:
\begin{enumerate}
	\item when $\bar S_2$ is bounded by a diamond line, the topological closure is a union of diamond lines;
	\item when $\bar S_2$ is bounded by a Coxeter line, it is a CCH;
	\item when $\bar S_2$ is bounded by a plain line, it is a union of thickened plain lines.
\end{enumerate}
Moreover, by the proof of Lemma~\ref{lem:atomic embedding}, $\widetilde O_2$ is embedded.

Now we define $\widetilde P_2$ and $\widetilde Q_2$ in a similar way. Assume without loss of generality that $\bar S_2$ is the coarse intersection of $\widetilde O_2$ and $\widetilde P_2$. Then by Lemma~\ref{lem:sector quasiflat touch} and Lemma~\ref{lem:completion containment}, we can further assume $\bar S_2\subset \widetilde O_2\cap\widetilde P_2$. Then the topological closure of $\widetilde P_2\setminus \bar S_2$ in $\widetilde P_2$ is one of the forms listed from (1)-(3) in the previous paragraph. By mapping $\widetilde O_2$ and $\widetilde P_2$ to a suitable tree as in Lemma~\ref{lem:atomic embedding} and Lemma~\ref{lem:diamond embedding}, we know $\widetilde O_2\cup\widetilde P_2$ gives an embedded triplane in $\Xa_{\Gamma_2}$, moreover, $\partial(\widetilde O_2\cap\widetilde P_2)$ is either a plain line, or a boundary line of a diamond line, or a Coxeter line. Let $\widehat Q_2$ be the topological closure of the symmetric difference of $\widetilde O_2$ and $\widetilde P_2$. Then $\widehat Q_2$ is homeomorphic to $\mathbb R^2$ and $d_H(\widehat Q_2,\widetilde Q_2)<\infty$. Since both $\widehat Q_2$ and $\widetilde Q_2$ are homeomorphic to $\mathbb R^2$ and are quasi-isometrically embedded, we use the argument from the proof of Lemma~\ref{lem:sector touch} to show larger and larger discs of $\widehat Q_2$ centered at a fixed vertex are contained in $\widetilde Q_2$, hence $\widehat Q_2\subset \widetilde Q_2$. Thus $\widehat Q_2=\widetilde Q_2$. Then there is a singular line $L_2$ such that $\widetilde O_2\cap\widetilde P_2\cap \widetilde Q_2$ is either contained in $L_2$, or a boundary of $L_2$. The line $L_2$ is actually a stable line by Lemma~\ref{lem:characterize stable line}. One now readily verifies that $d_H(q(L_1),L_2)<D$ for some $D$ independent of $L_1$. 
\end{proof}

\begin{remark}
Note that for each stable line, there is a $\mathbb Z$--subgroup of $A_\Gamma$ acting cocompactly on the line. Thus Theorem~\ref{thm:preservation of stable lines} also implies that the collection of $\mathbb Z$--subgroups corresponding to stable lines is preserved by quasi-isometries up to finite Hausdorff distance.
\end{remark}

\subsection{Preservation of intersection graphs}
Two singular lines are \emph{parallel} if their Hausdorff distance is finite. The \emph{parallel set} $P_L$ of a stable line $L$ is defined to be the union of all stable lines that are parallel to $L$. 

The following is motivated by the fixed set graph defined by Crisp \cite{MR2174269} and the extension graph for right-angled Artin groups defined by Kim and Koberda \cite{kim2013embedability}. 

\begin{definition}[Intersection graph]
	\label{def:intersection graph}
	Let $A_\Gamma$ be a $2$--dimensional Artin group. The \emph{intersection graph} $\I_\Gamma$ of $A_\Gamma$ is defined as follows. There is a one to one correspondence between vertices of $\I_\Gamma$ and parallelism classes of diamond lines, Coxeter lines and single labeled plain lines in the universal cover of the presentation complex of $A_\Gamma$. Let $v_1,v_2$ be two vertices representing two such parallelism classes $\p_1$ and $\p_2$. Then $v_1$ and $v_2$ are joined by an edge if and only if for $i=1,2$, there exists a $\mathbb Z$ subgroup $Z_i\le A_\Gamma$ such that $Z_i$ stabilizes a line in $\p_i$, and $Z_1,Z_2$ generate a free abelian subgroup of rank $2$ in $A_\Gamma$. The \emph{stable subgraph} of $\I_\Gamma$ is the full subgraph of $\I_\Gamma$ spanned by vertices corresponding to stable lines. 
\end{definition}

It could happen that the stable subgraph of $\I_\Gamma$ is empty. For example, this is the case when $\Gamma$ is a $4$--gon with each edge labeled by $2$. Of course, in such case, one should not expect a quasi-isometry to map any $\mathbb Z$ subgroups to other $\mathbb Z$ subgroups up to finite Hausdorff distance.

In the rest of this subsection, we prove Theorem~\ref{thm:invariance}.

\begin{lemma}
	\label{lem:finite out}
	Suppose $A_\Gamma$ is $2$--dimensional. Suppose that
	\begin{enumerate}
		\item $\Gamma$ is connected and does not contain valence one vertex;
		\item for any vertex $u\in\Gamma$ which commutes with all of its adjacent vertices, we have $(u^\perp)^\perp=u$.
	\end{enumerate}
Then the stable subgraph of $\I_\Gamma$ is all of $\I_\Gamma$. In particular, if the outer automorphism group $\mathrm{Out}(A_\Gamma)$ is finite and $\Gamma$ has more than two vertices, then the stable subgraph of $\I_\Gamma$ is all of $\I_\Gamma$.
\end{lemma}

\begin{proof}
The first statement follows from Definition~\ref{def:stable line}. It remains to prove the ``in particular'' statement. Suppose $\mathrm{Out}(A_\Gamma)$ is finite.
If $\Gamma$ has a valence one vertex $v$, then any vertex $w\in\Gamma$ adjacent to $v$ is a separating vertex (since $\Gamma$ has more than two vertices). This leads to a Dehn twist automorphism defined in \cite[pp.\ 1383]{MR2174269} and hence $\mathrm{Out}(A_\Gamma)$ is infinite. Thus $\Gamma$ does not have valence one vertices. Similarly, we know $\Gamma$ is connected. Let $u\in\Gamma$ be vertex such that all edges of $\Gamma$ containing $u$ are labeled by $2$. If there is a vertex $v\neq u$ such that $v\in (u^\perp)^\perp$, then sending $v$ to $vu$ and fixing all other generators yields an infinite order element in $\mathrm{Out}(A_\Gamma)$. Thus we must have $(u^\perp)^\perp=u$ for all such $u$. 
\end{proof}

\begin{lemma}
	\label{lem:uniform}
	Let $L\subset\Xa_\Gamma$ be a diamond line, or a Coxeter line, or a single-labeled plain line. Then any singular line $L'$ parallel to $L$ is of the same type. Moreover, there exists a $\mathbb Z$--subgroup $Z\le A_\Gamma$ such that 
	\begin{enumerate}
		\item $Z$ stabilizes any singular line that is parallel to $L$;
		\item for any $g\in Z$, there is a constant $C>0$ only depending on $g$ such that $d(x,gx)<C$ for any vertex $x$ in the parallel set of $L$.
	\end{enumerate}
\end{lemma}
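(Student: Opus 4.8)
\textbf{Proof plan for Lemma~\ref{lem:uniform}.}

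The plan is to reduce the statement to the already-established facts about the geometry of the parallel set of each type of singular line, and then to exhibit the required $\mathbb{Z}$--subgroup explicitly. First I would handle the three cases (diamond line, Coxeter line, single-labeled plain line) separately, but the common principle is the same: each such line $L$ is the fixed/axis-type object of a naturally associated element of $A_\Gamma$, and its parallel set $P_L$ is a well-understood convex-type subcomplex on which that element acts nicely. For the assertion that a parallel singular line $L'$ has the same type as $L$, I would argue by the coarse geometry developed earlier: a diamond line lies in a single block (Definition~\ref{def:diamond line}) and is the carrier of a ``diamond'' pattern, a Coxeter line is the carrier of a wall in a Davis plane (Definition~\ref{def:wall line}), and a single-labeled plain line has all edge labels equal to a fixed generator (Definition~\ref{def:plain line}); these three behaviours are mutually exclusive in terms of which blocks the line meets and how. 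Since Hausdorff-close quasi-isometrically embedded lines (each is quasi-isometrically embedded by the remarks after the respective definitions, using \cite[Theorem 1.2]{charney2014convexity}) fellow-travel, $L'$ must exhibit the same block-intersection and labelling pattern as $L$, hence be of the same type. Concretely, for the plain-line case one uses that the single label is determined by a rank-$1$ vertex of the Deligne complex $D_\Gamma$ (Lemma~\ref{lem:intersetion of blocks} and the discussion of $D_\Gamma$ in Section~\ref{subsec:properties of atomic sectors}), and the $\pi$-images of two parallel lines land in the same bounded neighbourhood of a single rank-$1$ vertex; for the diamond- and Coxeter-line cases one uses instead the associated wall/flat-strip in the relevant block or Davis plane.

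Next I would produce the subgroup $Z$. In each case there is a canonical candidate: for a diamond line $L$ inside a block $B$ with defining edge $\overline{ab}$, take $Z$ to be an infinite-cyclic subgroup of the centralizer of the stabilizer of $B$ (cf.\ the remark after Definition~\ref{def:diamond line} and the construction of $A_F$ after Lemma~\ref{lem:diamond embedding}); for a Coxeter line, take $Z$ generated by the element translating along the axis of the wall-carrier, which exists because Coxeter lines are periodic (this is used repeatedly in Sections~\ref{sec:orientation of edges of Coxeter regions}--\ref{sec:development}); for a single-labeled plain line with label $a$, take $Z=\langle a\rangle$ or, if $a$ is a leaf joined by an odd edge to $b$, the appropriate cyclic subgroup acting through the chain of thickened plain lines (cf.\ the odd-component discussion in Section~\ref{subsec:completion}). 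The point is that in all three cases $P_L$ is coarsely a product $\ell\times T$ (or $\ell\times$ tree, as in the splitting $M_g=\ell_g\times T$ from \cite[Theorem II.6.8]{BridsonHaefliger1999} used in Lemma~\ref{lem:atomic embedding}): every singular line parallel to $L$ projects onto the same line factor $\ell$, and $Z$ acts by translation along $\ell$ while fixing the $T$-coordinate, so $Z$ stabilizes each such parallel line. This gives assertion (1).

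For assertion (2), fix $g\in Z$. Since $g$ acts by a bounded-displacement translation along the $\ell$-factor of the product structure on (a neighbourhood of) $P_L$, and $P_L$ is exactly the union of the parallel singular lines, for every vertex $x\in P_L$ the points $x$ and $gx$ lie on the same parallel line (or on lines at bounded distance), both with the same $\ell$-displacement; hence $d(x,gx)$ is bounded by the translation length of $g$ plus a universal constant coming from the bounded geometry of $X_\Gamma$ (finitely many isometry types of cells). So $C$ depends only on $g$. I expect the main obstacle to be the precise verification that $P_L$ really has this uniform product-like structure — i.e.\ that \emph{all} lines parallel to $L$, not merely those in one block or one Davis plane, are captured by a single such splitting, and that the subgroup $Z$ acts on each of them with displacement bounded independently of which parallel line $x$ lies on. This is where one must combine the convexity results for standard subgroups (\cite{charney2014convexity,lek}) with the Deligne-complex bookkeeping of Section~\ref{subsec:properties of atomic sectors}, ruling out that the parallel set could ``spread out'' unboundedly transverse to $\ell$; once that is in place, both conclusions follow quickly.
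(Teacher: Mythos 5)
Your overall framework (Deligne complex, Min-set splitting, canonical cyclic subgroups) is the right one, and your diamond-line and plain-line cases are essentially the paper's: there the conclusion comes from the fact that the relevant fixed sets in $D_\Gamma$ (a rank~$2$ vertex, resp.\ a tree containing rank~$1$ vertices) are stabilized pointwise, so $Z$ stabilizes the vertex of $D_\Gamma$ encoding each parallel line. But two steps you lean on are not proved and one of them is a genuine gap. First, the ``same type'' claim: fellow-travelling does not by itself force the same block-intersection or labelling pattern. The paper instead notes that parallel periodic lines have commensurable stabilizers (via \cite{MR2867450}) and then distinguishes the three types by the fixed-set trichotomy of Crisp's Lemma~8 in $D_\Gamma$ (rank~$2$ vertex for diamond, tree with a rank~$1$ vertex for single-labeled plain, empty fixed set with a translation axis for Coxeter), using that the fixed set is unchanged under passing to finite-index subgroups. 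Your sketch gestures at this but never supplies the mechanism.

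The more serious gap is the Coxeter case of assertion (1). You argue that $P_L$ is coarsely $\ell\times T$, that $Z$ fixes the $T$-coordinate, and that therefore $Z$ stabilizes every parallel line. Even granting the splitting $M_g=\ell_g\times T$ (and note that a geodesic parallel to an axis need not lie in $\mathrm{Min}(g)$ without an argument), this does not yield the conclusion: distinct parallel Coxeter lines inside a single Coxeter-plain flat have walls with the \emph{same} $\pi$-image in $D_\Gamma$ (this is used explicitly in the proof of Lemma~\ref{lem:atomic embedding}), so ``fixing the $T$-coordinate'' cannot distinguish them, and preserving a fiber over $T$ does not show that each individual Coxeter line in $\Xa_\Gamma$ is preserved setwise. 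This is exactly the point you flag as ``the main obstacle'' and leave open. The paper closes it with a chain argument: given parallel Coxeter lines $L,L'$, it produces a finite chain $L=L_0,L_1,\dots,L_n=L'$ of Coxeter lines such that consecutive ones lie in a common Coxeter-plain flat or a common Coxeter flat; within each such flat the rank-$2$ abelian structure lets the same $\mathbb Z$ stabilize all its Coxeter lines. Constructing the chain requires the concrete combinatorial analysis of the flat strip $E\subset D_\Gamma$ bounded by $\pi(\W)$ and $\pi(\W')$ (using that $D_\Gamma$ is $2$--dimensional, so $E$ is a subcomplex, and working cell by cell across it). Without this, or a substitute for it, assertion (1) in the Coxeter case — and hence (2), which the paper deduces from (1) via the uniform quasi-isometric embedding of singular lines — is not established.
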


\begin{proof}
Suppose that $L'$ and $L$ are parallel with their stabilizers denoted by $Z$ and $Z'$. Then by \cite[Corollary 2.4]{MR2867450}, $Z_0=Z\cap Z'$ is of finite index in both $Z$ and $Z'$. Then $L'$ and $L$ are of the same type by the following:
\begin{itemize}
	\item if $L$ is a diamond line, then the fixed point set Fix$(Z)$ of the $Z$--action on the Deligne complex $D_\Gamma$ is a vertex of rank 2, and Fix$(Z)$=Fix$(Z_0)$ by \cite[Lemma 8 (ii)]{MR2174269};
	\item if $L$ is a single-labeled plain line, then Fix$(Z)$ is a tree containing a vertex of rank 1, and Fix$(Z)$=Fix$(Z_0)$ by \cite[Lemma 8 (i)]{MR2174269};
	\item if $L$ is a Coxeter line, then Fix$(Z)$ is empty and $Z$ acts by translations on a geodesic line in $D_\Gamma$.
\end{itemize}

Now we prove the ``moreover" statement. By Section~\ref{subsec:singular lines}, each such $L$ is quasi-isometrically embedded with constants independent of $L$. Thus (2) follows from (1). It suffices to prove (1). Let $L,L',Z$ and $Z'$ be as before. If $L$ is a single-labeled plain line, then Fix$(Z)=$Fix$(Z_0)=$Fix$(Z')$. By the definition of $D_\Gamma$, we know $L$ (resp.\ $L'$) corresponds to a vertex in Fix$(Z)$ (resp.\ Fix$(Z')$), thus $Z$ also stabilizes $L'$ and (1) holds for single-labeled plain lines. If $L$ is a diamond line, then the above discussion implies that $L$ and $L'$ are in the same block $B$. Thus the centralizer of the stabilizer of $B$ will stabilize all the diamond lines in $B$.
	
	It remains to consider the case $L$ is a Coxeter line. We claim there is a finite chain $L_0=L,L_1,\ldots,L_{n-1},L_n=L'$ such that each $L_i$ is a Coxeter line and for $0\le i\le n-1$, either $L_i$ and $L_{i+1}$ are contained in the same Coxeter-plain flat, or they are contained in the same Coxeter flat. The claim will imply that the stabilizer of $L$ also stabilizes $L'$, which finishes the proof.
	
	Now we prove the above claim. Let $\W$ and $\W'$ be the walls of $L$ and $L'$ respectively. Let $\pi \colon X^b_\Gamma\to D_\Gamma$ be the $A_\Gamma$--equivariant simplicial map defined at the beginning of Section~\ref{subsec:properties of atomic sectors}. Let $\ell=\pi(\W)$ and $\ell'=\pi(\W')$. Then $\ell$ and $\ell'$ are parallel geodesic lines in a $CAT(0)$ space $D_\Gamma$. If $\ell=\ell'$, then by the definition of $D_\Gamma$, we know $L$ and $L'$ are contained in the same Coxeter-plain flat, hence the claim follows. Now we assume $\ell\neq\ell'$, then they bound a flat strip $E\subset D_\Gamma$. Since $D_\Gamma$ is $2$--dimensional, $E$ is a subcomplex of $D_\Gamma$. Let $F$ be the Coxeter-plain flat containing $L$. Let $L^\perp$ be a thickened plain line in $F$ intersecting $L$ in a $2$--cell $C$. Let $v\in C$ be the center of $C$ and let $u,w$ be two vertices in $\W$ adjacent to $v$ (with respect to the simplicial structure on $X^b_\Gamma$). Then $u$ and $w$ are in two different boundary lines of $L^\perp$. Let $\Delta\subset E$ be the triangle containing $\bar{v}=\phi(v)$ and $\bar{u}=\phi(u)$. Then $\Delta$ has a rank 0 vertex $\bar{x}$. Let $x=\rho^{-1}(\bar x)$. Then by the definition of $D_\Gamma$, we know $x$ and $u$ are contained in the same boundary line of $L^\perp$. Let $C_x$ be a $2$--cell in $L^\perp$ containing $x$. Then $\ell$ cuts $\rho(C_x)$ into two halves such that one of them (denoted by $K$) contains both $\Delta$ and $\rho(w)$. Since $K\cap E$ is a convex subcomplex of $K$, we have $K\subset E$. Let $L_x$ be the Coxeter line in $F$ intersecting $L^\perp$ in $C_x$. Similarly, we know that $\ell$ cuts $\rho(L_x)$ into two halves with one of them contained in $E$. Let $F_1$ be the Coxeter flat containing $L_x$, and let $\W_x$ and $\W'_x$ be two distinct parallel walls in $F_1$ such that $\W_x\subset L_x$, $x$ is between $\W_x$ and $\W'_x$, and the distance between $\W_x$ and $\W'_x$ is as small as possible. Then $\rho$ maps the region of $F_1$ between $\W_x$ and $\W'_x$ to a flat strip $U$ in $D_\Gamma$. Since a half of $\rho(L_x)$ is contained in $U\cap E$, by convexity, we know  $U\subset E$. Now we repeat the previous argument with $L$ replaced by the carrier of $\W'_x$. This process terminates after finitely many steps and gives rise to the finite chain as required.
\end{proof}

\begin{theorem}
	\label{thm:invariance}
	Let $A_{\Gamma}$ and $A_{\Gamma'}$ be two $2$--dimensional Artin groups and let $q\colon A_\Gamma\to A_{\Gamma'}$ be a quasi-isometry. Then $q$ induces an isomorphism from the stable subgraph of $\I_{\Gamma}$ onto the stable subgraph of $\I_{\Gamma'}$. 
If both $A_\Gamma$ and $A_{\Gamma'}$ satisfy conditions (1) and (2) in Lemma~\ref{lem:finite out}, then $q$ induces an isomorphism between their intersection graphs. 

Thus if both $A_\Gamma$ and $A_{\Gamma'}$ have finite outer automorphism group and $\Gamma$ has more than two vertices, then $q$ induces an isomorphism between their intersection graphs.
\end{theorem}

\begin{proof}
	Let $\Theta_\Gamma$ and $\Theta_{\Gamma'}$ be the stable subgraphs of $\I_\Gamma$ and $\I_{\Gamma'}$ respectively. It follows from Theorem~\ref{thm:preservation of stable lines} that $q_\ast$ induces a well-defined injective map on the vertex set of $\Theta_\Gamma$. Now we show if two vertices $v_1,v_2$ are adjacent in $\Theta_\Gamma$, then $q_{\ast}(v_1)$ and $q_{\ast}(v_2)$ are adjacent in $\Theta_{\Gamma'}$.
	
	Let $L_1$ and $L_2$ be two stable lines representing $v_1$ and $v_2$. For $i=1,2$, let $P_i$ be the parallel set of $L_i$ and suppose $\mathbb Z_i$ is a $\mathbb Z$--subgroup stabilizing $L_i$. We assume $\mathbb Z_1$ and $\mathbb Z_2$ generate a free abelian subgroup of rank 2. Let $Q=\mathbb Z_1\oplus\mathbb Z_2$ and we view $Q$ as a subset of $\Xa_\Gamma$. Then $Q$ is at finite Hausdorff distance away from a union of stable lines parallel to $L_1$. By Theorem~\ref{thm:preservation of stable lines}, there exists a stable line $L'_i$ such that $Q'=q(Q)$ is at finite Hausdorff distance away from a union of stable lines parallel to $L'_1$. Let $g'_1$ be a generator of a $\mathbb Z$--subgroup $\mathbb Z'_1$ that stabilizes $L'_1$. Then Lemma~\ref{lem:uniform} (2) implies that $d_H(Q',g'_1 Q')<\infty$. Similarly, we define $L'_2,g'_2$ and $\mathbb Z'_2$ and we have $d_H(Q',g'_2 Q')<\infty$.
	
	Note that $Q'$ is a quasiflat. Now we assume there is a continuous quasi-isometric embedding $f\colon \mathbb E^2\to \Xa_{\Gamma'}$ such that $d_H(\im f, Q')<\infty$. Let $[\mathbb E^2]\in H^{\mathrm{lf}}_{2}(\mathbb E^2)$ be the fundamental class and let $[\alpha]=f_{\ast}([\mathbb E^2])\in H^{\mathrm{lf}}_{2}(\Xa_{\Gamma'})$. By \cite[Lemma 4.3]{bks}, the support set $S_{[\alpha]}$ of $[\alpha]$ is non-empty and satisfies $d_H(\im f, S_{[\alpha]})<\infty$. Let $f_i$ be $f$ post-composed with the action of $g'_i$. Lemma~\ref{lem:uniform} (2) implies that there is a uniform bound on $d(f(x),f_i(x))$ for any $x\in \mathbb E^2$. Since $\Xa_{\Gamma'}$ is uniformly contractible, there is a proper homotopy between $f$ and $f_i$. Thus $[\alpha]=(f_i)_{\ast}[\mathbb E^2]=(g'_i)_{\ast}[\alpha]$. Thus the support set $S_{[\alpha]}$ is invariant under the subgroup $H=\langle\mathbb Z'_1,\mathbb Z'_2\rangle$ of $A_{\Gamma'}$. Then $H$ is quasi-isometric to $\mathbb E^2$, hence is virtually $\mathbb Z\oplus\mathbb Z$. It follows that a finite index subgroup of $\mathbb Z'_1$ and a finite index subgroup of $\mathbb Z'_2$ generate a free abelian subgroup. Hence $q_{\ast}(v_1)$ and $q_{\ast}(v_2)$ are adjacent.
	
	In summary, we now have a graph embedding $q_\ast\colon \Theta_\Gamma\to\Theta_{\Gamma'}$. By considering the quasi-isometry inverse of $q$ and repeating the above argument, we know $q_\ast$ is actually an isomorphism. The ``in particular" statement of the theorem follows from Lemma~\ref{lem:finite out}.
\end{proof}

\begin{corollary}
	\label{cor:injective2}
	Let $A_{\Gamma}$ be a $2$--dimensional Artin group such that $\Gamma$ satisfies (1) and (2) in Lemma~\ref{lem:finite out}.
If $q\colon A_\Gamma\to A_\Gamma$ is an $(L,A)$--quasi-isometry inducing the identity map on $\I_\Gamma$, then there exists $C=C(L,A,\Gamma)$ such that $d(q(x),x)\le C$ for any $x\in A_\Gamma$. Thus the map $\QI(A_\Gamma)\to \Aut(\I_\Gamma)$ described in Theorem~\ref{thm:invariance} is an injective homomorphism.

Thus if we know in addition that the homomorphism $A_\Gamma\to \Aut(\I_\Gamma)$ induced by the action $A_\Gamma\act \Aut(\I_\Gamma)$ has finite index image, then any finitely generated groups quasi-isometric to $A_\Gamma$ is virtually $A_\Gamma$.
\end{corollary}

\begin{proof}
First, observe that, by virtue of Definition~\ref{def:intersection graph} and Theorem~\ref{thm:preservation of stable lines} the map $\QI(A_\Gamma)\to \Aut(\I_\Gamma)$ is a homomorphism.
	
Let $x\in A_\Gamma$. It follows from the assumption on $\Gamma$ that there exist $F_1,F_2,F_3\subset \Xa_\Gamma$ containing $x$ such that
\begin{enumerate}
	\item each $F_i$ is either a diamond-plain flat (cf.\ Definition~\ref{def:diamond-plain flat}) or a cubical flat spanned by two commuting elements of $\Gamma$;
	\item $x=F_1\cap F_2\cap F_3$, moreover, there exists $f\colon \mathbb R_{\ge 0}\to \mathbb R_{\ge 0}$ such that $\cap_{i=1}^3N_r(F_i)$ is contained in the $f(r)$--ball around $x$.
\end{enumerate}
Note that each $F_i$ corresponds to an abelian subgroup of $A_i\le A_\Gamma$ such that $\cap_{i=1}^3A_i$ is the trivial subgroup, thus (2) follows from \cite[Corollary 2.4]{MR2867450}. We can assume that the function $f$ depends only on $A_\Gamma$ (but not on $x$) by applying a left translation. As $q$ induces the identity map on $\I_\Gamma$, it follows from the third paragraph of the proof of Theorem~\ref{thm:invariance} that if we consider the support set $S_i$ associated with $q(F_i)$ as there, then $S_i$ is invariant under a finite index subgroup of $A_i$. Thus $d_H(q(F_i),F_i)<\infty$. Actually this can be improved to $d_H(q(F_i),F_i)<C$ for $C$ depending only on $L,A,A_\Gamma$, as $d_H(S_i,q(F_i))<C=C(L,A,A_\Gamma)$ and $S_i=F_i$. This and the property (2) above imply the corollary.
\end{proof}

\section{Artin groups of type CLTTF}
\label{sec:CLTTF}
In this section, we restrict ourselves to groups $A_\Gamma$ belonging to the class of CLTTF Artin groups, as in Definition~\ref{def:CLTTF}. Note that there are no Coxeter lines in $\Xa_\Gamma$. Let $\I_\Gamma$ be the intersection graph of $A_\Gamma$ and let $\Theta_\Gamma$ be the stable subgraph (cf.\ Definition~\ref{def:intersection graph}).

\subsection{The fixed set graphs and chunks} 
\label{subsec:fixed set graphs}
We recall several notions from \cite{MR2174269}. Crisp defined a notion of \emph{fixed set graph} for $A_\Gamma$ in \cite[Section 4]{MR2174269}. It follows from \cite[Lemma 9]{MR2174269} that a single-labeled line is stable if and only if the centralizer of the stabilizer of this line is not virtually abelian (recall that we require $A_\Gamma$ to be of large type). Now by \cite[Lemma 12]{MR2174269} and the sentence before this lemma, we know that the there is a natural isomorphism between the fixed set graph and $\Theta_\Gamma$.

Crisp divides the vertex set of $\Theta_\Gamma$ into two disjoint subsets, one being $\V_\Gamma$, which consists of vertices coming from diamond lines, and one being $\F_\Gamma$, which consists of vertices coming from stable plain lines. Note that there is a one to one correspondence between elements in $\V_\Gamma$ and blocks in $\Xa_\Gamma$. It is clear that there is an action $A_\Gamma\act \Theta_\Gamma$ which preserves $\V_\Gamma$ and $\F_\Gamma$. 

We choose an identification of $A_\Gamma$ with the $0$--skeleton of $\Xa_\Gamma$. Let $\Gamma_2$ be the first subdivision of $\Gamma$. Each vertex of $\Gamma_2$ corresponds to a unique single-labeled plain line or a diamond line that contains the identity element $\ast$ of $A_\Gamma$. Let $\widehat \Gamma$ be the full subgraph of $\Gamma_2$ spanned by the non-terminal vertices. Then vertices of $\widehat \Gamma$ give stable lines containing $\ast$ (however, it is possible that a terminal vertex of $\Gamma_2$ also gives a stable line). This induces a graph embedding $\widehat \Gamma\to \Theta_\Gamma$. From now on, we shall identify $\widehat \Gamma$ as a subgraph of $\Theta_\Gamma$ via such embedding. Note that $\Theta_\Gamma$ is the union of $A_\Gamma$ translates of $\widehat \Gamma$.

Similarly, the vertices of $D_\Gamma$ that are associated with left cosets of $A_\Gamma$ that contain $\ast$ span a convex subcomplex $K$ of $D_\Gamma$, which is called the \emph{fundamental chamber} of $D_\Gamma$. As a simplicial complex, $K$ is isomorphic to the cone over $\Gamma_2$. $D_\Gamma$ is the union of $A_\Gamma$ translates of $K$.

There is another cell structure on $D_\Gamma$, where one ignores all the edges between a rank $0$ vertex and a rank $2$ vertex and combines triangles of $D_\Gamma$ into squares. We denote $D_\Gamma$ with this new cell structure by $\square_\Gamma$. When $\Gamma$ is CLTTF, we know that $\square_\Gamma$ is a CAT(0) cube complex (actually, a more general version of this fact was proved in \cite{CharneyDavis}). 

\begin{lemma}
	\label{lem:cycle in links}
\cite[Lemma 39]{MR2174269} 
Let $v\in \square_\Gamma$ be a rank $2$ vertex and let $m$ be the label of the defining edge of the block that is associated with $v$.
Every simple cycle in $\lk(v,\square_\Gamma)$ has edge length at least $2m$, and there exists at least one simple cycle of $2m$ edges in $\lk(v,\square_\Gamma)$.
\end{lemma}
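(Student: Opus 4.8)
This is \cite[Lemma~39]{MR2174269}; for the reader's convenience we recall the argument, which fits naturally with the metric on $D_\Gamma$ recalled above. The first observation is that $\lk(v,\square_\Gamma)$ is a local object: it is determined by the single block containing $v$, so it suffices to treat the case in which $\Gamma$ is a single edge $e=\overline{st}$ labelled by $m$ and $v=A_{\overline{st}}$ is the unique rank $2$ vertex of $\square_{\overline{st}}$. The squares of $\square_{\overline{st}}$ containing $v$ are the translates $\sigma_g$ $(g\in A_{\overline{st}})$ of the fundamental square, $\sigma_g$ having vertices $\{g\}<gA_s<A_{\overline{st}}>gA_t$; since $A_s\cap A_t$ is trivial, the assignment $g\mapsto\sigma_g$ is injective. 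Hence $\lk(v,\square_\Gamma)$ is the bipartite graph whose vertex set is the disjoint union of the left cosets $A_{\overline{st}}/A_s$ and $A_{\overline{st}}/A_t$, with an edge (coming from $\sigma_g$) joining $gA_s$ to $gA_t$ for every $g$; this graph is simple because an edge is determined by the pair of cosets it joins.

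For the lower bound I would pass to $D_\Gamma$ with its piecewise Euclidean Charney--Davis metric, which is CAT$(0)$ by \cite[Proposition~4.4.5]{CharneyDavis} (recalled above). Each square $\sigma_g$ is the union of two triangles meeting $v$ at angle $\pi/2m$, so $\sigma_g$ subtends angle $\pi/m$ at $v$. A simple cycle of length $n$ in $\lk(v,\square_\Gamma)$ uses $n$ pairwise distinct squares $\sigma_{g_0},\sigma_{g_1},\dots$ at $v$; replacing each edge $\sigma_{g_i}$ of this cycle by the length‑$2$ path in $\lk(v,D_\Gamma)$ through the rank $0$ vertex $\{g_i\}$ produces a closed walk in $\lk(v,D_\Gamma)$ of angular length $n\pi/m$. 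Simplicity of the original cycle guarantees that consecutive squares are distinct, so this closed walk does not backtrack; being a locally geodesic loop in the CAT$(1)$ space $\lk(v,D_\Gamma)$ (a link of a CAT$(0)$ space, cf.\ \cite{BridsonHaefliger1999}), it has length at least $2\pi$. Thus $n\pi/m\ge 2\pi$, i.e.\ $n\ge 2m$. Equivalently, this is exactly the ``$2\pi$--large at $v$'' condition that underlies the Charney--Davis CAT$(0)$ proof.

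For the existence of a cycle with exactly $2m$ edges I would read off the defining relator. The word $R=\underbrace{sts\cdots}_{m}(\underbrace{tst\cdots}_{m})^{-1}$ is cyclically reduced, represents the identity in $A_{\overline{st}}$, and, as a cyclic word, alternates strictly between powers $s^{\pm1}$ and $t^{\pm1}$, with $2m$ syllables of exponent $\pm1$. Taking the successive partial products $1,g_1,g_1g_2,\dots$ traces a non‑backtracking closed walk of combinatorial length $2m$ in $\lk(v,\square_\Gamma)$. If this walk were not embedded it would split at a repeated vertex into two non‑trivial closed sub‑walks, at least one of which is again a non‑backtracking loop, hence of length $\ge 2m$ by the lower bound; this is impossible as the total length is $2m$. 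Therefore the walk is an embedded $2m$--cycle. The part requiring genuine care is this last bookkeeping — the explicit identification of the link, the verification that distinct syllables give distinct edges, and the splitting argument — whereas the geometric input (CAT$(0)$-ness of $D_\Gamma$) is already available and carries the substance of the lower bound. An alternative, purely combinatorial route to the lower bound is via the $C(2m)$ small cancellation condition satisfied by the one‑relator presentation of $A_{\overline{st}}$, but the metric argument above is shorter given the facts recalled in the text.
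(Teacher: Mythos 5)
The paper gives no proof of this statement --- it is quoted verbatim from Crisp \cite{MR2174269} --- so there is no internal argument to compare yours against; what you have written is an independent proof, and it is essentially correct. Your reduction to a single labelled edge is valid (every square of $\square_\Gamma$ containing $v$ lies in the block associated with $v$), your identification of $\lk(v,\square_\Gamma)$ with the bipartite coset graph on $A_{\overline{st}}/A_s\sqcup A_{\overline{st}}/A_t$ is right, and the simplicity of that graph, i.e.\ $A_s\cap A_t=\{1\}$, is exactly van der Lek's theorem \cite{lek} quoted in the proof of Lemma~\ref{lem:intersetion of blocks}, so you can cite it rather than assert it. The lower bound is sound: each square subtends angle $\pi/m$ at $v$ in the Charney--Davis metric, $D_\Gamma$ is CAT(0) by \cite[Proposition 4.4.5]{CharneyDavis}, hence $\lk(v,D_\Gamma)$ is a CAT(1) metric graph, and your subdivided loop is non-backtracking, hence a locally geodesic loop, of angular length $n\pi/m\ge 2\pi$.

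The one step that is not correct as written is the final splitting argument. Cutting a non-embedded closed walk at an \emph{arbitrary} repeated vertex need not yield a non-backtracking sub-loop: both halves can backtrack at the chosen vertex (e.g.\ a walk that goes around one cycle, out along a path, around a second cycle, and back along the path; cutting at an interior vertex of the path gives two sub-loops, each backtracking there). Two immediate repairs. (i) Choose a repeated vertex pair at minimal cyclic distance $d$; since the link graph is simple and bipartite and your relator walk never repeats consecutive edges, $d\ge 3$, and minimality makes the corresponding sub-walk an embedded cycle of length $d\le m<2m$, contradicting the first half --- so the relator walk is in fact embedded. (ii) Alternatively, drop embeddedness of that particular walk altogether: a nontrivial non-backtracking closed walk cannot be contained in a forest, so the subgraph it traverses contains a simple cycle, which has at most $2m$ edges and hence, by the lower bound, exactly $2m$ --- which is all the lemma asks for. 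With either patch your argument is a complete proof of the cited statement.
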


\begin{prop}
	\label{prop:local rigid}
\cite[Proposition 40]{MR2174269} 
Let $v$ be a rank $2$ vertex in the fundamental chamber of $\square_\Gamma$ and let $A_{s,t}$ be the standard subgroup of $A_\Gamma$ associated with $v$ such that its defining edge is $\overline{st}\subset\Gamma$. Let $E$ be the edge in $\lk(v,\square_\Gamma)$ spanned by vertices corresponding to $A_s$ and $A_t$. If $\tau$ is a graph automorphism of $\lk(v,\square_\Gamma)$ that fixes $E$, then either $\tau$ is the identity on $\lk(v,\square_\Gamma)$, or $\tau$ is induced by the group inversion such that $s\to s^{-1}$ and $t\to t^{-1}$.
\end{prop}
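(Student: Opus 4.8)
\textbf{Plan for the proof of Proposition~\ref{prop:local rigid}.}
The statement to prove is purely local and combinatorial: $\lk(v,\square_\Gamma)$ is a specific finite graph — by Lemma~\ref{lem:cycle in links} its girth is $2m$ (where $m$ is the label of the edge $\overline{st}$) and it contains a simple $2m$--cycle — and we must show that any graph automorphism fixing the distinguished edge $E=\overline{A_sA_t}$ is either the identity or the inversion $s\mapsto s^{-1},\,t\mapsto t^{-1}$. The plan is to first pin down the structure of $\lk(v,\square_\Gamma)$ explicitly. Since $v$ corresponds to the coset $A_{s,t}$ (up to translation, $A_{s,t}$ itself), the vertices of $\lk(v,\square_\Gamma)$ adjacent to $v$ in $\square_\Gamma$ that have rank $1$ are cosets $gA_s$ and $gA_t$ with $g\in A_{s,t}$, and the rank $0$ vertices are cosets $gA_\emptyset=\{g\}$ with $g\in A_{s,t}$; edges of $\lk(v,\square_\Gamma)$ record the inclusions $\{g\}\subset gA_s$ and $\{g\}\subset gA_t$, so rank $0$ vertices have valence $2$ and rank $1$ vertices have valence equal to the index-type count (they correspond to cosets of $\langle s\rangle$ or $\langle t\rangle$ inside $A_{s,t}$). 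This identifies $\lk(v,\square_\Gamma)$ as a union of bipartite pieces glued along the valence-$2$ rank-$0$ vertices; the girth statement of Lemma~\ref{lem:cycle in links} then says the shortest embedded cycle uses $m$ rank-$1$ vertices and $m$ rank-$0$ vertices.

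The next step is to use the distinguished $2m$--cycle $C_0$ through $E$. The edge $E$ joins the rank-$1$ vertices $A_s$ and $A_t$; but wait — in $\square_\Gamma$ an edge between two rank-$1$ vertices would have to pass through a common rank-$0$ vertex, so $E$ is really the length-$2$ path $A_s - \{\ast\} - A_t$, realized as a single edge of $\lk(v,\square_\Gamma)$ in Crisp's cell structure. The unique (or canonical) shortest cycle $C_0$ containing this edge is the one coming from the standard dihedral relation $\underbrace{sts\cdots}_m=\underbrace{tst\cdots}_m$ in $A_{s,t}$: its rank-$0$ vertices are the prefixes $\{\ast\},\{s\},\{st\},\dots$ and $\{\ast\},\{t\},\{ts\},\dots$, a $2m$--gon. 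I would argue that $C_0$ is combinatorially characterized (e.g.\ it is the only $2m$--cycle through $E$, or through $E$ it is determined up to a reflection) so that $\tau$ must send $C_0$ to itself. An automorphism of a $2m$--cycle fixing a given edge setwise is either the identity or the reflection fixing that edge; this gives the dichotomy on $C_0$, and the reflection is precisely the combinatorial shadow of the inversion $s\mapsto s^{-1},t\mapsto t^{-1}$. Then I would propagate: if $\tau|_{C_0}=\mathrm{id}$, bootstrap outward — each rank-$1$ vertex adjacent to a rank-$0$ vertex of $C_0$ shares two rank-$0$ neighbours with a configuration already fixed, and using the girth bound (no short cycles) these neighbours are forced, so $\tau$ is the identity on the $1$--neighbourhood, then inductively on all of $\lk(v,\square_\Gamma)$ by connectedness. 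If $\tau|_{C_0}$ is the reflection, compose with the inversion automorphism and reduce to the previous case.

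The main obstacle I anticipate is the propagation/rigidity step: showing that fixing $\tau$ on $C_0$ forces it everywhere. This requires that $\lk(v,\square_\Gamma)$ have no "extra" symmetries that fix a large subgraph, which is exactly where the C(4)--T(4)-type girth condition (Lemma~\ref{lem:cycle in links}, girth $=2m\ge 6$ since the type is large) does the work: a graph automorphism fixing an edge and one adjacent $2m$--cycle pointwise cannot permute the remaining rank-$1$ vertices hanging off that cycle without creating a cycle shorter than $2m$ or an extra identification of cosets, contradicting either the girth or the coset structure of $A_{s,t}$. Making this airtight means carefully enumerating, for a rank-$0$ vertex $\{g\}$, its exactly two rank-$1$ neighbours $gA_s$ and $gA_t$, and checking that knowing one of them together with the identity on a neighbourhood determines the other; this is a finite local check but needs the description of $\lk(v,\square_\Gamma)$ from the first step to be fully explicit. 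Everything else — the $2m$--cycle dichotomy, the interpretation of the reflection as group inversion, the reduction — is routine once that local picture is in hand, and in any case the statement is quoted from \cite[Proposition 40]{MR2174269}, so at the level of this paper it suffices to indicate this line of argument and cite Crisp for the details.
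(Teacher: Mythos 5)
The paper offers no proof of this statement at all: it is quoted verbatim from Crisp, and the citation to \cite[Proposition 40]{MR2174269} is the entire argument, so your closing remark that one may simply cite Crisp is exactly what the paper does. The sketch you offer as a substitute, however, has concrete gaps. First, the local picture is off: in $\square_\Gamma$ the rank~$0$--rank~$2$ edges have been deleted, so the vertex set of $\lk(v,\square_\Gamma)$ consists only of the rank~$1$ cosets $gA_s,\,gA_t$ with $g\in A_{s,t}$, and the edges of the link are the squares through $v$, i.e.\ they are indexed by the elements $g\in A_{s,t}$ (uniquely, since $A_s\cap A_t=\{1\}$), each joining $gA_s$ to $gA_t$; the rank~$0$ cosets you keep treating as valence-two vertices are \emph{edges} of the link, not vertices. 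More seriously, your central dichotomy is mis-set-up. The hypothesis must be read as $\tau$ fixing the two endpoints $A_s$ and $A_t$ (otherwise the automorphism induced by the swap $s\leftrightarrow t$ would contradict the conclusion), and an automorphism preserving a $2m$--cycle while fixing one of its edges pointwise is the identity on that cycle -- there is no ``reflection'' case on $C_0$. The inversion $s\mapsto s^{-1},\,t\mapsto t^{-1}$ is not a reflection of $C_0$: it fixes $E$ pointwise and interchanges the ``positive'' $2m$--cycle (prefixes of $\underbrace{sts\cdots}_m=\underbrace{tst\cdots}_m$) with the ``negative'' one coming from the inverted relation. In particular your uniqueness claim for the $2m$--cycle through $E$ is false -- there are at least these two -- and the correct dichotomy (``$\tau$ preserves or swaps this pair'') already requires classifying all $2m$--cycles through $E$, which you do not do.

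The propagation step, which you rightly flag as the main obstacle, is also not supplied, and the justification offered cannot work: a graph automorphism never ``creates a cycle shorter than $2m$,'' so the girth bound by itself places no constraint on how $\tau$ permutes the infinitely many edges $gs^k$, $k\in\mathbb Z$, incident to a vertex $gA_s$. What is actually needed is finer incidence information about minimal cycles -- for instance, that two edges at $gA_s$ lie on a common $2m$--cycle if and only if they are $gs^k$ and $gs^{k\pm1}$, which endows the edge set at each vertex with a $\mathbb Z$--line structure on which an edge-fixing automorphism acts as $\pm1$, after which an induction over the connected link can be run. Proving that incidence statement amounts to classifying the minimal-syllable-length relations in the dihedral Artin group $A_{s,t}$, and this is precisely the nontrivial content of Crisp's proposition, not a routine finite check. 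So as written the proposal is a plan whose two pivotal steps (the cycle classification through $E$ and the rigidity propagation) are respectively incorrect as stated and missing; either carry those out in full, or do as the paper does and rely on the citation.
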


The following notion of chunks was introduced in \cite[Section 7]{MR2174269}
\begin{definition}
Let $A$ be a connected full subgraph of $\Gamma$. $A$ is \emph{indecomposable} if, for every decomposition $\Gamma=\Gamma_1\cup_T\Gamma_2$ of $\Gamma$ over a separating edge or vertex $T$, either $A\subset\Gamma_1$ or $A\subset\Gamma_2$. A \emph{chunk} of $\Gamma$ is a maximal indecomposable (connected and full) subgraph of $\Gamma$. Clearly, any two chunks of $\Gamma$ intersect, if at all, along a single separating edge or vertex. A chunk of $\Delta$ shall be said to be \emph{solid} if it contains a simple closed circuit of $\Delta$. Note that any chunk is either solid or consists of just a single edge of the graph.

$\widehat \Gamma$ can be viewed both as a subset of $\Gamma$ and as a subgraph of $\Theta_\Gamma$. A \emph{(solid) chunk} of $\widehat \Gamma$ is the intersection of $\widehat \Gamma$ with a (solid) chunk of $\Gamma$. This defines a subgraph of $\widehat \Gamma$ which shall be thought of as lying inside $\Theta_\Gamma$. A \emph{(solid) chunk} of $\Theta_\Gamma$ is any translate of a (solid) chunk of $\widehat \Gamma$ by an element of $A_\Gamma$. 
\end{definition}

An isomorphism $\phi$ from $\Theta_\Gamma$ to $\Theta_{\Gamma'}$ is a \emph{$\V\F$--isomorphism} if $\phi(\V_\Gamma)=\V_{\Gamma'}$ and $\phi(\F_\Gamma)=\F_{\Gamma'}$.

\begin{prop}
	\label{prop:VF}
	\cite[Proposition 41]{MR2174269} Suppose $A_\Gamma$ and $A_{\Gamma'}$ are CLTTF. If $\Gamma$ is not a tree, then any isomorphism $\Theta_\Gamma\to\Theta_{\Gamma'}$ is a $\V\F$--isomorphism.
\end{prop}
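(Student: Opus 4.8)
\textbf{Proof proposal for Proposition~\ref{prop:VF} (citation of Crisp).}

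The statement is attributed to Crisp \cite[Proposition 41]{MR2174269}, so the task here is to recall the structure of his argument and explain how it would be carried out in the present setting. The plan is to show that the bipartition of the vertex set of $\Theta_\Gamma$ into $\V_\Gamma$ (vertices coming from diamond lines, equivalently blocks) and $\F_\Gamma$ (vertices coming from stable plain lines) is detected purely by the graph structure of $\Theta_\Gamma$, hence must be respected by any isomorphism. First I would record the local distinction between the two types of vertices: a vertex in $\V_\Gamma$ corresponds to a block of $\Xa_\Gamma$ whose defining edge carries a label $m\ge 3$, and by Lemma~\ref{lem:cycle in links} its link in $\square_\Gamma$ contains a simple cycle of length $2m\ge 6$ and no shorter one; a vertex in $\F_\Gamma$ corresponds to a single-labeled plain line, whose local structure is that of a rank $1$ vertex in the Deligne complex and therefore looks quite different. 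The key point is to translate these facts into a statement about $\Theta_\Gamma$ itself rather than about $D_\Gamma$ or $\square_\Gamma$, using the identification (from \cite[Lemma 12]{MR2174269} and the surrounding discussion) between $\Theta_\Gamma$ and Crisp's fixed set graph, under which adjacency in $\Theta_\Gamma$ corresponds to a $\mathbb Z\oplus\mathbb Z$ configuration of the associated $\mathbb Z$-subgroups.

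The main combinatorial input is that a $\V_\Gamma$-vertex and an $\F_\Gamma$-vertex behave differently with respect to cycles through the graph $\Theta_\Gamma$. Concretely: a diamond line and the plain lines through it (its two boundary lines together with the translates under the two commuting cyclic factors) assemble into the link data of a block, and this produces, around a $\V_\Gamma$-vertex, short cycles of a controlled length reflecting the edge label $m$; moreover a $\V_\Gamma$-vertex is adjacent only to $\F_\Gamma$-vertices and to $\V_\Gamma$-vertices lying in the same block (via Lemma~\ref{lem:same tag}-type considerations and the structure of blocks). By contrast, a chromatic plain line (an $\F_\Gamma$-vertex) is adjacent to vertices coming from several distinct blocks, and the cycles through it are forced to have length $\ge 4$ with no analogue of the "label $m$" constraint. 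I would make this precise by checking, for each vertex $v$ of $\Theta_\Gamma$, whether $v$ lies on a \emph{full} subgraph which is a single closed circuit of length exactly twice the label of some edge — such circuits exist around $\V_\Gamma$-vertices and witness the block, whereas an $\F_\Gamma$-vertex sits at the intersection of several such circuits of different lengths (this is where the triangle-free and large-type hypotheses, encoded in CLTTF, are used: triangles in $\Gamma$ would create the genuinely $2$-dimensional "chamber" behaviour that muddies the picture). The hypothesis that $\Gamma$ is not a tree is exactly what guarantees $\Theta_\Gamma$ has at least one solid chunk, i.e.\ a $\V_\Gamma$-vertex sitting on a circuit; if $\Gamma$ were a tree there would be no diamond lines forming nontrivial cycles and $\V_\Gamma$ could be confused with $\F_\Gamma$ (this is the genuine counterexample regime, handled separately in \cite{behrstock2008quasi}).

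With the intrinsic characterization of $\V_\Gamma$ in hand — $v\in\V_\Gamma$ iff $v$ lies on a simple full circuit of $\Theta_\Gamma$ of minimal length among all circuits through $v$, and that minimal length is $\ge 6$, equivalently $v$ satisfies the ``block'' signature described above — the rest is formal: if $\phi\colon\Theta_\Gamma\to\Theta_{\Gamma'}$ is a graph isomorphism, it preserves this property, hence $\phi(\V_\Gamma)=\V_{\Gamma'}$, and since $\V$ and $\F$ partition the vertex sets, also $\phi(\F_\Gamma)=\F_{\Gamma'}$, so $\phi$ is a $\V\F$-isomorphism. I expect the main obstacle to be the bookkeeping in the intrinsic characterization: one must be careful that no $\F_\Gamma$-vertex accidentally lies on a short full circuit of $\Theta_\Gamma$ (which could happen if the odd-component / leaf structure of $\Gamma$ conspired to make two stable plain lines and their connecting thickened plain lines close up), and conversely that the circuit around each $\V_\Gamma$-vertex really is \emph{full} in $\Theta_\Gamma$ and not shortened by an extra edge coming from some abelian subgroup one did not anticipate. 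This is precisely the content that Crisp verifies by a link-by-link analysis in $\square_\Gamma$ via Lemma~\ref{lem:cycle in links} and Proposition~\ref{prop:local rigid}, and the proof would proceed by importing that analysis and transporting it along the identification of $\Theta_\Gamma$ with the fixed set graph.
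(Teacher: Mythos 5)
The paper does not prove this proposition at all: it is quoted verbatim from Crisp \cite[Proposition 41]{MR2174269}, so the only ``proof'' in the paper is the citation. Your proposal likewise ultimately defers to Crisp, which is fine in principle, but the sketch you offer of how the argument goes does not hold up, and since the whole content of the statement is the intrinsic distinguishability of $\V$ from $\F$, the sketch cannot stand in for the citation.

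The central gap is that you never produce a property of the graph $\Theta_\Gamma$ that is actually asymmetric between the two classes. Your proposed criterion --- ``$v\in\V_\Gamma$ iff $v$ lies on a full circuit of minimal length through $v$, that length being tied to the edge label $m$ via Lemma~\ref{lem:cycle in links}'' --- fails on two counts. First, Lemma~\ref{lem:cycle in links} concerns simple cycles in $\lk(v,\square_\Gamma)$, whose vertices alternate between rank~$0$ and rank~$1$ vertices of $\square_\Gamma$; rank~$0$ vertices (group elements) are not vertices of $\Theta_\Gamma$ at all, so those $2m$--cycles do not live in $\Theta_\Gamma$, and the short circuits of $\Theta_\Gamma$ reflect circuits of $\Gamma$ (hence the girth of $\Gamma$), not edge labels. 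Second, and more fundamentally, a circuit of $\Gamma$ gives a circuit in $\widehat\Gamma\subset\Theta_\Gamma$ that alternates between $\V$-- and $\F$--vertices, so every $\F$--vertex on such a circuit lies on exactly the same circuit as its $\V$--neighbours; any test of the form ``lies on a (minimal, full) circuit of length $\ell$'' is therefore passed simultaneously by vertices of both types, and cannot separate the two sides. Showing that some genuinely asymmetric invariant exists is precisely what Crisp's Proposition 41 accomplishes, so as written the proposal essentially assumes what is to be proved. (Two smaller slips: a $\V$--vertex cannot be adjacent to ``$\V$--vertices lying in the same block,'' since $\V$--vertices are in bijection with blocks; and the role of the non-tree hypothesis is correctly identified only in spirit --- it guarantees solid chunks/circuits, but your sketch does not explain how a circuit is then exploited to break the $\V$/$\F$ symmetry.) If you want more than the citation, you need to either reproduce Crisp's actual argument or supply a correct intrinsic characterization of $\V_\Gamma$ inside $\Theta_\Gamma$; the circuit-length signature you propose is not one.
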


\begin{prop}
	\label{prop:chunk}
\cite[Proposition 23]{MR2174269} Suppose $A_\Gamma$ and $A_{\Gamma'}$ are CLTTF. Any $\V\F$--isomorphism $\Theta_\Gamma\to\Theta_{\Gamma'}$ maps each solid chunk of $\Theta_\Gamma$	onto a solid chunk of $\Theta_{\Gamma'}$.
\end{prop}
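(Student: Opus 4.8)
The plan is to characterise solid chunks intrinsically, purely in terms of the graph $\Theta_\Gamma$ together with the partition $\{\V_\Gamma,\F_\Gamma\}$ of its vertex set, so that invariance under a $\V\F$-isomorphism becomes formal. Recall that $\Theta_\Gamma$ is the union of the $A_\Gamma$-translates of $\widehat\Gamma$ — after, if necessary, enlarging $\widehat\Gamma$ to contain the finitely many additional stable plain lines through $\ast$ (those of Definition~\ref{def:stable line}(5), coming from leaves joined by odd-labelled edges to non-leaves), since $\widehat\Gamma$ need not be full on its vertex set in $\Theta_\Gamma$. A decomposition $\Gamma=\Gamma_1\cup_T\Gamma_2$ of $\Gamma$ over a separating vertex or edge $T$ yields a splitting of $A_\Gamma$ as an amalgam over the standard subgroup $A_T$, hence an action of $A_\Gamma$ on the associated Bass--Serre tree. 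Transporting this along the embeddings $g\widehat\Gamma\hookrightarrow\Theta_\Gamma$ exhibits $\Theta_\Gamma$ as a tree of graphs: the vertex graphs are the $A_\Gamma$-translates of $\Theta_{\Gamma_1}$ and $\Theta_{\Gamma_2}$, and the edge graphs are the translates of the piece of $\widehat\Gamma$ attached to $T$, namely a single $\F$-vertex when $T$ is a vertex and the star of a single $\V$-vertex when $T$ is an edge. Iterating over all separating vertices and edges of $\Gamma$ produces the \emph{chunk decomposition} of $\Theta_\Gamma$, whose indecomposable vertex graphs are exactly the translates of the chunks of $\widehat\Gamma$.

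With this picture the relevant ``cut data'' is visibly intrinsic. Call an $\F$-vertex (resp.\ a $\V$-star) of $\Theta_\Gamma$ \emph{separating} if it arises as an overlap $g\widehat\Gamma\cap h\widehat\Gamma$ with $g\widehat\Gamma\ne h\widehat\Gamma$; these are precisely the $A_\Gamma$-images of the separating vertices (resp.\ separating edges) of $\Gamma$. A chunk of $\Theta_\Gamma$ is then a maximal subgraph that cannot be split at any separating $\F$-vertex or separating $\V$-star. To detect solidity, observe that a simple circuit $a_1e_1a_2e_2\cdots a_ke_ka_1$ of $\Gamma$ corresponds to an embedded $2k$-cycle in $\widehat\Gamma$ alternating between the $\F$-vertices $v_{a_i}$ and the $\V$-vertices $v_{e_i}$, and conversely that an \emph{essential} alternating cycle — one crossing no separating $\F$-vertex or $\V$-star, equivalently one lying inside a single translate $g\widehat\Gamma$ — must be the image of a genuine circuit of $\Gamma$. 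Hence a chunk of $\Theta_\Gamma$ is solid if and only if it contains an essential alternating cycle; the chunks that are single edges contain no cycle and lie outside the statement.

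The verification now runs as follows. A $\V\F$-isomorphism $\phi\colon\Theta_\Gamma\to\Theta_{\Gamma'}$ preserves connectivity, the $\V\F$-partition, and — by Lemma~\ref{lem:cycle in links} and Proposition~\ref{prop:local rigid}, which recover the star of every rank-$2$ (i.e.\ $\V$-) vertex and the minimal bipartite circuits through it from the data of $\Theta_\Gamma$ and the partition alone — the star structure of every $\V$-vertex. Consequently $\phi$ carries separating $\F$-vertices and separating $\V$-stars of $\Theta_\Gamma$ to those of $\Theta_{\Gamma'}$, carries the chunk decomposition of $\Theta_\Gamma$ to that of $\Theta_{\Gamma'}$, and carries essential alternating cycles to essential alternating cycles. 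Since $\phi$ is an isomorphism, it therefore restricts to an isomorphism from each maximal subgraph that is indecomposable over the cut data and contains an essential cycle onto a subgraph of $\Theta_{\Gamma'}$ of the same kind; that is, it maps each solid chunk of $\Theta_\Gamma$ onto a solid chunk of $\Theta_{\Gamma'}$, as claimed (following Crisp~\cite{MR2174269}).

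The main obstacle is the equivalence used in the second step: that an embedded alternating cycle is essential precisely when it is contained in a single translate $g\widehat\Gamma$, and that in that case it descends to a circuit of $\Gamma$ rather than being an artifact of the amalgamation structure. Proving this requires an acylindricity-type analysis of how translates of $\widehat\Gamma$ overlap: two distinct translates meet in at most one separating $\F$-vertex or one separating $\V$-star, and the Bass--Serre trees above are genuine trees, so any embedded cycle that leaves a translate must re-enter it through the same cut piece and can be homotoped off it, reducing to the case of a single chunk, where the dictionary with circuits of $\Gamma$ is immediate. The subsidiary technical point — that $\widehat\Gamma$ may fail to be a full subgraph of $\Theta_\Gamma$ — must be disposed of first, by adjoining the finitely many extra stable plain lines through $\ast$ and checking that the enlarged fundamental piece is still respected by the tree-of-graphs structure.
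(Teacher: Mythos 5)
The paper itself does not prove this proposition: it is imported verbatim from Crisp \cite{MR2174269} (his Proposition 23), so your argument has to stand on its own, and it does not. The substantive content of the statement is that the chunk structure can be read off from the abstract graph $\Theta_\Gamma$ together with the $\V\F$-partition alone. Your proposal never establishes this. You define the cut data (``separating'' $\F$-vertices and $\V$-stars) as overlaps $g\widehat\Gamma\cap h\widehat\Gamma$ of distinct translates and assert these are precisely the $A_\Gamma$-images of separating vertices and edges of $\Gamma$. That is false: distinct translates overlap at every vertex of $\Gamma$, not only at separating ones --- for instance $\widehat\Gamma$ and $s\widehat\Gamma$ (for $s$ a generator) share the $\F$-vertex of the $s$-labeled line through $\ast$ together with all $\V$-vertices of blocks containing that line, whether or not $s$ separates $\Gamma$. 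Worse, even a corrected version of this definition is phrased in terms of the family of translates of $\widehat\Gamma$, which is exactly the structure an abstract $\V\F$-isomorphism is not known to preserve; saying that $\phi$ ``carries separating $\F$-vertices and separating $\V$-stars to those of $\Theta_{\Gamma'}$'' is therefore circular unless you first produce a characterization purely in terms of the graph and the partition. The appeal to Lemma~\ref{lem:cycle in links} and Proposition~\ref{prop:local rigid} does not supply this: both statements concern links of rank~$2$ vertices in the cube complex $\square_\Gamma$ (and automorphisms of such a link fixing an edge), not intrinsic features of $\Theta_\Gamma$.

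The step you yourself flag as ``the main obstacle'' --- that an embedded alternating cycle avoiding the cut data lies in a single translate of $\widehat\Gamma$ and descends to a genuine circuit of $\Gamma$, and that $\phi$ preserves this property --- is precisely the hard part of Crisp's proof (his Lemmas 20--22 and the surrounding circuit analysis in the Deligne complex), and your sketch of it rests on the additional claim that two distinct translates meet in at most one separating $\F$-vertex or one $\V$-star, which fails for the same reason as above. So the proposal reduces the proposition to an unproved (and, as stated, incorrect) combinatorial description of how translates of $\widehat\Gamma$ sit inside $\Theta_\Gamma$, rather than proving it. A minor additional point: the preliminary enlargement of $\widehat\Gamma$ by the stable lines of Definition~\ref{def:stable line}(5) is unnecessary, since vertices of $\Theta_\Gamma$ are parallelism classes and such a line is parallel, through thickened plain lines across the odd-labeled edge, to a line labeled by a non-leaf vertex, hence already represents a vertex lying in translates of $\widehat\Gamma$.
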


\subsection{Quasi-isometries of CLTTF Artin groups}
\label{subsec:quasi-isometries}
\begin{lemma}
	\label{lem:graph}
	Suppose that $A_\Gamma$ and $A_{\Gamma'}$ are CLTTF Artin groups. Let $q\colon \Xa_\Gamma\to \Xa_{\Gamma'}$ be an $(L,A)$--quasi-isometry. Then 
	\begin{enumerate}
		\item $q$ induces a graph isomorphism $q_\ast\colon \Theta_\Gamma\to\Theta_{\Gamma'}$;
		\item if in addition $\Gamma$ is not a tree, then there exists a constant $D=D(L,A,\Gamma,\Gamma')$ such that for any block $B\subset\Xa_\Gamma$, there exists a block $B'\subset\Xa_{\Gamma'}$ such that $d_H(q(B),B')<D$; moreover, $q_\ast$ sends the vertex in $\Theta_\Gamma$ associated with $B$ to the vertex in $\Theta_{\Gamma'}$ associated with $B'$.
	\end{enumerate}
\end{lemma}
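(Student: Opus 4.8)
\textbf{Proof plan for Lemma~\ref{lem:graph}.}
For part (1), the plan is to combine Theorem~\ref{thm:preservation of stable lines} with the rigidity of $\mathbb Z$-subgroups and with Lemma~\ref{lem:uniform}. First I would observe that in the CLTTF case every stable line is either a diamond line or a stable single-labeled plain line (there are no Coxeter lines when $\Gamma$ is triangle-free), and that by Lemma~\ref{lem:finite out}/the definitions the stable subgraph $\Theta_\Gamma$ coincides with the full intersection graph $\I_\Gamma$ restricted to these lines. By Theorem~\ref{thm:preservation of stable lines}, $q$ sends each stable line $L$ to within uniformly bounded Hausdorff distance of a unique stable line $L'$ in $\Xa_{\Gamma'}$ (uniqueness because distinct parallelism classes of stable lines are at infinite Hausdorff distance, which follows from quasi-isometric embeddedness of stable lines with uniform constants, cf.\ Section~\ref{subsec:singular lines}, together with Lemma~\ref{lem:uniform}). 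Since $q$ is a quasi-isometry with quasi-inverse $\bar q$, and $\bar q$ also preserves stable lines, this assignment descends to a bijection $q_\ast$ on parallelism classes, i.e.\ on the vertex set of $\Theta_\Gamma$. For adjacency, I would run exactly the argument in the proof of Theorem~\ref{thm:invariance}: if $Z_1,Z_2$ are $\mathbb Z$-subgroups on parallel classes $\mathcal P_1,\mathcal P_2$ generating $\mathbb Z^2$, then $Z_1\oplus Z_2$ is a quasiflat whose image under $q$ is a quasiflat, and by Lemma~\ref{lem:uniform}(2) the generators $g'_1,g'_2$ of the $\mathbb Z$-subgroups on the image classes move the support set of this quasiflat by a bounded amount; uniform contractibility of $\Xa_{\Gamma'}$ and triviality of $H_2$ then force $\langle g'_1,g'_2\rangle$ to preserve a quasiflat, hence to be virtually $\mathbb Z^2$, so the image vertices are adjacent. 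Symmetry of the argument under $\bar q$ upgrades $q_\ast$ to a graph isomorphism.

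For part (2), the plan is to track how $q_\ast$ behaves on the vertices of $\V_\Gamma$ (those coming from diamond lines, in bijection with blocks of $\Xa_\Gamma$). When $\Gamma$ is not a tree, Proposition~\ref{prop:VF} says any isomorphism $\Theta_\Gamma\to\Theta_{\Gamma'}$ is a $\V\F$-isomorphism, so $q_\ast(\V_\Gamma)=\V_{\Gamma'}$. Given a block $B$, its associated vertex $v\in\V_\Gamma$ maps to a vertex $v'=q_\ast(v)\in\V_{\Gamma'}$ associated with some block $B'$. To see $d_H(q(B),B')$ is bounded, I would use that $B$ is a union of diamond lines all parallel to each other (the $n-i$ parameter varies but all diamond lines in a fixed block share the same parallelism class, represented by $v$), so $B$ is Hausdorff-close to the parallel set $P_L$ of a diamond line $L$ in $B$; more precisely $B$ is quasi-isometric to $\mathbb Z\oplus F_k$ and $v$ records this class. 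By part (1) and Theorem~\ref{thm:preservation of stable lines} applied with uniform constant $D$, $q$ sends every diamond line of $B$ to within $D$ of a diamond line in the class $v'$, all of which lie in $B'$; conversely $\bar q$ sends the diamond lines of $B'$ back into $B$ up to bounded distance. Since blocks are convex-enough (coarsely equal to the union of their diamond lines) and quasi-isometrically embedded with uniform constants, a pigeonhole/thickening argument gives $d_H(q(B),B')<D'$ with $D'$ depending only on $L,A,\Gamma,\Gamma'$. The fact that $q_\ast$ sends $v$ to $v'$ is then automatic from the construction.

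The main obstacle I anticipate is the uniformity of the constant $D$ in part (2) (and implicitly the uniformity already packaged in Theorem~\ref{thm:preservation of stable lines}). The subtle point is that a block $B$ is not literally the union of a single parallelism class of diamond lines but carries a whole $F_k$-worth of them, and one must argue that the \emph{entire} block, not just one diamond line in it, maps uniformly close to $B'$; this requires knowing that the parallel set of a diamond line is coarsely the block containing it, with coarseness constants independent of the block. This should follow from the $\mathrm{SL}$-type structure: each block is isomorphic to $X_n$ for one of finitely many $n$ (finitely many edge labels in $\Gamma$), and the $A_\Gamma$-action is cocompact, so there are only finitely many orbits of blocks and the needed comparison holds uniformly. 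Once that uniformity is in hand, the rest is routine coarse geometry, and the identification of $q_\ast(v)$ with $v'$ is bookkeeping.
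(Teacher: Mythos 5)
Your proposal is correct and follows essentially the same route as the paper: part (1) is exactly the content of Theorem~\ref{thm:invariance} (which you re-derive rather than simply cite), and part (2) is the paper's argument via Proposition~\ref{prop:VF} together with Theorem~\ref{thm:preservation of stable lines} applied to diamond lines. Your anticipated obstacle in (2) dissolves because a block coincides with the parallel set of any one of its diamond lines --- all diamond lines in a block are mutually at finite Hausdorff distance, and by (the proof of) Lemma~\ref{lem:uniform} parallel diamond lines lie in a common block --- which is precisely how the paper concludes $d_H(q(B),B')<D$ with the uniform constant coming from Theorem~\ref{thm:preservation of stable lines}.
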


\begin{proof}
Theorem~\ref{thm:invariance} implies (1). Now we prove (2). By Proposition~\ref{prop:VF}, $q_{\ast}\colon \Theta_\Gamma\to\Theta_{\Gamma'}$ is a $\V\F$--iso\-mor\-phism. Now suppose $B$ is the parallel set of a diamond line $L$. By Theorem~\ref{thm:preservation of stable lines}, there exists a stable line $L'\subset\Xa_{\Gamma'}$ such that $d_H(P_{L'},q(P_L))<D$ for $D=D(L,A,\Gamma_1,\Gamma_2)$. Note that $L'$ has to be a diamond line. Thus $P_{L'}$ is a block $B'$ and $d_H(B',q(B))<D$. 
\end{proof}

\begin{theorem}
	\label{thm:CLTTF1}
Let $A_\Gamma$ and $A_{\Gamma'}$ be CLTTF Artin groups. Suppose $\Gamma$ does not have separating vertices and edges. Let $q\colon A_\Gamma\to A_{\Gamma'}$ be an $(L,A)$--quasi-isometry. Then there exist a constant $D=D(L,A,\Gamma)$ and a bijection $\tilde q\colon  A_\Gamma\to A_{\Gamma'}$ such that
\begin{enumerate}
	\item $d(q(x),\tilde q(x))<D$ for any $x\in A_\Gamma$;
	\item if $\Gamma_1\subset\Gamma$ is an edge (resp.\ vertex), then for any $g\in A_\Gamma$, there exists an edge (resp.\ vertex) $\Gamma'_1\subset\Gamma'$ and $g'\in A_{\Gamma'}$ such that $\tilde q$ maps $g A_{\Gamma_1}$ bijectively onto $g'A_{\Gamma'_1}$; and similar properties hold for $\tilde q^{-1}$.
\end{enumerate}
Moreover, such $\tilde q$ is unique.
\end{theorem}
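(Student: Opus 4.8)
\textbf{Proof plan for Theorem~\ref{thm:CLTTF1}.}
The plan is to upgrade the quasi-isometry $q$ to a genuine bijection respecting the coset structure, by first passing through the combinatorics of the intersection graph and the Deligne complex, and then invoking Crisp's local rigidity. First I would use Lemma~\ref{lem:graph}: since $\Gamma$ has no separating vertices or edges it is in particular not a tree (a tree with $\ge 3$ vertices has a separating vertex), so $q$ induces a $\V\F$-isomorphism $q_\ast\colon\Theta_\Gamma\to\Theta_{\Gamma'}$, and moreover each block $B\subset\Xa_\Gamma$ is sent to within Hausdorff distance $D_0=D_0(L,A,\Gamma,\Gamma')$ of a block $B'\subset\Xa_{\Gamma'}$, compatibly with $q_\ast$ on the $\V$-vertices. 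Dually, $q_\ast$ induces a simplicial isomorphism of the modified Deligne complexes $D_\Gamma\to D_{\Gamma'}$ (equivalently of $\square_\Gamma\to\square_{\Gamma'}$): the rank $2$ and rank $1$ vertices and their incidences are exactly the $\V$- and $\F$-vertices of $\Theta_\Gamma$ with the adjacency recording when the corresponding block contains the corresponding single-labeled plain line, and this data is preserved by a $\V\F$-isomorphism; the rank $0$ vertices (i.e.\ vertices of $\Xa_\Gamma$) are then recovered combinatorially as the "tips" shared by a maximal configuration of blocks, forcing a bijection on vertex sets of $\Xa_\Gamma$ and $\Xa_{\Gamma'}$.

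The core of the argument is to promote this vertex bijection to the bijection $\tilde q$ with property (2) and to control $d(q(x),\tilde q(x))$. Here I would run Crisp's machinery: by Proposition~\ref{prop:chunk} the induced $\V\F$-isomorphism carries each solid chunk of $\Theta_\Gamma$ onto a solid chunk of $\Theta_{\Gamma'}$, and since $\Gamma$ has no separating vertices or edges it is a single chunk. Then, using Lemma~\ref{lem:cycle in links} and Proposition~\ref{prop:local rigid}, the isomorphism of $\square_\Gamma\to\square_{\Gamma'}$ is rigid on links of rank $2$ vertices up to the global inversion; this is precisely the input Crisp uses (in the proof of his Theorem~3 and Lemma~42) to show that a $\V\F$-isomorphism of $\Theta$-graphs for chunk CLTTF defining graphs is induced by a "canonical bijection" $\tilde q\colon A_\Gamma\to A_{\Gamma'}$ — one that maps every coset $gA_{\Gamma_1}$ of a standard edge- or vertex-subgroup bijectively onto such a coset, and likewise for the inverse. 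I would invoke this to produce $\tilde q$ satisfying (2), and uniqueness of such a coset-preserving bijection is part of the same statement (two canonical bijections inducing the same $q_\ast$ agree).

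It remains to verify (1), that $\tilde q$ is uniformly close to $q$. For this I would argue locally: each vertex $x\in A_\Gamma$ lies on the boundary of several blocks $B_1,\dots,B_k$ whose pairwise intersections are exactly the single-labeled plain lines through $x$, and $\{x\}=\bigcap_i B_i$ is the unique such common point. Since $d_H(q(B_i),B_i')<D_0$ for blocks $B_i'$ determined by $q_\ast$, and since $\tilde q(x)$ is by construction the corresponding common point of the $B_i'$, the point $q(x)$ lies within $D_0$ of each $B_i'$; using that two blocks sharing a vertex meet in a single plain line and that plain lines and blocks are quasi-isometrically embedded with uniform constants (the discussion after Definitions~\ref{def:diamond line} and~\ref{def:wall line}, and \cite[Theorem 1.2]{charney2014convexity}), the coarse intersection $\bigcap_i N_{D_0}(B_i')$ has diameter bounded by some $D=D(L,A,\Gamma)$, which forces $d(q(x),\tilde q(x))<D$. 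The main obstacle I anticipate is the last step: one must show the coarse intersection of the $N_{D_0}(B_i')$ really is uniformly bounded — a priori two blocks could "coarsely meet along a line" rather than a point. I would handle this exactly as in Section~\ref{subsec:properties of atomic sectors} (using the projection to the Deligne complex $D_{\Gamma'}$ and convexity of fixed-point trees of the block stabilizers, as in the proofs of Lemmas~\ref{lem:atomic embedding} and~\ref{lem:atomic intersection}): distinct blocks project to distinct rank $2$ vertices of $D_{\Gamma'}$, and a vertex of $\Xa_{\Gamma'}$ that coarsely lies on two blocks must project within bounded distance of both rank $2$ vertices, hence within bounded distance of the (finite) geodesic joining them, which pins down the vertex up to bounded ambiguity.
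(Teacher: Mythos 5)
Your skeleton (Lemma~\ref{lem:graph}, Proposition~\ref{prop:VF}, Proposition~\ref{prop:chunk}) is the same as the paper's, but the heart of the theorem --- existence of $\tilde q$ with property (2), and its uniqueness --- is outsourced to a black box (``the canonical bijection from the proofs of Crisp's Theorem~3 and Lemma~42'') that is not available in the form you need. Crisp's Lemma~42 is the source of \emph{counterexamples} when separating vertices or edges are present (this paper cites it for exactly that), and what the paper attributes to Crisp is a statement for automorphisms of $\Theta_\Gamma$ under extra hypotheses; moreover a two-sided ``chunk'' hypothesis would require knowing that $\Gamma'$ has no separating vertices or edges, which is not assumed here (in the paper it only comes out a posteriori). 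Your earlier claim that the rank-$0$ vertices are ``recovered combinatorially as tips shared by a maximal configuration of blocks'' is precisely the nontrivial point, not something a $\V\F$-isomorphism gives for free. The paper instead constructs $\tilde q$ by hand: the blocks through $x$ give the $\V$-vertices of the chunk $g\widehat\Gamma$; Proposition~\ref{prop:chunk} places their images in a chunk of $\Theta_{\Gamma'}$, so the corresponding blocks $B'_i$ have nonempty, hence one-vertex, intersection, and one sets $\tilde q(x):=\bigcap_i B'_i$. Property (2) is then proved by showing the chunk-valued function $x\mapsto g'\bigl(q_\ast(g\widehat\Gamma)\bigr)$ is constant (an argument on adjacent vertices using the stabilizer of the stable line through them), the resulting subgraph $\Gamma'_1$ equals $\Gamma'$ by coarse density of $\im\tilde q$, bijectivity comes from applying the construction to a quasi-inverse, and uniqueness from the fact that two blocks at finite Hausdorff distance coincide. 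None of these steps has a counterpart in your plan, and your uniqueness claim (``two canonical bijections inducing the same $q_\ast$ agree'') is weaker than what is asserted, namely uniqueness among all bijections satisfying (1) and (2).

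The argument you offer for (1) also fails as stated. Projecting to $D_{\Gamma'}$ and landing near the bounded geodesic between the two rank-$2$ vertices does not ``pin down the vertex up to bounded ambiguity'': the map $\pi$ is far from coarsely proper, since the entire unbounded block $B'$ is sent into the bounded closed star of its rank-$2$ vertex, so lying over a bounded subset of $D_{\Gamma'}$ gives no bound in $\Xa_{\Gamma'}$. What is actually needed is that $\bigcap_i N_{D_0}(B'_i)$, taken over the whole configuration of blocks (whose defining edges contain a cycle of $\Gamma'$), lies in a uniform neighborhood of the single vertex $\bigcap_i B'_i$; this can be extracted, for instance, from the fact that two blocks through $x'$ whose edges share a vertex coarsely intersect along the single-labeled plain line (the fixed-tree argument of Lemma~\ref{lem:atomic embedding}), that two such lines through $x'$ with distinct labels have bounded coarse intersection, and finiteness of orbits of such configurations --- not from the Deligne-complex projection you describe. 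Finally, your (1) presupposes that the black-boxed $\tilde q$ sends $x$ to the common vertex of the blocks $B'_i$ produced by Lemma~\ref{lem:graph}(2); with the paper's explicit construction this holds by definition, but with an abstract citation it is an extra compatibility you would still have to verify.
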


\begin{proof}
Pick a vertex $x\in\Xa_\Gamma$ and let $\{B_i\}_{i=1}^{n}$ be the collection of blocks containing $x$. Note that $x=\cap_{i=1}^{n} B_i$. Let $\{v_i\}_{i=1}^n$ be the collection of vertices in $\V_\Gamma$ corresponding to $\{B_i\}_{i=1}^{n}$. Let $g\in A_\Gamma$ be the element such that it sends the identity element $\ast$ to $x$. Then $\{v_i\}_{i=1}^n\subset g\widehat\Gamma\subset \Theta_\Gamma$. Since $\Gamma$ has no separating vertices and edges, $g\widehat \Gamma$ is a chunk. Let $q_\ast\colon \Theta_\Gamma\to\Theta_{\Gamma'}$ be the graph isomorphism in Lemma~\ref{lem:graph}, which is a $\V\F$--isomorphism by Proposition~\ref{prop:VF}. Let $B'_i$ be the block such that $d_H(q(B_i),B'_i)<D$ as in Lemma~\ref{lem:graph} (2). Proposition~\ref{prop:chunk} implies that $q_\ast(g\widehat\Gamma)$ is a chunk of $\Theta_{\Gamma'}$, thus the vertices of $\Theta_{\Gamma'}$ associated with $\{B'_i\}_{i=1}^n$ are contained in a chunk. Hence $\cap_{i=1}^{n} B'_i\neq\emptyset$. Since $\cap_{i=1}^{n} B'_i$ is bounded, it is a vertex $x'$ of $\Xa_{\Gamma'}$. We define $\tilde q(x)=x'$. It is clear that for any vertex $x\in \Xa_\Gamma$, we have $d(q(x),\tilde q(x))<D$ for $D=D(L,A,\Gamma,\Gamma')$. By construction, $\tilde q$ satisfies all the requirements except we still need to verify the bijectivity and uniqueness of $\tilde q$.

We define a map $F$ from $A_\Gamma$ to chunks in $\widehat{\Gamma'}$ as follows. Let $x,x'$ and $g$ be as before and let $g'\in A_{\Gamma'}$ be the element sending $x'$ to the identity element $\ast'\in A_{\Gamma'}$. Then $F(x)$ is defined to be the chunk $g'(q_\ast(g\widehat\Gamma))$. We claim $F$ is a constant function. It suffices to show if $x_1$ and $x_2$ are two adjacent vertices in $\Xa_\Gamma$, then $F(x_1)=F(x_2)$. Suppose $F(x_i)=g'_i(q_\ast(g_i\widehat\Gamma))$ for $i=1,2$. Then $g_1\widehat\Gamma\cap g_2\widehat\Gamma$ contains the closed star (in $g_1\widehat\Gamma$) of some $\F$--type vertex $w$, where $w$ corresponds to the single-labeled plain line $\ell$ passing through $x_1$ and $x_2$. Our assumption on $\Gamma$ implies $g_1\widehat\Gamma\cap g_2\widehat\Gamma$ has at least two edges. Let $\ell'$ be the stable line associated with $q_\ast(w)$ that passes through $\tilde q(x_1)$. Then $\tilde q(x_2)\subset \ell'$, hence $g'_1$ and $g'_2$ differ by an element in the stabilizer of $\ell'$. Now one readily deduces that $F(x_1)\cap F(x_2)$ contains at least two edges that intersect along an $\F$--type vertex. Thus $F(x_1)=F(x_2)$ by the definition of chunks.

Note that the image of $F$ gives a subgraph $\Gamma'_1\subset\Gamma'$. Note that if $x_1$ and $x_2$ are adjacent in $\Xa_\Gamma$, then $\tilde q(x_1)$ and $\tilde q(x_2)$ are in the same left coset of $A_{\Gamma'_1}$ in $A_{\Gamma'}$. Thus $\tilde q(x)$ is contained in a left coset of $A_{\Gamma'_1}$. Since $\im \tilde q$ is $A$--dense in $A_{\Gamma'}$, we have $\Gamma'_1=\Gamma'$. Now we repeat the previous discussion for a quasi-isometry inverse of $q$ to deduce the bijectivity statement in the theorem. To see the uniqueness, note that two blocks have finite Hausdorff distance if and only if they are equal. Thus for any block $B\subset \Xa_\Gamma$, there is a unique block $B'\subset \Xa_\Gamma$ such that $d_H(q(B),B')<\infty$. By using (2) and repeating the opening paragraph of the proof, we know $\tilde q$ is unique.
\end{proof}

\begin{corollary}
	\label{cor:qi and iso}
Let $A_\Gamma$ and $A_{\Gamma'}$ be CLTTF Artin groups. Suppose $\Gamma$ does not have separating vertices and edges. Then $A_\Gamma$ and $A_{\Gamma'}$ are quasi-isometric if and only if $\Gamma$ and $\Gamma'$ are isomorphic as labeled graphs. 
\end{corollary}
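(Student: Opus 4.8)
\textbf{Proof plan for Corollary~\ref{cor:qi and iso}.}
The plan is to deduce the corollary directly from Theorem~\ref{thm:CLTTF1}. The ``if'' direction is immediate: if $\Gamma$ and $\Gamma'$ are isomorphic as labeled graphs, then $A_\Gamma$ and $A_{\Gamma'}$ are isomorphic as groups, hence quasi-isometric. For the ``only if'' direction, suppose $q\colon A_\Gamma\to A_{\Gamma'}$ is an $(L,A)$--quasi-isometry. Since $\Gamma$ has no separating vertices and edges, Theorem~\ref{thm:CLTTF1} applies and produces a bijection $\tilde q\colon A_\Gamma\to A_{\Gamma'}$ at bounded distance from $q$, carrying cosets of edge-standard (resp.\ vertex-standard) subgroups of $A_\Gamma$ onto cosets of edge-standard (resp.\ vertex-standard) subgroups of $A_{\Gamma'}$, and with the analogous property for $\tilde q^{-1}$.

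The core step is to manufacture a labeled graph isomorphism $\Gamma\to\Gamma'$ out of $\tilde q$. First I would use $\tilde q$ to recover the combinatorial structure of $\Xa_\Gamma$ ``seen by standard subcomplexes'': for each block $B\subset\Xa_\Gamma$ (standard subcomplex associated to an edge $\overline{st}\subset\Gamma$) there is, by Theorem~\ref{thm:CLTTF1}(2) together with Lemma~\ref{lem:graph}(2), a block $B'\subset\Xa_{\Gamma'}$ with $d_H(q(B),B')$ bounded; since blocks at finite Hausdorff distance coincide, $B'$ is canonically determined. This sets up a bijection between blocks of $\Xa_\Gamma$ and blocks of $\Xa_{\Gamma'}$ which is $\tilde q$--equivariant in the sense that it intertwines the left $A_\Gamma$-action and the left $A_{\Gamma'}$-action via the induced group identification. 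Passing to the quotient by the group actions, the set of $A_\Gamma$-orbits of blocks is identified with $E(\Gamma)$, and likewise for $\Gamma'$; so one gets a bijection $E(\Gamma)\to E(\Gamma')$. The same argument with vertex-standard subcomplexes (single-labeled plain lines, i.e.\ rank $1$ standard subcomplexes) gives a bijection $V(\Gamma)\to V(\Gamma')$, and the compatibility of these bijections with the incidence relation (a rank $1$ standard subcomplex is contained in a block iff the corresponding vertex lies on the corresponding edge) — which transfers under $\tilde q$ because $\tilde q$ maps cosets to cosets and respects inclusions of cosets up to the group identification — yields a graph isomorphism $\psi\colon\Gamma\to\Gamma'$.

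It remains to check that $\psi$ preserves edge labels. Here I would invoke the geometry already developed: by Lemma~\ref{lem:graph}(2) the block $B$ with defining edge labeled $m$ is sent to the block $B'=\psi(\overline{st})$, and the label of $B'$ is an intrinsic quasi-isometry invariant of the pair $(B',\Xa_{\Gamma'})$ — for instance, it is the number $2n$ of edges on the boundary of a precell, equivalently (via Lemma~\ref{lem:cycle in links}) half the edge-length of a shortest simple cycle in the link of the corresponding rank $2$ vertex of $\square_{\Gamma'}$. Since $d_H(q(B),B')<\infty$ and $B$, $B'$ are each quasi-isometric to $F_2\times\mathbb Z$ with the asymptotic cone / divergence data detecting $m$, the labels must agree; alternatively one can note that $B$ and $B'$ are quasi-isometric via $\tilde q|_B$ and that among the groups $DA_m$ the value of $m$ is a quasi-isometry invariant (two blocks are isometric after rescaling iff they have the same label, and a quasi-isometry between them forces equality of $m$ by the combinatorics of Definition~\ref{def:length} and the structure of $X_m$). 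Thus $\psi$ is a labeled graph isomorphism.

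\textbf{Main obstacle.} The delicate point is the label-preservation, and more precisely pinning down \emph{which} intrinsic invariant of a block is both manifestly preserved by $\tilde q$ and manifestly determined by the edge label; the cleanest route is to observe that $\tilde q$ restricts (after the coset identifications) to a quasi-isometry $B\to B'$ of the corresponding copies of $DA_m$ and $DA_{m'}$, and then to cite that $m$ is recoverable from the quasi-isometry type of $DA_m$ — e.g.\ via the fact that $B$ is a union of $2m$--gon precells and this count survives quasi-isometry by the rigidity of the tiling established in Sections~\ref{sec:geometric model}--\ref{sec:local flatness} (or directly: the pair $(\V_\Gamma,\F_\Gamma)$--structure of $\Theta_\Gamma$ together with Lemma~\ref{lem:graph} already records the incidence data, and the label is read off the local structure at a $\V$--vertex as in Proposition~\ref{prop:local rigid} and Lemma~\ref{lem:cycle in links}). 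Everything else is bookkeeping: promoting the coset-preserving bijection $\tilde q$ to a graph isomorphism is a routine consequence of van der Lek's $A_{\Gamma_1\cap\Gamma_2}=A_{\Gamma_1}\cap A_{\Gamma_2}$ (Lemma~\ref{lem:intersetion of blocks}) identifying standard subcomplexes with sub-Artin groups and their intersection pattern with the incidence structure of $\Gamma$.
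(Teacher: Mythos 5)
Your overall strategy (deduce the statement from Theorem~\ref{thm:CLTTF1}, extract a graph isomorphism from the coset-preserving bijection $\tilde q$, then check labels) is the paper's strategy, but the two key steps as written do not hold up. The serious one is label preservation: you lean on the claims that ``among the groups $DA_m$ the value of $m$ is a quasi-isometry invariant'' and that asymptotic cone/divergence data of a block detect $m$. These are false: by the cube complex of Lemma~\ref{lem:diamond embedding}, every dihedral Artin group $DA_m$ with $m\ge 3$ acts geometrically on $(\text{tree})\times\mathbb E^1$, so all blocks occurring in CLTTF groups are quasi-isometric to $F_2\times\mathbb Z$ regardless of the label; no quasi-isometry invariant of $B'$ alone (nor any ``manifest'' invariant of the pair $(B',\Xa_{\Gamma'})$) can read off $m$. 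The label has to be recovered combinatorially, not metrically: since $\tilde q$ carries cosets of vertex- and edge-standard subgroups to cosets of the same type (Theorem~\ref{thm:CLTTF1}(2)), and simplices of $D_\Gamma$ are exactly chains of such cosets under inclusion, $\tilde q$ induces a simplicial isomorphism $D_\Gamma\to D_{\Gamma'}$ (equivalently $\square_\Gamma\to\square_{\Gamma'}$); the label of a rank~$2$ vertex is then half the girth of its link (Lemma~\ref{lem:cycle in links}), a purely combinatorial quantity preserved by any simplicial isomorphism. You mention this route only parenthetically and do not carry it out, while the arguments you actually rely on in the ``main obstacle'' paragraph would fail.

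The second issue is your construction of the bijection $E(\Gamma)\to E(\Gamma')$: you pass to $A_\Gamma$-orbits of blocks, justified by equivariance ``via the induced group identification'' --- but at that stage no group identification exists (producing one is essentially the point), so it is not justified that blocks in a single $A_\Gamma$-orbit are sent to blocks in a single $A_{\Gamma'}$-orbit; two blocks with distinct defining edges but equal labels are isometric, so the orbit structure is not visible to the block correspondence by itself. The paper avoids quotienting altogether: restrict the induced isomorphism $D_\Gamma\to D_{\Gamma'}$ to the fundamental chamber, i.e.\ to the cosets containing a fixed basepoint, which is simplicially the cone over the subdivision of $\Gamma$, and read off the graph isomorphism $\Gamma\to\Gamma'$ there. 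With that fix, and with the combinatorial label argument via Lemma~\ref{lem:cycle in links}, your outline becomes the paper's proof; as written, however, both the orbit step and the label step have genuine gaps.
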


\begin{proof}
Let $\tilde q$ be as in Theorem~\ref{thm:CLTTF1}. Then by Theorem~\ref{thm:CLTTF1} (2), $\tilde q$ induces an isomorphism $\phi\colon D_\Gamma\to D_{\Gamma'}$. By restricting $\phi$ to the fundamental chamber of $D_\Gamma$, we obtain a graph isomorphism $\Gamma\to\Gamma'$. By Lemma~\ref{lem:cycle in links}, this isomorphism preserves labels of edges.
\end{proof}

\begin{theorem}
	\label{thm:CLTTF2}
Let $A_\Gamma$ be a CLTTF Artin group such that $\Gamma$ does not have separating vertices and edges. Let $\QI(A_\Gamma)$ be the quasi-isometry group of $A_\Gamma$. Let $\Isom(A_\Gamma)$ be the isometry group of $A_\Gamma$ with respect to the word distance with respect to the standard generating set. Then the following hold.
\begin{enumerate}
	\item Any quasi-isometry from $A_\Gamma$ to itself is uniformly close to an element in $\Isom(A_\Gamma)$.
	\item There are isomorphisms $\QI(A_\Gamma)\cong \Isom(A_\Gamma)\cong \Aut(D_\Gamma)$, where $\Aut(D_\Gamma)$ is the simplicial automorphism group of $D_\Gamma$.
\end{enumerate}
\end{theorem}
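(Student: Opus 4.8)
\textbf{Proof proposal for Theorem~\ref{thm:CLTTF2}.}
The plan is to bootstrap from Theorem~\ref{thm:CLTTF1} applied with $\Gamma'=\Gamma$. Given a quasi-isometry $q\colon A_\Gamma\to A_\Gamma$, Theorem~\ref{thm:CLTTF1} produces a bijection $\tilde q\colon A_\Gamma\to A_\Gamma$ with $d(q(x),\tilde q(x))<D$, and such that $\tilde q$ carries left cosets of standard subgroups associated to edges (resp.\ vertices) of $\Gamma$ onto left cosets of standard subgroups associated to edges (resp.\ vertices) of $\Gamma$. The first step is to show $\tilde q$ induces a simplicial automorphism $\phi_{\tilde q}\colon D_\Gamma\to D_\Gamma$: since vertices of $D_\Gamma$ correspond to left cosets $gA_{\Gamma'}$ with $\Gamma'$ empty, a vertex, or an edge, and $\tilde q$ respects exactly this coset structure (and inclusion of cosets, which is the adjacency relation in $D_\Gamma$), we get a well-defined simplicial automorphism. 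Conversely, an element of $\Aut(D_\Gamma)$ fixes the rank stratification, hence permutes rank $0$ vertices, i.e.\ acts on $A_\Gamma=(\Xa_\Gamma)^{(0)}$, and one checks this action is an isometry of the word metric (it sends edges of $\Xa_\Gamma$, which correspond to rank $0$ vertices lying in a common rank $1$ vertex of $D_\Gamma$, to edges). This gives a homomorphism $\Aut(D_\Gamma)\to\Isom(A_\Gamma)$, and it is routine that these two maps are mutually inverse, yielding $\Isom(A_\Gamma)\cong\Aut(D_\Gamma)$.

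Next I would produce the map $\QI(A_\Gamma)\to\Isom(A_\Gamma)$. The uniqueness clause in Theorem~\ref{thm:CLTTF1} is the crucial point: it shows $\tilde q$ depends only on $q$, not on auxiliary choices, and moreover that quasi-isometries at finite distance from each other yield the same $\tilde q$ (their $\tilde q$'s would both be the unique bijection satisfying property (2) and lying at bounded distance, and two blocks at finite Hausdorff distance coincide). Hence $q\mapsto\tilde q$ descends to a well-defined map $\QI(A_\Gamma)\to\Isom(A_\Gamma)$. Composition is handled by observing $\widetilde{q_1\circ q_2}=\tilde q_1\circ\tilde q_2$: the right-hand side is a bijection satisfying property (2) of Theorem~\ref{thm:CLTTF1} (composition of coset-respecting maps respects cosets) and lies at bounded distance from $q_1\circ q_2$, so equals $\widetilde{q_1\circ q_2}$ by uniqueness. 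This makes $q\mapsto\tilde q$ a group homomorphism. It is surjective because any isometry of $A_\Gamma$ is in particular a quasi-isometry $q$ with $\tilde q=q$ (again by uniqueness, since an isometry respects the block, single-labeled-line and vertex structure of $\Xa_\Gamma$ and hence property (2)). It is injective because if $\tilde q=\mathrm{id}$ then $d(q(x),x)<D$ for all $x$, i.e.\ $q$ represents the identity in $\QI(A_\Gamma)$. Part (1) of the theorem is then immediate: every quasi-isometry $q$ is at distance $<D$ from $\tilde q\in\Isom(A_\Gamma)$, and part (2) follows by composing the two isomorphisms.

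The main obstacle, which is really already absorbed into invoking Theorem~\ref{thm:CLTTF1}, is the well-definedness and uniqueness of $\tilde q$ — the whole content of the CLTTF rigidity machinery (Proposition~\ref{prop:VF}, Proposition~\ref{prop:chunk} and the chunk-intersection argument) is what makes $q\mapsto\tilde q$ canonical, and without uniqueness the map $\QI(A_\Gamma)\to\Isom(A_\Gamma)$ would not even be defined. The only genuinely new checks beyond quoting Theorem~\ref{thm:CLTTF1} are the two functoriality statements $\widetilde{q_1\circ q_2}=\tilde q_1\circ\tilde q_2$ and $\tilde q=q$ for $q$ an isometry, both of which reduce to the uniqueness clause, and the identification $\Isom(A_\Gamma)\cong\Aut(D_\Gamma)$, which is a direct translation between the coset description of $D_\Gamma$'s vertices and the combinatorics of $\Xa_\Gamma$. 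I would also remark, as the paper does after Theorem~\ref{thm:intro4}, that $\Isom(A_\Gamma)$ need not be $\Aut(A_\Gamma)$, so this does not by itself give strong rigidity — that refinement is Theorem~\ref{thm:largechar2}.
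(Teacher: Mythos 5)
Your overall architecture (a cycle of maps between $\QI(A_\Gamma)$, $\Isom(A_\Gamma)$ and $\Aut(D_\Gamma)$, with Theorem~\ref{thm:CLTTF1} supplying the canonical $\tilde q$ and its functoriality via the uniqueness clause) is the same as the paper's, and those parts of your argument are fine. But there is a genuine gap at the one step you dismiss as routine: the claim that the action of an element of $\Aut(D_\Gamma)$ on rank $0$ vertices (equivalently, that the bijection $\tilde q$ itself) is an isometry of the word metric. Your justification is that ``edges of $\Xa_\Gamma$ correspond to rank $0$ vertices lying in a common rank $1$ vertex of $D_\Gamma$.'' This is false: the rank $0$ vertices adjacent in $D_\Gamma$ to the rank $1$ vertex $gA_s$ are exactly the elements of the entire coset $g\langle s\rangle$, so two group elements share a rank $1$ vertex if and only if they differ by an \emph{arbitrary} power $s^k$, not by $s^{\pm1}$. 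A simplicial automorphism of $D_\Gamma$ therefore only tells you, a priori, that $\tilde q$ carries each coset $g\langle s\rangle$ bijectively onto some coset $g'\langle t\rangle$; it gives no control on how the coset is permuted internally, hence no control on word-metric adjacency. Note also that Theorem~\ref{thm:CLTTF1} by itself only provides a coset-respecting bijection, not an isometry, so without this step your map $\QI(A_\Gamma)\to\Isom(A_\Gamma)$ does not land in $\Isom(A_\Gamma)$ and the whole proof collapses.

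The missing ingredient is precisely Crisp's local rigidity, quoted in the paper as Proposition~\ref{prop:local rigid}: after normalizing (pre/post-composing with left translations, and with the automorphism coming from the label-preserving graph automorphism of $\Gamma$ that $\phi$ induces on the fundamental chamber, via the proof of Corollary~\ref{cor:qi and iso}) one may assume $\phi$ is the identity on the fundamental chamber; then for two adjacent vertices lying in a dihedral standard subgroup $A_{s,t}$, the induced automorphism of $\lk(v,\square_\Gamma)$ at the corresponding rank $2$ vertex fixes the edge spanned by $A_s$ and $A_t$, so by Proposition~\ref{prop:local rigid} the restriction of the induced bijection to $A_{s,t}$ is either the identity or the group inversion — in particular it preserves adjacency. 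This is how the paper shows the map $\Aut(D_\Gamma)\to\Isom(A_\Gamma)$ is well defined (and hence that every quasi-isometry is uniformly close to an isometry); some such argument must be supplied in your write-up, since the ``direct translation between the coset description of $D_\Gamma$ and the combinatorics of $\Xa_\Gamma$'' you invoke does not exist at the level of edges.
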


\begin{proof}
We first define a homomorphism $h_1\colon \Aut(D_\Gamma)\to \Isom(A_\Gamma)$ as follows. Let $\phi\in \Aut(D_\Gamma)$. By looking at the restriction of $\phi$ to rank $0$ vertices, we obtain a bijection $\varphi\colon A_\Gamma\to A_\Gamma$ satisfying Lemma~\ref{lem:graph} (2). We claim $\varphi\in \Isom(A_\Gamma)$. First we show if $x_1,x_2\in A_\Gamma$ are adjacent vertices in $\Xa_\Gamma$, then so is $\varphi(x_1)$ and $\varphi(x_2)$. Up to pre-composing and post-composing $\varphi$ with suitable left translations, we assume $x_1$ is the identity element $\ast$, and $\varphi(x_1)=\ast$. Then the corresponding $\phi$ sends the fundamental chamber to itself. By the proof of Corollary~\ref{cor:qi and iso}, $\phi$ induces a label-preserving automorphism $\alpha$ of $\Gamma$. By modifying $\varphi$ by an automorphism of $A_\Gamma$ induced by $\alpha$, we assume $\phi$ is the identity on the fundamental chamber. Thus there exists a rank $2$ vertex $v$ in the fundamental chamber such that the standard subgroup $A_{s,t}$ associated with $v$ contains $x_1=\varphi(x_1)$ and $x_2=\varphi(x_2)$. Note that $\phi$ induces a graph automorphism $\tau\colon \lk(v,\square_\Gamma)\to\lk(v,\square_\Gamma)$ which is compatible with $\varphi|_{A_{s,t}}\colon A_{s,t}\to A_{s,t}$. Since $\phi$ is the identity on the fundamental chamber, $\tau$ fixes the edge in $\lk(v,\square_\Gamma)$ that are spanned by vertices associated with $A_s$ and $A_t$. By Proposition~\ref{prop:local rigid}, $\varphi|_{A_{s,t}}$ is either the identity or a group inversion. Thus $\varphi(x_1)$ and $\varphi(x_2)$ are adjacent in $\Xa_\Gamma$. We deduce that $\varphi$ is $1$--Lipschitz with respect to the word metric. Similarly, we know $\varphi^{-1}$ is $1$--Lipschitz. Thus $\varphi\in \Isom(A_\Gamma)$. We define $h_1(\phi)=\varphi$ and one readily verifies that $h_1\colon \Aut(D_\Gamma)\to \Isom(A_\Gamma)$ is a homomorphism.

It is clear there is a homomorphism $h_2\colon \Isom(A_\Gamma)\to\QI(A_\Gamma)$. By Theorem~\ref{thm:CLTTF1}, there is a homomorphism $h_3\colon \QI(A_\Gamma)\to \Aut(D_\Gamma)$. It is clear that $h_3\circ h_2\circ h_1$ is the identity map. Thus the theorem follows.
\end{proof}

A group $G$ is \emph{strongly rigid} if any element in $\QI(G)$ is uniformly close to an automorphism of $G$.

\begin{theorem}
	\label{thm:largechar2}
Let $A_\Gamma$ be a large-type and triangle-free Artin group. Then $A_\Gamma$ is strongly rigid if and only if $\Gamma$ satisfies all of the following conditions:
\begin{enumerate}
	\item $\Gamma$ is connected and has $\ge 3$ vertices;
	\item $\Gamma$ does not have separating vertices and edges;
	\item any label preserving automorphism of $\Gamma$ which fixes the neighborhood of a vertex is the identity.
\end{enumerate}
Moreover, if a large-type and triangle-free Artin group $A_\Gamma$ satisfies all the above conditions and $H$ is a finitely generated group quasi-isometric to $A_\Gamma$, then there exists a finite index subgroup $H'\le H$ and a homomorphism $\phi\colon H'\to A_\Gamma$ with finite kernel and finite index image.
\end{theorem}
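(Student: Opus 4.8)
The plan is to reduce Theorem~\ref{thm:largechar2} to the quasi-isometric rigidity statement of Theorem~\ref{thm:CLTTF2} together with standard arguments about quasi-actions. First I would dispose of the characterization of strong rigidity. The ``if'' direction: assume $\Gamma$ satisfies (1)--(3). Condition (1) plus (2) means $A_\Gamma$ is CLTTF with no separating vertices or edges, so Theorem~\ref{thm:CLTTF2} applies and gives $\QI(A_\Gamma)\cong\Isom(A_\Gamma)\cong\Aut(D_\Gamma)$. Now I need to see that condition (3) forces $\Aut(D_\Gamma)$ to be exactly the image of $\Aut(A_\Gamma)$ (equivalently $\mathrm{Out}(A_\Gamma)$, viewed as label-preserving automorphisms of $\Gamma$, together with the global inversion). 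The point is that an element of $\Aut(D_\Gamma)$ which fixes the fundamental chamber is, by Proposition~\ref{prop:local rigid} applied at each rank $2$ vertex of that chamber, either the identity or the global inversion; and an arbitrary element of $\Aut(D_\Gamma)$ can be composed with a translation so that it preserves the fundamental chamber and then restricts there to a label-preserving automorphism $\alpha$ of $\Gamma$ (by Lemma~\ref{lem:cycle in links} as in the proof of Corollary~\ref{cor:qi and iso}). So $\Aut(D_\Gamma)$ is generated by $A_\Gamma$ (acting by left translations), the global inversion, and label-preserving graph automorphisms. The left translations and inversion are already realized by automorphisms of $A_\Gamma$; the issue is whether a label-preserving automorphism $\alpha$ of $\Gamma$ that is \emph{not} induced by a graph automorphism fixing neighborhoods can still land, as an element of $\QI(A_\Gamma)$, in the closure of $\Aut(A_\Gamma)$. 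Condition (3) is precisely what rules out the ``extra'' automorphisms of $D_\Gamma$ coming from Crisp's Dehn-twist-type or leaf-type moves that are not graph automorphisms of $\Gamma$; I would invoke \cite[Theorem~1]{MR2174269} (finite $\mathrm{Out}$) together with the explicit description there to match $\mathrm{Out}(A_\Gamma)$ with the group of label-preserving automorphisms of $\Gamma$ modulo those fixing a vertex-neighborhood, so that (3) $\iff$ every element of $\Aut(D_\Gamma)$ is at bounded distance from an element of $\Aut(A_\Gamma)$.

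For the ``only if'' direction I argue contrapositively. If (1) fails, then either $A_\Gamma$ is a free group or a $2$-generator Artin group or disconnected; free groups are not strongly rigid (their $\QI$ group is enormous), and for the $2$-generator large-type case the result of Crisp--Paris / the known flexibility shows strong rigidity fails. If (2) fails, a separating vertex or edge yields a Dehn twist automorphism of $D_\Gamma$ that is not uniformly close to any automorphism of $A_\Gamma$ (this is exactly \cite[Lemma~42]{MR2174269} plus Theorem~\ref{thm:CLTTF2}), so $\QI(A_\Gamma)$ is strictly larger than $\overline{\Aut(A_\Gamma)}$ and strong rigidity fails. If (3) fails, there is a label-preserving automorphism $\alpha\ne\mathrm{id}$ of $\Gamma$ fixing the neighborhood of some vertex $v$; I would build from $\alpha$ an explicit automorphism $\psi$ of $D_\Gamma$ (supported away from the chamber of $v$, using that $\alpha$ fixes $\St(v)$) which by Theorem~\ref{thm:CLTTF2} gives an element of $\QI(A_\Gamma)$, and then check it is not uniformly close to any group automorphism --- the key being that a group automorphism of $A_\Gamma$ inducing $\psi$ would have to induce $\alpha$ on $\Gamma$, but $\alpha$ fixing a vertex-neighborhood forces (by the usual normal-form / centralizer arguments for CLTTF groups) that automorphism to be inner up to the finite part, contradicting $\psi\ne\mathrm{id}$ in $\Aut(D_\Gamma)$.

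For the ``moreover'' statement, I would run the standard Milnor--Schwarz / quasi-action bootstrap. Let $H$ be finitely generated and quasi-isometric to $A_\Gamma$. Then $H$ quasi-acts on $\Xa_\Gamma$, hence (composing with the identification $\QI(A_\Gamma)\cong\Isom(A_\Gamma)$ from Theorem~\ref{thm:CLTTF2}, which shows the quasi-action is at bounded distance from an honest isometric action, i.e.\ the quasi-action is ``rigid'') we get a homomorphism $H\to\Isom(A_\Gamma)\cong\Aut(D_\Gamma)$ whose kernel is finite (elements acting trivially move points a bounded amount, and $\Isom(A_\Gamma)$ acts properly since it sits inside isometries of a locally finite graph) and whose image acts geometrically on $\Xa_\Gamma$ (equivalently on $D_\Gamma$), hence is a finite-index subgroup of $\Aut(D_\Gamma)$. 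Now under conditions (1)--(3), $\Aut(D_\Gamma)$ differs from $A_\Gamma$ only by a finite group (the finite $\mathrm{Out}$ together with the global inversion, which are all realized), so the image of $H$ is commensurable with $A_\Gamma$; intersecting with $A_\Gamma$ and pulling back gives a finite-index subgroup $H'\le H$ with a homomorphism $\phi\colon H'\to A_\Gamma$ of finite kernel and finite-index image, as required.

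The main obstacle I expect is the bookkeeping in the ``only if, condition (3)'' case: producing a concrete automorphism of $D_\Gamma$ from a neighborhood-fixing graph automorphism $\alpha$ and \emph{proving} it is not boundedly close to a group automorphism. This requires understanding precisely which elements of $\Aut(D_\Gamma)$ are induced by $\Aut(A_\Gamma)$ --- i.e.\ a clean identification of $\overline{\Aut(A_\Gamma)}$ inside $\Aut(D_\Gamma)\cong\QI(A_\Gamma)$ --- and that identification is exactly the content one must extract from Crisp's computation of $\Aut(A_\Gamma)$ in \cite{MR2174269}. The other steps (the quasi-action rigidity and the Milnor--Schwarz bootstrap) are routine given Theorem~\ref{thm:CLTTF2}.
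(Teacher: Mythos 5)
Your overall architecture (reduce everything to Theorem~\ref{thm:CLTTF2}, treat the three conditions separately, and run a quasi-action bootstrap for the ``moreover'' part) is the same as the paper's, and your ``moreover'' argument and your sketch for the case where (3) fails are essentially the paper's. However, there are two genuine gaps. First, in the ``if'' direction your key claim --- that an element of $\Aut(D_\Gamma)$ fixing the fundamental chamber is, ``by Proposition~\ref{prop:local rigid} applied at each rank $2$ vertex of that chamber, either the identity or the global inversion'' --- is not justified: Proposition~\ref{prop:local rigid} only controls the automorphism on the links of the rank-$2$ vertices of the chamber, so after normalizing you only know the map is the identity on $K$ and on the closed stars of its rank-$2$ vertices. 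Propagating this to all of $D_\Gamma$ is exactly the step where hypothesis (3) is needed (the paper invokes the chamber-by-chamber argument from the last paragraph of \cite[p.~1436]{MR2174269} at precisely this point), and your proposed substitute --- deducing the structure of $\Aut(D_\Gamma)$ from Crisp's Theorem~1 about $\Aut(A_\Gamma)$ --- cannot work: Crisp's Theorem~1 describes $\Aut(A_\Gamma)$ for every CLTTF graph without separating vertices or edges, independently of (3), while $\Aut(D_\Gamma)$ genuinely acquires extra (``half graph automorphism'') elements when (3) fails; also your identification ``$\mathrm{Out}(A_\Gamma)=$ label-preserving automorphisms of $\Gamma$ modulo those fixing a vertex-neighborhood'' is not what Crisp proves (neighborhood-fixing graph automorphisms still give nontrivial outer automorphisms; condition (3) concerns $\Aut(D_\Gamma)$ versus $\Aut(A_\Gamma)$, not a quotient of $\mathrm{Out}$).

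Second, your treatment of the case where (2) fails does not go through as written. You invoke Theorem~\ref{thm:CLTTF2}, but its hypotheses include exactly condition (2), so it is unavailable here; moreover the phrase ``a Dehn twist automorphism of $D_\Gamma$ that is not uniformly close to any automorphism of $A_\Gamma$'' is self-undermining, since a Dehn twist automorphism of $A_\Gamma$ is by definition an automorphism and hence trivially close to one. What is needed is an explicit self-quasi-isometry that is \emph{not} close to any automorphism, and the paper produces it by a direct construction: realize the Dehn twist topologically on the presentation complex, split $A_\Gamma$ as a graph of groups over the separating vertex/edge, pass to the Bass--Serre tree, and define the map to be the identity on the preimage of one halfspace and the lifted Dehn twist on the other (a ``half Dehn twist''); one then checks directly that this is a quasi-isometry and not boundedly close to any automorphism. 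If you want to cite \cite[Lemma~42]{MR2174269} instead, you would still have to explain how its examples yield quasi-isometries of the group and why they are not close to automorphisms without the $\QI\cong\Isom\cong\Aut(D_\Gamma)$ machinery, which is exactly what is missing when (2) fails.
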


Recall that a \emph{global inversion} of $A_\Gamma$ is the automorphism of $A_\Gamma$ sending each generator to its inverse. This will be used in the following proof.
\begin{proof}
First we prove the ``if" direction. Let $q\colon A_\Gamma\to A_\Gamma$ be a quasi-isometry. Suppose $\tilde q$ is the map in Theorem~\ref{thm:CLTTF1}. Let $\phi\colon D_\Gamma\to D_\Gamma$ be the isomorphism induced by $\tilde q$. As in Theorem~\ref{thm:CLTTF2}, by modifying $\tilde q$ by a left translation and an automorphism of $A_\Gamma$ induced by label-preserving graph automorphism of $\Gamma$, we assume $\phi$ is the identity on the fundamental chamber $K$. Also by Proposition~\ref{prop:local rigid}, $\phi$ is either the identity or induced by the group inversion on the link of each rank $2$ vertex of $K$. By connectedness of $\Gamma$, we know if $\tilde q$ is an inversion on $A_s$ for some vertex $s\in\Gamma$, then $\tilde q$ is an inversion on $A_t$ for any vertex $t\in \Gamma$. Thus by possibly modifying $\tilde q$ by a global inversion, we can assume $\phi$ is the identity on both $K$ and the closed star of any rank $2$ vertex in $K$. Now the argument in the last paragraph of \cite[pp. 1436]{MR2174269} implies that $\phi$ is the identity on $D_\Gamma$ (assumption (3) is used in this step). Hence $\tilde q$ is the identity and the if direction follows.

Before turning to the ``only if" direction we prove the last statement in the theorem. Since $H$ is quasi-isometric $A_\Gamma$, there is a discrete and cobounded quasi-action of $H$ on $A_\Gamma$ (see \cite[Section 2]{kleiner2001groups} for setting up on quasi-actions). By Theorem~\ref{thm:CLTTF2}, we can replace each quasi-isometry in the quasi-action by a unique element in $\Isom(A_\Gamma)$, which gives a homomorphism $h\colon H\to \Isom(A_\Gamma)$. Since the quasi-action is discrete, $h$ has finite kernel. Let $N$ be the finite subgroup of $\Aut(A_\Gamma)$ generated by global inversion and automorphisms arise from label-preserving graph automorphisms of $\Gamma$. Then $\Isom(A_\Gamma)\cong A_\Gamma\rtimes N$ by the previous paragraph (the copy of $A_\Gamma$ in $\Isom(A_\Gamma)$ corresponds to left translations). Now the coboundedness of the quasi-action implies $\im h$ has finite index in $\Isom(A_\Gamma)$. Since $A_\Gamma$ is finite index in $\Isom(A_\Gamma)$, the moreover statement follows.

It remains to prove the ``only if" direction. If $\Gamma$ is not connected, then $A_\Gamma$ is a free product and clearly there are quasi-isometries which are not uniformly close to any automorphisms. Thus $\Gamma$ is connected. If $\Gamma$ has only two vertices, then $A_\Gamma$ is quasi-isometric to either $\mathbb Z\oplus\mathbb Z$ or $F_2\oplus\mathbb Z$ ($F_2$ denotes the free group of two generators). Again $A_\Gamma$ is not strongly rigid. Thus (1) holds.

Now we assume $\Gamma$ has a separating vertex or edge. Let $\Gamma=\Gamma_1\cup_T\Gamma_2$, where $\Gamma_1$ and $\Gamma_2$ are full subgraphs of $\Gamma$ such that $T$ is an edge or a vertex, $\Gamma_1\cup\Gamma_2=\Gamma$ and $\Gamma_1\cap\Gamma_2=T$. We also assume $T\neq \Gamma_1$ and $T\neq\Gamma_2$. Let $g$ be an element in the centralizer of $A_T$. Then there is an automorphism $\phi$ of $A_\Gamma$ defined by $\phi(v)=gvg^{-1}$ for any vertex $v\in\Gamma_2$, and $\phi(v)=v$ for any vertex in $\Gamma_1$. This is called a \emph{Dehn twist automorphism} by Crisp. We denote the standard presentation complex of $A_\Gamma$ by $P_\Gamma$. Let $f\colon P_\Gamma\to P_\Gamma$ be a homotopy equivalence such that $f$ induces $\phi$ on the fundamental groups. We can assume $f|_{P_{\Gamma_1}}$ is the identity map and $f(P_{\Gamma_2})\subset P_{\Gamma_2}$. We pull apart $P_{\Gamma_1}$ and $P_{\Gamma_2}$ in $P_\Gamma$ to form a graph of spaces $X=P_{\Gamma_1}\cup P_T\times[0,1]\cup P_{\Gamma_2}$ with the attaching maps defined in the natural way. Then $f$ induces a homotopy equivalence $g\colon X\to X$. Let $T$ be the associated Bass-Serre tree. There is a projection map $p\colon \widetilde X\to T$ where $\widetilde X$ is the universal cover of $X$. Let $\tilde g\colon \widetilde X\to\widetilde X$ be a lift of $g$. We can assume $\tilde g$ is the identity on a lift $\widetilde P_{\Gamma_1}\cup \widetilde P_T\times[0,1]$ of $P_{\Gamma_1}\cup P_T\times[0,1]$ in $\widetilde X$. Let $E\subset T$ be the edge which is the $p$--image of such lift and let $V_1$ be the vertex of $E$ associated with $\widetilde P_{\Gamma_1}$. The midpoint of $E$ divides $T$ into two halfspace. Let $H_1$ be the halfspace containing $V_1$ and $H_2$ be the other halfspace. Now we define a new map $\tilde h\colon \widetilde X\to \widetilde X$ such that $\tilde h$ is the identity on $p^{-1}(H_1)$ and $\tilde h=\tilde g$ on $p^{-1}(H_2)$. One readily verifies that $\tilde h$ is a quasi-isometry, and $\tilde h$ is not uniformly close to an automorphism.


Now we assume (1) and (2) hold, but (3) fails. Let $v\in\Gamma$ be a vertex and let $\alpha$ be a non-trivial label-preserving automorphism of $\Gamma$ which fixes the closed star of $v$ pointwise. Let $f$ be the automorphism of $A_\Gamma$ induced by $\alpha$. Then $f$ induces an automorphism $\phi\colon \square_\Gamma\to \square_\Gamma$. Let $K$ be the fundamental chamber in $\square_\Gamma$ and let $E$ be the edge in $K$ between the rank $0$ vertex in $K$ and the rank $1$ vertex in $K$ corresponding to the subgroup generated by $v$. Let $H_E$ be the hyperplane in $\square_\Gamma$ dual to $E$. Suppose $\Gamma_1$ is the full subgraph of $\Gamma$ spanned by the closed star of $v$. Then there is a natural isometric embedding $\square_{\Gamma_1}\to\square_{\Gamma_2}$. Note that $H_E\subset \square_{\Gamma_1}$ and $\phi$ fixes $H_E$ pointwise. Let $H_1$ and $H_2$ be two halfspaces bounded by $H_E$ such that $v\in H_1$. Now we define an automorphism $\phi'$ of $\square_\Gamma$ such that $\phi'$ is the identity on $H_1$ and $\phi'=\phi$ on $H_2$. Then $\phi'$ induces an isometry $f'\colon A_\Gamma\to A_\Gamma$ as in the proof of Theorem~\ref{thm:CLTTF2}. Now we show $f'$ is not uniformly close to an automorphism. Since we already assume (1) and (2) hold, by \cite[Theorem 1]{MR2174269}, $\Aut(A_\Gamma)$ is generated by automorphisms induced by label-preserving automorphisms of $\Gamma$, global inversions and inner automorphisms. On the other hand, $f'|_K$ is induced by $\alpha$ and $f'$ is the identity on some translate of $K$. No elements in $\Aut(A_\Gamma)$ can induce such automorphism of $\square_\Gamma$. Thus $f'$ is not uniformly close to any automorphism by Theorem~\ref{thm:CLTTF2} (2).
\end{proof}

\section{Ending remarks and open questions}
\label{rmk:ending remark}
Here we leave some remarks concerning perspectives of extending the results of Section~\ref{sec:application} and Section~\ref{sec:CLTTF} to more general $2$--dimensional Artin groups and ask several related questions. 

\begin{question}
	\label{que:1}
	Let $A_\Gamma$ and $A_{\Gamma'}$ be two $2$--dimensional Artin groups with finite outer automorphism groups. Suppose $\Gamma$ has more than two vertices. Suppose $A_\Gamma$ and $A_{\Gamma'}$ are quasi-isometric. Are they isomorphic?
\end{question}

By Theorem~\ref{thm:invariance}, Question~\ref{que:1} reduces to the following question:
\begin{question}
	\label{que:2}
	Let $A_\Gamma$ and $A_{\Gamma'}$ be two $2$--dimensional Artin groups with finite outer automorphism groups. Suppose that $\Gamma$ has more than two vertices and suppose that $\I_{\Gamma}$ and $\I_{\Gamma'}$ are isomorphic as graphs. Are $A_\Gamma$ and $A_{\Gamma'}$ isomorphic as groups?
\end{question}

To answer this question, one need to generalize \cite[Proposition 23]{MR2174269}. This question has a positive answer in the right-angled case \cite{bestvina2008asymptotic,MR3692971}. Even in the case of large-type Artin groups answering Question~\ref{que:1} and Question~\ref{que:2} will be interesting.

In view of \cite[Proposition 23]{MR2174269}, it is also natural to ask when an analogue of \textquotedblleft Ivanov's theorem\textquotedblright\ holds for $2$--dimensional Artin groups.
\begin{question}
	\label{que:3}
	Let $A_\Gamma$ be a $2$--dimensional Artin group. Find conditions on $\Gamma$ such that each automorphism of $A_\Gamma$ induces a graph automorphism of $\I_\Gamma$ and vice versa.
\end{question}

Of course,	 one can ask more questions of this kind by relaxing the assumptions of the results in Section~\ref{subsec:quasi-isometries}. A question of different flavor, motivated by the work of Kim and Koberda \cite{kim2014geometry}, and Behrstock, Hagen and Sisto \cite{MR3650081}, is the following.

\begin{question}
	Suppose that $A_\Gamma$ is $2$--dimensional and that $\Gamma$ is connected. Is $\I_\Gamma$ Gromov-hyperbolic? Does $A_\Gamma$ have a Mazur-Minsky hierarchy structure with $\I_\Gamma$ being the \textquotedblleft largest Gromov-hyperbolic space\textquotedblright?
\end{question}

It is known that in the right-angled case, $\I_\Gamma$ is always a quasi-tree \cite{kim2014geometry}. We suspect this is not true outside the right-angled world, since the intersection pattern of flats will be substantially more complicated. 

\begin{question}
Under what condition, the graph $\I_\Gamma$ is not a quasi-tree?
\end{question}


\bibliography{mybib}{}
\bibliographystyle{plain}
\end{document}